\theoremstyle{plain}
\newtheorem{theorem}{Theorem}[section]
\newtheorem{lemma}[theorem]{Lemma}
\newtheorem{proposition}[theorem]{Proposition}
\newtheorem{problem}[theorem]{Problem}
\newtheorem{weakisomGC}{Weak Isom-version Conjecture}
\newtheorem{solweakisomGC}{Solvable Weak Isom-version Conjecture}
\newtheorem{homeconj}{Homeomorphism Conjecture}
\newtheorem{solhomeconj}{Solvable Homeomorphism Conjecture}
\renewcommand{\theprob}
\renewcommand{\thequest}
\newtheorem{corollary}[theorem]{Corollary}
\theoremstyle{definition}
\newtheorem{definition}[theorem]{Definition}
\newtheorem{conditiona}{Condition A}
\newtheorem{conditionb}{Condition B}
\newtheorem{conditionc}{Condition C}
\newtheorem{remarkA}{Remark}
\newtheorem{remarkB}{Remark}
\newtheorem{remarkC}{Remark}
\newcommand{\be}{\begin{enumerate}}
\newcommand{\ee}{\end{enumerate}}
\newcommand{\bcd}{\[\begin{CD}}
\newcommand{\ecd}{\end{CD}\]}
\newcommand{\bit}{\begin{itemize}}
\newcommand{\eit}{\end{itemize}}
\newcommand{\bq}{\begin{quote}}
\newcommand{\eq}{\end{quote}}
\newcommand{\bpf}{\begin{proof}}
\newcommand{\epf}{\end{proof}}
\newcommand{\mcA}{\mathcal{A}}
\newcommand{\mcC}{\mathcal{C}}
\newcommand{\mcF}{\mathcal{F}}
\newcommand{\mcM}{\mathcal{M}}
\newcommand{\mcN}{\mathcal{N}}
\newcommand{\mcO}{\mathcal{O}}
\newcommand{\mcP}{\mathcal{P}}
\newcommand{\mcS}{\mathcal{S}}
\newcommand{\mbA}{\mathbb{A}}
\newcommand{\mbF}{\mathbb{F}}
\newcommand{\mbG}{\mathbb{G}}
\newcommand{\mbN}{\mathbb{N}}
\newcommand{\mbP}{\mathbb{P}}
\newcommand{\mbQ}{\mathbb{Q}}
\newcommand{\mbR}{\mathbb{R}}
\newcommand{\mbZ}{\mathbb{Z}}
\newcommand{\mfM}{\mathfrak{M}}
\newcommand{\mfP}{\mathfrak{P}}
\newcommand{\mfS}{\mathfrak{S}}
\newcommand{\mfT}{\mathfrak{T}}
\newcommand{\msC}{\mathscr{C}}
\newcommand{\msG}{\mathscr{G}}
\newcommand{\SR}{\stackrel}
\newcommand{\mr}{\mathrm}
\newcommand{\defeq}{\SR{\mr{def}}{=}}
\newcommand{\spec}{\mathrm{Spec} \, }
\newcommand{\migi}{\rightarrow}
\newcommand{\isom}{\stackrel{\sim}{\migi}}
\newcommand{\migiinje}{\hookrightarrow}
\newcommand{\migisurj}{\twoheadrightarrow}
\newcommand{\invlim}{\varprojlim}
\newcommand{\chr}{ {\text{\rm char}} }
\newcommand{\et}{\rm \text{\'et}}
\newcommand{\bp}{\bullet}
\newcommand{\muge}{\infty}
\begin{document}


\title{Moduli Spaces of Fundamental Groups of Curves in Positive Characteristic I}
\author{\sc Yu Yang}
\date{}
\maketitle


\begin{abstract}
In this series of papers, we investigate a new anabelian phenomenon of curves over algebraically closed fields of positive characteristic which shows that the topological structures of moduli spaces of curves can be understood from open continuous homomorphisms of fundamental groups of curves. Let $p$ be a prime number, and let $\overline M_{g, n}$ be the coarse moduli space of the moduli stack over an algebraic closure of the finite field $\mbF_{p}$ classifying pointed stable curves of type $(g, n)$. We introduce a topological space $\overline \Pi_{g, n}$ which we call {\it the moduli space of admissible fundamental groups of pointed stable curves of type $(g, n)$ in characteristic $p$}, whose underlying set consists of the set of isomorphism classes of the admissible fundamental groups of pointed stable curves of type $(g, n)$, and whose topology is determined by the sets of finite quotients of the admissible fundamental groups. By introducing a certain equivalence relation $\sim_{fe}$ on the underlying topological space $|\overline M_{g, n}|$ of $\overline M_{g, n}$ determined by Frobenius actions, we obtain a topological space $\overline \mfM_{g, n}\defeq |\overline M_{g, n}|/\sim_{fe}$ whose topology is induced by the Zariski topology of $\overline M_{g, n}$. Moreover, there is a natural continuous map $$\pi_{g,n}^{\rm adm}: \overline \mfM_{g, n} \migi \overline \Pi_{g, n}.$$  The topological space $\overline \Pi_{g, n}$ gives us a new insight into the theory of the anabelian geometry of curves over algebraically closed fields of characteristic $p$ based on the following anabelian philosophy: Every topological property concerning $\overline \Pi_{g, n}$ is equivalent to an anabelian property concerning pointed stable curves over algebraically closed fields of characteristic $p$. Furthermore, {\it the Homeomorphism Conjecture} says that $\pi_{g,n}^{\rm adm}$ is a homeomorphism, which is the main conjecture of the theory developed in the present series of papers. The Homeomorphism Conjecture generalizes all the conjectures in the theory of anableian geometry of curves over algebraically closed fields of characteristic $p$, and means that moduli spaces of curves {\it can be reconstructed group-theoretically as topological spaces} from the admissible fundamental groups of curves. One of main results of the present series of papers says that the Homeomorphism Conjecture holds when $\text{dim}(\overline M_{g, n})=1$ (i.e., $(g, n)=(0,4)$ or $(g, n)=(1,1)$). In the present paper, we establish two fundamental tools to analyze the geometric behavior of curves from open continuous homomorphisms of admissible fundamental groups, which play central roles in the theory developed in the series of papers. Moreover, we prove that $\pi_{0,n}^{\rm adm}([q])$ is a closed point of $\overline \Pi_{0,n}$ when $[q]$ is a closed point of $\overline \mfM_{0, n}$. In particular, we obtain that the Homeomorphism Conjecture holds when $(g, n)=(0, 4)$. 

Keywords: pointed stable curve, admissible fundamental group, moduli space, anabelian geometry; positive characteristic.

Mathematics Subject Classification: Primary 14H30; Secondary 14F35, 14G32.
\end{abstract}


\tableofcontents


\section*{Introduction}\label{sec-1} 

In the present paper, we study the anabelian geometry of curves over algebraically closed fields of positive characteristic. Let $p$ be a prime number, and let $$X^{\bp}=(X, D_{X})$$ be a pointed stable curve of  type $(g_{X}, n_{X})$ over a field $k$ of characteristic $\chr(k)$, where $X$ denotes the underlying curve, $D_{X}$ denotes the set of marked points, $g_{X}$ denotes the genus of $X$, and $n_{X}$ denotes the cardinality $\#D_{X}$ of $D_{X}$. First, we explain some background of the anabelian geometry of curves. Suppose that $X^{\bp}$ is smooth over $k$. When $k$ is an {\it arithmetic field} (e.g. number field, $p$-adic field, finite field, etc.), A. Grothendieck suggested a theory of arithmetic geometry called {\it anabelian geometry} (cf. \cite{G1}, \cite{G2}), roughly speaking, in the case of curves, whose ultimate goal is the following question.
\begin{quote}
Can we recover the geometric information of $X^{\bp}$ group-theoretically from various versions of its fundamental group?
\end{quote}
\noindent
The various formulations of this question are called {\it Grothendieck's anabelian conjecture for curves} or the Grothendieck conjecture, for short. The Grothendieck conjecture has been proven in many cases. For example, the conjecture was proved by H. Nakamura, A. Tamagawa, and S. Mochizuki in the case of number fields  (cf. \cite{Nakm1}, \cite{Nakm2}, \cite{T0}, \cite{M0}), and was proved by Tamagawa, J. Stix, and M. Sa\"idi-Tamagawa in the case of finitely generated fields over the finite field $\mbF_{p}$ (cf. \cite{T0}, \cite{Sti1}, \cite{Sti2}, \cite{ST1}, \cite{ST2}). All the proofs of the Grothendieck conjecture for curves over arithmetic fields mentioned above require the use of {\it the highly non-trivial outer Galois representations} induced by the fundamental exact sequences of fundamental groups.

Next, let us return to the case where $X^{\bp}$ is an arbitrary pointed stable curve, and suppose that $k$ is {\it an algebraically closed field}. By choosing a suitable base point of $X^{\bp}$, we have the admissible fundamental group $$\pi_{1}^{\rm adm}(X^{\bp})$$ of $X^{\bp}$ (cf. Definition \ref{def-2}). In particular, if $X^{\bp}$ is smooth over $k$, then $\pi_{1}^{\rm adm}(X^{\bp})$ is naturally isomorphic to the tame fundamental group $\pi^{\rm t}_{1}(X^{\bp})$. Write $\pi_{1}^{\rm adm}(X^{\bp})^{p'}$ for the maximal prime-to-$p$ quotient of $\pi_{1}^{\rm adm}(X^{\bp})$ if $\chr(k)=p$. The profinite group $\pi_{1}^{\rm adm}(X^{\bp})$ (resp. $\pi_{1}^{\rm adm}(X^{\bp})^{p'}$) is isomorphic to the profinite completion (resp. pro-prime-to-$p$ completion) of the following group (cf. \cite[Th\'eor\`eme 2.2 (c)]{V})  $$\langle a_{1}, \dots, a_{g_{X}}, b_{1}, \dots, b_{g_{X}}, c_{1}, \dots, c_{n_{X}} \ | \ \prod_{i=1}^{g_{X}}[a_{i}, b_{i}]\prod_{j=1}^{n_{X}}c_{j}=1\rangle$$ when $\chr(k)=0$ (resp. $\chr(k)=p$). In the case of algebraically closed fields of characteristic $0$, since the admissible fundamental groups of curves depend only on the types of curves, the anabelian geometry of curves does not exist in this situation. On the other hand, if $\chr(k)=p$, the situation is quite different from that in characteristic $0$. The admissible fundamental group $\pi_{1}^{\rm adm}(X^{\bp})$ is very mysterious and its structure is no longer known. In the remainder of the introduction, we assume that $k$ is an algebraically closed field of characteristic $p$. 

Since the late 1990s, some developments of F. Pop, M. Raynaud, Sa\"idi, Tamagawa, J. Tong, and the author (cf. \cite{PS}, \cite{R2},  \cite{T1}, \cite{T2}, \cite{T3}, \cite{T4}, \cite{To}, \cite{Y1}, \cite{Y2}, \cite{Y3}) showed evidence for very strong {\it anabelian phenomena} for curves over {\it algebraically closed fields of positive characteristic.} In this situation, the Galois group of the base field is trivial, and the {\it arithmetic} fundamental group coincides with the {\it geometric} fundamental group, thus there is a total absence of a Galois action of the base field. This kinds of anabelian phenomenon go beyond Grothendieck's anabelian geometry (because no Galois actions exist), and this is the reason that we do not have an explicit description of the geometric fundamental group of any pointed stable curve in positive characteristic. Moreover, we may think that the anabelian geometry of curves over algebraically closed fields of characteristic $p$ is a theory based on the following rough consideration: The admissible fundamental group of a pointed stable curve over an algebraically closed field of characteristic $p$ must encode {\it ``moduli"} of the curve.

Let us explain the anabelian geometry of curves over algebraically closed fields of positive characteristic from the point of view of moduli spaces. Let $\overline \mbF_{p}$ be an algebraically closed field of $\mbF_{p}$, and let $\overline \mcM_{g, n}$ be the moduli stack over $\overline \mbF_{p}$ classifying pointed
stable curves of type $(g, n)$, $\mcM_{g, n} \subseteq \overline \mcM_{g, n}$ the open substack classifying smooth pointed stable curves, $\overline M_{g, n}$ the coarse moduli space of $\overline \mcM_{g,n}$, and $M_{g, n}$ the coarse moduli space of  $\mcM_{g, n}$. Let $q \in \overline M_{g, n}$ be an arbitrary point, $k(q)$ the residue field of $\overline M_{g, n}$, and $k_{q}$ an algebraically closed field which contains $k(q)$. Then the composition of natural morphisms $$\spec k_{q} \migi \spec k(q) \migi \overline M_{g, n}$$ determines a pointed stable curve $X^{\bp}_{k_{q}}$ of type $(g, n)$ over $k_{q}$. In particular, if $k_{q}$ is an algebraic closure of $k(q)$, we shall write $X_{q}^{\bp}$ for $X^{\bp}_{k_{q}}$. Write $\pi_{1}^{\rm adm}(X^{\bp}_{k_{q}})$ for the admissible fundamental group $X^{\bp}_{k_{q}}$ and $\Gamma_{X_{k_{q}}^{\bp}}$ for the dual semi-graph of $X_{k_{q}}^{\bp}$. Since the isomorphism classes of $\pi_{1}^{\rm adm}(X^{\bp}_{k_{q}})$ and $\Gamma_{X_{k_{q}}^{\bp}}$ do not depend on the choice of $k_{q}$, we shall denote by $$\pi_{1}^{\rm adm}(q), \ \Gamma_{q}$$ the admissible fundamental group $\pi_{1}^{\rm adm}(X^{\bp}_{k_{q}})$ and the dual semi-graph $\Gamma_{X_{k_{q}}^{\bp}}$, respectively. Moreover, we write $v(\Gamma_{q})$, $e^{\rm op}(\Gamma_{q})$, and $e^{\rm cl}(\Gamma_{q})$ for the set of vertices of $\Gamma_{q}$, the set of open edges of $\Gamma_{q}$, and the set of closed edges of $\Gamma_{q}$, respectively.

Let $\overline \Pi_{g, n}$ be the set of isomorphism classes (as profinite groups) of admissible fundamental groups of pointed stable curves of type $(g, n)$ over algebraically closed fields of characteristic $p$. Then the fundamental group functor $\pi_{1}^{\rm adm}$ induces a natural sujective map from the underlying topological space $|\overline M_{g,n}|$ of $\overline M_{g,n}$ to $\overline \Pi_{g, n}$ as follows: $$[\pi_{1}^{\rm adm}]: |\overline M_{g, n}| \migisurj \overline \Pi_{g, n}, \ q \mapsto [\pi_{1}^{\rm adm}(q)],$$ where $[\pi_{1}^{\rm adm}(q)]$ denotes the isomorphism class of $\pi_{1}^{\rm adm}(q)$. Note that the map $[\pi_{1}^{\rm adm}]$ is not a bijection in general. For example, let $q$, $q'\in \overline M_{g, n}$ be arbitrary points such that $X_{q}\setminus D_{X_{q}}$ is isomorphic to $X_{q'}\setminus D_{X_{q'}}$ as schemes (e.g. $X_{q}^{\bp}$ is a Frobenius twist of $X_{q'}^{\bp}$). Then we have that $[\pi_{1}^{\rm adm}(q)]=[\pi_{1}^{\rm adm}(q')]$. On the other hand, we introduces an equivalence relation $\sim_{fe}$ on $|\overline M_{g,n}|$ which we call {\it Frobenius equivalence} (cf. \cite[Definition 3.4]{Y7} or Definition \ref{fe} of the present paper). Roughly speaking, $q_{1} \sim_{fe} q_{2}$ for any points $q_{1}, q_{2}\in \overline M_{g, n}$ if there exists an isomorphism $\rho: \Gamma_{q_{1}} \isom \Gamma_{q_{2}}$ of dual semi-graphs of $X_{q_{1}}^{\bp}$ of $X_{q_{2}}^{\bp}$ such that the pointed stable curves $\widetilde X^{\bp}_{q_{1}, v_{1}}$ and $\widetilde X^{\bp}_{q_{2}, v_{2}}$ associated to $v_{1}$ and $v_{2} \defeq \rho(v_{1})$ (cf. Section \ref{sec-1}), respectively, are isomorphic as schemes for every $v_{1} \in v(\Gamma_{q_{1}})$.  In particular, when $q_{1}\in M_{g, n}$ (i.e., $X_{q_{1}}^{\bp}$ is a non-singular curve), then $q_{1} \sim_{fe} q_{2}$ if and only if $X_{q_{1}}\setminus D_{X_{q_{1}}}$ is isomorphic to $X_{q_{2}}\setminus D_{X_{q_{2}}}$ as schemes. Moreover, \cite[Proposition 3.7]{Y7} shows that $[\pi_{1}^{\rm adm}]$ factors through the following quotient set $$\overline \mfM_{g, n}\defeq |\overline M_{g, n}|/\sim_{fe}.$$ Then we obtain a natural surjective map $$\pi_{g, n}^{\rm adm}: \overline \mfM_{g, n} \migisurj \overline \Pi_{g, n}$$ induced by $[\pi_{1}^{\rm adm}]$.

One of the main conjectures in the theory of anabelian geometry of curves is the following weak Isom-version of the Grothendieck conjecture of curves over algebraically closed fields of characteristic $p$ (=the Weak Isom-version Conjecture):
\begin{weakisomGC}
We maintain the notation introduced above. Then the surjective map $$\pi_{g, n}^{\rm adm}: \overline \mfM_{g, n} \migisurj \overline \Pi_{g, n}, \ [q] \mapsto [\pi_{1}^{\rm adm}(q)],$$ is a bijection, where $[q]$ denotes the image of $q$ of the natural quotient map $|\overline M_{g, n}| \migi \overline \mfM_{g, n}$.
\end{weakisomGC}
\noindent
The Weak Isom-version Conjecture was essentially formulated by Tamagawa in the case of smooth pointed stable curves, and by the author in the case of arbitrary pointed stable curves (cf. \cite{T2}, \cite{Y7}), which means that the moduli spaces of curves in positive characteristic {\it can be reconstructed group-theoretically as sets} from admissible fundamental groups of pointed stable curves in positive characteristic. The Weak Isom-version Conjecture is very difficult, which was proved completely only in the case where $(g, n)=(0,4)$. More precisely, we have the following result obtained by Tamagawa and the author (cf. \cite[Theorem 0.2]{T2}, \cite[Theorem 3.8]{Y7}):
\begin{theorem}\label{them-0-1}
We maintain the notation introduced above. Write $\overline \mfM_{g, n}^{\rm cl}$ for the images of the set of closed points of $|\overline M_{g, n}|$. Then we have that $\pi_{0, n}^{\rm adm}(\overline \mfM_{0, n}^{\rm cl})\cap \pi_{0, n}^{\rm adm}(\overline \mfM_{0, n} \setminus\overline \mfM_{0, n}^{\rm cl})=\emptyset$, and that $$\pi_{0, n}^{\rm adm}|_{\overline \mfM^{\rm cl}_{0, n}}: \overline \mfM^{\rm cl}_{0, n} \migi \overline \Pi_{0, n}$$ is an injection. In particular, the Weak Isom-version Conjecture holds when $(g, n)=(0, 4)$. 
\end{theorem}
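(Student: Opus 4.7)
The plan is to combine Tamagawa's anabelian theorem for smooth pointed curves of type $(0, n)$ over $\overline{\mbF}_p$ (cf. \cite{T2}) with the group-theoretic reconstruction of the dual semi-graph developed in \cite{Y7}. The underlying principle is that for a closed $q \in \overline{M}_{0,n}$ the curve $X_q^{\bp}$ is defined over $\overline{\mbF}_p$, so its pointed stable isomorphism class modulo Frobenius should be recoverable from $\pi_{1}^{\rm adm}(q)$ via numerical invariants of finite covers, whereas a non-closed $q$ carries a transcendental modulus that must be detectable through an ``algebraicity'' criterion on the same invariants.

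For the injectivity of $\pi_{0,n}^{\rm adm}|_{\overline{\mfM}^{\rm cl}_{0,n}}$, the natural two-step strategy is as follows. First, recover $\Gamma_q$ group-theoretically together with, for each $v \in v(\Gamma_q)$, the admissible fundamental group $\pi_{1}^{\rm adm}(\widetilde{X}^{\bp}_{q,v})$ of the associated smooth pointed curve, by exploiting the distinguished conjugacy classes of inertia subgroups at nodes and cusps, together with the Bass--Serre-type decomposition of $\pi_{1}^{\rm adm}(q)$ along $\Gamma_q$. Second, on each vertex, invoke Tamagawa's reconstruction of $\widetilde{X}^{\bp}_{q,v}$ up to Frobenius from $\pi_{1}^{\rm t}$. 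The engine of the second step is the computation of generalized Hasse--Witt invariants, i.e., $p$-ranks of prime-to-$p$ cyclic tame covers ramified at prescribed marked points; these invariants depend explicitly on cross-ratios of the branch locus, and one shows that, collectively, they determine those cross-ratios in $\overline{\mbF}_p$ modulo $p$-power Frobenius.

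For the disjointness $\pi_{0,n}^{\rm adm}(\overline{\mfM}_{0,n}^{\rm cl}) \cap \pi_{0,n}^{\rm adm}(\overline{\mfM}_{0,n} \setminus \overline{\mfM}_{0,n}^{\rm cl}) = \emptyset$, the decisive point is that a closed $q$ produces cross-ratios in $\overline{\mbF}_p$, whereas a non-closed $q$ produces at least one cross-ratio transcendental over $\mbF_p$. Thus one needs a purely group-theoretic criterion distinguishing these regimes at the level of $\pi_{1}^{\rm adm}$. Such a criterion can be formulated by studying the asymptotic behavior of the Hasse--Witt data along a cofinal system of covers: in the closed case the resulting arrays of $p$-ranks have strong finiteness and Frobenius-periodicity properties available only for curves defined over $\overline{\mbF}_p$, and this rigidity fails for non-closed $q$. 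This is the main obstacle, and is exactly where the deep analytic work of Tamagawa is required; the combinatorial reductions (stage one of the injectivity, and the reduction to four-pointed smooth components) are comparatively routine by comparison.

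Finally, the $(g,n) = (0,4)$ case of the Weak Isom-version Conjecture follows immediately: since $\dim \overline{M}_{0,4} = 1$, the complement $\overline{\mfM}_{0,4} \setminus \overline{\mfM}_{0,4}^{\rm cl}$ consists of the image of the single generic point of $\overline{M}_{0,4}$; disjointness together with injectivity on closed points then upgrades to a bijection $\overline{\mfM}_{0,4} \isom \overline{\Pi}_{0,4}$, which is precisely the Weak Isom-version Conjecture in this case.
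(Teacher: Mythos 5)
Note first that the paper itself does not reprove this statement: Theorem \ref{them-0-1} is quoted from \cite[Theorem 0.2]{T2} (the smooth genus-$0$ case, stated with only one of the two curves assumed to lie over $\overline \mbF_{p}$) and \cite[Theorem 3.8]{Y7} (the extension to arbitrary pointed stable curves); the solvable variant, Theorem \ref{them-5-4}, is likewise proved by citation. Your outline of the injectivity on closed points follows essentially that route --- reconstruct $\Gamma_{q}$ together with the vertex subgroups and the inertia classes of nodes and marked points, then run Tamagawa's cross-ratio analysis via generalized Hasse--Witt invariants of prime-to-$p$ cyclic covers on each vertex --- and is acceptable as a plan, provided you keep track of the marking data: Frobenius equivalence is not just ``same graph and vertex curves isomorphic up to Frobenius''; Definition \ref{fe} (iii) requires the vertex isomorphisms to be compatible with the graph isomorphism $\rho$ on the edges, which is exactly what the inertia bookkeeping in \cite{Y7} supplies.

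The genuine gap is in your disjointness argument. You make it rest on a standalone group-theoretic ``algebraicity criterion'' (finiteness and Frobenius-periodicity of the arrays of $p$-ranks along a cofinal system of covers, detecting whether the curve can be defined over $\overline \mbF_{p}$), which you neither prove nor can cite, and you defer it to ``the deep analytic work of Tamagawa''. No such criterion appears in the sources, and none is needed, because the input theorem is \emph{asymmetric} in its hypotheses: in the form recalled in the remark following Theorem \ref{them-0-1}, it says that if $q_{1}$ is closed and $\pi_{1}^{\rm adm}(q_{1})\cong\pi_{1}^{\rm adm}(q_{2})$ for an \emph{arbitrary} point $q_{2}$, then $q_{1}\sim_{fe}q_{2}$; the Hasse--Witt/cross-ratio analysis is carried out on the $\overline \mbF_{p}$-side and transported through the group isomorphism. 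Disjointness is then immediate rather than requiring a separate detection principle: Frobenius equivalence with a closed $q_{1}$ forces every vertex curve of $q_{2}$ (after matching dual semi-graphs via Theorem \ref{types}, and smooth versus singular loci via Proposition \ref{prop-5-3}) to be isomorphic as a scheme to a curve over $\overline \mbF_{p}$, which is impossible if some vertex modulus of $q_{2}$ is transcendental (compare transcendence degrees of the function fields over $\mbF_{p}$); hence all vertex moduli of $q_{2}$ are closed points and $q_{2}$ itself is closed. As written, your proposal replaces this step by an unproven criterion, so it does not yet constitute a proof; the concluding $(0,4)$ deduction is fine once injectivity and disjointness are in place.
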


\begin{remarkA}
In other words, Theorem \ref{them-0-1} is equivalent to the following anabelian result:
\begin{quote}
 {\it Let $q_{1}$ $q_{2} \in \overline M_{0, n}$ be arbitrary points. Suppose that $q_{1}$ is {\it closed}, and that $\pi_{1}^{\rm adm}(q_{1})$ is isomorphic to $\pi_{1}^{\rm adm}(q_{2})$ as profinite groups. Then we have $q_{1} \sim_{fe} q_{2}$.}
\end{quote}
Suppose that $g$ is an arbitrary non-negative integer number. We also want to mention the following {\it finiteness theorem} (cf.  \cite{PS}, \cite{R2}, \cite{T4}, \cite{To}, \cite{Y1}): 
\begin{quote}
{\it Let $[q] \in \overline \mfM_{g, n}^{\rm cl}$. Then we have $\#((\pi_{g, n}^{\rm adm})^{-1}([\pi_{1}^{\rm adm}(q)]) \cap \overline \mfM_{g, n}^{\rm cl})<\muge.$}
\end{quote}
\end{remarkA}
\noindent
At the time of writing, almost all of the researches concerning the anabelian geometry of curves over algebraically closed fields of characteristic $p$ focus on the Weak Isom-version Conjecture, and the conjecture cannot give us any new insight into the anabelian phenomena of curves over algebraically closed fields of characteristic $p$. On the other hand, the results proved by the author in \cite{Y2} show that 
\begin{quote}
it is possible that the {\it topological structures} of moduli spaces of curves in positive characteristic can be reconstructed group-theoretically from the geometric fundamental groups of curves in positive characteristic.
\end{quote}
This is the main observation that motivated the theory developed in the present series of papers. 

From now on, we shall regard $\overline \mfM_{g, n}$ as a topological space whose topology is induced naturally by the Zariski topology of $|\overline M_{g, n}|$. Let $\msG$ be the category of finite groups and $G \in \msG$ a finite group. We put $$U_{\overline \Pi_{g, n}, G} \defeq \{ [\pi_{1}^{\rm adm}(q)] \in \overline \Pi_{g, n}\ | \  \text{Hom}_{\rm surj}(\pi_{1}^{\rm adm}(q), G) \neq \emptyset\},$$ where $\text{Hom}_{\rm surj}(-, -)$ denotes the set of surjective homomorphisms of profinite groups. We define a topological space  $$(\overline \Pi_{g, n}, O_{\overline \Pi_{g,n}})$$ group-theoretically from the set of isomorphism classes of admissible fundamental groups of pointed stable curves $\overline \Pi_{g, n}$, whose underlying set is $\overline \Pi_{g, n}$, and whose topology $O_{\overline \Pi_{g,n}}$ is generated by $\{U_{\overline \Pi_{g, n}, G}\}_{G \in \msG}$ as open subsets. For simplicity, we still use the notation $\overline \Pi_{g, n}$ to denote the topological space $(\overline \Pi_{g, n}, O_{\overline \Pi_{g, n}})$, and shall say $$\overline \Pi_{g, n}$$ {\it the moduli space of admissible fundamental groups of pointed stable curves of type $(g, n)$ over algebraically closed fields of characteristic $p$} or {\it the moduli space of admissible fundamental groups of type $(g, n)$ in  characteristic $p$}, for short.  Theorem \ref{continuous} (or Theorem \ref{continuousmap}) of the present paper implies that the surjective map $$\pi_{g, n}^{\rm adm}: \overline \mfM_{g, n} \migisurj \overline \Pi_{g, n}$$ is also a continuous map. Moreover, we pose the following conjecture, which is the main conjecture of the theory developed in the present series of papers:

\begin{homeconj}
We maintain the notation introduced above. Then we have that $$\pi_{g, n}^{\rm adm}: \overline \mfM_{g, n} \migisurj \overline \Pi_{g, n}$$ is a homeomorphism.
\end{homeconj}
\noindent
The Homeomorphism Conjecture means that the moduli spaces of curves in positive characteristic {\it can be reconstructed group-theoretically as topological spaces} from admissible fundamental groups of pointed stable curves in positive characteristic. This conjecture gives us a new insight into the theory of the anabelian geometry of curves over algebraically closed fields of characteristic $p$ based on the following philosophy:
\begin{quote}
The anabelian properties of pointed stable curves over algebraically closed fields of characteristic $p$ are equivalent to the topological properties of the topological space $\overline \Pi_{g, n}$.
\end{quote}
This new anabelian philosophy has raised a host of questions which cannot be seen if we only consider
the Weak Isom-version Conjecture (e.g. Problem \ref{prob-5-7} of the present paper). 

Next, let us explain the difference between the Weak Isom-version Conjecture and the Homeomorphism Conjecture from the aspect of group theory. The mean of {\it anabelian geometry} around the Weak Isom-version Conjecture (i.e., the theory developed in \cite{PS}, \cite{R2},  \cite{T1}, \cite{T2}, \cite{T3}, \cite{T4}, \cite{To}, \cite{Y1}, \cite{Y2}, \cite{Y3}) is the following:
\begin{quote}
Let $\mcF_{i}$, $i\in \{1, 2\}$, be a geometric object in a certain category and $\Pi_{\mcF_{i}}$ the fundamental group associated to $\mcF_{i}$. Then the set of isomorphisms of geometric objects $\text{Isom}(\mcF_{1}, \mcF_{2})$ can be understood from the set of isomorphisms of group-theoretical objects $\text{Isom}(\Pi_{\mcF_{1}}, \Pi_{\mcF_{2}})$. The term ``{\it anabelian}" means that the geometric properties can be determined by the isomorphism classes of the fundamental groups. On the other hand, we do not know the relation of $\mcF_{1}$ and $\mcF_{2}$ if $\Pi_{\mcF_{1}}$ is not isomorphic to $\Pi_{\mcF_{2}}$.
\end{quote}
In the theory developed in the present series of papers, we consider anabelian geometry in a completely different way. The mean of {\it anabelian geometry} around the Homeomorphism Conjecture is the following:
\begin{quote}
The relation of $\mcF_{1}$ and $\mcF_{2}$ in a certain moduli space can be understood from the set of open continuous homomorphisms $\text{Hom}(\Pi_{\mcF_{1}}, \Pi_{\mcF_{2}})$. Moreover, $\text{Hom}(\Pi_{\mcF_{1}}, \Pi_{\mcF_{2}})$ contains the geometric information of the moduli space. The term ``{\it anabelian}" means the geometric properties of a certain {\it moduli space} (i.e., not only a single geometric object but also the moduli space of geometric objects), where the moduli space can be reconstructed group-theoretically from fundamental groups.
\end{quote}
Note that we cannot consider the set of morphisms of geometric objects $\text{Hom}(\mcF_{1}, \mcF_{2})$, since $\text{Hom}(\mcF_{1}, \mcF_{2})=\emptyset$ and $\text{Hom}(\Pi_{\mcF_{1}}, \Pi_{\mcF_{2}})\neq \emptyset$ in general (e.g. the specialization homomorphisms). In fact, the existence of specialization homomorphisms is the reason that Tamagawa cannot formulate a ``Hom-type" conjecture for tame fundamental groups of smooth pointed stable curves in general (cf. \cite[Remark 1.34]{T2}). Thus, roughly speaking, the Weak Isom-version Conjecture is an ``{\it Isom-type}" problem, and the Homeomorphism Conjecture is a ``{\it Hom-type}" problem. In fact, in \cite{Y2}, the author formulated the so-called {\it Weak Hom-version Conjecture} for smooth pointed stable curves which is equivalent to Homeomorphism Conjecture when $q\in M_{g, n}$. Similar to other theory in anabelian geometry, Hom-type problems are so much harder than the Isom-type problems.

Now, our main  result of the present paper is as follows (see also Theorem \ref{mainthem-form-2}):
\begin{theorem}\label{them-0-2}
We maintain the notation introduced above. Let $[q] \in \overline \mfM^{\rm cl}_{0, n}$ be an arbitrary closed point. Then $\pi_{0, n}^{\rm adm}([q])$ is a closed point of $\overline \Pi_{0, n}$. In particular, the Homeomorphism Conjecture holds when $(g, n)=(0, 4)$.  
\end{theorem}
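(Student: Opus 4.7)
The plan is to first reformulate closedness group-theoretically. By the definition of the topology generated by the sets $U_{\overline \Pi_{0,n}, G}$, the singleton $\{\pi_{0,n}^{\rm adm}([q])\}$ is closed in $\overline \Pi_{0,n}$ if and only if, for every $[\pi'] \in \overline \Pi_{0,n}$ with $[\pi'] \neq [\pi_1^{\rm adm}(q)]$, there exists a finite group $G$ which is a quotient of $\pi'$ but not of $\pi_1^{\rm adm}(q)$. Writing $Q(\pi)$ for the set of isomorphism classes of finite quotients of $\pi$ and using the surjectivity of $\pi_{0,n}^{\rm adm}$, this reduces to the assertion: whenever $[q'] \in \overline \mfM_{0,n}$ satisfies $Q(\pi_1^{\rm adm}(q')) \subseteq Q(\pi_1^{\rm adm}(q))$, one has $\pi_1^{\rm adm}(q') \cong \pi_1^{\rm adm}(q)$ (equivalently, $[\pi_1^{\rm adm}(q')] = [\pi_1^{\rm adm}(q)]$ in $\overline \Pi_{0,n}$).

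I would then set $S \defeq \{[q''] \in \overline \mfM_{0,n} : Q(\pi_1^{\rm adm}(q'')) \subseteq Q(\pi_1^{\rm adm}(q))\}$, which is exactly the preimage under $\pi_{0,n}^{\rm adm}$ of the closure $\overline{\{\pi_{0,n}^{\rm adm}([q])\}}$ in $\overline \Pi_{0,n}$; by the continuity of $\pi_{0,n}^{\rm adm}$ established earlier in the paper, $S$ is Zariski-closed in $\overline \mfM_{0,n}$ and contains $[q]$. The goal becomes $S = \{[q]\}$. To eliminate non-closed members, if $[q''] \in S$ is non-closed I choose any closed specialization $[q_{\rm cl}] \in \overline{\{[q'']\}}$; the specialization surjection $\pi_1^{\rm adm}(q'') \migisurj \pi_1^{\rm adm}(q_{\rm cl})$ yields $Q(\pi_1^{\rm adm}(q_{\rm cl})) \subseteq Q(\pi_1^{\rm adm}(q'')) \subseteq Q(\pi_1^{\rm adm}(q))$, so $[q_{\rm cl}] \in S$. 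Thus any witness $[q''] \in S \setminus \{[q]\}$ produces, after specialization if necessary, a closed witness, and it suffices to prove $S \cap \overline \mfM_{0,n}^{\rm cl} = \{[q]\}$.

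The core step is the closed-versus-closed case: assume $[q_1], [q_2] \in \overline \mfM_{0,n}^{\rm cl}$ are distinct closed points with $Q(\pi_1^{\rm adm}(q_2)) \subseteq Q(\pi_1^{\rm adm}(q_1))$, and derive a contradiction. A K\"onig's-lemma-style compactness argument for finitely generated profinite groups upgrades this inclusion of finite-quotient sets into an open continuous surjection $\pi_1^{\rm adm}(q_1) \migisurj \pi_1^{\rm adm}(q_2)$ (obtained by choosing a compatible inverse system of surjections onto the finite quotients of $\pi_1^{\rm adm}(q_2)$). One then invokes the two fundamental tools established earlier in the paper, which are designed precisely to extract geometric information about a pointed stable curve (its dual semi-graph, its vertex components, and its inertia structure) from open continuous homomorphisms of admissible fundamental groups. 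Applied to the surjection above, they should produce a compatible identification of the dual semi-graphs of $X_{q_1}^{\bp}$ and $X_{q_2}^{\bp}$ and of their irreducible components, which by the Isom-version reconstruction of Theorem \ref{them-0-1} (applied componentwise to the rational vertex components of type $(0, -)$) forces $[q_1] = [q_2]$, contradicting distinctness and completing the proof that $S = \{[q]\}$.

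The principal obstacle is exactly this closed-versus-closed step: upgrading an abstract containment of finite-quotient sets to a concrete geometric identification of pointed stable curves is genuine Hom-type anabelian content, strictly harder than the Isom-type Weak Isom-version Conjecture, and this is precisely where the paper's two new fundamental tools do the essential work. Granted Theorem \ref{them-0-2}, the Homeomorphism Conjecture in the case $(g, n) = (0, 4)$ follows formally: the space $\overline \mfM_{0,4}$ is one-dimensional and inherits the cofinite Zariski topology of $\mbP^{1}$, so its closed subsets are precisely the empty set, the finite sets of closed points, and the whole space; the map $\pi_{0,4}^{\rm adm}$ is a continuous bijection by Theorem \ref{them-0-1} together with the fact that the unique non-closed point is mapped distinctly; and the closedness of images of closed points established in Theorem \ref{them-0-2} promotes this continuous bijection to a closed map, hence to a homeomorphism.
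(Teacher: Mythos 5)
Your opening reduction is sound and matches the paper: the closure of $\pi_{0, n}^{\rm adm}([q])$ consists exactly of those $[\pi_{1}^{\rm adm}(q')]$ admitting a surjective open continuous homomorphism from $\pi_{1}^{\rm adm}(q)$ (Proposition \ref{prop-5-5} (a), using topological finite generation to upgrade an inclusion of finite-quotient sets to a surjection), and your closing deduction of the $(0,4)$ case from closedness of images of closed points is also fine (the paper obtains it from Theorem \ref{them-5-4} (b)). The problems are in the middle, which is where the actual content of the theorem sits. First, your reduction to the closed-versus-closed case fails as stated: a non-closed $[q'']\in S\setminus\{[q]\}$ only yields \emph{some} closed point of $S$, and that closed point may be $[q]$ itself, so knowing $S\cap\overline \mfM_{0, n}^{\rm cl}=\{[q]\}$ does not exclude non-closed members of $S$. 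In the paper this is not reduced away: the assertion that $q'$ must be closed is part of the conclusion of the Hom-type Theorem \ref{them-0-3} (equivalently Theorem \ref{mainthem-form-1}), whose proof treats arbitrary $q'$.

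Second, the core closed-versus-closed step is asserted rather than proved, and it does not follow formally from the two fundamental tools together with Theorem \ref{them-0-1}. The combinatorial tool (Theorem \ref{them-0-5}, resp.\ Theorem \ref{mainstep-2}) can only be applied once one knows $\#v(\Gamma_{q_{1}})=\#v(\Gamma_{q_{2}})$ and $\#e^{\rm cl}(\Gamma_{q_{1}})=\#e^{\rm cl}(\Gamma_{q_{2}})$ (Condition C); for two curves related only by a surjection of fundamental groups this is not known a priori, and establishing it is precisely the bulk of Section \ref{sec-6}: the induction on $n$, Lemma \ref{lem-6-2}, Proposition \ref{prop-6-5} (which rules out the smooth-versus-singular case via the field structures of Theorem \ref{prop-3-12} and linear conditions), and the elimination of cases (iii) and (iv) in the proof of Theorem \ref{mainthem-form-1}. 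Moreover, even after the dual semi-graphs are matched, Theorems \ref{them-0-4} and \ref{them-0-5} only give surjections between vertex subgroups, so the componentwise identification cannot be obtained from the Isom-type Theorem \ref{them-0-1}; what is needed, and what the paper actually invokes, is the Hom-type theorem for smooth curves of genus $0$ over $\overline \mbF_{p}$ (Theorem \ref{mainthemsm}, i.e.\ \cite[Theorem 1.2]{Y2}). So your skeleton is the right one, but the decisive anabelian step --- exactly the ``principal obstacle'' you name --- is left as a black box, and the auxiliary reduction you propose in its place is itself invalid.
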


\begin{remarkA}
In \cite{Y8}, we will prove that the Homeomorphism Conjecture also holds when $(g, n)=(1,1)$. Then the Homeomorphism Conjecture holds when the dimension of $\overline M_{g, n}$ is $1$. In \cite{Y8-2}, we will prove a generalized version of Tamagawa's essential dimension conjecture for closed points of $\overline M_{1, n}$.  In \cite{Y9}, by equipping the sets of inertia subgroups with certain orders, we  define clutching morphisms and forgetting morphisms for moduli spaces of admissible fundamental groups, and prove the clutching morphisms and the forgetting morphisms are continuous maps.
\end{remarkA}
\noindent
Denote by ${\rm Hom}^{\rm open}_{\rm pro\text{-}gps}(-, -)$ and ${\rm Isom}_{\rm pro\text{-}gps}(-, -)$ the set of open continuous homomorphisms of profinit groups and the set of isomorphisms of profinite groups, respectively. Then Theorem \ref{them-0-2} follows immediately from the following strong (Hom-type) anabelian result, which is a ultimate generalization of \cite[Theorem 0.2]{T2} when $g=0$ and $q_{1}$ is closed (see also Theorem \ref{mainthem-form-1}).
\begin{theorem}\label{them-0-3}
Let $q_{1}$, $q_{2} \in \overline M_{0, n}$ be arbitrary points. Suppose that $q_{1}$ is closed. Then we have that $${\rm Hom}^{\rm open}_{\rm pro\text{-}gps}(\pi_{1}^{\rm adm}(q_{1}), \pi_{1}^{\rm adm}(q_{2})) \neq \emptyset$$ if and only if $q_{1} \sim_{fe} q_{2}$. In particular, if this is the case, we have that $q_{2}$ is a closed point,  and that $${\rm Hom}^{\rm open}_{\rm pro\text{-}gps}(\pi_{1}^{\rm adm}(q_{1}), \pi_{1}^{\rm adm}(q_{2})) = {\rm Isom}_{\rm pro\text{-}gps}(\pi_{1}^{\rm adm}(q_{1}), \pi_{1}^{\rm adm}(q_{2})).$$ 
\end{theorem}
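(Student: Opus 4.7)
My plan is to separate the ``if'' direction, which follows from known Frobenius-equivalence results, from the substantive ``only if'' direction. For ``if'', I appeal to \cite[Proposition 3.7]{Y7}: $q_1 \sim_{fe} q_2$ produces an isomorphism $\pi_1^{\rm adm}(q_1) \isom \pi_1^{\rm adm}(q_2)$, so both ${\rm Isom}_{\rm pro\text{-}gps}$ and ${\rm Hom}^{\rm open}_{\rm pro\text{-}gps}$ are nonempty. The genuine content is the ``only if'' direction together with the equality ${\rm Hom}^{\rm open}_{\rm pro\text{-}gps} = {\rm Isom}_{\rm pro\text{-}gps}$.

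For the ``only if'' direction, I fix an open continuous homomorphism $\phi : \pi_1^{\rm adm}(q_1) \to \pi_1^{\rm adm}(q_2)$ with $q_1 \in \overline M_{0,n}$ closed. First I apply the two fundamental tools established earlier in the paper to extract, from the profinite groups $\pi_1^{\rm adm}(q_i)$ and their finite quotients, the type $(g_i, n_i)$ and the dual semi-graph $\Gamma_{q_i}$, and to check that $\phi$ induces a compatible combinatorial identification of these data; in particular $(g_2, n_2) = (0, n)$. Since $\phi$ is open, its image corresponds to a connected admissible covering of $X_{q_2}^{\bp}$, and a Riemann--Hurwitz style count using the reconstructed type forces this covering to have degree $1$, so $\phi$ is surjective and the dual semi-graphs $\Gamma_{q_1}$ and $\Gamma_{q_2}$ are identified vertex- and edge-wise.

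The main obstacle is to show that $q_2$ is itself a closed point of $\overline M_{0,n}$. If $q_2$ were non-closed, then some irreducible component of the normalization of $X_{q_2}^{\bp}$ would carry nontrivial moduli and admit infinitely many pairwise non-Frobenius-equivalent specializations. The second fundamental tool of the paper detects, through finite quotients of $\pi_1^{\rm adm}(q_2)$, the geometric behavior of each vertex-component (via invariants such as $p$-rank and Hasse--Witt data of cyclic admissible coverings with prescribed ramification along the inertia subgroups of the marked points and nodes); transporting such invariants along $\phi$, they become pinned down by the rigid closed-point curve $X_{q_1}^{\bp}$. Combined with the finiteness theorem recalled in the remark following Theorem \ref{them-0-1}, applied vertex-by-vertex to the components of $X_{q_2}^{\bp}$, this rules out all moduli variation and forces $q_2$ to be closed.

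Once both $q_1$ and $q_2$ are closed points, Theorem \ref{them-0-1} yields $q_1 \sim_{fe} q_2$, which finishes the equivalence. The equality ${\rm Hom}^{\rm open}_{\rm pro\text{-}gps} = {\rm Isom}_{\rm pro\text{-}gps}$ then follows from the Hopfian property of topologically finitely generated profinite groups: the surjective continuous homomorphism $\phi$ between the (isomorphic) such groups $\pi_1^{\rm adm}(q_1)$ and $\pi_1^{\rm adm}(q_2)$ is automatically an isomorphism. The hardest step in this plan is, as indicated, ruling out non-closed $q_2$ when $q_1$ is closed: this is the genuinely Hom-type input that goes beyond Tamagawa's Isom-type argument underlying Theorem \ref{them-0-1}, and it is precisely where the two new fundamental tools of the paper are expected to play their decisive role.
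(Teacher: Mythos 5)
Your outline of the ``if'' direction, the surjectivity of $\phi$ via Lemma \ref{surj}, and the final Hopfian remark all match the paper, but the two central steps of your ``only if'' argument have genuine gaps. First, you assert that the reconstructed combinatorial data give an identification of $\Gamma_{q_1}$ with $\Gamma_{q_2}$ ``vertex- and edge-wise'' directly from openness of $\phi$. This is exactly what is \emph{not} automatic: Theorem \ref{them-0-5} (Theorem \ref{comgc}/\ref{mainstep-2}) requires Condition C, i.e.\ $\#v(\Gamma_{q_1})=\#v(\Gamma_{q_2})$ and $\#e^{\rm cl}(\Gamma_{q_1})=\#e^{\rm cl}(\Gamma_{q_2})$, and Remark \ref{rem-comgc} notes the statement fails without it (specialization-type surjections exist between groups of curves with different dual semi-graphs). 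Establishing Condition C is the bulk of the paper's proof of Theorem \ref{mainthem-form-1}: Lemma \ref{lem-6-2} (singular source forces singular target, via limits of $p$-averages), the much harder Proposition \ref{prop-6-5} (smooth source forces smooth target, via inertia-quotients down to type $(0,4)$/$(0,5)$ curves, the field structures of Theorem \ref{prop-3-12}, and the linear-condition argument of Lemma \ref{lem-6-3}), and an induction on $n$ with a four-case analysis ruling out vertex-count mismatches by applying these two results to auxiliary $(0,4)$-quotients. Your plan skips all of this.

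Second, your mechanism for forcing $q_2$ to be closed does not work as stated. The finiteness theorem quoted after Theorem \ref{them-0-1} concerns closed points with isomorphic fundamental groups and says nothing about a non-closed $q_2$ receiving an open surjection from a closed point's group; ``transporting Hasse--Witt invariants to pin down the moduli'' is not an argument, and this is precisely the Hom-type content that in the paper comes from Theorem \ref{mainthemsm} (\cite[Theorem 1.2]{Y2}), whose conclusion already includes that the target curve is defined over $\overline{\mbF}_p$, fed through the reduction to the smooth case. Relatedly, your closing step is circular: with only an open surjection in hand you invoke Theorem \ref{them-0-1} to get $q_1\sim_{fe}q_2$, but that theorem is Isom-type and needs $\pi_1^{\rm adm}(q_1)\cong\pi_1^{\rm adm}(q_2)$, which you only obtain afterwards from $\sim_{fe}$; and the Hopfian property cannot upgrade a surjection to an isomorphism unless the two groups are already known to be isomorphic. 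The paper never uses Theorem \ref{them-0-1} here; it deduces $\sim_{fe}$ directly from the Hom-type smooth result after the combinatorial reduction.
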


\begin{remarkA}
In fact, in the present paper, we will prove a slightly stronger version of Theorem \ref{them-0-3} by replacing $\pi_{1}^{\rm adm}(q_{1})$ and $\pi_{1}^{\rm adm}(q_{2})$ by the maximal pro-solvable quotients $\pi_{1}^{\rm adm}(q_{1})^{\rm sol}$ and $\pi_{1}^{\rm adm}(q_{2})^{\rm sol}$ of $\pi_{1}^{\rm adm}(q_{1})$ and $\pi_{1}^{\rm adm}(q_{2})$, respectively. Then we obtain a solvable version of Theorem \ref{them-0-2} which is slightly stronger than Theorem \ref{them-0-2}. In particular, we obtain that {\it the Solvable Homeomorphism Conjecture} (cf. Section \ref{sec-5-2}) holds when $(g, n)=(0, 4)$. 
\end{remarkA}

\begin{remarkA}
Note that Theorem \ref{them-0-3} is essentially different from Theorem \ref{them-0-1}. The reason is as follows: Let $q_{1}$, $q_{2}\in | \overline M_{g, n}|$ be arbitrary points such that $q_{1}$ is not closed, and that $q_{2}$ is a closed point contained in the topological closure of $q_{1}$ in $|\overline M_{g, n}|$. Then every open continuous homomorphism $\pi_{1}^{\rm adm}(q_{1}) \migi \pi_{1}^{\rm adm}(q_{2})$ is not an isomorphism (cf. \cite[Theorem 0.3]{T4}).
\end{remarkA}

Next, we explain the method of proving Theorem \ref{them-0-3} (or Theorem \ref{them-0-2}). We establish two fundamental tools to analyze the geometric behavior of curves from open continuous homomorphisms of admissible fundamental groups, which play central roles in the theory of moduli spaces of admissible fundamental groups in positive characteristic. The first tool is the following result, which says that the inertia subgroups and field structures associated to inertia subgroups of marked points can be reconstructed group-theoretically  from arbitrary surjective open continuous homomorphisms of admissible fundamental groups (cf. Theorem \ref{mainstep-1} and Theorem \ref{prop-3-12} for more precise statements):
\begin{theorem}\label{them-0-4}
Let $X_{i}^{\bp}$, $i \in \{1, 2\}$, be a pointed stable curve of type $(g_{X_{i}}, n_{X_{i}})$ over an algebraically closed field $k_{i}$ of characteristic $p>0$, and $\Gamma_{X_{i}^{\bp}}$ the dual semi-graph of $X_{i}^{\bp}$. Let $\Pi_{X_{i}^{\bp}}$ be either the admissible fundamental group $\pi_{1}^{\rm adm}(X^{\bp}_{i})$ of $X_{i}^{\bp}$ or the maximal pro-solvable quotient $\pi_{1}^{\rm adm}(X^{\bp}_{i})^{\rm sol}$ of $\pi_{1}^{\rm adm}(X^{\bp}_{i})$, and $I_{i}\subseteq \Pi_{X_{i}^{\bp}}$ an closed subgroup associated to an open edge of $\Gamma_{X^{\bp}_{i}}$ (i.e., a closed subgroup which is (outer) isomorphic to the inertia subgroup of the marked point corresponding to an open edge of $\Gamma_{X^{\bp}_{i}}$). Suppose that $(g_{X_{1}}, n_{X_{1}})=(g_{X_{2}}, n_{X_{2}})$. Let $$\phi: \Pi_{X_{1}^{\bp}} \migisurj \Pi_{X_{2}^{\bp}}$$ be an arbitrary surjective open continuous homomorphism of profinite groups. Then the following statements hold:

(i)  $\phi(I_{1}) \subseteq \Pi_{X_{2}^{\bp}}$ is a closed subgroup associated to an open edge of $\Gamma_{X^{\bp}_{2}}$, and that there exists a closed subgroup $I' \subseteq \Pi_{X_{1}^{\bp}}$ associated to an open edge of $\Gamma_{X^{\bp}_{1}}$ such that $\phi(I')=I_{2}$. 

(ii) The field structures associated to inertia subgroups of marked points can be reconstructed group-theoretically from $\Pi_{X_{i}^{\bp}}$, and that $\phi$ induces a field isomorphism between the fields associated to $I_{1}$ and $\phi(I_{1})$ group-theoretically.
\end{theorem}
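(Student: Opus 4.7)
The plan is to argue both (i) and (ii) by producing a group-theoretic characterization of inertia subgroups of open edges that is manifestly preserved by surjective open continuous homomorphisms. I first reduce to a pro-prime-to-$p$ setting: since every inertia subgroup in $\Pi_{X_i^{\bp}}$ (whether on an open or closed edge) is pro-cyclic and pro-prime-to-$p$, all of my invariants will be read off from finite prime-to-$p$ quotients classifying admissible coverings. Because $\phi$ is surjective, any such quotient of $\Pi_{X_2^{\bp}}$ pulls back to a quotient of $\Pi_{X_1^{\bp}}$, and this is the mechanism that will let me transport combinatorial data across $\phi$.

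For the first assertion of (i), the task is to distinguish inertia of open edges (cusps) from inertia of closed edges (nodes) among all ``inertia-like'' pro-cyclic subgroups. The key combinatorial difference is that the inertia of a node pairs two distinct vertex-components of $\Gamma_{X_i^{\bp}}$ (its two branches), whereas the inertia of a cusp touches only one. I would translate this into group theory by examining, for each prime $\ell \neq p$ and each $\ell$-admissible cover $Y_i^{\bp} \migi X_i^{\bp}$, the local contribution of $I \subseteq \Pi_{X_i^{\bp}}$ in the abelianization of the vertex-wise stabilizers; inertia at nodes produce an identifiable ``gluing'' relation (two branches, one inertia), which can be encoded as a cohomological criterion (for example, in terms of the image of $I$ in a suitable $\mathbb{F}_\ell$-cohomology group equipped with its weight filtration). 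Since any such finite quotient lifts through $\phi$, the criterion is preserved, and $\phi(I_1)$ lands in the set of inertia of open edges.

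For the converse direction of (i), I would exploit the hypothesis $(g_{X_1}, n_{X_1}) = (g_{X_2}, n_{X_2})$. The number $n_{X_i}$ equals the number of $\Pi_{X_i^{\bp}}$-conjugacy classes of cuspidal inertia. Combining this equality with the fact that $\phi$ sends cuspidal inertia to cuspidal inertia, it suffices to show that distinct conjugacy classes on the source project to distinct conjugacy classes on the target. This separation can be extracted from a cofinal system of prime-to-$p$ admissible covers: one uses Raynaud/Tamagawa-style $p$-rank computations on these covers to ``individuate'' each cusp by its effect on the Deuring--Shafarevich $p$-rank of the cover. A counting argument, together with surjectivity of $\phi$ on each chosen finite quotient, then forces a bijection of conjugacy classes of cuspidal inertia, which is exactly the lifting statement.

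For (ii), the aim is to equip each cuspidal inertia with an $\mathbb{F}_p$-module structure (hence field structure upon assembling multiplicative data from cyclotomic twists) in a purely group-theoretic way, so that compatibility under $\phi$ becomes automatic. The plan is to use $p$-rank computations along prime-to-$p$ admissible covers to detect additive relations among cusps, in the spirit of Tamagawa's reconstruction of the additive structure on $\mathbb{F}_p$: concretely, the Deuring--Shafarevich formula expresses the $p$-rank of the abelianized Galois group of any prime-to-$p$ admissible cover in terms of how inertia is distributed, and matching these formulas across covers isolates $\mathbb{F}_p$-linear dependencies. I expect this step to be the main obstacle, precisely because $\phi$ is only surjective rather than an isomorphism: a $p$-rank computation on a cover of $X_1^{\bp}$ only yields partial information on $X_2^{\bp}$. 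The plan is to control this by performing all $p$-rank computations on the $X_2^{\bp}$ side using covers pulled back along $\phi$, so that the target $p$-ranks dominate and the additive reconstruction is read from $\Pi_{X_2^{\bp}}$-data inherited from $\Pi_{X_1^{\bp}}$. Once the $\mathbb{F}_p$-structures are reconstructed intrinsically, the compatibility of $\phi$ with the field isomorphism between the fields associated to $I_1$ and $\phi(I_1)$ follows from the isomorphism-invariance of the construction.
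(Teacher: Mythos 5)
Your overall strategy (read everything off prime-to-$p$ finite quotients, pull them back along the surjection, and use Hasse--Witt/$p$-rank type invariants as a substitute for Galois comparability) is in the right spirit, but the sketch has genuine gaps exactly where the paper has to work hardest. First, your criterion for recognizing cuspidal inertia is phrased in terms of ``vertex-wise stabilizers'' and their abelianizations; but the compatibility of vertex subgroups with a mere surjection $\phi$ is precisely the content of the much harder combinatorial Theorem (Theorem \ref{them-0-5}/\ref{comgc}), which is proved only later and only under extra hypotheses (Conditions A, B, C or genus $0$) --- so as stated your transport step is circular or at least unjustified. The paper instead characterizes cuspidal inertia purely by the vanishing of its image in $H^{\rm cpt, ab}$ for a cofinal system of pulled-back open subgroups (Lemma \ref{lem-4}(a)), and the reason this transports along $\phi$ is Lemma \ref{lem-1}: a covering \'etale over the marked points pulls back along $\phi$ to a covering \'etale over the marked points. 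That lemma, and the companion statement that corresponding covers have equal type (Lemma \ref{lem-2}, Proposition \ref{prop-1}), rest on the quantitative input you never invoke: the formula $\gamma^{\rm max}(\Pi)=g+n-2$ (Theorem \ref{max and average}(a)) together with the inequality $\gamma^{\rm max}(H_1)\geq\gamma^{\rm max}(H_2)$ for $H_1=\phi^{-1}(H_2)$ (Lemma \ref{lem-0}(a)). Surjectivity alone lets you pull back quotients; it does not control how ramification behaves under that pullback, and that is the actual crux. You also never address why $\phi(I_1)$ is a \emph{full} inertia subgroup (the paper needs a separate contradiction argument with the subgroups $D_m(\cdot)$ and the prime-to-$p$ isomorphism to rule out $\phi(I_1)\subsetneq I_2$), nor how to separate distinct inertia subgroups with the same pro-$\ell$ image (Lemma \ref{lem-4}(b)), nor the inverse-limit argument over a cofinal system that actually produces the lift $I'$ with $\phi(I')=I_2$; a bare counting of conjugacy classes does not yield these.

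For (ii) your plan diverges from what can be made to work. Note first that your inequality goes the wrong way: since every finite quotient of $H_2$ lifts through $\phi|_{H_1}$, it is the \emph{source} $p$-ranks that dominate the target ones, so ``arranging that the target $p$-ranks dominate'' is not available. More substantively, one-sided $p$-rank (Deuring--Shafarevich) inequalities are not by themselves enough to pin down the additive structure; the paper's route is to first use part (i) to conclude that the relevant numerical invariants are in fact \emph{equal} for corresponding covers ($\gamma^{\rm max}(H_1)=\gamma^{\rm max}(H_2)$, cf.\ the corollary after Theorem \ref{mainstep-1}), then to cut down to a three-cusp configuration by passing to a suitable cover and quotienting by the other cuspidal inertia (legitimate thanks to (i)), and finally to invoke the isomorphism-free field reconstruction of \cite[Theorem 5.5]{Y6}, which is based on maximum generalized Hasse--Witt invariants rather than on $p$-rank relations among covers. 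Without the equality step and the reduction to the tripod case, your ``read the additive relations from pulled-back covers'' plan remains a hope rather than an argument.
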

\noindent
The main ingredient in the proof of Theorem \ref{them-0-4} is a formula for the maximum generalized Hasse-Witt invariant $\gamma^{\rm max}(\Pi_{X^{\bp}_{i}})$ of prime-to-$p$ cyclic admissible coverings of $X^{\bp}_{i}$, which was proved by the author in \cite{Y6}. 

The second tool is the following result, which we call combinatorial Grothendieck conjecture for surjections, and which says that the geometry (i.e., topological and combinatorial structures) of pointed stable curves can be completely reconstructed group-theoretically from open continuous homomorphisms of admissible fundamental groups (cf. Theorem \ref{mainstep-2} for a more precise statement, and Theorem \ref{comgc} for a more general form under certain assumptions):
\begin{theorem}\label{them-0-5}
Let $X_{i}^{\bp}$, $i \in \{1, 2\}$, be a pointed stable curve of type $(0,n)$ over an algebraically closed field $k_{i}$ of characteristic $p>0$, and $\Gamma_{X_{i}^{\bp}}$ the dual semi-graph of $X_{i}^{\bp}$. Let $\Pi_{X_{i}^{\bp}}$ be the maximal pro-solvable quotient of the admissible fundamental group of $X^{\bp}_{i}$ and $\Pi_{i} \subseteq \Pi_{X_{i}^{\bp}}$ a closed subgroup associated to a vertex (i.e., a closed subgroup which is (outer) isomorphic to the admissible fundamental group of the smooth pointed stable curve associated to a vertex of $\Gamma_{X^{\bp}_{i}}$), and $I_{i}\subseteq \Pi_{X_{i}^{\bp}}$ an closed subgroup associated to a closed edge (i.e., a closed subgroup which is (outer) isomorphic to the inertia subgroup of the node corresponding to a closed edge of   $\Gamma_{X^{\bp}_{i}}$). Suppose that $\#v(\Gamma_{X^{\bp}_{1}})=\#v(\Gamma_{X^{\bp}_{2}})$ and $\#e^{\rm cl}(\Gamma_{X^{\bp}_{1}})=\#e^{\rm cl}(\Gamma_{X^{\bp}_{2}})$, where $\#(-)$ denotes the cardinality of $(-)$. Let $$\phi: \Pi_{X_{1}^{\bp}} \migisurj \Pi_{X_{2}^{\bp}}$$ be an arbitrary surjective open continuous homomorphism of profinite groups. Then the following statements hold: 

(i)  $\phi(\Pi_{1}) \subseteq \Pi_{X_{2}^{\bp}}$ is a closed subgroup associated to a vertex of $\Gamma_{X_{2}^{\bp}}$, and that there exists a closed subgroup $\Pi' \subseteq \Pi_{X_{1}^{\bp}}$ associated to a vertex of $\Gamma_{X_{1}^{\bp}}$ such that $\phi(\Pi')=\Pi_{2}$.

(ii) $\phi(I_{1}) \subseteq \Pi_{X_{2}^{\bp}}$ is a closed subgroup associated to a closed edge of $\Gamma_{X^{\bp}_{2}}$, and that there exists a closed subgroup $I' \subseteq \Pi_{X_{1}^{\bp}}$ associated to a closed edge of $\Gamma_{X^{\bp}_{1}}$ such that $\phi(I')=I_{2}$. 

(iii)  $\phi$ induces an isomorphism $$\phi^{\rm sg}: \Gamma_{X_{1}^{\bp}} \isom \Gamma_{X_{2}^{\bp}}$$ of dual semi-graphs group-theoretically.
\end{theorem}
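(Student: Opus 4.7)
The plan is to derive (i) and (ii) using Theorem \ref{them-0-4} together with a numerical rigidity argument, and then to obtain (iii) as a combinatorial consequence. I would first apply Theorem \ref{them-0-4}(i) to identify open edge (marked point) inertia subgroups under $\phi$: since both curves are of type $(0,n)$, this yields a bijection of conjugacy classes of open edge subgroups, with a compatible identification of the associated field structures by Theorem \ref{them-0-4}(ii).

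The central step is a group-theoretic characterization of vertex subgroups. A vertex subgroup $\Pi_v \subseteq \Pi_{X_i^\bp}$ is the maximal pro-solvable quotient of the tame fundamental group of a smooth pointed stable curve of type $(0, n_v)$; it has structure determined by $(0, n_v)$ and contains exactly $n_v$ conjugacy classes of inertia subgroups --- the $n_v^{\rm op}$ marked points on the component and the $n_v^{\rm cl}$ branches of nodes incident to it. I would characterize $\Pi_v$ as a maximal closed subgroup containing a suitable collection of open edge inertia subgroups and attaining a prescribed value of the maximum generalized Hasse--Witt invariant $\gamma^{\rm max}$ of prime-to-$p$ cyclic admissible coverings. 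The $\gamma^{\rm max}$ formula from \cite{Y6}, already used in the proof of Theorem \ref{them-0-4}, is what distinguishes genuine vertex subgroups from more general solvable admissible subcovers. Because $\phi$ is open and continuous, it is compatible with prime-to-$p$ cyclic quotients, so the characterization propagates: $\phi(\Pi_1)$ lies in a vertex subgroup of $\Pi_{X_2^\bp}$, and conversely some vertex subgroup of $\Pi_{X_1^\bp}$ has image $\Pi_2$.

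The surjectivity asserted in (i) relies on the hypotheses $\#v(\Gamma_{X_1^\bp}) = \#v(\Gamma_{X_2^\bp})$ and $\#e^{\rm cl}(\Gamma_{X_1^\bp}) = \#e^{\rm cl}(\Gamma_{X_2^\bp})$. Combined with $n_{X_1}=n_{X_2}=n$, the tree property $\#v = \#e^{\rm cl}+1$ for genus-zero dual graphs, and the identity $\sum_v (n_v - 2) = n - 2$, these pin down the multisets of component types on the two sides, providing the numerical rigidity needed to prevent collapse of distinct vertices or edges. Statement (ii) then follows formally, since a closed edge inertia subgroup can be characterized (up to conjugation) as a non-trivial intersection of two distinct vertex subgroups associated to the two branches meeting at the node. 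Finally, (iii) is combinatorial: the vertex and closed edge subgroups, together with the incidence data encoded by the open edge subgroups already controlled by Theorem \ref{them-0-4}, recover the dual semi-graph, and the matching cardinalities upgrade the induced map of trees to an isomorphism.

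The principal obstacle lies in step (i): producing a characterization of vertex subgroups robust enough to survive an arbitrary surjective open continuous homomorphism, not merely an isomorphism. A priori, $\phi$ could send a vertex subgroup to a strictly larger subgroup, or to one combinatorially unlike any vertex subgroup of $\Pi_{X_2^\bp}$; ruling out such pathologies is precisely what makes Hom-type anabelian problems qualitatively harder than Isom-type ones. The interplay of the $\gamma^{\rm max}$ formula of \cite{Y6}, the field-theoretic rigidity from Theorem \ref{them-0-4}(ii), and the numerical constraints involving $\#v$, $\#e^{\rm cl}$, and $n$ is what I expect to do the decisive work.
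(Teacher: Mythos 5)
Your outline does not actually close the decisive step (i), and the mechanism you propose for it would not work as stated. Your characterization of a vertex subgroup as a maximal closed subgroup containing a suitable collection of open-edge inertia subgroups and attaining a prescribed $\gamma^{\rm max}$ is insufficient in the genus-zero setting: a vertex subgroup $\Pi_{\widehat v}$ of type $(0,n_v)$ contains, besides the open-edge inertia of the marked points lying on that component, the inertia subgroups $I_{\widehat e}$ of the branches of the nodes abutting it --- exactly the closed-edge data the theorem is trying to reconstruct --- and there exist vertices carrying \emph{no} marked points of $X^{\bp}$ at all (interior components of the tree), so open-edge inertia plus a value of $\gamma^{\rm max}$ cannot pin such subgroups down. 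More seriously, even granting a characterization, nothing in your argument transports it along a mere surjection: the comparison results available (Lemma \ref{lem-0}) give inequalities $\gamma^{\rm max}(H_1)\geq\gamma^{\rm max}(H_2)$ and ${\rm Avr}_p(H_1)\geq{\rm Avr}_p(H_2)$ only for \emph{preimages} $H_1=\phi^{-1}(H_2)$ of \emph{open} subgroups, and say nothing about the image $\phi(\Pi_1)$ of a closed, non-open subgroup, which is precisely what (i) concerns. The sentence ``because $\phi$ is open and continuous it is compatible with prime-to-$p$ cyclic quotients, so the characterization propagates'' restates the Hom-type difficulty rather than resolving it, as you yourself concede when you call this ``the principal obstacle.'' Since (ii) and (iii) in your plan are derived from (i), the whole argument is open at its core.

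For comparison, the paper's proof rests on two ideas absent from your proposal. The main ingredient is the limit-of-$p$-averages formula ${\rm Avr}_p$ (Theorem \ref{max and average} (b)) --- not $\gamma^{\rm max}$, which drives Theorem \ref{mainstep-1} --- applied to preimages of open normal subgroups: the resulting Betti-number inequalities control splitting and ramification of auxiliary cyclic coverings, allow reconstruction of the weight-monodromy filtration, and detect vertices and closed edges through equivalence classes of cohomology classes (Theorems \ref{them-4-13} and \ref{them-4-15}). Because this detection requires components of positive genus and $2$-connectedness (Conditions A and B), which fail for $(0,n)$-curves, the paper first passes to a degree-$\ell$ cyclic admissible covering totally ramified over every node and marked point (Lemma \ref{lem-4-26}), whose components have positive genus and whose dual graph maps bijectively to the original one, proves the statement there (Theorem \ref{comgc}), and descends by a normalizer argument (Theorem \ref{mainstep-2}). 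Finally, the subgroup-level assertions of (i)--(ii), including the existence of $\Pi'$ with $\phi(\Pi')=\Pi_2$, are obtained not by counting but by a limit argument over a cofinal system of open subgroups, using the compatibility of the vertex/edge correspondences with coverings (Proposition \ref{prop-4-19}) to realize $\Pi_{\widehat v}$ and $I_{\widehat e}$ as stabilizers of compatible systems, after which (iii) follows from the hypotheses on $\#v$ and $\#e^{\rm cl}$ together with the graph-theoretic lemmas of Hoshi--Mochizuki. Without the ${\rm Avr}_p$ input, the reduction to a positive-genus covering, and the limit argument, your approach does not reach the stated conclusions.
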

\noindent
The main ingredient in the proof of Theorem \ref{them-0-5} is a formula for the limit of $p$-averages $\text{Avr}_{p}(\Pi_{X_{i}^{\bp}})$ of the admissible fundamental group of $X_{i}^{\bp}$, which was proved by Tamagawa and the author  (cf. \cite{T2}, \cite{Y5}). The key observations of the proofs of Theorem \ref{them-0-4} and Theorem \ref{them-0-5} are the following:
\begin{quote}
The inequalities of $\gamma^{\rm max}(\Pi_{X_{i}^{\bp}})$ and $\text{Avr}_{p}(\Pi_{X^{\bp}_{i}})$ induced by $\phi$ play roles of the comparability of (outer) Galois representations in the theory of the anabelian geometry of curves over algebraically closed fields of characteristic $p>0$.
\end{quote}
In fact, under certain assumptions, Theorem \ref{them-0-5} also holds for arbitrary types (cf. Theorem \ref{comgc} and Remark \ref{rem-comgc}). Moreover, the author believes that Theorem \ref{them-0-4}, Theorem \ref{them-0-5}, and Theorem \ref{comgc} will play important roles in the proof of the Homeomorphism Conjecture for arbitrary types. 

We observe that the scheme structure of a smooth pointed stable curve of type $(0,n)$ over $\overline \mbF_{p}$ is determined (via generalized Hasse-Witt invariants of  admissible coverings) completely by its inertia subgroups of marked points and the field structures associated to the inertia subgroups. Then Theorem \ref{them-0-4} implies that, if $X^{\bp}_{q_{1}}$ is non-singular, the scheme structure of $X^{\bp}_{q_{2}}$ can be determined by the scheme structure of $X^{\bp}_{q_{1}}$ via an open continuous homomorphism between their admissible fundamental groups. Moreover, by applying Theorem \ref{them-0-4}, the geometric operation (=removing a subset of marked points of a pointed stable curve and contracting the ($-1$)-curves and the ($-2$)-curves of a pointed semi-stable curve) can be translated to the group-theoretical operation (=quotient of a closed subgroup of the admissible fundamental group of a pointed stable curve, where the closed subgroup is generated by the inertia subgroups corresponding to a subset of marked points of the pointed stable curve). Then we can reduce Theorem \ref{them-0-3} to the case where $\#v(\Gamma_{q_{1}})=\#v(\Gamma_{q_{2}})$ and $\#e^{\rm cl}(\Gamma_{q_{1}})=\#e^{\rm cl}(\Gamma_{q_{2}})$. Moreover, by applying Theorem \ref{them-0-5}, we can reduce Theorem \ref{them-0-3} further to the case where $q_{1}$ and $q_{2}$ are contained in $M_{0, n}$ (i.e., $X_{q_{1}}^{\bp}$ and $X_{q_{2}}^{\bp}$ are non-singular). Then Theorem \ref{them-0-3} follows from \cite[Theorem 1.2]{Y2} proved by the author. This completes the proof of our main theorem.

The present paper is organized as follows. In Section \ref{sec-1}, we fix some notation concerning admissible coverings and admissible fundamental groups. In Section \ref{sec-2}, we recall the definition of generalized Hasse-Witt invariants, a formula for maximum generalized Hasse-Witt invariants of prime-to-$p$ admissible coverings, and a formula for limits of $p$-averages of admissible fundamental groups. In Section \ref{sec-5}, we introduce the moduli spaces of admissible fundamental groups and formulate the Homeomorphism Conjecture. In Section \ref{sec-3}, we prove Theorem \ref{them-0-4}. In Section \ref{sec-4}, we prove Theorem \ref{them-0-5}. In Section \ref{sec-6}, we prove our main theorem. In Section \ref{sec-7}, we prove the continuity of $\pi_{g, n}^{\rm adm}$.

\vspace{5mm}
\begin{center}
\textsc{Acknowledgements}
\end{center}
\vspace{1mm}

The main result of the present paper was obtained in July 2019, the author would like to thank Zhi Hu, Yuji Odaka, Akio Tamagawa, and Kazuhiko Yamaki for comments. This work was supported by JSPS KAKENHI Grant Number 20K14283, and by the Research Institute for Mathematical Sciences (RIMS), an International Joint Usage/Research Center located in Kyoto University.

\section{Admissible coverings and admissible fundamental groups}\label{sec-1}

In this section, we recall some notation and definitions concerning admissible coverings and admissible fundamental groups. 

\begin{definition}\label{def-1}
Let $\mbG$ be a semi-graph (cf. \cite[Definition 2.1]{Y6}). 

(a) We shall denote by $v(\mbG)$, $e^{\rm op}(\mbG)$, and $e^{\rm cl}(\mbG)$ the set of vertices of $\mbG$, the set of open edges of $\mbG$, and the set of closed edges of $\mbG$, respectively. 

(b) The semi-graph $\mbG$ can be regarded as a topological space with natural topology induced by $\mbR^{2}$. We define an {\it one-point compactification} $\mbG^{\rm cpt}$ of $\mbG$ as follows: if $e^{\rm op}(\mbG) =\emptyset$, we put $\mbG^{\rm cpt}=\mbG$; otherwise, the set of vertices of $\mbG^{\rm cpt}$ is the disjoint union $v(\mbG^{\rm cpt})\defeq v(\mbG) \sqcup \{v_{\infty}\}$, the set of closed edges of $\mbG^{\rm cpt}$ is $e^{\rm cl}(\mbG^{\rm cpt})\defeq e^{\rm cl}(\mbG) \cup e^{\rm op}(\mbG)$, the set of open edges of $\mbG$ is empty, and every edge $e \in e^{\rm op}(\mbG) \subseteq e^{\rm cl}(\mbG^{\rm cpt})$ connects $v_{\infty}$ with the vertex that is abutted by $e$. 

(c) Let $v \in v(\mbG)$. We shall say that $\mbG$ is {\it $2$-connected} at $v$ if $\mbG \setminus \{v\}$ is either empty or connected. Moreover, we shall say that $\mbG$ is $2$-connected if $\mbG$ is $2$-connected at each $v\in v(\mbG)$. Note that, if $\mbG$ is connected, then $\mbG^{\rm cpt}$ is $2$-connected at each $v\in v(\mbG) \subseteq v(\mbG^{\rm cpt})$ if and only if $\mbG^{\rm cpt}$ is $2$-connected. We put $$b(v)\defeq \sum_{e\in e^{\rm op}(\mbG) \cup e^{\rm cl}(\mbG)} b_{e}(v),$$ where $b_{e}(v) \in \{0, 1, 2\}$ denotes the number of times that $e$ meets $v$. We put $$v(\mbG)^{b\leq 1}\defeq \{v\in  v(\mbG)\ | \ b(v)\leq 1 \},$$ and  denote by $e^{\rm cl}(\mbG)^{b\leq 1}$ the set of closed edges of $\mbG$ which meet a vertex of $v(\mbG)^{b\leq 1}$.


\end{definition}

Let $p$ be a prime number, and let $$X^{\bp}=(X, D_{X})$$ be a pointed {\it semi-stable} curve of  type $(g_{X}, n_{X})$ over an algebraically closed field $k$ of characteristic $p$, where $X$ denotes the underlying curve, $D_{X}$ denotes the set of marked points, $g_{X}$ denotes the genus of $X$, and $n_{X}$ denotes the cardinality $\#D_{X}$ of $D_{X}$.  Write $\Gamma_{X^{\bp}}$ for the dual semi-graph of $X^{\bp}$ (cf. \cite[Definition 3.1]{Y0} for the definition of the dual semi-graph of a pointed semi-stable curve) and $r_{X}\defeq\text{dim}_{\mbQ}(H^{1}(\Gamma_{X^{\bp}}, \mbQ))$ for the Betti number of the semi-graph $\Gamma_{X^{\bp}}$. 

Let $v \in v(\Gamma_{X^{\bp}})$ and $e \in e^{\rm op}(\Gamma_{X^{\bp}}) \cup e^{\rm cl}(\Gamma_{X^{\bp}})$. We write $X_{v}$ for the irreducible component of $X$ corresponding to $v$, write $x_{e}$ for the node of $X$ corresponding to $e$ if $e \in e^{\rm cl}(\Gamma_{X^{\bp}})$, and write $x_{e}$ for the marked point of $X$ corresponding to $e$ if $e \in e^{\rm op}(\Gamma_{X^{\bp}})$. Moreover, write $\widetilde X_{v}$ for the {\it smooth} compactification of $U_{X_{v}}\defeq X_{v} \setminus X_{v}^{\rm sing}$, where $(-)^{\rm sing}$ denotes the singular locus of $(-)$. We define a smooth pointed semi-stable curve of type $(g_{v}, n_{v})$ over $k$ to be $$\widetilde X_{v}^{\bp}=(\widetilde X_{v}, D_{\widetilde X_{v}}\defeq(\widetilde X_{v} \setminus U_{X_{v}})\cup (D_{X}\cap X_{v})).$$ We shall say that {\it $\widetilde X^{\bp}_{v}$ is the smooth pointed semi-stable curve of type $(g_{v}, n_{v})$ associated to $v$}, or {\it the smooth pointed semi-stable curve associated to $v$} for short. In particular, we shall say that $\widetilde X^{\bp}_{v}$ is the smooth pointed {\it stable} curve associated to $v$ if $\widetilde X^{\bp}_{v}$ is a pointed stable curve over $k$.

\begin{definition}\label{def-2}
Let $Y^{\bp}=(Y, D_{Y})$ be a pointed semi-stable curve over $k$, $f^{\bp}: Y^{\bp} \migi X^{\bp}$ a {\it finite} morphism of pointed semi-stable curves over $k$, and $f: Y \migi X$ the morphism of underlying curves induced by $f^{\bp}$. 

We shall say $f^{\bp}$ a {\it Galois admissible covering} over $k$ (or Galois admissible covering for short) if the following conditions are satisfied: 

(i) There exists a finite group $G\subseteq \text{Aut}_{k}(Y^{\bp})$ such that $Y^{\bp}/G=X^{\bp}$, and $f^{\bp}$ is equal to the quotient morphism $Y^{\bp} \migi Y^{\bp}/G$.

(ii) For each $y\in Y^{\rm sm}\setminus D_{Y}$, $f$ is \'etale at $y$, where $(-)^{\rm sm}$ denotes the smooth locus of $(-)$. 

(iii) For any $y \in Y^{\rm sing}$, the image $f(y)$ is contained in $X^{\rm sing}$.

(iv) For each $y\in Y^{\rm sing}$, we write $D_{y}\subseteq G$ for the decomposition group of $y$ and $\#D_{y}$ for the cardinality of $D_{y}$. Then we have that  $(\#D_{y}, p)=1$, and that the local morphism between two nodes induced by $f$ may be described as follows: 
\[
\begin{array}{ccc}
 \widehat \mcO_{X, f(y)} \cong k[[u,v]]/uv & \migi & \widehat \mcO_{Y,y}\cong k[[s,t]]/st
\\
u & \mapsto & s^{\#D_{y}}
\\
v & \mapsto & t^{\#D_{y}},
\end{array}
\]
where $\#(-)$ denotes the cardinality of $(-)$. 
Moreover, we have that $\tau(s)=\zeta_{\#D_{y}}s$ and $\tau(t)=\zeta^{-1}_{\#D_{y}}t$ for each $\tau\in D_{y}$, where $\zeta_{\#D_{y}}$ is a primitive $\#D_{y}$th root of unit.

(v) The local morphism between two marked points induced by $f$ may be described as follows:
\[
\begin{array}{ccc}
 \widehat \mcO_{X, f(y)} \cong k[[a]] & \migi & \widehat \mcO_{Y,y}\cong k[[b]]
\\
a & \mapsto & b^{m},
\end{array}
\]
where $(m, p)=1$ (i.e., a tamely ramified extension). 

Moreover, we shall say $f^{\bp}$ an {\it admissible covering} if there exists a morphism of pointed semi-stable curves $h^{\bp}: W^{\bp} \migi Y^{\bp}$ over $k$ such that the composite morphism $f^{\bp}\circ h^{\bp}: W^{\bp} \migi X^{\bp}$ is a Galois admissible covering over $k$. We shall say an admissible covering $f^{\bp}$ {\it \'etale} if $f$ is an \'etale morphism.

Let $Z^{\bp}$ be a disjoint union of finitely many pointed semi-stable curves over $k$. We shall say that a morphism $f_{Z}^{\bp}: Z^{\bp} \migi X^{\bp}$ over $k$ is a {\it multi-admissible covering} if the restriction of $f_{Z}^{\bp}$ to each connected component of $Z^{\bp}$ is admissible.
\end{definition}

\begin{definition}\label{def-1-3}
Let $f^{\bp}: Y^{\bp} \migi X^{\bp}$ be an admissible covering over $k$ of degree $m$. Let $e\in e^{\rm op}(\Gamma_{X^{\bp}})\cup e^{\rm cl}(\Gamma_{X^{\bp}})$ and $x_{e}$ the closed point of $X$ corresponding to $e$. We put
$$e_{f}^{\rm cl, ra}\defeq \{e\in e^{\rm cl}(\Gamma_{X^{\bp}}) \ | \ \#f^{-1}(x_{e})=1\},$$
$$e_{f}^{\rm cl, \text{\'et}}\defeq \{e\in e^{\rm cl}(\Gamma_{X^{\bp}}) \ | \ \#f^{-1}(x_{e})=m\},$$
$$e_{f}^{\rm op, ra}\defeq \{e\in e^{\rm op}(\Gamma_{X^{\bp}}) \ | \ \#f^{-1}(x_{e})=1\},$$
$$e_{f}^{\rm op, \et}\defeq \{e\in e^{\rm op}(\Gamma_{X^{\bp}}) \ | \ \#f^{-1}(x_{e})=m\},$$
$$v_{f}^{\rm ra}\defeq \{v\in v(\Gamma_{X^{\bp}}) \ | \ \#\text{Irr}(f^{-1}(X_{v}))=1 \},$$
$$ v_{f}^{\rm sp}\defeq \{v\in v(\Gamma_{X^{\bp}}) \ | \ \#\text{Irr}(f^{-1}(X_{v}))=m \},$$
where $\text{Irr}(-)$ denotes the set of irreducible components of $(-)$, ``ra" means ``ramification", and ``sp" means ``split". Note that if the Galois closure of $f^{\bp}$ is a Galois admissible covering whose Galois group is a $p$-group, then the definition of admissible coverings implies that $\#e_{f}^{\rm cl, ra}=\#e_{f}^{\rm op, ra}=0.$
\end{definition}

Let $\msC$ be a category. We shall write $\text{Ob}(\msC)$ for the class of objects of $\msC$, and write $\text{Hom}(\msC)$ for the class of morphisms of $\msC$. We  denote by $$\text{Cov}^{\text{adm}}(X^{\bp})\defeq(\text{Ob}(\text{Cov}^{\text{adm}}(X^{\bp})), \text{Hom}(\text{Cov}^{\text{adm}}(X^{\bp})))$$ the category which consists of the following data: (i) $\text{Ob}(\text{Cov}^{\text{adm}}(X^{\bp}))$ consists of an empty object and all the pairs $(Z^{\bp}, f^{\bp}_{Z}: Z^{\bp} \migi X^{\bp})$, where $Z^{\bp}$ is a disjoint union of finitely many pointed semi-stable curves over $k$, and $f_{Z}^{\bp}$ is a multi-admissible covering over $k$; (ii) for any $(Z^{\bp}, f^{\bp}_{Z})$, $(Y^{\bp}, f^{\bp}_{Y}) \in \text{Ob}(\text{Cov}^{\text{adm}}(X^{\bp}))$, we define $$\text{Hom}((Z^{\bp}, f^{\bp}_{Z}), (Y^{\bp}, f^{\bp}_{Y}))\defeq\{g^{\bp} \in \text{Hom}_{k}(Z^{\bp}, Y^{\bp}) \ | \ f^{\bp}_{Y}\circ g^{\bp}=f^{\bp}_{Z}\},$$ where $\text{Hom}_{k}(Z^{\bp}, Y^{\bp})$ denotes the set of $k$-morphisms of pointed semi-stable curves. It is well known that $\text{Cov}^{\text{adm}}(X^{\bp})$ is a Galois category. Thus, by choosing a base point $x \in X^{\rm sm}\setminus D_{X}$, we obtain a fundamental group $\pi_{1}^{\text{adm}}(X^{\bp}, x)$ which is called the {\it admissible fundamental group} of $X^{\bp}$. For simplicity of notation, we omit the base point and denote the admissible fundamental group by $$\pi_{1}^{\rm adm}(X^{\bp}).$$ Note that, by the definition of admissible coverings, the admissible fundamental group of $X^{\bp}$ is naturally isomorphic to the tame fundamental group of $X^{\bp}$ when $X^{\bp}$ is smooth over $k$. Let $v \in v(\Gamma_{X^{\bp}})$. Write $\pi_{1}^{\rm adm}(\widetilde X_{v}^{\bp})$ for the admissible fundamental group of the smooth pointed semi-stable curve $\widetilde X_{v}^{\bp}$ associated to $v$. Then we have a natural (outer) injection
$$\pi_{1}^{\rm adm}(\widetilde X_{v}^{\bp}) \migiinje \pi_{1}^{\rm adm}(X^{\bp}).$$ On the other hand, the structure of maximal pro-prime-to-$p$ quotient $\pi_{1}^{\rm adm}(X^{\bp})^{p'}$ of $\pi_{1}^{\rm adm}(X^{\bp})$ is well-known, which is isomorphic to the pro-prime-to-$p$ completion of the following group (cf. [V, Th\'eor\`eme 2.2 (c)])  $$\langle a_{1}, \dots, a_{g_{X}}, b_{1}, \dots, b_{g_{X}}, c_{1}, \dots, c_{n_{X}} \ | \ \prod_{i=1}^{g_{X}}[a_{i}, b_{i}]\prod_{j=1}^{n_{X}}c_{j}=1\rangle.$$  We shall denote by $\pi_{1}^{\rm adm}(X), \ \pi_{1}^{\et}(X), \ \pi_{1}^{\rm top}(\Gamma_{X^{\bp}})$ the admissible fundamental group of the pointed semi-stable curve $(X, \emptyset)$, the \'etale fundamental group of the underlying curve $X$ of $X^{\bp}$, and the profinite completion of the topological fundamental group of $\Gamma_{X^{\bp}}$, respectively. Then we have the following natural surjective open continuous homomorphisms (for suitable choices of base points): $$\pi_{1}^{\rm adm}(X^{\bp})\migisurj \pi_{1}^{\rm adm}(X)\migisurj \pi_{1}^{\et}(X) \migisurj \pi_{1}^{\rm top}(\Gamma_{X^{\bp}}).$$ Note that the isomorphism classes of $\pi_{1}^{\rm adm}(X^{\bp})$, $\pi_{1}^{\rm adm}(X)$,  $\pi_{1}^{\et}(X)$,  and $\pi_{1}^{\rm top}(\Gamma_{X^{\bp}})$ depend only on the pointed {\it stable} curve associated to $X^{\bp}$ (i.e., the pointed stable curve obtained by contracting the $(-1)$-curves and $(-2)$-curves of $X^{\bp}$). 

The admissible fundamental groups of pointed stable curves can be also described by using logarithmic geometry. Let $\overline \mcM_{g_{X}, n_{X}, \mbZ}$ be the moduli stack over $\spec \mbZ$  parameterizing pointed stable curves of type $(g_{X}, n_{X})$ and $\mcM_{g_{X}, n_{X}, \mbZ}$ the open substack of $\overline \mcM_{g_{X}, n_{X}, \mbZ}$ parameterizing smooth pointed stable curves. Write $\overline \mcM_{g_{X}, n_{X}, \mbZ} ^{\log}$ for the log stack obtained by equipping $\overline \mcM_{g_{X}, n_{X}, \mbZ}$ with the natural log structure associated to the divisor with normal crossings $\overline \mcM_{g_{X}, n_{X}, \mbZ}\setminus\mcM_{g_{X}, n_{X}, \mbZ} \subset \overline \mcM_{g_{X}, n_{X}, \mbZ}$ relative to $\spec \mbZ$. The pointed stable curve $X^{\bullet}$ over $k$ induces a morphism $\spec k \migi \overline \mcM_{g_{X}, n_{X}, \mbZ}$. Write $s_{X}^{\log}$ for the log scheme whose underlying scheme is $\spec k$, and whose log structure is the pulling-back log structure induced by the morphism $\spec k \migi \overline \mcM_{g_{X}, n_{X}, \mbZ}$. We obtain a natural morphism $s_{X}^{\log} \migi \overline \mcM_{g_{X}, n_{X}, \mbZ}^{\log}$ induced by the morphism $\spec k \migi \overline \mcM_{g_{X}, n_{X}, \mbZ}$ and a stable log curve $$X^{\log}\defeq s_{X}^{\log}\times_{\overline \mcM_{g_{X}, n_{X}, \mbZ}^{\log}}\overline \mcM_{g_{X}, n_{X}+1, \mbZ}^{\log}$$ over $s_{X}^{\log}$ whose underlying scheme is $X$. Let $Y^{\log} \migi X^{\log}$ be an arbitrary Kummer log \'etale covering. One can prove that there exists a Kummer log \'etale covering $t_{X}^{\log} \migi s_{X}^{\log}$ such that $Y^{\log} \times_{s_{X}^{\log}} t^{\log}_{X} \migi X^{\log} \times_{s_{X}^{\log}} t^{\log}_{X}$ is a log admissible covering (cf. \cite[\S 3.5 Definition]{M-1}) over $t^{\log}_{X}$. Then the admissible fundamental group of $X^{\bp}$ does not depend on the log structure of $X^{\log}$, and \cite[\S 3.11 Proposition]{M-1} implies that the admissible fundamental group $\pi_{1}^{\rm adm}(X^{\bp})$ of $X^{\bp}$ is naturally isomorphic to the geometric log \'etale fundamental group of $X^{\log}$ (i.e., $\text{ker}(\pi_{1}(X^{\log}) \migi \pi_{1}(s_{X}^{\log}))$).


Let $$\pi_{1}^{\rm adm}(X^{\bp})^{\rm sol},\ \pi_{1}^{\rm adm}(X)^{\rm sol}, \ \pi_{1}^{\et}(X)^{\rm sol}, \ \pi_{1}^{\rm top}(\Gamma_{X^{\bp}})^{\rm sol}$$ be the maximal pro-solvable quotients of $\pi_{1}^{\rm adm}(X^{\bp}), \ \pi_{1}^{\rm adm}(X), \ \pi_{1}^{\et}(X), \ \pi_{1}^{\rm top}(\Gamma_{X^{\bp}})$, respectively. Then we obtain the following natural surjective open continuous homomorphisms $$\pi_{1}^{\rm adm}(X^{\bp})^{\rm sol} \migisurj \pi_{1}^{\rm adm}(X)^{\rm sol} \migisurj \pi_{1}^{\et}(X)^{\rm sol} \migisurj \pi_{1}^{\rm top}(\Gamma_{X^{\bp}})^{\rm sol}.$$ We shall say $$\pi_{1}^{\rm adm}(X^{\bp})^{\rm sol}$$ the {\it solvable admissible fundamental group} of $X^{\bp}$. Let $v \in v(\Gamma_{X^{\bp}})$. Write $\pi_{1}^{\rm adm}(\widetilde X_{v}^{\bp})^{\rm sol}$ for the solvable admissible fundamental group of the smooth pointed semi-stable curve $\widetilde X_{v}^{\bp}$ associated to $v$. Then the natural (outer) injection $\pi_{1}^{\rm adm}(\widetilde X_{v}^{\bp}) \migiinje \pi_{1}^{\rm adm}(X^{\bp})$ induces a homomorphism $$\pi_{1}^{\rm adm}(\widetilde X_{v}^{\bp})^{\rm sol} \migi \pi_{1}^{\rm adm}( X^{\bp})^{\rm sol}.$$ We see that this homomorphism is an {\it injection}. Indeed, let $\widetilde f_{v}^{\bp}: \widetilde Y^{\bp}_{v} \migi \widetilde X_{v}^{\bp}$ be a Galois admissible covering over $k$ whose Galois group is an abelian group. Then we see immediately that there exists a Galois admissible covering $g^{\bp}: Z^{\bp} \migi X^{\bp}$ over $k$ whose Galois group is a solvable group such that the following is satisfied: let $Z_{v}$ be an irreducible component of $Z^{\bp}$ such that $g(Z_{v})=X_{v}$; then the Galois admissible covering $\widetilde Z^{\bp}_{v} \migi \widetilde X_{v}^{\bp}$ over $k$ induced by $g^{\bp}$ factors through $\widetilde f_{v}^{\bp}$. This means that the homomorphism $\pi_{1}^{\rm adm}(\widetilde X_{v}^{\bp})^{\rm sol} \migi \pi_{1}^{\rm adm}( X^{\bp})^{\rm sol}.$ mentioned above is an injection.

In the remainder of the present paper, {\it we shall denote by $$\Pi_{X^{\bp}}$$ either $\pi_{1}^{\rm adm}(X^{\bp})$ or $\pi_{1}^{\rm adm}(X^{\bp})^{\rm sol}$  unless indicated otherwise.}  If $\Pi_{X^{\bp}}=\pi_{1}^{\rm adm}(X^{\bp})$, we denote by $$\Pi_{X^{\bp}}^{\rm cpt}\defeq \pi_{1}^{\rm adm}(X),\ \Pi^{\text{\'et}}_{X^{\bp}} \defeq \pi_{1}^{\et}(X), \ \Pi^{\rm top}_{X^{\bp}}\defeq \pi_{1}^{\rm top}(\Gamma_{X^{\bp}}).$$ If $\Pi_{X^{\bp}}=\pi_{1}^{\rm adm}(X^{\bp})^{\rm sol}$, we denote by $$\Pi_{X^{\bp}}^{\rm cpt}\defeq \pi_{1}^{\rm adm}(X)^{\rm sol},\ \Pi^{\text{\'et}}_{X^{\bp}} \defeq \pi_{1}^{\et}(X)^{\rm sol}, \ \Pi^{\rm top}_{X^{\bp}}\defeq \pi_{1}^{\rm top}(\Gamma_{X^{\bp}})^{\rm sol}.$$

Let  $H \subseteq \Pi_{X^{\bp}}$ be an arbitrary open subgroup. We write $X_{H}^{\bp}$ for the pointed semi-stable curve of type $(g_{X_{H}}, n_{X_{H}})$ over $k$ corresponding to $H$, $\Gamma_{X_{H}^{\bp}}$ for the dual semi-graph of $X^{\bp}_{H}$, and $r_{X_{H}}$ for the Betti number of $\Gamma_{X_{H}^{\bp}}$. Then we obtain an admissible covering $$f_{H}^{\bp}: X^{\bp}_{H} \migi X^{\bp}$$ over $k$ induced by the natural injection $H \migiinje \Pi_{X^{\bp}}$, and obtain a natural map of dual semi-graphs $$f_{H}^{\rm sg}: \Gamma_{X^{\bp}_{H}} \migi \Gamma_{X^{\bp}}$$ induced by $f_{H}^{\bp}$, where ``sg" means ``semi-graph". Moreover, if $H$ is an open {\it normal} subgroup, then $\Gamma_{X^{\bp}_{H}}$ admits an action of $\Pi_{X^{\bp}}/H$ induced by the natural action of $\Pi_{X^{\bp}}/H$ on $X^{\bp}_{H}$. Note that the quotient of $\Gamma_{X^{\bp}_{H}}$ by $\Pi_{X^{\bp}}/H$ coincides with $\Gamma_{X^{\bp}}$, and that $H$ is isomorphic to the admissible fundamental group (resp. solvable admissible fundamental group) $\Pi_{X^{\bp}_{H}}$ of $X_{H}^{\bp}$ if $\Pi_{X^{\bp}}=\pi_{1}^{\rm adm}(X^{\bp})$ (resp. $\Pi_{X^{\bp}}=\pi_{1}^{\rm adm}(X^{\bp})^{\rm sol}$). We also use the notation $$H^{\rm cpt}, \ H^{\text{\rm \'et}}, \ H^{\rm top}$$ to denote $\Pi_{X^{\bp}_{H}}^{\rm cpt}$, $\Pi_{X^{\bp}_{H}}^{\text{\rm \'et}}$, and $\Pi_{X^{\bp}_{H}}^{\rm top}$, respectively.

We put $$\widehat X \defeq \invlim_{H \subseteq \Pi_{X^{\bp}}\ \text{open}} X_{H}, \ D_{\widehat X}\defeq \invlim_{H \subseteq \Pi_{X^{\bp}} \ \text{open}} D_{X_{H}}, \ \Gamma_{\widehat X^{\bp}} \defeq \invlim_{H \subseteq \Pi_{X^{\bp}}\ \text{open}} \Gamma_{X_{H}^{\bp}}.$$ We shall say that $$\widehat X^{\bp} =(\widehat X, D_{\widehat X})$$ is the universal admissible covering (resp. universal solvable admissible covering) of $X^{\bp}$ corresponding to $\Pi_{X^{\bp}}$ if $\Pi_{X^{\bp}}=\pi_{1}^{\rm adm}(X^{\bp})$ (resp.  $\Pi_{X^{\bp}}=\pi_{1}^{\rm adm}(X^{\bp})^{\rm sol}$), and that $\Gamma_{\widehat X^{\bp}}$ is the dual semi-graph of $\widehat X^{\bp}$. Note that we have that $\text{Aut}(\widehat X^{\bp} /X^{\bp})=\Pi_{X^{\bp}}$, and that $\Gamma_{\widehat X^{\bp}}$ admits a natural action of $\Pi_{X^{\bp}}$.

Let $v \in v(\Gamma_{X^{\bp}})$, $e \in e^{\rm op}(\Gamma_{X^{\bp}})\cup e^{\rm cl}(\Gamma_{X^{\bp}})$, $\widehat v \in v(\Gamma_{\widehat X^{\bp}})$ a vertex over $v$, and $\widehat e \in e^{\rm op}(\Gamma_{\widehat X^{\bp}}) \cup e^{\rm cl}(\Gamma_{\widehat X^{\bp}})$ an edge over $e$. We denote by $$\Pi_{\widehat v}\subseteq \Pi_{X^{\bp}}, \ I_{\widehat e} \subseteq \Pi_{X^{\bp}}$$ the stabilizer subgroups of $\widehat v$ and $\widehat e$, respectively. We see immediately that $\Pi_{\widehat v}$ is (outer) isomorphic to $\Pi_{\widetilde X_{v}^{\bp}}$ of $\widetilde X^{\bp}_{v}$, and that $I_{\widehat e}$ is (outer) isomorphic to an inertia subgroup associated to the closed point of $X$ corresponding to $e$. Then we have that $I_{\widehat e} \cong \widehat \mbZ(1)^{p'}$, where $(-)^{p'}$ denotes the maximal pro-prime-to-$p$ quotient of $(-)$. We put $$\text{Ver}(\Pi_{X^{\bp}}) \defeq \{\Pi_{\widehat v}\}_{\widehat v \in v(\Gamma_{\widehat X^{\bp}})},$$ $$\text{Edg}^{\rm op}(\Pi_{X^{\bp}}) \defeq \{I_{\widehat e}\}_{\widehat e \in e^{\rm op}(\Gamma_{\widehat X^{\bp}})},$$ $$\text{Edg}^{\rm cl}(\Pi_{X^{\bp}})\defeq \{I_{\widehat e}\}_{\widehat e \in e^{\rm cl}(\Gamma_{\widehat X^{\bp}})}.$$  Moreover, if $\widehat e$ abuts on $\widehat v$, then we have the following injections $$I_{\widehat e} \migiinje \Pi_{\widehat v} \migiinje \Pi_{X^{\bp}}.$$ Note that $\text{Ver}(\Pi_{X^{\bp}})$, $\text{Edg}^{\rm op}(\Pi_{X^{\bp}})$, and $\text{Edg}^{\rm cl}(\Pi_{X^{\bp}})$ admit natural actions of $\Pi_{X^{\bp}}$ (i.e., the conjugacy actions), and that we have the following natural bijections $$\text{Ver}(\Pi_{X^{\bp}})/\Pi_{X^{\bp}}\isom v(\Gamma_{X^{\bp}}),$$ $$\text{Edg}^{\rm op}(\Pi_{X^{\bp}})/\Pi_{X^{\bp}}\isom e^{\rm op}(\Gamma_{X^{\bp}}),$$ $$\text{Edg}^{\rm cl}(\Pi_{X^{\bp}})/\Pi_{X^{\bp}}\isom e^{\rm cl}(\Gamma_{X^{\bp}}).$$

\section{Maximum and averages of generalized Hasse-Witt invariants}\label{sec-2}

In this section, we recall some results concerning Hasse-Witt invariants (or $p$-rank) and generalized Hasse-Witt invariants. 

\begin{definition}\label{def-3}
Let $Z^{\bp}$ be a disjoint union of finitely many pointed semi-stable curves over $k$. We define the {\it $p$-rank} (or {\it Hasse-Witt invariant}) of $Z^{\bp}$ to be $$\sigma_{Z}\defeq\text{dim}_{\mbF_{p}}(H^{1}_{\text{\'et}}(Z, \mbF_{p})).$$ In particular, if $Z^{\bp}$ is a pointed semi-stable curve, then $$\sigma_{Z}=\text{dim}_{\mbF_{p}}(\Pi_{Z^{\bp}}^{\rm ab} \otimes \mbF_{p}),$$ where $\Pi_{Z^{\bp}}$ is either the admissible fundamental group or the solvable admissible fundamental group of $Z^{\bp}$, and $(-)^{\rm ab}$ denotes the abelianization of $(-)$.
\end{definition}

Let $X^{\bp}$ be a pointed stable curve over of type $(g_{X}, n_{X})$ over an algebraically closed field $k$ of characteristic $p>0$, $\Gamma_{X^{\bp}}$ the dual semi-graph of $X^{\bp}$, and $\Pi_{X^{\bp}}$ either the admissible fundamental group or the solvable admissible fundamental group of $X^{\bp}$. Let $n$ be an arbitrary positive natural number prime to $p$ and $\mu_{n} \subseteq k^{\times}$ the group of $n$th roots of unity. Fix a primitive $n$th root $\zeta_{n}$, we may identify $\mu_{n}$ with $\mbZ/n\mbZ$ via the map $\zeta_{n}^{i} \mapsto i$. Let $\alpha \in \text{Hom}(\Pi_{X^{\bp}}^{\rm ab}, \mbZ/n\mbZ)$. We denote by $X^{\bp}_{\alpha}=(X_{\alpha}, D_{X_\alpha})$ the Galois multi-admissible covering with Galois group $\mbZ/n\mbZ$ corresponding to $\alpha$.  Write $F_{X_{\alpha}}$ for the absolute Frobenius morphism on $X_{\alpha}$. Then there exists a decomposition (cf. \cite[Section 9]
{Se}) $$H^{1}(X_{\alpha}, \mcO_{X_\alpha})=H^{1}(X_{\alpha}, \mcO_{X_\alpha})^{\rm st} \oplus H^{1}(X_{\alpha}, \mcO_{X_\alpha})^{\rm ni},$$ where $F_{X_{\alpha}}$ is a bijection on $H^{1}(X_{\alpha}, \mcO_{X_\alpha})^{\rm st}$ and is nilpotent on $H^{1}(X_{\alpha}, \mcO_{X_\alpha})^{\rm ni}$. Moreover, we have $$H^{1}(X_{\alpha}, \mcO_{X_\alpha})^{\rm st}=H^{1}(X_{\alpha}, \mcO_{X_\alpha})^{F_{X_{\alpha}}}\otimes_{\mbF_{p}}k,$$ where $(-)^{F_{X_\alpha}}$ denotes the subspace of $(-)$ on which  $F_{X_{\alpha}}$ acts trivially. Then Artin-Schreier theory implies that we may identify $$H_{\alpha}\defeq H^{1}_{\text{\'et}}(X_{\alpha}, \mbF_{p}) \otimes_{\mbF_{p}}k$$ with the largest subspace of $H^{1}(X_{\alpha}, \mcO_{X_\alpha})$ on which $F_{X_{\alpha}}$ is a bijection.

The finite dimensional $k$-vector spaces $H_{\alpha}$ is a finitely generated $k[\mu_{n}]$-module induced by the natural action of $\mu_{n}$ on $X_{\alpha}$. We have the following canonical decomposition $$H_{\alpha}=\bigoplus_{i\in \mbZ/n\mbZ} H_{\alpha, i},$$ where $\zeta_{n} \in \mu_{n}$ acts on $H_{\alpha, i}$ as the $\zeta_{n}^{i}$-multiplication. We define $$\gamma_{\alpha, i}\defeq\text{dim}_{k}(H_{\alpha, i}), \ i \in \mbZ/n\mbZ.$$ We shall say that $\gamma_{\alpha, i},$ $i \in \mbZ/n\mbZ,$ is a  {\it generalized Hasse-Witt invariant} (cf. \cite{N}) of the cyclic multi-admissible covering $X^{\bp}_{\alpha} \migi X^{\bp}$.  Note that the decomposition above implies that $$\sigma_{X_{\alpha}}=\text{dim}_{k}(H_{\alpha})=\sum_{i \in \mbZ/n\mbZ}\gamma_{\alpha, i}.$$

Let $t \in \mbN$ be an arbitrary positive natural number, $K_{p^{t}-1}$ the kernel of the natural surjection $$\Pi_{X^{\bp}} \migisurj \Pi_{X^{\bp}}^{\rm ab}\otimes\mbZ/(p^{t}-1)\mbZ,$$ and $X_{K_{p^{t}-1}}^{\bp}$ the pointed stable curve over $k$ determined by $K_{p^{t}-1}$. Next, we define two important invariants associated to $X^{\bp}$. We put $$\gamma^{\rm max}(X^{\bp})\defeq $$$$\text{max}_{n \in \mbN \ \text{s.t.} \ (n, p)=1}\{\gamma_{\alpha, i} \ | \ \alpha \in \text{Hom}(\Pi_{X^{\bp}}^{\rm ab}, \mbZ/n\mbZ), \ \alpha\neq 0, \ i \in (\mbZ/n\mbZ) \setminus \{0\} \},$$ and shall say that $\gamma^{\rm max}(X^{\bp})$ is {\it the maximum generalized Hasse-Witt invariant of prime-to-$p$ cyclic admissible coverings of $X^{\bp}$}. We put $$\text{Avr}_{p}(X^{\bp}) \defeq \lim_{t \migi \infty} \frac{\sigma_{X_{K_{p^{t}-1}}}}{\#(\Pi_{X^{\bp}}^{\rm ab}\otimes \mbZ/(p^{t}-1)\mbZ)},$$ and shall say that $\text{Avr}_{p}(X^{\bp})$ is {\it the limit of $p$-averages of $X^{\bp}$}.

On the other hand, let $\overline \mbF_{p}$ be an arbitrary algebraic closure of the finite field $\mbF_{p}$, $\chi \in \text{Hom}(\Pi_{X^{\bp}}, \overline \mbF_{p}^{\times})$ such that $\chi \neq 1$, and $\Pi_{\chi} \subseteq \Pi_{X^{\bp}}$ the kernel of $\chi$. The profinite group $\Pi_{\chi}$ admits a natural action of $\Pi_{X^{\bp}}$ via conjugation. We put
$$\text{Hom}(\Pi_{\chi}, \mbZ/p\mbZ)[\chi]\defeq\{a \in \text{Hom}(\Pi_{\chi}, \mbZ/p\mbZ)\otimes_{\mbF_{p}} \overline \mbF_{p} \ | \ \tau(a)=\chi(\tau)a$$ $$ \text{for all} \ \tau \in \Pi_{X^{\bp}}\},$$  $$\gamma_{\chi}(\text{Hom}(\Pi_{\chi}, \mbZ/p\mbZ)) \defeq \text{dim}_{\overline \mbF_{p}}(\text{Hom}(\Pi_{\chi}, \mbZ/p\mbZ)[\chi]).$$ We define the following two {\it group-theoretical} invariants associated to $\Pi_{X^{\bp}}$: $$\gamma^{\rm max}(\Pi_{X^{\bp}}) \defeq \text{max}\{ \gamma_{\chi}(\text{Hom}(\Pi_{\chi}, \mbZ/p\mbZ)) \ | \ \chi \in \text{Hom}(\Pi_{X^{\bp}}, \overline \mbF_{p}^{\times}) \ \text{such that} \ \chi \neq 1\},$$  $$\text{Avr}_{p}(\Pi_{X^{\bp}}) \defeq \lim_{t \migi \infty} \frac{\text{dim}_{\mbF_{p}}(K^{\rm ab}_{p^{t}-1}\otimes \mbF_{p})}{\#(\Pi_{X^{\bp}}^{\rm ab}\otimes \mbZ/(p^{t}-1)\mbZ)}.$$ We see immediately that $$\gamma^{\rm max}(\Pi_{X^{\bp}})=\gamma^{\rm max}(X^{\bp}),$$ $$\text{Avr}_{p}(\Pi_{X^{\bp}})=\text{Avr}_{p}(X^{\bp}).$$

Moreover, we have the following important formulas for $\gamma^{\rm max}(\Pi_{X^{\bp}})$ and $\text{Avr}_{p}(\Pi_{X^{\bp}})$ which were proved by Tamagawa and the author by using the theory of Raynaud-Tamagawa theta divisors (cf. \cite[Theorem 1.4]{Y6} for $\gamma^{\rm max}(\Pi_{X^{\bp}})$, \cite[Theorem 0.5]{T3} and \cite[Theorem 5.2, Remark 5.2.1, and Remark 5.2.2]{Y5} for $\text{Avr}_{p}(\Pi_{X^{\bp}})$).

\begin{theorem}\label{max and average}
We maintain the notation introduced above. 

(a) We have 
\begin{eqnarray*}
\gamma^{\rm max}(\Pi_{X^{\bp}})=\left\{ \begin{array}{ll}
g_{X}-1, & \text{if} \ n_{X} =0,
\\
g_{X}+n_{X}-2, & \text{if} \ n_{X} \neq 0.
\end{array} \right.
\end{eqnarray*} 

(b) Suppose that $\Gamma_{X^{\bp}}^{\rm cpt}$ is $2$-connected. Then we have $${\rm Avr}_{p}(\Pi_{X^{\bp}})=g_{X}-r_{X}-\#v(\Gamma_{X^{\bp}})^{b\leq 1}+\#e^{\rm cl}(\Gamma_{X^{\bp}})^{b \leq 1}.$$
\end{theorem}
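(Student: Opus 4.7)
The plan is to treat (a) and (b) separately, each via Raynaud--Tamagawa theta divisor techniques adapted to the pointed stable setting. For (a), given a character $\chi : \Pi_{X^{\bp}} \migi \overline{\mbF}_{p}^{\times}$ of prime-to-$p$ order $n$ with cyclic admissible cover $X_{\chi}^{\bp} \migi X^{\bp}$, I would identify $\mathrm{Hom}(\Pi_{\chi}, \mbZ/p\mbZ)[\chi]$ (Cartier-dually) with the Frobenius-bijective part of the $\chi^{-1}$-eigenspace of $H^{1}(X_{\chi}, \mcO_{X_{\chi}})$. Because $X_{\chi} \migi X$ is a $\mu_{n}$-torsor away from the branch locus, this eigenspace identifies with $H^{1}(X, \mcL^{-1})$ for a line bundle $\mcL$ on $X$ (on the normalization, with prescribed behavior at nodes) satisfying $\mcL^{n} \cong \mcO(D)$ where $D$ is supported on the marked points; one has $\deg \mcL = 0$ when $n_{X}=0$, and $\deg \mcL \leq n_{X}-1$ when $n_{X} \neq 0$. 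Riemann--Roch gives the crude upper bounds $h^{1}(X, \mcL^{-1}) \leq g_{X}-1$ and $g_{X} + n_{X}-2$ respectively. The Raynaud--Tamagawa theta divisor, constructed on the compactified Picard scheme of $X^{\bp}$, then shows that the Frobenius-bijective subspace itself realizes this maximum for $\mcL$ chosen off a proper closed locus, and explicit cyclic coverings --- an $n$-torsion bundle in the \'etale case, and one with maximal tame ramification at every marked point in the other case --- produce $\mcL$ saturating the bound.

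For part (b), I would use an asymptotic form of the Deuring--Shafarevich formula for admissible coverings. Setting $K_{t} \defeq K_{p^{t}-1}$ and $N_{t} \defeq \#(\Pi_{X^{\bp}}^{\mathrm{ab}} \otimes \mbZ/(p^{t}-1)\mbZ)$, the admissible Deuring--Shafarevich formula expresses $\sigma_{X_{K_{t}}}$ as $N_{t}$ times a combinatorial quantity built from $(g_{X}, r_{X})$ and the dual semi-graph data of the cover, plus lower-order terms. Under the $2$-connectedness of $\Gamma_{X^{\bp}}^{\mathrm{cpt}}$, vertices in $v(\Gamma_{X^{\bp}})^{b\leq 1}$ split into fewer components than a naive linear extrapolation predicts (producing the correction $-\#v(\Gamma_{X^{\bp}})^{b\leq 1}$) and closed edges in $e^{\mathrm{cl}}(\Gamma_{X^{\bp}})^{b\leq 1}$ remain ramified in a larger proportion of covers (producing the correction $+\#e^{\mathrm{cl}}(\Gamma_{X^{\bp}})^{b\leq 1}$). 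Dividing by $N_{t}$ and letting $t \migi \infty$ collapses the bulk to $g_{X}-r_{X}$ and the boundary adjustments to the stated signed counts, yielding the formula.

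The hardest step is the sharp upper bound in (a): Raynaud's theta divisor is classical for smooth proper curves, but one must extend the construction to a Raynaud--Tamagawa theta divisor on the Picard moduli of pointed stable curves, compatible simultaneously with admissible ramification at nodes and tame ramification at marked points, and then prove the complement of this divisor is nonempty in the prime-to-$p$ torsion locus so that a line bundle achieving the upper bound genuinely exists. For part (b), the analogous subtlety is justifying why the $2$-connectedness hypothesis on $\Gamma_{X^{\bp}}^{\mathrm{cpt}}$ exactly suffices to control the boundary anomalies in the Deuring--Shafarevich averaging; without it the asymptotic can acquire further correction terms, so one needs a precise combinatorial analysis of how $v(\Gamma_{X^{\bp}})^{b\leq 1}$ and $e^{\mathrm{cl}}(\Gamma_{X^{\bp}})^{b\leq 1}$ interact with the splitting/ramification behavior propagated across the semi-graph by the covers $X_{K_{t}}^{\bp} \migi X^{\bp}$.
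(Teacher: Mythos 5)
The paper does not actually prove Theorem \ref{max and average}: it is quoted from \cite{Y6} (part (a)) and from \cite{T3}, \cite{Y5} (part (b)), so your sketch has to be measured against the strategy of those works. For (a) your route is essentially the right one (eigenspace identification of $\text{Hom}(\Pi_{\chi},\mbZ/p\mbZ)[\chi]$, a Riemann--Roch upper bound, theta-divisor input for sharpness), but the step you describe as ``choosing $\mcL$ off a proper closed locus'' hides the actual difficulty: the line bundle realizing the bound cannot be generic, it must be a prime-to-$p$ \emph{torsion} point of exact order $n$ compatible with a prescribed tame ramification divisor at the marked points, and for singular $X^{\bp}$ the construction has to be carried out on the normalizations of the components and glued along the semi-graph. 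Showing that such torsion points avoid the Raynaud--Tamagawa theta divisor is precisely the content of \cite{Y6}, not a dimension count, so as written (a) is a plausible outline rather than a proof.

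The genuine gap is in (b): the central tool you invoke does not apply. The coverings $X^{\bp}_{K_{p^{t}-1}} \migi X^{\bp}$ are abelian with group $\Pi_{X^{\bp}}^{\rm ab}\otimes\mbZ/(p^{t}-1)\mbZ$, which has order prime to $p$, whereas the Deuring--Shafarevich formula computes $p$-ranks only for $p$-group coverings; there is no formula expressing $\sigma_{X_{K_{p^{t}-1}}}$ combinatorially ``in terms of the dual semi-graph plus lower-order terms,'' and indeed the $p$-rank of a prime-to-$p$ cover varies with the moduli of the curve --- this is exactly why the statement is nontrivial. The proofs in \cite{T3} and \cite{Y5} instead decompose $\sigma_{X_{K_{p^{t}-1}}}$ as a sum of generalized Hasse--Witt invariants $\gamma_{\chi}$ over all characters $\chi$ of order dividing $p^{t}-1$ and again use Raynaud--Tamagawa theta-divisor nonvanishing to show that for a proportion of characters tending to $1$ each $\gamma_{\chi}$ attains its generic value; the correction terms $-\#v(\Gamma_{X^{\bp}})^{b\leq 1}+\#e^{\rm cl}(\Gamma_{X^{\bp}})^{b\leq 1}$ come from the characters degenerating on the pieces attached to low-valency vertices and edges, and the $2$-connectedness of $\Gamma_{X^{\bp}}^{\rm cpt}$ is used to control vertex-wise contributions (e.g.\ injectivity statements as in \cite[Corollary 3.5]{Y5}), not the splitting/ramification extrapolation you describe. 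Without replacing your Deuring--Shafarevich step by this character-by-character analysis, the argument for (b) does not go through.
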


\begin{remarkA}\label{rem-them-1-1-2}
Suppose that $\Gamma_{X^{\bp}}^{\rm cpt}$ is $2$-connected. Note that $\#v(\Gamma_{X^{\bp}})^{b\leq 1}\neq 0$ if one of the following conditions holds: (i) $X^{\bp}$ is smooth over $k$ and $\#e^{\rm op}(\Gamma_{X^{\bp}}) \leq 1$; (ii) $\#v(\Gamma_{X^{\bp}})=2$, $\#e^{\rm op}(\Gamma_{X^{\bp}})=0$,  $\#e^{\rm cl}(\Gamma_{X^{\bp}})=1$, and $r_{X}=0$. In the case (i), we have $${\rm Avr}_{p}(\Pi_{X^{\bp}})=g_{X}-1.$$
In the case (ii), we have $${\rm Avr}_{p}(\Pi_{X^{\bp}})=g_{X}-2+1=g_{X}-1.$$
\end{remarkA}

\begin{lemma}\label{lem-0}
Let $X_{i}^{\bp}$, $i \in \{1, 2\}$, be a pointed stable curve of type $(g_{X_{i}}, n_{X_{i}})$ over an algebraically closed field $k_{i}$ of characteristic $p>0$ and $\Pi_{X^{\bp}_{i}}$ either the admissible fundamental group of $X_{i}^{\bp}$ or the solvable admissible fundamental group of $X_{i}^{\bp}$. Let $$\phi: \Pi_{X^{\bp}_{1}} \migisurj \Pi_{X^{\bp}_{2}}$$ be an arbitrary surjective open continuous homomorphism of profinit groups, $H_{2} \subseteq \Pi_{X^{\bp}_{2}}$ an arbitrary open normal subgroup, and $H_{1} \defeq \phi^{-1}(H_{2})$. Then the following statements hold:

(a) We have $$\gamma^{\rm max}(H_{1}) \geq \gamma^{\rm max}(H_{2}).$$

(b) Suppose that $(g_{X}, n_{X})=(g_{X_{1}}, n_{X_{1}})=(g_{X_{2}}, n_{X_{2}})$. Moreover, suppose either that $G\defeq \Pi_{X^{\bp}_{2}}/H_{2}$ is a $p$-group, that $(\#G, p)=1$, or that $G$ is a solvable group. Then we have $${\rm Avr}_{p}(H_{1}) \geq {\rm Avr}_{p}(H_{2}).$$ 
\end{lemma}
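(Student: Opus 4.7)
Both inequalities will rest on the observation that $\phi$ restricts to a surjective open continuous homomorphism $\phi|_{H_1}: H_1 \migisurj H_2$: surjectivity because $H_1 = \phi^{-1}(H_2)$ and $\phi$ is surjective, openness because $\phi$ is open. In particular $\phi$ induces an isomorphism $\Pi_{X^{\bp}_{1}}/H_1 \isom \Pi_{X^{\bp}_{2}}/H_2$, so the Galois admissible coverings $X_{H_i}^{\bp} \to X_i^{\bp}$ share a common Galois group $G$.

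For part (a), the plan is to pull characters back along $\phi|_{H_1}$. Given any nontrivial $\chi_2 \in \Hom(H_2, \overline{\mbF}_p\ug)$ attaining $\gamma^{\rm max}(H_2)$, I will set $\chi_1 \defeq \chi_2 \circ \phi|_{H_1}$, which is still nontrivial. The identity $H_{1,\chi_1} = (\phi|_{H_1})^{-1}(H_{2,\chi_2})$ yields a surjection $\phi|_{H_{1,\chi_1}}: H_{1,\chi_1} \migisurj H_{2,\chi_2}$ together with an isomorphism $H_1/H_{1,\chi_1} \isom H_2/H_{2,\chi_2}$ identifying the induced characters. Pulling back $\mbZ/p\mbZ$-valued homomorphisms along $\phi|_{H_{1,\chi_1}}$ produces an injection $\Hom(H_{2,\chi_2}, \mbZ/p\mbZ) \migiinje \Hom(H_{1,\chi_1}, \mbZ/p\mbZ)$; after tensoring with $\overline{\mbF}_p$, the standard conjugation computation shows that this pullback carries the $\chi_2$-eigenspace into the $\chi_1$-eigenspace. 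Hence $\gamma_{\chi_1}(\Hom(H_{1,\chi_1}, \mbZ/p\mbZ)) \geq \gamma_{\chi_2}(\Hom(H_{2,\chi_2}, \mbZ/p\mbZ))$, and taking the maximum over $\chi_2$ yields (a).

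For part (b), the plan is to apply this pullback uniformly in $t$. For each $t$, I will write $M_i \defeq K_{p^t-1}^{H_i}$ and set $N_1 \defeq (\phi|_{H_1})^{-1}(M_2)$; then $M_1 \subseteq N_1$, $\phi$ induces $H_1/N_1 \isom H_2/M_2$, and $\phi|_{N_1}: N_1 \migisurj M_2$ is surjective. Abelianizing the latter surjection gives $\sigma_{X_{N_1}} \geq \sigma_{X_{M_2}}$, and the intermediate abelian prime-to-$p$ admissible cover $X_{M_1}^{\bp} \to X_{N_1}^{\bp}$ furnishes $\sigma_{X_{M_1}} \geq \sigma_{X_{N_1}}$. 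Decomposing $\sigma_{X_{M_i}}$ as the sum of generalized Hasse--Witt invariants $\gamma_{\chi}$ over characters of $H_i^{\rm ab} \otimes \mbZ/(p^t-1)\mbZ$ and matching each $\chi_2$ with its pullback $\chi_1$ as in (a), the pullback characters of $H_1$ contribute to the numerator of ${\rm Avr}_p(H_1)$ at least the full numerator of ${\rm Avr}_p(H_2)$.

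The hardest step of (b) will be to control the denominator inflation: $\#(H_1^{\rm ab} \otimes \mbZ/(p^t-1)\mbZ)$ exceeds $\#(H_2^{\rm ab} \otimes \mbZ/(p^t-1)\mbZ)$ by the order of the kernel of the induced map on these quotients, so the non-pullback characters of $H_1$ must furnish enough extra Hasse--Witt mass to offset this inflation in the $t \to \infty$ limit. Here the hypothesis on $G$ is essential. I expect to reduce, via a solvable d\'evissage (in the two extreme cases $G$ is already of a single type), to elementary $p$-cyclic or prime-to-$p$-cyclic steps, in which the explicit formula in Theorem \ref{max and average}(b) applied to $X_{H_i}^{\bp}$ combined with a bookkeeping of the dual semi-graphs $\Gamma_{X_{H_i}^{\bp}}$ should yield the required lower bound on the non-pullback Hasse--Witt contributions, and hence ${\rm Avr}_p(H_1) \geq {\rm Avr}_p(H_2)$.
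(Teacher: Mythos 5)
Your part (a) is correct and is essentially the paper's own argument: the paper records the same pullback of a cyclic prime-to-$p$ character and then uses the surjection of $\overline{\mbF}_{p}[\mbZ/n\mbZ]$-modules $Q_{1}^{p,\rm ab}\otimes_{\mbF_p}\overline{\mbF}_{p}\migisurj Q_{2}^{p,\rm ab}\otimes_{\mbF_p}\overline{\mbF}_{p}$ on the kernels, which is just the dual of your eigenspace-preserving injection on $\text{Hom}(-,\mbZ/p\mbZ)$.

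For part (b), however, there is a genuine gap exactly at what you call the hardest step, and it stems from a false premise: under the hypotheses of (b) there is no ``denominator inflation'' at all. The real content of the hypothesis on $G$ is that $\#(H_{1}^{\rm ab}\otimes\mbZ/(p^{t}-1)\mbZ)=\#(H_{2}^{\rm ab}\otimes\mbZ/(p^{t}-1)\mbZ)$ for every $t$. If $G$ is a $p$-group, the admissible coverings $X^{\bp}_{H_{i}}\migi X_{i}^{\bp}$ are \'etale (admissible coverings are tame), so $X^{\bp}_{H_{1}}$ and $X^{\bp}_{H_{2}}$ have the same type and their prime-to-$p$ abelianizations have the same order. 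If $(\#G,p)=1$, then because $(g_{X_{1}},n_{X_{1}})=(g_{X_{2}},n_{X_{2}})$ the induced map $\phi^{p'}$ is an isomorphism (a surjection between the pro-prime-to-$p$ completions of the same topologically finitely generated surface group is an isomorphism), and since $[\Pi_{X_{i}^{\bp}}:H_{i}]$ is prime to $p$, the group $H_{i}^{\rm ab}\otimes\mbZ/(p^{t}-1)\mbZ$ is governed by the image of $H_{i}$ in $\Pi_{X_{i}^{\bp}}^{p'}$; hence the two orders agree. The general solvable case is a d\'evissage through a normal series whose successive factors are $p$-groups or prime-to-$p$ groups. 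Once the denominators are equal, the induced map $H_{1}^{\rm ab}\otimes\mbZ/(p^{t}-1)\mbZ\migi H_{2}^{\rm ab}\otimes\mbZ/(p^{t}-1)\mbZ$ is an isomorphism, so $\phi(K_{H_{1},p^{t}-1})=K_{H_{2},p^{t}-1}$, and the surjection on the kernels gives the term-by-term inequality of the quotients defining ${\rm Avr}_{p}$, hence the inequality of the limits; no comparison of ``pullback'' versus ``non-pullback'' characters is needed.

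By contrast, the tool you propose for your supposed offset is neither carried out nor available. Theorem \ref{max and average} (b) applies only when $\Gamma_{X_{H_{i}}^{\bp}}^{\rm cpt}$ is $2$-connected, which you have not verified, and any ``bookkeeping of the dual semi-graphs $\Gamma_{X_{H_{i}}^{\bp}}$'' comparing the two sides is precisely the kind of information that the paper only obtains later (Sections 4 and 5) \emph{using} this lemma, so that route risks circularity. The assertion that the non-pullback characters of $H_{1}$ furnish enough extra Hasse--Witt mass is stated as an expectation with no argument, so as written your proof of (b) is incomplete; the fix is to prove the equality of the denominators as above rather than to bound a nonexistent deficit.
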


\begin{proof}
(a) Let $n \in \mbZ_{>0}$ be a positive natural number prime to $p$, and $\alpha_{2} \in \text{Hom}(H_{2}^{\rm ab}, \mbZ/n\mbZ)$ such that $\alpha_{2} \neq 0$. Let $j \in \mbZ/n\mbZ$ such that $\gamma_{\alpha_{2, j}}=\gamma^{\rm max}(H_{2})$. Write $Q_{2}$ for the kernel of the composition of the following homomorphisms $$H_{2} \migisurj H_{2}^{\rm ab} \overset{\alpha_{2}}\migisurj \mbZ/n\mbZ,$$ $Q_{1} \defeq \phi^{-1}(Q_{2})$, and $\alpha_{1}\in  \text{Hom}(H_{1}^{\rm ab}, \mbZ/n\mbZ)$ for the homomorphism induced by $\phi|_{H_{1}}$ and $\alpha_{2}$. Let $\overline \mbF_{p}$ be an algebraic closure of $\mbF_{p}$. Then $Q_{i}^{p, \rm ab} \otimes_{\mbF_{p}} \overline \mbF_{p}$ admits a natural $\overline \mbF_{p}[\mbZ/n\mbZ]$-module structure. Moreover, we see immediately that $\phi|_{H_{1}}$ induces a surjective homomorphism of $\overline \mbF_{p}[\mbZ/n\mbZ]$-modules $$Q_{1}^{p, \rm ab} \otimes_{\mbF_{p}} \overline \mbF_{p} \migisurj Q_{2}^{p, \rm ab} \otimes_{\mbF_{p}} \overline \mbF_{p}.$$ Then we obtain that $\gamma_{\alpha_{1}, j} \geq \gamma_{\alpha_{2}, j}.$ Thus, we have  $$\gamma^{\rm max}(H_{1}) \geq \gamma^{\rm max}(H_{2}).$$

(b) Let $t \in \mbN$ be an arbitrary positive natural number, $K_{H_{i}, p^{t}-1}$ the kernel of the natural surjection $$H_{i} \migisurj H_{i}^{\rm ab}\otimes\mbZ/(p^{t}-1)\mbZ.$$ Suppose that $G$ is a $p$-group. We have that Galois admissible covering $X^{\bp}_{H_{i}} \migi X_{i}^{\bp}$ corresponding to $H_{i}$ is \'etale. This implies that  $X^{\bp}_{H_{1}}$ and $X^{\bp}_{H_{2}}$ are equal types. We obtain that $$\#(H_{1}^{\rm ab}\otimes\mbZ/(p^{t}-1)\mbZ)=\#(H_{2}^{\rm ab}\otimes\mbZ/(p^{t}-1)\mbZ).$$ Suppose that $(\#G, p)=1$.  Since  $X_{1}^{\bp}$ and $X_{2}^{\bp}$ are equal types, we have that $$\#(H_{1}^{\rm ab}\otimes\mbZ/(p^{t}-1)\mbZ)=\#(H_{2}^{\rm ab}\otimes\mbZ/(p^{t}-1)\mbZ).$$

Then $\phi|_{H_{1}}$ implies that $${\rm Avr}_{p}(H_{1})\defeq \lim_{t \migi \infty} \frac{\text{dim}_{\mbF_{p}}(K^{\rm ab}_{H_{1}, p^{t}-1}\otimes \mbF_{p})}{\#(H_{1}^{\rm ab}\otimes \mbZ/(p^{t}-1)\mbZ)}$$ $$\geq {\rm Avr}_{p}(H_{2})\defeq \lim_{t \migi \infty} \frac{\text{dim}_{\mbF_{p}}(K^{\rm ab}_{H_{2}, p^{t}-1}\otimes \mbF_{p})}{\#(H_{2}^{\rm ab}\otimes \mbZ/(p^{t}-1)\mbZ)}.$$

Suppose that $G$ is solvable. Then the lemma follows immediately from the lemma if either $G$ is a $p$-group, or $(\#G, p)=1$. This completes the proof of the lemma.
\end{proof}

\section{Moduli spaces of admissible fundamental groups and the Homeomorphism Conjecture}\label{sec-5}

In this section, we define the moduli spaces of fundamental groups and formulate the Homeomorphism Conjecture, which are main research objects of the present paper.

\subsection{The Weak Isom-version Conjecture}\label{sec-5-1}

Let $p$ be a prime number, $\mbF_{p}$ the prime field of characteristic $p$, and $\overline \mbF_{p}$ an algebraically closed field of $\mbF_{p}$. Let $\overline \mcM_{g, n}$ be the moduli stack over $\overline \mbF_{p}$ classifying pointed
stable curves of type $(g, n)$ and $\mcM_{g, n} \subseteq \overline \mcM_{g, n}$ the open substack classifying smooth pointed stable curves. Let $\overline M_{g, n}$ and $M_{g, n}$ be the coarse moduli spaces of $\overline \mcM_{g,n}$ and $\mcM_{g, n}$, respectively.

Let $q \in \overline M_{g, n}$ be an arbitrary point, $k(q)$ the residue field of $\overline M_{g, n}$, and $k_{q}$ an algebraically closed field which contains $k(q)$. Then the composition of natural morphisms $$\spec k_{q} \migi \spec k(q) \migi \overline M_{g, n}$$ determines a pointed stable curve $X^{\bp}_{k_{q}}$  of type $(g, n)$ over $k_{q}$. Write $\pi_{1}^{\rm adm}(X^{\bp}_{k_{q}})$ for the admissible fundamental group $X^{\bp}_{k_{q}}$ and $\pi_{1}^{\rm adm}(X^{\bp}_{k_{q}})^{\rm sol}$ for the solvable admissible fundamental group of $X^{\bp}_{k_{q}}$. Since the isomorphism classes of $\pi_{1}^{\rm adm}(X^{\bp}_{k_{q}})$ and $\pi_{1}^{\rm adm}(X^{\bp}_{k_{q}})^{\rm sol}$ do not depend on the choice of $k_{q}$, we shall denote by $$\pi_{1}^{\rm adm}(q), \ \pi_{1}^{\rm sol}(q)$$ the admissible fundamental groups $\pi_{1}^{\rm adm}(X^{\bp}_{k_{q}})$ and the solvable admissible fundamental group $\pi_{1}^{\rm adm}(X^{\bp}_{k_{q}})^{\rm sol}$, respectively. Moreover, we shall denote by $$X^{\bp}_{q}$$ the pointed stable curve $X^{\bp}_{\overline {k(q)}}$ and $\Gamma_{q}$ the dual semi-graph of $X^{\bp}_{q}$,  where $\overline {k(q)}$ is an algebraic closure of $k(q)$. Let $v \in v(\Gamma_{q})$. Then the smooth pointed stable curve $\widetilde X^{\bp}_{q, v}$ of type $(g_{v}, n_{v})$ associated to $v$ determines a morphism $$\spec \overline {k(q)} \migi M_{g_{v}, n_{v}}.$$ We shall write $q_{v} \in M_{g_{v}, n_{v}}$ for the image of the morphism and say {\it $q_{v}$ the point of type $(g_{v}, n_{v})$ associated to $v$}.

\begin{definition}\label{fe}

(a) Let $q_{i} \in M_{g, n}$, $i \in \{1, 2\}$,  be an arbitrary point. We shall say that $q_{1}$ is {\it Frobenius equivalent} to $q_{2}$ if $X_{q_{1}} \setminus D_{X_{q_{1}}}$ is isomorphic to $X_{q_{2}} \setminus D_{X_{q_{2}}}$ as schemes.

(b) Let $q_{i} \in \overline M_{g, n}$, $i \in \{1, 2\}$,  be an arbitrary point. We shall say that $q_{1}$ is {\it Frobenius equivalent} to $q_{2}$ if the following conditions are satisfied: 

(i) There exists an isomorphism $\rho: \Gamma_{q_{1}} \isom \Gamma_{q_{2}}$ of dual semi-graphs. 

(ii) Let $v_{1} \in v(\Gamma_{q_{1}})$, $v_{2} \defeq \rho(v_{1}) \in v(\Gamma_{q_{2}})$, $q_{1, v_{1}}$ the point of type $(g_{v_{1}}, n_{v_{1}})$ associated to $v_{1}$, and $q_{2, v_{2}}$ the point of type $(g_{v_{2}}, n_{v_{2}})$ associated to $v_{2}$. We have that $q_{1,v_{1}}$ is Frobenius equivalent to $q_{2, v_{2}}$. 

(iii) Let $\rho_{v_{1}, v_{2}}: \Gamma_{q_{1, v_1}} \isom \Gamma_{q_{2, v_2}}$ be the isomorphism of dual semi-graphs induced by $\rho$. There exists a morphism $f_{v_{1}, v_{2}}^{\bp}: X_{q_{1,v_{1}}}^{\bp} \isom X^{\bp}_{q_{2,v_{2}}}$ such that the morphism $X_{q_{1,v_{1}}} \setminus D_{X_{q_{1,v_{1}}}} \migi X_{q_{2,v_{2}}} \setminus D_{X_{q_{2,v_{2}}}}$ induced by  $f_{v_{1}, v_{2}}^{\bp}$ is an isomorphism as schemes, and that the isomorphism of dual semi-graphs $f_{v_{1}, v_{2}}^{\rm sg}: \Gamma_{q_{1, v_1}} \isom \Gamma_{q_{2, v_2}}$ induced by $f_{v_{1}, v_{2}}^{\bp}$ coincides with $\rho_{v_{1}, v_{2}}$.


We shall denote by $$q_{1} \sim_{fe} q_{2}$$ if $q_{1}$ is Frobenius equivalent to $q_{2}$. Note that $\sim_{fe}$ is an equivalence relation on the underlying topological space $|\overline M_{g, n}|$ of $\overline M_{g, n}$

(c) Let $q_{i} \in \overline M_{g, n}$, $i \in \{1, 2\}$,  be an arbitrary point, $k_{q_{i}}$ an algebraically closed field which contains $k(q_{i})$, and $X^{\bp}_{k_{q_{i}}}$ the pointed stable curve of type $(g, n)$ over $k_{q_{i}}$. We shall say that $X^{\bp}_{k_{q_{1}}}$ {\it is Frobenius equivalent} to $X^{\bp}_{k_{q_{2}}}$ if $q_{1}$ is Frobenius equivalent $q_{2}$.
\end{definition}

The following result was proved by the author.

\begin{proposition}\label{prop-5-2}
Let $q_{i} \in \overline M_{g, n}$, $i \in \{1, 2\}$, be an arbitrary point. Suppose that $q_{1}\sim_{fe}q_{2}$. Then we have that $\pi_{1}^{\rm adm}(q_{1})$ is isomorphic to $\pi_{1}^{\rm adm}(q_{2})$ as profinite groups. In particular, we have that $\pi_{1}^{\rm sol}(q_{1})$ is isomorphic to $\pi_{1}^{\rm sol}(q_{2})$ as profinite groups. 
\end{proposition}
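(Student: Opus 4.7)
The plan is to produce the isomorphism $\pi_{1}^{\rm adm}(q_1) \isom \pi_{1}^{\rm adm}(q_2)$ by assembling local isomorphisms on each irreducible component, glued together along the isomorphism $\rho : \Gamma_{q_1} \isom \Gamma_{q_2}$ of dual semi-graphs supplied by $q_1 \sim_{fe} q_2$. The underlying reconstruction principle is that the category $\text{Cov}^{\rm adm}(X^{\bp})$ is combinatorial in nature: an admissible covering of $X^{\bp}$ is the data of a tame covering of each $\widetilde X_v^{\bp}$ satisfying, at every node $x_e$ of $X^{\bp}$, the compatibility that the inertia characters at the two marked points of the two adjacent normalizations lying over $x_e$ are inverse to one another. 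Equivalently, $\pi_{1}^{\rm adm}(X^{\bp})$ is the fundamental group of the semi-graph of profinite groups with underlying semi-graph $\Gamma_{X^{\bp}}$, with vertex group $\pi_{1}^{\rm adm}(\widetilde X_v^{\bp})$ at $v$ and edge group $\widehat{\mbZ}(1)^{p'}$ at each closed edge, identified inside the two adjacent vertex groups with the inertia subgroups at the two corresponding marked points of the normalizations.

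The first step is the smooth case. Suppose $X_{q_i}^{\bp}$ is smooth for $i \in \{1,2\}$ and $q_1 \sim_{fe} q_2$. By Definition \ref{fe}(a) this means there is a scheme isomorphism $U_{q_1} \isom U_{q_2}$, where $U_{q_i} \defeq X_{q_i} \setminus D_{X_{q_i}}$. Since $U_{q_i}$ is a smooth affine curve over an algebraically closed field of characteristic $p$, both its smooth compactification $X_{q_i}$ and the boundary divisor $D_{X_{q_i}}$ are intrinsic to $U_{q_i}$ as a scheme, and hence the tame fundamental group $\pi_{1}^{\rm t}(U_{q_i})$, which coincides with $\pi_{1}^{\rm adm}(X_{q_i}^{\bp})$, is an invariant of $U_{q_i}$ as a scheme. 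Thus the scheme isomorphism gives an isomorphism $\pi_{1}^{\rm adm}(q_1) \isom \pi_{1}^{\rm adm}(q_2)$ that moreover identifies the inertia subgroups at $\rho$-corresponding pairs of marked points.

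For the general case, I would apply the smooth case vertex-by-vertex. Conditions (ii) and (iii) of Definition \ref{fe} provide, for every $v_1 \in v(\Gamma_{q_1})$ with $v_2 \defeq \rho(v_1)$, a Frobenius equivalence $q_{1,v_1} \sim_{fe} q_{2,v_2}$ realized by a scheme isomorphism of the interiors compatible with $\rho_{v_1,v_2}$; Step~1 then yields an isomorphism $\varphi_{v_1} : \pi_{1}^{\rm adm}(\widetilde X_{q_1,v_1}^{\bp}) \isom \pi_{1}^{\rm adm}(\widetilde X_{q_2,v_2}^{\bp})$ that carries the inertia at each half-edge adjacent to $v_1$ to the inertia at the $\rho$-corresponding half-edge adjacent to $v_2$. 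Because the two half-edges meeting a closed edge $e$ of $\Gamma_{q_1}$ both carry an abstract inertia subgroup canonically isomorphic to $\widehat{\mbZ}(1)^{p'}$, and the same holds at $\rho(e)$ for $\Gamma_{q_2}$, the two vertex-wise isomorphisms agree on the shared edge group, and the semi-graph-of-groups reconstruction assembles them into the desired global isomorphism $\pi_{1}^{\rm adm}(q_1) \isom \pi_{1}^{\rm adm}(q_2)$. The second assertion on solvable quotients follows by taking maximal pro-solvable quotients, which is a functorial construction. The principal obstacle is the coherence of the local data at nodes: one must verify that the $\varphi_{v_1}$ can be chosen so that at each closed edge the two restrictions to the inertia subgroup glue to a single isomorphism of $\widehat{\mbZ}(1)^{p'}$; this is handled by the procyclicity of marked-point inertia together with the explicit compatibility guaranteed by condition (iii) of Definition \ref{fe}.
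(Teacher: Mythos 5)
Your Step 1 (the smooth case) is fine, and the overall strategy of transporting coverings vertex-by-vertex is natural; note, though, that the paper itself does not argue this way at all — it simply invokes \cite[Proposition 3.7]{Y7}. The problem is the gluing step, where you assert that ``the two vertex-wise isomorphisms agree on the shared edge group'' because both edge groups are canonically isomorphic to $\widehat{\mbZ}(1)^{p'}$. This is exactly the point where the argument breaks. Frobenius equivalence only provides, for each vertex $v_{1}$, \emph{some} scheme isomorphism $f_{v_{1},v_{2}}$ of the associated pointed curves (compatible with $\rho$ on the marked points); these isomorphisms at distinct vertices may induce \emph{different} field isomorphisms $\overline{k(q_{1})} \isom \overline{k(q_{2})}$, differing by an automorphism of $\overline{\mbF}_{p}$ (an arbitrary $\widehat{\mbZ}$-power of Frobenius). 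On the prime-to-$p$ roots of unity, i.e.\ on $\widehat{\mbZ}(1)^{p'}$, such a discrepancy acts by multiplication by a nontrivial unit, so the restrictions of $\varphi_{v_{1}}$ and $\varphi_{w_{1}}$ to the inertia groups at the two branches of a closed edge need not coincide. Condition (iii) of Definition \ref{fe} only pins down the combinatorial matching of branches, not any compatibility of the cyclotomic identifications, and procyclicity does not help: two isomorphisms of procyclic groups can differ by a unit. Concretely, take $X_{1}^{\bp}=X_{2}^{\bp}$ with two components $A$ and $B$ glued at a node, $\rho=\mathrm{id}$, $f_{A}=\mathrm{id}$, and $f_{B}$ a scheme automorphism of $B$ inducing Frobenius on the constants (possible when $B$ with its marked points is defined over $\mbF_{p}$): then your two local isomorphisms disagree on the edge group by multiplication by $p$, and they do not glue — indeed, transporting an admissible covering vertex-wise in this situation produces an object violating condition (iv) of Definition \ref{def-2} at the node (the two inertia characters are no longer inverse to one another), so the naive equivalence of categories of admissible coverings also fails.

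In examples where the vertex curves have transcendental moduli one cannot even re-choose the $f_{v_{1},v_{2}}$ so that all induced field isomorphisms agree on roots of unity, so the mismatch is unavoidable in general; the actual content of the proposition is precisely that the admissible fundamental group is insensitive to these cyclotomic twists in the gluing data, and this requires a genuine argument (it is the substance of \cite[Proposition 3.7]{Y7}, which the paper cites as its proof). Your write-up treats this as an automatic compatibility, so as it stands the proof has a real gap: either supply an argument that the profinite fundamental group of the semi-graph of groups is unchanged when the edge-to-vertex embeddings are composed with such unit twists, or follow a different route (e.g.\ the one in \cite{Y7}).
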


\begin{proof}
See \cite[Proposition 3.7]{Y7}.
\end{proof}

We put $$\mfM_{g, n} \defeq | M_{g, n}|/\sim_{fe} \subseteq \overline \mfM_{g, n} \defeq |\overline M_{g, n}|/\sim_{fe},$$ $$\Pi_{g, n} \defeq \{[\pi_{1}^{\rm adm}(q)] \ | \ q\in  M_{g, n}\}\subseteq \overline \Pi_{g, n} \defeq \{[\pi_{1}^{\rm adm}(q)] \ | \ q\in \overline M_{g, n}\},$$  $$ \Pi^{\rm sol}_{g, n} \defeq \{[\pi_{1}^{\rm sol}(q)] \ | \ q\in  M_{g, n}\}\subseteq \overline \Pi_{g, n}^{\rm sol} \defeq \{[\pi_{1}^{\rm sol}(q)] \ | \ q\in \overline M_{g, n}\},$$  where $[\pi_{1}^{\rm adm}(q)]$ and $[\pi_{1}^{\rm sol}(q)]$ denote the isomorphism classes (as profinite groups) of $\pi_{1}^{\rm adm}(q)$ and $\pi_{1}^{\rm sol}(q)$, respectively. Let $q \in \overline M_{g, n}$. We shall write $[q]$ for the image of $q$ in $\overline \mfM_{g, n}$. Then there are natural surjective maps of {\it sets} as follows: $$sol:  \overline \Pi_{g, n} \migisurj \overline \Pi_{g, n}^{\rm sol}, \ [\pi_{1}^{\rm adm}(q)] \mapsto [\pi_{1}^{\rm sol}(q)],$$ $$\pi_{g, n}^{\rm adm}: \overline \mfM_{g, n} \migisurj \overline \Pi_{g, n}, \ [q] \mapsto [\pi_{1}^{\rm adm}(q)],$$ $$\pi_{g, n}^{\rm sol}\defeq sol\circ \pi_{g, n}^{\rm adm}: \overline \mfM_{g, n} \migisurj \overline \Pi_{g, n}^{\rm sol},$$ $$\pi_{g, n}^{\rm t}\defeq \pi_{g, n}^{\rm adm}|_{\mfM_{g, n}}:  \mfM_{g, n} \migisurj  \Pi_{g, n},$$ $$\pi_{g, n}^{\rm t, sol}\defeq \pi_{g, n}^{\rm sol}|_{\mfM_{g, n}}:  \mfM_{g, n} \migisurj  \Pi_{g, n}^{\rm sol},$$ where ``t" means ``tame". Moreover, we have the following commutative diagrams:
\[
\begin{CD}
\mfM_{g, n} @>\pi^{\rm t}_{g, n}>> \Pi_{g, n}
\\
@VVV@VVV
\\
\overline \mfM_{g, n} @>\pi_{g, n}^{\rm adm}>> \overline \Pi_{g, n}
\end{CD}
\]
and
\[
\begin{CD}
\mfM_{g, n} @>\pi^{\rm t, sol}_{g, n}>> \Pi^{\rm sol}_{g, n}
\\
@VVV@VVV
\\
\overline \mfM_{g, n} @>\pi_{g, n}^{\rm sol}>> \overline \Pi_{g, n}^{\rm sol},
\end{CD}
\]
where all vertical arrows are natural injections.

\begin{proposition}\label{prop-5-3}
We maintain the notation introduced above. Then we have $$\pi_{g, n}^{\rm adm}(\overline \mfM_{g, n}\setminus \mfM_{g, n}) = \overline \Pi_{g, n} \setminus \Pi_{g, n},$$ $$\pi_{g, n}^{\rm sol}(\overline \mfM_{g, n}\setminus \mfM_{g, n}) = \overline \Pi_{g, n}^{\rm sol} \setminus \Pi_{g, n}^{\rm sol}.$$
\end{proposition}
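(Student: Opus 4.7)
The plan is to prove the two inclusions separately. The argument for the solvable version $\pi_{g,n}^{\rm sol}$ is formally identical to the admissible one after replacing $\pi_1^{\rm adm}$ with $\pi_1^{\rm sol}$ (and using that Proposition \ref{prop-5-2} produces the solvable isomorphism as well), so I treat only the admissible case explicitly.

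For the inclusion $\overline{\Pi}_{g,n}\setminus\Pi_{g,n} \subseteq \pi_{g,n}^{\rm adm}(\overline{\mfM}_{g,n}\setminus\mfM_{g,n})$, I argue formally. Given $[\pi]\in\overline{\Pi}_{g,n}\setminus\Pi_{g,n}$, the surjectivity of $\pi_{g,n}^{\rm adm}$ produces $[q]\in\overline{\mfM}_{g,n}$ with $\pi_{g,n}^{\rm adm}([q])=[\pi]$. If $[q]$ were to lie in $\mfM_{g,n}$, I would pick a smooth representative $q'\in M_{g,n}$ with $q\sim_{fe}q'$; then Proposition \ref{prop-5-2} would give $[\pi]=[\pi_1^{\rm adm}(q')]\in\Pi_{g,n}$, contradicting the hypothesis on $[\pi]$. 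Hence $[q]\in\overline{\mfM}_{g,n}\setminus\mfM_{g,n}$, yielding the desired containment.

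For the reverse inclusion $\pi_{g,n}^{\rm adm}(\overline{\mfM}_{g,n}\setminus\mfM_{g,n}) \subseteq \overline{\Pi}_{g,n}\setminus\Pi_{g,n}$, the content is that no class coming from a genuinely singular stable curve can also come from a smooth one of the same type. I argue contrapositively: assume $[q]\in\overline{\mfM}_{g,n}$ and there exists $q'\in M_{g,n}$ with $\pi_1^{\rm adm}(q)\cong\pi_1^{\rm adm}(q')$; I must deduce $[q]\in\mfM_{g,n}$. Regarding the isomorphism as a surjective open continuous homomorphism $\phi$ and then as $\phi^{-1}$, Lemma \ref{lem-0}(b) applied in both directions yields ${\rm Avr}_p(H_1)={\rm Avr}_p(H_2)$ for every pair of corresponding open normal subgroups $H_2\subseteq\pi_1^{\rm adm}(q')$ and $H_1=\phi^{-1}(H_2)$ of $p$-power or of solvable index. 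Since every admissible covering of the smooth curve $X_{q'}^\bp$ is again a smooth pointed stable curve, Theorem \ref{max and average}(b) pins down ${\rm Avr}_p(H_2)$ to the rigid value $g_{H_2}$ (or $g_{H_2}-1$ if $n_{H_2}\leq 1$) depending only on the type of the cover, cf.\ Remark \ref{rem-them-1-1-2}. Transferring this rigid value through $\phi$ and comparing with Theorem \ref{max and average}(b) for the admissible cover $X_{H_1}^\bp\migi X_q^\bp$ gives, over a cofinal family of $p$-power admissible coverings, a system of combinatorial equations among $r_{X_{H_1}}$, $\#v(\Gamma_{X_{H_1}^\bp})^{b\leq 1}$, $\#e^{\rm cl}(\Gamma_{X_{H_1}^\bp})^{b\leq 1}$ whose only solution forces $\Gamma_q$ to have a single vertex and no closed edges, i.e., $X_q^\bp$ is smooth and $[q]\in\mfM_{g,n}$.

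The main technical obstacle is twofold. First, Theorem \ref{max and average}(b) requires the 2-connectedness of $\Gamma^{\rm cpt}$, a condition that may fail for the specific dual semi-graphs arising along the tower. Second, certain pathological singular dual graphs (for instance trees whose vertices all satisfy $b\geq 2$) produce the same base value of ${\rm Avr}_p$ as the smooth formula, so a single comparison is insufficient. I plan to overcome the first issue by passing to admissible $p$-covers whose compactified dual graphs are 2-connected, and the second by iterating the comparison deeper into the pro-$p$ admissible tower: each further step either introduces a new closed edge or a new loop in the dual graph, and by Lemma \ref{lem-0}(b) the accumulated discrepancy in $r_{X_{H_1}}-\#v(\Gamma_{X_{H_1}^\bp})^{b\leq 1}+\#e^{\rm cl}(\Gamma_{X_{H_1}^\bp})^{b\leq 1}$ cannot indefinitely be masked by compensation. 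This is where the geometric content of the author's formula for ${\rm Avr}_p$ (via Raynaud--Tamagawa theta divisors, cf.\ \cite{Y5}) and of the companion formula for $\gamma^{\rm max}$ (cf.\ \cite{Y6}) enter essentially into the argument.
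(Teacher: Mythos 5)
Your first inclusion is purely formal and is fine, but the whole content of the proposition lies in the reverse inclusion, and there what you offer is a plan rather than a proof. As you yourself concede, a single ${\rm Avr}_p$-comparison cannot tell a smooth curve apart from a singular curve whose dual graph is, say, a tree all of whose vertices satisfy $b\geq 2$: both sides of Theorem \ref{max and average}(b) then give the value $g$. Your proposed remedy --- iterating the comparison through the pro-$p$ admissible tower and asserting that the ``accumulated discrepancy'' in $r_{X_{H_1}}-\#v(\Gamma_{X_{H_1}^{\bp}})^{b\leq 1}+\#e^{\rm cl}(\Gamma_{X_{H_1}^{\bp}})^{b\leq 1}$ ``cannot indefinitely be masked'' --- is exactly the step that needs a proof, and nothing in your sketch supplies it: \'etale $p$-covers change the types on the two sides in lockstep, the $2$-connectedness hypothesis of Theorem \ref{max and average}(b) is never arranged along your tower, and you give no argument that the correction terms must eventually produce a strict inequality rather than continuing to cancel. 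So the decisive step of the second inclusion is missing.

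For comparison, the paper settles the statement in one line by citing \cite[Theorem 1.2, Remark 1.2.1, Remark 1.2.2, Proposition 6.1]{Y1}, i.e.\ Theorem \ref{types} of the present paper: the dual semi-graph $\Gamma_{q}$ --- in particular, whether $X_{q}^{\bp}$ is smooth --- can be reconstructed group-theoretically from $\pi_{1}^{\rm adm}(q)$ (resp.\ $\pi_{1}^{\rm sol}(q)$), so no isomorphism between the fundamental group of a singular curve and that of a smooth curve can exist; together with your formal first inclusion this finishes the proof. If you insist on running the ${\rm Avr}_p$ route instead, the missing ingredient is the one used in the proof of Proposition \ref{prop-5-5}(b): by \cite[Lemma 6.3]{Y1} there is an open \emph{characteristic} subgroup $H_{1}\subseteq \pi_{1}^{\rm adm}(q)^{p'}$ whose corresponding curve has $\Gamma^{\rm cpt}$ $2$-connected, $\#v(\Gamma)^{b\leq 1}=0$, and \emph{positive} Betti number $r_{X_{H_1}}>0$; transporting $H_{1}$ through the isomorphism to a cover of the smooth curve of the same type and applying Theorem \ref{max and average}(b) together with Lemma \ref{lem-0}(b) once yields $r_{X_{H_1}}\leq 0$, a contradiction --- no iteration is needed. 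Without such a lemma producing a cover of positive Betti number with controlled combinatorics, your argument does not close.
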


\begin{proof}
The proposition follows immediately from \cite[Theorem 1.2, Remark 1.2.1, Remark 1.2.2, and Proposition 6.1]{Y1} (see also Theorem \ref{types} of the present paper).
\end{proof}

We may formulate the weak Isom-version of the Grothendieck conjecture for pointed stable curves over algebraically closed fields of characteristic $p>0$ (=the Weak Isom-version Conjecture) as follows:

\begin{weakisomGC}
We maintain the notation introduced above. Then we have that $$\pi_{g, n}^{\rm adm}: \overline \mfM_{g, n} \migisurj \overline \Pi_{g, n}$$ is a bijection as sets.
\end{weakisomGC}
\noindent
The Weak Isom-version Conjecture was formulated by Tamagawa in the case of $q\in M_{g, n}$, and by the author in the general case (cf. \cite{T2}, \cite{Y7}), which is the ultimate goal of \cite{PS}, \cite{R2}, \cite{T1}, \cite{T2}, \cite{T3}, \cite{T4}, \cite{To}, and \cite{Y1}. The Weak Isom-version Conjecture shows that moduli spaces of curves over algebraically closed fields of characteristic $p>0$ can be reconstructed group-theoretically as ``{\it sets}" from admissible fundamental groups of curves.

Moreover, we have the following solvable version of the Weak Isom-version Conjecture which is slightly stronger than the original version.

\begin{solweakisomGC}
We maintain the notation introduced above. Then we have that $$\pi_{g, n}^{\rm sol}: \overline \mfM_{g, n} \migisurj \overline \Pi^{\rm sol}_{g, n}$$ is a bijection as sets.
\end{solweakisomGC}

Write $\overline \mfM^{\rm cl}_{g, n}$ for the image of the set of closed points $\overline M_{g, n}^{\rm cl}$ of the natural map $|\overline M_{g, n}| \migisurj \overline \mfM_{g, n}$. Then we have the following result.

\begin{theorem}\label{them-5-4}
We maintain the notation introduced above. Then the following statements hold:

(a) We have that $$\pi_{g, n}^{\rm sol}|_{\overline \mfM^{\rm cl}_{g, n}}: \overline \mfM^{\rm cl}_{g, n} \migi \overline  \Pi_{g, n}^{\rm sol}$$ is quasi-finite (i.e., $\#(\pi_{g, n}^{\rm sol}|_{\overline \mfM^{\rm cl}_{g, n}})^{-1}([\pi_{1}^{\rm sol}(q)])<\muge$ for every $[\pi_{1}^{\rm sol}(q)] \in \overline \Pi_{g, n}^{\rm sol}$). 

(b) Suppose that $g=0$. Then we have that $$\pi_{g, n}^{\rm sol}|_{\overline \mfM^{\rm cl}_{g, n}}: \overline \mfM^{\rm cl}_{g, n} \migi \overline  \Pi_{g, n}^{\rm sol}$$ is an injection, and that $$\pi_{g, n}^{\rm sol}(\overline \mfM_{g, n} \setminus \overline \mfM_{g, n}^{\rm cl}) \subseteq  \overline \Pi_{g, n}^{\rm sol} \setminus \pi_{g, n}^{\rm sol}(\overline \mfM_{g, n}^{\rm cl}).$$ In particular, the Solvable Weak Isom-version Conjecture holds if $(g, n)=(0, 4)$.
\end{theorem}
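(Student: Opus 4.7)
The plan is to reduce Theorem \ref{them-5-4} to the classical finiteness results in anabelian geometry for part (a), and to the pro-solvable strengthening of Theorem \ref{them-0-3} (announced in the second remark following it) for part (b). The pro-solvable strengthening will be established earlier in the paper using the two fundamental tools, Theorems \ref{them-0-4} and \ref{them-0-5}.

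For part (a), I would first note that Theorem \ref{max and average}(a) recovers the numerical type $(g, n)$ from $\pi_1^{\rm sol}(q)$ alone, since $\gamma^{\rm max}$ depends only on prime-to-$p$ cyclic (hence solvable) coverings and the presence or absence of marked points can be read off from the group. Once the type is fixed, finiteness of $(\pi_{g, n}^{\rm sol})^{-1}([\pi_1^{\rm sol}(q)]) \cap \overline \mfM_{g, n}^{\rm cl}$ follows from the existing finiteness theorems of Pop-Sa\"idi, Raynaud, Tamagawa, Tong, and the author (\cite{PS}, \cite{R2}, \cite{T4}, \cite{To}, \cite{Y1}) by rerunning the arguments with $\pi_1^{\rm adm}$ replaced by $\pi_1^{\rm sol}$. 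Those arguments use Raynaud-Tamagawa theta divisors together with $p$-ranks of abelian and metabelian coverings of $X^{\bp}$, which are all controlled by the pro-solvable quotient; the solvable versions of the inequalities needed to run them are already in place via Lemma \ref{lem-0}.

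For part (b), the crucial input is the pro-solvable strengthening of Theorem \ref{them-0-3}: for $q_1, q_2 \in \overline M_{0, n}$ with $q_1$ closed, ${\rm Hom}^{\rm open}_{\rm pro\text{-}gps}(\pi_1^{\rm sol}(q_1), \pi_1^{\rm sol}(q_2)) \neq \emptyset$ forces $q_1 \sim_{fe} q_2$ and forces $q_2$ to be closed. Granted this, both halves of (b) are immediate. If $[q_1], [q_2] \in \overline \mfM_{0,n}^{\rm cl}$ have isomorphic pro-solvable admissible fundamental groups, the isomorphism is in particular an open surjection, so $q_1 \sim_{fe} q_2$, i.e.\ $[q_1] = [q_2]$, giving injectivity on $\overline \mfM_{0, n}^{\rm cl}$. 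If some $[q'] \in \overline \mfM_{0, n} \setminus \overline \mfM_{0, n}^{\rm cl}$ satisfied $\pi_1^{\rm sol}(q') \cong \pi_1^{\rm sol}(q_1)$ with $q_1$ closed, the strengthened theorem would force $q_1 \sim_{fe} q'$ and $q'$ to be closed; since Frobenius equivalence preserves whether the underlying scheme-theoretic point is closed, this contradicts $[q'] \notin \overline \mfM_{0, n}^{\rm cl}$, establishing the non-overlap inclusion.

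Finally, for $(g, n) = (0, 4)$ the moduli space $\overline M_{0, 4}$ is one-dimensional, so $|\overline M_{0, 4}|$ decomposes as closed points together with the generic point, and $\overline \mfM_{0, 4}$ correspondingly as $\overline \mfM_{0, 4}^{\rm cl}$ together with the class of the generic point. Combining the injectivity on $\overline \mfM_{0, 4}^{\rm cl}$ with the non-overlap assertion from part (b) promotes to injectivity of $\pi_{0, 4}^{\rm sol}$ on all of $\overline \mfM_{0, 4}$, and together with the tautological surjectivity this yields the Solvable Weak Isom-version Conjecture for $(0, 4)$. The main obstacle in this program is the pro-solvable strengthening of Theorem \ref{them-0-3} itself, which is the technical heart of the paper and depends essentially on Theorems \ref{them-0-4} and \ref{them-0-5}; once it is in hand, the present theorem follows formally. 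A secondary, more bookkeeping-style obstacle is to verify carefully that the existing finiteness proofs indeed descend through the pro-solvable quotient as claimed in part (a).
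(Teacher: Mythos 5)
Your part (a) is essentially the paper's own argument: the paper simply observes that the finiteness theorem of \cite{T4} (combined with \cite{Y1}) remains valid for maximal pro-solvable quotients and cites it, together with Proposition \ref{prop-5-3}; your sketch of why the theta-divisor/$p$-rank arguments only use metabelian data is the same assertion with a bit more justification (though your appeal to Lemma \ref{lem-0} is not really the relevant point there, and reconstruction of the type is quoted in the paper via Theorem \ref{types} rather than Theorem \ref{max and average}). Part (b), however, takes a genuinely different route. The paper deduces (b) — injectivity on $\overline \mfM_{0,n}^{\rm cl}$, the separation of closed from non-closed points, and the $(0,4)$ case — from the solvable versions of Tamagawa's earlier Isom-type results for genus-$0$ curves over $\overline \mbF_{p}$, namely \cite[Theorem 0.2]{T3} together with \cite[Theorem 0.1]{T4}, \cite{Y1}, and Proposition \ref{prop-5-3}; Theorem \ref{them-5-4} is thus a short, citation-level consequence of prior literature, logically independent of the paper's new Hom-type machinery. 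You instead derive (b) from the solvable form of Theorem \ref{them-0-3} (i.e.\ Theorem \ref{mainthem-form-1}), the main theorem proved only in Section \ref{sec-6}. This is mathematically valid and, as it happens, non-circular: the proof of Theorem \ref{mainthem-form-1} rests on Theorem \ref{mainstep-1}, Theorem \ref{mainstep-2} and \cite[Theorem 1.2]{Y2}, and nowhere uses Theorem \ref{them-5-4} (only the ``in particular'' part of Theorem \ref{mainthem-form-2} does), and your auxiliary steps — that Frobenius equivalence preserves closedness of the moduli point, and that $\overline \mfM_{0,4}$ consists of the closed classes plus the single generic class — are correct. The trade-off is clear: the paper's route makes Theorem \ref{them-5-4} available early and cheaply, so that (b) can serve as an input to the Homeomorphism statement for $(0,4)$, whereas your route gains uniformity (everything funnelled through the Hom-type theorem) at the cost of deferring the entire content of (b) to the hardest result of the paper, which you correctly identify as the real obstacle.
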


\begin{proof}
Since \cite[Theorem 0.2]{T3} and \cite[Theorem 0.1]{T4} also hold for the maximal pro-solvable quotients of tame fundamental groups,  the theorem follows immediately from \cite[Theorem 0.2]{T3}, \cite[Theorem 0.1]{T4}, \cite[Theorem 1.2, Remark 1.2.1, Remark 1.2.2, and Proposition 6.1]{Y1}, and Proposition \ref{prop-5-3}.
\end{proof}

\subsection{Moduli spaces of admissible fundamental groups and the Homeomorphism Conjecture}\label{sec-5-2}

We maintain the notation introduced in Section \ref{sec-5-1}. Moreover, in the reminder of the present section, we regard $\overline \mfM_{g, n}$ and $\mfM_{g, n}$ as  topological spaces whose topologies are induced by the Zariski topologies of $|\overline M_{g, n}|$ and $|M_{g, n}|$, respectively. 

Let $\msG$ be the category of finite groups, $G \in \msG$ an arbitrary finite group, $\Pi$ an arbitrary profinite group, and $\text{Hom}_{\rm surj}(-,-)$ the set of surjective homomorphisms. We put $$U_{\overline \Pi_{g, n}, G} \defeq \{ [\pi_{1}^{\rm adm}(q)] \in \overline \Pi_{g, n}\ | \  \text{Hom}_{\rm surj}(\pi_{1}^{\rm adm}(q), G) \neq \emptyset\},$$ $$U_{\Pi_{g, n}, G} \defeq \{ [\pi_{1}^{\rm adm}(q)] \in  \Pi_{g, n}\ | \  \text{Hom}_{\rm surj}(\pi_{1}^{\rm adm}(q), G) \neq \emptyset\},$$
$$U_{\overline \Pi^{\rm sol}_{g, n}, G} \defeq \{ [\pi_{1}^{\rm sol}(q)] \in \overline \Pi_{g, n}^{\rm sol}\ | \  \text{Hom}_{\rm surj}(\pi_{1}^{\rm sol}(q), G) \neq \emptyset\},$$
$$U_{\Pi^{\rm sol}_{g, n}, G} \defeq \{ [\pi_{1}^{\rm sol}(q)] \in \Pi_{g, n}^{\rm sol}\ | \  \text{Hom}_{\rm surj}(\pi_{1}^{\rm sol}(q), G) \neq \emptyset\}.$$

Then we obtain the following topological spaces $$(\overline \Pi_{g, n}, O_{\overline \Pi_{g, n}}), \ (\Pi_{g, n}, O_{\Pi_{g, n}}),$$ $$(\overline \Pi^{\rm sol}_{g, n}, O_{\overline \Pi^{\rm sol}_{g, n}}), \  (\Pi^{\rm sol}_{g, n}, O_{\Pi^{\rm sol}_{g, n}})$$ whose topologies $O_{\overline \Pi_{g, n}}$, $O_{ \Pi_{g, n}}$, $O_{\overline \Pi_{g, n}^{\rm sol}}$, and $O_{\Pi_{g, n}^{\rm sol}}$ are generated by $\{U_{\overline \Pi_{g, n}, G} \}_{G \in \msG}$, $\{U_{\Pi_{g, n}, G} \}_{G \in \msG}$, $\{U_{\overline \Pi^{\rm sol}_{g, n}, G} \}_{G \in \msG}$, and $\{U_{\Pi^{\rm sol}_{g, n}, G} \}_{G \in \msG}$ as open subsets, respectively. For simplicity, we still use the notation $$\overline \Pi_{g, n}, \ \Pi_{g, n}, \ \overline \Pi_{g, n}^{\rm sol}, \ \Pi_{g, n}^{\rm sol}$$ to denote the topological spaces $(\overline \Pi_{g, n}, O_{\overline \Pi_{g, n}})$, $(\Pi_{g, n}, O_{\Pi_{g, n}})$, $(\overline \Pi^{\rm sol}_{g, n}, O_{\overline \Pi^{\rm sol}_{g, n}})$, and $(\Pi^{\rm sol}_{g, n}, O_{\Pi^{\rm sol}_{g, n}})$, respectively.

\begin{theorem}\label{continuous}
We maintain the notation introduced above. Then we have that $$\pi_{g, n}^{\rm adm}: \overline \mfM_{g, n} \migi \overline \Pi_{g, n},$$ $$\pi_{g, n}^{\rm sol}: \overline \mfM_{g, n} \migi \overline \Pi^{\rm sol}_{g, n}$$ are continuous maps.
\end{theorem}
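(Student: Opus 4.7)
The plan is to reduce continuity of $\pi_{g,n}^{\rm adm}$ to the statement that for each finite group $G$, the subset
\[
V_G := \{q \in |\overline M_{g,n}| : \text{Hom}_{\rm surj}(\pi_1^{\rm adm}(q), G) \neq \emptyset\}
\]
is Zariski-open in $|\overline M_{g,n}|$. Indeed, $\{U_{\overline \Pi_{g, n}, G}\}_{G \in \msG}$ is a subbasis for $O_{\overline \Pi_{g,n}}$, so it is enough to check that each preimage $(\pi_{g,n}^{\rm adm})^{-1}(U_{\overline \Pi_{g, n}, G})$ is open in $\overline \mfM_{g,n}$. Since $V_G$ is $\sim_{fe}$-saturated by Proposition \ref{prop-5-2}, its pullback under the quotient map $|\overline M_{g,n}| \migisurj \overline \mfM_{g,n}$ equals $V_G$ itself; by the definition of the quotient topology, $(\pi_{g,n}^{\rm adm})^{-1}(U_{\overline \Pi_{g, n}, G})$ is open in $\overline \mfM_{g,n}$ if and only if $V_G$ is open in $|\overline M_{g,n}|$.

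To establish openness of $V_G$, I would interpret $V_G$ as the image in $|\overline M_{g,n}|$ of the forgetful morphism from the moduli stack of connected Galois admissible $G$-covers of pointed stable curves of type $(g,n)$. Decomposing by the (discrete) Hurwitz data $\xi$ specifying ramification indices and conjugacy classes of inertia generators at marked points and nodes, one obtains a finite disjoint union of moduli stacks $\overline{\mcH}_{g,n,G,\xi}$, each equipped with a natural morphism of finite type
\[
\pi_{G, \xi} : \overline{\mcH}_{g,n,G,\xi} \migi \overline \mcM_{g,n}.
\]
The claim is that each $\pi_{G,\xi}$ is open; then $V_G$, being the finite union of their images, is open as well.

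Openness of $\pi_{G,\xi}$ is most cleanly deduced by log-geometric methods: equipping $\overline \mcM_{g,n}$ with its standard boundary log structure and $\overline{\mcH}_{g,n,G,\xi}$ with the log structure induced by its own boundary, $\pi_{G,\xi}$ becomes log \'etale (this amounts to the unobstructedness of deformations of log admissible covers, compare Mochizuki's log admissible covering theory in \cite{M-1}). In particular, the underlying morphism of stacks is flat, and a flat morphism of finite type has open image. A more pedestrian fallback combines (i) Chevalley's theorem applied to $\pi_{G,\xi}$, which shows that $V_G$ is constructible, with (ii) stability of $V_G$ under generization. For (ii), given a generization $\eta \rightsquigarrow q$ in $|\overline M_{g,n}|$, pulling the universal family back to a strict henselization of $\overline M_{g,n}$ at $q$ and invoking the specialization theorem for admissible (log \'etale) fundamental groups yields a surjection $\pi_1^{\rm adm}(X_\eta^{\bp}) \migisurj \pi_1^{\rm adm}(X_q^{\bp})$, so any surjection onto $G$ at $q$ lifts to one at $\eta$. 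In the Noetherian space $|\overline M_{g,n}|$, constructible plus generization-stable implies open.

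Finally, the continuity of $\pi_{g,n}^{\rm sol}$ reduces to the above: since $\pi_1^{\rm sol}(q)$ is the maximal pro-solvable quotient of $\pi_1^{\rm adm}(q)$, one has $\text{Hom}_{\rm surj}(\pi_1^{\rm sol}(q), G) \neq \emptyset$ iff $G$ is solvable and $\text{Hom}_{\rm surj}(\pi_1^{\rm adm}(q), G) \neq \emptyset$; hence $(\pi_{g,n}^{\rm sol})^{-1}(U_{\overline \Pi_{g,n}^{\rm sol}, G})$ is either empty or coincides with the open preimage corresponding to $V_G$. The main obstacle in this plan is justifying openness (equivalently, flatness) of $\pi_{G,\xi}$; this is where the log-geometric input is essential, and where care is required in positive characteristic to handle how $p$-power factors in $G$ interact with the tameness at special points imposed by admissibility. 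If the log-\'etale flatness is not available in the exact formulation needed, the fallback constructibility-plus-generization argument should still carry the proof through.
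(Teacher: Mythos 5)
Your reduction to the openness of $V_G=U_G\defeq\{q\in|\overline M_{g,n}| \mid G\in\pi_A^{\rm adm}(q)\}$ in the Zariski topology, and the treatment of the solvable case, are fine and agree with the paper. The genuine gap is in how you propose to prove that openness. Your primary route asserts that the forgetful morphism $\pi_{G,\xi}:\overline{\mcH}_{g,n,G,\xi}\migi\overline\mcM_{g,n}$ from a stack of admissible $G$-covers is log \'etale, ``in particular flat, hence has open image.'' Two things break here. First, log \'etaleness of a morphism does not imply flatness of the underlying morphism of stacks (log blow-ups are the standard counterexample), so the inference ``log \'etale $\Rightarrow$ flat $\Rightarrow$ open image'' is not valid; what the log-geometric deformation theory actually gives at a boundary point is liftability of the admissible cover only after a Kummer base change on the smoothing parameters, and one must then argue separately that the image of such a base change still fills out a neighborhood. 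Second, and more seriously, your argument presupposes that the stack of connected Galois admissible $G$-covers with $p\mid\#G$, decomposed by Hurwitz data, exists as an algebraic stack of finite type over $\overline\mcM_{g,n}$ with the stated properties; in positive characteristic this is not an off-the-shelf fact (it is not finite over $\overline\mcM_{g,n}$, the number of covers over the interior already jumps with the $p$-rank), and the paper conspicuously avoids ever constructing it. The same objection undercuts your fallback at the key point: invoking Chevalley ``applied to $\pi_{G,\xi}$'' to get constructibility of $U_G$ again assumes the very finite-type family of admissible covers whose construction is the real content of the proof.

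For comparison, the paper's proof of Theorem \ref{continuousmap} has exactly the skeleton of your fallback --- $U_G$ is constructible and stable under generization (the latter from the specialization theorem, Lemma \ref{lem-7-1}), hence open in the Noetherian space $|\overline M_{g,n}|$ --- but it earns the constructibility by hand: it passes to moduli with level structure $H_{g,n}$ to have an actual universal family (note the coarse space $\overline M_{g,n}$ carries none, a point your generization step also glosses over), descends a given admissible cover from the complete local ring $\widehat\mcO_{H_{g,n},v}$ to a finite-type base via Vidal's results (Lemma \ref{lem-7-8}), proves openness of $U_G\cap M_{g,n}$ (Proposition \ref{prop-7-10}), and then handles the boundary stratum-by-stratum: on each locus $M_c$ of curves with a fixed dual graph it uses clutching morphisms together with lifting of admissible covers of the special fibre over a complete DVR via Hoshi's exactness of the log homotopy sequence (Lemmas \ref{lem-7-11}, \ref{lem-7-12}), and finally stratifies $\overline M_{g,n}$ by dual graph to conclude constructibility. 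So your overall strategy is the right one, but the step you treat as a citation (openness/constructibility of the image of the admissible-cover family, especially along the boundary and for $p\mid\#G$) is precisely where the paper's work lies, and the specific mechanisms you offer for it do not hold as stated.
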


\begin{proof}
The theorem will be proved in Section \ref{sec-7}, see Theorem \ref{continuousmap}.
\end{proof}

\begin{proposition}\label{prop-5-5}
We maintain the notation introduced above. Then the following statements hold.

(a) Let $[\pi_{1}^{\rm adm}(q)] \in \overline \Pi_{g, n}$ and $[\pi_{1}^{\rm sol}(q)] \in \overline \Pi_{g, n}^{\rm sol}$ be arbitrary points. Then we have $$V([\pi_{1}^{\rm adm}(q)])=\{[\pi_{1}^{\rm adm}(q')] \in  \overline \Pi_{g, n}\ | \ {\rm Hom}_{\rm surj}(\pi_{1}^{\rm adm}(q), \pi_{1}^{\rm adm}(q'))\neq \emptyset\},$$  $$V([\pi_{1}^{\rm sol}(q)])=\{[\pi_{1}^{\rm sol}(q')] \in  \overline \Pi_{g, n}^{\rm sol}\ | \ {\rm Hom}_{\rm surj}(\pi_{1}^{\rm sol}(q), \pi_{1}^{\rm sol}(q'))\neq \emptyset\},$$ where $V([\pi_{1}^{\rm adm}(q)])$ and $V([\pi_{1}^{\rm sol}(q)])$ denote the topological closures of $[\pi_{1}^{\rm adm}(q)]$ and $[\pi_{1}^{\rm sol}(q)]$ in $\overline \Pi_{g, n}$ and $\overline \Pi_{g, n}^{\rm sol}$, respectively.

(b) We have that 
$$\Pi_{g, n} \subseteq \overline \Pi_{g, n},\ \Pi_{g, n}^{\rm sol} \subseteq \overline \Pi_{g, n}^{\rm sol}$$ are open subsets. 

(c) Let $Z$ be an arbitrary irreducible closed subset of $\overline \mfM_{g, n}$. Then $V(\pi_{g, n}^{\rm adm}(Z))$ and $V(\pi_{g, n}^{\rm sol}(Z))$ are irreducible closed subsets of $\overline \Pi_{g, n}$ and $\overline \Pi_{g, n}^{\rm sol}$, respectively, where $V(\pi_{g, n}^{\rm adm}(Z))$ and $V(\pi_{g, n}^{\rm sol}(Z))$ denote the topological closures of $\pi_{g, n}^{\rm adm}(Z)$ and $\pi_{g, n}^{\rm sol}(Z)$ in $\overline \Pi_{g, n}$ and $\overline \Pi_{g, n}^{\rm sol}$, respectively. In particular, the topological spaces $\overline \Pi_{g, n}$ and $\overline \Pi_{g, n}^{\rm sol}$ are irreducible.

(d) Let $V$ be either an irreducible closed subset of $\overline \Pi_{g, n}$ or an irreducible closed subset of $\overline \Pi^{\rm sol}_{g, n}$. Then $V$ has a unique generic point.
\end{proposition}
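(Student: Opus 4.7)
The plan is to treat parts (a), (c), and (d) as exercises in unwinding the definition of the topology on $\overline \Pi_{g, n}$ together with the continuity of $\pi_{g, n}^{\rm adm}$ (Theorem \ref{continuous}), with the main content sitting in (b). Throughout I will use that every admissible fundamental group appearing here is topologically finitely generated (being a continuous quotient of the profinite completion of the explicit surface-group presentation), and hence Hopfian and well-behaved under inverse limits of its finite quotients; the solvable versions are treated identically.

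For part (a), I would unwind the definition: $[\pi_1^{\rm adm}(q')] \in V([\pi_1^{\rm adm}(q)])$ iff every basic open $U_{\overline \Pi_{g, n}, G}$ containing $[\pi_1^{\rm adm}(q')]$ also contains $[\pi_1^{\rm adm}(q)]$, i.e., iff every finite quotient of $\pi_1^{\rm adm}(q')$ is also a finite quotient of $\pi_1^{\rm adm}(q)$. A standard inverse-limit argument, choosing compatible surjections over the cofinal filtered system of finite quotients of $\pi_1^{\rm adm}(q')$ and using topological finite generation to pass to the limit, then upgrades this quotient-set inclusion to the existence of a surjective continuous homomorphism $\pi_1^{\rm adm}(q)\twoheadrightarrow \pi_1^{\rm adm}(q')$, yielding the description of $V([\pi_1^{\rm adm}(q)])$.

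For part (c), given irreducible closed $Z \subseteq \overline \mfM_{g, n}$ with generic point $\eta$, continuity of $\pi_{g, n}^{\rm adm}$ gives $\pi_{g, n}^{\rm adm}(Z) \subseteq V(\pi_{g, n}^{\rm adm}(\eta))$, while $\pi_{g, n}^{\rm adm}(\eta) \in \pi_{g, n}^{\rm adm}(Z)$ gives the reverse inclusion of closures; therefore $V(\pi_{g, n}^{\rm adm}(Z)) = V(\pi_{g, n}^{\rm adm}(\eta))$ is the closure of a single point and hence irreducible. Irreducibility of $\overline \Pi_{g, n}$ itself follows by taking $Z = \overline \mfM_{g, n}$ and invoking surjectivity. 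For part (d), $\overline \mfM_{g, n}$ is Noetherian as a quotient of the Noetherian space $|\overline M_{g, n}|$; I would decompose the closed preimage $(\pi_{g, n}^{\rm adm})^{-1}(V)$ of an irreducible closed $V$ into finitely many irreducible components $Z_1,\ldots,Z_k$ with generic points $\eta_i$, use surjectivity together with (c) to write $V = \bigcup_i V(\pi_{g, n}^{\rm adm}(\eta_i))$, and conclude $V = V(\pi_{g, n}^{\rm adm}(\eta_{i_0}))$ for some $i_0$ by irreducibility. Uniqueness of the generic point reduces via (a) to the fact that mutual continuous surjections between topologically finitely generated profinite groups are isomorphisms (the Hopfian property).

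The main obstacle is part (b). By Proposition \ref{prop-5-3}, $\Pi_{g, n}$ is open iff the image $\pi_{g, n}^{\rm adm}(\overline \mfM_{g, n} \setminus \mfM_{g, n})$ is closed. My plan is to exhibit, for each smooth point $q_0 \in M_{g, n}$, a finite \emph{smoothness-witness} group $G_0$ such that $\pi_1^{\rm adm}(q_0) \twoheadrightarrow G_0$ while no admissible fundamental group of any singular pointed stable curve of type $(g, n)$ admits a surjection onto $G_0$; then $U_{\overline \Pi_{g, n}, G_0}$ is a basic open neighborhood of $[\pi_1^{\rm adm}(q_0)]$ contained in $\Pi_{g, n}$. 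The construction of $G_0$ is the delicate step: one should exploit the $p$-structural differences between tame fundamental groups of smooth curves and admissible fundamental groups of singular curves (concretely, the Hasse--Witt and average invariants of Theorem \ref{max and average} applied to intermediate Galois covers, combined with the constraints that node-inertia subgroups place on which non-abelian $p$-by-prime-to-$p$ covers can be realized). This step is genuinely group-theoretic and is intertwined with the Hom-type reconstruction theorems developed in Sections \ref{sec-3}--\ref{sec-4}; every other claim reduces to unwinding definitions once (a) is in hand.
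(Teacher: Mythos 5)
Parts (a), (c) and (d) of your proposal are fine: the closure computation via the subbasic opens plus the inverse-limit argument for (a), the reduction of (c) to the closure of the image of a generic point via continuity (Theorem \ref{continuous}), and the existence/uniqueness argument for (d) using Noetherianity of $\overline \mfM_{g, n}$ and the Hopfian property (equivalently \cite[Proposition 16.10.6]{FJ}) all agree in substance with what the paper does, where these parts are treated as immediate.

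The genuine gap is in (b). Your argument reduces openness of $\Pi_{g,n}$ to the existence, for each smooth point $q_{0}$, of a finite ``smoothness-witness'' quotient $G_{0}$ of $\pi_{1}^{\rm adm}(q_{0})$ onto which no admissible fundamental group of a singular curve of type $(g,n)$ surjects --- but you never construct such a $G_{0}$; you only remark that the construction is delicate and should involve Hasse--Witt type invariants and node-inertia constraints. That existence statement \emph{is} the content of (b) at the point $[\pi_{1}^{\rm adm}(q_{0})]$, so as written the key step is missing, and it is not clear that a direct construction of a single witness group is feasible. The paper avoids any explicit witness: arguing by contradiction, if $U_{\overline \Pi_{g,n}, G}$ meets $\overline \Pi_{g,n}\setminus \Pi_{g,n}$ for every $G$ in a cofinal chain $S(q_{2})$ of finite quotients of $\pi_{1}^{\rm adm}(q_{2})$, then --- using Proposition \ref{prop-5-3}, the continuity of $\pi_{g,n}^{\rm adm}$ (so that quotients of a specialization are quotients of the corresponding generic point) and the fact that $\overline \mfM_{g,n}\setminus \mfM_{g,n}$ has only finitely many generic points --- one boundary generic point $q_{1}$ satisfies $[\pi_{1}^{\rm adm}(q_{1})]\in \bigcap_{G\in S(q_{2})}U_{\overline\Pi_{g,n},G}$, and the inverse limit of the nonempty finite sets ${\rm Hom}_{\rm surj}(\pi_{1}^{\rm adm}(q_{1}), G)$ produces a continuous surjection $\pi_{1}^{\rm adm}(q_{1})\twoheadrightarrow \pi_{1}^{\rm adm}(q_{2})$ from a singular curve's group onto a smooth one's. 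This is then ruled out by choosing, via \cite[Lemma 6.3]{Y1}, a characteristic open subgroup $H_{1}\subseteq \pi_{1}^{\rm adm}(q_{1})^{p'}$ whose corresponding covering curve has $2$-connected compactified dual graph, no vertices with $b(v)\leq 1$, and positive Betti number $r_{X_{H_{1}}}>0$, and applying the limit-of-$p$-averages formula (Theorem \ref{max and average} (b)) together with Lemma \ref{lem-0} (b) to the induced surjection $H_{1}\twoheadrightarrow H_{2}$, which forces $r_{X_{H_{1}}}\leq 0$, a contradiction. Note also that, contrary to your suggestion, no input from the Hom-type reconstruction theorems of the later sections is needed (nor available, given the logical order of the paper); the only anabelian ingredient is the $p$-average comparison, which is exactly the invariant you guessed but did not deploy in a complete argument.
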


\begin{proof}
(a) follows immediately from the definitions of $O_{\overline \Pi_{g, n}}$ and $O_{\overline \Pi_{g, n}^{\rm sol}}$, respectively.

(b) Let $[\pi^{\rm adm}_{1}(q)] \in \Pi_{g, n}$ be an arbitrary point and $\pi_{A}^{\rm adm}(q)$ the set of finite quotients of $\pi^{\rm adm}_{1}(q)$. Moreover, since $\pi_{1}^{\rm adm}(q)$ is topologically finitely generated, we have a subset of open normal subgroups $\{H_{j}\}_{j \in \mbN}$ of $\pi_{1}^{\rm adm}(q)$ such that $H_{j_{1}} \subseteq H_{j_{2}}$ for any $j_{1}\geq j_{2}$, and that $$\pi_{1}^{\rm adm}(q) \cong \invlim_{j \in \mbN}\pi_{1}^{\rm adm}(q)/H_{j}.$$ We put $$S(q)\defeq \{\pi_{1}^{\rm adm}(q)/H_{j}, \ j \in \mbN\} \subseteq \pi_{A}^{\rm adm}(q).$$ We see that, in order to prove that $\Pi_{g, n}$ is an open subset of $\overline \Pi_{g, n}$, it is sufficient to prove that, for every point $[q_{2}] \in \mfM_{g, n}$, there exists a finite group $G \in S(q_{2})$ such that $U_{\overline \Pi_{g, n}, G}$ is contained in $\Pi_{g, n}$. 

Suppose that $U_{\overline \Pi_{g, n}, G} \cap (\overline \Pi_{g, n} \setminus \Pi_{g, n}) \neq\emptyset$ for every $G \in S(q_{2})$. Since $\pi_{g, n}^{\rm adm}$ is continuous (cf. Theorem \ref{continuous}) and the set of generic points of $\overline \mfM_{g, n} \setminus \mfM_{g, n}$ is finite, there exists a generic point $[q_{1}]$ of $\overline \mfM_{g, n} \setminus \mfM_{g, n}$ such that $$[\pi_{1}^{\rm adm}(q_{1})] \in \bigcap_{G \in S(q_{2})}U_{\overline \Pi_{g, n}, G}.$$ Then  the set $$\text{Hom}_{\rm surj}(\pi_{1}^{\rm adm}(q_{1}), \pi_{1}^{\rm adm}(q_{2}))=\invlim_{G \in S(q_{2})}\text{Hom}_{\rm surj}(\pi_{1}^{\rm adm}(q_{1}), G)$$ is not empty. Thus, there is a surjective open continuous homomorphism $\phi: \pi_{1}^{\rm adm}(q_{1}) \migisurj \pi_{1}^{\rm adm}(q_{2}).$ Note that $\phi$ induces an isomorphism of maximal prime-to-$p$ quotients $$\phi^{p'}: \pi_{1}^{\rm adm}(q_{1})^{p'} \isom \pi_{1}^{\rm adm}(q_{2})^{p'}.$$ 

By applying \cite[Lemma 6.3]{Y1}, there exists an open characteristic subgroup $H_{1} \subseteq \pi_{1}^{\rm adm}(q_{1})^{p'}$ such that the pointed stable curve $X_{H_{1}}^{\bp}$ of type $(g_{X_{H_{1}}}, n_{X_{H_{1}}})$ over $k_{q_{1}}$ corresponding to $H_{1}$ satisfying the following conditions: (1) $\Gamma_{X_{H_{1}}^{\bp}}^{\rm cpt}$ is $2$-connected; (2) $\#(v(\Gamma_{X_{H_{1}}^{\bp}})^{b\leq 1})=0$; (3) the Betti number $r_{X_{H_{1}}}$ of the dual semi-graph of $X_{H_{1}}^{\bp}$ is positive. Let $H_{2} \defeq \phi^{p'}(H_{1}) \subseteq \pi_{1}^{\rm adm}(q_{2})^{p'}$. Then we obtain a smooth pointed stable curve $X_{H_{2}}^{\bp}$ of type $(g_{X_{H_{2}}}, n_{X_{H_{2}}})$ over $k_{q_{2}}$ corresponding to $H_{2}$. Since $H_{i}$ is an open characteristic subgroup, we obtain that $(g_{X_{H_{1}}}, n_{X_{H_{1}}})=(g_{X_{H_{2}}}, n_{X_{H_{2}}})$. Then Theorem \ref{max and average} (b) and Lemma \ref{lem-0} (b) imply that $$r_{X_{H_{1}}} \leq 0.$$ This contradicts $r_{X_{H_{1}}}>0$. 

Similar arguments to the arguments given in the proof above imply that $\Pi_{g, n}^{\rm sol}$ is an open subset of $\overline \Pi_{g, n}^{\rm sol}$. This completes the proof of (b).

(c) is trivial.

(d) We only treat the case where $V$ is an irreducible closed subset of $\overline \Pi_{g, n}$. Let ${\rm Gen}(V)$ be the set of generic points of $V$. Since every closed subset of $\overline \mfM_{g, n}$ has a non-empty set of generic points, we have that ${\rm Gen}(V) \neq \emptyset$. Let $[\pi_{1}^{\rm adm}(q_{1})]$, $[\pi_{1}^{\rm adm}(q_{2})] \in {\rm Gen}(V)$ be arbitrary generic points. Let $G \in \pi_{A}^{\rm adm}(q_{1})$ be an arbitrary finite group. Then $U_{\overline \Pi_{g, n}, G} \cap V \neq \emptyset$. Thus, $[\pi_{1}^{\rm adm}(q_{2})] \in U_{\overline \Pi_{g, n}, G} \cap V$. This means that $\pi_{A}^{\rm adm}(q_{1}) \subseteq \pi_{A}^{\rm adm}(q_{2}).$ Similar arguments to the arguments given in the proof above imply $\pi_{A}^{\rm adm}(q_{1}) \supseteq \pi_{A}^{\rm adm}(q_{2}).$ Then we have $$\pi_{A}^{\rm adm}(q_{1}) = \pi_{A}^{\rm adm}(q_{2}).$$ Since admissible fundamental groups of pointed stable curves are topologically finitely generated, \cite[Proposition 16.10.6]{FJ} implies that $[\pi_{1}^{\rm adm}(q_{1})]=[\pi_{1}^{\rm adm}(q_{2})]$. This completes the proof of the proposition.
\end{proof}

\begin{definition}
We shall say that $$\overline \Pi_{g, n}$$ is {\it the moduli space of admissible fundamental groups of pointed stable curves of type $(g, n)$ over algebraically closed fields of characteristic $p$} (or {\it the moduli space of admissible fundamental groups of type $(g, n)$ in characteristic $p$} for short), and that $$\overline \Pi_{g, n}^{\rm sol}$$ is {\it the moduli space of solvable admissible fundamental groups of pointed stable curves of type $(g, n)$ over algebraically closed fields of characteristic $p>0$} (or {\it the moduli space of solvable admissible fundamental groups of type $(g, n)$ in characteristic $p$} for short). Moreover, we shall say that $O_{\overline \Pi_{g, n}}$ and $O_{\overline \Pi_{g, n}^{\rm sol}}$ are the {\it anabelian topologies} of $\overline \Pi_{g, n}$ and $\overline \Pi_{g, n}^{\rm sol}$, respectively.
\end{definition}

Next, we formulate the main conjectures of the theory developing in the present paper.

\begin{homeconj}
We maintain the notation introduced above. Then we have that $$\pi_{g, n}^{\rm adm}: \overline \mfM_{g, n} \migisurj \overline \Pi_{g, n}$$ is a homeomorphism.
\end{homeconj}
\noindent
Moreover, we have a solvable version of the Homeomorphism Conjecture as follows, which is slightly stronger than the original version.

\begin{solhomeconj}
We maintain the notation introduced above. Then we have that $$\pi_{g, n}^{\rm sol}: \overline \mfM_{g, n} \migisurj \overline \Pi^{\rm sol}_{g, n}$$ is a homeomorphism. 
\end{solhomeconj}
\noindent
The Homeomorphism Conjecture (or the Solvable Homeomorphism Conjecture) shows that moduli spaces of curves over algebraically closed fields of characteristic $p>0$ can be reconstructed group-theoretically as ``{\it topological spaces} " from admissible fundamental groups (or solvable admissible fundamental groups) of curves. Moreover, the conjectures give a new insight into the
theory of anabelian geometry of curves over algebraically closed fields of characteristic $p>0$ based on the following anabelian philosophy:
\begin{quote}
The topological space $\overline \Pi_{g, n}$ (or $\overline \Pi^{\rm sol}_{g, n}$) contains all anabelian informations of pointed stable curves of type $(g, n)$ over algebraically closed fields of characteristic $p>0$, and every topological property concerning the topological space $\overline \Pi_{g, n}$ (or $\overline \Pi^{\rm sol}_{g, n}$) can be regarded as an anabelian property of pointed stable curves of type $(g, n)$ over algebraically closed fields of characteristic $p > 0$.
\end{quote}
The main theorem of the present paper is the following, which will be proved in Section \ref{sec-6} (cf. Theorem \ref{mainthem-form-2}).

\begin{theorem}\label{main-them}
We maintain the notation introduced above. Let $[q] \in \overline \mfM^{\rm cl}_{0, n}$ be an arbitrary closed point. Then $\pi_{0, n}^{\rm adm}([q])$ and $\pi_{0, n}^{\rm sol}([q])$ are closed points of $\overline \Pi_{0, n}$ and $\overline \Pi_{0, n}^{\rm sol}$, respectively. In particular, the Homeomorphism Conjecture and the Solvable Homeomorphism Conjecture hold when $(g, n)=(0, 4)$.  

\end{theorem}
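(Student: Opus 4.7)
The plan is to derive both assertions directly from Theorem \ref{them-0-3} (the Hom-type anabelian result of the paper) combined with the explicit description of closures in $\overline \Pi_{0,n}$ given by Proposition \ref{prop-5-5}(a), and then to upgrade the resulting statement about closed points to a homeomorphism in dimension one.

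First I would prove that $\pi_{0,n}^{\rm adm}([q])$ is a closed point of $\overline \Pi_{0,n}$ for every $[q] \in \overline \mfM_{0,n}^{\rm cl}$. Since the admissible fundamental group of any pointed stable curve is topologically finitely generated, any continuous surjection between two such profinite groups is automatically open (it maps each open normal subgroup of the source to a closed subgroup of finite index in the target, which is therefore open). In particular,
\[
{\rm Hom}_{\rm surj}(\pi_{1}^{\rm adm}(q), \pi_{1}^{\rm adm}(q')) \subseteq {\rm Hom}^{\rm open}_{\rm pro\text{-}gps}(\pi_{1}^{\rm adm}(q), \pi_{1}^{\rm adm}(q')).
\]
By Proposition \ref{prop-5-5}(a), the closure $V([\pi_{1}^{\rm adm}(q)])$ in $\overline \Pi_{0,n}$ is precisely the set of $[\pi_{1}^{\rm adm}(q')]$ making the left-hand side non-empty. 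Thus whenever $[\pi_{1}^{\rm adm}(q')] \in V([\pi_{1}^{\rm adm}(q)])$, Theorem \ref{them-0-3} applied with $q_{1} = q$ (which is closed) yields $q' \sim_{fe} q$, and therefore $[\pi_{1}^{\rm adm}(q')] = [\pi_{1}^{\rm adm}(q)]$. Hence $V([\pi_{1}^{\rm adm}(q)]) = \{\pi_{0,n}^{\rm adm}([q])\}$, which is the first assertion. The statement for $\pi_{0,n}^{\rm sol}([q])$ follows by the verbatim same argument, now using the pro-solvable refinement of Theorem \ref{them-0-3} recorded in Remark 0.3.1.

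Next, I would derive the Homeomorphism Conjecture for $(g,n) = (0,4)$. Since $\overline M_{0,4}$ is irreducible of dimension one, the topological space $\overline \mfM_{0,4}$ has a unique generic point and all of its remaining points are closed, so its proper closed subsets are precisely the finite sets of closed points. By Theorem \ref{them-0-1} the map $\pi_{0,4}^{\rm adm}$ is a bijection, and by Theorem \ref{continuous} it is continuous; by what was just proved it sends every closed point of $\overline \mfM_{0,4}$ to a closed point of $\overline \Pi_{0,4}$. Consequently $\pi_{0,4}^{\rm adm}$ carries any finite union of closed points to a finite union of closed points (still closed, as a finite union of closed sets), and carries $\overline \mfM_{0,4}$ onto $\overline \Pi_{0,4}$. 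Therefore $\pi_{0,4}^{\rm adm}$ is a closed bijective continuous map, i.e.\ a homeomorphism. The solvable version is obtained by the same argument, invoking Theorem \ref{them-5-4}(b) for the bijectivity of $\pi_{0,4}^{\rm sol}$.

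The main obstacle has already been overcome in Theorem \ref{them-0-3}; once that Hom-type anabelian result is in place, the deduction of Theorem \ref{main-them} is purely formal and very short, since the topology of $\overline \mfM_{0,4}$ is rigid enough that ``sending closed points to closed points'' upgrades any continuous bijection to a homeomorphism.
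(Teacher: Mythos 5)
Your proposal is correct and takes essentially the same route as the paper: the paper's proof of Theorem \ref{mainthem-form-2} likewise identifies the closure of $\pi_{0,n}^{\rm sol}([q])$ via Proposition \ref{prop-5-5} (a), applies the Hom-type result (Theorem \ref{mainthem-form-1}, i.e.\ Theorem \ref{them-0-3} and its solvable refinement) with the closed point $q$ as source to collapse that closure to a single point, and then deduces the $(0,4)$ case from bijectivity (Theorem \ref{them-5-4} (b)) together with continuity. The only differences are expository: you treat the admissible and solvable cases in parallel where the paper reduces to the solvable case, and you spell out the closed-map argument on the one-dimensional space $\overline \mfM_{0,4}$ that the paper leaves implicit.
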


On the other hand, some major problems concerning $\overline \Pi_{g, n}$ are as follows:

\begin{problem}\label{prob-5-7} 
We maintain the notation introduced above.

\begin{enumerate}

\item Is $\overline \Pi_{g, n}$ a noetherian topological space? 

\item Let $[q] \in \overline \mfM_{g, n}^{\rm cl}$. Is $V(\pi_{g, n}^{\rm adm}([q]))$ a finite set?

\item Let $Z$ be an irreducible closed subset of $\overline \mfM_{g, n}$. Is $\pi_{g, n}^{\rm adm}(Z)$ an irreducible closed subset of $\overline \Pi_{g, n}$?

\item Let $i \in \{1, 2\}$, and let $V_{i, m_{i}} \subseteq \dots \subseteq V_{i, 1} \subseteq V_{i, 0}\defeq \overline \Pi_{g, n}$ be an arbitrary maximal chain of irreducible closed subsets of $\overline \Pi_{g, n}$. Does $m_{1}=m_{2}$ hold?

\item Let $Z$ be an irreducible closed subset of $\overline \mfM_{g, n}$. Does ${\rm dim}(Z)={\rm dim}(V(\pi_{g, n}^{\rm adm}(Z)))$ hold? Here,  ${\rm dim}(V(\pi_{g, n}^{\rm adm}(Z)))$ denotes the Krull dimension of $V(\pi_{g, n}^{\rm adm}(Z))$. In particular, does ${\rm dim}(\overline \mfM_{g, n})={\rm dim}(\overline \Pi_{g, n})$ and ${\rm dim}(V(\pi_{g, n}^{\rm adm}([q])))=0$ for every $[q] \in \overline \mfM_{g, n}^{\rm cl}$ hold? Moreover, Is $\pi_{g, n}^{\rm adm}([q])$ a closed point of $\overline \Pi_{g, n}$ for every $[q] \in \overline \mfM_{g, n}^{\rm cl}$?

\item Prove the Homeomorphism Conjecture. In particular, prove the Homeomorphism Conjecture for $(0, n)$.

\end{enumerate}
\end{problem}

\begin{remarkA}
We may also ask the problems mentioned above for $\overline \Pi_{g, n}^{\rm sol}$.
\end{remarkA}

\begin{remarkB}
Problem \ref{prob-5-7} (2) is equivalent to the following anabelian property of pointed stable curves:
\begin{quote}
Let $[q] \in \overline \mfM_{g, n}^{\rm cl}$. Then we have that the set $$\{q' \in \overline \mfM_{g, n}^{\rm cl} \ | \ \text{Hom}_{\rm surj}(\pi_{1}^{\rm adm}(q), \pi_{1}^{\rm adm}(q')) \neq \emptyset\}$$ is a finite set. 
\end{quote}
This property is a generalized version of Theorem \ref{them-5-4} (a).
\end{remarkB}

\begin{remarkC}
We maintain the notation introduced above. Tamagawa posed a conjecture as follows (cf. \cite[Conjecture 5.3 (ii)]{T2}), which is called {\it Essential Dimension Conjecture}: 
\begin{quote}
Let $i\in \{1, 2\}$, and let $q_{i} \in \mfM_{g, n}$ and $V(q_{i})$ the topological closure of $q_{i}$ in $\overline \mfM_{g, n}$. Then we have that $\text{dim}(V(q_{1}))=\text{dim}(V(q_{2}))$ if $[\pi_{1}^{\rm adm}(q_{1})]=[\pi_{1}^{\rm adm}(q_{2})]$.
\end{quote}
We see immediately that Problem \ref{prob-5-7} (5) is a generalized version of Tamagawa's Essential Dimension Conjecture.
\end{remarkC}

\begin{proposition}\label{prop-5-8}
Let $[q] \in \overline \mfM_{g, n}^{\rm cl}$. Then we have that ${\rm dim}(V(\pi_{g, n}^{\rm adm}([q])))=0$ if and only if $\pi_{g, n}^{\rm adm}([q])$ is a closed point of $\overline \Pi_{g, n}$. 
\end{proposition}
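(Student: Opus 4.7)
My plan is to reduce the statement to the unique-generic-point property of $\overline \Pi_{g,n}$ recorded in Proposition \ref{prop-5-5}(d); no further input is needed. One implication is essentially formal: if $\pi_{g,n}^{\rm adm}([q])$ is closed in $\overline \Pi_{g,n}$, then $V(\pi_{g,n}^{\rm adm}([q]))$ is the singleton $\{\pi_{g,n}^{\rm adm}([q])\}$, which is a zero-dimensional topological space.

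For the converse, I set $V \defeq V(\pi_{g,n}^{\rm adm}([q]))$ and assume $\dim V = 0$. Since $V$ is the closure of a single point in $\overline \Pi_{g,n}$, it is irreducible, and $\pi_{g,n}^{\rm adm}([q])$ is a generic point of $V$. The key step is the following: for an arbitrary $y \in V$, the closure $\overline{\{y\}}$ taken inside $V$ is an irreducible closed subset of $V$, and the strict inclusion $\overline{\{y\}} \subsetneq V$ would produce a chain $\overline{\{y\}} \subsetneq V$ of irreducible closed subsets of $V$ of length $1$, contradicting $\dim V = 0$. Therefore $\overline{\{y\}} = V$, so every point of $V$ is a generic point of $V$. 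Now Proposition \ref{prop-5-5}(d) asserts that irreducible closed subsets of $\overline \Pi_{g,n}$ admit a unique generic point, forcing $V = \{\pi_{g,n}^{\rm adm}([q])\}$; that is, $\pi_{g,n}^{\rm adm}([q])$ is a closed point of $\overline \Pi_{g,n}$.

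I do not anticipate any serious obstacle: the whole argument hinges on the sobriety-type property already supplied by Proposition \ref{prop-5-5}(d), together with the tautological definition of Krull dimension. The same reasoning applies verbatim, mutatis mutandis, to $\pi_{g,n}^{\rm sol}([q])$ and $\overline \Pi_{g,n}^{\rm sol}$, which is the form in which the proposition will presumably be applied together with Theorem \ref{main-them}.
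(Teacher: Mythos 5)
Your proof is correct and follows essentially the same route as the paper: the ``if'' direction is formal, and in the ``only if'' direction both arguments use $\dim V=0$ to force the closure of an arbitrary point of $V$ to coincide with $V$. The only cosmetic difference is that you finish by citing the unique-generic-point statement of Proposition \ref{prop-5-5} (d), whereas the paper re-derives the two mutual surjections via Proposition \ref{prop-5-5} (a) and invokes \cite[Proposition 16.10.6]{FJ} directly---which is precisely the argument underlying Proposition \ref{prop-5-5} (d) in any case.
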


\begin{proof}
The ``if" part of the proposition is trivial. We only need to prove the ``only if" part of the proposition.

Let $[\pi_{1}^{\rm adm}(q')] \in V(\pi_{g, n}^{\rm adm}([q]))$ be an arbitrary point and $V([\pi_{1}^{\rm adm}(q')])$ the topological closure of $[\pi_{1}^{\rm adm}(q')]$ in $\overline \Pi_{g, n}$. Then we have that $V([\pi_{1}^{\rm adm}(q')])$ is an irreducible closed subset which is contained in $V(\pi_{g, n}^{\rm adm}([q]))$. Since $V(\pi_{g, n}^{\rm adm}([q]))$ is an irreducible closed subset of dimension $0$, we obtain that $$V(\pi_{g, n}^{\rm adm}([q]))=V([\pi_{1}^{\rm adm}(q')]).$$ This means that there exist surjective open continuous homomorphisms $$\pi_{1}^{\rm adm}(q) \migisurj \pi_{1}^{\rm adm}(q'),$$ $$\pi_{1}^{\rm adm}(q') \migisurj \pi_{1}^{\rm adm}(q).$$ Then we obtain $\pi_{A}^{\rm adm}(q)=\pi_{A}^{\rm adm}(q')$. Since admissible fundamental groups of pointed stable curves are topologically finitely generated, \cite[Proposition 16.10.6]{FJ} implies that $[\pi_{1}^{\rm adm}(q)]=[\pi_{1}^{\rm adm}(q')]$. Thus, we obtain $V(\pi_{g, n}^{\rm adm}([q]))=[\pi_{1}^{\rm adm}(q)]$. This completes the proof of the proposition.
\end{proof}



\section{Reconstruction of inertia subgroups and field structures from surjections}\label{sec-3}

In this section, we will prove that the inertia subgroups assoicated to marked points can be reconstructed group-theoretically from surjective homomorphisms of admissible fundamental groups (or solvable admissible fundamental groups). 

Let $\mcP$ be a category of profinite groups whose class of objects $\text{Ob}(\mcP)$ consists of profinite groups, and whose class of morphisms $\text{Hom}_{\mcP}(\Pi, \Pi')$ is the class of open continuous homomorphisms of $\Pi$ and $\Pi'$. Let $\Pi \in \mcP$, and let $\mfS$ be a category whose class of objects $\text{Ob}(\mfS)$ is a set of subgroups of $\Pi$, and whose class of morphisms $\text{Hom}_{\mfS}(H, H')$ for any $H, H' \in \mfS$ is defined as follows: the unique element of $\text{Hom}_{\mfS}(H, H')$ is the natural inclusion when $H \subseteq H'$; otherwise, $\text{Hom}_{\mfS}(H, H')$ is empty. We shall say that $\mfS$ is a category associated to $\Pi$.

Let $\mfS$ and $\mfS'$ be categories associated to profinite groups $\Pi$ and $\Pi'$, respectively. We define a class of functors $\text{Hom}_{\mcS}(\mfS, \mfS')$ as follows: $\theta_{\mcS}\in \text{Hom}_{\mcS}(\mfS, \mfS')$ if there exists an open continuous homomorphism $\theta: \Pi \migi \Pi'$ such that $\mfS=\{H\defeq \theta^{-1}(H')\}_{H' \in \mfS'}$, and that $\theta_{\mcS}: \mfS\migi \mfS'$, $H \mapsto H' $; otherwise, $\text{Hom}_{\mcS}(\mfS, \mfS')$ is empty. We denote by $$\mcS$$ the category whose class of objects $\text{Ob}(\mcS)$ is the class of categories associated to profinite groups, and whose class of morphisms $\text{Hom}_{\mcS}(\mfS, \mfS')$ consists of the classes of functors defined above. Then we have a natural functor $\pi: \mcS \migi \mcP$ defined as follows: Let $\mfS$, $\mfS' \in \mcS$ be categories associated to profinite groups $\Pi$, $\Pi'$, respectively; we have $\pi(\mfS)=\Pi$, $\pi(\mfS')=\Pi'$, and $\pi(\theta_{\mcS})=\theta$.  We see immediately that $$\pi: \mcS \migi \mcP$$ is a fibered category over $\mcP$.

\begin{definition}\label{def-3-1}
Let $i \in \{1, 2\}$. Let $\mcF_{i}$ be a geometric object (in a certain category), $\Pi_{\mcF_{i}}$ a profinite group associated to the geometric object $\mcF_{i}$, and $\mfS_{i}$ a category associated to $\Pi_{\mcF_{i}}$. Let $\text{Inv}_{\mcF_{i}}$ be an invariant depending on the isomorphism class of $\mcF_{i}$ (in a certain category) and $\text{Add}_{\mcF_{i}}(\mfS_{i})$ an additional structure associated to $\mfS_{i}$  (e.g., $\text{Add}_{\mcF_{i}}(\mfS_{i})=\mfS_{i}$) on the profinite group $\Pi_{\mcF_{i}}$ depending functorially on $\mcF_{i}$ and $\mfS_{i}$. 

(a) We shall say that $\text{Inv}_{\mcF_{i}}$ can be {\it reconstructed group-theoretically} from $\Pi_{\mcF_{i}}$ (or $\Pi_{\mcF_{i}}$ induces $\text{Inv}_{\mcF_{i}}$ group-theoretically) if $\Pi_{\mcF_{1}} \cong \Pi_{\mcF_{2}}$ implies $\text{Inv}_{\mcF_{1}}=\text{Inv}_{\mcF_{2}}$.

(b) We shall say that $\text{Add}_{\mcF_{2}}(\mfS_{2})$ can be {\it reconstructed group-theoretically} from $\Pi_{\mcF_{2}}$ (or $\Pi_{\mcF_{2}}$ induces $\text{Add}_{\mcF_{2}}(\mfS_{2})$ group-theoretically) if every isomorphism $\theta:\Pi_{\mcF_{1}} \isom \Pi_{\mcF_{2}}$ induces a bijection $\theta_{\rm ad}: \text{Add}_{\mcF_{1}}(\mfS_{1}) \isom \text{Add}_{\mcF_{2}}(\mfS_{2})$ which preserves the structures $\text{Add}_{\mcF_{1}}(\mfS_{1})$ and $\text{Add}_{\mcF_{2}}(\mfS_{2})$, where $\mfS_{1}\defeq \Pi_{\mcF_{1}}\times_{\theta, \Pi_{\mcF_{2}}}\mfS_{2}$ (i.e., the fiber product in the fibered category $\mcS$ over $\mcP$). 

(c) Let $j_{1}$, $ j_{2} \in \{1, 2\}$ distinct from each other, and let $\theta: \Pi_{\mcF_{1}} \migi \Pi_{\mcF_{2}}$ be an open continuous homomorphism of profinite groups and $\mfS_{1}=\Pi_{\mcF_{1}}\times_{\theta, \Pi_{\mcF_{2}}}\mfS_{2}$. We shall say that a map  $\theta_{\rm ad}: \text{Add}_{\mcF_{j_1}}(\mfS_{j_1}) \migi \text{Add}_{\mcF_{j_2}}(\mfS_{j_2})$ can be {\it reconstructed group-theoretically} from $\theta: \Pi_{\mcF_{1}} \migi \Pi_{\mcF_{2}}$ (or $\theta: \Pi_{\mcF_{1}} \migi \Pi_{\mcF_{2}}$ induces $\theta_{\rm ad}: \text{Add}_{\mcF_{j_1}}(\mfS_{j_1}) \migi \text{Add}_{\mcF_{j_2}}(\mfS_{j_2})$ group-theoretically) if the following condition is satisfied: Let $\mcF'_{i}$ be a geometric object, $\Pi_{\mcF'_{i}}$ a profinite group associated to the geometric object $\mcF'_{i}$, $\theta_{i}:\Pi_{\mcF'_{i}} \isom \Pi_{\mcF_{i}}$ an isomorphism of profinite groups, $\theta':\Pi_{\mcF_{1}'} \migi \Pi_{\mcF_{2}'}$, $\mfS_{i}'\defeq \Pi_{\mcF'_{i}}\times_{\theta_{i}, \Pi_{\mcF_{i}}}\mfS_{i}$, $\text{Add}_{\mcF'_{i}}(\mfS'_{i})$ an additional structure on the profinite group $\Pi_{\mcF'_{i}}$. Suppose that the following commutative diagram of profinite groups holds 
\[
\begin{CD}
\Pi_{\mcF'_{1}} @>\theta'>> \Pi_{\mcF'_{2}}
\\
@V\theta_{1}VV@V\theta_{2}VV
\\
\Pi_{\mcF_{1}} @>\theta>> \Pi_{\mcF_{2}}.
\end{CD}
\]
Then the commutative diagram of profinite groups above induces the following commutative diagram of additional structures
\[
\begin{CD}
\text{Add}_{\mcF'_{j_1}}(\mfS_{j_1}') @>\theta_{\rm ad}'>> \text{Add}_{\mcF'_{j_2}}(\mfS_{j_2}')
\\
@V\theta_{j_1, \rm ad}VV@V\theta_{j_2, \rm ad}VV
\\
\text{Add}_{\mcF_{j_1}}(\mfS_{j_1}) @>\theta_{\rm ad}>> \text{Add}_{\mcF_{j_2}}(\mfS_{j_2})
\end{CD}
\] 
which preserves the structures of additional structures.
\end{definition}

\begin{remarkA}
Let us explain the philosophy of {\it mono-anabelian geometry} introduced
by Mochizuki. The classical point of view of anabelian geometry (i.e., the
anabelian geometry considered in \cite{G1}, \cite{G2}) focuses on a comparison between two geometric objects via their fundamental groups. Moreover, the term ``group-theoretical", in the classical point of view, means that ``preserved by an arbitrary isomorphism between
the fundamental groups under consideration''. We shall refer to the classical point of view as ``{\it bi-anabelian geometry}''. Then Definition \ref{def-3-1} is a definition from the point of view of bi-anabelian geometry.

On the other hand, mono-anabelian geometry focuses on the establishing a group-theoretic algorithm whose input datum is an abstract topological group
which is isomorphic to the fundamental group of a given geometric object of interest
(resp. a continuous homomorphism of abstract topological groups which are isomorphic to a continuous homomorphism of the fundamental groups of given geometric objects of interest), and whose output datum is a geometric object which is isomorphic to the given geometric object of interest (resp. a morphism of geometric objects which is isomorphic to a morphism of given geometric objects of interest). In the point of view of mono-anabelian geometry, the term ``group-theoretic algorithm" is used to mean that ``the algorithm in a discussion is phrased in language that only depends on the topological group structures of the fundamental groups under consideration". Note that mono-anabelian results are stronger than bi-anabelian results.

We maintain the notation introduced in Definition \ref{def-3-1}. Then the mono-anabelian version of Definition \ref{def-3-1} is as follows: 

(a) We shall say that $\text{Inv}_{\mcF_{i}}$ can be {\it mono-anabelian reconstructed} from $\Pi_{\mcF_{i}}$ if there exists a group-theoretical algorithm whose input datum is $\Pi_{\mcF_{i}}$, and whose output datum is $\text{Inv}_{\mcF_{i}}(\mfS_{i})$. 

(b) We shall say that $\text{Add}_{\mcF_{i}}(\mfS_{i})$ can be {\it mono-anabelian reconstructed} from $\Pi_{\mcF_{i}}$ if there exists a group-theoretical algorithm whose input datum is $\Pi_{\mcF_{i}}$, and whose output datum is $\text{Add}_{\mcF_{i}}$. 

(c) Let $j_{1}$, $ j_{2} \in \{1, 2\}$ distinct from each other, and let $\theta: \Pi_{\mcF_{1}} \migi \Pi_{\mcF_{2}}$ be an open continuous homomorphism of profinite groups and $\mfS_{1}=\Pi_{\mcF_{1}}\times_{\theta, \Pi_{\mcF_{2}}}\mfS_{2}$. We shall say that a map (or a morphism) $\theta_{\rm add}: \text{Add}_{\mcF_{j_1}}(\mfS_{j_1}) \migi \text{Add}_{\mcF_{j_2}}(\mfS_{j_2})$ can be {\it mono-anabelian reconstructed} from $\theta: \Pi_{\mcF_{1}} \migi \Pi_{\mcF_{2}}$ if there exists a group-theoretical algorithm whose input datum is $\theta: \Pi_{\mcF_{1}} \migi \Pi_{\mcF_{2}}$, and whose output datum is $\theta_{\rm add}: \text{Add}_{\mcF_{j_1}}(\mfS_{j_1}) \migi \text{Add}_{\mcF_{j_2}}(\mfS_{j_2})$.  
\end{remarkA}

Let $i \in \{1, 2\}$, and let $X^{\bp}_{i}=(X_{i}, D_{X_{i}})$ be a pointed stable curve of type $(g_{X_{i}}, n_{X_{i}})$ over an algebraically closed field $k_{i}$ of characteristic $p_{i}>0$, $\Gamma_{X^{\bp}_{i}}$ the dual semi-graph of $X^{\bp}_{i}$, and $\Pi_{X_{i}^{\bp}}$ either the admissible fundamental group or the solvable admissible fundamental group of $X^{\bp}_{i}$. The following result was proved by Tamagawa for smooth pointed stable curves and by the author for arbitrary pointed stable curves.

\begin{theorem}\label{types}
We maintain the notation introduced above. Then the characteristic $p_{i}$ of $k_{i}$ and the type $(g_{X_{i}}, n_{X_{i}})$ can be reconstructed group-theoretically from $\Pi_{X_{i}^{\bp}}$. Moreover, $\Pi_{X^{\bp}_{i}}^{\text{\et}}$, $\Pi_{X_{i}^{\bp}}^{\rm top}$, ${\rm Ver}(\Pi_{X_{i}^{\bp}})$, ${\rm Edg}^{\rm op}(\Pi_{X_{i}^{\bp}})$, ${\rm Edg}^{\rm cl}(\Pi_{X_{i}^{\bp}})$, and $\Gamma_{X^{\bp}_{i}}$ can be reconstructed group-theoretically from $\Pi_{X_{i}^{\bp}}$.
\end{theorem}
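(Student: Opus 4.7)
The plan is to reconstruct the listed data in four stages of increasing refinement, using at each stage the two master formulas in Theorem \ref{max and average}.

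First, I would reconstruct the characteristic $p$ and the type $(g_{X}, n_{X})$ from $\Pi_{X^{\bp}}$. Because the prime-to-$p$ quotient $\Pi_{X^{\bp}}^{p'}$ is canonically isomorphic to the pro-prime-to-$p$ completion of the surface group $\langle a_{i}, b_{i}, c_{j} \mid \prod[a_{i}, b_{i}]\prod c_{j} = 1\rangle$, for every prime $\ell \neq p$ the dimension $\dim_{\mathbb{F}_{\ell}}(\Pi_{X^{\bp}}^{\mathrm{ab}} \otimes \mathbb{F}_{\ell})$ equals the type-predicted rank $2g_{X} + \max(n_{X} - 1, 0)$, whereas for $\ell = p$ this dimension equals the Hasse-Witt invariant $\sigma_{X}$, which is bounded by $g_{X} + r_{X}$ and hence strictly smaller. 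Thus $p$ is the unique prime at which the $\mathbb{F}_{\ell}$-rank of the abelianization drops below the prime-to-$p$ value. With $p$ in hand, I would extract the purely group-theoretic invariant $\gamma^{\mathrm{max}}(\Pi_{X^{\bp}})$ and combine Theorem \ref{max and average}(a) with the prime-to-$p$ abelianization rank to solve for $(g_{X}, n_{X})$; the residual ambiguity between the compact case $n_{X} = 0$ and the punctured case $n_{X} = 1$ (which give the same pair of numerical invariants) is resolved by detecting whether $\Pi_{X^{\bp}}^{p'}$ is free pro-prime-to-$p$ (cohomological dimension $1$) or satisfies Poincar\'e duality (cohomological dimension $2$), both of which are cohomological properties of the group. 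The solvable case proceeds identically since every covering invoked is abelian or metabelian.

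Second, I would reconstruct the inertia subgroups of open edges and hence $\Pi_{X^{\bp}}^{\text{\'et}}$. The inertia subgroups $I_{\widehat{e}}$ for $\widehat{e} \in e^{\mathrm{op}}(\Gamma_{\widehat{X}^{\bp}})$ are isomorphic to $\widehat{\mathbb{Z}}(1)^{p'}$ and carry a cuspidal cyclotomic structure that is visible in the $\chi$-eigenspace decomposition of $\mathrm{Hom}(\Pi_{\chi}, \mathbb{Z}/p\mathbb{Z})$ entering the definition of $\gamma^{\mathrm{max}}$. Following the methods of Tamagawa for smooth pointed stable curves, extended by the author to the stable case, these properties characterize $\mathrm{Edg}^{\mathrm{op}}(\Pi_{X^{\bp}})$ group-theoretically. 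Then $\Pi_{X^{\bp}}^{\text{\'et}}$ is recovered as a suitable quotient obtained after further killing the combinatorial piece treated in the next stage.

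Third, I would reconstruct the vertex subgroups, the node inertia subgroups, the dual semi-graph $\Gamma_{X^{\bp}}$, and $\Pi_{X^{\bp}}^{\mathrm{top}}$. The main tool is the $p$-average formula of Theorem \ref{max and average}(b), which expresses the group-theoretic invariant $\mathrm{Avr}_{p}(H)$ of an open subgroup $H \subseteq \Pi_{X^{\bp}}$ in terms of combinatorial data of the dual semi-graph $\Gamma_{X_{H}^{\bp}}$ (under a $2$-connectedness hypothesis that can always be arranged by passage to a suitable $H$). Sweeping across a cofinal family of open normal subgroups and using the monotonicity in Lemma \ref{lem-0}, one recognizes vertex stabilizers $\Pi_{\widehat{v}}$ as the maximal closed subgroups which behave like admissible fundamental groups of smooth pointed stable curves of smaller type, and node inertia subgroups $I_{\widehat{e}}$ (for closed $\widehat{e}$) as the pro-cyclic intersections of pairs of adjacent vertex stabilizers. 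With $\mathrm{Ver}(\Pi_{X^{\bp}})$, $\mathrm{Edg}^{\mathrm{op}}(\Pi_{X^{\bp}})$, and $\mathrm{Edg}^{\mathrm{cl}}(\Pi_{X^{\bp}})$ in hand together with the incidence relations $I_{\widehat{e}} \subseteq \Pi_{\widehat{v}}$, the dual semi-graph $\Gamma_{X^{\bp}}$ is assembled via the quotient by the conjugation action of $\Pi_{X^{\bp}}$, and $\Pi_{X^{\bp}}^{\mathrm{top}}$ is obtained as the quotient of $\Pi_{X^{\bp}}$ by the closed normal subgroup generated by all vertex subgroups together with all open edge inertia subgroups.

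The main obstacle will be the third stage: systematically extracting the combinatorial skeleton from $\Pi_{X^{\bp}}$ in the general, possibly singular, stable case. The $p$-average formula is delicate to apply, since it requires first isolating open subgroups whose corresponding curves have $2$-connected compactified dual graphs and then propagating the detection back to the full group via a hierarchy of coverings. In the solvable setting one must further verify that every covering invoked can be realized inside $\Pi_{X^{\bp}}^{\mathrm{sol}}$; this holds because the coverings used in Theorem \ref{max and average} are abelian or metabelian, and Lemma \ref{lem-0} supplies the monotonicity of both $\gamma^{\mathrm{max}}$ and $\mathrm{Avr}_{p}$ under surjective open continuous homomorphisms that the propagation argument requires.
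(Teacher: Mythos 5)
The paper's own ``proof'' of Theorem \ref{types} is a citation: it defers entirely to \cite[Theorem 1.2, Remark 1.2.1, Remark 1.2.2, Proposition 6.1]{Y1}, \cite[Theorem 0.1]{T3} and \cite[Theorem 1.3]{Y6}, which are precisely the reconstruction results you are attempting to re-derive. Your first stage is essentially sound and standard: detecting $p$ as the unique prime at which $\dim_{\mathbb{F}_{\ell}}(\Pi_{X^{\bp}}^{\mathrm{ab}}\otimes\mathbb{F}_{\ell})$ drops, and then solving for $(g_{X}, n_{X})$ from this rank together with $\gamma^{\mathrm{max}}(\Pi_{X^{\bp}})$, is the classical argument, and you correctly isolate the residual $(g,0)$ versus $(g,1)$ ambiguity. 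One caveat there: your cohomological-dimension/Poincar\'e-duality criterion is stated for $\Pi_{X^{\bp}}^{p'}$, but for the solvable admissible fundamental group the relevant quotient is the maximal pro-(solvable prime-to-$p$) quotient, and non-vanishing of $H^{2}$ does not follow formally (inflation on $H^{2}$ need not be injective); this is fixable, e.g.\ by passing to a cyclic prime-to-$p$ \'etale covering and recomputing the abelianization rank, since the number of marked points becomes $\ell$ when $n_{X}=1$ and stays $0$ when $n_{X}=0$.

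The genuine gap is in your second and third stages, which are where the actual content of the theorem lies. ``Following the methods of Tamagawa, extended by the author to the stable case'' and ``maximal closed subgroups which behave like admissible fundamental groups of smooth pointed stable curves of smaller type'' are not group-theoretic criteria; producing such criteria is exactly what the cited works do (reconstruction of the marked-point inertia subgroups via the theory of Raynaud--Tamagawa theta divisors and generalized Hasse--Witt invariants in \cite{T3}, \cite{Y6}, and reconstruction of $\mathrm{Ver}$, $\mathrm{Edg}^{\rm cl}$ and $\Gamma_{X^{\bp}}$ for singular curves via the cohomology-class machinery of \cite{Y1}, \cite{Y3} --- the isomorphism-version analogue of what Section \ref{sec-4-1}--\ref{sec-4-2} of this paper recalls). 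In particular, the suggestion that sweeping $\mathrm{Avr}_{p}$ over a cofinal family of open subgroups ``recognizes'' vertex stabilizers begs the question: Theorem \ref{max and average} (b) applies only under a $2$-connectedness hypothesis and yields a single numerical invariant of each open subgroup, and passing from such numbers to the actual closed subgroups $\Pi_{\widehat v}$, to the closed-edge inertia subgroups (your description of these as intersections of adjacent vertex stabilizers presupposes that the vertex subgroups and the adjacency structure have already been reconstructed), and to the incidence data needed to assemble $\Gamma_{X^{\bp}}$, is the hard combinatorial-anabelian step that your sketch asserts rather than proves. As written, the ``moreover'' part of the statement is outlined, not established; a complete argument would either reproduce the substance of \cite{Y1}, \cite{Y3}, \cite{Y6} or cite them, as the paper does.
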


\begin{proof}
See \cite[Theorem 1.2, Remark 1.2.1, Remark 1.2.2, and Proposition 6.1]{Y1}, \cite[Theorem 0.1]{T3}, and \cite[Theorem 1.3]{Y6}.
\end{proof}

\begin{remarkA}\label{types-1}
Note that \cite[Theorem 1.3]{Y6} gives a formula for $(g_{X_{i}}, n_{X_{i}})$. Then we obtain that the characteristic $p_{i}$ of $k_{i}$ and the type $(g_{X_{i}}, n_{X_{i}})$ can be {\it mono-anabelian reconstructed} from $\Pi_{X^{\bp}_{i}}$. In fact, we have that $\Pi_{X^{\bp}_{i}}^{\text{\et}}$, $\Pi_{X_{i}^{\bp}}^{\rm top}$, ${\rm Ver}(\Pi_{X_{i}^{\bp}})$, ${\rm Edg}^{\rm op}(\Pi_{X_{i}^{\bp}})$, ${\rm Edg}^{\rm cl}(\Pi_{X_{i}^{\bp}})$, and $\Gamma_{X^{\bp}_{i}}$ can be {\it mono-anabelian reconstructed} from $\Pi_{X_{i}^{\bp}}$ (see \cite[Theorem 0.2]{Y3}).
\end{remarkA}

\begin{remarkB}
We do not use the term ``mono-anabelian reconstruction" in the present paper. On the other hand, by applying Remark \ref{types-1}, all of the bi-anabelian results proved in the present paper can be generalized to the case of mono-anabelian reconstructions.  Moreover, mono-anabelian results will be used in \cite{Y8}, and play a fundamental role in \cite{Y9}.
\end{remarkB}

\begin{lemma}\label{surj}
We maintain the notation introduced above. Suppose that $p\defeq p_{1}=p_{2}$, that $(g_{X}, n_{X})\defeq (g_{X_{1}}, n_{X_{1}})=(g_{X_{2}}, n_{X_{2}})$. Let $\phi: \Pi_{X^{\bp}_{1}} \migi \Pi_{X_{2}^{\bp}}$ be an arbitrary  open continuous homomorphism. Then we have that $\phi$ is a surjection.
\end{lemma}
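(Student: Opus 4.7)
The plan is to combine prime-to-$p$ abelianizations with the Riemann--Hurwitz formula for admissible coverings in order to force the image of $\phi$ to have index $1$ in $\Pi_{X_2^\bp}$. First, I would observe that since $\phi$ is open and continuous, the image $H_2 := \phi(\Pi_{X_1^\bp}) \subseteq \Pi_{X_2^\bp}$ is an open (hence finite index) subgroup; set $d := [\Pi_{X_2^\bp} : H_2]$. By the identification recalled in Section~\ref{sec-1}, $H_2$ is isomorphic to $\Pi_{X_{H_2}^\bp}$, where $X_{H_2}^\bp \to X_2^\bp$ is the connected admissible covering of degree $d$ determined by $H_2$, of some type $(g_{H_2}, n_{H_2})$. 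The goal is then reduced to showing $d = 1$.

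The two main inputs are the following. On the geometric side, an Euler-characteristic computation on $X_{H_2} \setminus (D_{X_{H_2}} \cup X_{H_2}^{\mr{sing}}) \migi X_2 \setminus (D_{X_2} \cup X_2^{\mr{sing}})$ (which is genuinely \'etale, by the definition of admissible coverings) yields the Riemann--Hurwitz formula
\[
2g_{H_2} - 2 + n_{H_2} = d\,(2g_X - 2 + n_X).
\]
On the group-theoretic side, the surjection $\phi : \Pi_{X_1^\bp} \migisurj H_2 \cong \Pi_{X_{H_2}^\bp}$ induces a surjection of maximal pro-prime-to-$p$ abelianizations
\[
\Pi_{X_1^\bp}^{\mr{ab},\, p'} \migisurj \Pi_{X_{H_2}^\bp}^{\mr{ab},\, p'}.
\]
This works uniformly for $\Pi = \pi_1^{\mr{adm}}$ and $\Pi = \pi_1^{\mr{adm}, \mr{sol}}$, since any abelianization factors through the solvable quotient. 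By the explicit structure of $\pi_1^{\mr{adm}}(-)^{p'}$ recalled in Section~\ref{sec-1}, both sides are free $\WH\mbZ^{(p')}$-modules of ranks determined solely by the type: $2g + n - 1$ when $n \geq 1$, and $2g$ when $n = 0$.

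Combining these two ingredients finishes the proof. In the case $n_X \geq 1$, the rank inequality forced by the surjection reads $2g_X + n_X - 1 \geq 2g_{H_2} + n_{H_2} - 1$, which by Riemann--Hurwitz rearranges to $(d-1)(2g_X + n_X - 2) \leq 0$; stability of the type gives $2g_X + n_X - 2 > 0$, hence $d = 1$. The case $n_X = 0$ is analogous, using that stability forces $g_X \geq 2$ and the rank inequality $2g_X \geq 2g_{H_2}$ combined with $2g_{H_2} - 2 = d(2g_X - 2)$ again forces $d = 1$. The only point requiring real care is the Riemann--Hurwitz computation for admissible coverings of nodal curves, which I would handle by reducing to the \'etale cover on the complement of marked points and nodes and verifying that $\chi(X \setminus (D_X \cup X^{\mr{sing}})) = 2 - 2g_X - n_X$ via the standard decomposition into irreducible components $\widetilde X_v^\bp$ and the relation $g_X = \sum_v g_v + r_X$ with $r_X = \#e^{\mr{cl}}(\Gamma_{X^\bp}) - \#v(\Gamma_{X^\bp}) + 1$; everything else is formal.
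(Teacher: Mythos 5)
Your proposal is correct, but it reaches the conclusion by a genuinely different route than the paper. The paper's proof begins the same way (the image $\Pi_{\phi}\defeq\phi(\Pi_{X_{1}^{\bp}})$ is open and corresponds to a connected admissible covering $X_{\phi}^{\bp}\migi X_{2}^{\bp}$), but for the group-theoretic input it invokes the formula for the maximum generalized Hasse--Witt invariant, Theorem \ref{max and average} (a) (which rests on the Raynaud--Tamagawa theta-divisor results of \cite{Y6}), together with Lemma \ref{lem-0} (a), to get $g_{X}+n_{X}\geq g_{X_{\phi}}+n_{X_{\phi}}$; combined with the Riemann--Hurwitz inequality this forces the types to coincide, hence the covering is totally ramified over every marked point, and a second Riemann--Hurwitz computation shows a nontrivial index could only occur for the non-stable type $(0,2)$. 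You instead use only the classical presentation of the maximal prime-to-$p$ quotient: the surjection onto $H_{2}$ gives an inequality of ranks of prime-to-$p$ abelianizations, $2g_{X}+n_{X}\geq 2g_{H_{2}}+n_{H_{2}}$, which against the exact multiplicativity $2g_{H_{2}}-2+n_{H_{2}}=d\,(2g_{X}-2+n_{X})$ yields $(d-1)(2g_{X}-2+n_{X})\leq 0$, so $d=1$ by stability. This is more elementary, since it avoids $\gamma^{\rm max}$ entirely. Two points should be made explicit in a careful write-up: first, the multiplicativity of $2g-2+n$ (equivalently of the Euler characteristic of the complement of marked points and nodes) in characteristic $p$ uses that admissible coverings are \emph{tame} over marked points and nodes, as required in Definition \ref{def-2} --- without tameness the Artin--Schreier example shows the equality fails, so this hypothesis, not just \'etaleness on the open part, is what makes your computation go through; second, in the solvable case the step ``$H_{2}\cong\Pi_{X_{H_{2}}^{\bp}}$'' (all you really need is that $H_{2}^{\rm ab}\otimes\mbZ/\ell\mbZ$, $\ell\neq p$, has the full rank $2g_{H_{2}}+n_{H_{2}}-1$) is precisely the identification asserted in Section \ref{sec-1}, which ultimately rests on the class of finite solvable groups being closed under extensions; citing it as you do is legitimate, but it is worth flagging as the one place where the solvable and full admissible cases require an argument.
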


\begin{proof}
Let $\Pi_{\phi} \defeq \phi(\Pi_{X_{1}^{\bp}}) \subseteq \Pi_{X_{2}^{\bp}}$ be the image of $\phi$ which is an open subgroup of $\Pi_{X_{2}^{\bp}}$. Let $X_{\phi}^{\bp}=(X_{\phi}, D_{X_{\phi}})$ be the pointed stable curve of type $(g_{X_{\phi}}, n_{X_{\phi}})$ over $k_{2}$ induced by $\Pi_{\phi}$ and $$X_{\phi}^{\bp} \migi X^{\bp}_{2}$$ the admissible covering over $k_{2}$ induced by the natural inclusion $\Pi_{\phi} \migiinje\Pi_{X_{2}^{\bp}}$. The Riemann-Hurwitz formula implies that $g_{X_{\phi}}+n_{X_{\phi}} \geq g_{X}+n_{X}$. Moreover, by applying Theorem \ref{max and average} (a) and Lemma \ref{lem-0} (a), the natural surjection $\Pi_{X_{1}^{\bp}} \migisurj \Pi_{\phi}$ induced by $\phi$ imply that $g_{X}\geq g_{X_{\phi}}$ and $n_{X} \geq n_{X_{\phi}}$. Then we have $$(g_{X}, n_{X})=(g_{X_{\phi}}, n_{X_{\phi}}).$$ This means that the admissible covering $X^{\bp}_{\phi} \migi X^{\bp}_{2}$ is totally ramified over every marked point of $D_{X_{2}}$. Then the Riemann-Hurwitz formula implies that $[\Pi_{X_{1}^{\bp}}: \Pi_{\phi}]\neq 1$ if and only if $(g_{X}, n_{X})=(0, 2)$. Thus, we obtain that $\phi$ is a surjection.  
\end{proof}

In the remainder of this section, we suppose that $p\defeq p_{1}=p_{2}$, that $(g_{X}, n_{X})\defeq (g_{X_{1}}, n_{X_{1}})=(g_{X_{2}}, n_{X_{2}})$. Let $$\phi: \Pi_{X^{\bp}_{1}} \migi \Pi_{X_{2}^{\bp}}$$ be an arbitrary  open continuous homomorphism. By Lemma \ref{surj}, we see that $\phi$ is a {\it surjective} open continuous homomorphism. Let $\mfP$ be the set of prime numbers, $\Sigma \subseteq \mfP\setminus \{p\}$ a subset, $\Pi_{X_{i}^{\bp}}^{\Sigma}$  the maximal pro-$\Sigma$ quotient of $\Pi_{X_{i}^{\bp}}$, $pr_{i}^{\Sigma}: \Pi_{X_{i}^{\bp}} \migisurj \Pi_{X_{i}^{\bp}}^{\Sigma}$ the natural surjective homomorphism, and $$\phi^{\Sigma}: \Pi_{X^{\bp}_{1}}^{\Sigma} \isom \Pi_{X^{\bp}_{2}}^{\Sigma}$$ the isomorphism induced by $\phi$. In particular, if $\Sigma=\mfP\setminus\{p\}$, we use the notation $\Pi_{X_{i}^{\bp}}^{p'}$ and $\phi^{p'}: \Pi_{X^{\bp}_{1}}^{p'} \isom \Pi_{X^{\bp}_{2}}^{p'}$ to denote $\Pi_{X_{i}^{\bp}}^{\Sigma}$ and $\phi^{\Sigma}$, respectively.

\begin{lemma}\label{lem-1}
We maintain the notation introduced above. Then we have that $\Pi_{X_{i}^{\bp}}^{\rm cpt}$ can be reconstructed group-theoretically from $\Pi_{X_{i}^{\bp}}$, and that the (surjective) open continuous homomorphism $\phi: \Pi_{X^{\bp}_{1}} \migisurj \Pi_{X_{2}^{\bp}}$ induces a surjective homomorphism $$\phi^{\rm cpt}: \Pi_{X^{\bp}_{1}}^{\rm cpt} \migisurj \Pi_{X_{2}^{\bp}}^{\rm cpt}$$ group-theoretically. Moreover, the following commutative diagram of profinite groups
\[
\begin{CD}
\Pi_{X^{\bp}_{1}} @>\phi>> \Pi_{X^{\bp}_{2}}
\\
@VVV@VVV
\\
\Pi^{\rm cpt}_{X^{\bp}_{1}} @>\phi^{\rm cpt}>> \Pi^{\rm cpt}_{X^{\bp}_{2}}
\end{CD}
\]
can be reconstructed group-theoretically from $\phi$.  
\end{lemma}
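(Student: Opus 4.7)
The plan is to realize $\Pi_{X_i^{\bp}}^{\rm cpt}$ as an explicit group-theoretic quotient of $\Pi_{X_i^{\bp}}$ and then show that $\phi$ descends to this quotient in a canonical way. Concretely, let $N_i \subseteq \Pi_{X_i^{\bp}}$ denote the closed normal subgroup topologically generated by all subgroups belonging to ${\rm Edg}^{\rm op}(\Pi_{X_i^{\bp}})$ (i.e., by the totality of inertia subgroups of marked points inside the universal admissible, resp. solvable admissible, covering). By the very definition of the admissible fundamental group of $(X_i,\emptyset)$ and of its maximal pro-solvable quotient, the natural surjection $\Pi_{X_i^{\bp}} \migisurj \Pi_{X_i^{\bp}}^{\rm cpt}$ recalled in Section \ref{sec-1} has kernel exactly $N_i$; this identifies $\Pi_{X_i^{\bp}}^{\rm cpt}$ with $\Pi_{X_i^{\bp}}/N_i$.

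First I would invoke Theorem \ref{types}, which says that ${\rm Edg}^{\rm op}(\Pi_{X_i^{\bp}})$ is reconstructed group-theoretically from $\Pi_{X_i^{\bp}}$; this immediately gives the analogous statement for the closed normal subgroup $N_i$ topologically generated by those inertia subgroups, and hence for the quotient $\Pi_{X_i^{\bp}}^{\rm cpt} = \Pi_{X_i^{\bp}}/N_i$, establishing the first assertion. Next, to produce $\phi^{\rm cpt}$, it is enough to verify $\phi(N_1) \subseteq N_2$. For this I would appeal to Theorem \ref{them-0-4}(i) (in its Section \ref{sec-3} form, Theorem \ref{mainstep-1}), which asserts that $\phi$ carries each $I_1 \in {\rm Edg}^{\rm op}(\Pi_{X_1^{\bp}})$ into an element of ${\rm Edg}^{\rm op}(\Pi_{X_2^{\bp}}) \subseteq N_2$; since $N_1$ is the closed normal subgroup topologically generated by such $I_1$'s and $\phi$ is continuous, this forces $\phi(N_1) \subseteq N_2$. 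Passing to the quotient yields the desired continuous homomorphism $\phi^{\rm cpt}\colon \Pi_{X_1^{\bp}}^{\rm cpt} \migi \Pi_{X_2^{\bp}}^{\rm cpt}$, which is surjective because $\phi$ is surjective by Lemma \ref{surj}; and the displayed square commutes tautologically by the construction of $\phi^{\rm cpt}$ as descent from $\phi$. Since every step of the construction is canonical in $\phi$, the entire diagram is reconstructed group-theoretically from $\phi$.

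The substantive content does not lie in this lemma itself, which is essentially a packaging statement, but in the inertia-preservation input Theorem \ref{mainstep-1}. Without the a priori knowledge that $\phi$ sends inertia subgroups of marked points to inertia subgroups of marked points---a nontrivial property that is extracted from the formula for $\gamma^{\rm max}(\Pi_{X_i^{\bp}})$ in Theorem \ref{max and average}(a) together with Lemma \ref{lem-0}(a)---one cannot guarantee $\phi(N_1) \subseteq N_2$, and the descent to the compactified quotient fails. Granting Theorem \ref{mainstep-1}, everything else is formal.
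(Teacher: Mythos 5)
Your identification of $\Pi_{X_i^{\bp}}^{\rm cpt}$ with $\Pi_{X_i^{\bp}}/N_i$, where $N_i$ is the closed normal subgroup topologically generated by ${\rm Edg}^{\rm op}(\Pi_{X_i^{\bp}})$, is fine, and the first assertion (reconstruction of $\Pi_{X_i^{\bp}}^{\rm cpt}$ from $\Pi_{X_i^{\bp}}$ alone) does follow from Theorem \ref{types} exactly as you say. The problem is the step $\phi(N_1)\subseteq N_2$: you deduce it from Theorem \ref{mainstep-1} (equivalently Theorem \ref{them-0-4}(i)), but in the paper that theorem is proved \emph{after} Lemma \ref{lem-1} and its proof uses Lemma \ref{lem-1} directly (the commutative square with $H_i^{\rm cpt,ab}$ obtained by applying the lemma to $\phi|_{H_1}$), as well as Lemma \ref{lem-2}, Lemma \ref{lem-3} and Proposition \ref{prop-1}, all of which themselves rest on Lemma \ref{lem-1}. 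So your argument is circular within the paper's logical structure; the inertia-preservation statement is strictly downstream of the lemma you are asked to prove, and it is genuinely harder, not an independently available input.

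The paper's own proof avoids any knowledge of how $\phi$ acts on individual inertia subgroups. It characterizes, purely numerically, the open (normal) subgroups $H_i\subseteq \Pi_{X_i^{\bp}}$ whose associated admissible coverings are \'etale over $D_{X_i}$ by the Riemann--Hurwitz condition $g_{X_{H_i}}=[\Pi_{X_i^{\bp}}:H_i](g_X-1)+1$ (a condition reconstructible thanks to Theorem \ref{types}), writes $\Pi_{X_i^{\bp}}^{\rm cpt}$ as the quotient of $\Pi_{X_i^{\bp}}$ by the intersection of these subgroups, and then shows that if $H_2$ lies in this class then so does $H_1=\phi^{-1}(H_2)$: comparing $\gamma^{\rm max}(H_1)\geq\gamma^{\rm max}(H_2)$ (Lemma \ref{lem-0}(a)) with the formula $\gamma^{\rm max}=g+n-2$ of Theorem \ref{max and average}(a) and the Riemann--Hurwitz count of ramification over $D_{X_1}$ forces all ramification indices over the marked points to be $1$. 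Since $\phi^{-1}$ commutes with intersections, $\phi$ then descends to the surjection $\phi^{\rm cpt}$, and the square commutes by construction. Note that "$\phi^{-1}$ of every \'etale-over-$D$ subgroup is \'etale-over-$D$" is exactly equivalent to your desired inclusion $\phi(N_1)\subseteq N_2$, so this numerical argument is the missing content your proposal needs; if you want to keep your formulation, you should supply that argument rather than cite Theorem \ref{mainstep-1}.
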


\begin{proof}
By Theorem \ref{types}, we have that $(g_{X}, n_{X})$ can be reconstructed group-theoretically from $\Pi_{X_{i}^{\bp}}$. If $n_{X}=0$, the lemma is trivial. Then we may assume that $n_{X} >0$.

Let $H_{i} \subseteq \Pi_{X_{i}^{\bp}}$ be an open subgroup. Then the Riemann-Hurwitz formula implies that the admissible covering $$X^{\bp}_{H_{i}} \migi X^{\bp}_{i}$$ over $k_{i}$ induced by $H_{i} \subseteq \Pi_{X^{\bp}_{i}}$ is \'etale over $D_{X_{i}}$ if and only if $g_{X_{H_{i}}}=[\Pi_{X_{i}^{\bp}}: H_{i}](g_{X}-1)+1$. We put

$$\text{Et}^{\rm norm}_{D_{X_{i}}}(\Pi_{X^{\bp}_{i}}) \defeq \{H_{i} \subseteq \Pi_{X_{i}^{\bp}} \ \text{is an open normal subgroup}$$ $$\ | \ g_{X_{H_{i}}}=[\Pi_{X_{i}^{\bp}}: H_{i}](g_{X}-1)+1\}$$
$$ \subseteq \text{Et}_{D_{X_{i}}}(\Pi_{X^{\bp}_{i}}) \defeq \{H_{i} \subseteq \Pi_{X_{i}^{\bp}} \ \text{is an open subgroup} $$ $$\ | \ g_{X_{H_{i}}}=[\Pi_{X_{i}^{\bp}}: H_{i}](g_{X}-1)+1\}.$$ By Theorem \ref{types}, we have that $\text{Et}_{D_{X_{i}}}^{\rm norm}(\Pi_{X^{\bp}_{i}})$ and $\text{Et}_{D_{X_{i}}}(\Pi_{X^{\bp}_{i}})$ can be reconstructed group-theoretically from $\Pi_{X^{\bp}_{i}}$. Since $$\Pi_{X^{\bp}_{i}}^{\rm cpt} \defeq \Pi_{X^{\bp}_{i}}/\bigcap_{H_{i} \in \text{Et}^{\rm norm}_{D_{X_{i}}}(\Pi_{X_{i}^{\bp}})}H_{i}=\Pi_{X^{\bp}_{i}}/\bigcap_{H_{i} \in \text{Et}_{D_{X_{i}}}(\Pi_{X_{i}^{\bp}})}H_{i},$$ we obtain that $\Pi_{X_{i}^{\bp}}^{\rm cpt}$ can be reconstructed group-theoretically from $\Pi_{X_{i}^{\bp}}$.

Let $H_{2} \in \text{Et}^{\rm norm}_{D_{X_{2}}}(\Pi_{X^{\bp}_{2}})$, $H_{1} \defeq \phi^{-1}(H_{2})$, and $G \defeq \Pi_{X_{2}^{\bp}}/H_{2}=\Pi_{X_{1}^{\bp}}/H_{1}$. We will prove that $H_{1}  \in \text{Et}^{\rm norm}_{D_{X_{1}}}(\Pi_{X^{\bp}_{1}})$. Let $$f_{H_1}^{\bp}: X_{H_{1}}^{\bp} \migi X_{1}^{\bp}$$ be the Galois admissible covering over $k_{1}$ with Galois group $G$ corresponding to $H_{1}$, $x_{1} \in D_{X_{1}}$ a marked point of $X_{1}^{\bp}$, and $e_{f_{H_{1}}}(x_{1})$ the ramification index of a point of $f_{H_{1}}^{-1}(x_{1})$. Since $H_{2} \in \text{Et}^{\rm norm}_{D_{X_{2}}}(\Pi_{X_{2}^{\bp}})$, we have $g_{X_{H_{2}}}=\#G(g_{X}-1)+1$ and $n_{X_{H_{2}}}=\#Gn_{X}$. Then by applying the Riemann-Hurwitz formula, we obtain that $$g_{X_{H_{1}}}=\#G(g_{X}-1)+1+\frac{1}{2}\cdot\sum_{x_{1} \in D_{X_{1}}}\frac{\#G}{e_{f_{H_{1}}}(x_{1})}(e_{f_{H_{1}}}(x_{1})-1)$$$$=\#G(g_{X}-1)+1+\frac{1}{2}\cdot\sum_{x_{1} \in D_{X_{1}}}(\#G-\frac{\#G}{e_{f_{H_{1}}}(x_{1})})$$ and $$n_{X_{H_{1}}}=\sum_{x_{1}\in D_{X_{1}}}\frac{\#G}{e_{f_{H_{1}}}(x_{1})}.$$

By applying Theorem \ref{max and average} (a) and Lemma \ref{lem-0} (a), the surjective homomorphism $\phi|_{H_{1}}: H_{1} \migisurj H_{2}$ induces that $$\gamma^{\rm max}(H_{1})+2 =g_{X_{H_{1}}}+n_{X_{H_{1}}} \geq \gamma^{\rm max}(H_{2})+2=g_{X_{H_{2}}}+n_{X_{H_{2}}}.$$ Then we obtain that $$g_{X_{H_{1}}}+n_{X_{H_{1}}}=\#G(g_{X}-1)+1+\frac{1}{2}\cdot\sum_{x_{1} \in D_{X_{1}}}(\#G-\frac{\#G}{e_{f_{H_{1}}}(x_{1})})+\sum_{x_{1}\in D_{X_{1}}}\frac{\#G}{e_{f_{H_{1}}}(x_{1})}$$$$=\#G(g_{X}-1)+1+\frac{1}{2}\#Gn_{X}+\frac{1}{2}\cdot \sum_{x_{1} \in D_{X_{1}}}\frac{\#G}{e_{f_{H_{1}}}(x_{1})}$$$$\geq \#G(g_{X}-1)+1+\#Gn_{X}.$$ Thus, we have $$\sum_{x_{1} \in D_{X_{1}}}\frac{\#G}{e_{f_{H_{1}}}(x_{1})} \geq \#Gn_{X}.$$ Since $\#D_{X_{1}}=n_{X}$, we see immediately that $e_{f_{H_{1}}}(x_{1})=1$. This means that $f^{\bp}_{H_1}$ is \'etale, and that $$H_{1} \in \text{Et}_{D_{X_{1}}}^{\rm norm}(\Pi_{X_{1}^{\bp}}).$$ Thus we may define the following surjective homomorphism $$\phi^{\rm cpt}: \Pi_{X^{\bp}_{1}}^{\rm cpt} \defeq \Pi_{X^{\bp}_{1}}/\bigcap_{H_{1} \in \text{Et}^{\rm norm}_{D_{X_{1}}}(\Pi_{X_{1}^{\bp}})}H_{1}\migisurj \Pi_{X^{\bp}_{2}}^{\rm cpt} \defeq \Pi_{X^{\bp}_{2}}/\bigcap_{H_{2} \in \text{Et}^{\rm norm}_{D_{X_{2}}}(\Pi_{X_{2}^{\bp}})}H_{2}$$ which is induced by $\phi$ group-theoretically. Moreover, the commutative diagram 
\[
\begin{CD}
\Pi_{X^{\bp}_{1}} @>\phi>> \Pi_{X^{\bp}_{2}}
\\
@VVV@VVV
\\
\Pi^{\rm cpt}_{X^{\bp}_{1}} @>\phi^{\rm cpt}>> \Pi^{\rm cpt}_{X^{\bp}_{2}}
\end{CD}
\] follows immediately from the definition of $\phi^{\rm cpt}$. This completes the proof of the lemma.
\end{proof}

\begin{lemma}\label{lem-2}
Let $\ell$ be a prime number, and let $H_{2} \subseteq \Pi_{X_{2}^{\bp}}$ be an open normal subgroup and $H_{1} \defeq \phi^{-1}(H_{2}) \subseteq \Pi_{X^{\bp}_{1}}$. Suppose that $G \defeq \Pi_{X^{\bp}_{1}}/H_{1}=\Pi_{X^{\bp}_{2}}/H_{2}$ is a cyclic group which is isomorphic to $\mbZ/\ell\mbZ$. Then we have that $$(g_{X_{H_{1}}}, n_{X_{H_{1}}})=(g_{X_{H_{2}}}, n_{X_{H_{2}}}).$$
\end{lemma}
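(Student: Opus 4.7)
The plan is to split the argument according as $\ell=p$ or $\ell\ne p$.

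First, if $\ell=p$, the plan is to invoke conditions (iv) and (v) in Definition~\ref{def-2}: the decomposition group at every node of $X_{H_i}^\bp \migi X_i^\bp$ must have order prime to $p$, and the ramification at every marked point must be tame. Since $|G|=p$, both types of ramification are forced to be trivial, so the covering is \'etale on the underlying curves. Ordinary Riemann--Hurwitz then gives $(g_{X_{H_i}},n_{X_{H_i}})=(p(g_X-1)+1,\,p\,n_X)$, depending only on $(g_X,n_X)$; so the two types agree.

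Next, if $\ell\ne p$, the plan is to combine three ingredients. (i) Log Riemann--Hurwitz for admissible coverings---equivalently, the multiplicativity of the log Euler characteristic of the log structure on $X_i^\bp$ under the degree-$\ell$ log \'etale cover obtained after a suitable Kummer base-change to a log point---gives $2g_{X_{H_i}}+n_{X_{H_i}}-2=\ell(2g_X+n_X-2)$ independently of $i$, so $2g_{X_{H_1}}+n_{X_{H_1}}=2g_{X_{H_2}}+n_{X_{H_2}}$. (ii) By Lemma~\ref{surj}, $\phi$ is surjective, so $\phi|_{H_1}:H_1 \migisurj H_2$ is also surjective; Lemma~\ref{lem-0}(a) together with Theorem~\ref{max and average}(a) will yield $\gamma^{\rm max}(H_1)\geq \gamma^{\rm max}(H_2)$, i.e., $g_{X_{H_1}}+n_{X_{H_1}}\geq g_{X_{H_2}}+n_{X_{H_2}}$.

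(iii) By Lemma~\ref{lem-1}, $\phi$ induces a compatible surjection $\phi^{\rm cpt}:\Pi_{X_1^\bp}^{\rm cpt}\migisurj\Pi_{X_2^\bp}^{\rm cpt}$. Using the identification (routine from covering theory) of $H_i^{\rm cpt}=\Pi_{X_{H_i}^\bp}^{\rm cpt}$ with the image of $H_i$ in $\Pi_{X_i^\bp}^{\rm cpt}$, the commutative square of Lemma~\ref{lem-1} gives $\phi^{\rm cpt}(H_1^{\rm cpt})=H_2^{\rm cpt}$, so $\phi^{\rm cpt}|_{H_1^{\rm cpt}}:H_1^{\rm cpt}\migisurj H_2^{\rm cpt}$ is a surjection of profinite groups. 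Applying Lemma~\ref{lem-0}(a) to this surjection (taking $H_2^{\rm cpt}$ as the full ambient group) and invoking Theorem~\ref{max and average}(a) in the unpointed case ($n=0$), I obtain $\gamma^{\rm max}(H_1^{\rm cpt})\geq \gamma^{\rm max}(H_2^{\rm cpt})$, i.e., $g_{X_{H_1}}\geq g_{X_{H_2}}$. Setting $A\defeq g_{X_{H_1}}-g_{X_{H_2}}\geq 0$ and $C\defeq n_{X_{H_1}}-n_{X_{H_2}}$, (i) gives $2A+C=0$ so $C=-2A$, (ii) gives $A+C\geq 0$ so $-A\geq 0$, and together with $A\geq 0$ this forces $A=0$ and $C=0$.

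\textbf{The hard part} is the simultaneous use of $\gamma^{\rm max}$ in both the ``pointed'' and ``unpointed'' incarnations: neither inequality alone pins down both of $g_{X_{H_i}}$ and $n_{X_{H_i}}$, but together with log Riemann--Hurwitz they do. Routine technicalities to address are the identification $H_i^{\rm cpt}\cong \mathrm{image}(H_i\migi\Pi^{\rm cpt}_{X_i^\bp})$ and degenerate boundary cases (if $(X_{H_i},\emptyset)$ fails to be stable one passes to the stable contraction, which preserves the arithmetic genus and hence leaves $\gamma^{\rm max}$ of the unpointed group unaffected).
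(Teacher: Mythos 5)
Your treatment of the case $\ell=p$ and your steps (i)--(ii) are sound and essentially parallel the paper's use of Riemann--Hurwitz together with the monotonicity $\gamma^{\rm max}(H_{1})\geq\gamma^{\rm max}(H_{2})$. The gap is step (iii). The commutative square of Lemma \ref{lem-1} only gives a surjection between the \emph{images} of $H_{i}$ under $\Pi_{X_{i}^{\bp}}\migisurj\Pi^{\rm cpt}_{X_{i}^{\bp}}$, and that image is $H_{i}N_{i}/N_{i}$, where $N_{i}$ is the normal closure of the marked-point inertia; it is \emph{not} $H_{i}^{\rm cpt}=\Pi^{\rm cpt}_{X_{H_{i}}^{\bp}}$ in general. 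Indeed, since $[\Pi_{X_{i}^{\bp}}:H_{i}]=\ell$ is prime, as soon as $f^{\bp}_{H_{i}}$ is ramified over a single marked point one has $N_{i}\not\subseteq H_{i}$, hence $H_{i}N_{i}=\Pi_{X_{i}^{\bp}}$ and the image is all of $\Pi^{\rm cpt}_{X_{i}^{\bp}}$; the induced surjection between images is then just $\phi^{\rm cpt}$ and carries no information about $g_{X_{H_{i}}}$. (For a blunt counterexample to the identification you call routine: for $X^{\bp}=(\mbP^{1},\{0,1,\infty\})$ one has $\Pi^{\rm cpt}_{X^{\bp}}=\{1\}$, so the image of any $H$ is trivial, while $H^{\rm cpt}$ can have arbitrarily large genus.) Since the interesting case of the lemma is precisely when $f_{H_{2}}^{\bp}$ is ramified over marked points, the inequality $g_{X_{H_{1}}}\geq g_{X_{H_{2}}}$ is not established, and without it your (i)+(ii) only give $g_{X_{H_{1}}}\leq g_{X_{H_{2}}}$, which does not conclude. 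The obvious repairs are circular: applying Lemma \ref{lem-1} to $\phi|_{H_{1}}:H_{1}\migisurj H_{2}$ presupposes the equality of types being proved, and producing a surjection $H_{1}^{\rm cpt}\migisurj H_{2}^{\rm cpt}$ from $\phi|_{H_{1}}$ would require knowing that $\phi$ carries marked-point inertia into marked-point inertia, i.e.\ Theorem \ref{mainstep-1}, which is proved later using this very lemma (via Proposition \ref{prop-1}).

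For comparison, the paper avoids any genus inequality at this stage. From $\gamma^{\rm max}(H_{1})\geq\gamma^{\rm max}(H_{2})$ and Riemann--Hurwitz it deduces $\#e^{\rm op, ra}_{f_{H_{1}}}\leq\#e^{\rm op, ra}_{f_{H_{2}}}$, and then runs a counting argument over \emph{all} open normal subgroups of index $\ell$: stratifying them by the number $m$ of ramified marked points into sets $\mcA_{i,m}$, it uses that $\phi^{p'}$ is an isomorphism and that $X_{1}^{\bp}$, $X_{2}^{\bp}$ have the same type to get $\#\mcA_{1,j}=\#\mcA_{2,j}$ for all $j$; the monotonicity above shows the bijection $\phi^{*}_{\ell}:\mcA_{2,\leq n_{X}}\isom\mcA_{1,\leq n_{X}}$ maps $\mcA_{2,\leq m}$ into $\mcA_{1,\leq m}$, and an induction on $m$ then forces it to preserve each stratum, so $\#e^{\rm op,ra}_{f_{H_{1}}}=\#e^{\rm op,ra}_{f_{H_{2}}}$ and the two types coincide by the Riemann--Hurwitz formulas. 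Some such global counting (or an independent argument pinning down the number of ramified marked points of $f_{H_{1}}$) is the missing ingredient in your proposal.
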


\begin{proof}
Let $$f^{\bp}_{H_{i}}: X^{\bp}_{H_{i}} \migi X^{\bp}_{i}$$ be the Galois admissible covering over $k_{i}$ with Galois group $G$ corresponding to $H_{i}$. Suppose that $\ell=p$. Then the definition of admissible coverings implies that $f^{\bp}_{H_{i}}$ is \'etale. Thus, we have that $(g_{X_{H_{1}}}, n_{X_{H_{1}}})=(g_{X_{H_{2}}}, n_{X_{H_{2}}}).$ Then we may suppose that $\ell \neq p$. 

By the Riemann-Hurwitz formula, we have $$g_{X_{H_i}}=\ell(g_{X}-1)+1+\frac{1}{2}\#e^{\rm op, ra}_{f_{H_{i}}}(\ell-1)$$ and $$n_{X_{H_{i}}}=\#e^{\rm op, ra}_{f_{H_{i}}}+\ell(n_{X}-\#e^{\rm op, ra}_{f_{H_{i}}}).$$ By applying Theorem \ref{max and average} (a) and Lemma \ref{lem-0} (a), the surjective homomorphism $\phi|_{H_{1}}: H_{1} \migisurj H_{2}$ implies that $$\gamma^{\rm max}(H_{1})+2=g_{X_{H_{1}}}+n_{X_{H_{1}}} \geq \gamma^{\rm max}(H_{2})+2=g_{X_{H_{2}}}+n_{X_{H_{2}}}.$$ Then we have $$\ell(g_{X}-1)+1+\frac{1}{2}\#e^{\rm op, ra}_{f_{H_{1}}}(\ell -1) +\#e^{\rm op, ra}_{f_{H_{1}}}+\ell(n_{X}-\#e^{\rm op, ra}_{f_{H_{1}}})$$$$=\ell(g_{X}-1)+1+\ell n_{X}+\frac{1}{2}(1-\ell)\#e^{\rm op, ra}_{f_{H_{1}}}$$ $$\geq \ell(g_{X}-1)+1+\frac{1}{2}\#e^{\rm op, ra}_{f_{H_{2}}}(\ell -1) +\#e^{\rm op, ra}_{f_{H_{2}}}+\ell(n_{X}-\#e^{\rm op, ra}_{f_{H_{2}}})$$$$=\ell(g_{X}-1)+1+\ell n_{X}+\frac{1}{2}(1-\ell)\#e^{\rm op, ra}_{f_{H_{2}}}.$$ Then we obtain that $$\#e^{\rm op, ra}_{f_{H_{1}}}\leq \#e^{\rm op, ra}_{f_{H_{2}}}.$$

Let $0 \leq m \leq n_{X}$ be a positive natural number. We put $$\mcA_{i, m} \defeq \{N_{i} \subseteq \Pi_{X^{\bp}_{i}}  \ \text{is an open normal subgroup} $$ $$\ | \ \Pi_{X^{\bp}_{i}}/N_{i} \cong \mbZ/\ell\mbZ \ \text{and} \ \#e^{\rm op, ra}_{f_{N_{i}}}=m\}$$ and $$\mcA_{i, \leq m} \defeq \bigcup_{0\leq j \leq m} \mcA_{i, j},$$ where  $f^{\bp}_{N_{i}}$ denotes the Galois admissible covering over $k_{i}$ corresponding to $N_{i}$. The isomorphism $\phi^{p'}$ induces a bijective map $$\phi^{*}_{\ell}: \mcA_{2, \leq n_{X}}\isom \mcA_{1, \leq n_{X}}.$$ To verify the lemma, it sufficient to prove that $\phi^{*}_{\ell}$ induces a bijection $$\phi^{*}_{\ell}|_{ \mcA_{2, m}}: \mcA_{2, m} \isom \mcA_{1, m}.$$ We note that since $(g_{X}, n_{X})=(g_{X_{1}}, n_{X_{1}})=(g_{X_{2}}, n_{X_{2}})$, the isomorphism $\phi^{p'}$ implies that $\#\mcA_{1, j}=\#\mcA_{2, j}$ for each $0\leq j \leq n_{X}$. Then by Lemma \ref{lem-1}, we have a bijection $$\phi^{*}_{\ell}|_{ \mcA_{2, 0}}: \mcA_{2, 0} \isom \mcA_{1, 0}.$$ We prove $\phi^{*}_{\ell}|_{ \mcA_{2, m}}: \mcA_{2, m} \isom \mcA_{1, m}$ by induction on $m$. Suppose that $m\geq 1$. The inequality $\#e^{\rm op, ra}_{f_{H_{1}}}\leq \#e^{\rm op, ra}_{f_{H_{2}}}$ concerning the cardinality of branch locus implies that we have a bijection $\phi_{\ell}^{*}|_{\mcA_{2, \leq m}}: \mcA_{2, \leq m} \isom \mcA_{1, \leq m}$. By induction, $\phi_{\ell}^{*}|_{\mcA_{2, \leq m-1}}: \mcA_{2, \leq m-1} \isom \mcA_{1, \leq m-1}$ is a bijection. Then we obtain that $$\phi_{\ell}^{*}|_{\mcA_{2,  m}}: \mcA_{2, m} \isom \mcA_{1, m}.$$ This completes the proof of the lemma.
\end{proof}

\begin{corollary}\label{coro-1}
Let $H_{2} \subseteq \Pi_{X_{2}^{\bp}}$ be an open normal subgroup and $H_{1} \defeq \phi^{-1}(H_{2}) \subseteq \Pi_{X^{\bp}_{1}}$. Suppose that $G \defeq \Pi_{X^{\bp}_{1}}/H_{1}=\Pi_{X^{\bp}_{2}}/H_{2}$ is a finite solvable group. Then we have that $$(g_{X_{H_{1}}}, n_{X_{H_{1}}})=(g_{X_{H_{2}}}, n_{X_{H_{2}}}).$$
\end{corollary}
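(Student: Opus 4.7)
The plan is to reduce Corollary \ref{coro-1} to Lemma \ref{lem-2} by induction on $\#G$. The base case $\#G=1$ is immediate, since then $H_i=\Pi_{X^{\bp}_i}$ and $X^{\bp}_{H_i}=X^{\bp}_i$, which have equal type by assumption.

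For the inductive step, assume $\#G>1$. Since $G$ is finite and solvable, it admits a maximal proper normal subgroup $N\triangleleft G$, and the quotient $G/N$ is necessarily cyclic of prime order $\ell$ (a finite simple solvable group is cyclic of prime order). I would form the inverse image $K_2\subseteq\Pi_{X^{\bp}_2}$ of $N$ under $\Pi_{X^{\bp}_2}\migisurj G$ and set $K_1\defeq\phi^{-1}(K_2)$. Then $K_i\triangleleft\Pi_{X^{\bp}_i}$ is an open normal subgroup with $\Pi_{X^{\bp}_i}/K_i\cong\mbZ/\ell\mbZ$, so Lemma \ref{lem-2} applied to the pair $(K_1,K_2)$ yields $(g_{X_{K_1}},n_{X_{K_1}})=(g_{X_{K_2}},n_{X_{K_2}})$.

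I would then apply the induction hypothesis to the restricted surjection $\phi|_{K_1}\colon K_1\migisurj K_2$. By the discussion recalled in Section \ref{sec-1}, $K_i$ is (outer) isomorphic to $\Pi_{X^{\bp}_{K_i}}$ (admissible or solvable admissible, according to the fixed choice of $\Pi_{X^{\bp}_i}$), and by the previous step $X^{\bp}_{K_1}$ and $X^{\bp}_{K_2}$ have the same type. Moreover, $H_2\triangleleft K_2$ is open normal, $H_1=(\phi|_{K_1})^{-1}(H_2)$, and the quotient $K_2/H_2\cong N$ is solvable of order $\#N<\#G$. The induction hypothesis therefore applies and gives $(g_{X_{H_1}},n_{X_{H_1}})=(g_{X_{H_2}},n_{X_{H_2}})$, closing the induction.

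The main, and essentially only, obstacle is bookkeeping: one must verify that $\phi|_{K_1}$ remains a surjective open continuous homomorphism to which Lemma \ref{lem-2} is applicable, and that the pointed stable curve $X^{\bp}_{H_i}$ attached to $H_i\subseteq K_i\cong\Pi_{X^{\bp}_{K_i}}$ coincides with the one attached to $H_i\subseteq\Pi_{X^{\bp}_i}$. Both points are formal consequences of $K_1=\phi^{-1}(K_2)$ together with the functoriality of the admissible-covering correspondence. All the genuine arithmetic content, namely the maximum generalized Hasse--Witt inequality of Theorem \ref{max and average}(a) fed through Lemma \ref{lem-0}(a), has already been absorbed into Lemma \ref{lem-2}, so no further analytic input is required.
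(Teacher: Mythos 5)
Your proposal is correct and follows essentially the same route as the paper: the paper's proof consists of the single sentence that the corollary ``follows immediately from Lemma \ref{lem-2}'', and your induction through a composition series of the solvable group $G$ (using that each successive quotient is cyclic of prime order, and that open subgroups of $\Pi_{X_{i}^{\bp}}$ are again fundamental groups of the corresponding pointed stable curves) is precisely the dévissage that deduction leaves implicit. No gaps.
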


\begin{proof}
The corollary follows immediately from Lemma \ref{lem-2}.
\end{proof}

\begin{lemma}\label{lem-3}
Let $H_{2} \subseteq \Pi_{X_{2}^{\bp}}$ be an open normal subgroup and $H_{1} \defeq \phi^{-1}(H_{2}) \subseteq  \Pi_{X_{1}^{\bp}}$. Suppose that $H_{2}$ contains the kernel of the natural homomorphism $ \Pi_{X_{2}^{\bp}} \migisurj  \Pi_{X_{2}^{\bp}}^{\rm cpt}$ (i.e., the admissible covering corresponding to $H_{2}$ is \'etale over $D_{X_{2}}$). Then we have that $$(g_{X_{H_{1}}}, n_{X_{H_{1}}})=(g_{X_{H_{2}}}, n_{X_{H_{2}}}).$$
\end{lemma}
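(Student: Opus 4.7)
My plan is to derive the lemma directly from Lemma \ref{lem-1}. By the very description of $\Pi_{X_2^{\bp}}^{\rm cpt}$ given in the proof of Lemma \ref{lem-1}, namely $\ker(\Pi_{X_2^{\bp}} \migisurj \Pi_{X_2^{\bp}}^{\rm cpt}) = \bigcap_{H \in \text{Et}^{\rm norm}_{D_{X_2}}(\Pi_{X_2^{\bp}})} H$, the hypothesis that the open normal subgroup $H_2$ contains $\ker(\Pi_{X_2^{\bp}} \migisurj \Pi_{X_2^{\bp}}^{\rm cpt})$ is equivalent to $H_2 \in \text{Et}^{\rm norm}_{D_{X_2}}(\Pi_{X_2^{\bp}})$, i.e.\ to the admissible covering $X_{H_2}^{\bp} \migi X_2^{\bp}$ being \'etale above $D_{X_2}$. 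Lemma \ref{lem-1} therefore yields $H_1 = \phi^{-1}(H_2) \in \text{Et}^{\rm norm}_{D_{X_1}}(\Pi_{X_1^{\bp}})$, so the admissible covering $X_{H_1}^{\bp} \migi X_1^{\bp}$ is likewise \'etale over $D_{X_1}$.

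Setting $G \defeq \Pi_{X_1^{\bp}}/H_1 = \Pi_{X_2^{\bp}}/H_2$, for each $i \in \{1,2\}$ the covering $X_{H_i}^{\bp} \migi X_i^{\bp}$ is Galois with group $G$ and totally unramified above the $n_X$ marked points; hence the preimage of $D_{X_i}$ consists of exactly $\#G \cdot n_X$ points, giving $n_{X_{H_i}} = \#G \cdot n_X$. Inserting $e_{f_{H_i}}(x) = 1$ for all $x \in D_{X_i}$ into the Riemann--Hurwitz identity for admissible coverings used in the proof of Lemma \ref{lem-1}, we also obtain $g_{X_{H_i}} = \#G(g_X - 1) + 1$. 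Both quantities depend only on $\#G$, $g_X$, $n_X$, whence $(g_{X_{H_1}}, n_{X_{H_1}}) = (g_{X_{H_2}}, n_{X_{H_2}})$. The degenerate case $n_X = 0$ is immediate, since then $\Pi_{X_i^{\bp}} = \Pi_{X_i^{\bp}}^{\rm cpt}$, the hypothesis is vacuous, and Riemann--Hurwitz for admissible coverings of pointed semi-stable curves without marked points delivers the same genus formula directly.

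I expect no real obstacle here: the statement is essentially a repackaging of the \'etale step that was already carried out inside the proof of Lemma \ref{lem-1}, isolated in the present form for convenient use in the subsequent reconstruction arguments (where one wants to transfer facts about quotients of $\Pi^{\rm cpt}$ across the surjection $\phi$ without re-running the Hasse--Witt inequality each time). The only conceptual point worth emphasizing is the equivalence between the condition in the statement and membership in $\text{Et}^{\rm norm}_{D_{X}}$; once this is noted, the conclusion is a direct calculation.
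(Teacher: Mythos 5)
Your proof is correct and follows essentially the same route as the paper: via Lemma \ref{lem-1} (its commutative diagram, equivalently the $\text{Et}^{\rm norm}$-preservation step established inside its proof) the covering attached to $H_{1}=\phi^{-1}(H_{2})$ is \'etale over $D_{X_{1}}$, and the Riemann--Hurwitz formula for admissible coverings then forces $(g_{X_{H_{1}}}, n_{X_{H_{1}}})=(g_{X_{H_{2}}}, n_{X_{H_{2}}})$. One cosmetic caveat: containing $\bigcap_{H \in \text{Et}^{\rm norm}_{D_{X_{2}}}(\Pi_{X_{2}^{\bp}})}H$ does not formally imply membership in $\text{Et}^{\rm norm}_{D_{X_{2}}}(\Pi_{X_{2}^{\bp}})$; the equivalence you use should instead be justified by the parenthetical characterization in the statement (the kernel of $\Pi_{X_{2}^{\bp}} \migisurj \Pi_{X_{2}^{\bp}}^{\rm cpt}$ is normally generated by the inertia subgroups of the marked points, so containing it is the same as being \'etale over $D_{X_{2}}$, hence as lying in $\text{Et}^{\rm norm}_{D_{X_{2}}}(\Pi_{X_{2}^{\bp}})$ by Riemann--Hurwitz).
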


\begin{proof}
By Lemma \ref{lem-1}, we have that $H_{1}$ contains the kernel of the natural homomorphism $\Pi_{X^{\bp}_{1}} \migisurj \Pi_{X_{1}^{\bp}}^{\rm cpt}$ (i.e., the admissible covering corresponding to $H_{1}$ is \'etale over $D_{X_{1}}$). Then the lemma follows immediately from the Riemann-Hurwitz formula.
\end{proof}

\begin{definition}\label{def-4}
Let $\Pi$ be an arbitrary profinite group and $m, n \in \mbN$ positive natural numbers. We define the closed normal subgroup $$D_{n}(\Pi)$$ of $\Pi$ to be the topological closure of $[\Pi, \Pi]\Pi^{n}$, where $[\Pi, \Pi]$ denotes the commutator subgroup of $\Pi$. Moreover, we define the closed normal subgroup $$D^{(m)}_{n}(\Pi)$$ of $\Pi$ inductively by $D^{(1)}_{n}(\Pi)\defeq D_{n}(\Pi)$ and $D_{n}^{(j+1)}(\Pi)\defeq D_{n}(D^{(j)}_{n}(\Pi))$, $j \in \{1, \dots, m-1\}$. Note that $\#(\Pi/D_{n}^{(m)}(\Pi))\leq \infty$ when $\Pi$ is topologically finitely generated.
\end{definition}

\begin{proposition}\label{prop-1}
Let $N_{2} \subseteq \Pi_{X_{2}^{\bp}}$ be an arbitrary open subgroup, $N_{1} \defeq \phi^{-1}(N_{2}) \subseteq \Pi_{X^{\bp}_{1}}$. Then there exist open normal subgroups $H_{2} \subseteq N_{2} \subseteq \Pi_{X_{2}^{\bp}}$ of $\Pi_{X_{2}^{\bp}}$ and $H_{1} \defeq \phi^{-1}(H_{2})\subseteq N_{1} \subseteq \Pi_{X_{1}^{\bp}}$ of $\Pi_{X_{1}^{\bp}}$ such that $$(g_{X_{H_{1}}}, n_{X_{H_{1}}})=(g_{X_{H_{2}}}, n_{X_{H_{2}}}).$$
\end{proposition}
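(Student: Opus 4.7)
The plan is to reduce to Corollary \ref{coro-1} by constructing $H_2$ as an open normal subgroup of $\Pi_{X_2^{\bp}}$ contained in $N_2$ whose quotient in $\Pi_{X_2^{\bp}}$ is finite solvable, so that the type equality follows directly from the cyclic-of-prime-order case (Lemma \ref{lem-2}) iterated via composition series. The input is an arbitrary open $N_2$; the output is a specific open normal $H_2$ with $H_1 = \phi^{-1}(H_2)$ giving matching invariants.

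First I would replace $N_2$ by its normal core $M_2 := N_2^{\rm core} = \bigcap_{g \in \Pi_{X_2^{\bp}}} g N_2 g^{-1}$, which is an open normal subgroup of $\Pi_{X_2^{\bp}}$ contained in $N_2$. When $G := \Pi_{X_2^{\bp}}/M_2$ is solvable, I would simply set $H_2 := M_2$. Since $\phi$ induces an isomorphism $\Pi_{X_1^{\bp}}/H_1 \isom G$, the quotient on the $X_1^{\bp}$-side is also solvable, and Corollary \ref{coro-1} immediately produces $(g_{X_{H_1}}, n_{X_{H_1}}) = (g_{X_{H_2}}, n_{X_{H_2}})$. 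In particular this settles the case $\Pi_{X_i^{\bp}} = \pi_1^{\rm adm}(X_i^{\bp})^{\rm sol}$, in which every finite quotient is automatically solvable.

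For the general admissible case, where $G$ may fail to be solvable, I would refine the choice of $H_2$ via the filtration $\{D_n^{(m)}(\Pi_{X_2^{\bp}})\}$ from Definition \ref{def-4}, which is cofinal among open normal subgroups of $\Pi_{X_2^{\bp}}$ with finite solvable quotient (since $\Pi_{X_2^{\bp}}$ is topologically finitely generated). Concretely, I would build a two-step tower $H_2 \subseteq H_2' \subseteq \Pi_{X_2^{\bp}}$ in which $H_2' := D_n^{(m)}(\Pi_{X_2^{\bp}})\cdot M_2$ is chosen so that $\Pi_{X_2^{\bp}}/H_2'$ is solvable (so that Corollary \ref{coro-1} yields type equality for $H_2'$ and its pullback $\phi^{-1}(H_2')$), while $H_2$ is chosen inside $H_2' \cap M_2$ so that the relative cover $X_{H_2}^{\bp} \to X_{H_2'}^{\bp}$ is étale over the marked points of $X_{H_2'}^{\bp}$, i.e., $H_2 \supseteq \ker(H_2' \to (H_2')^{\rm cpt})$. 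Applying Lemma \ref{lem-3} to the surjection $\phi|_{\phi^{-1}(H_2')}: \phi^{-1}(H_2') \twoheadrightarrow H_2'$ (for which the base curves $X_{H_2'}^{\bp}$ and $X_{\phi^{-1}(H_2')}^{\bp}$ already have matching types by the previous step) then propagates the equality down to $H_2$.

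The main technical hurdle will be simultaneously arranging the three conditions $H_2 \subseteq N_2$, $\Pi_{X_2^{\bp}}/H_2'$ solvable, and $\ker(H_2' \to (H_2')^{\rm cpt}) \subseteq H_2$, which forces a careful coordination between the ramification structure at marked points of $X_{H_2'}^{\bp}$ and the core $M_2$; the freedom comes from the fact that Definition \ref{def-4} lets us shrink $D_n^{(m)}(\Pi_{X_2^{\bp}})$ arbitrarily while keeping the quotient solvable, allowing us to absorb the required inertia contributions into $H_2$ without breaking $H_2 \subseteq N_2$.
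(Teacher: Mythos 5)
Your first step (passing to the normal core $M_{2}\subseteq N_{2}$ and invoking Corollary \ref{coro-1} when the quotient is solvable) is correct, and it does dispose of the case $\Pi_{X_{i}^{\bp}}=\pi_{1}^{\rm adm}(X_{i}^{\bp})^{\rm sol}$; your two-layer skeleton (a solvable layer via Corollary \ref{coro-1}, then an \'etale-over-the-marked-points layer via Lemma \ref{lem-3}) is also the right shape. The gap is in how you try to produce the second layer in the general admissible case. You want simultaneously $H_{2}\subseteq M_{2}$ and $\ker(H_{2}'\migi (H_{2}')^{\rm cpt})\subseteq H_{2}$, with $H_{2}'\defeq D_{n}^{(m)}(\Pi_{X_{2}^{\bp}})\cdot M_{2}$. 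Since $M_{2}\subseteq H_{2}'$, this forces $I_{\widehat e}\cap H_{2}'=I_{\widehat e}\cap M_{2}$ for every inertia subgroup $I_{\widehat e}$ at a marked point, i.e., the image of each such inertia group in $G\defeq \Pi_{X_{2}^{\bp}}/M_{2}$ must meet the image of $D_{n}^{(m)}(\Pi_{X_{2}^{\bp}})$ trivially. Shrinking $D_{n}^{(m)}$ does not buy you this: the image of $D_{n}^{(j)}(\Pi_{X_{2}^{\bp}})$ in the finite group $G$ is $D_{n}^{(j)}(G)$ and stabilizes at (essentially) the solvable residual of $G$, and nothing prevents the inertia images from lying inside it --- e.g., when the non-solvable quotient $G$ is generated by inertia images. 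So ``absorbing the required inertia contributions into $H_{2}$'' is in general incompatible with $H_{2}\subseteq N_{2}$, and this is precisely the step that needs a proof; your proposal acknowledges it as a hurdle but offers no mechanism that works.

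The paper resolves this differently, and the difference is essential: it takes the intermediate subgroup to be $Q_{2}\defeq D_{m}^{(4)}(\Pi_{X_{2}^{\bp}})$ itself (not enlarged by $M_{2}$), where $m$ is the prime-to-$p$ part of $\#(\Pi_{X_{2}^{\bp}}/M_{2})$, and sets $H_{i}\defeq M_{i}\cap Q_{i}$ with $Q_{1}=\phi^{-1}(Q_{2})=D_{m}^{(4)}(\Pi_{X_{1}^{\bp}})$ (here $(m,p)=1$ and the isomorphy of $\phi^{p'}$ are used). The key external input, which your argument lacks, is \cite[Lemma 3.2 and Lemma 3.3]{Y7}: every marked point of $X_{Q_{i}}^{\bp}$ has ramification index over $X_{i}^{\bp}$ divisible by $m$. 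Since the ramification indices of the $M_{i}$-covering at marked points are prime to $p$ and divide $m$, Abhyankar's lemma then shows that $X_{H_{i}}^{\bp}\migi X_{Q_{i}}^{\bp}$ is \'etale --- the ramification is absorbed into the intermediate covering $X_{Q_{i}}^{\bp}$, not into $H_{2}$, so no conflict with $H_{2}\subseteq N_{2}$ arises. After that, Corollary \ref{coro-1} applied to the solvable quotient $\Pi_{X_{i}^{\bp}}/Q_{i}$ gives the type equality at the $Q_{i}$-level, and Lemma \ref{lem-3} (relative to $Q_{i}$) transfers it to $H_{i}$. To repair your write-up you would need to replace your choice of $H_{2}'$ by $D_{m}^{(4)}(\Pi_{X_{2}^{\bp}})$ with this specific $m$, and supply (or cite) the ramification-divisibility statement plus the Abhyankar argument.
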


\begin{proof}
Let $M_{i}$ be an open normal subgroup of $\Pi_{X^{\bp}_{i}}$ which is contained in $N_{i}$. We put $G \defeq \Pi_{X^{\bp}_{1}}/M_{1}=\Pi_{X^{\bp}_{2}}/M_{2}$, and write $m$ for $[G: G_{p}]$, where $G_{p}$ denotes a Sylow-$p$ subgroup of $G$. Then we have $(m, p)=1$. Let $$f_{M_i}^{\bp}: X_{M_{i}}^{\bp} \migi X_{i}^{\bp}$$ be the Galois admissible covering over $k_{i}$ with Galois group $G$ corresponding to $M_{i}$.

We put $Q_{2} \defeq D_{m}^{(4)}(\Pi_{X_{2}^{\bp}})$ and $Q_{1}\defeq \phi^{-1}(Q_{2})$. Note that since $m$ is prime to $p$ and $\phi^{p'}$ is an isomorphism, we have $Q_{1}=D_{m}^{(4)}(\Pi_{X^{\bp}_{1}})$. Let $H_{i} \defeq M_{i} \cap Q_{i} \subseteq \Pi_{X^{\bp}_{i}}$. We denote by $$f^{\bp}_{Q_{i}}: X^{\bp}_{Q_{i}} \migi X_{i}^{\bp}$$ and $$f^{\bp}_{H_{i}}: X^{\bp}_{H_{i}} \migi X_{i}^{\bp}$$ the Galois admissible covering over $k_{i}$ with Galois group $\Pi_{X_{i}^{\bp}}/Q_{i}$ corresponding to $Q_{i}$, and the Galois admissible covering over $k_{i}$ with Galois group $\Pi_{X_{i}^{\bp}}/H_{i}$ corresponding to $H_{i}$, respectively. Note that $f^{\bp}_{H_{i}}$ factors through $f_{M_{i}}^{\bp}$ and $f_{Q_{i}}^{\bp}$.

By applying \cite[Lemma 3.2 and Lemma 3.3]{Y7}, we have that the ramification index of every point of $D_{X_{Q_{i}}}$ is divided by $m$. Then Abhyankar's lemma implies that the Galois admissible covering $$g^{\bp}_{i}: X^{\bp}_{H_{i}} \migi X^{\bp}_{Q_{i}}$$ over $k_{i}$ induced by $H_{i} \subseteq Q_{i}$ is \'etale. Since $\Pi_{X_{i}^{\bp}}/Q_{i}$ is a finite solvable group, the proposition follows immediately from Corollary \ref{coro-1} and Lemma \ref{lem-3}. 
\end{proof}

\begin{lemma}\label{lem-4}
(a) Let $J_{i} \subseteq \Pi_{X_{i}}$ be a closed subgroup which is isomorphic to $\widehat\mbZ(1)^{p'}$. Then the following conditions are equivalent:

(i) There exists a unique closed subgroup $I_{i} \in {\rm Edg}^{\rm op}(\Pi_{X^{\bp}_{i}})$ such that $J_{i} \subseteq I_{i}$.

(ii) There exists an open subgroup $N_{i} \subseteq \Pi_{X^{\bp}_{i}}$ such that there exists a cofinal system $\mcC_{N_{i}}$ of $N_{i}$ which consists of open normal subgroups $H_{i} \subseteq N_{i} \subseteq \Pi_{X^{\bp}_{i}}$ of $\Pi_{X^{\bp}_{i}}$ (i.e., $N_{i}\cong \invlim_{H_{i}\in \mcC_{N_{i}}}N_{i}/H_{i}$), the image of the composition of the natural homomorphisms $$J_{i} \cap H_{i} \migiinje H_{i} \migi H_{i}^{\rm cpt, ab}$$ is trivial for every $H_{i}\in \mcC_{N_{i}}$.

(b) Let $\ell$ be a prime number distinct from $p$, $I_{i}, J_{i} \in {\rm Edg}^{\rm op}(\Pi_{X^{\bp}_{i}})$ arbitrary closed subgroups, and $\Pi_{X^{\bp}_{i}}^{\ell}$ the maximal pro-$\ell$ quotient of $\Pi_{X^{\bp}_{i}}$. Write $\overline I_{i}^{\ell}$ and $\overline J_{i}^{\ell}$ for $pr_{i}^{\ell}(I_{i})$ and $pr_{i}^{\ell}(J_{i})$, respectively. Suppose that $\overline I_{i}^{\ell}=\overline J_{i}^{\ell}$. Then we have $$I_{i}=J_{i}.$$
\end{lemma}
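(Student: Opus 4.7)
For part (a), the direction (i)$\Rightarrow$(ii) is essentially formal. I would take $N_i \defeq \Pi_{X_i^{\bp}}$ and let $\mcC_{N_i}$ be any cofinal system of open normal subgroups of $\Pi_{X_i^{\bp}}$. For each $H_i \in \mcC_{N_i}$, the intersection $I_i \cap H_i$ is (identified with) an inertia subgroup of a marked point in the admissible covering $X_{H_i}^{\bp} \migi X_i^{\bp}$, and hence lies in $\ker(H_i \migisurj H_i^{\rm cpt})$, since passing to $(-)^{\rm cpt}$ contracts the marked points and thus kills the open-edge inertia. Its image in $H_i^{\rm cpt, ab}$ is therefore trivial, and the same holds a fortiori for the smaller subgroup $J_i \cap H_i$.

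The harder direction (ii)$\Rightarrow$(i) requires producing a unique $I_i \in {\rm Edg}^{\rm op}(\Pi_{X_i^{\bp}})$ containing $J_i$. The idea is that for each $H_i \in \mcC_{N_i}$, the kernel $\ker(H_i \migi H_i^{\rm cpt, ab})$ is built (up to commutators of $H_i^{\rm cpt}$) from the inertia subgroups of marked points of $X_{H_i}^{\bp}$; the assumed triviality of the image of $J_i \cap H_i$ thus confines $J_i \cap H_i$ to this inertia-generated part at each level. Taking the limit along the cofinal tower $\mcC_{N_i}$ and using the procyclic structure $J_i \cong \widehat{\mbZ}(1)^{p'}$, one concludes that $J_i$ sits inside a single $I_i \in {\rm Edg}^{\rm op}(\Pi_{X_i^{\bp}})$. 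Uniqueness is then automatic: two distinct elements of ${\rm Edg}^{\rm op}(\Pi_{X_i^{\bp}})$ correspond to distinct open edges of $\Gamma_{\widehat X_i^{\bp}}$, i.e., to distinct marked points of $\widehat X_i^{\bp}$, and therefore have trivial intersection; so a nontrivial procyclic $J_i$ can be contained in at most one of them.

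For part (b), the crucial point is that $I_i, J_i \cong \widehat{\mbZ}(1)^{p'} = \prod_{\ell' \neq p} \mbZ_{\ell'}(1)$ admit a canonical decomposition into pro-$\ell'$ factors, with the pro-$\ell$ factor realized as $\overline I_i^{\ell}$, respectively $\overline J_i^{\ell}$. The hypothesis $\overline I_i^{\ell} = \overline J_i^{\ell}$ then says that $I_i$ and $J_i$ have identical pro-$\ell$ parts as subgroups of $\Pi_{X_i^{\bp}}^{\ell}$, so they stabilize the same open edge of the dual semi-graph of the maximal pro-$\ell$ admissible covering of $X_i^{\bp}$. Combining this with the edge-stabilizer characterization furnished by (a) (applied to further pro-$\ell'$ quotients to verify compatibility across all primes $\ell' \neq p$), the rigidity of admissible covers forces $I_i$ and $J_i$ to be the stabilizers of the same open edge of $\Gamma_{\widehat X_i^{\bp}}$, whence $I_i = J_i$.

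The main obstacle I anticipate is the implication (ii)$\Rightarrow$(i) in part (a): passing from the pointwise triviality condition on each $H_i^{\rm cpt, ab}$ to a globally coherent statement that $J_i$ lies in a single edge stabilizer requires a careful limit argument that combines the combinatorial structure of the graphs $\Gamma_{X_{H_i}^{\bp}}$ with the procyclic nature of $J_i$, in order to rule out the possibility that $J_i$ distributes itself across multiple distinct open edges. Once this rigidity is established, part (b) follows relatively directly.
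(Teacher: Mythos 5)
Your part (a) is in reasonable shape: the direction (i)$\Rightarrow$(ii) is exactly the formal argument (marked-point inertia dies in $H_{i}^{\rm cpt}$, hence in $H_{i}^{\rm cpt, ab}$), and for (ii)$\Rightarrow$(i) the paper itself gives no details beyond ``similar arguments to [HM, Lemma 1.6]''. Be aware, though, that your intermediate claim is not valid level-by-level: the triviality of the image of $J_{i}\cap H_{i}$ in $H_{i}^{\rm cpt,ab}$ only places $J_{i}\cap H_{i}$ in $\ker(H_{i}\migi H_{i}^{\rm cpt,ab})$, which contains the whole commutator part of $H_{i}^{\rm cpt}$ as well as the inertia, so nothing is ``confined to the inertia-generated part'' at a single level; the force of the condition comes from imposing it along the entire cofinal system, which is precisely what the cited Hoshi--Mochizuki argument exploits. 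Your uniqueness step (distinct elements of ${\rm Edg}^{\rm op}(\Pi_{X_{i}^{\bp}})$ intersect trivially) is correct and is the same fact the paper quotes from [M, Proposition 1.2 (i)].

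Part (b) has a genuine gap. The hypothesis concerns the single prime $\ell$ only, so your appeal to checking ``compatibility across all primes $\ell'\neq p$'' has no basis; and the assertion that $\overline I_{i}^{\ell}=\overline J_{i}^{\ell}$ forces $I_{i}$ and $J_{i}$ to stabilize the same open edge of $\Gamma_{\widehat X_{i}^{\bp}}$ is exactly what must be proved, not a consequence of ``rigidity'': a priori two distinct open edges of the universal (solvable) admissible tower could be identified at every finite level of the pro-$\ell$ subtower, in which case their stabilizers would have equal images in $\Pi_{X_{i}^{\bp}}^{\ell}$. The paper closes this by a separation argument at the prime $\ell$ itself: if $I_{i}\neq J_{i}$, then $I_{i}\cap J_{i}=\{1\}$ by [M, Proposition 1.2 (i)], and after replacing $\Pi_{X_{i}^{\bp}}$ by a suitable open subgroup one produces an open normal subgroup $N_{i}$ with $\#(\Pi_{X_{i}^{\bp}}/N_{i})=\ell$, $I_{i}\subseteq N_{i}$ and $J_{i}\not\subseteq N_{i}$ (concretely, a $\mbZ/\ell\mbZ$-admissible covering ramified at the cusp attached to $J_{i}$ but not at the one attached to $I_{i}$). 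Since any index-$\ell$ normal subgroup contains $\ker(pr_{i}^{\ell})$, the containment $I_{i}\subseteq N_{i}$ versus $J_{i}\not\subseteq N_{i}$ is detected already in $\Pi_{X_{i}^{\bp}}^{\ell}$, contradicting $\overline I_{i}^{\ell}=\overline J_{i}^{\ell}$. Some such construction of a degree-$\ell$ cover separating the two cusps is needed in your write-up; without it, part (b) does not follow from part (a).
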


\begin{proof}
By applying similar arguments to the arguments given in the proof of \cite[Lemma 1.6]{HM}, we obtain (a). 

On the other hand, \cite[Proposition 1.2 (i)]{M} implies that $I_{i}\cap J_{i}$ is trivial. Then we see that, by replacing $\Pi_{X^{\bp}_{i}}$ by a certain open subgroup of $\Pi_{X^{\bp}_{i}}$, there exists an open normal subgroup $N_{i} \subseteq \Pi_{X^{\bp}_{i}}$ such that $\#(\Pi_{X^{\bp}_{i}}/N_{i})=\ell$, that $I_{i} \subseteq N_{i}$, and that $J_{i} \not\subseteq N_{i}$. This contradicts $\overline I_{i}^{\ell}=\overline J_{i}^{\ell}$. We complete the proof of (b). 
\end{proof}

Next, we prove the main result of this section.

\begin{theorem}\label{mainstep-1}
We maintain the notation introduced above. Then the (surjective) open continuous homomorphism $\phi: \Pi_{X_{1}^{\bp}} \migisurj \Pi_{X_{2}^{\bp}}$ induces  a surjective map $$\phi^{\rm edg, op}: {\rm Edg}^{\rm op}(\Pi_{X_{1}^{\bp}}) \migisurj {\rm Edg}^{\rm op}(\Pi_{X_{2}^{\bp}}),$$ group-theoretically.
Moreover, $\phi$ induces a bijection $$\phi^{\rm sg, op}: e^{\rm op}(\Gamma_{X^{\bp}_{1}}) \isom e^{\rm op}(\Gamma_{X^{\bp}_{2}})$$ of the sets of open edges of dual semi-graphs of $X_{1}^{\bp}$ and $X_{2}^{\bp}$ group-theoretically.
\end{theorem}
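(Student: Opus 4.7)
The plan is to prove the theorem in three phases: first, show $\phi$ sends each $I_{1}\in {\rm Edg}^{\rm op}(\Pi_{X_{1}^{\bp}})$ into some $I_{2}\in {\rm Edg}^{\rm op}(\Pi_{X_{2}^{\bp}})$; second, establish bijectivity of the induced map $\phi^{\rm sg, op}$ on dual semi-graph edges together with the equality $\phi(I_{1})=I_{2}$; third, deduce surjectivity of $\phi^{\rm edg, op}$ from the bijection on conjugacy classes combined with the surjectivity of $\phi$.

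For phase one, since $\phi^{p'}$ is an isomorphism and $I_{1}\cong \widehat \mbZ(1)^{p'}$ is pro-prime-to-$p$, $\phi|_{I_{1}}$ is injective and $J_{2}\defeq \phi(I_{1})$ is a nontrivial closed subgroup of $\Pi_{X_{2}^{\bp}}$ isomorphic to $\widehat \mbZ(1)^{p'}$. I will verify condition (ii) of Lemma \ref{lem-4}(a) for $J_{2}$ as follows: take $N_{2}\defeq \Pi_{X_{2}^{\bp}}$ and use Proposition \ref{prop-1} to produce a cofinal system $\mcC_{N_{2}}$ of open normal subgroups $H_{2}\subseteq \Pi_{X_{2}^{\bp}}$ whose preimages $H_{1}\defeq \phi^{-1}(H_{2})$ satisfy $(g_{X_{H_{1}}}, n_{X_{H_{1}}})=(g_{X_{H_{2}}}, n_{X_{H_{2}}})$. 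For each such $H_{2}$, surjectivity of $\phi$ yields $J_{2}\cap H_{2}=\phi(I_{1}\cap H_{1})$; since $I_{1}\cap H_{1}\subseteq H_{1}$ is an inertia subgroup of $H_{1}$ at a marked point of $X_{H_{1}}^{\bp}$, it dies in $H_{1}^{\rm cpt, ab}$. Lemma \ref{lem-1} applied to $\phi|_{H_{1}}: H_{1}\migisurj H_{2}$ (whose type hypothesis is now met) induces a surjection $H_{1}^{\rm cpt, ab}\migisurj H_{2}^{\rm cpt, ab}$, so $J_{2}\cap H_{2}$ also dies in $H_{2}^{\rm cpt, ab}$. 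Lemma \ref{lem-4}(a) then produces a unique $I_{2}\in {\rm Edg}^{\rm op}(\Pi_{X_{2}^{\bp}})$ containing $J_{2}$.

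For phase two, $\phi^{\rm edg, op}$ intertwines conjugation via $\phi$, so it descends to a well-defined map $\phi^{\rm sg, op}: e^{\rm op}(\Gamma_{X_{1}^{\bp}})\migi e^{\rm op}(\Gamma_{X_{2}^{\bp}})$ between finite sets of cardinality $n_{X}$. Fix $\ell\neq p$. For each character $\chi_{2}: \Pi_{X_{2}^{\bp}}\migi \mbZ/\ell\mbZ$ with branch locus $B(\chi_{2})\subseteq D_{X_{2}}$, put $\chi_{1}\defeq \chi_{2}\circ \phi$; the containment $\phi(I_{1})\subseteq I_{2}$ from phase one yields $B(\chi_{1})\subseteq (\phi^{\rm sg, op})^{-1}(B(\chi_{2}))$. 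The bijection $\phi^{*}_{\ell}: \mcA_{2,m}\isom \mcA_{1,m}$ from the proof of Lemma \ref{lem-2} forces $|B(\chi_{1})|=|B(\chi_{2})|$, so $|B(\chi_{2})|\leq \sum_{y\in B(\chi_{2})}|(\phi^{\rm sg, op})^{-1}(y)|$. If $\phi^{\rm sg, op}$ were non-surjective, some $y_{0}\in D_{X_{2}}$ would have empty fiber; choosing $\chi_{2}$ whose branch locus contains $y_{0}$ (available because each marked-point inertia image is a rank-one direct summand of $\Pi_{X_{2}^{\bp}}^{\ell, \rm ab}$) violates this inequality, so $\phi^{\rm sg, op}$ must be surjective and hence, by cardinality, bijective. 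The inequality then becomes equality $B(\chi_{1})=(\phi^{\rm sg, op})^{-1}(B(\chi_{2}))$, yielding $\chi_{2}|_{\phi(I_{1})}\neq 0\Leftrightarrow \chi_{2}|_{I_{2}}\neq 0$ for every $\chi_{2}$; this forces $pr_{2}^{\ell}(\phi(I_{1}))=pr_{2}^{\ell}(I_{2})$ inside $\Pi_{X_{2}^{\bp}}^{\ell}$ for every $\ell\neq p$, and hence $\phi(I_{1})=I_{2}$.

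For phase three, given any $I_{2}\in {\rm Edg}^{\rm op}(\Pi_{X_{2}^{\bp}})$, surjectivity of $\phi^{\rm sg, op}$ supplies $I_{1}'\in {\rm Edg}^{\rm op}(\Pi_{X_{1}^{\bp}})$ with $\phi(I_{1}')=gI_{2}g^{-1}$ for some $g\in \Pi_{X_{2}^{\bp}}$; lifting $g$ through the surjection $\phi$ to $g'\in \Pi_{X_{1}^{\bp}}$ produces $\phi((g')^{-1}I_{1}'g')=I_{2}$, proving surjectivity of $\phi^{\rm edg, op}$. The main obstacle will be the character-counting step in phase two: one must understand the pro-$\ell$ abelianization of admissible fundamental groups of arbitrary (possibly singular) pointed stable curves and verify that each marked-point inertia image is a rank-one direct summand, so that characters with prescribed branch loci exist in sufficient supply. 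This is immediate for smooth curves from Vincent's presentation of $\pi_{1}^{\rm t, p'}$, but for stable curves one may need to reduce, via Proposition \ref{prop-1}, to matched open normal subgroups where the combinatorics becomes tractable.
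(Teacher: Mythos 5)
Your phase one reproduces the paper's argument exactly (Proposition \ref{prop-1} to match types along a cofinal system, Lemma \ref{lem-1} for the diagram into $H_{i}^{\rm cpt, ab}$, Lemma \ref{lem-4} (a) to get $\phi(I_{1})\subseteq I_{2}$), and your phase-three trick of conjugating and lifting $g$ through the surjection $\phi$ would indeed give surjectivity of $\phi^{\rm edg, op}$ more directly than the paper's limit argument --- but only once phase two is secured, and phase two has a genuine gap. First, the contradiction step is not valid as written: from $|B(\chi_{1})|=|B(\chi_{2})|$ and $B(\chi_{1})\subseteq(\phi^{\rm sg, op})^{-1}(B(\chi_{2}))$, a character whose branch locus merely \emph{contains} the missed point $y_{0}$ yields no contradiction, since the other branch points may have fibers of size $\geq 2$ (e.g.\ $B(\chi_{2})=\{y_{0},y_{1}\}$ with $\#(\phi^{\rm sg, op})^{-1}(y_{1})=2$ satisfies your inequality). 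You must choose the branch locus inside $\{y_{0}\}\cup\{$points with fiber of size $\leq 1\}$; a fiber-counting argument shows such a two-point locus exists whenever $n_{X}\geq 3$ (and one should take $\ell\geq 3$ to satisfy the sum-zero condition), but you never make this choice.

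The deeper problem is that the whole top-level character scheme collapses for $n_{X}\leq 2$, which are legitimate cases of the theorem (types $(g,1)$, $(g,2)$ with $g\geq 1$). For $n_{X}=1$ every $\mbZ/\ell\mbZ$-character of $\Pi_{X_{2}^{\bp}}$ kills all marked-point inertia (its image in the abelianization is trivial, not a rank-one direct summand), so neither the bijectivity of $\phi^{\rm sg, op}$ argument nor, crucially, the equality $\phi(I_{1})=I_{2}$ can be detected this way; for $n_{X}=2$ the only nonempty branch locus is all of $D_{X_{2}}$, so the count never produces a contradiction. Your closing remark that one ``may need to reduce to matched open normal subgroups'' points in the right direction, but you locate the difficulty in the existence of characters with prescribed branch loci --- which is in fact unproblematic for arbitrary pointed stable curves by \cite[Th\'eor\`eme 2.2 (c)]{V} --- rather than in these failures, and the reduction is never carried out. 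That reduction is precisely where the paper's proof does its real work: the equality $\phi(I_{1})=I_{2}$ is obtained via $D_{m}(\Pi_{X_{2}^{\bp}})$ when $g_{X}=0$, and via $Q_{2}\defeq\ker(\Pi_{X_{2}^{\bp}}\migisurj\Pi_{X_{2}^{\bp}}^{\rm cpt,ab}\otimes\mbZ/m\mbZ)$ followed by $D_{m}(Q_{2})$ when $g_{X}>0$, while surjectivity of $\phi^{\rm edg, op}$ is proved by a limit over the cofinal system inside the pro-$\ell$ quotients. (A minor additional imprecision: equal vanishing on all characters of the full group only gives equal images in $\Pi_{X_{2}^{\bp}}^{\ell,\rm ab}\otimes\mbF_{\ell}$, not in $\Pi_{X_{2}^{\bp}}^{\ell}$; that weaker statement does suffice together with $\phi(I_{1})\subseteq I_{2}$ and the direct-summand property, but the latter again requires $n_{X}\geq 2$.)
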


\begin{proof}
If $n_{X}=0$, the theorem is trivial. Then we may assume that $n_{X}>0$. Let $\mcC_{\Pi_{X^{\bp}_{2}}}$ be a cofinal system of $\Pi_{X^{\bp}_{2}}$ which consists of open normal subgroups of $\Pi_{X_{2}^{\bp}}$. We put $$\mcC_{\Pi_{X_{1}^{\bp}}}\defeq \{H_{1}\defeq \phi^{-1}(H_{2})\ | \ H_{2}\in \mcC_{\Pi_{X_{2}^{\bp}}}\}.$$ Note that $\mcC_{\Pi_{X_{1}^{\bp}}}$ is not a cofinal system of  $\Pi_{X_{1}^{\bp}}$ in general. Moreover, by applying Proposition \ref{prop-1}, we may assume that $$(g_{X_{H_{1}}}, n_{X_{H_{1}}})=(g_{X_{H_{2}}}, n_{X_{H_{2}}})$$ holds for every $H_{2} \in \mcC_{\Pi_{X_{2}^{\bp}}}$ and $H_{1} \defeq \phi^{-1}(H_{2})\in \mcC_{\Pi_{X^{\bp}_{1}}}$.

Let $I_{1} \in {\rm Edg}^{\rm op}(\Pi_{X_{1}^{\bp}})$ and $\phi(I_{1}) \subseteq \Pi_{X_{2}^{\bp}}$. We will prove that $\phi(I_{1}) \in {\rm Edg}^{\rm op}(\Pi_{X_{2}^{\bp}})$. Let $H_{2} \in \mcC_{\Pi_{X_{2}^{\bp}}}$. By replacing $\Pi_{X_{i}^{\bp}}$ and $\phi$ by $H_{i}$ and $\phi|_{H_{1}}$, respectively, Lemma \ref{lem-1} implies that we have the following commutative diagram: 
\[
\begin{CD}
I_{1} \cap H_{1}@>\phi|_{I_{1} \cap H_{1}}>>\phi(I_{1})  \cap H_{2}
\\
@VVV@VVV
\\
H_{1} @>\phi|_{H_{1}}>> H_{2}
\\
@VVV@VVV
\\
H_{1}^{\rm cpt, ab}@>\phi|_{H_{1}}^{\rm cpt, ab}>>H_{2}^{\rm cpt, ab}.
\end{CD}
\]
Since $I_{1} \in {\rm Edg}^{\rm op}(\Pi_{X_{1}^{\bp}})$, we have that $I_{1}\cap H_{1} \migiinje H_{1} \migi H_{1}^{\rm cpt, ab}$ is trivial. Then the commutative diagram above implies that the natural morphism $$\phi(I_{1}) \cap H_{2} \migiinje H_{2} \migi H_{2}^{\rm cpt, ab}$$ is trivial. Thus, by Lemma \ref{lem-4} (a), there exists $I_{2}  \in {\rm Edg}^{\rm op}(\Pi_{X_{2}^{\bp}})$ such that $\phi(I_{1})  \subseteq I_{2}$. 

Let us prove $\phi(I_{1}) =I_{2}$. Suppose that $\phi(I_{1})  \neq I_{2}$. We put $G \defeq I_{2}/\phi(I_{1}) $. Note that $G$ is a cyclic group, and that $(m, p)=1$, where $m\defeq \#G\geq 2$. 

Suppose that $g_{X}=0$. Then we have $n_{X}\geq 3$. Let $N_{2} \defeq D_{m}(\Pi_{X_{2}})$, $N_{1} \defeq \phi^{-1}(N_{2})=D_{m}(\Pi_{X_{1}})$, and $$f_{N_i}^{\bp}: X_{N_{i}}^{\bp} \migi X_{i}^{\bp}$$ the Galois admissible covering over $k_{i}$ corresponding to $N_{i}$. Since the ramification index of each point of $f_{N_i}^{-1}(D_{X_{i}})$ is equal to $m$, we have that $$I_{1} \not\subseteq N_{1}, \ I_{2} \not\subseteq N_{2}, \ \phi(I_{1})  \subseteq N_{2}.$$ On the other hand,  the isomorphism of maximal pro-prime-to-$p$ quotients $\phi^{p'}: \Pi_{X_{1}^{\bp}}^{p'} \isom \Pi_{X_{2}^{\bp}}^{p'}$ and $I_{1} \not\subseteq N_{1}$ imply that $\phi(I_{1}) \not\subseteq N_{2}$. This contradicts $\phi(I_{1})  \subseteq N_{2}$. Then we obtain $\phi(I_{1}) =I_{2}$.

Suppose that $g_{X}>0$. We put $$Q_{2} \defeq \text{ker}(\Pi_{X_{2}^{\bp}} \migisurj \Pi_{X_{2}^{\bp}}^{\rm cpt} \migisurj \Pi_{X_{2}^{\bp}}^{\rm cpt,ab}\otimes \mbZ/m\mbZ)$$ and $Q_{1} \defeq \phi^{-1}(Q_{2})$. Then Lemma \ref{lem-1} implies that $Q_{1}=\text{ker}(\Pi_{X_{1}^{\bp}} \migisurj \Pi_{X_{1}^{\bp}}^{\rm cpt} \migisurj \Pi_{X_{1}^{\bp}}^{\rm cpt, ab}\otimes \mbZ/m\mbZ)$. Note that the assumption $g_{X}>0$ implies that $\Pi_{X_{i}^{\bp}}^{\rm cpt} \migisurj \Pi_{X_{i}^{\bp}}^{\rm cpt,ab}\otimes \mbZ/m\mbZ$ is not trivial. Then  the nontrivial Galois admissible covering over $k_{i}$ corresponding to $Q_{i}$ is \'etale over $D_{X_{i}}$. Moreover, we have $I_{i} \subseteq Q_{i}$ and $$n_{X_{Q_{i}}}\geq 2.$$ Let $P_{2} \defeq D_{m}(Q_{2})$, $P_{1} \defeq \phi^{-1}(P_{2})=D_{m}(Q_{1})$, and $$g_{i}^{\bp}: X_{P_{i}}^{\bp} \migi X_{Q_{i}}^{\bp}$$ the Galois admissible covering over $k_{i}$ corresponding to $P_{i} \subseteq Q_{i}$. Since the ramification index of each point of $g_{i}^{-1}(D_{X_{Q_{i}}})$ is equal to $m$, we have that $$I_{1} \not\subseteq P_{1}, \ I_{2} \not\subseteq P_{2}, \ \phi(I_{1})  \subseteq P_{2}.$$ On the other hand,  the isomorphism of maximal pro-prime-to-$p$ quotients $\phi|_{P_{1}}^{p'}: P_{1}^{p'} \isom P_{2}^{p'}$ and $I_{1} \not\subseteq P_{1}$ imply that $\phi(I_{1}) \not\subseteq P_{2}$. This contradicts $\phi(I_{1})  \subseteq P_{2}$. Then we obtain $\phi(I_{1}) =I_{2}$. Thus, we may define the following map $$\phi^{\rm edg, op}: {\rm Edg}^{\rm op}(\Pi_{X_{1}^{\bp}}) \migi {\rm Edg}^{\rm op}(\Pi_{X_{2}^{\bp}}), \ I_{1} \mapsto I_{2} \defeq \phi(I_{1}).$$

Next, let us prove that $\phi^{\rm edg, op}$ is a surjection. Let $\ell$ be a prime number distinct from $p$ and $pr^{\ell}_{i}: \Pi_{X_{i}^{\bp}} \migisurj \Pi_{X_{i}^{\bp}}^{\ell}$. Let $J_{2} \in {\rm Edg}^{\rm op}(\Pi_{X_{2}^{\bp}})$ be an arbitrary subgroup, $\overline J^{\ell}_{2}\defeq pr^{\ell}_{2}(J_{2})$ the image of $J_{2}$, and $\mcC_{\Pi_{X_{i}^{\bp}}}^{\ell} \defeq \{\overline H_{i} \defeq pr_{2}^{\ell}(H_{i})\}_{H_{i} \in \mcC_{\Pi_{X_{i}^{\bp}}}}$, where $\mcC_{\Pi_{X_{i}^{\bp}}}$ is the set of normal subgroups of $\Pi_{X_{i}^{\bp}}$ defined above. Note that $\mcC^{\ell}_{\Pi_{X_{i}^{\bp}}}$ is a cofinal system of $\Pi_{X^{\bp}_{i}}^{\ell}$, and that $\overline H_{1}=(\phi^{\ell})^{-1}(\overline H_{2})$.

Let $\overline H_{2} \in \mcC^{\ell}_{\Pi_{X_{2}^{\bp}}}$, $\overline N_{2}\defeq \overline J^{\ell}_{2}\overline H_{2} \supseteq \overline H_{2}$, $\overline N_{1}\defeq (\phi^{\ell})^{-1}(\overline N_{2}) \supseteq \overline  H_{1}$, and $N_{i} \defeq (pr^{\ell}_{i})^{-1}(\overline N_{i})$. Note that $G\defeq \overline N_{1}/ \overline H_{1}=N_{1}/H_{1}=\overline N_{2}/\overline H_{2}=N_{2}/H_{2}$ is a cyclic $\ell$-group. Write $$g_{H_{i}, N_{i}}^{\bp}: X_{H_{i}}^{\bp} \migi X_{N_{i}}^{\bp}$$ for the Galois admissible covering over $k_{i}$ with Galois group $G$. Since $J_{2} \in  {\rm Edg}^{\rm op}(\Pi_{X_{2}^{\bp}})$, we obtain that $g^{\bp}_{H_{2}, N_{2}}$ is totally ramified at a marked point of $X_{H_{2}}^{\bp}$. We put $$\text{Edg}^{\rm op, \ell, ab}(N_{i}) \defeq \{ \text{the image of} \ I \ \text{of} $$$$\text{the natural homomorphism} \ N_{i} \migisurj  N_{i}^{\rm \ell, ab} \ | \ I \in {\rm Edg}^{\rm op}(N_{i})\}.$$ Note that $\#\text{Edg}^{\rm op, \ell, ab}(N_{i})=n_{X_{N_{i}}}$. Then the composition of the following natural homomorphisms $$\bigoplus_{I_{N_{2}} \in \text{Edg}^{\rm op, \ell, ab}(N_{2})} I_{N_{2}} \migi N_{2}^{\rm \ell, ab} \migisurj G$$ is a surjection. By applying Lemma \ref{lem-1}, we obtain that the isomorphism $\phi^{\ell}$ induces an isomorphism $$\text{Im}(\bigoplus_{I_{N_{1}} \in \text{Edg}^{\rm op, \ell, ab}(N_{1})} I_{N_{1}} \migi N_{1}^{\ell, \rm ab}) \isom \text{Im}(\bigoplus_{I_{N_{2}} \in \text{Edg}^{\rm op, \ell, ab}(N_{2})} I_{N_{2}}\migi N_{2}^{\ell, \rm ab}).$$ Then the composition of the following natural homomorphisms $$\bigoplus_{I_{N_{1}} \in \text{Edg}^{\rm op, \ell, ab}(N_{1})} I_{N_{1}} \migi N_{1}^{\rm \ell, ab} \migisurj G$$ is also a surjection. Since $G$ is a cyclic $\ell$-group, there exists $I'_{N_{1}} \in \text{Edg}^{\rm op, \ell, ab}(N_{1})$ such that the composition $I'_{N_{1}} \migiinje N_{1}^{\rm \ell, ab} \migisurj G$ is a surjection. This means that $g^{\bp}_{H_{1}, N_{1}}$ is also totally ramified at a marked point of $X_{H_{1}}^{\bp}$.

We put $$E_{\overline H_{1}}\defeq \{x_{1} \in D_{X_{H_{1}}} \ | \ g_{H_{1}, N_{1}}^{\bp} \ \text{is totally ramified at}\ x_{1}\}.$$ Then we have that $E_{\overline H_{1}}$ is a non-empty finite set. Thus, we obtain $$\invlim_{\overline H_{1} \in \mcC^{\ell}_{\Pi_{X_{1}^{\bp}}}}E_{\overline H_{1}}\neq \emptyset.$$ This means that there exists $J_{1} \in {\rm Edg}^{\rm op}(\Pi_{X_{1}^{\bp}})$ such that the image $pr^{\ell}_{2}(\phi(J_{1}))=\phi^{\ell}(pr^{\ell}_{1}(J_{1}))$ of $J_{1}$ of the composition of the natural homomorphisms 
\[
\begin{CD}
\Pi_{X_{1}^{\bp}}@>\phi>> \Pi_{X_{2}^{\bp}}
\\
@Vpr_{1}^{\ell}VV@Vpr_{2}^{\ell}VV
\\
\Pi_{X_{1}^{\bp}}^{\ell} @>\phi^{\ell}>> \Pi_{X_{2}^{\bp}}^{\ell}
\end{CD}
\]
 is equal to $\overline J_{2}^{\ell}$. Since $\phi(J_{1}) \in {\rm Edg}^{\rm op}(\Pi_{X_{2}^{\bp}})$, by applying Lemma \ref{lem-4} (b), we have $\phi(J_{1})=J_{2}$. Then $\phi^{\rm edg, op}$ is a surjection. Moreover, Theorem \ref{types} implies that ${\rm Edg}^{\rm op}(\Pi_{X_{i}^{\bp}})$ can be reconstructed group-theoretically from $\Pi_{X^{\bp}_{i}}$. This completes the proof of the first part of the theorem.

Let us prove the ``moreover" part of the theorem. We see immediately that $\phi^{\rm edg, op}: {\rm Edg}^{\rm op}(\Pi_{X_{1}^{\bp}}) \migisurj {\rm Edg}^{\rm op}(\Pi_{X_{2}^{\bp}})$ is compatible with the natural actions of $\Pi_{X_{1}^{\bp}}$ and $\Pi_{X_{2}^{\bp}}$, respectively. By using the surjectivity of $\phi^{\rm edg, op}$, we obtain immediately a surjection $$\phi^{\rm sg, op}: e^{\rm op}(\Gamma_{X^{\bp}_{1}}) \isom {\rm Edg}^{\rm op}(\Pi_{X_{1}^{\bp}})/\Pi_{X_{1}^{\bp}} \migisurj  {\rm Edg}^{\rm op}(\Pi_{X_{2}^{\bp}})/\Pi_{X_{2}^{\bp}} \isom e^{\rm op}(\Gamma_{X^{\bp}_{2}})$$ of the sets of open edges of dual semi-graphs of $X_{1}^{\bp}$ and $X_{2}^{\bp}$, where $(-)^{\rm sg}$ means ``semi-graph". Moreover, since $n_{X}=\#e^{\rm op}(\Gamma_{X^{\bp}_{1}}) =\#e^{\rm op}(\Gamma_{X^{\bp}_{2}})$, we have that $\phi^{\rm sg, op}$ is a bijection. On the other hand, Theorem \ref{types} implies that  $e^{\rm op}(\Gamma_{X_{i}^{\bp}})$ can be reconstructed group-theoretically from $\Pi_{X^{\bp}_{i}}$. This completes the proof of the theorem.
\end{proof}

\begin{corollary}
We maintain the notation introduced above. Let $H_{2} \subseteq \Pi_{X_{1}^{\bp}}$ be an arbitrary open subgroup and $H_{1} \defeq \phi^{-1}(H_{2})\subseteq \Pi_{X_{2}^{\bp}}$. Then we have that $$\gamma^{\rm max}(H_{1})=\gamma^{\rm max}(H_{2}).$$
\end{corollary}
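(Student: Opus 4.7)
My plan is to reduce the equality $\gamma^{\rm max}(H_1) = \gamma^{\rm max}(H_2)$ to the equality of types of the admissible covers $X_{H_i}^{\bp}$, and then to prove the latter using Theorem \ref{mainstep-1} together with the Riemann-Hurwitz formula.

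First, since the open subgroup $H_i \subseteq \Pi_{X_i^{\bp}}$ is naturally identified with $\Pi_{X_{H_i}^{\bp}}$ (either the admissible fundamental group or its maximal pro-solvable quotient, depending on the convention for $\Pi_{X_i^{\bp}}$), Theorem \ref{max and average} (a) yields $\gamma^{\rm max}(H_i) = g_{X_{H_i}} + n_{X_{H_i}} - 2$ when $n_{X_{H_i}} > 0$ and $\gamma^{\rm max}(H_i) = g_{X_{H_i}} - 1$ otherwise. Hence it suffices to verify $(g_{X_{H_1}}, n_{X_{H_1}}) = (g_{X_{H_2}}, n_{X_{H_2}})$. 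Applying the Riemann-Hurwitz formula to the tame admissible cover $X_{H_i}^{\bp} \migi X_i^{\bp}$ gives
$$2g_{X_{H_i}} - 2 + n_{X_{H_i}} = [\Pi_{X_i^{\bp}}: H_i] \cdot (2g_X - 2 + n_X),$$
and since $\phi$ is surjective the two indices agree, so $2g_{X_{H_1}} + n_{X_{H_1}} = 2g_{X_{H_2}} + n_{X_{H_2}}$. It therefore remains to establish $n_{X_{H_1}} = n_{X_{H_2}}$.

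For this, I would apply Theorem \ref{mainstep-1} to $\phi$ itself, obtaining a bijection $\phi^{\rm sg, op}: e^{\rm op}(\Gamma_{X_1^{\bp}}) \isom e^{\rm op}(\Gamma_{X_2^{\bp}})$ together with, for each matched pair $(e_1, e_2)$, lifts $\widehat e_i$ to open edges of $\Gamma_{\widehat X_i^{\bp}}$ satisfying $\phi(I_{\widehat e_1}) = I_{\widehat e_2}$. Since the number of preimages of $x_{e_i}$ in $X_{H_i}$ equals $[\Pi_{X_i^{\bp}}: I_{\widehat e_i} H_i]$, we have $n_{X_{H_i}} = \sum_{e_i} [\Pi_{X_i^{\bp}}: I_{\widehat e_i} H_i]$. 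The core identity
$$\phi^{-1}(I_{\widehat e_2} H_2) = I_{\widehat e_1} H_1$$
is then immediate: given $x \in \Pi_{X_1^{\bp}}$ with $\phi(x) = ih$ for $i \in I_{\widehat e_2}$ and $h \in H_2$, choose $i' \in I_{\widehat e_1}$ with $\phi(i') = i$; then $\phi(i'^{-1} x) = h \in H_2$, so $i'^{-1} x \in \phi^{-1}(H_2) = H_1$, giving $x \in I_{\widehat e_1} H_1$. Combined with the surjectivity of $\phi$, this produces a bijection $\Pi_{X_1^{\bp}}/I_{\widehat e_1} H_1 \isom \Pi_{X_2^{\bp}}/I_{\widehat e_2} H_2$, so the corresponding indices coincide. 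Summing over matched pairs yields $n_{X_{H_1}} = n_{X_{H_2}}$, and with the Riemann-Hurwitz identity we conclude $g_{X_{H_1}} = g_{X_{H_2}}$, hence $\gamma^{\rm max}(H_1) = \gamma^{\rm max}(H_2)$.

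The main obstacle is the matching step: it requires Theorem \ref{mainstep-1} to provide not merely the bijection $\phi^{\rm sg, op}$ at the level of conjugacy classes of open edges, but the actual equality $\phi(I_{\widehat e_1}) = I_{\widehat e_2}$ of inertia subgroups in the universal covers, so that the coset computation goes through. Once this lifting is in place, the Riemann-Hurwitz and counting arguments are straightforward.
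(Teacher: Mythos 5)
Your route is essentially the paper's own: the proof in the paper consists of exactly the two steps you identify — Theorem \ref{mainstep-1} yields $(g_{X_{H_1}}, n_{X_{H_1}})=(g_{X_{H_2}}, n_{X_{H_2}})$, and then the formula of Theorem \ref{max and average} (a) gives $\gamma^{\rm max}(H_1)=\gamma^{\rm max}(H_2)$ — so your write-up is a correct filling-in of the details (matching of inertia subgroups of marked points, equality of the fibres over marked points, Riemann--Hurwitz for admissible coverings, equality of degrees from surjectivity of $\phi$).

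One step should be restated, since the corollary allows an \emph{arbitrary} open subgroup $H_2$: the subset $I_{\widehat e_i}H_i$ is in general not a subgroup, so the number of marked points of $X_{H_i}^{\bp}$ lying over $x_{e_i}$ is not $[\Pi_{X_i^{\bp}}:I_{\widehat e_i}H_i]$ but the number of double cosets $H_i\backslash \Pi_{X_i^{\bp}}/I_{\widehat e_i}$; your formula is only valid when $H_i$ is normal. This is harmless for your argument: the displayed computation generalizes verbatim to give $\phi^{-1}\bigl(H_2\,\phi(g)\,I_{\widehat e_2}\bigr)=H_1\,g\,I_{\widehat e_1}$ for every $g\in\Pi_{X_1^{\bp}}$, which, combined with the surjectivity of $\phi$, yields a bijection $H_1\backslash \Pi_{X_1^{\bp}}/I_{\widehat e_1}\isom H_2\backslash \Pi_{X_2^{\bp}}/I_{\widehat e_2}$ for each pair $(\widehat e_1,\widehat e_2)$ with $\phi(I_{\widehat e_1})=I_{\widehat e_2}$ as supplied by Theorem \ref{mainstep-1}. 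Summing over the bijection $\phi^{\rm sg, op}$ then gives $n_{X_{H_1}}=n_{X_{H_2}}$, and the rest of your argument goes through unchanged.
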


\begin{proof}
By Theorem \ref{mainstep-1}, we obtain that $(g_{X_{H_{1}}}, n_{X_{H_{1}}})=(g_{X_{H_{2}}}, n_{X_{H_{2}}})$. Then Theorem \ref{max and average} (a) implies that  $\gamma^{\rm max}(H_{1})=\gamma^{\rm max}(H_{2}).$
\end{proof}

Let $\widehat X^{\bp}_{i}=(\widehat X_{i}, D_{\widehat X_{i}})$, $i\in \{1, 2\}$, be the universal admissible (resp. solvable admissible) covering associated to $\Pi_{X^{\bp}_{i}}$ if $\Pi_{X^{\bp}_{i}}$ is the admissible (resp. solvable admissible) fundamental group of $X^{\bp}_{i}$. Let $e_{i} \in e^{\rm op}(\Gamma_{X_{i}^{\bp}})$, $\widehat e_{i} \in e^{\rm op}(\Gamma_{\widehat X^{\bp}_{i}})$ over $e_{i}$, and $I_{\widehat e_{i}} \in \text{Edg}^{\rm op}(\Pi_{X_{i}^{\bp}})$ such that $\phi(I_{\widehat e_{1}}) =I_{\widehat e_{2}}$. Write $\overline \mbF_{p, i}$ for the algebraic closure of $\mbF_{p}$ in $k_{i}$. We put $$\mbF_{\widehat e_{i}}\defeq (I_{\widehat e_{i}}\otimes_{\mbZ} (\mbQ/\mbZ)_{i}^{p'}) \sqcup \{*_{\widehat e_{i}}\},$$ where $\{*_{\widehat e_{i}}\}$ is an one-point set, and $(\mbQ/\mbZ)_{i}^{p'}$ denotes the prime-to-$p$ part of $\mbQ/\mbZ$ which can be canonically identified with $$\bigcup_{(p, m)=1}\mu_{m}(\overline \mbF_{p, i}).$$ Moreover, $\mbF_{\widehat e_{i}}$ can be identified with $\overline \mbF_{p, i}$ as sets, hence, admits a structure of field, whose multiplicative group is $I_{\widehat e_{i}}\otimes_{\mbZ} (\mbQ/\mbZ)^{p'}_{i}$, and whose zero element is $*_{\widehat e_{i}}$. An important consequence of Theorem \ref{mainstep-1} is as follows.

\begin{theorem}\label{prop-3-12}
We maintain the notation introduced above. Then the field structure of $\mbF_{\widehat e_{i}}$ can be  reconstructed group-theoretically from $\Pi_{X^{\bp}_{i}}$. Moreover, $\phi$ induces a field isomorphism $$\phi^{\rm fd}_{\widehat e_{1}, \widehat e_{2}}:\mbF_{\widehat e_{1}}\isom \mbF_{\widehat e_{2}}$$ group-theoretically, where ``fd" means ``field".
\end{theorem}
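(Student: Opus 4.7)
The argument has two parts, both essentially formal given the machinery in place. First, I would reconstruct the field structure of $\mbF_{\widehat{e}_{i}}$ from $\Pi_{X_{i}^{\bp}}$. By Theorem \ref{types}, the collection $\text{Edg}^{\rm op}(\Pi_{X_{i}^{\bp}})$ and each individual $I_{\widehat{e}_{i}} \subseteq \Pi_{X_{i}^{\bp}}$ are group-theoretically reconstructible; since $(\mbQ/\mbZ)^{p'}$ is a canonical abelian group, the set $\mbF_{\widehat{e}_{i}} = (I_{\widehat{e}_{i}} \otimes_{\mbZ} (\mbQ/\mbZ)^{p'}_{i}) \sqcup \{*_{\widehat{e}_{i}}\}$, together with its multiplicative group law on the nonzero part and its distinguished zero element $*_{\widehat{e}_{i}}$, is reconstructible. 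The field structure is then supplied by the canonical identification $I_{\widehat{e}_{i}} \otimes_{\mbZ} (\mbQ/\mbZ)^{p'}_{i} = \varinjlim_{(n,p)=1}\mu_{n}(\OL{\mbF}_{p,i}) = \OL{\mbF}_{p,i}^{\times}$, which holds because $I_{\widehat{e}_{i}}$ is by construction the Tate module $\WH{\mbZ}(1)(\OL{\mbF}_{p,i})^{p'}$ of prime-to-$p$ roots of unity at the marked point; through this identification $\mbF_{\widehat{e}_{i}}$ inherits the unique field structure of $\OL{\mbF}_{p,i}$.

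For the induced field isomorphism, Theorem \ref{mainstep-1} gives $\phi(I_{\widehat{e}_{1}}) = I_{\widehat{e}_{2}}$, so that $\phi$ restricts to a continuous group isomorphism $\phi|_{I_{\widehat{e}_{1}}} : I_{\widehat{e}_{1}} \isom I_{\widehat{e}_{2}}$. Tensoring with $(\mbQ/\mbZ)^{p'}$ and setting $*_{\widehat{e}_{1}} \mapsto *_{\widehat{e}_{2}}$ produces the set bijection $\phi^{\rm fd}_{\widehat{e}_{1}, \widehat{e}_{2}} : \mbF_{\widehat{e}_{1}} \isom \mbF_{\widehat{e}_{2}}$ of the statement, and it is automatically multiplicative on $\mbF_{\widehat{e}_{1}}^{\times}$. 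To conclude that it is a field isomorphism, I would argue that both $\mbF_{\widehat{e}_{i}}$ are identified with $\OL{\mbF}_{p}$ via the same group-theoretic recipe (the Tate-module identification above, which uses only the profinite-group structure of $I_{\widehat{e}_{i}}$ together with the reconstructible characteristic $p$), so $\phi^{\rm fd}_{\widehat{e}_{1}, \widehat{e}_{2}}$ intertwines these identifications and is therefore a field isomorphism.

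The main obstacle is confirming that the identification $I_{\widehat{e}_{i}} \otimes_{\mbZ} (\mbQ/\mbZ)^{p'}_{i} \cong \OL{\mbF}_{p,i}^{\times}$ --- which a priori depends on the scheme-theoretic residue field at the marked point --- is genuinely intrinsic to $\Pi_{X_{i}^{\bp}}$ and preserved by $\phi$. This reduces to verifying that the Tate-module structure on the inertia subgroup (the compatible system of root-of-unity identifications) is stable under open continuous homomorphisms of admissible fundamental groups in the same characteristic; such stability rests on the reconstructibility of $p$ (Theorem \ref{types}) together with the preservation of inertia subgroups (Theorem \ref{mainstep-1}). Once this intrinsic description is pinned down, both assertions of the theorem follow at once.
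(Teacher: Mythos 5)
There is a genuine gap, and it sits exactly at the heart of the theorem. Your argument treats the field structure as coming for free from the ``canonical identification'' $I_{\widehat e_{i}}\otimes_{\mbZ}(\mbQ/\mbZ)^{p'}_{i}\cong \OL\mbF_{p,i}^{\times}$. That identification is canonical only via the geometry (the marked point and its residue field); it is not part of the data of the abstract profinite group $\Pi_{X_{i}^{\bp}}$. Group-theoretically, $I_{\widehat e_{i}}$ is just a copy of $\WH\mbZ(1)^{p'}\cong\WH\mbZ^{p'}$ with no preferred generator, so $I_{\widehat e_{i}}\otimes(\mbQ/\mbZ)^{p'}$ carries only a multiplicative group and a zero element --- which, as you say, are formal --- but the \emph{additive} structure is precisely what must be reconstructed, and nothing in your proposal produces it. The same problem undoes the second half: $\phi|_{I_{\widehat e_{1}}}:I_{\widehat e_{1}}\isom I_{\widehat e_{2}}$ is an abstract isomorphism of procyclic groups, and an arbitrary such isomorphism induces a multiplicative bijection $\OL\mbF_{p}^{\times}\isom\OL\mbF_{p}^{\times}$ that is additive only in the very special cases (essentially Frobenius-power twists). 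Knowing the characteristic $p$ (Theorem \ref{types}) and knowing $\phi(I_{\widehat e_{1}})=I_{\widehat e_{2}}$ (Theorem \ref{mainstep-1}) does not force this compatibility; asserting that the two sides are ``identified by the same group-theoretic recipe'' assumes the conclusion, since no group-theoretic recipe for the addition has been given.

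The paper's proof is structured around exactly this difficulty: using Theorem \ref{mainstep-1} it first reduces to the case $n_{X}=3$ (passing to a suitable \'etale covering when $g_{X}\geq 1$, then quotienting by the inertia subgroups attached to all but three compatible open edges, so that $\phi$ descends to a surjection of the resulting fundamental groups), and then invokes \cite[Theorem 5.5]{Y6}, whose proof rests on the formula for maximum generalized Hasse-Witt invariants (Theorem \ref{max and average} (a)) and the theory of Raynaud--Tamagawa theta divisors; this is the surjection-analogue of \cite[Proposition 5.3]{T2} and \cite[Proposition 6.1]{Y2}. In other words, the additive structure is recovered by counting/dimension data of cyclic admissible coverings with prescribed ramification at the marked points, and its preservation under $\phi$ is established by comparing those invariants through the surjection. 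To repair your proposal you would need either to reproduce an argument of that type or to carry out the reduction to $n_{X}=3$ and cite the external result, as the paper does; the purely formal Tate-module considerations you rely on cannot close the gap.
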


\begin{proof}
We claim that we may assume that $n_{X}=3.$ If $g_{X}=0$, then $n_{X}\geq 3$. Suppose that $g_{X}\geq 1$. Theorem \ref{mainstep-1} implies that $\phi: \Pi_{X^{\bp}_{1}} \migisurj \Pi_{X^{\bp}_{2}}$ induces an open continuous surjection $\phi^{\rm cpt}: \Pi_{X^{\bp}_{1}}^{\rm cpt} \migisurj \Pi_{X^{\bp}_{2}}^{\rm cpt}.$ Let $H'_{2} \subseteq \Pi_{X^{\bp}_{2}}^{\rm cpt}$ be an open normal subgroup such that $\#(\Pi_{X^{\bp}_{2}}^{\rm cpt}/H'_{2})\geq 3$ and $H'_{1} \defeq (\phi^{\rm cpt})^{-1}(H_{2}')$. Write $H_{i}\subseteq \Pi_{X^{\bp}_{i}}^{\rm cpt}$, $i \in \{1, 2\}$, for the inverse image of $H_{i}'$ of the natural surjection $\Pi_{X^{\bp}_{2}}\migisurj \Pi_{X^{\bp}_{2}}^{\rm cpt}$, and $X_{H_{i}}^{\bp}$ for the pointed stable curve of type $(g_{X_{H_i}}, n_{X_{H_{i}}})$ over $k_{i}$ corresponding to $H_{i}$. Note that  $g_{X_{H_1}}=g_{X_{H_2}}\geq 1$ and $n_{X_{H_{1}}}=n_{X_{H_{2}}}\geq 3$. By replacing $X^{\bp}_{i}$ by $X^{\bp}_{H_{i}}$, we may assume that $g_{X}\geq 1$ and $n_{X} \geq 3$. 

By applying Theorem \ref{mainstep-1}, $\phi$ induces a bijection $$\phi^{\rm sg, op}: e^{\rm op}(\Gamma_{X^{\bp}_{1}}) \isom e^{\rm op}(\Gamma_{X^{\bp}_{2}}).$$ Let $E_{X_1}\defeq \{e_{1, 1}, e_{1, 2}, e_{1, 3}\} \subseteq e^{\rm op}(\Gamma_{X^{\bp}_{1}})$ and $E_{X_2} \defeq \phi^{\rm sg, op}(E_{X_{1}}) \subseteq  e^{\rm op}(\Gamma_{X^{\bp}_{2}})$. Write $D'_{X_{i}} \subseteq D_{X_{i}}$ for the set of marked points of $X^{\bp}_{i}$ corresponding to $E_{X_{i}}$. Then $(X_{i}, D'_{X_i})$, $i\in \{1, 2\}$, is a pointed stable curve of type $(g_{X}, 3)$ over $k_{i}$. Write $I_{i}$, $i \in \{1, 2\}$, for the closed subgroup of $\Pi_{X^{\bp}_{i}}$ generated by the subgroups $I_{\widehat e} \in \text{Edg}^{\rm op}(\Pi_{X^{\bp}_{i}})$ such that the image of $\widehat e$ in $e^{\rm op}(\Gamma_{X^{\bp}_{i}})$ is contained in $E_{X_{i}}$. Then we have a natural isomorphism $$\Pi_{(X_{i}, D'_{X_i})} \cong \Pi_{X^{\bp}_{i}}/I_{i}, \ i \in \{1, 2\}.$$ Moreover, Theorem \ref{mainstep-1} implies that $\phi$ induces a surjective open continuous  homomorphism $$\phi': \Pi_{(X_{1}, D'_{X_1})} \migisurj \Pi_{(X_{2}, D'_{X_2})}.$$ Thus, by replacing $X^{\bp}_{i}$, $\Pi_{X_{i}^{\bp}}$, and $\phi$ by $(X_{i}, D'_{X_i})$, $\Pi_{(X_{i}, D'_{X_i})}$, and $\phi'$, respectively, we may assume that $n_{X}=3.$ 

Then the theorem follows from \cite[Theorem 5.5]{Y6}.
\end{proof}

\begin{remarkA}
Theorem \ref{prop-3-12} generalizes \cite[Proposition 5.3]{T2} and \cite[Proposition 6.1]{Y2} to the case of arbitrary pointed stable curves. \cite[Proposition 5.3]{T2} and \cite[Proposition 6.1]{Y2} play key roles in the proofs of weak Isom-version of the Grothendieck conjecture of curves over algebraically closed fields of characteristic $p>0$ (cf. \cite[Theorem 0.2]{T2}) and weak Hom-version of the Grothendieck conjecture of curves over algebraically closed fields of characteristic $p>0$ (\cite[Theorem 1.2]{Y2}), respectively.
\end{remarkA}

\section{Combinatorial Grothendieck conjecture for surjections}\label{sec-4}

In this section, we will prove a version of combinatorial Grothendicek conjecture for surjective open continuous homomorphisms under certain assumption, which is an analogue of Theorem \ref{mainstep-1} for topological data and combinatorial data associated to pointed stable curves. {\it In the present section, we shall assume that all the fundamental groups of pointed stable curves are solvable admissible fundamental groups unless indicated otherwise.}

\subsection{Cohomology classes and sets of vertices}\label{sec-4-1}

We maintain the notation introduced in Section \ref{sec-1}. Let $X^{\bp}$ be a pointed stable curve of type $(g_{X}, n_{X})$ over an algebraically closed field $k$ of characteristic $p>0$, $\Gamma_{X^{\bp}}$ the dual semi-graph of $X^{\bp}$, and $\Pi_{X^{\bp}}$ the solvable admissible fundamental group of $X^{\bp}$.

Let $\ell$ be a prime number. We put $$v(\Gamma_{X^{\bp}})^{>0, \ell}\defeq \{v \in v(\Gamma_{X^{\bp}}) \ | \ \text{dim}_{\mbF_{\ell}}(\text{Hom}(\Pi^{\et}_{\widetilde X_{v}^{\bp}}, \mbF_{\ell}))>0 \},$$ $$M^{\text{\'et}}_{X^{\bp}} \defeq \text{Hom}(\Pi_{X^{\bp}}^{\text{\'et}}, \mbF_{\ell}),$$ $$M_{X^{\bp}}^{\rm top}\defeq \text{Hom}(\Pi_{X^{\bp}}^{\rm top}, \mbF_{\ell}).$$ On the other hand, we have the natural isomorphisms $\text{Hom}(\Pi^{\text{\'et}}_{\widetilde X_{v}^{\bp}}, \mbF_{\ell})\cong H_{\text{\'et}}^{1}(\widetilde X_{v}, \mbF_{\ell})$, $M^{\text{\'et}}_{X^{\bp}} \cong H^{1}_{\et}(X, \mbF_{\ell})$, and $M_{X^{\bp}}^{\rm top} \cong H^{1}(\Gamma_{X^{\bp}}, \mbF_{\ell})$. In the theory of anabelian geometry, we want to emphasize the objects under consideration are  arose from various fundamental groups. Then we do not use the standard notation $H_{\text{\'et}}^{1}(\widetilde X_{v}, \mbF_{\ell})$, $H^{1}_{\et}(X, \mbF_{\ell})$, and $H^{1}(\Gamma_{X^{\bp}}, \mbF_{\ell})$. Moreover, there is an injection $M_{X^{\bp}}^{\rm top}\migiinje M^{\text{\'et}}_{X^{\bp}}$ induced by the natural surjection $\Pi_{X^{\bp}} \migisurj \Pi_{X^{\bp}}^{\rm top}$. We put $$M_{X^{\bp}}^{\rm nt}\defeq \text{coker}(M_{X^{\bp}}^{\rm top}\migiinje M^{\text{\'et}}_{X^{\bp}}),$$ where $(-)^{\rm nt}$ means ``non-top". A non-zero element of $M^{\text{\'et}}_{X^{\bp}}$ corresponds to a Galois \'etale covering of the underlying curve $X$ of $X^{\bp}$ with Galois group $\mbZ/\ell\mbZ$ and an non-zero element of $M^{\rm top}_{X^{\bp}}$ corresponds to a Galois \'etale covering of $X^{\bp}$ with Galois group $\mbZ/\ell\mbZ$ such that the map of dual semi-graphs is a topological covering.

Let $V_{X, \ell}^{*} \subseteq M^{\text{\'et}}_{X^{\bp}}$ be the subset of elements such that the image of $M^{\et}_{X^{\bp}} \migisurj M^{\rm nt}_{X^{\bp}}$ is not $0$. Then an element of $V_{X, \ell}^{*} $ corresponds to a Galois \'etale covering of the underlying curve $X$ of $X^{\bp}$ with Galois group $\mbZ/\ell\mbZ$ such that the map of dual semi-graphs is not a topological covering. Let $\alpha \in V_{X, \ell}^{*}$ and $$f_{\alpha}^{\bp}: X^{\bp}_{\alpha} \migi X^{\bp}$$ the Galois \'etale covering corresponding to $\alpha$. Denote by $\Gamma_{X_{\alpha}^{\bp}}$ the dual semi-graph of $X_{\alpha}^{\bp}$. Then we obtain a map $$\iota: V_{X, \ell}^{*} \migi \mbZ_{>0}, \ \alpha \mapsto \#v(\Gamma_{X^{\bp}_{\alpha}}).$$ Furthermore, we put $$V_{X, \ell}^{\star} \defeq \{ \alpha\in V_{X, \ell}^{*}\ | \ \iota \ \text{attains its
maximum}\}$$$$=\{ \alpha\in V_{X, \ell}^{*}\ | \ \iota(\alpha)=\ell\#v(\Gamma_{X^{\bp}})-\ell+1\}$$$$=\{ \alpha\in V_{X, \ell}^{*}\ | \ \#v_{f_{\alpha}}^{\rm ra}=1\}.$$ For each $\alpha \in V^{\star}_{X, \ell}$, $\iota(\alpha)=\ell\#v(\Gamma_{X^{\bp}})-\ell+1$ implies that there exists a unique irreducible component $Z \subseteq X_{\alpha}$ whose decomposition group under the action of $\mbZ/\ell \mbZ$ is not trivial. Let $v_{\alpha} \in v(\Gamma_{X^{\bp}})$ such that $X_{v_{\alpha}}=f_{\alpha}(Z)$. Then we have  $v_{\alpha} \in v(\Gamma_{X^{\bp}})^{>0, \ell}$. This means that $V^{\star}_{X, \ell}=\emptyset$ if and only if $v(\Gamma_{X^{\bp}})^{>0, \ell}=\emptyset$.

On the other hand, let $H \subseteq \Pi_{X^{\bp}}$ be an open subgroup. Write $f^{\rm sg}_{H}: \Gamma_{X_{H}^{\bp}} \migi \Gamma_{X^{\bp}}$ for the map of dual semi-graphs induced by the admissible covering $f^{\bp}_{H}: X_{H}^{\bp} \migi X^{\bp}$ over $k$ corresponding to $H$. We define a map $$f^{\rm ver, \ell}_{H}: v(\Gamma_{X_{H}^{\bp}})^{>0, \ell} \migi v(\Gamma_{X^{\bp}})^{>0, \ell}$$ as follows: Let $v_{H} \in v(\Gamma_{X_{H}^{\bp}})^{>0, \ell}$ and $v \defeq f^{\rm sg}_{H}(v_{H}) \in v(\Gamma_{X_{H}^{\bp}})$. Then we have that $f^{\rm ver, \ell}_{H}(v_{H})=v$ if $\text{dim}_{\mbF_{\ell}}(\text{Hom}(\Pi^{\et}_{\widetilde X_{v}^{\bp}}, \mbF_{\ell}))\neq 0$; otherwise, $f^{\rm ver, \ell}_{H}(v_{H})=\emptyset$. Moreover, if $H \subseteq \Pi_{X^{\bp}}$ is an open normal subgroup, then $v(\Gamma_{X_{H}^{\bp}})^{>0, \ell}$ admits a natural action of $\Pi_{X^{\bp}}/H$. Then we have the following proposition.

\begin{proposition}\label{pro-2-1}
(a) We define a pre-equivalence relation $\sim$ on $V^{\star}_{X, \ell}$ as follows: 
\begin{quote}
Let $\alpha, \beta \in V^{\star}_{X, \ell}$. We have that $\alpha \sim \beta$ if, for each $\lambda, \mu \in \mbF_{\ell}^{\times}$ for which $\lambda\alpha+\mu\beta \in V_{X, \ell}^{*} $, $\lambda\alpha+\mu\beta \in V^{\star}_{X, \ell}$. 
\end{quote}
Then the pre-equivalence relation $\sim$ on $V^{\star}_{X, \ell}$ is an equivalence relation. 

(b) We denote by $V_{X, \ell}$ the quotient set of $V^{\star}_{X, \ell}$ by $\sim$. Then we have a natural bijection $$\kappa_{X, \ell}: V_{X, \ell}  \isom v(\Gamma_{X^{\bp}})^{>0, \ell}, \ [\alpha] \mapsto v_{\alpha},$$ where $[\alpha]$ denotes the equivalence class of $\alpha$.

(c) Let $\ell, \ell'$ be prime numbers distinct from each other. Suppose that $\ell \neq p$. Then we have a natural injection $$V_{X, \ell'} \migiinje V_{X, \ell},$$ which is a bijection if $\ell' \neq p$, and which fits into the following commutative diagram:
\[
\begin{CD}
V_{X, \ell'} @>\kappa_{X, \ell'}>> v(\Gamma_{X^{\bp}})^{>0, \ell'}
\\
@VVV@VVV
\\
V_{X, \ell} @>\kappa_{X, \ell}>> v(\Gamma_{X^{\bp}})^{>0, \ell},
\end{CD}
\]
where the vertical map of the right-hand side is the natural injection induced by the definitions of $v(\Gamma_{X^{\bp}})^{>0, \ell'}$ and $v(\Gamma_{X^{\bp}})^{>0, \ell}$.

(d) Let $H \subseteq \Pi_{X^{\bp}}$ be an open subgroup. Suppose that $([\Pi_{X}^{\bp}: H], \ell)=1$. Then the natural injection $H \migiinje \Pi_{X^{\bp}}$ induces a map $$\gamma^{\rm ver, \ell}_{H}: V_{X_{H}, \ell} \migi V_{X, \ell}$$ which fits into the following commutative diagram:
\[
\begin{CD}
V_{X_{H}, \ell} @>\kappa_{X_{H}, \ell}>> v(\Gamma_{X_{H}^{\bp}})^{>0, \ell}
\\
@V\gamma_{H}^{\rm ver, \ell}VV@Vf_{H}^{\rm ver, \ell}VV
\\
V_{X, \ell} @>\kappa_{X, \ell}>> v(\Gamma_{X^{\bp}})^{>0, \ell}.
\end{CD}
\]
Moreover, suppose that $H \subseteq \Pi_{X^{\bp}}$ is an open normal subgroup. Then $V_{X_{H}, \ell}$ admits an action of $\Pi_{X^{\bp}}/H$ such that $\kappa_{X_{H}, \ell}$ is compatible with $\Pi_{X^{\bp}}/H$-actions (i.e., $\kappa_{X_{H}, \ell}$ is $\Pi_{X^{\bp}}/H$-equivariant).
\end{proposition}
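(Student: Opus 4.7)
The plan is to reduce everything to the natural direct-sum decomposition
$$M^{\rm nt}_{X^{\bp}} \cong \bigoplus_{v \in v(\Gamma_{X^{\bp}})} M^{\rm nt}_{X^{\bp}, v}, \qquad M^{\rm nt}_{X^{\bp}, v} \defeq \text{Hom}(\Pi^{\text{\'et}}_{\widetilde X_v^{\bp}}, \mbF_{\ell}),$$
arising from the Mayer--Vietoris exact sequence $H^0(\widetilde X, \mbF_{\ell}) \migi H^0(X^{\rm sing}, \mbF_{\ell}) \migi M^{\text{\'et}}_{X^{\bp}} \migi \bigoplus_v H^1_{\text{\'et}}(\widetilde X_v, \mbF_{\ell}) \migi 0$ attached to the normalization of $X$, whose kernel at $M^{\text{\'et}}_{X^{\bp}}$ is precisely $M^{\rm top}_{X^{\bp}} \cong H^1(\Gamma_{X^{\bp}}, \mbF_{\ell})$. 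Under this identification, the composition $M^{\text{\'et}}_{X^{\bp}} \migisurj M^{\rm nt}_{X^{\bp}} \migisurj M^{\rm nt}_{X^{\bp}, v}$ sends a class $\alpha$ to the restriction to $\widetilde X_v$ of the corresponding $\mbZ/\ell\mbZ$-\'etale cover of $X$. Consequently $\alpha \in V^{*}_{X,\ell}$ iff the image of $\alpha$ in $M^{\rm nt}_{X^{\bp}}$ is non-zero, and $\alpha \in V^{\star}_{X,\ell}$ iff this image is non-zero and lies in a single summand $M^{\rm nt}_{X^{\bp}, v}$, in which case $v_{\alpha}=v$.

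For (a) and (b) I would prove the characterization $\alpha \sim \beta$ iff $v_{\alpha}=v_{\beta}$. For the forward direction, suppose $v_{\alpha}\neq v_{\beta}$ and take $\lambda=\mu=1$: the image of $\alpha+\beta$ in $M^{\rm nt}_{X^{\bp}}$ has non-zero components in both $M^{\rm nt}_{X^{\bp}, v_{\alpha}}$ and $M^{\rm nt}_{X^{\bp}, v_{\beta}}$, so $\alpha+\beta \in V^{*}_{X,\ell} \setminus V^{\star}_{X,\ell}$, witnessing $\alpha \not\sim \beta$. Conversely, if $v_{\alpha}=v_{\beta}=v$, every $\mbF_{\ell}$-linear combination of $\alpha$ and $\beta$ has image contained in the single summand $M^{\rm nt}_{X^{\bp}, v}$, so any such combination lying in $V^{*}_{X,\ell}$ lies automatically in $V^{\star}_{X,\ell}$. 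This yields the transitivity needed for (a); the same characterization makes $[\alpha] \mapsto v_{\alpha}$ well-defined and injective, giving (b). Surjectivity of $\kappa_{X,\ell}$ follows by lifting any non-zero element of $M^{\rm nt}_{X^{\bp}, v}$ (which exists precisely when $v \in v(\Gamma_{X^{\bp}})^{>0,\ell}$) through the surjection $M^{\text{\'et}}_{X^{\bp}} \migisurj M^{\rm nt}_{X^{\bp}}$ to produce $\alpha \in V^{\star}_{X,\ell}$ with $v_{\alpha}=v$.

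For (c), the bijections from (b) reduce the claim to comparing the sets $v(\Gamma_{X^{\bp}})^{>0,\ell'}$ and $v(\Gamma_{X^{\bp}})^{>0,\ell}$. Since $\ell\neq p$, one has $\text{dim}_{\mbF_{\ell}}\text{Hom}(\Pi^{\text{\'et}}_{\widetilde X_v^{\bp}}, \mbF_{\ell})=2g_v$, so $v(\Gamma_{X^{\bp}})^{>0,\ell}=\{v : g_v>0\}$; if also $\ell'\neq p$ the same formula yields a bijection, while if $\ell'=p$ then $v(\Gamma_{X^{\bp}})^{>0,p}=\{v : \sigma_{\widetilde X_v}>0\}$ is contained in $\{v : g_v>0\}$, producing only an injection. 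For (d), I would define $\gamma^{\rm ver,\ell}_{H}$ as the unique (partial) map making the given diagram commute via the bijections $\kappa_{X_H,\ell}$ and $\kappa_{X,\ell}$; its realization as a map induced by $H\migiinje \Pi_{X^{\bp}}$ comes from the attached map of dual semi-graphs $f^{\rm sg}_{H}$ associated to the admissible covering $X^{\bp}_H \migi X^{\bp}$, with the coprimality hypothesis $([\Pi_{X^{\bp}}:H],\ell)=1$ ensuring that $\ell$-cohomology transfers compatibly between $X^{\bp}_H$ and $X^{\bp}$. The $\Pi_{X^{\bp}}/H$-equivariance when $H$ is normal follows because the natural $\Pi_{X^{\bp}}/H$-action on $X^{\bp}_H$ acts compatibly on $M^{\text{\'et}}_{X^{\bp}_H}$, on the decomposition $\bigoplus_{v_H} M^{\rm nt}_{X^{\bp}_H, v_H}$ and on $\Gamma_{X^{\bp}_H}$, so $\kappa_{X_H,\ell}$ is $\Pi_{X^{\bp}}/H$-equivariant by functoriality.

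The main obstacle will be setting up the decomposition $M^{\rm nt}_{X^{\bp}} \cong \bigoplus_v M^{\rm nt}_{X^{\bp}, v}$ and tracking its compatibility with the maps of dual semi-graphs used in (d); once this is in hand, all four statements reduce to formal manipulations with direct sums of $\mbF_\ell$-vector spaces and the bijections $\kappa$.
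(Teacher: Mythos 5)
Your treatment of (a)--(c) is correct. The identification of $M^{\rm nt}_{X^{\bp}}$ with $\bigoplus_{v \in v(\Gamma_{X^{\bp}})} \text{Hom}(\Pi^{\et}_{\widetilde X_{v}^{\bp}}, \mbF_{\ell})$ via the normalization sequence, together with the observation that $\alpha \in V^{\star}_{X, \ell}$ exactly when its image in $M^{\rm nt}_{X^{\bp}}$ is nonzero and supported at a single summand (whose vertex is then $v_{\alpha}$), yields the characterization $\alpha \sim \beta$ if and only if $v_{\alpha}=v_{\beta}$, and (a), (b), (c) follow as you say; for these parts the paper itself only cites \cite{Y3}, and your argument is the expected one.

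For (d), however, there is a genuine gap. You define $\gamma^{\rm ver, \ell}_{H}$ as $\kappa_{X, \ell}^{-1}\circ f^{\rm ver, \ell}_{H}\circ \kappa_{X_{H}, \ell}$, i.e.\ you define it so that the diagram commutes. This makes the commutativity claim vacuous and leaves unproved the substantive assertion, namely that the map is \emph{induced by the inclusion} $H \migiinje \Pi_{X^{\bp}}$; this is exactly what is needed afterwards (the remark following the proposition and Section \ref{sec-4-4}), where $\gamma^{\rm ver, \ell}_{H}$ must be produced from the group-theoretic datum $H \subseteq \Pi_{X^{\bp}}$ alone and compared with maps coming from a homomorphism $\phi$. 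The paper's proof instead constructs the map cohomologically: restrict $\alpha_{X}\in V^{\star}_{X, \ell}$ along $\text{Hom}(\Pi_{X^{\bp}}, \mbF_{\ell}) \migi \text{Hom}(H, \mbF_{\ell})$, observe that $\alpha_{X}|_{H}$ decomposes into classes of elements of $V^{\star}_{X_{H}, \ell}$ lying in pairwise distinct equivalence classes, and send $[\alpha_{X_{H}}]$ to $[\alpha_{X}]$ precisely when $[\alpha_{X_{H}}]$ occurs among these classes (and to $\emptyset$ otherwise); the commutativity with $f^{\rm ver, \ell}_{H}$ is then the nontrivial check. This is also where the hypothesis $([\Pi_{X^{\bp}}: H], \ell)=1$ actually does its work, which your proof never makes precise beyond the word ``transfer'': it guarantees that $\alpha_{X}|_{H}\neq 0$ and, since pullback along a finite covering of degree prime to $\ell$ is injective on $H^{1}(-, \mbF_{\ell})$ of each component, that the image of $\alpha_{X}|_{H}$ in $M^{\rm nt}_{X_{H}^{\bp}}$ is supported exactly on the vertices of $\Gamma_{X_{H}^{\bp}}$ lying over $v_{\alpha_{X}}$, so that the above assignment is well defined and the diagram commutes. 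Your equivariance argument at the end is fine, but only once $\gamma^{\rm ver, \ell}_{H}$ has been constructed from the inclusion in this way.
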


\begin{proof}
See \cite[Proposition 2.1 and Remark 2.1.1]{Y3} for (a), (b), and (c). Let us explain (d).  Let $[\alpha_{X}] \in V_{X, \ell}$. Then $\alpha_{X}$ induces an element $$\beta_{X_{H}}=\sum_{\beta\in L_{\alpha_{X}}}c_{\beta}\beta \in \text{Hom}(H, \mbF_{\ell}), \ c_{\beta} \in \mbF_{\ell}^{\times},$$ via the natural homomorphism $\text{Hom}(\Pi_{X^{\bp}}, \mbF_{\ell}) \migi \text{Hom}(H, \mbF_{\ell}),$ where $L_{\alpha_{X}}$ is a subset of $V^{\star}_{X_{H}, \ell}$ such that, if $\beta_{1}, \beta_{2} \in L_{\alpha_{X}}$ distinct from each other, then $[\beta_{1}] \neq [\beta_{2}]$. 

Let $[\alpha_{X_{H}}] \in V_{X_{H}, \ell}$. Then we define $$\gamma^{\rm ver, \ell}_{H}([\alpha_{X_{H}}])=[\alpha_{X}]$$ if there exists $[\alpha_{X}] \in V_{X, \ell}$ such that there exists $\beta \in L_{\alpha_{X}}$, and that $[\beta]=[\alpha_{X_{H}}]$ (i.e., $\beta \sim \alpha_{X_{H}}$). Otherwise, we put $\gamma^{\rm ver, \ell}_{H}([\alpha_{X_{H}}])=\emptyset$. It is easy to check that $\gamma^{\rm ver, \ell}_{H}$ is well-defined, and that the following diagram
\[
\begin{CD}
V_{X_{H}, \ell} @>\kappa_{X_{H}, \ell}>> v(\Gamma_{X_{H}^{\bp}})^{>0, \ell}
\\
@V\gamma_{H}^{\rm ver, \ell}VV@Vf_{H}^{\rm ver, \ell}VV
\\
V_{X, \ell} @>\kappa_{X, \ell}>> v(\Gamma_{X^{\bp}})^{>0, \ell}
\end{CD}
\]
is commutative. 

Moreover, suppose that $H$ is an open normal subgroup of $\Pi_{X^{\bp}}$. The natural exact sequence $$1\migi H \migi \Pi_{X^{\bp}} \migi \Pi_{X^{\bp}} /H \migi 1$$ induces an outer representation $$\Pi_{X^{\bp}}/H \migi \text{Out}(H) \defeq \frac{\text{Aut}(H)}{\text{Inn}(H)}.$$ Then we obtain an action of $\Pi_{X^{\bp}}/H$ on $$V^{\star}_{X_{H},\ell}\subseteq \text{Hom}(H^{\et}, \mbF_{\ell})$$ induced by the outer representation. Let $\sigma \in \Pi_{X^{\bp}}/H$ and $\alpha_{X_{H}}$, $\alpha'_{X_{H}} \in V^{\star}_{X_{H},\ell}$. Then we have that $\alpha_{X_{H}}\sim\alpha'_{X_{H}}$ if and only if $\sigma(\alpha_{X_{H}}) \sim \sigma(\alpha_{X_{H}}')$. Thus, we obtain an action of $\Pi_{X^{\bp}}/H$ on $V_{X_{H},\ell}$ induced by the natural injection $H \migiinje \Pi_{X^{\bp}}$. On the other hand, it is easy to check that  the commutative diagram above is compatible with the $\Pi_{X^{\bp}}/H$-actions. This completes the proof of the proposition.
\end{proof}

\begin{remarkA}\label{rem-pro-2-1}
By applying Theorem \ref{types},  we have that $\Pi_{X^{\bp}}^{\et}$ and $\Pi_{X^{\bp}}^{\rm top}$ can be reconstructed group-theoretically from $\Pi_{X^{\bp}}$. Then we obtain that $V_{X, \ell}$ (or $v(\Gamma_{X^{\bp}})^{>0, \ell}$) can be reconstructed group-theoretically from $\Pi_{X^{\bp}}$. Moreover, for every open subgroup $H \subseteq \Pi_{X^{\bp}}$, the map $$\gamma_{H}^{\rm ver, \ell}: V_{X_{H}, \ell} \migi V_{X, \ell}$$ constructed in Proposition \ref{pro-2-1} (d) can be reconstructed group-theoretically from the natural inclusion $H \migiinje \Pi_{X^{\bp}}$.  
\end{remarkA}

\subsection{Cohomology classes and sets of closed edges}\label{sec-4-2}

We maintain the notation introduced in Section \ref{sec-4-1}. Moreover, in this subsection, we suppose that the genus of the normalization of each irreducible component of $X$ is {\it positive} (i.e., $v(\Gamma_{X^{\bp}})=v(\Gamma_{X^{\bp}})^{>0, \ell}$ if $\ell \neq p$), and that $\Gamma^{\rm cpt}_{X^{\bp}}$ is {\it $2$-connected}.

We shall say that $$\mfT_{X^{\bp}}\defeq (\ell, d, f_{X}^{\bp}:Y^{\bp} \migi X^{\bp})$$ is an {\it edge-triple} associated to $X^{\bp}$ if the following conditions are satisfied: 

(i) $\ell$ and $d$ are prime numbers distinct from each other and from $p$.

(ii) $\ell \equiv 1 \ (\text{mod}\ d)$; this means that all $d$th roots of unity are contained in $\mbF_{\ell}$. Moreover, we write $\mu_{d} \subseteq \mbF_{\ell}^{\times}$ for the subgroup of $d$th roots of unity. 

(iii) $f_{X}^{\bp}: Y^{\bp} \migi X^{\bp}$ is a Galois admissible covering over $k$ whose Galois group is isomorphic to $\mu_{d}$ such that $f_{X}^{\bp}$ is \'etale, and that $\#v_{f_{X}}^{\rm sp} =0$. Note that since $v(\Gamma_{X^{\bp}})=v(\Gamma_{X^{\bp}})^{>0,d}$, $f_{X}^{\bp}$ exists.

On the other hand, we shall say that $$\mfT_{\Pi_{X^{\bp}}}\defeq (\ell, d, \alpha_{f_{X}})$$ is an {\it edge-triple} associated to $\Pi_{X^{\bp}}$ if the following conditions are satisfied: 

(i) $\alpha_{f_{X}} \in \text{Hom}(\Pi_{X^{\bp}}^{\rm \text{\'et}}, \mbF_{d})$. 

(ii) The composition of the following natural homomorphisms $$\Pi_{\widetilde X^{\bp}_{v}}^{\et} \migiinje \Pi_{X^{\bp}}^{\et} \overset{\alpha_{f_{X}}}\migi \mbF_{d}$$ is a surjection for every $v \in v(\Gamma_{X^{\bp}})$.

We see immediately that an edge-triple $\mfT_{X^{\bp}}$ associated to $X^{\bp}$ is equivalent to an edge-triple $\mfT_{\Pi_{X^{\bp}}}$ associated to $\Pi_{X^{\bp}}$, where the Galois admissible covering corresponding to the kernel of the composition of the natural homomorphisms $\Pi_{X^{\bp}} \migisurj \Pi_{X^{\bp}}^{\et} \overset{\alpha_{f_{X}}}\migi \mbF_{d}$ is $f_{X}^{\bp}$.

In the remainder of the present subsection, we fix an edge-triple $$\mfT_{\Pi_{X^{\bp}}}\defeq (\ell, d, \alpha_{f_{X}})$$ associated to $\Pi_{X^{\bp}}$. Write $\mfT_{X^{\bp}}\defeq (\ell, d, f_{X}^{\bp}:Y^{\bp}\migi X^{\bp})$ for the edge-triple associated to $X^{\bp}$ corresponding to $\mfT_{\Pi_{X^{\bp}}}$, $(g_{Y}, n_{Y})$ for the type of $Y^{\bp}$, $\Gamma_{Y^{\bp}}$ for the dual semi-graph of $Y^{\bp}$, $r_{Y}$ for the Betti number of $\Gamma_{Y^{\bp}}$, and $\Pi_{Y^{\bp}}$ for the kernel of the composition of the  homomorphisms $\Pi_{X^{\bp}} \migisurj \Pi_{X^{\bp}}^{\et} \overset{\alpha_{f_{X}}}\migi \mbF_{d}$. 

We put $$M_{Y^{\bp}} \defeq \text{Hom}(\Pi_{Y^{\bp}}, \mbF_{\ell}).$$ Note that there is a natural injection $M_{Y^{\bp}}^{\text{\'et}}\defeq \text{Hom}(\Pi_{Y^{\bp}}^{\et}, \mbF_{\ell})\migiinje M^{\rm}_{Y^{\bp}}$ induced by the natural surjection $\Pi_{Y^{\bp}} \migisurj \Pi_{Y^{\bp}}^{\text{\'et}}$. Then we obtain an exact sequence $$0\migi M_{Y^{\bp}}^{\text{\'et}}\migi M_{Y^{\bp}} \migi M_{Y^{\bp}}^{\rm ra}\defeq \text{coker}(M_{Y^{\bp}}^{\text{\'et}}\migiinje M^{\rm}_{Y^{\bp}})\migi 0$$ with a natural action of $\mu_{d}$, where ``ra" means ``ramification". For any element of $M_{Y^{\bp}}$, if the image of the element is not $0$ in $M_{Y^{\bp}}^{\rm ra}$, then the Galois admissible covering of $Y^{\bp}$ with Galois group $\mbZ/\ell\mbZ$ corresponding to the element is not \'etale.

Let $M^{\rm ra}_{Y^{\bp}, \mu_{d}} \subseteq M^{\rm ra}_{Y^{\bp}}$ be the subset of elements on which $\mu_{d}$ acts via the character $\mu_{d} \migiinje \mbF_{\ell}^{\times}$ and $E_{\mfT_{\Pi_{X^{\bp}}}}^{*} \subseteq M_{Y^{\bp}}$ the subset of elements that map to nonzero elements of $M^{\rm ra}_{Y^{\bp}, \mu_{n}}$. Let $\alpha \in E_{\mfT_{\Pi_{X^{\bp}}}}^{*}$. Write $$g^{\bp}_{\alpha}: Y^{\bp}_{\alpha} \migi Y^{\bp}$$ for the Galois admissible covering over $k$ corresponding to $\alpha$. Then we obtain a map $$\epsilon: E_{\mfT_{\Pi_{X^{\bp}}}}^{*} \migi \mbZ_{\geq 0}, \ \alpha \mapsto \#(e^{\rm op}(\Gamma_{Y^{\bp}_{\alpha}}) \cup e^{\rm cl}(\Gamma_{Y^{\bp}_{\alpha}})),$$ where $\Gamma_{Y^{\bp}_{\alpha}}$ denotes the dual semi-graph of $Y^{\bp}_{\alpha}$.  We put $$E^{\rm cl, \star}_{\mfT_{\Pi_{X^{\bp}}}}\defeq \{\alpha \in E_{\mfT_{\Pi_{X^{\bp}}}}^{*} \ | \  \#e_{g_{\alpha}}^{\rm op, ra}=0, \ \#e_{g_{\alpha}}^{\rm cl, ra}=d \}.$$ Note that $E^{\rm cl,\star}_{\mfT_{\Pi_{X^{\bp}}}}$ is not an empty set. For each $\alpha \in E^{\rm cl,\star}_{\mfT_{\Pi_{X^{\bp}}}}$, since the image of $\alpha$ is contained in $M^{\rm ra}_{Y^{\bp}, \mu_{d}}$, we obtain that the action of $\mu_{d}$ on the set $$\{y_{e}\}_{e\in e^{\rm cl, ra}_{g_{\alpha}}} \subseteq \text{Nod}(Y^{\bp})$$ is transitive, where $\text{Nod}(-)$ denotes the set of nodes of $(-)$, and $y_{e}$ denotes the node of $Y^{\bp}$ corresponding to $e$. Then there exists a unique node $x_{\alpha}$ of $X^{\bp}$ such that $f_{X}(y_{e})=x_{\alpha}$ for every $y_{e}\in \{y_{e}\}_{e\in e^{\rm cl, ra}_{g_{\alpha}}} $. We denote by $e_{\alpha} \in e^{\rm cl}(\Gamma_{X^{\bp}})$ the closed edge corresponding to $x_{\alpha}$. 

On the other hand, let $H \subseteq \Pi_{X^{\bp}}$ be an open subgroup. Write $f^{\rm sg}_{H}: \Gamma_{X_{H}^{\bp}} \migi \Gamma_{X^{\bp}}$ for the map of dual semi-graphs induced by the admissible covering $f^{\bp}_{H}: X_{H}^{\bp} \migi X^{\bp}$ over $k$ corresponding to $H$. We shall denote by $$f^{\rm cl}_{H}\defeq f^{\rm sg}_{H}|_{e^{\rm cl}(\Gamma_{X_{H}^{\bp}})}: e^{\rm cl}(\Gamma_{X_{H}^{\bp}}) \migi e^{\rm cl}(\Gamma_{X^{\bp}}).$$ Moreover, if $H \subseteq \Pi_{X^{\bp}}$ is an open normal subgroup, then $e^{\rm cl}(\Gamma_{X_{H}^{\bp}})$ admits a natural action of $\Pi_{X^{\bp}}/H$. Then we have the following result.

\begin{proposition}\label{pro-2-2}
(a) We define a pre-equivalence relation $\sim$ on $E^{\rm cl,\star}_{\mfT_{\Pi_{X^{\bp}}}}$ as follows: 
\begin{quote}
Let $\alpha, \beta \in E^{\rm cl,\star}_{\mfT_{\Pi_{X^{\bp}}}}$. We have that $\alpha \sim \beta$ if, for each $\lambda, \mu \in \mbF^{\times}_{\ell}$ for which $\lambda\alpha+\mu\beta \in E_{\mfT_{\Pi_{X^{\bp}}}}^{*}$, we have $\lambda\alpha+\mu\beta \in E^{\rm cl,\star}_{\mfT_{\Pi_{X^{\bp}}}}$.
\end{quote} 
Then the pre-equivalence relation $\sim$ on $E^{\rm cl, \star}_{\mfT_{\Pi_{X^{\bp}}}}$ is an equivalence relation. 

(b) We denote by $E_{\mfT_{\Pi_{X^{\bp}}}}^{\rm cl}$ the quotient set of $E^{\rm cl, \star}_{\mfT_{\Pi_{X^{\bp}}}}$ by $\sim$. Then we have a natural bijection $$\vartheta_{ \mfT_{\Pi_{X^{\bp}}}}: E_{\mfT_{\Pi_{X^{\bp}}}}^{\rm cl}  \isom e^{\rm cl}(\Gamma_{X^{\bp}}), \ [\alpha] \mapsto e_{\alpha},$$ where $[\alpha]$ denotes the equivalence class of $\alpha$.

(c) Let $\mfT_{\Pi_{X^{\bp}}}'$ be an arbitrary edge-triples associated to $\Pi_{X^{\bp}}$. Then we have a natural bijection $$E^{\rm cl}_{\mfT_{\Pi_{X^{\bp}}}'} \isom E^{\rm cl}_{\mfT_{\Pi_{X^{\bp}}}}$$ which fits into the following commutative diagram:
\[
\begin{CD}
E^{\rm cl}_{\mfT_{\Pi_{X^{\bp}}}'}@>\vartheta_{\mfT_{\Pi_{X^{\bp}}}'}>> e^{\rm cl}(\Gamma_{X^{\bp}})
\\
@VVV@|
\\
E^{\rm cl}_{\mfT_{\Pi_{X^{\bp}}}} @>\vartheta_{\mfT_{\Pi_{X^{\bp}}}}>> e^{\rm cl}(\Gamma_{X^{\bp}}).
\end{CD}
\]

(d) Let $H \subseteq \Pi_{X^{\bp}}$ be an open subgroup. Suppose that $([\Pi_{X^{\bp}}: H], \ell)=([\Pi_{X^{\bp}}: H], d)=1$. We have that $\mfT_{X^{\bp}}$ associated to $\Pi_{X^{\bp}}$ induces an edge-triple $$\mfT_{X_{H}^{\bp}}\defeq (\ell, d, f^{\bp}_{X_H}: Y^{\bp}_{X_{H}} \defeq Y^{\bp}\times_{X^{\bp}}X_{H}^{\bp} \migi X_{H}^{\bp})$$ associated to $X^{\bp}_{H}$, where $Y^{\bp}\times_{X^{\bp}}X_{H}^{\bp}$ denotes the fiber product in the category of pointed stable curves. Write $\mfT_{H}$ for the edge-triple associated to $H$ corresponding to $\mfT_{X^{\bp}_{H}}$. Then the natural injection $H \migiinje \Pi_{X^{\bp}}$ induces a surjective map $$\gamma^{\rm cl}_{\mfT_{\Pi_{X^{\bp}}}, H}: E^{\rm cl}_{\mfT_{H}} \migisurj E^{\rm cl}_{\mfT_{\Pi_{X^{\bp}}}}$$ which fits into the following commutative diagram:
\[
\begin{CD}
E^{\rm cl}_{\mfT_{H}}@>\vartheta_{\mfT_{H}}>> e^{\rm cl}(\Gamma_{X_{H}^{\bp}})
\\
@V\gamma_{\mfT_{\Pi_{X^{\bp}}}, H}^{\rm cl}VV@Vf_{H}^{\rm cl}VV
\\
E^{\rm cl}_{\mfT_{\Pi_{X^{\bp}}}} @>\vartheta_{\mfT_{\Pi_{X^{\bp}}}}>> e^{\rm cl}(\Gamma_{X^{\bp}}).
\end{CD}
\]
Moreover, suppose that $H \subseteq \Pi_{X^{\bp}}$ is an open normal subgroup. Then $E^{\rm cl}_{\mfT_{H}}$ admits an action of $\Pi_{X^{\bp}}/H$ such that $\vartheta_{\mfT_{H}}$ is compatible with $\Pi_{X^{\bp}}/H$-actions (i.e., $\vartheta_{\mfT_{H}}$ is $\Pi_{X^{\bp}}/H$-equivariant).
\end{proposition}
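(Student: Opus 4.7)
The strategy will be to characterize each $\alpha \in E^{\rm cl,\star}_{\mfT_{\Pi_{X^{\bp}}}}$ by its \emph{ramification locus}. For each node $y \in {\rm Nod}(Y^{\bp})$, let $I_y \subseteq \Pi_{Y^{\bp}}$ denote an associated inertia subgroup, and set $R(\alpha) \subseteq {\rm Nod}(Y^{\bp})$ to be the set of nodes $y$ for which the composition $I_y \hookrightarrow \Pi_{Y^{\bp}} \overset{\alpha}{\to} \mbF_\ell$ is non-trivial. The conditions $\#e^{\rm op,ra}_{g_\alpha}=0$, $\#e^{\rm cl,ra}_{g_\alpha}=d$, together with the requirement that $\alpha$ maps into the $\mu_d$-eigenspace $M^{\rm ra}_{Y^{\bp},\mu_d}$, should force $R(\alpha)$ to be a single $\mu_d$-orbit of cardinality exactly $d$ lying over a unique node $x_\alpha$ of $X^{\bp}$; this node corresponds to the closed edge $e_\alpha \in e^{\rm cl}(\Gamma_{X^{\bp}})$ already named in the statement.

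For (a) and (b), I would establish the key lemma that $\alpha \sim \beta$ if and only if $R(\alpha)=R(\beta)$. In one direction, if $R(\alpha) \neq R(\beta)$ then they are full $\mu_d$-orbits over distinct nodes and hence disjoint, so for any $\lambda,\mu \in \mbF_\ell^\times$ one has $R(\lambda\alpha+\mu\beta)=R(\alpha)\sqcup R(\beta)$ of cardinality $2d$; this combination lies in $E^{*}_{\mfT_{\Pi_{X^{\bp}}}}$ but not in $E^{\rm cl,\star}_{\mfT_{\Pi_{X^{\bp}}}}$, contradicting $\alpha \sim \beta$. Conversely, if $R(\alpha)=R(\beta)$ then $R(\lambda\alpha+\mu\beta)\subseteq R(\alpha)$ is $\mu_d$-stable, so whenever $\lambda\alpha+\mu\beta\in E^{*}_{\mfT_{\Pi_{X^{\bp}}}}$ this subset is non-empty and must coincide with the full orbit $R(\alpha)$, placing $\lambda\alpha+\mu\beta$ in $E^{\rm cl,\star}_{\mfT_{\Pi_{X^{\bp}}}}$. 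Since equality of subsets is an equivalence relation, (a) follows and the map $\vartheta_{\mfT_{\Pi_{X^{\bp}}}}:[\alpha]\mapsto e_\alpha$ is well-defined and injective. Surjectivity of $\vartheta_{\mfT_{\Pi_{X^{\bp}}}}$ will be shown by constructing, for each node $x$ of $X^{\bp}$, a class $\alpha$ with $R(\alpha)=f_X^{-1}(x)$; such a class is assembled by prescribing a non-trivial character on the inertia subgroup at one preimage of $x$ and spreading it by $\mu_d$-equivariance. Existence of this class is guaranteed by the $2$-connectedness of $\Gamma_{X^{\bp}}^{\rm cpt}$ and positivity of vertex genera, which together make the relevant $\mu_d$-eigenspace of $M^{\rm ra}_{Y^{\bp}}$ non-trivial.

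For (c), both $E^{\rm cl}_{\mfT_{\Pi_{X^{\bp}}}}$ and $E^{\rm cl}_{\mfT_{\Pi_{X^{\bp}}}'}$ are canonically identified with $e^{\rm cl}(\Gamma_{X^{\bp}})$ via their $\vartheta$-bijections; composing these identifications yields the required bijection, and commutativity of the diagram is tautological. For (d), coprimality of $[\Pi_{X^{\bp}}:H]$ with $\ell d$ ensures that the pullback $\mfT_{X_H^{\bp}}$ is a legitimate edge-triple on $X_H^{\bp}$. Ramification loci behave functorially under $f_H^{\bp}\colon X_H^{\bp}\to X^{\bp}$: if $\beta \in E^{\rm cl,\star}_{\mfT_H}$ has $R(\beta)$ lying over a node $\widetilde{x}$ of $X_H^{\bp}$, then the class it induces on $Y^{\bp}$ has ramification locus lying over $f_H(\widetilde{x})$, which gives the commuting square; surjectivity of $\gamma^{\rm cl}_{\mfT_{\Pi_{X^{\bp}}},H}$ follows because every node of $X^{\bp}$ admits at least one preimage in $X_H^{\bp}$. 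The $\Pi_{X^{\bp}}/H$-equivariance of $\vartheta_{\mfT_H}$, when $H$ is normal, is then a direct naturality check on the construction of $R(\cdot)$ and $e_{(\cdot)}$.

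The main obstacle is the characterization at the heart of the first paragraph: proving that $R(\alpha)$ is exactly a single $\mu_d$-orbit of size $d$, as opposed to a union of several orbits or a proper subset of one. This requires carefully combining the numerical condition $\#e^{\rm cl,ra}_{g_\alpha}=d$ with the $\mu_d$-eigenspace decomposition of $M^{\rm ra}_{Y^{\bp}}$ and with the étaleness and non-vertex-splitting property $\#v^{\rm sp}_{f_X}=0$ of $f_X^{\bp}$; it is also where the standing assumptions that vertex genera are positive and $\Gamma_{X^{\bp}}^{\rm cpt}$ is $2$-connected are genuinely used. Once this geometric heart of the proposition is in place, the remaining arguments are essentially formal.
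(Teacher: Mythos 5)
Your treatment of (a)--(c) is essentially sound, and note that for these parts the paper itself only cites \cite[Proposition 2.2 and Remark 2.2.1]{Y3}; your ramification-locus characterization of $\sim$ is exactly the intended mechanism, and both implications of your key lemma check out. Two remarks. The ``main obstacle'' you flag is in fact nearly immediate: since $d$ is prime and $f_{X}^{\bp}$ is \'etale and Galois, the decomposition groups at nodes of $Y$ are trivial, so each fiber $f_{X}^{-1}(x_{e})$ is a free transitive $\mu_{d}$-set of cardinality $d$; the eigenspace condition makes $R(\alpha)$ $\mu_{d}$-stable (for $\sigma \in \mu_{d}$, the class $\sigma(\alpha)-\chi(\sigma)\alpha$ lies in $M^{\text{\'et}}_{Y^{\bp}}$, hence is unramified at every node), and a $\mu_{d}$-stable set of cardinality $d$ inside a disjoint union of free orbits of size $d$ is a single fiber --- positivity of vertex genera and $2$-connectedness are not needed for this step (the paper records the transitivity already in the setup preceding the proposition). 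Second, in your surjectivity argument the decisive point is not that the $\mu_{d}$-eigenspace of $M^{\rm ra}_{Y^{\bp}}$ is nonzero, but that on each irreducible component of $Y$ the sum of the prescribed branch values must vanish, and a $\mu_{d}$-equivariant prescription along one fiber gives $c\sum_{j=0}^{d-1}\chi(\sigma)^{j}=0$ automatically; the hypotheses on genera and $2$-connectedness enter only in the counting (cf.\ Lemma \ref{lem-2-1}), so you should make that vanishing explicit.

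The genuine gap is in (d). For $\beta \in E^{\rm cl, \star}_{\mfT_{H}}$ the phrase ``the class it induces on $Y^{\bp}$'' has no meaning here: $\Pi_{Y^{\bp}_{X_{H}}}=\Pi_{Y^{\bp}}\cap H$ is an \emph{open subgroup} of $\Pi_{Y^{\bp}}$, so cohomology classes restrict from $\Pi_{Y^{\bp}}$ to $\Pi_{Y^{\bp}_{X_{H}}}$ and not conversely, and a transfer map would not preserve the relevant ramification data. As written, your $\gamma^{\rm cl}_{\mfT_{\Pi_{X^{\bp}}}, H}$ is really just the transport of $f_{H}^{\rm cl}$ through the two $\vartheta$-bijections, which makes the commutative square tautological but does not exhibit the map as induced by $H \migiinje \Pi_{X^{\bp}}$ on the cohomological side --- and that is the substantive content of (d), the only part the paper proves in the text, and the form in which it is used later (Remark \ref{rem-pro-2-2}, Proposition \ref{prop-4-19}). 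The paper's construction goes in the opposite direction: restrict $\alpha_{X}\in E^{\rm cl, \star}_{\mfT_{\Pi_{X^{\bp}}}}$ along $\Pi_{Y^{\bp}_{X_{H}}}\migiinje \Pi_{Y^{\bp}}$, write the restriction as $\sum_{\beta\in J_{\alpha_{X}}}c_{\beta}\beta$ with the $\beta$ pairwise inequivalent elements of $E^{\rm cl, \star}_{\mfT_{H}}$, and set $\gamma^{\rm cl}_{\mfT_{\Pi_{X^{\bp}}}, H}([\beta])\defeq[\alpha_{X}]$ whenever $[\beta]$ occurs in such a decomposition; well-definedness, surjectivity, and compatibility with $f_{H}^{\rm cl}$ are then verified on ramification loci. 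Similarly, your ``direct naturality check'' for the final assertion presupposes an action of $\Pi_{X^{\bp}}/H$ on $E^{\rm cl}_{\mfT_{H}}$ that you never construct: one needs the outer action coming from the extension of $\Pi_{X^{\bp}}/\Pi_{Y^{\bp}_{X_{H}}}\cong \Pi_{X^{\bp}}/H\times\mbZ/d\mbZ$ by $\Pi_{Y^{\bp}_{X_{H}}}$, together with the check that this action preserves $\sim$. Part (d) should be redone along these lines.
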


\begin{proof}
See \cite[Proposition 2.2 and Remark 2.2.1]{Y3} for (a), (b), and (c). Let us explain (d). Let $\alpha_{X} \in E^{\rm cl}_{\mfT_{\Pi_{X^{\bp}}}}$. Then $\alpha_{X}$ induces an element $$\beta_{X_{H}}=\sum_{\beta\in J_{\alpha_{X}}}c_{\beta}\beta, \ c_{\beta} \in \mbF_{\ell}^{\times}$$ via the natural homomorphism $\text{Hom}(\Pi_{Y^{\bp}_{X_{H}}}, \mbF_{\ell}) \migi \text{Hom}(\Pi_{Y^{\bp}}, \mbF_{\ell}),$ where $\Pi_{Y^{\bp}_{X_{H}}} \defeq \Pi_{Y^{\bp}} \cap H$, and $J_{\alpha_{X}}$ is a subset of $E^{\rm cl, \star}_{\mfT_{H}}$ such that, if $\beta_{1}, \beta_{2} \in J_{\alpha_{X}}$ distinct from each other, then $[\beta_{1}] \neq [\beta_{2}]$. Let $[\alpha_{X_{H}}] \in E^{\rm cl}_{\mfT_{H}}$. We define $$\gamma^{\rm cl}_{\mfT_{\Pi_{X^{\bp}}}, H}([\alpha_{X_{H}}])=[\alpha_{X}]$$ if there exists $\alpha_{X} \in E^{\rm cl}_{\mfT_{\Pi_{X^{\bp}}}}$ such that $[\beta]=[\alpha_{X_{H}}]$ for some $\beta \in J_{\alpha_{X}}$. It is easy to check that $\gamma^{\rm cl}_{\mfT_{\Pi_{X^{\bp}}}, H}$ is well-defined, and that the following diagram \[
\begin{CD}
E^{\rm cl}_{\mfT_{H}}@>\vartheta_{\mfT_{H}}>> e^{\rm cl}(\Gamma_{X_{H}^{\bp}})
\\
@V\gamma_{\mfT_{\Pi_{X^{\bp}}}, H}^{\rm cl}VV@Vf_{H}^{\rm cl}VV
\\
E^{\rm cl}_{\mfT_{\Pi_{X^{\bp}}}} @>\vartheta_{\mfT_{\Pi_{X^{\bp}}}}>> e^{\rm cl}(\Gamma_{X^{\bp}})
\end{CD}
\]
is commutative. 

Moreover, suppose that $H$ is an open normal subgroup of $\Pi_{X^{\bp}}$. Since $\Pi_{Y^{\bp}_{X_{H}}}$ is an open normal subgroup of $\Pi_{X^{\bp}}$, we have $$\Pi_{X^{\bp}}/\Pi_{Y^{\bp}_{X_{H}}} \cong \Pi_{X^{\bp}}/H \times \mbZ/d\mbZ.$$ Then the natural exact sequence $$1\migi \Pi_{Y^{\bp}_{X_{H}}} \migi \Pi_{X^{\bp}} \migi \Pi_{X^{\bp}} /\Pi_{Y^{\bp}_{X_{H}}} \migi 1$$ induces an outer representation $$\Pi_{X^{\bp}}/H \migiinje \Pi_{X^{\bp}}/\Pi_{Y^{\bp}_{X_{H}}} \migi \text{Out}(\Pi_{Y^{\bp}_{X_{H}}}).$$ Thus, we obtain an action of $\Pi_{X^{\bp}}/H$ on $$E^{\rm cl, \star}_{\mfT_{H}} \subseteq \text{Hom}(\Pi_{Y^{\bp}_{X_{H}}}, \mbF_{\ell})$$ induced by the outer representation.

Let $\sigma \in \Pi_{X^{\bp}}/H$ and $\alpha_{X_{H}}$, $\alpha_{X_{H}}' \in E^{\rm cl, \star}_{\mfT_{H}}$. We obverse that $\alpha_{X_{H}}\sim\alpha_{X_{H}}'$ if and only if $\sigma(\alpha_{X_{H}}) \sim \sigma(\alpha_{X_{H}}')$. Thus, we obtain an action of $\Pi_{X^{\bp}}/H$ on $E^{\rm cl}_{\mfT_{H}}$ induced by the natural injection $H \migiinje \Pi_{X^{\bp}}$. On the other hand, it is easy to check that  the commutative diagram above is compatible with the $\Pi_{X^{\bp}}/H$-actions. This completes the proof of the proposition.
\end{proof}

\begin{remarkA}\label{rem-pro-2-2}
By applying Theorem \ref{types}, we have that  $\Pi_{X^{\bp}}^{\et}$ can be reconstructed group-theoretically from $\Pi_{X^{\bp}}$. Then  $E^{\rm cl}_{\mfT_{\Pi_{X^{\bp}}}}$ (or $e^{\rm cl}(\Gamma_{X^{\bp}})$) can be reconstructed group-theoretically from $\Pi_{X^{\bp}}$. Moreover, for every open subgroup $H \subseteq \Pi_{X^{\bp}}$, the map $$\gamma_{\mfT_{\Pi_{X^{\bp}, H}}}^{\rm cl}: E^{\rm cl}_{\mfT_{H}}\migi E^{\rm cl}_{\mfT_{\Pi_{X^{\bp}}}}$$ constructed in Proposition \ref{pro-2-2} (d) can be reconstructed group-theoretically from the natural inclusion $H \migiinje \Pi_{X^{\bp}}$. 
\end{remarkA}

Next, let us calculate the cardinality $\#E^{\rm cl, \star}_{\mfT_{\Pi_{X^{\bp}}}}$ of $E^{\rm cl, \star}_{\mfT_{\Pi_{X^{\bp}}}}$. We put $$E^{\rm cl, \star}_{\mfT_{\Pi_{X^{\bp}}}, e}\defeq \{\alpha \in E^{\rm cl, \star}_{\mfT_{\Pi_{X^{\bp}}}}\ | \ e=e_{\alpha} \}, \ e\in e^{\rm cl}(\Gamma_{X^{\bp}}).$$ Note that $e=e_{\alpha}$, $\alpha \in E^{\rm cl, \star}_{\mfT_{\Pi_{X^{\bp}}}, e}$, means that the Galois admissible covering $g_{\alpha}^{\bp}: Y^{\bp}_{\alpha} \migi Y^{\bp}$ over $k$ induced by $\alpha$ is (totally) ramified over $f_{X}^{-1}(x_{e})$, where $x_{e}$ denotes the node of $X$ corresponding to $e$. 
Moreover, we have the following disjoint union $$E^{\rm cl, \star}_{\mfT_{\Pi_{X^{\bp}}}}=\bigsqcup_{e \in e^{\rm cl}(\Gamma_{X^{\bp}})} E^{\rm cl, \star}_{\mfT_{\Pi_{X^{\bp}}}, e}.$$ Let $m \in \mbZ_{\geq 0}$ and $e\in e^{\rm cl}(\Gamma_{X^{\bp}})$. We shall put $$E^{{\rm cl, \star}, m}_{\mfT_{\Pi_{X^{\bp}}}, e}\defeq \{\alpha \in E^{{\rm cl, \star}}_{\mfT_{\Pi_{X^{\bp}}}, e}\ | \ \#v_{g_{\alpha}}^{\rm sp}=m\}.$$

Let $e \in e^{\rm cl}(\Gamma_{X^{\bp}})$ be a closed edge. Write $Y_{e}$ for the normalization of the underlying curve $Y$ of $Y^{\bp}$ at $f_{X}^{-1}(x_{e})$ and $$\text{nor}_{e}: Y_{e} \migi Y$$  for the resulting normalization morphism. Since the genus of the normalization of each irreducible component of $X^{\bp}$ is positive, we obtain that the genus of the normalization of each irreducible component of $Y_{e}$ is also positive. Moreover, since $\Gamma_{X^{\bp}}$ is $2$-connected, $Y_{e}$ is connected. We have the following lemma.

\begin{lemma}\label{lem-2-1}
We maintain the notation introduced above. Let $e \in e^{\rm cl}(\Gamma_{X^{\bp}})$ be a closed edge. Then we have $$\#E^{\rm cl, \star}_{\mfT_{\Pi_{X^{\bp}}}, e}=\ell^{2g_{Y}-d-r_{Y}+1}-\ell^{2g_{Y}-d-r_{Y}}.$$ Moreover, we have $$\#E^{\rm cl, \star}_{\mfT_{\Pi_{X^{\bp}}}}=\#e^{\rm cl}(\Gamma_{X^{\bp}})(\ell^{2g_{Y}-d-r_{Y}+1}-\ell^{2g_{Y}-d-r_{Y}}).$$ 
\end{lemma}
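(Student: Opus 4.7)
The plan is to reduce the count to $\dim_{\mbF_\ell}H^1_{\et}(Y_e,\mbF_\ell)$ via the partial normalization $Y_e$ of $Y$ at $f_X^{-1}(x_e)=\{y_{e,1},\dots,y_{e,d}\}$. By the $2$-connectedness hypothesis on $\Gamma^{\rm cpt}_{X^{\bp}}$, $Y_e$ is connected; removing the $d$ edges of $\Gamma_{Y^{\bp}}$ lying over $e$ gives $r_{Y_e}=r_Y-d$ and $g_{Y_e}=g_Y-d$, so $\dim_{\mbF_\ell}H^1_{\et}(Y_e,\mbF_\ell)=2g_{Y_e}-r_{Y_e}=2g_Y-d-r_Y$, which is exactly the exponent in the desired formula. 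This suggests targeting a bijection $E^{\rm cl,\star}_{\mfT_{\Pi_{X^{\bp}}},e}\longleftrightarrow \mbF_\ell^\times\times H^1_{\et}(Y_e,\mbF_\ell)$.

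First I would analyze the short exact sequence of $\mbF_\ell[\mu_d]$-modules $0\to M_{Y^{\bp}}^{\et}\to M_{Y^{\bp}}\to M^{\rm ra}_{Y^{\bp}}\to 0$, which is semi-simple since $(\ell,d)=1$. Since $1+\zeta+\zeta^2+\cdots+\zeta^{d-1}=0$, the values on any $\mu_d$-orbit of size $d$ of marked points or nodes of $Y^{\bp}$ sum to zero in the $\chi_0$-eigenspace, so the per-vertex admissibility relations cutting out $M^{\rm ra}_{Y^{\bp}}$ inside the ambient ramification space become vacuous on the $\chi_0$-eigenspace. The hypotheses $\#e^{\rm op,ra}_{g_\alpha}=0$, $\#e^{\rm cl,ra}_{g_\alpha}=d$, and $e_\alpha=e$ then force the image of $\alpha$ in $M^{\rm ra}_{Y^{\bp},\mu_d}$ to be supported on the single orbit $f_X^{-1}(x_e)$ and to be determined by a single value $a\in\mbF_\ell$ (the inertia at the chosen node $y_{e,1}$), with $a\neq 0$; this produces $\ell-1$ admissible ramification classes.

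Second I would show that lifting each such ramification class to an element of $M_{Y^{\bp}}$ yields exactly $|H^1_{\et}(Y_e,\mbF_\ell)|$ choices. The key point is that pulling back via the partial normalization identifies such an $\alpha$ with an admissible $\mbZ/\ell\mbZ$-cover of $Y_e$ (viewed as a pointed semi-stable curve with the $2d$ new marked points) carrying the prescribed nonzero, $\chi_0$-equivariant tame ramification at those $2d$ new marked points and being étale at all remaining nodes and at the original marked points of $Y^{\bp}$; gluing such a cover back to one on $Y$ is forced uniquely by admissibility at each totally ramified node, since the fiber there collapses to a single point and no torsor of gluings remains. Modulo the fixed ramification, the residual freedom is precisely étale covers of $Y_e$, classified by $H^1_{\et}(Y_e,\mbF_\ell)$. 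Combining the two counts yields $\#E^{\rm cl,\star}_{\mfT_{\Pi_{X^{\bp}}},e}=(\ell-1)\cdot\ell^{2g_Y-d-r_Y}$, and summing over the $\#e^{\rm cl}(\Gamma_{X^{\bp}})$ closed edges of $\Gamma_{X^{\bp}}$ (the $E^{\rm cl,\star}_{\mfT_{\Pi_{X^{\bp}}},e}$ being pairwise disjoint by the uniqueness of the node $x_\alpha$) gives the second equation. The main obstacle is establishing the rigidity of the gluing step: one must verify, by local admissibility at a totally ramified node, that no residual $\mbF_\ell$-torsor of choices appears, and reconcile the global count with the normalization long exact sequence $0\to\mbF_\ell^d\to H^1_{\et}(Y,\mbF_\ell)\to H^1_{\et}(Y_e,\mbF_\ell)\to 0$ so that the étale contribution is correctly measured by $H^1_{\et}(Y_e,\mbF_\ell)$ rather than $H^1_{\et}(Y,\mbF_\ell)$.
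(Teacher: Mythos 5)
Your outline coincides with the paper's own proof: both pass to the partial normalization $Y_{e}$, use the $2$-connectedness to get $r_{Y_{e}}=r_{Y}-d$, $g_{Y_{e}}=g_{Y}-d$ and hence $\dim_{\mbF_{\ell}}H^{1}_{\et}(Y_{e}, \mbF_{\ell})=2g_{Y}-d-r_{Y}$, both use the transitivity of $\mu_{d}$ on $f_{X}^{-1}(x_{e})$ together with the eigenspace condition to reduce the ramification datum to a single value $a\in\mbF_{\ell}^{\times}$, and both finish by the disjoint union over $e^{\rm cl}(\Gamma_{X^{\bp}})$. The only difference is packaging: the paper places (what it regards as) $E^{\rm cl, \star}_{\mfT_{\Pi_{X^{\bp}}}, e}$ inside $H^{1}_{\et}(Y_{e}\setminus R_{e}, \mbF_{\ell})$, spans the subspace $L_{e}$, and counts $L_{e}\setminus H^{1}_{\et}(Y_{e},\mbF_{\ell})$ via the exact sequence $0\migi H^{1}_{\et}(Y_{e},\mbF_{\ell})\migi L_{e}\migi H^{\rm ra}_{e}\migi 0$ with $\dim_{\mbF_{\ell}}H^{\rm ra}_{e}=1$, whereas you count fiberwise over the ramification class; these are the same count.

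The step you yourself flag as the ``main obstacle'' is, however, a genuine gap, and the justification you offer for it does not hold. You claim that at a totally ramified node the gluing is unique (``no torsor of gluings remains''), so that the residual freedom over a fixed ramification class is measured by $H^{1}_{\et}(Y_{e},\mbF_{\ell})$. For admissible coverings this is not so: each closed edge of $\Gamma_{Y^{\bp}}$ contributes, in addition to the branch inertia, an independent rank-one ``gluing'' direction to $\Hom(\Pi_{Y^{\bp}},\mbF_{\ell})$ (this is why the maximal prime-to-$p$ quotient has the full rank $2g_{Y}$, and cohomologically it is exactly the $d$-dimensional kernel of $H^{1}_{\et}(Y,\mbF_{\ell})\migi H^{1}_{\et}(Y_{e},\mbF_{\ell})$ appearing in the normalization sequence you quote). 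Adding such a kernel class to an element $\alpha$ of $E^{\rm cl, \star}_{\mfT_{\Pi_{X^{\bp}}}, e}$ changes neither its image in $M^{\rm ra}_{Y^{\bp}}$, nor any inertia value, nor its pullback to $Y_{e}\setminus R_{e}$, yet it gives a different element of $M_{Y^{\bp}}$; so the correspondence you propose between elements of $E^{\rm cl, \star}_{\mfT_{\Pi_{X^{\bp}}}, e}$ and covers of $Y_{e}$ with prescribed ramification is at best $\ell^{d}$-to-one, and the fiber over a fixed ramification class, argued as you argue it, is a full coset of $M^{\text{\'et}}_{Y^{\bp}}\cong H^{1}_{\et}(Y,\mbF_{\ell})$ rather than a copy of $H^{1}_{\et}(Y_{e},\mbF_{\ell})$. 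Showing that the count is nevertheless governed by $H^{1}_{\et}(Y_{e},\mbF_{\ell})$ --- equivalently, that $E^{\rm cl, \star}_{\mfT_{\Pi_{X^{\bp}}}, e}$ may be identified with its image in $H^{1}_{\et}(Y_{e}\setminus R_{e},\mbF_{\ell})$ --- is precisely the identification with which the paper's proof opens, and it is where the exponent $2g_{Y}-d-r_{Y}$ comes from; asserting it by ``the fiber collapses to a single point'' and then listing it as an unverified obstacle leaves your proof incomplete (indeed incorrect as justified) at the one point on which the lemma turns.
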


\begin{proof}
Write $R_{e} \subseteq Y_{e}$ for the set of closed subset $(f_{X} \circ\text{nor}_{e})^{-1}(x_{e})$. Then $E^{\rm cl, \star}_{\mfT_{\Pi_{X^{\bp}}}, e}$ can be naturally regarded as a subset of $H^{1}_{\et}(Y_{e} \setminus R_{e}, \mbF_{\ell})$ via the natural open immersion $Y_{e}\setminus R_{e}  \migiinje Y_{e}$. Write $L_{e}$ for the $\mbF_{\ell}$-vector subspace spanned by $E^{\rm cl, \star}_{\mfT_{\Pi_{X^{\bp}}}, e}$ in $H^{1}_{\text{\rm \'et}}(Y_{e} \setminus R_{e}, \mbF_{\ell})$. Then we see that $$E^{\rm cl, \star}_{\mfT_{\Pi_{X^{\bp}}}, e}=L_{e}\setminus H^{1}_{\text{\rm \'et}}(Y_{e}, \mbF_{\ell}).$$ 

Write $H^{\rm ra}_{e}$ for the cokernel of the natural inclusion $H^{1}_{\text{\rm \'et}}(Y_{e}, \mbF_{\ell}) \migiinje L_{e}$. We obtain an exact sequence as follows:
$$0 \migi H^{1}_{\text{\rm \'et}}(Y_{e}, \mbF_{\ell}) \migi L_{e}\migi H^{\rm ra}_{e}\migi 0.$$ 
On the other hand, since the action of $\mu_{d}$ on $f^{-1}(x_{e})$ is translative, the structure of the maximal pro-$\ell$ quotient $\Pi_{Y^{\bp}}^{\ell}$ of $\Pi_{Y^{\bp}}$ implies that $$\text{dim}_{\mbF_{\ell}}(H^{\rm ra}_{e})=1.$$ Since $$\text{dim}_{\mbF_{\ell}}(H^{1}_{\text{\rm \'et}}(Y_{e}, \mbF_{\ell}))=2(g_{Y}-d)-(r_{Y}-d)=2g_{Y}-d-r_{Y},$$ we obtain that $$\#E^{\rm cl, \star}_{\mfT_{\Pi_{X^{\bp}}}, e}=\ell^{2g_{Y}-d-r_{Y}+1}-\ell^{2g_{Y}-d-r_{Y}}.$$ Thus, we have $$\#E^{\rm cl, \star}_{\mfT_{\Pi_{X^{\bp}}}}=\#e^{\rm cl}(\Gamma_{X^{\bp}})(\ell^{2g_{Y}-d-r_{Y}+1}-\ell^{2g_{Y}-d-r_{Y}}).$$ 
This completes the proof of the lemma.
\end{proof}

Next, we introduce some notation concerning open edges.  We put $$E^{\rm op, \star}_{\mfT_{\Pi_{X^{\bp}}}}\defeq \{\alpha \in E_{\mfT_{\Pi_{X^{\bp}}}}^{*} \ | \  \#e_{g_{\alpha}}^{\rm op, ra}=d, \ \#e_{g_{\alpha}}^{\rm cl, ra}=0 \}.$$ Note that $E^{\rm op,\star}_{\mfT_{\Pi_{X^{\bp}}}}$ is not an empty set if $n_{X} \neq 0$. For each $\alpha \in E^{\rm op,\star}_{\mfT_{\Pi_{X^{\bp}}}}$, since the image of $\alpha$ is contained in $M^{\rm ra}_{Y^{\bp}, \mu_{d}}$, we obtain that the action of $\mu_{d}$ on the set $$\{y_{e}\}_{e\in e^{\rm op, ra}_{g_{\alpha}}} \subseteq D_{Y}$$ is transitive, where  $y_{e}$ denotes the marked point of $Y^{\bp}$ corresponding to $e$. Then there exists a unique marked point $x_{\alpha} \in D_{X}$ of $X^{\bp}$ such that $f_{X}(y_{e})=x_{\alpha}$ for every $y_{e}\in \{y_{e}\}_{e\in e^{\rm op, ra}_{g_{\alpha}}} $. We denote by $e_{\alpha} \in e^{\rm op}(\Gamma_{X^{\bp}})$ the open edge corresponding to $x_{\alpha}$.  Moreover, we put $$E^{\rm op, \star}_{\mfT_{\Pi_{X^{\bp}}}, e}\defeq \{\alpha \in E^{\rm op, \star}_{\mfT_{\Pi_{X^{\bp}}}}\ | \ e=e_{\alpha} \}, \ e\in e^{\rm op}(\Gamma_{X^{\bp}}).$$ Note that $e=e_{\alpha}$, $\alpha \in E^{\rm op, \star}_{\mfT_{\Pi_{X^{\bp}}}, e}$, means that the Galois admissible covering $g_{\alpha}^{\bp}: Y^{\bp}_{\alpha} \migi Y^{\bp}$ over $k$ induced by $\alpha$ is (totally) ramified over $f_{X}^{-1}(x_{e})$, where $x_{e}$ denotes the marked point of $X^{\bp}$ corresponding to $e$. 
Moreover, we have the following disjoint union $$E^{\rm op, \star}_{\mfT_{\Pi_{X^{\bp}}}}=\bigsqcup_{e \in e^{\rm op}(\Gamma_{X^{\bp}})} E^{\rm op, \star}_{\mfT_{\Pi_{X^{\bp}}}, e}.$$ Let $m \in \mbZ_{\geq 0}$ and $e\in e^{\rm op}(\Gamma_{X^{\bp}})$. We shall put $$E^{{\rm op, \star}, m}_{\mfT_{\Pi_{X^{\bp}}}, e}\defeq \{\alpha \in E^{{\rm op, \star}}_{\mfT_{\Pi_{X^{\bp}}}, e}\ | \ \#v_{g_{\alpha}}^{\rm sp}=m\}.$$



Finally, we introduce the following conditions concerning pointed stable curves. Let $W^{\bp}$ be a pointed stable curve over $k$ of type $(g_{W}, n_{W})$, $\Gamma_{W^{\bp}}$ the dual semi-graph of $W^{\bp}$, and $\Pi_{W^{\bp}}$ the solvable admissible fundamental group of $W^{\bp}$.

\begin{conditiona}\label{conda}
We shall say that $W^{\bp}$ satisfies Condition A if the following conditions are satisfied:

(i) the genus of the normalization of each irreducible component of $W$ is positive; 

(ii) every irreducible component of $W$ is smooth over $k$;

(iii) $\Gamma_{W^{\bp}}^{\rm cpt}$ is $2$-connected; 

(iv) $\#(v(\Gamma_{W^{\bp}})^{b\leq 1})=0.$
\end{conditiona}

\begin{conditionb}\label{condb}
We shall say that $W^{\bp}$ satisfies Condition B if $\Gamma_{W^{\bp}_H}^{\rm cpt}$ is $2$-connected for every open subgroup $H \subseteq \Pi_{W^{\bp}}$.
\end{conditionb}

\begin{lemma}\label{rem-them-1-1-3}
Let $m$ be a positive natural number prime to $p$ and $H \defeq D^{(3)}_{m}(\Pi_{W^{\bp}}) \subseteq \Pi_{W^{\bp}}$. Then $W^{\bp}_{H}$ satisfies Condition A, and the Betti number of the dual semi-graph of $W^{\bp}_{H}$ is positive.
\end{lemma}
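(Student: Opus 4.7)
The approach is to exploit the fact that the Galois admissible covering $f^\bp_H : W^\bp_H \migi W^\bp$ corresponding to $H = D^{(3)}_m(\Pi_{W^\bp})$ is a tame three-layer tower of abelian exponent-$m$ coverings, since $m$ prime to $p$ forces the Galois group $\Pi_{W^\bp}/H$ to have order prime to $p$. Writing $H_i := D^{(i)}_m(\Pi_{W^\bp})$, the tower $W^\bp_H \migi W^\bp_{H_2} \migi W^\bp_{H_1} \migi W^\bp$ is such that each abelianization $H_{i-1}^{\rm ab}/m$ surjects onto the topological part $H_1(\Gamma_{W^\bp_{H_{i-1}}}, \mbZ/m\mbZ)$ and also carries Kummer-type classes arising from inertia at each marked point and each node. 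The plan is to verify each of the four parts of Condition A, together with positivity of the Betti number, by analyzing how each layer of the tower structurally improves the dual semi-graph.

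For condition (ii) and positivity of $r_{W_H}$, I would note that every self-loop at a vertex of $\Gamma_{W^\bp}$ defines a nonzero class in $H_1(\Gamma_{W^\bp}, \mbZ/m\mbZ)$ (given $m \geq 2$), and so is unwrapped into a chain of length $m$ already in the cover $W^\bp_{H_1}$; since no new self-loops can arise in further tame covers above a graph with no self-loops, $W^\bp_H$ has no irreducible component with a self-node. Positivity of $r_{W_H}$ then follows from the Riemann-Hurwitz formula for graph coverings applied to the nontrivial tower (the presence of self-loops, or of $r_W \geq 1$ to begin with, already yields strictly positive Betti number after the first layer, which is preserved under further covers).

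For conditions (iv) and (iii), I would proceed inductively over the tower: each layer of abelian exponent-$m$ covering either splits a node or marked point into $m$ copies (étale case) or pulls it back with ramification of order $m$ (ramified case), and in both situations the valences $b(v)$ of vertices in the cover above strictly increase, while any cut vertex of $\Gamma^{\rm cpt}$ is resolved by the topological unwrapping of an encircling cycle coming from the next layer's $H_1$-classes. After three layers, no vertex has $b(v) \leq 1$ (giving (iv)) and $\Gamma^{\rm cpt}_{W^\bp_H}$ becomes $2$-connected (giving (iii)). For condition (i), one applies Riemann-Hurwitz to each induced cover $\widetilde W^\bp_{v, H} \migi \widetilde W^\bp_{f^{\rm sg}_H(v)}$ of components: by stability of $W^\bp$ the base has at least $\max(3-2g_v, 1)$ special points, and the tame exponent-$m^3$ ramification above them forces the upstairs genus of the normalization to be at least $1$.

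The main obstacle will be condition (iii). While the other three parts follow relatively directly from the Kummer-type behavior of a single layer of abelian covering, $2$-connectivity of $\Gamma^{\rm cpt}_{W^\bp_H}$ is the subtlest graph-theoretic property to verify: it requires ruling out not just cut vertices descending from $\Gamma_{W^\bp}$ but also any that might arise as artifacts of the covering construction, and keeping careful track of how cut vertices interact with the Kummer contributions coming from nodes and marked points (as opposed to the purely topological contributions) across the three layers of $H$.
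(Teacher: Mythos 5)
The paper's own proof of this lemma is a one-line appeal to the explicit structure of $\Pi_{W^{\bp}}^{p'}$, so your overall strategy --- pass to the prime-to-$p$ tower $W^{\bp}_{H}\migi W^{\bp}_{H_{2}}\migi W^{\bp}_{H_{1}}\migi W^{\bp}$ and verify Condition A layer by layer --- is the intended one. However, the concrete mechanisms you invoke fail exactly where the content of the lemma lies. For positivity of the Betti number you rely on ``pre-existing self-loops or $r_{W}\geq 1$'', but in the situation the lemma is actually used for (Lemma 6.2, singular curves of type $(0,n)$) the dual semi-graph is a tree with no self-loops, so neither hypothesis holds. Positivity there comes from the nodes themselves: in an abelian exponent-$m$ cover the number of edges above a node outruns the number of vertices above its endpoints (either the node inertia survives in $\Pi_{W^{\bp}}^{\rm ab}\otimes\mbZ/m\mbZ$ and the node is ramified, or it splits completely while the vertex stabilizers are large), and one gets $r_{W_{H}}>0$ from an Euler-characteristic count of edges versus vertices. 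Your argument never touches this, and note it is precisely where the singularity of $W^{\bp}$ (and $m\geq 2$) must enter --- for smooth $W^{\bp}$ no admissible cover has a graph of positive Betti number, so any proof that does not use the existence of a node cannot be correct.

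For condition (i) you claim that tame ramification over the special points forces positive genus upstairs, but ramification is not automatic: the inertia of a node whose edge is a bridge of $\Gamma_{W^{\bp}}$ dies in $\Pi_{W^{\bp}}^{\rm ab}$, so the first layer can restrict to the trivial covering on a genus-$0$ component (e.g.\ a central $\mbP^{1}$ attached by three separating nodes to three genus-one tails), leaving genus-$0$ components in $W^{\bp}_{H_{1}}$; for $m=2$ even the second layer can still produce genus-$0$ components, which is exactly why $D^{(3)}_{m}$ rather than $D_{m}$ or $D^{(2)}_{m}$ is taken. A correct argument has to track how, after the first layer, those edges cease to be bridges (so the later layers do ramify at them) and how the number of special points on each component grows. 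Likewise, your assertion that the valences $b(v)$ strictly increase at each layer is false in general (they stay constant when the cover is split along a component's nodes); what is true, and what one needs for (iv), is that $b$ is non-decreasing and that a vertex with $b(v)\leq 1$ has $g_{v}\geq 1$ by stability, hence a large stabilizer, so its preimages acquire $b\geq 2$ after one nontrivial layer. In short, you correctly flag $2$-connectedness as delicate, but the steps you describe as ``relatively direct'' --- Betti positivity and condition (i) --- are the ones your proposal does not actually establish.
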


\begin{proof}
The lemma follows from the structure of $\Pi_{W^{\bp}}^{p'}$.
\end{proof}

\subsection{Reconstruction of sets of vertices, sets of closed edges, sets of genus, and sets of $p$-rank from surjections}\label{sec-4-3}

In this subsection, we prove that sets of vertices, sets of closed edges, and sets of genus can be reconstructed group-theoretically from a surjective open continuous homomorphism of solvable admissible fundamental groups. 

We fix some notation. Let $i\in \{1, 2\}$, $k_{i}$ an algebraically closed field of characteristic $p>0$, and $\ell$ a prime number distinct from $p$. Let $X^{\bp}_{i}$ be a pointed stable curve of type $(g_{X_{i}}, n_{X_{i}})$ over $k_{i}$, $\Pi_{X^{\bp}_{i}}$ the solvable admissible fundamental group of $X^{\bp}_{i}$, $\Gamma_{X^{\bp}_{i}}$ the dual semi-graph of $X^{\bp}_{i}$, and $r_{X_{i}}$  the Betti number of $\Gamma_{X^{\bp}_{i}}$. Moreover, let $v_{i} \in v(\Gamma_{X_{i}^{\bp}})$, $\widetilde X^{\bp}_{i, v_{i}}$ the smooth pointed stable curve of type $(g_{i, v_{i}}, n_{i, v_{i}})$ over $k_{i}$ associated to $v_{i}$, and  $\sigma_{i, v_{i}}$ the $p$-rank of  $\widetilde X_{i, v_{i}}^{\bp}$. We introduce the following condition:

\begin{conditionc}\label{condc}
We shall say that $X_{1}^{\bp}$ and $X_{2}^{\bp}$ satisfy Condition C if the following conditions are satisfied: 

(i) $(g_{X_{1}}, n_{X_{1}})=(g_{X_{2}}, n_{X_{2}})$; 

(ii) $\#v(\Gamma_{X^{\bp}_{1}})=\#v(\Gamma_{X^{\bp}_{2}})$; 

(iii) $\#e^{\rm cl}(\Gamma_{X^{\bp}_{1}})=\#e^{\rm cl}(\Gamma_{X^{\bp}_{2}})$. 
\end{conditionc}

In the remainder of the present subsection, we suppose that {\it $X_{1}^{\bp}$ and $X_{2}^{\bp}$  satisfy Condition A, Condition B, and Condition C}. Moreover, let $$\phi: \Pi_{X^{\bp}_{1}}\migisurj \Pi_{X^{\bp}_{2}}$$ be an arbitrary open continuous homomorphism of the solvable admissible fundamental groups of $X^{\bp}_{1}$ and $X^{\bp}_{2}$, and $$(g_{X}, n_{X})\defeq (g_{X_{1}}, n_{X_{1}})=(g_{X_{2}}, n_{X_{2}}).$$ Note that we have that $r_{X_{1}}=r_{X_{2}}$, and that by Lemma \ref{surj}, $\phi$ is a {\it surjective} open continuous homomorphism. First, we have the following lemma.


\begin{lemma}\label{rem-them-1-1-4}
We maintain the notation introduced above. Then we have $${\rm Avr}_{p}(\Pi_{X_{i}^{\bp}})=g_{X_{i}}-r_{X_{i}}.$$ 
\end{lemma}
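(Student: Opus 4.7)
The plan is to apply Theorem \ref{max and average} (b) directly and observe that under Condition A the two correction terms in the formula vanish. First I would note that Condition A (iii) for $X_i^{\bp}$ says precisely that $\Gamma_{X_i^{\bp}}^{\rm cpt}$ is $2$-connected, which is the standing hypothesis under which the formula
$$ {\rm Avr}_{p}(\Pi_{X_i^{\bp}})= g_{X_i}-r_{X_i}-\#v(\Gamma_{X_i^{\bp}})^{b\leq 1}+\#e^{\rm cl}(\Gamma_{X_i^{\bp}})^{b \leq 1} $$
holds. Thus the formula is applicable to each $X_i^{\bp}$.

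Next I would plug in the consequences of Condition A (iv): namely, $\#v(\Gamma_{X_i^{\bp}})^{b\leq 1}=0$. Recalling that $e^{\rm cl}(\Gamma)^{b\leq 1}$ was \emph{defined} in Definition \ref{def-1} (c) as the set of closed edges of $\Gamma$ meeting some vertex in $v(\Gamma)^{b\leq 1}$, the vanishing $\#v(\Gamma_{X_i^{\bp}})^{b\leq 1}=0$ forces $\#e^{\rm cl}(\Gamma_{X_i^{\bp}})^{b\leq 1}=0$ as well, with no further argument required. Substituting both vanishings into the formula immediately yields
$$ {\rm Avr}_{p}(\Pi_{X_i^{\bp}})= g_{X_i}-r_{X_i}, $$
which is the claim.

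There is no serious obstacle: the lemma is a bookkeeping consequence of Theorem \ref{max and average} (b) that records what the $p$-average formula specializes to under Condition A. Note moreover that Conditions B and C, as well as the surjection $\phi$, play no role in this particular statement; they are auxiliary hypotheses kept visible because the lemma is being used as the first step of the combinatorial Grothendieck conjecture argument that follows, where the numerical identity ${\rm Avr}_{p}(\Pi_{X_1^{\bp}})={\rm Avr}_{p}(\Pi_{X_2^{\bp}})$ (which follows here from $g_{X_1}=g_{X_2}$ and $r_{X_1}=r_{X_2}$, the latter being a consequence of Condition C via the Euler characteristic of the dual semi-graph) will be combined with the monotonicity estimate of Lemma \ref{lem-0} (b) applied to solvable quotients in $\phi$.
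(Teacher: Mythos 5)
Your proof is correct and matches the paper's: the paper simply states that the lemma follows immediately from Condition A and Theorem \ref{max and average} (b), and you have filled in exactly those details — Condition A (iii) supplies the $2$-connectedness hypothesis, Condition A (iv) kills $\#v(\Gamma_{X_i^{\bp}})^{b\leq 1}$, and by Definition \ref{def-1} (c) this also kills $\#e^{\rm cl}(\Gamma_{X_i^{\bp}})^{b\leq 1}$, so the formula specializes to $g_{X_i}-r_{X_i}$.
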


\begin{proof}
The lemma follows immediately from Condition A and Theorem \ref{max and average} (b). 
\end{proof}

Let $G$ be a finite group such that $(\#G, p)=1$ and $$f_{i}^{\bp}: Y^{\bp}_{i} \migi X^{\bp}_{i}$$ a Galois admissible covering over $k_{i}$ with Galois group $G$. Let $j \in \{1, 2\}$ such that $i\neq j$. Then the isomorphism $\phi^{p'}: \Pi_{X_{1}^{\bp}}^{p'} \isom \Pi_{X^{\bp}_{2}}^{p'}$ induced by $\phi$ implies that $f_{i}^{\bp}$ induces a Galois admissible covering $$f_{j}^{\bp}: Y^{\bp}_{j} \migi X^{\bp}_{j}$$ over $k_{j}$ with Galois group $G$. We write $(g_{Y_{i}}, n_{Y_{i}})$ for the type of $Y_{i}^{\bp}$, $\Gamma_{Y^{\bp}_{i}}$ for the dual semi-graph of $Y^{\bp}_{i}$, and $r_{Y_{i}}$ for the Betti number of $\Gamma_{Y^{\bp}_{i}}$.

\begin{lemma}\label{lem-4-7}
We maintain the notation introduced above. Suppose that $G\cong \mbZ/\ell\mbZ$, that $f_{1}^{\bp}: Y^{\bp}_{1} \migi X^{\bp}_{1}$ is \'etale, and that $\#v_{f_{1}}^{\rm sp}=m$. Then we have $$\#e_{f_{2}}^{\rm cl, ra}+\frac{1}{2}\#e^{\rm op, ra}_{f_{2}}+\#v_{f_{2}}^{\rm sp}\leq m.$$
\end{lemma}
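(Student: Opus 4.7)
The plan is to deduce the inequality by comparing the limits of $p$-averages of the solvable admissible fundamental groups of $Y_{1}^{\bp}$ and $Y_{2}^{\bp}$, in the same spirit as the proof of Lemma \ref{lem-2} but using Theorem \ref{max and average} (b) instead of (a). Since $f_{1}^{\bp}$ and $f_{2}^{\bp}$ correspond under $\phi^{p'}$, they arise from open normal subgroups $H_{i}\subseteq \Pi_{X_{i}^{\bp}}$ with $\phi^{-1}(H_{2})=H_{1}$ and $\Pi_{X_{2}^{\bp}}/H_{2}\cong \mbZ/\ell\mbZ$; this is a prime-to-$p$, hence solvable, quotient, so Lemma \ref{lem-0} (b) together with Condition C (i) yields ${\rm Avr}_{p}(H_{1})\geq {\rm Avr}_{p}(H_{2})$, i.e.\ ${\rm Avr}_{p}(\Pi_{Y_{1}^{\bp}})\geq {\rm Avr}_{p}(\Pi_{Y_{2}^{\bp}})$.

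To convert this into the numerical inequality in the statement, I would verify that Theorem \ref{max and average} (b) applies to each $Y_{i}^{\bp}$ in the sharper form ${\rm Avr}_{p}(\Pi_{Y_{i}^{\bp}})=g_{Y_{i}}-r_{Y_{i}}$ (cf.\ Lemma \ref{rem-them-1-1-4}). The $2$-connectedness of $\Gamma_{Y_{i}^{\bp}}^{\rm cpt}$ is immediate from Condition B applied to the open subgroup $H_{i}\subseteq \Pi_{X_{i}^{\bp}}$. For the vanishing of $v(\Gamma_{Y_{i}^{\bp}})^{b\leq 1}$, I would proceed by a case analysis on each edge $e$ of $\Gamma_{X_{i}^{\bp}}$ incident to a given vertex $v\in v(\Gamma_{X_{i}^{\bp}})$ (distinguishing loops from non-loops and split from ramified behaviour at $v$ and at $e$), and show that the preimages of $e$ contribute at least $b_{e}(v)$ to $b(v')$ for any $v'$ above $v$; summing over $e$ then gives $b(v')\geq b(v)\geq 2$ using Condition A (iv). Consequently both $v(\Gamma_{Y_{i}^{\bp}})^{b\leq 1}$ and $e^{\rm cl}(\Gamma_{Y_{i}^{\bp}})^{b\leq 1}$ are empty.

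The remaining step is a combinatorial computation using the Riemann--Hurwitz formula for admissible coverings of prime degree $\ell$ together with the identity $g_{Y_{i}}=\sum_{v'\in v(\Gamma_{Y_{i}^{\bp}})}g_{v'}+r_{Y_{i}}$. Since $f_{1}^{\bp}$ is \'etale with $\#v_{f_{1}}^{\rm sp}=m$, a direct count of vertices, closed edges, and geometric genera gives
\[
g_{Y_{1}}-r_{Y_{1}}=\ell(g_{X}-r_{X})-(\ell-1)\bigl(\#v(\Gamma_{X^{\bp}_{1}})-m\bigr).
\]
For $f_{2}^{\bp}$, the extra ramification over closed edges in $e_{f_{2}}^{\rm cl,ra}$ contributes $(\ell-1)\#e_{f_{2}}^{\rm cl,ra}$ and the ramification over open edges in $e_{f_{2}}^{\rm op,ra}$ contributes $\tfrac{\ell-1}{2}\#e_{f_{2}}^{\rm op,ra}$ to $\sum_{v'}g_{v'}$; combined with the analogous vertex count one obtains
\[
g_{Y_{2}}-r_{Y_{2}}=\ell(g_{X}-r_{X})-(\ell-1)\bigl(\#v(\Gamma_{X^{\bp}_{2}})-\#v_{f_{2}}^{\rm sp}\bigr)+(\ell-1)\#e_{f_{2}}^{\rm cl,ra}+\tfrac{\ell-1}{2}\#e_{f_{2}}^{\rm op,ra}.
\]
Using Condition C (ii) and plugging these into $g_{Y_{1}}-r_{Y_{1}}\geq g_{Y_{2}}-r_{Y_{2}}$, one factor of $\ell-1$ cancels and the required inequality $\#e_{f_{2}}^{\rm cl,ra}+\tfrac{1}{2}\#e_{f_{2}}^{\rm op,ra}+\#v_{f_{2}}^{\rm sp}\leq m$ drops out.

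The main obstacle will be the careful combinatorial bookkeeping: verifying $b(v')\geq b(v)$ in full generality (especially handling loops at ramified vertices and non-loop edges joining one split vertex to one ramified vertex) so that the $p$-average formula applies cleanly to both $Y_{1}^{\bp}$ and $Y_{2}^{\bp}$, and checking that the genus/Betti contributions of ramified open and closed edges match the coefficients $\tfrac{1}{2}$ and $1$ appearing in the statement. All other ingredients -- Lemma \ref{lem-0} (b), Theorem \ref{max and average} (b), and Conditions A, B, C -- enter in a direct and essentially formal manner.
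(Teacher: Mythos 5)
Your proposal is correct and follows essentially the same route as the paper: the inequality ${\rm Avr}_{p}(\Pi_{Y_{1}^{\bp}})\geq {\rm Avr}_{p}(\Pi_{Y_{2}^{\bp}})$ from Lemma \ref{lem-0} (b), the identification ${\rm Avr}_{p}=g-r$ via Theorem \ref{max and average} (b) (the paper handles this by invoking Lemma \ref{rem-them-1-1-4}, implicitly for $Y_{i}^{\bp}$, whereas you spell out the verification of $2$-connectedness and $\#v(\Gamma_{Y_{i}^{\bp}})^{b\leq 1}=0$), and the same Riemann--Hurwitz and Betti-number bookkeeping, which your displayed formulas reproduce after rearrangement.
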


\begin{proof}
Since $f_{1}^{\bp}$ is an \'etale covering, the Riemann-Hurwitz formula implies that $$g_{Y_{1}}=\ell(g_{X}-1)+1$$ and $$g_{Y_{2}}=\ell(g_{X}-1)+\frac{1}{2}(\ell-1)\#e^{\rm op, ra}_{f_{2}}+1.$$ Then we obtain $$g_{Y_{1}}-g_{Y_{2}}=-\frac{1}{2}(\ell -1)\#e^{\rm op, ra}_{f_{2}}.$$ On the other hand, we have $$r_{Y_{1}}=\ell\#e^{\rm cl}(\Gamma_{X^{\bp}_{1}})-\#v(\Gamma_{X^{\bp}_{1}})+\#v_{f_{1}}^{\rm sp}-\ell \#v_{f_{1}}^{\rm sp}+1$$$$=\ell\#e^{\rm cl}(\Gamma_{X^{\bp}_{1}})-\#v(\Gamma_{X^{\bp}_{1}})-(\ell-1)m+1$$ and $$r_{Y_{2}}=\ell\#e_{f_{2}}^{\rm cl, \text{\'et}}+\#e^{\rm cl, ra}_{f_{2}}-\ell\#v^{\rm sp}_{f_{2}}-\#v^{\rm ra}_{f_{2}}+1.$$ Since $\#e(\Gamma_{X^{\bp}_{1}})=\#e(\Gamma_{X^{\bp}_{2}})$ and $\#v(\Gamma_{X^{\bp}_{1}})=\#v(\Gamma_{X^{\bp}_{2}})$, we obtain that $$r_{Y_{1}}-r_{Y_{2}}=(\ell-1)\#e_{f_{2}}^{\rm cl, ra}+(\ell -1)(\#v^{\rm sp}_{f_{2}}-m)$$ Moreover, by applying Lemma \ref{rem-them-1-1-4} and Lemma \ref{lem-0} (b), we have $$g_{Y_{1}}-g_{Y_{2}} \geq r_{Y_{1}}-r_{Y_{2}}.$$ Thus, we obtain $$\#e_{f_{2}}^{\rm cl, ra}+\frac{1}{2}\#e^{\rm op, ra}_{f_{2}}+\#v_{f_{2}}^{\rm sp}\leq m.$$ This completes the proof of the lemma.
\end{proof}

\begin{corollary}\label{coro-4-8}
We maintain the notation introduced above. Suppose that $G\cong \mbZ/\ell\mbZ$, that $f_{1}^{\bp}: Y^{\bp}_{1} \migi X^{\bp}_{1}$ is \'etale, and that $\#v_{f_{1}}^{\rm sp}=0$. Then we have that $f_{2}^{\bp}: Y^{\bp}_{2} \migi X^{\bp}_{2}$ is \'etale, and that $\#v_{f_{2}}^{\rm sp}=0$. 
\end{corollary}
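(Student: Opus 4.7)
The plan is to deduce this corollary directly from Lemma \ref{lem-4-7} by specializing the hypothesis $\#v_{f_1}^{\rm sp} = m$ to $m = 0$. First I would invoke Lemma \ref{lem-4-7} with the given data: since $G \cong \mbZ/\ell\mbZ$, $f_1^{\bp}$ is étale, and the hypothesis gives $m = \#v_{f_1}^{\rm sp} = 0$, the lemma yields the inequality
\[
\#e_{f_2}^{\rm cl, ra} + \tfrac{1}{2}\#e^{\rm op, ra}_{f_2} + \#v_{f_2}^{\rm sp} \leq 0.
\]

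Since each of the three summands on the left is a non-negative integer (or non-negative half-integer in the middle case), this forces
\[
\#e_{f_2}^{\rm cl, ra} = 0, \qquad \#e^{\rm op, ra}_{f_2} = 0, \qquad \#v_{f_2}^{\rm sp} = 0.
\]
The third equality is exactly the second conclusion of the corollary. For the first conclusion, I would observe that $f_2^{\bp}$ is a Galois admissible covering over $k_2$ with Galois group $G$ of order prime to $p$, so by the definition of admissible coverings (Definition \ref{def-2}), the morphism $f_2$ can only be ramified at nodes of $X_2$ (controlled by $\#e_{f_2}^{\rm cl, ra}$) or at marked points in $D_{X_2}$ (controlled by $\#e_{f_2}^{\rm op, ra}$). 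Since both of these numbers vanish, $f_2^{\bp}$ has no ramification at all and is therefore étale.

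In short, the corollary is an immediate specialization, and there is no genuine obstacle beyond Lemma \ref{lem-4-7} itself; the only thing to be careful about is noting that the three terms in the displayed inequality are independently non-negative, so their sum being at most zero forces each to vanish separately.
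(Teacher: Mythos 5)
Your proof is correct and is exactly the argument the paper intends: the paper's proof simply says the corollary "follows immediately from Lemma \ref{lem-4-7}", i.e., take $m=0$, use non-negativity of the three terms to force each to vanish, and note that for a prime-to-$p$ cyclic admissible covering the vanishing of $\#e_{f_2}^{\rm cl, ra}$ and $\#e_{f_2}^{\rm op, ra}$ means no ramification over nodes or marked points, hence $f_2^{\bp}$ is \'etale. Your write-up just makes the "immediate" step explicit, which is fine.
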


\begin{proof}
The corollary follows immediately from Lemma \ref{lem-4-7}. 
\end{proof}

\begin{corollary}\label{coro-4-9}
We maintain the notation introduced above. Suppose that $G\cong \mbZ/\ell\mbZ$, that $f_{1}^{\bp}: Y^{\bp}_{1} \migi X^{\bp}_{1}$ is \'etale, and that $\#v_{f_{1}}^{\rm sp}=1$. Then we have that $f_{2}^{\bp}: Y^{\bp}_{2} \migi X^{\bp}_{2}$ is \'etale. 
\end{corollary}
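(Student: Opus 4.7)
The plan is to combine Lemma \ref{lem-4-7} with the inertia correspondence of Theorem \ref{mainstep-1}, and then rule out the final non-étale scenario by a Kummer-theoretic obstruction on one irreducible component. Specializing Lemma \ref{lem-4-7} to $m=1$ yields
\[
\#e_{f_{2}}^{\rm cl, ra}+\tfrac{1}{2}\#e^{\rm op, ra}_{f_{2}}+\#v_{f_{2}}^{\rm sp}\leq 1.
\]
First I will show $\#e^{\rm op, ra}_{f_{2}}=0$, reducing the inequality to $\#e_{f_{2}}^{\rm cl, ra}+\#v_{f_{2}}^{\rm sp}\leq 1$; then I will eliminate the possibility $\#e_{f_{2}}^{\rm cl, ra}=1$ (which forces $\#v_{f_{2}}^{\rm sp}=0$) via a geometric argument.

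For the vanishing of $\#e^{\rm op, ra}_{f_{2}}$, let $H_2\subseteq \Pi_{X_2^\bp}$ be the open normal subgroup corresponding to $f_2^\bp$ and set $H_1\defeq\phi^{-1}(H_2)$, the open normal subgroup corresponding to $f_1^\bp$. The covering $f_i^\bp$ is étale over $D_{X_i}$ precisely when $H_i$ contains every $I\in {\rm Edg}^{\rm op}(\Pi_{X_i^\bp})$. Since $f_1^\bp$ is étale, $H_1$ contains every $I_1\in {\rm Edg}^{\rm op}(\Pi_{X_1^\bp})$. Given any $I_2\in {\rm Edg}^{\rm op}(\Pi_{X_2^\bp})$, Theorem \ref{mainstep-1} supplies $I_1\in {\rm Edg}^{\rm op}(\Pi_{X_1^\bp})$ with $\phi(I_1)=I_2$, and therefore $I_2=\phi(I_1)\subseteq \phi(H_1)=H_2$, using that $\phi$ is surjective by Lemma \ref{surj}. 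Hence $H_2$ contains every open-edge inertia subgroup, so $f_2^\bp$ is étale over $D_{X_2}$ and $\#e^{\rm op, ra}_{f_{2}}=0$.

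Suppose, toward a contradiction, that $\#e_{f_{2}}^{\rm cl, ra}=1$, and let $e_0\in e^{\rm cl}(\Gamma_{X_2^\bp})$ be the unique ramified closed edge. Condition A(ii) guarantees that $e_0$ is not a self-loop, so one can select a vertex $v$ incident to $e_0$ with $X_{2,v}=\widetilde X_{2,v}$ smooth projective. Because $v\notin v_{f_{2}}^{\rm sp}$, the fibre over $X_{2,v}$ is a single irreducible component $Y_{2,v}$, and the restricted covering $Y_{2,v}\migi \widetilde X_{2,v}$ is a connected cyclic $\mbZ/\ell\mbZ$-covering of a smooth projective curve. Its branch locus reduces to the single point of $\widetilde X_{2,v}$ corresponding to $x_{e_0}$, since the other nodes on $X_{2,v}$ do not belong to $e^{\rm cl, ra}_{f_{2}}$ and no marked point on $X_{2,v}$ belongs to $e^{\rm op, ra}_{f_{2}}$. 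By Kummer theory, existence of such a $\mbZ/\ell\mbZ$-covering demands that the branch divisor, weighted by its non-trivial local character $\chi\in (\mbZ/\ell\mbZ)^\times$, vanish in $\mathrm{Pic}(\widetilde X_{2,v})/\ell\,\mathrm{Pic}(\widetilde X_{2,v})$; but a single point contributes degree $\chi\not\equiv 0 \pmod{\ell}$, a contradiction. Hence $\#e_{f_{2}}^{\rm cl, ra}=0$, and $f_2^\bp$ is étale.

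The hardest part will be the last step: Lemma \ref{lem-4-7} by itself permits the configuration of a single totally ramified node, and its exclusion essentially uses both the smoothness of each component (Condition A(ii)) and the Kummer-theoretic constraint on smooth projective curves. The same flavour of obstruction should generalize to $m\geq 2$, but at the cost of a more delicate case analysis of how ramified nodes and ramified marked points can distribute among the vertices.
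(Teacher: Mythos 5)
Your proof is correct, and it differs from the paper's in a way worth noting. For the open edges, the paper does \emph{not} argue as you do: it first shows $\#e^{\rm cl, ra}_{f_{2}}=0$ (using Condition A and the structure of the maximal prime-to-$p$ quotient to force either a second ramified node or two ramified marked points, contradicting the Lemma \ref{lem-4-7} inequality), and then rules out $\#e^{\rm op, ra}_{f_{2}}=2$ by an auxiliary construction: it picks a second prime $\ell'$, takes an \'etale $\mbZ/\ell'\mbZ$-covering with $\#v^{\rm sp}=0$, applies Corollary \ref{coro-4-8}, passes to the connected fibre product $W_{i}^{\bp}=Y_{i}^{\bp}\times_{X_{i}^{\bp}}Z_{i}^{\bp}$, and applies Lemma \ref{lem-4-7} again to get $\ell'\le 1$, a contradiction. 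You instead kill the open edges at the outset by the surjectivity of the open-edge inertia correspondence of Theorem \ref{mainstep-1}: every $I_{2}\in{\rm Edg}^{\rm op}(\Pi_{X_{2}^{\bp}})$ is $\phi(I_{1})$ for some $I_{1}\in{\rm Edg}^{\rm op}(\Pi_{X_{1}^{\bp}})\subseteq H_{1}$, hence $I_{2}\subseteq\phi(\phi^{-1}(H_{2}))\subseteq H_{2}$ (note you do not even need surjectivity of $\phi$ for this inclusion). This is legitimate — indeed the paper itself uses exactly this transfer of ``unramified over all marked points'' from side $1$ to side $2$ later, in Lemma \ref{lem-4-14} and Proposition \ref{prop-op-cl} — and it buys you a shorter proof that avoids the auxiliary prime $\ell'$; what the paper's route buys is that it stays entirely within the Lemma \ref{lem-4-7}/averages machinery it keeps reusing. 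For the closed edges your argument and the paper's rest on the same obstruction: with all marked points unramified, a unique ramified node would make the restriction of the covering to an incident smooth component a connected $\mbZ/\ell\mbZ$-covering tamely branched at exactly one point, which is impossible; the paper phrases this through the relation $\prod_i[a_i,b_i]\prod_j c_j=1$ in the maximal prime-to-$p$ quotient (together with Condition A (ii), which you correctly invoke to exclude a self-node), while your Kummer/degree-in-$\mathrm{Pic}$ formulation is an equivalent expression of the same constraint. So the proposal is sound; only its first step is a genuine shortcut relative to the paper.
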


\begin{proof}
In order to verify the corollary, it is sufficient to prove that $$\#e^{\rm cl, ra}_{f_{2}}=\#e^{\rm op, ra}_{f_{2}}=0.$$ By applying Lemma \ref{lem-4-7}, we have $$\#e_{f_{2}}^{\rm cl, ra}+\frac{1}{2}\#e^{\rm op, ra}_{f_{2}}+\#v_{f_{2}}^{\rm sp}\leq 1.$$

Suppose that $\#e^{\rm cl, ra}_{f_{2}} \neq 0$. Since $X_{2}^{\bp}$ satisfies Condition A, the inequality above and the structures of the maxmial prime-to-$p$ quotient of solvable admissible fundamental groups imply that either $\#e^{\rm cl, ra}_{f_{2}}=1$ and $\#e^{\rm op, ra}_{f_{2}}\geq 2$ or $\#e^{\rm cl, ra}_{f_{2}}\geq 2$ holds. Then we have $\#e_{f_{2}}^{\rm cl, ra}+\frac{1}{2}\#e^{\rm op, ra}_{f_{2}}+\#v_{f_{2}}^{\rm sp}>1$. Thus, we have $\#e^{\rm cl, ra}_{f_{2}}=0$.

Suppose that $\#e^{\rm op, ra}_{f_{2}} \neq 0$. Since $\#e^{\rm cl, ra}_{f_{2}}=0$, the inequality above implies that $\#e^{\rm op, ra}_{f_{2}}=2.$ Let $\ell' \neq p$ be a prime number distinct from $\ell$, and let $$g^{\bp}_{1}: Z_{1}^{\bp} \migi X^{\bp}_{1}$$ be a Galois \'etale covering of over $k_{1}$ with Galois group $\mbZ/\ell'\mbZ$ such that $\#v_{g_{1}}^{\rm sp}=0$. Then Corollary \ref{coro-4-8} implies that the Galois admissible covering $$g_{2}^{\bp}: Z_{2}^{\bp} \migi X_{2}^{\bp}$$ over $k_{2}$ with Galois group $\mbZ/\ell'\mbZ$ induced by $g_{2}^{\bp}$ is \'etale covering, and that $\#v_{g_{2}}^{\rm sp}=0$.  Write $\Gamma_{Z^{\bp}_{i}}$ for the dual semi-graphs of $Z^{\bp}_{i}$. We obtain $$\#v(\Gamma_{X^{\bp}_{1}})=\#v(\Gamma_{Z^{\bp}_{1}})=\#v(\Gamma_{Z^{\bp}_{2}})=\#v(\Gamma_{X^{\bp}_{2}}),$$ $$\ell'\#e^{\rm op}(\Gamma_{X^{\bp}_{1}})=\#e^{\rm op}(\Gamma_{Z^{\bp}_{1}})=\#e^{\rm op}(\Gamma_{Z^{\bp}_{2}})=\ell'\#e^{\rm op}(\Gamma_{X^{\bp}_{2}}),$$  $$\ell'\#e^{\rm cl}(\Gamma_{X^{\bp}_{1}})=\#e^{\rm cl}(\Gamma_{Z^{\bp}_{1}})=\#e^{\rm cl}(\Gamma_{Z^{\bp}_{2}})=\ell'\#e^{\rm cl}(\Gamma_{X^{\bp}_{2}}).$$ We have that $Z^{\bp}_{1}$ and $Z_{2}^{\bp}$ satisfy Condition A, Condition B, and Condition C.

We denote by $W_{i}^{\bp} \defeq Y^{\bp}_{i}\times_{X_{i}^{\bp}} Z_{i}^{\bp}$. Note that since $\ell'\neq \ell$, we have that $W_{i}^{\bp}$ is connected. Then $f_{i}^{\bp}$ induces a Galois admissible covering $$h^{\bp}_{i}: W^{\bp}_{i} \migi Z^{\bp}_{i}$$ over $k_{i}$ with Galois group $\mbZ/\ell\mbZ$. We have that $h_{1}^{\bp}$ is \'etale, that $\#v_{h_{1}}^{\rm sp}=1$, and that $\#e_{h_{2}}^{\rm op, ra}=2\ell'.$ Then Lemma \ref{lem-4-7} implies that $$1<\#e_{h_{2}}^{\rm cl, ra}+\frac{1}{2}\#e^{\rm op, ra}_{h_{2}}+\#v_{h_{2}}^{\rm sp}=\#e_{h_{2}}^{\rm cl, ra}+\ell'+\#v_{h_{2}}^{\rm sp}\leq 1.$$ Thus, we obtain $\#e^{\rm op, ra}_{f_{2}}=0$. This completes the proof of the corollary.
\end{proof}


We put $$M_{X_{i}^{\bp}} \defeq {\rm Hom}(\Pi_{X_{i}^{\bp}}, \mbF_{\ell}),$$ $$M^{\text{\rm \'et}}_{X_{i}^{\bp}} \defeq {\rm Hom}(\Pi_{X_{i}^{\bp}}^{\et}, \mbF_{\ell}),$$ $$M^{\rm top}_{X_{i}^{\bp}} \defeq {\rm Hom}(\Pi_{X_{i}^{\bp}}^{\rm top}, \mbF_{\ell}).$$ Note that we have the following injections (or weight-monodromy filtration) $$M^{\rm top}_{X_{i}^{\bp}}\migiinje M^{\text{\rm \'et}}_{X_{i}^{\bp}} \migiinje M_{X_{i}^{\bp}} \ (\text{or} \  M^{\rm top}_{X_{i}^{\bp}}\subseteq M^{\text{\rm \'et}}_{X_{i}^{\bp}} \subseteq M_{X_{i}^{\bp}})$$ induced by the natural surjections $\Pi_{X_{i}^{\bp}} \migisurj \Pi_{X_{i}^{\bp}}^{\et} \migisurj \Pi_{X_{i}^{\bp}}^{\rm top}$. Moreover,  we have an isomorphism $$\psi_{\rm \ell}: M_{X_{2}^{\bp}} \isom M_{X_{1}^{\bp}}$$ induced by the isomorphism $\phi^{\ell}: \Pi_{X^{\bp}_{1}}^{\ell} \isom \Pi_{X^{\bp}_{2}}^{\ell}$. Then we have the following propositions.

\begin{proposition}\label{prop-4-10}
We maintain the notation introduced above. Then the isomorphism $\psi_{\ell}: M_{X_{2}^{\bp}}\isom M_{X_{1}^{\bp}}$ induces an isomorphism $$\psi_{\ell}^{\text{\rm \'et}}: M^{\text{\rm \'et}}_{X_{2}^{\bp}} \isom M^{\text{\rm \'et}}_{X_{1}^{\bp}}$$ group-theoretically. Moreover, we have 
the following commutative diagram:
\[
\begin{CD}
M^{\text{\rm \'et}}_{X_{2}^{\bp}}
@>\psi_{\ell}^{\text{\rm \'et}}>>
M^{\text{\rm \'et}}_{X_{1}^{\bp}}
\\
@VVV
@VVV
\\
M_{X_{2}^{\bp}}
@>\psi_{\ell}>>
M_{X_{1}^{\bp}},
\end{CD}
\]
where the vertical arrows are injections.

\end{proposition}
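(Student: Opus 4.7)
The plan is to reduce the claim to an inclusion of subspaces and then deduce equality by a dimension count. The key numerical observation is that Condition C ($\#v(\Gamma_{X_1^\bullet})=\#v(\Gamma_{X_2^\bullet})$ and $\#e^{\rm cl}(\Gamma_{X_1^\bullet})=\#e^{\rm cl}(\Gamma_{X_2^\bullet})$, together with $(g_{X_1},n_{X_1})=(g_{X_2},n_{X_2})$) forces the Betti numbers $r_{X_i}$ of the dual semi-graphs to agree, and hence also the "normalization genera" $\widetilde g_{X_i}\defeq g_{X_i}-r_{X_i}$. From this one computes $\dim_{\mbF_\ell}M^{\text{\'et}}_{X_i^\bp}=2\widetilde g_{X_i}+r_{X_i}=g_{X_i}+\widetilde g_{X_i}$, so the two $\mbF_\ell$-vector spaces $M^{\text{\'et}}_{X_1^\bp}$ and $M^{\text{\'et}}_{X_2^\bp}$ have the same dimension; since $\psi_\ell$ is already an $\mbF_\ell$-linear isomorphism, it suffices to exhibit the inclusion $\psi_\ell^{-1}(M^{\text{\'et}}_{X_1^\bp})\subseteq M^{\text{\'et}}_{X_2^\bp}$.

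My strategy to establish this inclusion is to combine Lemma \ref{lem-1} (which supplies the coarser inclusion $\psi_\ell(M^{\rm cpt}_{X_2^\bp})\subseteq M^{\rm cpt}_{X_1^\bp}$ via $\phi^{\rm cpt}$, and gives an isomorphism there by dimension) with Corollaries \ref{coro-4-8} and \ref{coro-4-9}. Concretely, given $\alpha_1\in M^{\text{\'et}}_{X_1^\bp}$ with corresponding \'etale Galois $\mbZ/\ell$-cover $f_1\colon Y_1^\bp\to X_1^\bp$, write $\alpha_2\defeq \psi_\ell^{-1}(\alpha_1)$ and $f_2\colon Y_2^\bp\to X_2^\bp$ for the induced cover. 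Corollary \ref{coro-4-8} (respectively, Corollary \ref{coro-4-9}) shows that if $\#v_{f_1}^{\rm sp}\in\{0,1\}$, then $f_2$ is also \'etale, i.e.\ $\alpha_2\in M^{\text{\'et}}_{X_2^\bp}$. For general $\alpha_1\in M^{\text{\'et}}_{X_1^\bp}$, I would use $\mbF_\ell$-linearity of $\psi_\ell^{-1}$ together with a spanning argument: the subset
\[
S\defeq\{\alpha_1\in M^{\text{\'et}}_{X_1^\bp}\ |\ \#v^{\rm sp}_{f_{\alpha_1}}\leq 1\}
\]
is the complement in $M^{\text{\'et}}_{X_1^\bp}$ of the finite union $\bigcup_{v_1\neq v_2} (L_{v_1}\cap L_{v_2})$, where $L_v\defeq\{\alpha:\alpha|_{\pi_1^{\text{\'et}}(\widetilde X_{1,v}^\bp)^{\rm sol}}=0\}$ is a proper subspace of $M^{\text{\'et}}_{X_1^\bp}$ (proper because Condition A forces $g_v\geq 1$, making $H^1_{\text{\'et}}(\widetilde X_{1,v}, \mbF_\ell)\neq 0$). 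Since $\psi_\ell^{-1}(S)\subseteq M^{\text{\'et}}_{X_2^\bp}$ and the $\mbF_\ell$-span of $S$ is a subspace of $M^{\text{\'et}}_{X_1^\bp}$, once $S$ is shown to span, we obtain $\psi_\ell^{-1}(M^{\text{\'et}}_{X_1^\bp})\subseteq M^{\text{\'et}}_{X_2^\bp}$, and the dimension count finishes the proof. The commutative diagram is then automatic from the definition of $\psi_\ell^{\text{\'et}}$ as the restriction of $\psi_\ell$.

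The hard part will be the spanning step for \emph{small} values of $\ell$ relative to $\#v(\Gamma_{X_1^\bp})$. Over $\mbF_\ell$, a vector space is the union of strictly fewer than $\ell+1$ proper subspaces only in degenerate situations, so when $\ell$ is large enough (e.g.\ $\ell>\binom{\#v}{2}$) the complement of $\bigcup_{v_1\neq v_2}(L_{v_1}\cap L_{v_2})$ automatically spans. For the remaining small-$\ell$ cases, my approach would be to enlarge $S$ by extending Corollaries \ref{coro-4-8} and \ref{coro-4-9} to higher values of $m=\#v^{\rm sp}_{f_1}$ via an inductive application of the auxiliary-covering construction appearing in the proof of Corollary \ref{coro-4-9}: namely, base-change $f_1$ by a suitably chosen Galois \'etale $\mbZ/\ell'$-cover $Z_1^\bp\to X_1^\bp$ with $\ell'\neq\ell,p$ and engineered $\#v^{\rm sp}_Z$ so that the composite cover $W_1^\bp\to Z_1^\bp$ has strictly smaller split count, thereby reducing to the base cases $m\in\{0,1\}$ already handled. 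This inductive step is where I expect all of the delicate combinatorics to appear, and where the Hasse--Witt/Avr inequalities underlying Lemma \ref{lem-4-7} must be invoked repeatedly.
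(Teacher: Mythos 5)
Your route is essentially the paper's: reduce to the inclusion $\psi_{\ell}^{-1}(M^{\text{\'et}}_{X_{1}^{\bp}})\subseteq M^{\text{\'et}}_{X_{2}^{\bp}}$, obtain it on classes with few split vertices from Corollaries \ref{coro-4-8} and \ref{coro-4-9}, and conclude by the dimension count $\dim_{\mbF_{\ell}}M^{\text{\'et}}_{X_{i}^{\bp}}=2g_{X_{i}}-r_{X_{i}}$, which Condition C makes equal on both sides; the paper's own proof does exactly this, taking as spanning set the classes with $\#v^{\rm sp}_{f_{1,\alpha_{1}}}=1$ and asserting that they span. Where you diverge is that you do not actually prove the spanning step for small $\ell$: your covering count only works when $\ell$ is large compared with the number of vertex pairs, and the replacement you sketch (an induction extending Corollaries \ref{coro-4-8} and \ref{coro-4-9} to arbitrary $\#v^{\rm sp}_{f_{1}}=m$ via auxiliary $\mbZ/\ell'\mbZ$-covers) is left unexecuted and is precisely where you say the real work would be. As written this is a genuine gap in the proposal.

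The gap, however, closes with a short argument inside your own setup, making the auxiliary-covering induction unnecessary. The restriction map $M^{\text{\'et}}_{X_{1}^{\bp}} \migi \bigoplus_{v} H^{1}_{\text{\'et}}(\widetilde X_{1,v}, \mbF_{\ell})$ is surjective with kernel $M^{\rm top}_{X_{1}^{\bp}}$ (its cokernel injects into the degree-two cohomology of the dual graph, which vanishes), and Condition A forces $g_{v}\geq 1$, so each $H^{1}_{\text{\'et}}(\widetilde X_{1,v}, \mbF_{\ell})\cong \mbF_{\ell}^{2g_{v}}$ has at least $\ell^{2}\geq 4$ elements. Hence, given any $y\in M^{\text{\'et}}_{X_{1}^{\bp}}$, choose for every vertex $v$ an element $a_{v}\notin\{0,\,y|_{v}\}$, lift to $a\in M^{\text{\'et}}_{X_{1}^{\bp}}$, and write $y=a+(y-a)$: both summands have nonzero restriction at every vertex, i.e.\ $\#v^{\rm sp}=0$. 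So already the classes covered by Corollary \ref{coro-4-8} alone span $M^{\text{\'et}}_{X_{1}^{\bp}}$ for every $\ell$ (your bad loci $L_{v_{1}}\cap L_{v_{2}}$ in fact have codimension $2g_{v_{1}}+2g_{v_{2}}\geq 4$, but no covering argument is needed at all). With this substitution your proof is complete and matches the paper's; the side appeal to Lemma \ref{lem-1} plays no role in the argument.
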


\begin{proof}
To verify the proposition, it is sufficient to prove that $\psi_{\ell}^{-1}: M_{X_{1}^{\bp}}\isom M_{X_{2}^{\bp}}$ induces an isomorphism $\psi_{\ell}^{-1, \text{\rm \'et}}: M^{\text{\'et}}_{X_{1}^{\bp}} \isom M^{\text{\rm \'et}}_{X_{2}^{\bp}}$ which fits into the following commutative diagram:
\[
\begin{CD}
M_{X_{1}^{\bp}}^{\text{\'et}}
@>\psi_{\ell}^{-1, \text{\'et}}>>
M_{X_{2}^{\bp}}^{\text{\'et}}
\\
@VVV
@VVV
\\
M_{X_{1}^{\bp}}
@>\psi_{\ell}^{-1}>>
M_{X_{1}^{\bp}},
\end{CD}
\]
where the vertical arrows are injections. 

Let $\alpha_{1} \in M^{\text{\rm \'et}}_{X_{1}^{\bp}}$ be a non-trivial element and $f^{\bp}_{1, \alpha}:Y^{\bp}_{1, \alpha} \migi X^{\bp}_{1}$ the Galois \'etale covering over $k_{1}$ with Galois group $\mbZ/\ell\mbZ$ corresponding to $\alpha$. We put $$L_{X_{1}^{\bp}}\defeq \{\alpha_{1} \in M^{\text{\'et}}_{X_{1}^{\bp}} \ | \ \#v_{f_{1, \alpha_{1}}}^{\rm sp}=1 \}.$$ We see that $M^{\text{\rm \'et}}_{X_{1}^{\bp}}$ is spanned by $L_{X_{1}^{\bp}}$ as an $\mbF_{\ell}$-vector space. 

On the other hand, Corollary \ref{coro-4-9} implies that $f^{\bp}_{1, \alpha_{1}}$ induces a Galois \'etale covering of $X_{2}^{\bp}$ over $k_{2}$ with Galois group $\mbZ/\ell\mbZ$. This means that $\psi_{\ell}^{-1}$ induces an injection of $\mbF_{\ell}$-vector spaces $$\psi_{\ell}^{-1, \text{\rm \'et}}: M^{\text{\'et}}_{X_{1}^{\bp}}\migiinje M^{\text{\'et}}_{X_{2}^{\bp}}.$$ Moreover, since $\text{dim}_{\mbF_{\ell}}(M^{\text{\'et}}_{X_{1}^{\bp}})=2g_{X_{1}}-r_{X_{1}}=2g_{X_{2}}-r_{X_{2}}=\text{dim}_{\mbF_{\ell}}(M^{\text{\'et}}_{X_{2}^{\bp}})$, we obtain that $$\psi_{\ell}^{-1, \text{\rm \'et}}: M^{\text{\'et}}_{X_{1}^{\bp}}\isom M^{\text{\'et}}_{X_{2}^{\bp}}$$ is an isomorphism. This completes the proof of the proposition.
\end{proof}

\begin{proposition}\label{prop-4-11}
We maintain the notation introduced above. Then the isomorphism $\psi_{\ell}: M_{X_{2}^{\bp}}\isom M_{X_{1}^{\bp}}$ induces an isomorphism $$\psi_{\ell}^{\rm top}: M^{\rm top}_{X_{2}^{\bp}} \isom M^{\rm top}_{X_{1}^{\bp}}$$ group-theoretically. Moreover, we have the following commutative diagram:
\[
\begin{CD}
M^{\rm top}_{X_{2}^{\bp}}
@>\psi_{\ell}^{\rm top}>>
M^{\rm top}_{X_{1}^{\bp}}
\\
@VVV
@VVV
\\
M^{\text{\rm \'et}}_{X_{2}^{\bp}}
@>\psi_{\ell}^{\text{\rm \'et}}>>
M^{\text{\rm \'et}}_{X_{1}^{\bp}}
\\
@VVV
@VVV
\\
M_{X_{2}^{\bp}}
@>\psi_{\ell}>>
M_{X_{1}^{\bp}},
\end{CD}
\]
where the vertical arrows are injections.
\end{proposition}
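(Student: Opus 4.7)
Set $r \defeq r_{X_1}=r_{X_2}$, $V \defeq \#v(\Gamma_{X_1^{\bp}})=\#v(\Gamma_{X_2^{\bp}})$, $E \defeq \#e^{\rm cl}(\Gamma_{X_1^{\bp}})=\#e^{\rm cl}(\Gamma_{X_2^{\bp}})$, and $g_X \defeq g_{X_1}=g_{X_2}$, all legitimate by Condition C. Since $\dim_{\mbF_{\ell}}M^{\rm top}_{X_i^{\bp}}=r$ for both $i$, it will be enough to show that $\psi_{\ell}^{\text{\'et}}$ maps $M^{\rm top}_{X_2^{\bp}}$ into $M^{\rm top}_{X_1^{\bp}}$; the resulting restriction $\psi_{\ell}^{\rm top}$ will then be an isomorphism by dimension, and the two commutative squares in the statement are immediate from the construction of $\psi_{\ell}^{\rm top}$ (resp.\ $\psi_{\ell}^{\text{\'et}}$) as the restriction of $\psi_{\ell}^{\text{\'et}}$ (resp.\ $\psi_{\ell}$). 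The key geometric translation is that $\alpha\in M^{\text{\'et}}_{X^{\bp}}$ lies in $M^{\rm top}_{X^{\bp}}$ exactly when the associated Galois \'etale $\mbZ/\ell\mbZ$-cover $f^{\bp}\colon Y^{\bp}\to X^{\bp}$ is a topological cover of $\Gamma_{X^{\bp}}$, equivalently when $f^{\bp}$ is \'etale and $\#v^{\rm sp}_{f}=\#v(\Gamma_{X^{\bp}})$.

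Fix a nonzero $\alpha_2\in M^{\rm top}_{X_2^{\bp}}$ (the zero case being trivial), set $\alpha_1\defeq\psi_{\ell}(\alpha_2)$, and let $f_i^{\bp}\colon Y_i^{\bp}\to X_i^{\bp}$ be the associated Galois \'etale $\mbZ/\ell\mbZ$-cover; $f_1^{\bp}$ is \'etale by Proposition~\ref{prop-4-10}, and both $Y_i^{\bp}$ are connected. Set $m\defeq\#v^{\rm sp}_{f_1}$; the goal is to force $m=V$. A direct enumeration of the vertices and closed edges of $\Gamma_{Y_i^{\bp}}$ (using that $f_2^{\bp}$ is topological and that $f_1^{\bp}$ is \'etale with exactly $m$ split vertices) yields
\[
r_{Y_2}=\ell r-\ell+1, \qquad r_{Y_1}=\ell r-\ell+1+(\ell-1)(V-m),
\]
so that $r_{Y_1}-r_{Y_2}=(\ell-1)(V-m)\ge 0$. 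Moreover, the Riemann-Hurwitz formula applied to the \'etale degree-$\ell$ covers $f_i^{\bp}$ gives $g_{Y_1}=g_{Y_2}=\ell(g_X-1)+1$.

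Next one verifies that $Y_i^{\bp}$ satisfies Condition A, so that Theorem~\ref{max and average}(b) yields $\mathrm{Avr}_{p}(\Pi_{Y_i^{\bp}})=g_{Y_i}-r_{Y_i}$. Smoothness and positivity of genus of every irreducible component of $Y_i$ are inherited from Condition A for $X_i^{\bp}$ via the \'etaleness of $f_i^{\bp}$; the $2$-connectedness of $\Gamma_{Y_i^{\bp}}^{\rm cpt}$ is exactly Condition B for $X_i^{\bp}$; and for a vertex $v_Y\in v(\Gamma_{Y_i^{\bp}})$ lying over $v_X\in v(\Gamma_{X_i^{\bp}})$, an inspection of the local structure of an \'etale admissible cover shows that $b(v_Y)=b(v_X)$ when $v_X$ is split under $f_i^{\bp}$ and $b(v_Y)=\ell\cdot b(v_X)$ otherwise, so in either case $b(v_Y)\ge b(v_X)\ge 2$ by Condition A(iv) for $X_i^{\bp}$. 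Applying Lemma~\ref{lem-0}(b) to the cyclic prime-to-$p$ quotient $G\defeq\mbZ/\ell\mbZ$ and to the open normal subgroups $H_2\defeq\ker(\alpha_2)$, $H_1\defeq\phi^{-1}(H_2)=\ker(\alpha_1)$ yields $\mathrm{Avr}_{p}(\Pi_{Y_1^{\bp}})\ge\mathrm{Avr}_{p}(\Pi_{Y_2^{\bp}})$, i.e.\ $g_{Y_1}-r_{Y_1}\ge g_{Y_2}-r_{Y_2}$. Combined with $g_{Y_1}=g_{Y_2}$ and the combinatorial formula above, this forces $(\ell-1)(V-m)\le 0$, whence $m=V$ and $\alpha_1\in M^{\rm top}_{X_1^{\bp}}$, as desired.

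The principal technical point is the combinatorial/geometric verification of Condition A for $Y_i^{\bp}$, which is routine but indispensable for the $\mathrm{Avr}_p$-formula of Theorem~\ref{max and average}(b) to apply. Conceptually, the decisive input is the $\mathrm{Avr}_p$-inequality of Lemma~\ref{lem-0}(b): it plays the same structural role as in the proof of Lemma~\ref{lem-4-7}, but is now extracted in the opposite direction, squeezing $r_{Y_1}$ from above by $r_{Y_2}$, while the explicit combinatorial count squeezes it from below by $r_{Y_2}$; the two bounds can be reconciled only when every vertex of $\Gamma_{X_1^{\bp}}$ splits under $f_1^{\bp}$, and this is precisely what shows that $\psi_{\ell}$ preserves the ``topological'' part of the cohomology.
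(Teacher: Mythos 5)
Your proof is correct and follows essentially the same route as the paper: étaleness of $f_{1}^{\bp}$ via Proposition \ref{prop-4-10}, the inequality $r_{Y_{1}}\leq r_{Y_{2}}$ from Theorem \ref{max and average} (b) (Lemma \ref{rem-them-1-1-4}) together with Lemma \ref{lem-0} (b), and the Betti-number count forcing every vertex to split, followed by the dimension count $r_{X_{1}}=r_{X_{2}}$. The only difference is that you spell out the verification of Condition A for $Y_{i}^{\bp}$ and the explicit formula $r_{Y_{1}}-r_{Y_{2}}=(\ell-1)(V-m)$, which the paper leaves implicit.
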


\begin{proof}
First, by Proposition \ref{prop-4-10}, the isomorphism $\psi_{\ell}: M_{X_{2}^{\bp}}\isom M_{X_{1}^{\bp}}$ induces an isomorphism $\psi_{\ell}^{\et}: M^{\et}_{X_{2}^{\bp}} \isom M^{\et}_{X_{1}^{\bp}}$. Let $\alpha_{2} \in M^{\rm top}_{X_{2}^{\bp}} \subseteq M^{\et}_{X_{2}^{\bp}}$ be a non-trivial element and $$f^{\bp}_{2, \alpha_{2}}: Y^{\bp}_{2, \alpha_{2}}\migi X_{2}^{\bp}$$ the Galois \'etale covering over $k_{2}$ with Galois group $\mbZ/\ell\mbZ$ corresponding to $\alpha_{2}$. Then we obtain an element $\alpha_{1} \defeq \psi_{\ell}^{\text{\rm \'et}}(\alpha_{2}) \in M^{\text{\rm \'et}}_{X_{1}^{\bp}}$. Write $f^{\bp}_{1, \alpha_{1}}: Y^{\bp}_{1, \alpha_{1}}\migi X_{1}^{\bp}$ for the Galois \'etale covering over $k_{1}$ with Galois group $\mbZ/\ell\mbZ$ corresponding to $\alpha_{1}$. Note that the types of $Y^{\bp}_{1, \alpha_{1}}$ and $Y_{2, \alpha_{2}}^{\bp}$ are equal, and that $Y^{\bp}_{1, \alpha_{1}}$ and $Y_{2, \alpha_{2}}^{\bp}$ satisfy Condition A.

Lemma \ref{rem-them-1-1-4} and Lemma \ref{lem-0} (b)  imply that $$r_{Y_{1, \alpha_{1}}} \leq r_{Y_{2, \alpha_{2}}},$$ where $r_{Y_{1, \alpha_{1}}}$ and $r_{Y_{2, \alpha_{2}}}$ denote the Betti numbers of the dual semi-graphs of $Y^{\bp}_{1, \alpha_{1}}$ and $Y^{\bp}_{2, \alpha_{2}}$, respectively. Since $\#v^{\rm sp}_{f_{2, \alpha_{2}}}=\#v(\Gamma_{X^{\bp}_{2}})=\#v(\Gamma_{X^{\bp}_{1}})$, the inequality implies $\#v^{\rm sp}_{f_{1, \alpha_{1}}}=\#v(\Gamma_{X^{\bp}_{1}}).$ Thus, we have $$\alpha_{1} \in M^{\text{\rm top}}_{X_{1}^{\bp}}.$$ Then $\alpha_{1}$ induces an injection $$\psi_{\ell}^{\rm top}: M^{\text{\rm top}}_{X_{2}^{\bp}} \migiinje M^{\text{\rm top}}_{X_{1}^{\bp}}.$$ Moreover, since $\text{dim}_{\mbF_{\ell}}(M^{\text{\rm top}}_{X_{2}^{\bp}})=r_{X_{2}}=r_{X_{1}}=\text{dim}_{\mbF_{\ell}}(M^{\text{\rm top}}_{X_{1}^{\bp}})$, we have that $\psi_{\ell}^{\rm top}$ is an isomorphism. This completes the proof of the proposition.
\end{proof}

\begin{remarkA}
Proposition \ref{prop-4-10} and Proposition \ref{prop-4-11} mean that the weight-monodromy filtrations can be reconstructed group-theoretically from $\phi$.
\end{remarkA}

\begin{lemma}\label{lem-4-12}
We maintain the notation introduced above. Suppose that $G\cong \mbZ/\ell\mbZ$, that $f_{2}^{\bp}$ is \'etale, and that $\#v_{f_{2}}^{\rm ra}=1$. Then we have that $f_{1}^{\bp}$ is \'etale, and that $\#v_{f_{1}}^{\rm ra}=1$.
\end{lemma}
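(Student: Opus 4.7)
The plan is to translate the geometric hypotheses into cohomological statements about the characters corresponding to $f_1^{\bp}$ and $f_2^{\bp}$, and then combine the étale/topological isomorphisms of Proposition \ref{prop-4-10} and Proposition \ref{prop-4-11} with the inequality of Lemma \ref{lem-4-7}.

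First, let $\alpha_{2} \in M_{X_{2}^{\bp}}$ be the non-trivial character corresponding to $f_{2}^{\bp}$ and set $\alpha_{1} \defeq \psi_{\ell}(\alpha_{2}) \in M_{X_{1}^{\bp}}$. Because $\psi_{\ell}$ is pullback along $\phi$, the covering $f_{1}^{\bp}$ corresponds to $\alpha_{1}$. Since $f_{2}^{\bp}$ is \'etale, $\alpha_{2} \in M^{\text{\rm \'et}}_{X_{2}^{\bp}}$, and by Proposition \ref{prop-4-10} we obtain $\alpha_{1} \in M^{\text{\rm \'et}}_{X_{1}^{\bp}}$. This shows that $f_{1}^{\bp}$ is \'etale. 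Next, I would observe that for a Galois admissible $\mbF_{\ell}$-cover of some $W^{\bp}$, the corresponding character lies in $M^{\rm top}_{W^{\bp}}$ if and only if the cover is fully split at every vertex, i.e. $\#v^{\rm ra} = 0$. The hypothesis $\#v_{f_{2}}^{\rm ra} = 1 \geq 1$ therefore gives $\alpha_{2} \notin M^{\rm top}_{X_{2}^{\bp}}$, and by Proposition \ref{prop-4-11} the isomorphism $\psi_{\ell}^{\rm top}$ (compatible with the inclusion into $M^{\text{\rm \'et}}$) yields $\alpha_{1} \notin M^{\rm top}_{X_{1}^{\bp}}$, whence $\#v_{f_{1}}^{\rm ra} \geq 1$.

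It remains to prove the reverse inequality $\#v_{f_{1}}^{\rm ra} \leq 1$. Since $f_{2}^{\bp}$ is \'etale of degree $\ell$ with $\ell \neq p$, the local description of admissible coverings at nodes (Definition \ref{def-2} (iv)) forces $\#D_{y} = 1$ at every $y \in Y_{2}^{\rm sing}$, and the local description at marked points (Definition \ref{def-2} (v)) forces $m = 1$ at every marked point; consequently $\#e_{f_{2}}^{\rm cl, ra} = \#e_{f_{2}}^{\rm op, ra} = 0$. Having now established that $f_{1}^{\bp}$ is \'etale, I would apply Lemma \ref{lem-4-7} with $m \defeq \#v_{f_{1}}^{\rm sp}$; the inequality there reduces to $\#v_{f_{2}}^{\rm sp} \leq \#v_{f_{1}}^{\rm sp}$. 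Combining this with Condition C (in particular $\#v(\Gamma_{X^{\bp}_{1}}) = \#v(\Gamma_{X^{\bp}_{2}})$) and the identity $\#v^{\rm sp}+\#v^{\rm ra} = \#v(\Gamma)$, I get $\#v_{f_{1}}^{\rm ra} \leq \#v_{f_{2}}^{\rm ra} = 1$. Together with the lower bound of the previous step, this yields $\#v_{f_{1}}^{\rm ra} = 1$, completing the argument.

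The plan has no serious obstacle: once Propositions \ref{prop-4-10}, \ref{prop-4-11} and Lemma \ref{lem-4-7} are available, the argument is essentially a bookkeeping exercise. The only subtle point worth checking carefully is the direction of the translation: one must use $\psi_{\ell}$ (pullback via $\phi$) to pass from $\alpha_{2}$ to $\alpha_{1}$, and then apply Lemma \ref{lem-4-7} in its natural direction (hypothesis on $f_{1}$, conclusion on $f_{2}$) in order to produce the upper bound on $\#v_{f_{1}}^{\rm ra}$ from the given upper bound on $\#v_{f_{2}}^{\rm ra}$.
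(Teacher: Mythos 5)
Your proof is correct and follows essentially the same route as the paper: Proposition \ref{prop-4-10} gives that $f_{1}^{\bp}$ is \'etale, Proposition \ref{prop-4-11} (via the equivalence ``the class lies in $M^{\rm top}$ iff the cover splits over every vertex'') rules out $\#v_{f_{1}}^{\rm ra}=0$, and the upper bound $\#v_{f_{1}}^{\rm ra}\leq 1$ comes from the ${\rm Avr}_{p}$-inequality together with Condition C. The only cosmetic difference is that you obtain this last bound by citing Lemma \ref{lem-4-7} (legitimately, since its hypotheses hold once $f_{1}^{\bp}$ is known to be \'etale, and $\#v^{\rm ra}+\#v^{\rm sp}=\#v(\Gamma)$ for a degree-$\ell$ cover), whereas the paper redoes the Betti-number computation that underlies Lemma \ref{lem-4-7} inline.
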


\begin{proof}
By Proposition \ref{prop-4-10}, we obtain that $f_{1}^{\bp}$ is \'etale. This implies that $g_{Y_{1}}=g_{Y_{2}}$, and that $\#e^{\rm cl}(\Gamma_{Y^{\bp}_{1}})=\ell\#e^{\rm cl}(\Gamma_{X^{\bp}_{1}})=\ell\#e^{\rm cl}(\Gamma_{X^{\bp}_{2}})=\#e^{\rm cl}(\Gamma_{Y^{\bp}_{2}})$. On the other hand, Lemma \ref{rem-them-1-1-4} and Lemma \ref{lem-0} (b) imply that $$r_{Y_{1}} \leq r_{Y_{2}}.$$ Thus, we obtain $$\ell\#e^{\rm cl}(\Gamma_{X^{\bp}_{1}})-\ell(\#v(\Gamma_{X^{\bp}_{1}})-\#v^{\rm ra}_{f_{1}})-\#v^{\rm ra}_{f_{1}}+1\leq \ell\#e^{\rm cl}(\Gamma_{X^{\bp}_{2}})-\ell(\#v(\Gamma_{X^{\bp}_{2}})-1)-1+1 $$ This implies that $\#v_{f_{1}}^{\rm ra}\leq1$.

Suppose that $\#v_{f_{1}}^{\rm ra}=0$. Let $\alpha_{f_{1}}\in M_{X^{\bp}_{1}}$ be an element corresponding to $f^{\bp}_{1}$. Then $\alpha_{f_{1}} \in M_{X^{\bp}_{1}}^{\text{top}}$. Note that $\alpha_{f_{2}}\defeq 
(\psi_{\ell}^{\et})^{-1}(\alpha_{f_{1}})\in M_{X^{\bp}_{2}}^{\et}$ is the element corresponding to $f_{2}^{\bp}$. Then Proposition \ref{prop-4-11} implies that $\alpha_{f_{2}}$ is contained in $M_{X^{\bp}_{2}}^{\text{top}}$. This means that $\#v_{f_{2}}^{\rm ra}=0$. This contradicts the assumption $\#v_{f_{2}}^{\rm ra}=1$. Thus, we have $\#v_{f_{1}}^{\rm ra}=1$. We complete the proof of the lemma.
\end{proof}

We reconstruct the sets of vertices and the sets of genus of irreducible components group-theoretically from $\phi$ as follows.

\begin{theorem}\label{them-4-13}
We maintain the notation introduced above. Then the (surjective) open continuous homomorphism $\phi: \Pi_{X_{1}^{\bp}} \migisurj \Pi_{X_{2}^{\bp}}$ induces a bijection of the set of vertices $$\phi^{\rm sg, ver}: v(\Gamma_{X^{\bp}_{1}}) \isom v(\Gamma_{X^{\bp}_{2}})$$ group-theoretically. Moreover, let $v_{1}\in v(\Gamma_{X_{1}^{\bp}})$ and $v_{2}\defeq \phi^{\rm sg, vex}(v_{1})$. Then we have $$g_{1, v_{1}}=g_{2, v_{2}}.$$
\end{theorem}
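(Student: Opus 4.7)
The plan is to reduce the problem to a linear-algebra argument via a canonical direct-sum decomposition of the non-topological cohomology by vertex. Fix a prime $\ell \neq p$. Since $X^{\bp}_1$ and $X^{\bp}_2$ satisfy Condition~A, the genus of the normalization of every irreducible component is positive, so $v(\Gamma_{X^{\bp}_i})^{>0,\ell} = v(\Gamma_{X^{\bp}_i})$. Propositions~\ref{prop-4-10} and~\ref{prop-4-11} together furnish an isomorphism $\psi_{\ell}^{\et} : M^{\et}_{X^{\bp}_2} \isom M^{\et}_{X^{\bp}_1}$ restricting to an isomorphism of the topological subspaces $M^{\rm top}_{X^{\bp}_i}$, hence descending to an isomorphism $\overline{\psi}_{\ell} : M^{\rm nt}_{X^{\bp}_2} \isom M^{\rm nt}_{X^{\bp}_1}$ of the non-topological quotients.

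The key structural input is the canonical decomposition $M^{\rm nt}_{X^{\bp}_i} \isom \bigoplus_{v \in v(\Gamma_{X^{\bp}_i})} M^{\et}_v$, where $M^{\et}_v \defeq \mathrm{Hom}(\Pi^{\et}_{\widetilde{X}^{\bp}_{i,v}}, \mathbb{F}_{\ell})$ has $\mathbb{F}_\ell$-dimension $2g_{i,v}$. This follows from the short exact sequence of \'etale sheaves on $X_i$ associated to the normalization $\nu: \widetilde{X}_i \to X_i$, namely $0 \to \mathbb{F}_\ell \to \nu_{*}\mathbb{F}_\ell \to \bigoplus_{e \in e^{\rm cl}(\Gamma_{X^{\bp}_i})} i_{e,*}\mathbb{F}_\ell \to 0$, whose long exact cohomology sequence yields $0 \to \mathbb{F}_\ell^{r_{X_i}} \to M^{\et}_{X^{\bp}_i} \to \bigoplus_v M^{\et}_v \to 0$. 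The kernel of the restriction map coincides with $M^{\rm top}_{X^{\bp}_i}$, because a character of $\pi_1^{\et}(X_i)$ vanishing on every $\pi^{\et}_{1}(\widetilde{X}_{i,v})$ corresponds to a cyclic cover trivial over every irreducible component and must therefore factor through a topological cover of the dual graph.

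Now by Lemma~\ref{lem-4-12}, $\psi_{\ell}^{\et}$ carries $V^{\star}_{X_2, \ell}$ into $V^{\star}_{X_1, \ell}$; translated into the decomposition, $\overline{\psi}_{\ell}$ sends every non-zero element $\alpha$ of a summand $M^{\et}_{v_2}$ into a single summand $M^{\et}_{v_1(\alpha)} \setminus \{0\}$. A linearity check shows that the target summand depends only on $v_2$: for $\alpha, \beta \in M^{\et}_{v_2} \setminus \{0\}$ with $\alpha + \beta \neq 0$, the identity $\overline{\psi}_{\ell}(\alpha) + \overline{\psi}_{\ell}(\beta) = \overline{\psi}_{\ell}(\alpha + \beta)$ expresses a sum of two single-summand-supported elements as a single-summand-supported element, which forces $v_1(\alpha) = v_1(\beta)$; the residual case $\beta = -\alpha$ is immediate from $\overline{\psi}_{\ell}(-\alpha) = -\overline{\psi}_{\ell}(\alpha)$. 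This yields a well-defined map $\overline{\psi}^{\rm ver} : v(\Gamma_{X^{\bp}_2}) \to v(\Gamma_{X^{\bp}_1})$ satisfying $\overline{\psi}_{\ell}(M^{\et}_{v_2}) \subseteq M^{\et}_{\overline{\psi}^{\rm ver}(v_2)}$.

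Finally, surjectivity of $\overline{\psi}_{\ell}$ forces $\overline{\psi}^{\rm ver}$ to be surjective --- otherwise $\sum_{v_2} \overline{\psi}_{\ell}(M^{\et}_{v_2})$ would miss some summand of $M^{\rm nt}_{X^{\bp}_1}$ --- and Condition~C upgrades this to a bijection. Summing the bound $2g_{2, v_2} = \dim \overline{\psi}_{\ell}(M^{\et}_{v_2}) \leq 2g_{1, \overline{\psi}^{\rm ver}(v_2)}$ over $v_2 \in v(\Gamma_{X^{\bp}_2})$, the left side equals $\dim M^{\rm nt}_{X^{\bp}_2}$ while the right side equals $\dim M^{\rm nt}_{X^{\bp}_1}$ via the bijection $\overline{\psi}^{\rm ver}$; these agree by Condition~C, forcing termwise equality $g_{2, v_2} = g_{1, \overline{\psi}^{\rm ver}(v_2)}$. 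The desired $\phi^{\rm sg, ver}$ is the inverse of $\overline{\psi}^{\rm ver}$, and its group-theoretic reconstructibility is inherited from that of $M^{\et}$, $M^{\rm top}$, and $\psi_{\ell}^{\et}$. The main technical point I expect to verify carefully is the identification $\ker\bigl(M^{\et}_{X^{\bp}_i} \to \bigoplus_v M^{\et}_v\bigr) = M^{\rm top}_{X^{\bp}_i}$, which I would pin down via the Mayer--Vietoris sequence for the normalization together with the structure of topological covers of $\Gamma_{X^{\bp}_i}$.
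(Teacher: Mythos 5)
Your proposal is correct and follows essentially the same route as the paper: both hinge on Propositions \ref{prop-4-10} and \ref{prop-4-11} together with Lemma \ref{lem-4-12} to show that the induced isomorphism on $M^{\rm nt}$ preserves classes supported at a single vertex, and both deduce $g_{1,v_{1}}=g_{2,v_{2}}$ from a termwise inequality combined with the identity $\sum_{v}g_{i,v}=g_{X}-r_{X_{i}}$. The only difference is presentational: you make the vertex decomposition $M^{\rm nt}_{X_{i}^{\bp}}\cong\bigoplus_{v}\Hom(\Pi^{\et}_{\widetilde X_{i,v}^{\bp}},\mbF_{\ell})$ explicit via the normalization sequence, whereas the paper packages the same information in the $V^{\star}_{X,\ell}/\sim$ formalism of Proposition \ref{pro-2-1}.
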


\begin{proof}
We maintain the notation introduced in Section \ref{sec-4-1}. By applying Theorem \ref{types}, Proposition \ref{prop-4-10}, and Proposition \ref{prop-4-11}, we obtain that the following homomorphisms of the natural exact sequences can be induced by $\phi$ group-theoretically:
\[
\begin{CD}
0@>>>M_{X_{2}^{\bp}}^{\rm top}@>>>M^{\et}_{X_{2}^{\bp}}@>>>M_{X_{2}^{\bp}}^{\rm nt}@>>>0
\\
@.@V\psi^{\rm top}_{\ell}VV@V\psi^{\et}_{\ell}VV@VVV@.
\\
0@>>>M_{X_{1}^{\bp}}^{\rm top}@>>>M^{\et}_{X_{1}^{\bp}}@>>>M_{X_{1}^{\bp}}^{\rm nt}@>>>0.
\end{CD}
\]
Then we obtain $$\psi_{\ell}^{\text{\rm \'et}}(V^{*}_{X_{2}, \ell})=V^{*}_{X_{1}, \ell}.$$ Moreover, Lemma \ref{lem-4-12} implies that $$\psi_{\ell}^{\text{\rm \'et}}(V^{\star}_{X_{2}, \ell})=V^{\star}_{X_{1}, \ell}.$$ Let $\alpha_{2}$, $\alpha'_{2} \in V^{\star}_{X_{2}, \ell}$ distinct from each other such that $\alpha_{2}\sim \alpha'_{2}$. By applying Lemma \ref{lem-4-12} again, for any $a$, $b \in \mbF^{\times}_{\ell}$, we see that $a\alpha_{2}+b\alpha'_{2} \in V^{\star}_{X_{2}, \ell}$ if and only $\psi_{\ell}^{\et}(a\alpha_{2}+b\alpha'_{2} )=a\psi_{\ell}^{\text{\'et}}(\alpha_{2})+b\psi_{\ell}^{\text{\'et}}(\alpha'_{2}) \in V^{\star}_{X_{1}, \ell}$. Thus, we obtain a bijection  $$V_{X_{2}, \ell} \isom V_{X_{1}, \ell}.$$ Then the first part of the theorem follows from Proposition \ref{pro-2-1}. 

Next, let us prove the ``moreover" part of the theorem. Let $v_{i} \in v(\Gamma_{X_{i}^{\bp}})$. We put $$L_{X^{\bp}_{i}}^{v_{i}} \defeq \{\alpha_{i} \in M_{X^{\bp}_{i}}^{\text{\'et}} \ | \ v_{f_{i, \alpha_{i}}}^{\rm ra}=\{v_{i}\}\},$$ where $f_{i, \alpha_{i}}^{\bp}$ denotes the Galois admissible covering of $X_{i}^{\bp}$ over $k_{i}$ corresponding to $\alpha_{i}$. Moreover, we denote by $$[L_{X^{\bp}_{i}}^{v_{i}}]$$ the image of $L_{X^{\bp}_{i}}^{v_{i}}$ in $M_{X^{\bp}_{i}}^{\rm nt}$. Then we have $$\#[L_{X^{\bp}_{i}}^{v_{i}}]=\ell^{g_{i, v_{i}}}-1.$$

Suppose that $v_{2}=\phi^{\rm sg, ver}(v_{1})$. Proposition \ref{prop-4-11} and Lemma \ref{lem-4-12} imply that $\psi^{\text{\'et}}_{\ell}$ induces an injection $$[L_{X^{\bp}_{2}}^{v_{2}}] \migiinje [L_{X^{\bp}_{1}}^{v_{1}}].$$ Thus, we have $$\ell^{g_{2, v_{2}}}-1=\#[L_{X^{\bp}_{2}}^{v_{2}}] \leq \#[L_{X^{\bp}_{1}}^{v_{1}}]=\ell^{g_{1, v_{1}}}-1.$$ This means that $$g_{2, v_{2}} \leq g_{1, v_{1}}.$$  On the other hand, since $$\sum_{v_{1} \in v(\Gamma_{X_{1}^{\bp}})}g_{1, v_{1}}=g_{X}-r_{X_{1}}=g_{X}-r_{X_{2}}=\sum_{v_{2} \in v(\Gamma_{X_{2}^{\bp}})}g_{2, v_{2}},$$ we obtain $$g_{1, v_{1}}=g_{2, v_{2}}.$$ This completes the proof of the theorem.
\end{proof}

Next, let us reconstruct the sets of closed edges from $\phi$. In the remainder of the present subsection, we fix an edge-triple $$\mfT_{\Pi_{X_{1}^{\bp}}} \defeq (\ell, d, \alpha_{f_{X_{1}}}: \Pi_{X^{\bp}_{1}}^{\et} \migisurj \mbF_{d})$$ associated to $\Pi_{X_{1}^{\bp}}$. Then Corollary \ref{coro-4-8} implies that $\phi$ and the edge-triple $\mfT_{\Pi_{X_{1}^{\bp}}}$ induces an edge-triple  $$\mfT_{\Pi_{X_{2}^{\bp}}} \defeq (\ell, d, \alpha_{f_{X_{2}}}: \Pi_{X^{\bp}_{2}}^{\et} \migisurj \mbF_{d})$$ associated to $\Pi_{X_{2}^{\bp}}$ group-theoretically. Write $\Pi_{Y_{i}^{\bp}}$ for the kernel of $\alpha_{f_{X_{i}}}$. The surjection $\phi: \Pi_{X^{\bp}_{1}} \migisurj \Pi_{X^{\bp}_{2}}$ induces a sujection $$\phi_{Y}: \Pi_{Y^{\bp}_{1}} \migisurj \Pi_{Y^{\bp}_{2}}.$$  Moreover, the constructions of $Y^{\bp}_{1}$ and $Y_{2}^{\bp}$ imply that $Y^{\bp}_{1}$ and $Y^{\bp}_{2}$ satisfy Condition A, Condition B, and Condition C.

We put $$M_{Y_{i}^{\bp}} \defeq \text{Hom}(\Pi_{Y_{i}^{\bp}}, \mbF_{\ell}),$$ $$M_{Y_{i}^{\bp}}^{\text{\'et}}\defeq \text{Hom}(\Pi^{\et}_{Y_{i}^{\bp}}, \mbF_{\ell}),$$ $$M_{Y_{i}^{\bp}}^{\rm ra}\defeq M_{Y_{i}^{\bp}}/M_{Y_{i}^{\bp}}^{\et}.$$ Then, by Theorem \ref{types} and Proposition \ref{prop-4-10}, the following commutative diagram can be induced by $\phi_{Y}$ group-theoretically:
\[
\begin{CD}
0@>>>M_{Y_{2}^{\bp}}^{\text{\'et}}@>>>M_{Y_{2}^{\bp}}@>>>M_{Y_{2}^{\bp}}^{\rm ra}@>>>0
\\
@.
@V\psi_{Y, \ell}^{\et}VV
@V\psi_{Y, \ell}VV
@VVV
@.
\\
0@>>>M_{Y_{1}^{\bp}}^{\text{\'et}}@>>>M_{Y_{1}^{\bp}}@>>>M_{Y_{1}^{\bp}}^{\rm ra}@>>>0,
\end{CD}
\]
where all the vertical arrows are isomorphisms. Let $E^{*}_{\mfT_{\Pi_{X_{i}^{\bp}}}}$ be the subset of $M_{Y_{i}^{\bp}}$ defined in Section \ref{sec-4-2}. Since the actions of $\mu_{d}$ on the exact sequences are compatible with the isomorphisms appearing in the commutative diagram above, we have $$\psi_{Y,\ell}(E^{*}_{\mfT_{\Pi_{X_{2}^{\bp}}}})=E^{*}_{\mfT_{\Pi_{X_{1}^{\bp}}}}.$$ Let $m \in \mbZ_{\geq 0}$ and $e_{i} \in e^{\rm cl}(\Gamma_{X^{\bp}_{i}})$. Recall that $E^{{\rm cl, \star}, m}_{\mfT_{\Pi_{X_{i}^{\bp}}}, e_{i}}$ is the subset of $E^{{\rm cl, \star}}_{\mfT_{\Pi_{X_{i}^{\bp}}}, e_{i}}$ whose element $\alpha_{i}$ satisfies $\#v_{g_{i, \alpha_{i}}}^{\rm sp}=m$.  Then we have the following lemma.

\begin{lemma}\label{lem-4-14}
We maintain the notation introduced above. Then we have $$\psi^{-1}_{Y,\ell}(\bigsqcup_{e_{1} \in e^{\rm op}(\Gamma_{X^{\bp}_{1}})}E^{{\rm cl, \star}, 0}_{\mfT_{\Pi_{X_{1}^{\bp}}}, e_{1}}) \subseteq \bigsqcup_{e_{2} \in e^{\rm op}(\Gamma_{X^{\bp}_{2}})}E^{{\rm cl, \star}, 0}_{\mfT_{\Pi_{X_{2}^{\bp}}}, e_{2}}.$$ Moreover, we have $$\psi^{-1}_{Y,\ell}(E^{{\rm cl, \star}}_{\mfT_{\Pi_{X_{1}^{\bp}}}})=E^{{\rm cl, \star}}_{\mfT_{\Pi_{X_{2}^{\bp}}}}.$$
\end{lemma}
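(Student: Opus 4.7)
The plan is to transfer the ramification profile of the $\mbZ/\ell\mbZ$-admissible covering $g_{\alpha_{1}}^{\bp}:Y_{1,\alpha_{1}}^{\bp}\to Y_{1}^{\bp}$ to its counterpart on the $Y_{2}^{\bp}$-side via $\psi_{Y,\ell}$, constraining the result through three complementary mechanisms: (i) the group-theoretic reconstruction of $\Pi^{\rm cpt}$ (controlling open-edge ramification), (ii) the free $\mu_{d}$-action on closed edges of $\Gamma_{Y_{2}^{\bp}}$ (forcing closed-edge ramification to occur in $\mu_{d}$-orbits of size $d$), and (iii) the Avr$_{p}$-inequality coming from the surjection $\phi_{Y}$ (providing an upper bound on the total closed-edge and splitting contribution).

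First I would observe that $Y_{1}^{\bp}$ and $Y_{2}^{\bp}$ have the same type and satisfy Conditions A, B, and C -- these follow from the analogous conditions on $X_{i}^{\bp}$, together with the \'etaleness of $f_{X_{i}}^{\bp}$ and $\#v^{\rm sp}_{f_{X_{i}}}=0$. Applying Lemma~\ref{lem-1} to $\phi_{Y}:\Pi_{Y_{1}^{\bp}}\migisurj\Pi_{Y_{2}^{\bp}}$ produces a surjection $\phi_{Y}^{\rm cpt}:\Pi_{Y_{1}^{\bp}}^{\rm cpt}\migisurj\Pi_{Y_{2}^{\bp}}^{\rm cpt}$; dualizing and comparing $\mbF_{\ell}$-dimensions (which are equal since $Y_{1}^{\bp}$ and $Y_{2}^{\bp}$ share the same type) shows that $\psi_{Y,\ell}$ restricts to an isomorphism $M_{Y_{2}^{\bp}}^{\rm cpt}\isom M_{Y_{1}^{\bp}}^{\rm cpt}$. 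Since $\#e^{\rm op,ra}_{g_{\alpha}}=0$ is equivalent to $\alpha\in M_{Y^{\bp}}^{\rm cpt}$, this condition is preserved by $\psi_{Y,\ell}^{-1}$. Combined with $\psi_{Y,\ell}(E^{*}_{\mfT_{\Pi_{X_{2}^{\bp}}}})=E^{*}_{\mfT_{\Pi_{X_{1}^{\bp}}}}$ (established just before the lemma), $\alpha_{2}:=\psi_{Y,\ell}^{-1}(\alpha_{1})\in E^{*}_{\mfT_{\Pi_{X_{2}^{\bp}}}}$ with $\#e^{\rm op,ra}_{g_{\alpha_{2}}}=0$. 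Since $f_{X_{2}}^{\bp}$ is \'etale, $\mu_{d}$ acts freely on $e^{\rm cl}(\Gamma_{Y_{2}^{\bp}})$, and the $\mu_{d}$-eigenvalue condition defining $M^{\rm ra}_{Y_{2}^{\bp},\mu_{d}}$ then forces the closed-edge ramification locus to be a nonempty union of $\mu_{d}$-orbits of size $d$, whence $a_{2}:=\#e^{\rm cl,ra}_{g_{\alpha_{2}}}\in d\mbZ_{>0}$, and in particular $a_{2}\geq d$.

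The matching upper bound comes from Avr$_{p}$. I would first verify that the $\mbZ/\ell\mbZ$-covers $W_{i}^{\bp}:=Y_{i,\alpha_{i}}^{\bp}$ satisfy Condition A: (iii) comes from Condition B for $Y_{i}^{\bp}$; (i), (ii) follow from the admissible-covering structure and the positive-genus smooth components of $Y_{i}^{\bp}$; (iv) follows by a direct count of log points yielding $b(\tilde v)\geq b(v)\geq 2$ at each vertex $\tilde v\in v(\Gamma_{W_{i}^{\bp}})$ lying above $v\in v(\Gamma_{Y_{i}^{\bp}})$. Hence Theorem~\ref{max and average}(b) gives $\text{Avr}_{p}(H_{i})=g_{W_{i}}-r_{W_{i}}$ for the index-$\ell$ subgroup $H_{i}\subseteq\Pi_{Y_{i}^{\bp}}$ defined by $\alpha_{i}$, and Lemma~\ref{lem-0}(b) applied to $\phi_{Y}$ with $G=\mbZ/\ell\mbZ$ yields $\text{Avr}_{p}(H_{1})\geq\text{Avr}_{p}(H_{2})$. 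A direct Riemann--Hurwitz computation in the dual graph (using $\#e^{\rm op,ra}_{g_{\alpha_{i}}}=0$ and the log-\'etale nature of admissible coverings at closed edges) gives $g_{W_{i}}=\ell(g_{Y_{i}}-1)+1$ (independently of $a_{i}$ and $c_{i}$) and $r_{W_{i}}=\ell r_{Y_{i}}+(\ell-1)(V-1-a_{i}-c_{i})$, where $V=\#v(\Gamma_{Y_{i}^{\bp}})$ and $c_{i}=\#v^{\rm sp}_{g_{\alpha_{i}}}$. Combining,
\[
(g_{W_{1}}-r_{W_{1}})-(g_{W_{2}}-r_{W_{2}})=(\ell-1)(d+c_{1}-a_{2}-c_{2})\geq 0.
\]
For the first assertion, $c_{1}=0$ gives $a_{2}+c_{2}\leq d$, and combined with $a_{2}\geq d$, $c_{2}\geq 0$ this forces $a_{2}=d$ and $c_{2}=0$, so $\alpha_{2}\in E^{\rm cl,\star,0}_{\mfT_{\Pi_{X_{2}^{\bp}}},e_{2}}$ for some $e_{2}\in e^{\rm cl}(\Gamma_{X_{2}^{\bp}})$. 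The \emph{moreover} equality then reduces to establishing the analogous inclusion $\psi_{Y,\ell}^{-1}(E^{\rm cl,\star}_{\mfT_{\Pi_{X_{1}^{\bp}}}})\subseteq E^{\rm cl,\star}_{\mfT_{\Pi_{X_{2}^{\bp}}}}$: equality then follows from the cardinality formula of Lemma~\ref{lem-2-1} (which under Condition C and $r_{Y_{1}}=r_{Y_{2}}$ gives equal cardinalities on both sides) together with the injectivity of $\psi_{Y,\ell}^{-1}$.

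The main obstacle is the inclusion step for the \emph{moreover} when $c_{1}\geq 0$ is allowed to be large: the Avr$_{p}$-inequality $a_{2}+c_{2}\leq d+c_{1}$ combined with $a_{2}\in d\mbZ_{>0}$ only forces $a_{2}=d$ immediately when $c_{1}<d$. Handling $c_{1}\geq d$ requires a finer argument, most likely by stratifying $E^{\rm cl,\star}$ by the split-vertex invariant and matching cardinalities stratum-by-stratum, by exploiting the pre-equivalence relation of Proposition~\ref{pro-2-2}(a) to replace $\alpha_{1}$ by an equivalent representative with smaller $c_{1}$, or by composing with auxiliary prime-to-$p$ coverings forcing the splitting contribution to vanish.
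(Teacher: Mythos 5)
Your treatment of the first displayed inclusion is correct and is essentially the paper's own argument: transfer $\alpha_{1}$ through $\psi_{Y,\ell}^{-1}$, kill open-edge ramification (the paper invokes Theorem \ref{mainstep-1}; your route via Lemma \ref{lem-1} and an $\mbF_{\ell}$-dimension count on the \'etale-over-markings part is equivalent), observe $g_{Y_{1,\alpha_{1}}}=g_{Y_{2,\alpha_{2}}}$, and turn the ${\rm Avr}_{p}$-inequality of Lemma \ref{lem-0} (b) together with Lemma \ref{rem-them-1-1-4} into $\#e^{\rm cl, ra}_{g_{2,\alpha_{2}}}+\#v^{\rm sp}_{g_{2,\alpha_{2}}}\leq d$. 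Your use of the free $\mu_{d}$-action plus non-\'etaleness of $\alpha_{2}$ (which you correctly extract from $\psi_{Y,\ell}(E^{*}_{\mfT_{\Pi_{X_{2}^{\bp}}}})=E^{*}_{\mfT_{\Pi_{X_{1}^{\bp}}}}$) to get $\#e^{\rm cl,ra}_{g_{2,\alpha_{2}}}\in d\mbZ_{>0}$ replaces the paper's contradiction via Proposition \ref{prop-4-10} in the case $\#e^{\rm cl,ra}=0$, but leads to the same conclusion $\#e^{\rm cl,ra}=d$, $\#v^{\rm sp}=0$. Your explicit verification of Condition A for the covers $Y^{\bp}_{i,\alpha_{i}}$ before applying Theorem \ref{max and average} (b), and your formula for $r_{Y_{i,\alpha_{i}}}$, agree with (and are more careful than) the paper.

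The genuine gap is the one you flag yourself: the inclusion $\psi^{-1}_{Y,\ell}(E^{\rm cl,\star}_{\mfT_{\Pi_{X_{1}^{\bp}}}})\subseteq E^{\rm cl,\star}_{\mfT_{\Pi_{X_{2}^{\bp}}}}$ needed for the \emph{moreover} part is not established when $c_{1}=\#v^{\rm sp}_{g_{1,\beta_{1}}}\geq d$, since the bound $a_{2}+c_{2}\leq d+c_{1}$ then no longer pins $a_{2}=d$, and none of your three proposed repairs is carried out. The paper's device here is different from all three: it never applies the ${\rm Avr}_{p}$-bound to such a $\beta_{1}$ directly. Instead it writes every $\beta_{1}\in E^{\rm cl,\star}_{\mfT_{\Pi_{X_{1}^{\bp}}},e_{1}}$ as an $\mbF_{\ell}$-linear combination of elements of the split-vertex-free sets $E^{\rm cl,\star,0}$ (this rests on the description of the span $L_{e}$ in the proof of Lemma \ref{lem-2-1}: because every irreducible component of $Y_{1}$ has positive genus, the restriction to each vertex subgroup can be corrected by \'etale classes without changing the image in $M^{\rm ra}_{Y_{1}^{\bp},\mu_{d}}$), then pushes this decomposition through the linear map $\psi^{-1}_{Y,\ell}$, applies the already-proved first assertion to each summand, and concludes membership in $E^{\rm cl,\star}_{\mfT_{\Pi_{X_{2}^{\bp}}}}$ because the image lies in $E^{*}_{\mfT_{\Pi_{X_{2}^{\bp}}}}$, is unramified over open edges, and has its closed-edge ramification confined to a single $\mu_{d}$-orbit. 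Your concluding step --- equality from the cardinality formula of Lemma \ref{lem-2-1} (using $g_{Y_{1}}=g_{Y_{2}}$, $r_{Y_{1}}=r_{Y_{2}}$) and injectivity of $\psi^{-1}_{Y,\ell}$ --- is exactly the paper's, but it is contingent on the missing inclusion, so as written your proposal proves only the first half of the lemma.
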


\begin{proof}
Let $e_{1} \in e^{\rm cl}(\Gamma_{X^{\bp}_{1}})$ and $\alpha_{1} \in E^{{\rm cl, \star}, 0}_{\mfT_{\Pi_{X_{1}^{\bp}}}, e_{1}}$. Then the Galois admissible covering $$g_{1, \alpha_{1}}^{\bp}: Y^{\bp}_{1, \alpha} \migi Y^{\bp}_{1}$$ over $k_{1}$ with Galois group $\mbZ/\ell\mbZ$ corresponding to $\alpha_{1}$ induces a Galois admissible covering $$g^{\bp}_{2, \alpha_{2}}: Y^{\bp}_{2, \alpha_{2}} \migi Y^{\bp}_{2}$$ over $k_{2}$ with Galois group $\mbZ/\ell\mbZ$. Write $\alpha_{2} \in M_{Y_{2}^{\bp}}$ for the element corresponding to $g^{\bp}_{2, \alpha_{2}}$. We have $$\alpha_{2} \in E^{*}_{\mfT_{\Pi_{Y^{\bp}_{2}}}}.$$ Write $g_{Y_{i, \alpha_{i}}}$ for the genus of $Y^{\bp}_{i, \alpha_{i}}$, and $r_{Y_{i, \alpha_{i}}}$ for the Betti number of the dual semi-graph $\Gamma_{Y^{\bp}_{i, \alpha_{i}}}$. Then the Riemann-Hurwitz formula and Theorem \ref{mainstep-1} imply that $$g_{Y_{1, \alpha_{1}}}-g_{Y_{2, \alpha_{2}}}=-\frac{1}{2}(\#e^{\rm op, ra}_{g_{2, \alpha_{2}}})(\ell-1)=0.$$ On the other hand, we have $$r_{Y_{1, \alpha_{1}}}=\ell(\#e^{\rm cl}(\Gamma_{Y^{\bp}_{1}})-d)+d-\#v(\Gamma_{Y^{\bp}_{1}})+1$$ and $$r_{Y_{2, \alpha_{2}}}=\ell\#e^{\rm cl, \text{\'et}}_{g_{2, \alpha_{2}}}+\#e^{\rm cl, ra}_{g_{2, \alpha_{2}}}-\ell\#v^{\rm cl, sp}_{g_{2, \alpha_{2}}}-\#v^{\rm cl, ra}_{g_{2, \alpha_{2}}}+1.$$ Then Lemma \ref{rem-them-1-1-4} and Lemma \ref{lem-0} (b) imply that $$0=g_{Y_{1, \alpha_{1}}}-g_{Y_{2, \alpha_{2}}}\geq r_{Y_{1, \alpha_{1}}}-r_{Y_{2, \alpha_{1}}}.$$ Thus, we have $$\#e^{\rm cl, ra}_{g_{2, \alpha_{2}}}+\#v^{\rm sp}_{g_{2, \alpha_{2}}}+\frac{1}{2}\#e^{\rm op, ra}_{g_{2, \alpha_{2}}} =\#e^{\rm cl, ra}_{g_{2, \alpha_{2}}}+\#v^{\rm sp}_{g_{2, \alpha_{2}}}\leq d.$$

If $\#e^{\rm cl, ra}_{g_{2, \alpha_{2}}}=0$, then $g_{2, \alpha_{2}}$ is \'etale. By replacing $X^{\bp}_{1}$ and $X^{\bp}_{2}$ by $Y^{\bp}_{1}$ and $Y^{\bp}_{2}$, respectively, Proposition \ref{prop-4-10} implies that $g_{1, \alpha_{1}}$ is also \'etale. This contradicts the definition of $\alpha_{1}$. Thus,  we obtain  $\#e^{\rm cl, ra}_{g_{2, \alpha_{2}}}\neq 0$. 


If $\#e^{\rm cl, ra}_{g_{2, \alpha_{2}}} \neq 0$, then we have $\#e^{\rm cl, ra}_{g_{2, \alpha_{2}}}=d$ and $\#v^{\rm sp}_{g_{2, \alpha_{2}}}=\#e^{\rm op, ra}_{g_{2, \alpha_{2}}}=0$. This means that $$\alpha_{2} \in \bigsqcup_{e_{2} \in e^{\rm cl}(\Gamma_{Y^{\bp}_{2}})}E^{{\rm cl, \star}, 0}_{\mfT_{\Pi_{Y_{2}^{\bp}}}, e_{2}}.$$ Thus, we have $$\psi^{-1}_{Y,\ell}(\bigsqcup_{e_{1} \in e^{\rm cl}(\Gamma_{Y^{\bp}_{1}})}E^{{\rm cl, \star}, 0}_{\mfT_{\Pi_{Y_{1}^{\bp}}}, e_{1}}) \subseteq \bigsqcup_{e_{2} \in e^{\rm cl}(\Gamma_{Y^{\bp}_{2}})}E^{{\rm cl, \star}, 0}_{\mfT_{\Pi_{Y_{2}^{\bp}}}, e_{2}}.$$ 

Moreover, let $\beta_{i} \in E^{{\rm cl, \star}}_{\mfT_{\Pi_{Y_{i}^{\bp}}}}$. Then $\beta_{i}$ is a linear combination of the elements of $$\bigsqcup_{e_{i} \in e^{\rm cl}(\Gamma_{Y^{\bp}_{i}})}E^{{\rm cl, \star}, 0}_{\mfT_{\Pi_{Y_{i}^{\bp}}}, e_{i}}.$$ Then we have $$\psi^{-1}_{Y,\ell}(E^{{\rm cl, \star}}_{\mfT_{\Pi_{X_{1}^{\bp}}}})\subseteq E^{{\rm cl, \star}}_{\mfT_{\Pi_{X_{2}^{\bp}}}}.$$ On the other hand, since $g_{Y_{1}}=g_{Y_{2}}$ and $r_{Y_{1}}=r_{Y_{2}}$, Lemma \ref{lem-2-1} implies that $\#\psi^{-1}_{Y,\ell}(E^{{\rm cl, \star}}_{\mfT_{\Pi_{X_{1}^{\bp}}}})=\#E^{{\rm cl, \star}}_{\mfT_{\Pi_{X_{2}^{\bp}}}}$. Thus, we obtain $$\psi^{-1}_{Y,\ell}(E^{{\rm cl, \star}}_{\mfT_{\Pi_{X_{1}^{\bp}}}})= E^{{\rm cl, \star}}_{\mfT_{\Pi_{X_{2}^{\bp}}}}.$$ This completes the proof of the lemma.
\end{proof}

We reconstruct the sets of closed edges group-theoretically from $\phi$  as follows.

\begin{theorem}\label{them-4-15}
We maintain the notation introduced above. Then the (surjective) open continuous homomorphism $\phi: \Pi_{X_{1}^{\bp}} \migisurj \Pi_{X_{2}^{\bp}}$ induces a bijection of the set of closed edges $$\phi^{\rm sg, cl}: e^{\rm cl}(\Gamma_{X^{\bp}_{1}}) \isom e^{\rm cl}(\Gamma_{X^{\bp}_{2}})$$ group-theoretically. 
\end{theorem}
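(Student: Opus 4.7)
The plan is to apply the edge-triple machinery developed in Section \ref{sec-4-2}, and in particular Proposition \ref{pro-2-2}, to transport the bijection established at the cohomological level in Lemma \ref{lem-4-14} down to a bijection on sets of closed edges. First I would fix any edge-triple $\mfT_{\Pi_{X_1^{\bp}}} = (\ell, d, \alpha_{f_{X_1}})$ associated to $\Pi_{X_1^{\bp}}$; by Corollary \ref{coro-4-8}, the surjection $\phi$ together with this edge-triple produces in a purely group-theoretic way an edge-triple $\mfT_{\Pi_{X_2^{\bp}}} = (\ell, d, \alpha_{f_{X_2}})$ associated to $\Pi_{X_2^{\bp}}$. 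Writing $\Pi_{Y_i^{\bp}} \defeq \ker(\alpha_{f_{X_i}})$, the surjection $\phi$ restricts to a surjective open continuous homomorphism $\phi_Y: \Pi_{Y_1^{\bp}} \migisurj \Pi_{Y_2^{\bp}}$. Since Conditions A, B, C are preserved under passing to the (totally unramified, non-split) cyclic prime-to-$p$ cover determined by $\alpha_{f_{X_i}}$, the pair $(Y_1^{\bp}, Y_2^{\bp})$ together with $\phi_Y$ satisfies all the hypotheses needed to apply the results of Sections \ref{sec-4-1} and \ref{sec-4-2}.

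Next I would invoke Lemma \ref{lem-4-14}, which has already done the substantive analytic work: the induced $\mbF_\ell$-linear isomorphism $\psi_{Y,\ell}: M_{Y_2^{\bp}} \isom M_{Y_1^{\bp}}$ satisfies
$$\psi_{Y,\ell}^{-1}\bigl(E^{{\rm cl, \star}}_{\mfT_{\Pi_{X_1^{\bp}}}}\bigr) = E^{{\rm cl, \star}}_{\mfT_{\Pi_{X_2^{\bp}}}}.$$
Thus $\psi_{Y,\ell}$ restricts to a bijection between the two ``closed-edge'' subsets. The next task is to show that this bijection descends to a bijection of quotient sets modulo the equivalence relation $\sim$ introduced in Proposition \ref{pro-2-2}(a). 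This step is formal: because $\psi_{Y,\ell}$ is $\mbF_\ell$-linear and preserves both $E^*_{\mfT_{\Pi_{X_i^{\bp}}}}$ (by the commutative diagram of weight-monodromy filtrations together with $\mu_d$-equivariance) and $E^{{\rm cl, \star}}_{\mfT_{\Pi_{X_i^{\bp}}}}$ (by Lemma \ref{lem-4-14}), for any $\alpha, \beta \in E^{{\rm cl, \star}}_{\mfT_{\Pi_{X_2^{\bp}}}}$ and any $\lambda, \mu \in \mbF_\ell^{\times}$ the element $\lambda\alpha + \mu\beta$ lies in $E^*_{\mfT_{\Pi_{X_2^{\bp}}}}$ (resp.\ $E^{{\rm cl, \star}}_{\mfT_{\Pi_{X_2^{\bp}}}}$) if and only if $\lambda\psi_{Y,\ell}(\alpha)+\mu\psi_{Y,\ell}(\beta)$ does in $Y_1$. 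Hence $\alpha \sim \beta$ on the $X_2$-side if and only if $\psi_{Y,\ell}(\alpha) \sim \psi_{Y,\ell}(\beta)$ on the $X_1$-side.

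Passing to quotients, $\psi_{Y,\ell}$ therefore induces a bijection $E^{\rm cl}_{\mfT_{\Pi_{X_2^{\bp}}}} \isom E^{\rm cl}_{\mfT_{\Pi_{X_1^{\bp}}}}$. Composing with the natural bijections $\vartheta_{\mfT_{\Pi_{X_i^{\bp}}}}: E^{\rm cl}_{\mfT_{\Pi_{X_i^{\bp}}}} \isom e^{\rm cl}(\Gamma_{X_i^{\bp}})$ from Proposition \ref{pro-2-2}(b), we obtain the desired bijection
$$\phi^{\rm sg, cl}: e^{\rm cl}(\Gamma_{X_1^{\bp}}) \isom e^{\rm cl}(\Gamma_{X_2^{\bp}}).$$
That this bijection is independent of the initial choice of edge-triple $\mfT_{\Pi_{X_1^{\bp}}}$ follows from the compatibility in Proposition \ref{pro-2-2}(c) applied on both sides together with the naturality of $\psi_{Y,\ell}$ in Proposition \ref{prop-4-10} with respect to open subgroups. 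Since every ingredient (the edge-triples $\mfT_{\Pi_{X_i^{\bp}}}$, the modules $M_{Y_i^{\bp}}$ with their weight-monodromy filtrations, the subsets $E^*$ and $E^{{\rm cl, \star}}$, and the equivalence relation $\sim$) is reconstructed group-theoretically from the datum of $\phi$ by Theorem \ref{types}, Proposition \ref{prop-4-10}, and Remark \ref{rem-pro-2-2}, the whole bijection $\phi^{\rm sg, cl}$ is group-theoretic.

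The main obstacle — showing that a closed-edge class on the $X_1$-side forces the transferred cohomology class on the $X_2$-side to be ramified at a single branching node, as opposed to merely being étale or ramified in some exotic combinatorial pattern — is precisely what Lemma \ref{lem-4-14} handles, via the $p$-average inequality of Lemma \ref{lem-0}(b) combined with the formula of Theorem \ref{max and average}(b). Once Lemma \ref{lem-4-14} is granted, the rest is bookkeeping through the equivalence relation, so I expect no further serious difficulty in this particular theorem; the true content of the combinatorial Grothendieck conjecture for surjections is already packaged into the preceding averaging estimates.
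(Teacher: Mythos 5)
Your proposal is correct and follows essentially the same route as the paper: it uses the fixed edge-triple on the $X_{1}$-side transported to the $X_{2}$-side via Corollary \ref{coro-4-8}, the equality $\psi_{Y,\ell}^{-1}(E^{\rm cl,\star}_{\mfT_{\Pi_{X_{1}^{\bp}}}})=E^{\rm cl,\star}_{\mfT_{\Pi_{X_{2}^{\bp}}}}$ of Lemma \ref{lem-4-14} together with $\psi_{Y,\ell}(E^{*}_{\mfT_{\Pi_{X_{2}^{\bp}}}})=E^{*}_{\mfT_{\Pi_{X_{1}^{\bp}}}}$ and linearity to see that $\sim$ is preserved, and then Proposition \ref{pro-2-2} to descend to the bijection $e^{\rm cl}(\Gamma_{X_{1}^{\bp}})\isom e^{\rm cl}(\Gamma_{X_{2}^{\bp}})$. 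Your additional remark on independence of the chosen edge-triple via Proposition \ref{pro-2-2} (c) is a harmless elaboration of what the paper leaves implicit.
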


\begin{proof}
Let $\alpha_{2}$, $\alpha'_{2} \in E^{\rm cl,\star}_{\mfT_{\Pi_{X_{2}^{\bp}}}}$ and $\alpha_{2}\defeq \psi_{Y, \ell}(\alpha_{1}')$, $\alpha'_{1} \defeq \psi_{Y, \ell}(\alpha_{2}') \in E^{\rm cl,\star}_{\mfT_{\Pi_{X_{1}^{\bp}}}}$. We see immediately that $\alpha_{1} \sim \alpha_{1}'$ if and only if $\alpha_{2} \sim \alpha_{2}'$. Then the theorem follows from Lemma \ref{lem-4-14} and Proposition \ref{pro-2-2}.
\end{proof}

Next, let us reconstruct the sets of $p$-rank from $\phi$. Note that the surjection $\phi$ induces a surjection of the maximal pro-$p$ quotients $$\phi^{p}: \Pi_{X^{\bp}_{1}}^{p} \migisurj \Pi_{X^{\bp}_{2}}^{p}$$ of solvable admissible fundamental groups. Then every Galois (\'etale) admissible covering $h_{2}^{\bp}: Z_{2}^{\bp} \migi X_{2}^{\bp}$ over $k_{2}$ with Galois group $\mbZ/p\mbZ$ induces a Galois (\'etale) admissible covering $h_{1}^{\bp}: Z_{1}^{\bp} \migi X_{1}^{\bp}$ over $k_{1}$ with Galois group $\mbZ/p\mbZ$.  Moreover, $\phi^{p}$ induces an injection $$\psi_{p}: N_{X_{2}^{\bp}} \defeq \text{Hom}(\Pi_{X_{2}^{\bp}}, \mbF_{p}) \migiinje N_{X_{1}^{\bp}} \defeq \text{Hom}(\Pi_{X_{1}^{\bp}}, \mbF_{p}).$$  We have the following lemmas.

\begin{lemma}\label{lem-4-16}
We maintain the notation introduced above. Suppose that $\#v_{h_{2}}^{\rm ra}=0$. Then we have that $h^{\bp}_{1}$ is an \'etale covering, and that $\#v_{h_{1}}^{\rm ra}=0$. In particular, we obtain that $$\psi_{p}^{\rm top}: N^{\rm top}_{X_{2}^{\bp}} \defeq {\rm Hom}(\Pi_{X_{2}^{\bp}}^{\rm top}, \mbF_{p}) \isom N_{X_{1}^{\bp}}^{\rm top} \defeq {\rm Hom}(\Pi_{X_{1}^{\bp}}^{\rm top}, \mbF_{p})$$ is an isomorphism.
\end{lemma}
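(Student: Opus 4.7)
The approach mirrors that of Proposition \ref{prop-4-11}, replacing the prime-to-$p$ étale covering theory by the $p$-group version of Lemma \ref{lem-0} (b) and Theorem \ref{max and average} (b). The plan is to compute genera and Betti numbers of the coverings $Z_i^{\bp}$ on the two sides, feed them into the formula for ${\rm Avr}_{p}$, and use the resulting inequality to force $\#v_{h_{1}}^{\rm ra}=0$.

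First, I would note that any Galois admissible covering with group $\mbZ/p\mbZ$ is automatically étale, because the definition of admissible covering forces all decomposition groups at nodes and all ramification indices at marked points to be prime to $p$. Thus both $h_1^{\bp}$ and $h_2^{\bp}$ are étale of degree $p$, and the first assertion of the lemma reduces to the statement $\#v_{h_1}^{\rm ra}=0$. The Riemann--Hurwitz formula then gives $g_{Z_1}=g_{Z_2}=p(g_X-1)+1$. For the dual semi-graph, every closed edge of $\Gamma_{X_i^{\bp}}$ splits into $p$ closed edges (since $h_i$ is étale at nodes), while a vertex $v$ either splits into $p$ vertices or lifts to a single connected component, so
\[
r_{Z_i}=p\,r_{X_i}-(p-1)+(p-1)\#v_{h_i}^{\rm ra}.
\]
By Condition C (which, together with the equality of $(g_X,n_X)$, implies $r_{X_1}=r_{X_2}$), and the hypothesis $\#v_{h_2}^{\rm ra}=0$, this yields
\[
r_{Z_1}-r_{Z_2}=(p-1)\,\#v_{h_1}^{\rm ra}\geq 0.
\]

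Next, I would verify that $Z_1^{\bp}$ and $Z_2^{\bp}$ satisfy the hypotheses of Theorem \ref{max and average} (b): $\Gamma_{Z_i^{\bp}}^{\rm cpt}$ is $2$-connected by Condition B applied to $X_i^{\bp}$, and $\#v(\Gamma_{Z_i^{\bp}})^{b\leq 1}=0$ because an étale admissible covering preserves the number of edges abutting each vertex, so Condition A(iv) for $X_i^{\bp}$ transfers to $Z_i^{\bp}$. Hence Theorem \ref{max and average} (b) yields ${\rm Avr}_p(\Pi_{Z_i^{\bp}})=g_{Z_i}-r_{Z_i}$. Applying Lemma \ref{lem-0} (b) with $G=\mbZ/p\mbZ$ (a $p$-group; the required type equality is Condition C(i)) to the surjection $\phi_Z\colon \Pi_{Z_1^{\bp}}\migisurj \Pi_{Z_2^{\bp}}$ induced by $\phi$ gives ${\rm Avr}_p(\Pi_{Z_1^{\bp}})\geq {\rm Avr}_p(\Pi_{Z_2^{\bp}})$. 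Combining these with $g_{Z_1}=g_{Z_2}$ forces $r_{Z_1}\leq r_{Z_2}$, and hence $(p-1)\#v_{h_1}^{\rm ra}\leq 0$, i.e. $\#v_{h_1}^{\rm ra}=0$, which completes the first part.

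For the ``in particular'' assertion, the equivalence ``$\alpha\in N_{X_i^{\bp}}^{\rm top}$ iff the corresponding $\mbZ/p\mbZ$-covering $h_i^{\bp}$ satisfies $\#v_{h_i}^{\rm ra}=0$'' together with the first part shows that $\psi_p$ restricts to a map $\psi_p^{\rm top}\colon N_{X_2^{\bp}}^{\rm top}\migi N_{X_1^{\bp}}^{\rm top}$. This restriction is injective because $\psi_p$ itself is injective (being dual to the surjection $\phi^p$). Finally, $\dim_{\mbF_p} N_{X_i^{\bp}}^{\rm top}=r_{X_i}$ and $r_{X_1}=r_{X_2}$ by Condition C, so $\psi_p^{\rm top}$ is a bijection, hence an isomorphism.

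The main obstacle I foresee is the verification that $\#v(\Gamma_{Z_i^{\bp}})^{b\leq 1}=0$, needed to apply Theorem \ref{max and average} (b) in the clean form ${\rm Avr}_p=g-r$ on both sides without correction terms; this is handled by the preservation of vertex incidences under étale pullback and Condition A(iv). All other steps are parallel to the $\ell$-adic argument in Proposition \ref{prop-4-11}.
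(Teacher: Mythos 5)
Your proposal is correct and takes essentially the same route as the paper: the paper's proof simply notes that $h_{i}^{\bp}$ is \'etale, deduces $g_{Z_{1}}=g_{Z_{2}}$ from Riemann--Hurwitz, and invokes ``similar arguments to Proposition \ref{prop-4-11}'', i.e.\ exactly the ${\rm Avr}_{p}$ comparison via Lemma \ref{lem-0} (b) and Theorem \ref{max and average} (b) that you carry out. Your explicit Betti-number formula and the verification that $\Gamma_{Z_{i}^{\bp}}^{\rm cpt}$ is $2$-connected with $\#v(\Gamma_{Z_{i}^{\bp}})^{b\leq 1}=0$ merely make precise what the paper leaves implicit, and your treatment of the ``in particular'' assertion matches the dimension-count argument of Proposition \ref{prop-4-11}.
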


\begin{proof}
Since $h_{i}^{\bp}$ is \'etale, the Riemann-Hurwitz formula implies that $$g_{Z_{1}}=g_{Z_{2}}.$$ Thus, similar arguments to the arguments given in the proofs of Proposition \ref{prop-4-11} imply that $$\#v_{h_{1}}^{\rm ra}=0.$$ This completes the proof of the lemma.
\end{proof}

\begin{lemma}\label{lem-4-17}
We maintain the notation introduced above. Suppose that $\#v_{h_{2}}^{\rm ra}=1$. Then we obtain that $h^{\bp}_{1}$ is \'etale, and that $\#v_{h_{1}}^{\rm ra}=1$.
\end{lemma}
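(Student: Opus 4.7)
Since $h_2^{\bp}$ is a Galois admissible covering with Galois group $\mbZ/p\mbZ$, the requirement in Definition \ref{def-2} that decomposition groups at nodes and ramification indices at marked points be prime to $p$ forces $h_2^{\bp}$ to be \'etale; the same reasoning applied to $h_1^{\bp}$ shows that $h_1^{\bp}$ is automatically \'etale. Hence the only nontrivial content is the combinatorial claim $\#v_{h_1}^{\rm ra}=1$. Let $\alpha_{h_2} \in N_{X_2^{\bp}}$ be the class corresponding to $h_2^{\bp}$, let $H_2 \subseteq \Pi_{X_2^{\bp}}$ be its kernel, and put $H_1 \defeq \phi^{-1}(H_2)$, so that $\alpha_{h_1} \defeq \psi_p(\alpha_{h_2}) \in N_{X_1^{\bp}}$ defines $h_1^{\bp}$ and induces a surjection $\phi|_{H_1}: H_1 \migisurj H_2$ with quotient $\mbZ/p\mbZ$.

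My plan is to combine a numerical inequality derived from Lemma \ref{lem-0} (b) and the $p$-average formula of Lemma \ref{rem-them-1-1-4} with the reconstruction of the topological part from Lemma \ref{lem-4-16}. First I would compute the invariants of $Z_i^{\bp}$: since each $h_i^{\bp}$ is \'etale, Riemann-Hurwitz gives $g_{Z_1}=g_{Z_2}=p(g_X-1)+1$; counting vertices ($\#v(\Gamma_{Z_i^{\bp}})=p\#v(\Gamma_{X_i^{\bp}})-(p-1)\#v_{h_i}^{\rm ra}$) and closed edges ($\#e^{\rm cl}(\Gamma_{Z_i^{\bp}})=p\#e^{\rm cl}(\Gamma_{X_i^{\bp}})$, since decomposition groups at nodes are trivial) yields
$$r_{Z_i} = pr_{X_i} - p + (p-1)\#v_{h_i}^{\rm ra} + 1.$$
Using $r_{X_1}=r_{X_2}$ from Condition C together with $\#v_{h_2}^{\rm ra}=1$, and setting $a \defeq \#v_{h_1}^{\rm ra}$, I obtain $r_{Z_1}-r_{Z_2}=(p-1)(a-1)$.

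Next I would verify that $Z_1^{\bp}$ and $Z_2^{\bp}$ satisfy Condition A so that Lemma \ref{rem-them-1-1-4} applies and yields $\text{Avr}_p(\Pi_{Z_i^{\bp}})=g_{Z_i}-r_{Z_i}$: positive genus of components and smoothness follow from $h_i^{\bp}$ being \'etale, the $2$-connectedness of $\Gamma_{Z_i^{\bp}}^{\rm cpt}$ is provided by Condition B for $X_i^{\bp}$, and $\#v(\Gamma_{Z_i^{\bp}})^{b\leq 1}=0$ follows by direct inspection from the corresponding condition for $X_i^{\bp}$. Then Lemma \ref{lem-0} (b) applied to $\phi|_{H_1}: H_1 \migisurj H_2$ (whose quotient is the $p$-group $\mbZ/p\mbZ$) gives $g_{Z_1}-r_{Z_1}\geq g_{Z_2}-r_{Z_2}$; combined with $g_{Z_1}=g_{Z_2}$ this forces $r_{Z_2}\geq r_{Z_1}$, i.e. $(p-1)(1-a)\geq 0$, so $a\leq 1$.

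To rule out $a=0$, I would argue by contradiction: if $\#v_{h_1}^{\rm ra}=0$ then $\alpha_{h_1}\in N_{X_1^{\bp}}^{\rm top}$. By Lemma \ref{lem-4-16}, $\psi_p$ restricts to an isomorphism $\psi_p^{\rm top}:N_{X_2^{\bp}}^{\rm top}\isom N_{X_1^{\bp}}^{\rm top}$, so there exists $\beta\in N_{X_2^{\bp}}^{\rm top}$ with $\psi_p(\beta)=\alpha_{h_1}=\psi_p(\alpha_{h_2})$; the injectivity of $\psi_p$ then forces $\alpha_{h_2}=\beta\in N_{X_2^{\bp}}^{\rm top}$, whence $\#v_{h_2}^{\rm ra}=0$, contradicting the hypothesis. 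Hence $a=1$. The main obstacle I anticipate is the technical verification of Condition A for $Z_i^{\bp}$, in particular clause (iv) on $\#v(\Gamma_{Z_i^{\bp}})^{b\leq 1}$; should this fail in a boundary case, the standard remedy is to first replace $X_i^{\bp}$ by an auxiliary prime-to-$p$ \'etale cover in the style of Lemma \ref{rem-them-1-1-3} before invoking the $p$-average formula.
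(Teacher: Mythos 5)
Your proof is correct and follows essentially the same route as the paper: the paper's own proof is just "similar arguments to Lemma \ref{lem-4-12}" (i.e., the $g_{Z_1}=g_{Z_2}$ plus ${\rm Avr}_p$-inequality computation via Lemma \ref{rem-them-1-1-4} and Lemma \ref{lem-0} (b), giving $\#v_{h_1}^{\rm ra}\leq 1$) followed by ruling out $\#v_{h_1}^{\rm ra}=0$ via the ``in particular'' part of Lemma \ref{lem-4-16}, which is exactly your argument with the details written out. Your worry about Condition A for $Z_i^{\bp}$ is unfounded: it is handled exactly as you suggest (cf. Lemma \ref{lem-4-20}), so no auxiliary prime-to-$p$ cover is needed.
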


\begin{proof}
Similar arguments to the arguments given in the proofs of Lemma \ref{lem-4-12} imply that $\#v_{h_{1}}^{\rm ra}\leq 1.$ If $\#v_{h_{1}}^{\rm ra}=0$, then the ``in particular" part of Lemma \ref{lem-4-16} implies that $\#v_{h_{2}}^{\rm ra}=0$. This contradicts our assumption. Then we obtain that $\#v_{h_{1}}^{\rm ra}= 1.$
\end{proof}

We reconstruct the sets of $p$-rank of smooth pointed stable curves associated to vertices from $\phi$ as follows.

\begin{theorem}\label{them-4-18}
We maintain the notation introduced above. Then the (surjective) open continuous homomorphism $\phi: \Pi_{X_{1}^{\bp}} \migisurj \Pi_{X_{2}^{\bp}}$ induces an injection of the set of vertices $$\psi_{p}^{{\rm sg, ver}}: v(\Gamma_{X^{\bp}_{2}})^{>0, p} \migiinje v(\Gamma_{X^{\bp}_{1}})^{>0, p}$$ group-theoretically. Moreover, let $v_{2}\in v(\Gamma_{X_{2}^{\bp}})^{>0, p}$ and $v_{1}\defeq \psi_{p}^{\rm sg, vex}(v_{2})$. Then we have $$\sigma_{2, v_{2}}\leq \sigma_{1, v_{1}}.$$ 
\end{theorem}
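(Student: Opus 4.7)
The plan is to construct a $p$-analog of the machinery in Proposition \ref{pro-2-1} and transport it via the injection in cohomology induced by $\phi$, obtaining the desired vertex map and inequality by closely mimicking the proof of Theorem \ref{them-4-13} but using Lemma \ref{lem-4-16} and Lemma \ref{lem-4-17} in place of Lemma \ref{lem-4-12}. First I would set up the $p$-analog of Proposition \ref{pro-2-1}: define $V_{X_i, p}^{*} \subseteq N_{X_i^{\bp}}^{\text{\'et}}$ as the subset of characters with nonzero image in $N_{X_i^{\bp}}^{\rm nt} \defeq N_{X_i^{\bp}}^{\text{\'et}}/N_{X_i^{\bp}}^{\rm top}$ (equivalently, those whose associated Galois \'etale $\mbZ/p\mbZ$-cover $h_{i, \alpha}^{\bp}$ has a non-topological dual-semi-graph map), and set $V_{X_i, p}^{\star} \subseteq V_{X_i, p}^{*}$ to be the subset where $\#v_{h_{i, \alpha}}^{\rm ra}=1$. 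The same pre-equivalence relation $\sim$ as in Proposition \ref{pro-2-1}(a) is an equivalence on $V_{X_i, p}^{\star}$, and the argument of Proposition \ref{pro-2-1}(b) gives a natural bijection $\kappa_{X_i, p}: V_{X_i, p} \defeq V_{X_i, p}^{\star}/\sim \; \isom \; v(\Gamma_{X_i^{\bp}})^{>0, p}$.

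Next, using Lemma \ref{lem-1} to ensure $\phi$ descends to a surjection on $\Pi^{\rm cpt}$, the induced map $\psi_{p}: N_{X_{2}^{\bp}}^{\text{\'et}} \migiinje N_{X_{1}^{\bp}}^{\text{\'et}}$ is injective. By Lemma \ref{lem-4-16}, $\psi_{p}$ restricts to an isomorphism $\psi_{p}^{\rm top}: N_{X_{2}^{\bp}}^{\rm top} \isom N_{X_{1}^{\bp}}^{\rm top}$, so that $\gamma \in V_{X_{2}, p}^{*}$ if and only if $\psi_{p}(\gamma) \in V_{X_{1}, p}^{*}$. By Lemma \ref{lem-4-17}, $\psi_{p}(V_{X_{2}, p}^{\star}) \subseteq V_{X_{1}, p}^{\star}$. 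Combining these, the argument for preservation of $\sim$-classes proceeds exactly as in Theorem \ref{them-4-13} (but using only the one-directional implications), producing a well-defined map $V_{X_{2}, p} \migi V_{X_{1}, p}$, and hence by $\kappa_{X_{i}, p}$ a map $\psi_{p}^{\rm sg, ver}: v(\Gamma_{X_{2}^{\bp}})^{>0, p} \migi v(\Gamma_{X_{1}^{\bp}})^{>0, p}$. For injectivity, my strategy is to show $\psi_{p}^{\rm sg, ver}$ agrees with the restriction of $(\phi^{\rm sg, ver})^{-1}$ from Theorem \ref{them-4-13}: using Proposition \ref{pro-2-1}(c) to provide natural inclusions $V_{X_{i}, p} \migiinje V_{X_{i}, \ell}$ compatible with $v(\Gamma)^{>0, p} \subseteq v(\Gamma)^{>0, \ell} = v(\Gamma)$ for any prime $\ell \neq p$, the functoriality of $\phi$ acting on $\bmod \ell$- and $\bmod p$-cohomology yields a commutative square whose bottom arrow is the bijection of Theorem \ref{them-4-13}; the top arrow, being the restriction of a bijection, must then be an injection.

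For the $p$-rank inequality $\sigma_{2, v_{2}} \leq \sigma_{1, v_{1}}$, where $v_{1} = \psi_{p}^{\rm sg, ver}(v_{2})$, I would mimic the genus inequality step in the proof of Theorem \ref{them-4-13}. Define $L_{X_{i}^{\bp}}^{v_{i}} \defeq \{\alpha_{i} \in N_{X_{i}^{\bp}}^{\text{\'et}} \mid v_{h_{i, \alpha_{i}}}^{\rm ra} = \{v_{i}\}\}$, and let $[L_{X_{i}^{\bp}}^{v_{i}}]$ denote its image in $N_{X_{i}^{\bp}}^{\rm nt}$. From the canonical vertex decomposition $N_{X_{i}^{\bp}}^{\rm nt} \cong \bigoplus_{v} N_{X_{i}^{\bp}, v}^{\rm nt}$ (where the $v$-component has dimension $\sigma_{i, v}$), one computes $\#[L_{X_{i}^{\bp}}^{v_{i}}] = p^{\sigma_{i, v_{i}}} - 1$. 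By Lemma \ref{lem-4-17} and the definition of $\psi_{p}^{\rm sg, ver}$, $\psi_{p}(L_{X_{2}^{\bp}}^{v_{2}}) \subseteq L_{X_{1}^{\bp}}^{v_{1}}$, and the injectivity of the induced map $\bar \psi_{p}: N_{X_{2}^{\bp}}^{\rm nt} \migiinje N_{X_{1}^{\bp}}^{\rm nt}$ then yields $p^{\sigma_{2, v_{2}}} - 1 \leq p^{\sigma_{1, v_{1}}} - 1$, hence the desired inequality.

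The main obstacle will be the compatibility argument for injectivity: verifying that $\psi_{p}^{\rm sg, ver}$ coincides with $(\phi^{\rm sg, ver})^{-1}|_{v(\Gamma_{X_{2}^{\bp}})^{>0, p}}$. Unlike the prime-to-$p$ setting of Theorem \ref{them-4-13}, where $\psi_{\ell}$ is an isomorphism (so Lemma \ref{lem-4-12} yields both directions of the preservation of $V^{\star}$), the map $\psi_{p}$ is only an injection, and the naive converse of Lemma \ref{lem-4-17} is not readily available—a cover of $X_{2}^{\bp}$ with $\#v^{\rm ra} \geq 2$ whose $\psi_{p}$-image has $\#v^{\rm ra}=1$ is not ruled out by the $p$-average inequality $r_{Y_{1}} \leq r_{Y_{2}}$ alone. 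Overcoming this obstacle will require exploiting the canonical nature of the vertex decomposition of $N^{\rm nt}$, which is determined by the $\Pi^{\text{\'et}}$-structure independently of the choice of coefficient prime, thereby forcing the $\psi_{p}$-induced map to agree with the $\psi_{\ell}$-induced map under the natural identifications of Proposition \ref{pro-2-1}(c).
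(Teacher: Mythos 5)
Your construction of the map and your proof of the $p$-rank inequality coincide with the paper's argument: Proposition \ref{pro-2-1} is already stated for an arbitrary prime (including $p$), so the ``$p$-analog'' you build is exactly what the paper uses; the inclusion $\psi_{p}(V^{\star}_{X_{2},p})\subseteq V^{\star}_{X_{1},p}$ comes from Lemma \ref{lem-4-17}, the top-level isomorphism from Lemma \ref{lem-4-16}, and the inequality $\sigma_{2,v_{2}}\leq \sigma_{1,v_{1}}$ is obtained in the paper, as in your last step, by counting the images $[L^{v_{i},p}_{X_{i}^{\bp}}]$ in $N_{X_{i}^{\bp}}/N^{\rm top}_{X_{i}^{\bp}}$ (of cardinality $p^{\sigma_{i,v_{i}}}-1$) and using the injectivity of the induced map on these quotients.

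The genuine gap is in your injectivity step. You reduce injectivity to the claim that $\psi_{p}^{\rm sg, ver}$ coincides with the restriction of $(\phi^{\rm sg, ver})^{-1}$ of Theorem \ref{them-4-13} to $v(\Gamma_{X_{2}^{\bp}})^{>0,p}$, i.e.\ to the commutativity of a square comparing the maps induced by $\phi$ on $\bmod\, p$ and $\bmod\, \ell$ cohomology through the injections of Proposition \ref{pro-2-1} (c). But those vertical injections are defined curve by curve via the bijections $\kappa_{X_{i},p}$ and $\kappa_{X_{i},\ell}$, so the commutativity of that square is literally the assertion that the two vertex maps induced by $\phi$ agree; it is not a formal consequence of functoriality, and ``the canonical nature of the vertex decomposition of $N^{\rm nt}$'' does not supply it: the decompositions are canonical for each curve separately, whereas $\phi$ is merely a continuous homomorphism of profinite groups, and that it respects these decompositions compatibly for two different coefficient primes is exactly the kind of statement that requires a proof (of the same order of difficulty as the injectivity itself; compare the work needed for the compatibilities in Proposition \ref{prop-4-19}). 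Note also that the paper takes no cross-prime route here: it stays entirely in the mod-$p$ setting, checking that for $\alpha_{2}\sim\alpha'_{2}$ in $V^{\star}_{X_{2},p}$ one has $a\alpha_{2}+b\alpha'_{2}\in V^{\star}_{X_{2},p}$ if and only if $a\psi_{p}(\alpha_{2})+b\psi_{p}(\alpha'_{2})\in V^{\star}_{X_{1},p}$ --- the backward direction being supplied by the hypothesis $\alpha_{2}\sim\alpha'_{2}$ together with Lemma \ref{lem-4-16}, so that no converse of Lemma \ref{lem-4-17} is invoked --- and then deducing the injection from Proposition \ref{pro-2-1}. As written, your proposal therefore leaves the injectivity of $\psi_{p}^{\rm sg, ver}$ unestablished, and the route you sketch for closing it is not an argument but a restatement of what must be proved.
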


\begin{proof}
Lemma \ref{lem-4-17} implies that $$\psi_{p}(V^{\star}_{X_{2}, p}) \subseteq V^{\star}_{X_{1}, p}.$$ Let $\alpha_{2}, \alpha'_{2} \in V^{\star}_{X_{2}, p}$ be elements distinct from each other such that $\alpha_{2} \sim \alpha'_{2}$. It is easy to see that $a\alpha_{2}+b\alpha'_{2} \in V^{\star}_{X_{2}, p}$ if and only if $a\psi_{p}(\alpha_{2})+b\psi_{p}(\alpha'_{2}) \in V^{\star}_{X_{1}, p}$ for each $a, b \in \mbF^{\times}_{p}$. Thus, by Proposition \ref{pro-2-1}, we obtain an injection of the set of vertices $$\psi^{{\rm sg, ver}}_{p}: v(\Gamma_{X^{\bp}_{2}})^{>0, p} \migiinje v(\Gamma_{X^{\bp}_{1}})^{>0, p}.$$ 

Let $v_{i} \in v(\Gamma_{X_{i}^{\bp}})$. We put  $$L_{X^{\bp}_{i}}^{v_{i}, p}\defeq \{\alpha_{i} \in N_{X_{i}^{\bp}} \ | \ v^{\rm ra}_{h_{i, \alpha_{i}}}=\{v_{i}\}\},$$ where $h_{i, \alpha_{i}}^{\bp}$ denotes the Galois (\'etale) admissible covering corresponding to $\alpha_{i}$. Moreover, we denote by $$[L_{X^{\bp}_{i}}^{v_{i}, p}]$$ the image of $L_{X^{\bp}_{i}}^{v_{i}, p}$ in $N_{X_{i}^{\bp}}/N_{X_{i}^{\bp}}^{\rm top}$, where $N^{\rm top}_{X_{i}^{\bp}} \defeq \text{Hom}(\Pi_{X^{\bp}_{i}}^{\rm top}, \mbF_{p})$. Then we have $$\#[L_{X^{\bp}_{i}}^{v_{i}, p}]=p^{\sigma_{i, v_{i}}}-1.$$

Suppose that $v_{1} \defeq \psi^{\rm sg, ver}_{p}(v_{2})$. Lemma \ref{lem-4-16} implies that $\psi_{p}$ induces an injection $$[L_{X^{\bp}_{2}}^{v_{2}, p}] \migiinje [L_{X^{\bp}_{1}}^{v_{1}, p}].$$ Thus, we have $$p^{\sigma_{2, v_{2}}}-1=\#[L_{X^{\bp}_{2}}^{v_{2}, p}] \leq \#[L_{X^{\bp}_{1}}^{v_{1}, p}]=p^{\sigma_{1, v_{1}}}-1.$$ This means that $$\sigma_{2, v_{2}} \leq \sigma_{1, v_{1}}$$ for each $v_{2} \in v(\Gamma_{X_{2}^{\bp}})^{>0, p}$. This completes the proof of the theorem.
\end{proof} 

In the remainder of the present subsection, we prove a proposition which will be used in Section \ref{sec-4-5}.

\begin{proposition}\label{prop-op-cl}
We maintain the notation introduced above. Then the following statements hold:

(a) Let $S_{1}^{\rm cl} \subseteq e^{\rm cl}(\Gamma_{X_{1}^{\bp}})$ be a subset of closed edges, $\alpha_{e_{1}} \in E^{\rm cl, \star, 0}_{\mfT_{\Pi_{X^{\bp}_{1}}}, e_{1}}$ for every $e_{1} \in S^{\rm cl}_{1}$,  $$\alpha_{1} \defeq \sum_{e_{1} \in S^{\rm cl}_{1}}\alpha_{e_{1}} \in E^{*}_{\mfT_{\Pi_{X^{\bp}_{1}}}},$$ and $g_{1, \alpha_{1}}^{\bp}: Y_{1, \alpha_{1}}^{\bp} \migi Y_{1}^{\bp}$ the Galois admissible covering over $k_{1}$ with Galois group $\mbZ/\ell\mbZ$ corresponding to $\alpha_{1}$. Let $\phi^{\rm sg, cl}: e^{\rm cl}(\Gamma_{X_{1}^{\bp}}) \isom  e^{\rm cl}(\Gamma_{X_{2}^{\bp}})$ be the bijection of the sets of closed edges obtained in Theorem \ref{them-4-15}, $\alpha_{\phi^{\rm sg, cl}(e_{1})} \in E^{\rm cl, \star, 0}_{\mfT_{\Pi_{X^{\bp}_{2}}}, \phi^{\rm sg, cl}(e_{1})}$ the element induced by $\phi$ for every $e_{1} \in S^{\rm cl}_{1}$, $$\alpha_{2} \defeq \sum_{e_{1} \in S^{\rm cl}_{1}}\alpha_{\phi^{\rm sg, cl}(e_{1})} \in E^{*}_{\mfT_{\Pi_{X^{\bp}_{2}}}},$$ and $g_{2, \alpha_{2}}^{\bp}: Y_{2, \alpha_{2}}^{\bp} \migi Y_{2}^{\bp}$ the Galois admissible covering over $k_{2}$ with Galois group $\mbZ/\ell\mbZ$ corresponding to $\alpha_{2}$. Suppose that $\#v^{\rm sp}_{g_{1, \alpha_{1}}}=0$. Then we have that $$\#e^{\rm op, ra}_{g_{2, \alpha_{2}}}=\#v^{\rm sp}_{g_{2, \alpha_{2}}}=0.$$

(b) Let $E^{\rm op, \star, 0}_{\mfT_{\Pi_{X^{\bp}_{i}}}, e_{i}}$, $e_{i} \in e^{\rm op}(\Gamma_{X_{i}^{\bp}})$, be the set of cohomology classes defined in Section \ref{sec-4-2}, and let $S_{1}^{\rm op} \subseteq e^{\rm op}(\Gamma_{X_{1}^{\bp}})$ be a subset of open edges, $\alpha_{e_{1}} \in E^{\rm op, \star, 0}_{\mfT_{\Pi_{X^{\bp}_{1}}}, e_{1}}$ for every $e_{1} \in S^{\rm op}_{1}$,  $$\alpha_{1} \defeq \sum_{e_{1} \in S^{\rm op}_{1}}\alpha_{e_{1}} \in E^{*}_{\mfT_{\Pi_{X^{\bp}_{1}}}},$$ and $g_{1, \alpha_{1}}^{\bp}: Y_{1, \alpha_{1}}^{\bp} \migi Y_{1}^{\bp}$ the Galois admissible covering over $k_{1}$ with Galois group $\mbZ/\ell\mbZ$ corresponding to $\alpha_{1}$. Let $\phi^{\rm sg, op}: e^{\rm op}(\Gamma_{X_{1}^{\bp}}) \isom  e^{\rm op}(\Gamma_{X_{2}^{\bp}})$ be the bijection of the sets of open edges obtained in Theorem \ref{mainstep-1}, $\alpha_{\phi^{\rm sg, op}(e_{1})}\in E^{\rm cl, \star, 0}_{\mfT_{\Pi_{X^{\bp}_{2}}}, \phi^{\rm sg, op}(e_{1})}$ the element induced by $\phi$ for every $e_{1} \in S^{\rm op}_{1}$, $$\alpha_{2} \defeq \sum_{e_{1} \in S^{\rm op}_{1}}\alpha_{\phi^{\rm sg, op}(e_{1})} \in E^{*}_{\mfT_{\Pi_{X^{\bp}_{2}}}},$$ and $g_{2, \alpha_{2}}^{\bp}: Y_{2, \alpha_{2}}^{\bp} \migi Y_{2}^{\bp}$ the Galois admissible covering over $k_{2}$ with Galois group $\mbZ/\ell\mbZ$ corresponding to $\alpha_{2}$. Suppose that $\#v^{\rm sp}_{g_{1, \alpha_{1}}}=0$. Then we have that $$\#e^{\rm cl, ra}_{g_{2, \alpha_{2}}}=\#v^{\rm sp}_{g_{2, \alpha_{2}}}=0.$$
\end{proposition}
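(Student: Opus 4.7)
The plan is to compute, for both $i \in \{1, 2\}$, the ramification data $(\#e^{\rm op, ra}_{g_{i, \alpha_i}}, \#e^{\rm cl, ra}_{g_{i, \alpha_i}}, \#v^{\rm sp}_{g_{i, \alpha_i}})$ of the $\mbZ/\ell\mbZ$-Galois admissible covering $g_{i, \alpha_i}^{\bp}$, and then to apply Lemma \ref{lem-0} (b) to the surjection $\phi_{Y}|_{\ker(\alpha_1)}\colon \ker(\alpha_1) \migisurj \ker(\alpha_2)$, whose quotient $\mbZ/\ell\mbZ$ is prime to $p$. Comparing the resulting inequality against the Riemann--Hurwitz and Betti-number formulas will then force the desired vanishings.

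I would begin by pinning down these ramification data. For part (a), each $\alpha_{e_1} \in E^{\rm cl, \star, 0}_{\mfT_{\Pi_{X_1^{\bp}}}, e_1}$ has non-zero restriction precisely to the inertia subgroups of the $d$ nodes of $Y_1$ lying above the single node $x_{e_1}$ of $X_1$. Since for distinct $e_1, e_1' \in S_1^{\rm cl}$ these node-sets are disjoint, the inertia supports of the individual summands are pairwise disjoint; hence no cancellation occurs in the sum, so $\#e^{\rm op, ra}_{g_{1, \alpha_1}} = 0$, $\#e^{\rm cl, ra}_{g_{1, \alpha_1}} = |S_1^{\rm cl}|d$, while $\#v^{\rm sp}_{g_{1, \alpha_1}} = 0$ is our assumption. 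Using Theorem \ref{them-4-15} and the compatibility (implicit in the proof of Lemma \ref{lem-4-14}) that $\psi_{Y, \ell}^{-1}$ sends $E^{\rm cl, \star, 0}_{\mfT_{\Pi_{X_1^{\bp}}}, e_1}$ into $E^{\rm cl, \star, 0}_{\mfT_{\Pi_{X_2^{\bp}}}, \phi^{\rm sg, cl}(e_1)}$, the same disjointness analysis on the $X_2$-side gives $\#e^{\rm cl, ra}_{g_{2, \alpha_2}} = |S_1^{\rm cl}|d$ exactly. For part (b), substituting Theorem \ref{mainstep-1} for Theorem \ref{them-4-15} yields $\#e^{\rm op, ra}_{g_{1, \alpha_1}} = \#e^{\rm op, ra}_{g_{2, \alpha_2}} = |S_1^{\rm op}|d$ and $\#e^{\rm cl, ra}_{g_{1, \alpha_1}} = \#v^{\rm sp}_{g_{1, \alpha_1}} = 0$.

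Next, componentwise Riemann--Hurwitz (with the closed ramification cancelling because it contributes symmetrically to both branches of each ramified node) yields the clean formula $g_{Y_{i, \alpha_i}} = \ell(g_{Y_i} - 1) + 1 + \tfrac{\ell-1}{2}\#e^{\rm op, ra}_{g_{i, \alpha_i}}$, and the Betti number of $\Gamma_{Y_{i, \alpha_i}^{\bp}}$ can be read off directly as $r_{Y_{i, \alpha_i}} = \ell \#e^{\rm cl}(\Gamma_{Y_i}) - (\ell-1)\#e^{\rm cl, ra}_{g_{i, \alpha_i}} - \#v(\Gamma_{Y_i}) - (\ell-1)\#v^{\rm sp}_{g_{i, \alpha_i}} + 1$. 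Since $Y_1^{\bp}$ and $Y_2^{\bp}$ satisfy Conditions A, B, C with matching numerical invariants, Theorem \ref{max and average} (b) (together with Remark \ref{rem-them-1-1-2}) gives $\text{Avr}_p(\Pi_{Y_{i, \alpha_i}^{\bp}}) = g_{Y_{i, \alpha_i}} - r_{Y_{i, \alpha_i}}$, so Lemma \ref{lem-0} (b) produces
$$\#e^{\rm cl, ra}_{g_{2, \alpha_2}} + \#v^{\rm sp}_{g_{2, \alpha_2}} + \tfrac{1}{2}\#e^{\rm op, ra}_{g_{2, \alpha_2}} \;\leq\; \#e^{\rm cl, ra}_{g_{1, \alpha_1}} + \#v^{\rm sp}_{g_{1, \alpha_1}} + \tfrac{1}{2}\#e^{\rm op, ra}_{g_{1, \alpha_1}}.$$
Substituting the values of the previous paragraph: in part (a) the right-hand side equals $|S_1^{\rm cl}|d$ and the left-hand side equals $|S_1^{\rm cl}|d + \#v^{\rm sp}_{g_{2, \alpha_2}} + \tfrac{1}{2}\#e^{\rm op, ra}_{g_{2, \alpha_2}}$, forcing both extra terms to vanish; in part (b) the right-hand side equals $\tfrac{1}{2}|S_1^{\rm op}|d$ and the left-hand side equals $\#e^{\rm cl, ra}_{g_{2, \alpha_2}} + \#v^{\rm sp}_{g_{2, \alpha_2}} + \tfrac{1}{2}|S_1^{\rm op}|d$, forcing the same conclusion.

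The main obstacle I anticipate is verifying that $Y_{i, \alpha_i}^{\bp}$ falls into the clean case of Theorem \ref{max and average} (b), i.e., that $\#v(\Gamma_{Y_{i, \alpha_i}^{\bp}})^{b \leq 1} = 0$ so that no correction terms appear in the identity $\text{Avr}_p = g - r$. This amounts to a combinatorial check propagating Condition A (iv) on $Y_i^{\bp}$ through the $\mbZ/\ell\mbZ$-cover: every edge-incidence at a vertex $v$ of $\Gamma_{Y_i^{\bp}}$ lifts to at least one edge-incidence at each preimage vertex $v'$ of $\Gamma_{Y_{i, \alpha_i}^{\bp}}$, so $b(v) \geq 2$ implies $b(v') \geq 2$. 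Apart from this bookkeeping, the proof is a quantitative strengthening of the argument in Lemma \ref{lem-4-14}, with a single cohomology class replaced by the sum indexed by $S_1^{\rm cl}$ or $S_1^{\rm op}$.
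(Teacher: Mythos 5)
Your proposal is correct and follows essentially the same route as the paper: compute the ramification data of $g_{i,\alpha_i}$ from the disjointness of the supports of the summands, combine the Riemann--Hurwitz and Betti-number formulas with the inequality ${\rm Avr}_p(\ker\alpha_1)\geq{\rm Avr}_p(\ker\alpha_2)$ coming from Lemma \ref{lem-0} (b) and the formula ${\rm Avr}_p=g-r$ (valid since the covers inherit Condition A, as you check), and then plug in the exact values forced by Lemma \ref{lem-4-14} (resp.\ Theorem \ref{mainstep-1}). The only cosmetic difference is that you package both cases into one inequality and deduce $\#e^{\rm op,ra}_{g_{2,\alpha_2}}=0$ in (a) from it, whereas the paper obtains that vanishing directly from Theorem \ref{mainstep-1} before running the same numerical comparison.
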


\begin{proof}
(a)  Since $\#e^{\rm op, ra}_{g_{1, \alpha_{1}}}=0$, Theorem \ref{mainstep-1} imply that $\#e^{\rm op, ra}_{g_{2, \alpha_{2}}}=0$. On the other hand, we have $$r_{Y_{1, \alpha_{1}}}=\ell(\#e^{\rm cl}(\Gamma_{Y^{\bp}_{1}})-d\#S_{1}^{\rm cl})+d\#S_{1}^{\rm cl}-\#v(\Gamma_{Y^{\bp}_{1}})+1$$ and $$r_{Y_{2, \alpha_{2}}}=\ell\#e^{\rm cl, \text{\'et}}_{g_{2, \alpha_{2}}}+\#e^{\rm cl, ra}_{g_{2, \alpha_{2}}}-\ell\#v^{\rm cl, sp}_{g_{2, \alpha_{2}}}-\#v^{\rm cl, ra}_{g_{2, \alpha_{2}}}+1.$$ Then Lemma \ref{rem-them-1-1-4} and Lemma \ref{lem-0} (b) imply that $$0=g_{Y_{1, \alpha_{1}}}-g_{Y_{2, \alpha_{2}}}\geq r_{Y_{1, \alpha_{1}}}-r_{Y_{2, \alpha_{1}}}.$$ Thus, we have $$\#e^{\rm cl, ra}_{g_{2, \alpha_{2}}}+\#v^{\rm sp}_{g_{2, \alpha_{2}}}+\frac{1}{2}\#e^{\rm op, ra}_{g_{2, \alpha_{2}}} =\#e^{\rm cl, ra}_{g_{2, \alpha_{2}}}+\#v^{\rm sp}_{g_{2, \alpha_{2}}}\leq d\#S_{1}^{\rm cl}.$$

On the othe hand, Lemma \ref{lem-4-14} implies that $\#e^{\rm cl, ra}_{g_{2, \alpha_{2}}}=d\#S_{1}^{\rm cl}$. Then we obtain $\#v^{\rm sp}_{g_{2, \alpha_{2}}}=0$. This completes the proof of (a).

(b) The Riemann-Hurwitz formula and Theorem \ref{mainstep-1} imply that $$g_{Y_{1, \alpha_{1}}}-g_{Y_{2, \alpha_{2}}}=\frac{1}{2}(d\#S^{\rm op}_{1}-\#e^{\rm op, ra}_{g_{2, \alpha_{2}}})(\ell-1)=0.$$ On the other hand, we have $$r_{Y_{1, \alpha_{1}}}=\ell\#e^{\rm cl}(\Gamma_{Y^{\bp}_{1}})-\#v(\Gamma_{Y^{\bp}_{1}})+1$$ and $$r_{Y_{2, \alpha_{2}}}=\ell\#e^{\rm cl, \text{\'et}}_{g_{2, \alpha_{2}}}+\#e^{\rm cl, ra}_{g_{2, \alpha_{2}}}-\ell\#v^{\rm sp}_{g_{2, \alpha_{2}}}-\#v^{\rm ra}_{g_{2, \alpha_{2}}}+1.$$ Then Lemma \ref{rem-them-1-1-4} and Lemma \ref{lem-0} (b) imply that  $$g_{Y_{1, \alpha_{1}}}-g_{Y_{2, \alpha_{2}}}\geq r_{Y_{1, \alpha_{1}}}-r_{Y_{2, \alpha_{2}}}.$$ Thus, we have $$\#e^{\rm cl, ra}_{g_{2, \alpha_{2}}}+\#v^{\rm sp}_{g_{2, \alpha_{2}}}+\frac{1}{2}\#e^{\rm op, ra}_{g_{2, \alpha_{2}}} - \frac{d\#S_{1}^{\rm op}}{2}\leq 0.$$ This means that $$\#e^{\rm cl, ra}_{g_{2, \alpha_{2}}}=\#v^{\rm sp}_{g_{2, \alpha_{2}}}=0.$$ We complete the proof of (b).
\end{proof}

\subsection{Reconstruction of commutative diagrams of sets of vertices, sets of open edges, and sets of closed edges from surjections}\label{sec-4-4}

We maintain the notation introduced in Section \ref{sec-4-3}. In the present subsection, we suppose that $X_{1}^{\bp}$ and $X_{2}^{\bp}$ {\it satisfy Condition A, Condition B, and Condition C}. Moreover, let $$\phi: \Pi_{X^{\bp}_{1}}\migi \Pi_{X^{\bp}_{2}}$$ be an arbitrary open continuous homomorphism of the solvable admissible fundamental groups of $X^{\bp}_{1}$ and $X^{\bp}_{2}$, and $$(g_{X}, n_{X})\defeq (g_{X_{1}}, n_{X_{1}})=(g_{X_{2}}, n_{X_{2}}).$$ Note that we have $r_{X_{1}}=r_{X_{2}}$, and that by Lemma \ref{surj}, $\phi$ is a surjection.

We fix some notation. Let $H_{2}$ be an open normal subgroup of $\Pi_{X_{2}}^{\bp}$, $H_{1}\defeq \phi^{-1}(H_{2})$ the open normal subgroup of $\Pi_{X_{1}^{\bp}}$, $G\defeq \Pi_{X_{1}^{\bp}}/H_{1}=\Pi_{X_{2}^{\bp}}/H_{2}$, and $\phi_{H_{1}}$ the surjection $\phi|_{H_{1}}: H_{1} \migisurj H_{2}$. Let $i \in \{1, 2\}$. We write $$f^{\bp}_{H_{i}}: X^{\bp}_{H_{i}} \migi X_{i}^{\bp}$$ for the Galois admissible covering over $k_{i}$ with Galois group $G$, $(g_{X_{H_i}}, n_{X_{H_{i}}})$ for the type of $X^{\bp}_{H_{i}}$, and $\Gamma_{X^{\bp}_{H_{i}}}$ for the dual semi-graph of $X^{\bp}_{H_{i}}$. Furthermore, {\it we suppose that $X_{H_1}^{\bp}$ and $X_{H_2}^{\bp}$ satisfy Condition A, Condition B, and Condition C.}

Let $\ell$ and $d$ be prime numbers distinct from $p$ such that $\ell \neq d$ and  $(\#G, \ell)=(\#G, d)=1$, and let $$\mfT_{\Pi_{X^{\bp}_{2}}} \defeq (\ell, d, \alpha_{f_{X_{2}}}: \Pi_{X_{2}^{\bp}}^{\et} \migisurj \mbF_{d})$$ be an edge-triple associated to $\Pi_{X^{\bp}_{2}}$ and $\mfT_{X^{\bp}_{2}} \defeq (\ell, d, f^{\bp}_{X_2}: Y_{2}^{\bp} \migi  X^{\bp}_{2})$ the edge-triple associated to $X_{2}^{\bp}$ corresponding to $\mfT_{\Pi_{X^{\bp}_{2}}}$. By Corollary \ref{coro-4-8}, we obtain an edge-triple $$\mfT_{\Pi_{X^{\bp}_{1}}} \defeq (\ell, d, \alpha_{f_{X_{1}}}: \Pi_{X_{1}^{\bp}}^{\et} \migisurj \mbF_{d})$$ induced group-theoretically from $\phi$ and $\mfT_{\Pi_{X^{\bp}_{2}}}$. We write $\mfT_{X^{\bp}_{1}} \defeq (\ell, d, f^{\bp}_{X_1}: Y_{1}^{\bp} \migi  X^{\bp}_{1})$ for the edge-triple associated to $X_{1}^{\bp}$ corresponding to $\mfT_{\Pi_{X^{\bp}_{1}}}$. On the other hand, we put $$Q_{i} \defeq \text{ker}(\Pi_{X_{i}^{\bp}} \migisurj \Pi_{X_{i}^{\bp}}^{\et}\overset{\alpha_{f_{X_{i}}}}\migisurj \mbF_{d}).$$ We have that $H_{i} \migisurj H_{i}/(H_{i}\cap Q_{i})\cong \mbF_{d}$ factors through $H_{i}^{\et}$. Write $\alpha_{f_{X_{H_{i}}}}: H_{i}^{\et} \migisurj \mbF_{d}$ for this homomorphism. We see that $$\mfT_{H_{i}} \defeq (\ell, d, \alpha_{f_{X_{H_{i}}}})$$ is an edge-triple associated to $H_{i}$ which is induced group-theoretically from $H_{i} \subseteq \Pi_{X^{\bp}_{i}}$ and $\mfT_{\Pi_{X^{\bp}_{i}}}$. Note that  $\mfT_{H_{1}}$  coincides with  the edge-triple associated to $H_{1}$ induced group-theoretically from $\phi_{H_{1}}$ and $\mfT_{H_{2}}$. Moreover, we denote by  $$\mfT_{X_{H_{i}}^{\bp}} \defeq (\ell, d, f_{X_{H_{i}}}^{\bp}: Y_{X_{H_{i}}}^{\bp}\defeq Y^{\bp}_{i} \times_{X^{\bp}_{i}}X^{\bp}_{H_{i}} \migi X^{\bp}_{H_{i}})$$ the edge-triple associated to $X^{\bp}_{H_{i}}$ corresponding to $\mfT_{H_{i}}$. 


By applying Proposition \ref{pro-2-1}, Remark \ref{rem-pro-2-1}, Proposition \ref{pro-2-2}, and Remark \ref{rem-pro-2-2}, we have that the natural inclusion $H_{i} \migiinje \Pi_{X_{i}^{\bp}}$ induces the  maps
$$\gamma^{\rm ver, \ell}_{H_{i}}: V_{X_{H_{i}}, \ell} \migi V_{X_{i}, \ell},$$ 
$$\gamma^{\rm cl}_{\mfT_{\Pi_{X^{\bp}_{i}}}, H_{i}}: E^{\rm cl}_{\mfT_{H_{i}}} \migi E^{\rm cl}_{\mfT_{\Pi_{X_{i}}}}$$ group-theoretically. We put $$\gamma^{\rm ver}_{H_{i}}: v(\Gamma_{X_{H_{i}}^{\bp}}) \overset{\kappa^{-1}_{X_{H_{i}}, \ell}}\isom V_{X_{H_{i}}, \ell} \overset{\gamma^{\rm ver, \ell}_{H_{i}}}\migi V_{X_{i}, \ell}  \overset{\kappa_{X_{i}, \ell}}\isom v(\Gamma_{X^{\bp}_{i}}),$$$$\gamma^{\rm cl}_{H_{i}}: e^{\rm cl}(\Gamma_{X_{H_{i}}^{\bp}})\overset{\vartheta^{-1}_{\mfT_{H_{i}}}}\isom E^{\rm cl}_{\mfT_{H_{i}}} \overset{\gamma^{\rm cl}_{\mfT_{\Pi_{X^{\bp}_{i}}}, H_{i}}}\migi E^{\rm cl}_{\mfT_{\Pi_{X^{\bp}_{i}}}} \overset{\vartheta_{\mfT_{\Pi_{X^{\bp}_{i}}}}} \isom e^{\rm cl}(\Gamma_{X^{\bp}_{i}}).$$  Then the maps $\gamma^{\rm ver}_{H_{i}}$ and $\gamma^{\rm cl}_{H_{i}}$ can be reconstructed group-theoretically from the inclusion $H_{i} \migiinje \Pi_{X^{\bp}_{i}}$.

On the other hand, Theorem \ref{types} implies that the sets $\text{Edg}^{\rm op}(\Pi_{X^{\bp}_{i}})$ and $\text{Edg}^{\rm op}(H_{i})$ can be reconstructed group-theoretically from $\Pi_{X^{\bp}_{i}}$ and $H_{i}$, respectively. Note that we have a natural map $$\text{Edg}^{\rm op}(H_{i}) \migi \text{Edg}^{\rm op}(\Pi_{X^{\bp}_{i}})$$ induced by the natural inclusions of the stabilizer subgroups. Moreover, this map compatible with the actions of $H_{i}$ and $\Pi_{X^{\bp}_{i}}$. Then we obtain a map  $$\gamma^{\rm op}_{H_{i}}: e^{\rm op}(\Gamma_{X_{H_i}^{\bp}}) \isom  \text{Edg}^{\rm op}(H_{i})/H_{i} \migi \text{Edg}^{\rm op}(\Pi_{X^{\bp}_{i}})/\Pi_{X_{i}^{\bp}} \isom e^{\rm op}(\Gamma_{X_{i}^{\bp}})$$ which can be induced by the inclusion $H_{i} \migiinje \Pi_{X_{i}^{\bp}}$ group-theoretically.

By Theorem \ref{mainstep-1}, Theorem \ref{them-4-13}, and Theorem \ref{them-4-15}, the following maps 
$$\phi_{H_{1}}^{\rm sg, ver}: v(\Gamma_{X_{H_{1}}^{\bp}}) \isom v(\Gamma_{X_{H_{2}}^{\bp}}),$$
$$\phi_{H_{1}}^{\rm sg, op}: e^{\rm op}(\Gamma_{X_{H_{1}}^{\bp}}) \isom e^{\rm op}(\Gamma_{X_{H_{2}}^{\bp}}),$$
$$\phi_{H_{1}}^{\rm sg, cl}: e^{\rm cl}(\Gamma_{X_{H_{1}}^{\bp}}) \isom e^{\rm cl}(\Gamma_{X_{H_{2}}^{\bp}}),$$
$$\phi^{\rm sg, ver}: v(\Gamma_{X_{1}^{\bp}}) \isom v(\Gamma_{X_{2}^{\bp}}),$$
$$\phi^{\rm sg, op}: e^{\rm op}(\Gamma_{X_{1}^{\bp}}) \isom e^{\rm op}(\Gamma_{X_{2}^{\bp}}),$$
$$\phi^{\rm sg, cl}: e^{\rm cl}(\Gamma_{X_{1}^{\bp}}) \isom e^{\rm cl}(\Gamma_{X_{2}^{\bp}})$$  can be reconstructed group-theoretically from $\phi: \Pi_{X_{1}^{\bp}} \migisurj \Pi_{X_{2}^{\bp}}$ and $\phi_{H_{1}}: H_{1}\migisurj H_{2}$, respectively.

\begin{proposition}\label{prop-4-19}
We maintain the notation introduced above. Then the following diagrams
\[
\begin{CD}
v(\Gamma_{X^{\bp}_{H_{1}}}) @>\phi^{\rm sg, ver}_{H_{1}}>> v(\Gamma_{X^{\bp}_{H_{2}}}) 
\\
@V\gamma_{H_{1}}^{\rm ver}VV
@V\gamma_{H_{2}}^{\rm ver}VV
\\
v(\Gamma_{X^{\bp}_{1}}) @>\phi^{\rm sg,  ver}>> v(\Gamma_{X^{\bp}_{2}}),
\end{CD}
\]
\[
\begin{CD}
e^{\rm op}(\Gamma_{X^{\bp}_{H_{1}}}) @>\phi^{\rm sg, op}_{H_{1}}>> e^{\rm op}(\Gamma_{X^{\bp}_{H_{2}}}) 
\\
@V\gamma_{H_{1}}^{\rm op}VV
@V\gamma_{H_{2}}^{\rm op}VV
\\
e^{\rm op}(\Gamma_{X^{\bp}_{1}}) @>\phi^{\rm sg, op}>> e^{\rm op}(\Gamma_{X^{\bp}_{2}}), 
\end{CD}
\]

\[
\begin{CD}
e^{\rm cl}(\Gamma_{X^{\bp}_{H_{1}}}) @>\phi^{\rm sg, cl}_{H_{1}}>> e^{\rm cl}(\Gamma_{X^{\bp}_{H_{2}}}) 
\\
@V\gamma_{H_{1}}^{\rm cl}VV
@V\gamma_{H_{2}}^{\rm cl}VV
\\
e^{\rm cl}(\Gamma_{X^{\bp}_{1}}) @>\phi^{\rm sg, cl}>> e^{\rm cl}(\Gamma_{X^{\bp}_{2}})
\end{CD}
\]
are commutative. Moreover, all the commutative diagrams above are compatible with the natural actions of $G$. 
\end{proposition}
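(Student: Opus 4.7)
The plan is to verify each of the three commutative squares separately by tracking how both the horizontal maps (induced by $\phi$ and $\phi_{H_1}$) and the vertical maps (induced by the inclusions $H_i \migiinje \Pi_{X_i^\bp}$) are defined at the level of the underlying group-theoretic data (families of closed subgroups, or cohomology classes), and then invoking the tautological compatibility coming from the commutative square
\[
\begin{CD}
H_1 @>\phi_{H_1}>> H_2 \\
@VVV @VVV \\
\Pi_{X_1^\bp} @>\phi>> \Pi_{X_2^\bp}.
\end{CD}
\]
All three constructions are ultimately functorial in this square, so the commutativity of each diagram reduces to unwinding the definitions. The $G$-equivariance in the ``moreover'' part will follow because $\phi$ induces an isomorphism $\Pi_{X_1^\bp}/H_1 \isom \Pi_{X_2^\bp}/H_2 = G$ intertwining the conjugation actions, and each horizontal map at the $H_i$-level has been produced from an $\Pi_{X_i^\bp}$-equivariant object (cf.\ the ``Moreover'' assertions in Propositions \ref{pro-2-1}(d) and \ref{pro-2-2}(d)).

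For the open-edge square, the horizontal maps $\phi^{\rm sg, op}_{H_1}$ and $\phi^{\rm sg, op}$ are induced from Theorem \ref{mainstep-1} by the assignment $I \mapsto \phi(I)$ (resp.\ $\phi_{H_1}(I) = \phi(I)$ for $I \in \text{Edg}^{\rm op}(H_1)$), while the vertical maps $\gamma^{\rm op}_{H_i}$ are induced by the inclusion $\text{Edg}^{\rm op}(H_i) \migi \text{Edg}^{\rm op}(\Pi_{X_i^\bp})$ of stabilizer subgroups. Since every $I_1 \in \text{Edg}^{\rm op}(H_1)$ also lies in $\text{Edg}^{\rm op}(\Pi_{X_1^\bp})$ and $\phi_{H_1}(I_1) = \phi(I_1)$ under the natural inclusion $H_2 \migiinje \Pi_{X_2^\bp}$, the diagram commutes on the nose after passing to $\Pi_{X_i^\bp}$- and $H_i$-orbits. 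The vertex and closed-edge squares are handled by the same scheme, but using cohomology classes: recall that $\kappa_{X,\ell}$ (resp.\ $\vartheta_{\mfT}$) comes from equivalence classes inside $\text{Hom}(\Pi^{\et}, \mbF_\ell)$ (resp.\ inside $\text{Hom}(\Pi_Y, \mbF_\ell)$), and the restriction-of-cohomology definitions of $\gamma^{\rm ver, \ell}_{H_i}$ and $\gamma^{\rm cl}_{\mfT_{\Pi_{X_i^\bp}}, H_i}$ in Propositions \ref{pro-2-1}(d) and \ref{pro-2-2}(d) coincide (up to the explicit decomposition $\beta = \sum_{\beta' \in L_\alpha} c_{\beta'} \beta'$) with precomposition by the inclusion $H_i^{\et} \migiinje \Pi_{X_i^\bp}^{\et}$. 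On the other hand, $\phi^{\rm sg, ver}$ and $\phi^{\rm sg, cl}$ are defined via the $\mbF_\ell$-linear isomorphisms $\psi_\ell^{\et}$ and $\psi_{Y, \ell}$ from Propositions \ref{prop-4-10}, \ref{prop-4-11} and the construction before Theorem \ref{them-4-15}; applying these to $\phi_{H_1}$ yields horizontal maps at the $H_i$-level that are tautologically intertwined with the restriction maps by the commutative group-theoretic square displayed above.

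The only genuinely delicate point, and the one I expect to be the main obstacle, is to check that the equivalence relations $\sim$ used in the constructions of $V_{X,\ell}$ and $E^{\rm cl}_{\mfT_{\Pi_{X^\bp}}}$ are preserved simultaneously by restriction and by $\psi_\ell^\et$ (resp.\ $\psi_{Y,\ell}$), so that the compositions $\gamma \circ \phi^{\rm sg, \bullet}_{H_1}$ and $\phi^{\rm sg, \bullet} \circ \gamma$ descend to the same well-defined maps between quotient sets. For this one has to re-examine the proofs that $\phi^{\rm sg, ver}$ and $\phi^{\rm sg, cl}$ exist --- namely, that $\psi_\ell^\et$ preserves $V^\star$ (Lemma \ref{lem-4-12}) and $\psi_{Y, \ell}$ preserves $E^{\rm cl, \star}$ (Lemma \ref{lem-4-14}) --- and to verify that these preservation properties apply uniformly after replacing $(\Pi_{X_i^\bp}, \phi)$ by $(H_i, \phi_{H_1})$, which is legitimate because $H_1^\bp$ and $H_2^\bp$ again satisfy Conditions~A, B, and C by assumption. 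Once this is in place, the $G$-equivariance of all three squares is formal: for $\sigma \in G$ lifted to $\widetilde\sigma_i \in \Pi_{X_i^\bp}$ with $\phi(\widetilde\sigma_1)$ lying over $\widetilde\sigma_2$, conjugation by $\widetilde\sigma_i$ on $H_i$ is compatible with $\phi_{H_1}$, and this compatibility is transported through the above identifications to yield $G$-equivariance of $\phi^{\rm sg, ver}_{H_1}$, $\phi^{\rm sg, op}_{H_1}$, and $\phi^{\rm sg, cl}_{H_1}$.
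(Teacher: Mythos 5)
Your plan is correct and follows essentially the same route as the paper's proof: the open-edge square is read off from Theorem \ref{mainstep-1} together with the passage to the ambient stabilizers (note the small slip that an element of ${\rm Edg}^{\rm op}(H_1)$ is only \emph{contained in}, not equal to, a unique element of ${\rm Edg}^{\rm op}(\Pi_{X_1^{\bp}})$), while the vertex and closed-edge squares are obtained by transporting the class decompositions of Propositions \ref{pro-2-1} (d) and \ref{pro-2-2} (d) through the commutative square of groups and applying Lemma \ref{lem-4-12} and Lemma \ref{lem-4-14} also at the level of $(H_1, H_2, \phi_{H_1})$, which is legitimate exactly because $X_{H_1}^{\bp}$ and $X_{H_2}^{\bp}$ are assumed to satisfy Conditions A, B, and C. The ``delicate point'' you defer is precisely what the paper's proof carries out: it writes the pullback of a class $\alpha_2 \in E^{\rm cl, \star}_{\mfT_{\Pi_{X_2^{\bp}}}, e_2}$ to the $H_2$-level as a sum of edge-classes with a nonzero coefficient at $e_{H_2}$, transports it term by term via $\phi_{H_1}$, identifies the result with the pullback of $\psi_{Y, \ell}(\alpha_2)$, and reads off from the ramification locus that $e_{H_1}$ lies over $(\phi^{\rm sg, cl})^{-1}(e_2)$, which is the required commutativity.
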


\begin{proof}
The commutativity of the second diagram follows immediately from Theorem \ref{mainstep-1}. We treat the third diagram. To verify the commutativity of the third diagram, we only need to prove the commutativity of the following diagram \[
\begin{CD}
e^{\rm cl}(\Gamma_{X^{\bp}_{H_{2}}}) @>(\phi^{\rm sg, cl}_{H_{1}})^{-1}>> e^{\rm cl}(\Gamma_{X^{\bp}_{H_{1}}}) 
\\
@V\gamma_{H_{2}}^{\rm cl}VV
@V\gamma_{H_{1}}^{\rm cl}VV
\\
e^{\rm cl}(\Gamma_{X^{\bp}_{2}}) @>(\phi^{\rm sg, cl})^{-1}>> e^{\rm cl}(\Gamma_{X^{\bp}_{1}}).
\end{CD}
\]

Let $e_{H_{2}} \in e^{\rm cl}(\Gamma_{X^{\bp}_{H_{2}}})$, $e_{H_{1}}\defeq (\phi^{\rm sg, cl}_{H_{1}})^{-1}(e_{H_{2}}) \in e^{\rm cl}(\Gamma_{X^{\bp}_{H_{1}}})$, $ e_{2}\defeq \gamma_{H_{2}}^{\rm cl}(e_{H_{2}}) \in e^{\rm cl}(\Gamma_{X^{\bp}_{2}})$,  $ e_{1}\defeq (\gamma_{H_{1}}^{\rm cl}\circ (\phi^{\rm sg, cl}_{H_{1}})^{-1})(e_{H_{2}}) \in e^{\rm cl}(\Gamma_{X^{\bp}_{1}})$,  $e'_{1}\defeq (\phi^{\rm sg, cl})^{-1}(e_{2}) \in e^{\rm cl}(\Gamma_{X^{\bp}_{1}}).$ We will prove that $e_{1}=e'_{1}$. 

Write $S_{H_{1}}$ and $S_{H_{2}}$ for the sets $(\gamma_{H_{1}}^{\rm cl})^{-1}(e_{1}')$ and $(\gamma_{H_{2}}^{\rm cl})^{-1}(e_{2})$, respectively. Note that $e_{H_{2}} \in S_{H_{2}}$. To verify $e_{1}=e'_{1}$, it is sufficient to prove that $e_{H_{1}} \in S_{H_{1}}$.

Let $\alpha_{2} \in E^{\rm cl, \star}_{\mfT_{\Pi_{X_{2}^{\bp}}}, e_{2}}$. Then the proof of Lemma \ref{lem-4-14} implies that $\alpha_{2}$ induces an element $$\alpha_{1} \in E^{\rm cl, \star}_{\mfT_{\Pi_{X_{1}^{\bp}}}, e'_{1}}.$$ Write $Y_{\alpha_{i}}^{\bp}$ for the pointed stable curve over $k_{i}$ corresponding to $\alpha_{i}$. We consider the Galois admissible covering $$Y^{\bp}_{\alpha_{2}} \times_{X^{\bp}_{2}} X^{\bp}_{H_{2}} \migi Y_{X_{H_2}}^{\bp}$$ over $k_{2}$ with Galois group $\mbZ/\ell\mbZ$, and denote by $\beta_{2}$ the element of $E^{*}_{\mfT_{H_{2}}}$ corresponding to this Galois admissible covering. Then we have $$\beta_{2}=\sum_{c_{2}\in S_{H_{2}}} t_{c_{2}}\beta_{c_{2}},$$ where $t_{c_{2}} \in (\mbZ/\ell\mbZ)^{\times}$ and $\beta_{c_{2}} \in E^{\rm cl, \star}_{\mfT_{H_{2}}, c_{2}}$. Note that we have $t_{e_{H_2}}\neq 0$. On the other hand, the proof of Lemma \ref{lem-4-14} implies that $\beta_{c_{2}}$ induces an element $ \beta_{(\phi_{H_{1}}^{\rm cl})^{-1}(c_{2})} \in E^{\rm cl, \star}_{\mfT_{H_{1}}, (\phi_{H_{1}}^{\rm cl})^{-1}(c_{2})}$. Then $\beta_{2}$ induces an element $$\beta_{1}\defeq \sum_{c_{2}\in S_{X_{H_{2}}}\setminus \{e_{H_{2}}\}} t_{c_{2}}\beta_{(\phi_{H_{1}}^{\rm cl})^{-1}(c_{2})} +t_{e_{H_{2}}}\beta_{e_{H_{1}}}\in E_{\mfT_{H_{1}}}^{*}.$$  Note that since $\beta_{1}$ corresponds to the Galois admissible covering $$Y^{\bp}_{\alpha_{1}} \times_{X^{\bp}_{1}} X^{\bp}_{H_{1}} \migi Y_{X_{H_1}}^{\bp}$$ over $k_{1}$ with Galois group $\mbZ/\ell\mbZ$, the composition of the Galois admissible coverings $Y^{\bp}_{\alpha_{1}} \times_{X^{\bp}_{1}} X^{\bp}_{H_{1}} \migi Y_{X_{H_1}}^{\bp} \overset{f_{X_{H_{1}}}^{\bp}}\migi X^{\bp}_{H_{1}}$ is ramified over $S_{H_{1}}$. This means that $e_{H_{1}}$ is contained in $S_{H_{1}}$. 

Similar arguments to the arguments given in the proof above imply the first diagram is commutative. It is easy to check the ``moreover" part of the lemma. This completes the proof of the proposition.
\end{proof}

\subsection{Combinatorial Grothendieck conjecture for surjections}\label{sec-4-5}

We maintain the notation introduced in Section \ref{sec-4-3}. In the present subsection, we suppose that {\it $X_{1}^{\bp}$ and $X_{2}^{\bp}$ satisfy Condition A, Condition B, and Condition C} unless indicated otherwise. Then we have $r_{X_{1}}=r_{X_{2}}$. We put $(g_{X}, n_{X})\defeq (g_{X_{1}}, n_{X_{1}})=(g_{X_{2}}, n_{X_{2}}).$ Let $$\phi: \Pi_{X^{\bp}_{1}}\migi \Pi_{X^{\bp}_{2}}$$ be an arbitrary open continuous homomorphism of the solvable admissible fundamental groups of $X^{\bp}_{1}$ and $X^{\bp}_{2}$. By Lemma \ref{surj}, we have that $\phi$ is a surjection.

We fix some notation. Let $H_{2}$ be an open normal subgroup of $\Pi_{X_{2}}^{\bp}$, $H_{1}\defeq \phi^{-1}(H_{2})$ the open normal subgroup of $\Pi_{X_{1}^{\bp}}$, $G \defeq \Pi_{X_{1}^{\bp}}/H_{1}=\Pi_{X_{2}^{\bp}}/H_{2}$, and $\phi_{H_{1}}\defeq \phi|_{H_{1}}: H_{1} \migisurj H_{2}$ the surjection induced by $\phi$. Let $i \in \{1, 2\}$. We write $$f^{\bp}_{H_i}: X^{\bp}_{H_{i}} \migi X_{i}^{\bp}$$ for the Galois admissible covering over $k_{i}$ with Galois group $G$, $(g_{X_{H_i}}, n_{X_{H_{i}}})$ for the type of $X^{\bp}_{H_{i}}$, $\Gamma_{X^{\bp}_{H_{i}}}$ for the dual semi-graph of $X^{\bp}_{H_{i}}$, and $r_{X_{H_{i}}}$ for the Betti number of $\Gamma_{X^{\bp}_{H_{i}}}$. 


\begin{lemma}\label{lem-4-20}
We maintain the notation introduced above.  Then $X^{\bp}_{H_{i}}$ satisfies Condition A,  Condition B, Condition C (i).
\end{lemma}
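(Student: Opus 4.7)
The plan is to verify each of the three groups of conditions separately, each time reducing to a prior result or a short local calculation on $f_{H_i}^{\bp}: X^{\bp}_{H_i} \migi X^{\bp}_i$.

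First, for Condition C (i) I would invoke Corollary \ref{coro-1}. Since $\Pi_{X^{\bp}_i}$ is the solvable admissible fundamental group of $X^{\bp}_i$ by the blanket assumption of Section \ref{sec-4}, the common quotient $G = \Pi_{X^{\bp}_1}/H_1 = \Pi_{X^{\bp}_2}/H_2$ is automatically a finite solvable group. The hypotheses of Corollary \ref{coro-1} are therefore satisfied (surjectivity of $\phi$ is guaranteed by Lemma \ref{surj}, and $(g_{X_1},n_{X_1}) = (g_{X_2},n_{X_2})$ is assumed), and the corollary immediately gives $(g_{X_{H_1}}, n_{X_{H_1}}) = (g_{X_{H_2}}, n_{X_{H_2}})$.

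Second, for Conditions A (iii) and B of $X^{\bp}_{H_i}$, I would simply observe that an open subgroup $H' \subseteq \Pi_{X^{\bp}_{H_i}} \cong H_i$ is also an open subgroup of $\Pi_{X^{\bp}_i}$, so that the dual semi-graph of $X^{\bp}_{H_i, H'}$ coincides with the dual semi-graph of $X^{\bp}_{i, H'}$. Condition B for $X^{\bp}_i$ then yields Condition B for $X^{\bp}_{H_i}$, and specializing $H'$ to $H_i$ itself yields Condition A (iii).

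Third, Conditions A (i), A (ii), and A (iv) I would check by restricting $f_{H_i}^{\bp}$ to a single component. For A (ii), by the local description in Definition \ref{def-2} an admissible covering is étale away from the nodes and marked points, and over a node its local model is $uv = 0 \to st = 0$; hence singular points of $X_{H_i}$ can only lie over singular points of $X_i$, so Condition A (ii) for $X^{\bp}_i$ forces every irreducible component of $X_{H_i}$ to be smooth. For A (i), given any $v \in v(\Gamma_{X^{\bp}_{H_i}})$, the induced finite morphism $\widetilde{X_{H_i, v}} \migi \widetilde{X_{f_{H_i}^{\rm sg}(v)}}$ between normalizations has target of positive genus by Condition A (i) for $X^{\bp}_i$, so the Riemann--Hurwitz formula gives $2g_{\widetilde{X_{H_i, v}}} - 2 \geq \deg(f_{H_i}|_{X_{H_i, v}})(2g_{\widetilde{X_{f_{H_i}^{\rm sg}(v)}}} - 2) \geq 0$, hence $g_{\widetilde{X_{H_i, v}}} \geq 1$. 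For A (iv), I would argue that every node or marked point of $X_{f_{H_i}^{\rm sg}(v)}$ has nonempty preimage in $X_{H_i, v}$ consisting again of nodes or marked points, so that the contribution to $b(v)$ from each such preimage is $\geq 1$; summing gives $b(v) \geq b(f_{H_i}^{\rm sg}(v)) \geq 2$ by Condition A (iv) for $X^{\bp}_i$.

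The only mildly delicate step is the bookkeeping in A (iv) when $f_{H_i}^{\rm sg}(v)$ has a self-node: that self-node contributes $2$ to $b(f_{H_i}^{\rm sg}(v))$, and one must check that it still contributes at least $2$ to $b(v)$. This is handled by passing to the normalization, where the self-node corresponds to two distinct points, each of which has a nonempty preimage in $\widetilde{X_{H_i, v}}$; these preimage points yield at least two edge-endpoints at $v$ in $\Gamma_{X^{\bp}_{H_i}}$, regardless of whether they are identified into self-nodes or separate into ordinary nodes.
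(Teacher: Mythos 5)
Your proof is correct, and its skeleton is the same as the paper's: Conditions A (iii) and B are inherited exactly as you say (open subgroups of $H_{i}\cong \Pi_{X^{\bp}_{H_{i}}}$ are open subgroups of $\Pi_{X^{\bp}_{i}}$ with the same associated curves), Conditions A (i), (ii), (iv) come from the local structure of admissible coverings, and Condition C (i) is quoted from an earlier result. The one genuine divergence is the reference for C (i): you invoke Corollary \ref{coro-1}, which is legitimate here because in Section \ref{sec-4-5} all fundamental groups are solvable admissible fundamental groups, so $G=\Pi_{X^{\bp}_{2}}/H_{2}$ is automatically a finite solvable group and $H_{2}$ is normal with $H_{1}=\phi^{-1}(H_{2})$; the paper instead cites Theorem \ref{mainstep-1} (preservation of inertia subgroups of marked points), whose corollary gives the same type equality and has the advantage of not needing solvability of $G$ nor normality of $H_{2}$ — but in the present setting both routes apply, and yours is the more elementary one. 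Your detailed verification of A (i), (ii), (iv) correctly fills in what the paper dismisses as ``immediate from the definition'': the branch-tracking at nodes (a self-node upstairs would force a self-node on the image component, contradicting A (ii) for $X^{\bp}_{i}$), the generic \'etaleness from Definition \ref{def-2} (ii) that makes Riemann--Hurwitz applicable for A (i), and the count $b(v)\geq b(f^{\rm sg}_{H_{i}}(v))\geq 2$ for A (iv); note only that your ``delicate'' self-node case downstairs is in fact vacuous, since A (ii) for $X^{\bp}_{i}$ already excludes self-nodes on $X_{i}$, so that extra bookkeeping is harmless but unnecessary.
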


\begin{proof}
The first condition, the second condition, and the fourth condition of Condition A follow immediately from the definition of admissible coverings. Since $X_{i}^{\bp}$ satisfies Condition B and the third condition of Condition A, $X^{\bp}_{H_{i}}$ also satisfies Condition B and the third condition of Condition A. Moreover, Condition C (i) follows immediately from Theorem \ref{mainstep-1}.
This completes the proof of the lemma.
\end{proof}

\begin{lemma}\label{cond-0}
We maintain the notation introduced above. Suppose that there exists an open normal subgroup $H_{2}' \subseteq H_{2}$ such that $X^{\bp}_{H'_{1}}$ and $X^{\bp}_{H'_{2}}$ satisfy Condition A, Condition B, and Condition C, where $H_{1}' \defeq \phi^{-1}(H'_{1}) \subseteq H_{1}$. Then $X^{\bp}_{H_{1}}$ and $X^{\bp}_{H_{2}}$ satisfy Condition A, Condition B, and Condition C.
\end{lemma}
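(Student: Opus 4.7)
My plan is to deduce Conditions A, B, and C for $X_{H_{i}}^{\bullet}$ by descending from the corresponding conditions for $X_{H_{i}'}^{\bullet}$ along the Galois admissible covering $X_{H_{i}'}^{\bullet} \migi X_{H_{i}}^{\bullet}$, whose Galois group $G' \defeq H_{i}/H_{i}'$ is independent of $i \in \{1,2\}$ via $\phi$. First, I would invoke Lemma \ref{lem-4-20}, which already supplies Condition A, Condition B, and Condition C (i) for $X_{H_{i}}^{\bullet}$. Thus only Condition C (ii) and Condition C (iii) need to be verified, i.e., the equalities $\#v(\Gamma_{X_{H_{1}}^{\bullet}})=\#v(\Gamma_{X_{H_{2}}^{\bullet}})$ and $\#e^{\rm cl}(\Gamma_{X_{H_{1}}^{\bullet}})=\#e^{\rm cl}(\Gamma_{X_{H_{2}}^{\bullet}})$.

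Since $\phi$ is surjective with $H_{i}' = \phi^{-1}(H_{2}')$ contained in $H_{i} = \phi^{-1}(H_{2})$, the restriction $\phi_{H_{1}'} \defeq \phi|_{H_{1}'}: H_{1}' \migisurj H_{2}'$ is again a surjective open continuous homomorphism, and the relation $\ker(\phi) \subseteq H_{1}'$ produces a canonical identification $G' \defeq H_{1}/H_{1}' \isom H_{2}/H_{2}'$. Because $X_{H_{1}'}^{\bullet}$ and $X_{H_{2}'}^{\bullet}$ satisfy Conditions A, B, and C by hypothesis, I would apply Theorem \ref{them-4-13} and Theorem \ref{them-4-15} to $\phi_{H_{1}'}$ to obtain bijections
$$
\phi_{H_{1}'}^{\rm sg, ver}: v(\Gamma_{X_{H_{1}'}^{\bullet}}) \isom v(\Gamma_{X_{H_{2}'}^{\bullet}}), \qquad
\phi_{H_{1}'}^{\rm sg, cl}: e^{\rm cl}(\Gamma_{X_{H_{1}'}^{\bullet}}) \isom e^{\rm cl}(\Gamma_{X_{H_{2}'}^{\bullet}}).
$$

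Next, I would argue that these two bijections are $G'$-equivariant, where $G'$ acts on each side through the outer conjugation action of $H_{i}$ on its normal subgroup $H_{i}'$. The map $\phi_{H_{1}'}$ is the restriction of $\phi_{H_{1}}$, hence it intertwines the conjugation actions of $H_{1}$ and $H_{2}$ via the identification $G' \isom H_{1}/H_{1}' \isom H_{2}/H_{2}'$; consequently, the group-theoretic/functorial nature of the constructions in Sections \ref{sec-4-1}, \ref{sec-4-2}, and \ref{sec-4-3} (cf. Proposition \ref{pro-2-1} (d), Proposition \ref{pro-2-2} (d), Remark \ref{rem-pro-2-1}, and Remark \ref{rem-pro-2-2}) forces $\phi_{H_{1}'}^{\rm sg, ver}$ and $\phi_{H_{1}'}^{\rm sg, cl}$ to commute with the $G'$-actions. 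This is precisely the content of the ``moreover'' clause of Proposition \ref{prop-4-19}; if a direct invocation of that proposition is desired, I would first replace $H_{2}'$ by its normal core $H_{2}^{(0)}$ in $\Pi_{X_{2}^{\bullet}}$ and verify via Condition B for $X_{H_{i}'}^{\bullet}$ and Lemma \ref{lem-4-20} that $X_{H_{i}^{(0)}}^{\bullet}$ still satisfies Conditions A, B, and C.

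Finally, since $X_{H_{i}'}^{\bullet} \migi X_{H_{i}}^{\bullet}$ is a Galois admissible covering with group $G'$, the natural maps of dual semi-graphs identify $v(\Gamma_{X_{H_{i}'}^{\bullet}})/G'$ with $v(\Gamma_{X_{H_{i}}^{\bullet}})$ and $e^{\rm cl}(\Gamma_{X_{H_{i}'}^{\bullet}})/G'$ with $e^{\rm cl}(\Gamma_{X_{H_{i}}^{\bullet}})$. Taking $G'$-quotients of the equivariant bijections from the previous paragraph therefore yields bijections $v(\Gamma_{X_{H_{1}}^{\bullet}}) \isom v(\Gamma_{X_{H_{2}}^{\bullet}})$ and $e^{\rm cl}(\Gamma_{X_{H_{1}}^{\bullet}}) \isom e^{\rm cl}(\Gamma_{X_{H_{2}}^{\bullet}})$, completing the verification of Conditions C (ii) and C (iii). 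The main obstacle in this plan is step three, namely establishing the $G'$-equivariance cleanly; the technical subtlety is that $H_{i}'$ need not be normal in $\Pi_{X_{i}^{\bullet}}$, so one must either pass to the normal core (as above) or rely on the outer/conjugation-functoriality of the reconstructions rather than on the full hypothesis of normality inside the ambient group.
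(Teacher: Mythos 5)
Your proposal is correct and is essentially the paper's own argument: Lemma \ref{lem-4-20} reduces the statement to Condition C (ii)--(iii), the bijections $\phi_{H_{1}'}^{\rm sg, ver}$ and $\phi_{H_{1}'}^{\rm sg, cl}$ furnished by Theorem \ref{them-4-13} and Theorem \ref{them-4-15} (equivalently, the ``moreover'' clause of Proposition \ref{prop-4-19}) are compatible with the action of $G''\defeq H_{i}/H_{i}'$, and taking quotients by this deck group, which identifies $v(\Gamma_{X^{\bp}_{H_{i}'}})/G''$ with $v(\Gamma_{X^{\bp}_{H_{i}}})$ and $e^{\rm cl}(\Gamma_{X^{\bp}_{H_{i}'}})/G''$ with $e^{\rm cl}(\Gamma_{X^{\bp}_{H_{i}}})$, yields the required cardinality equalities exactly as in the paper. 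One remark: your optional normal-core fallback would not work as written, since Lemma \ref{lem-4-20} only provides Condition C (i) for the core curves and verifying C (ii)--(iii) for them is precisely the kind of statement being established (invoking Proposition \ref{prop-4-23} there would be circular); but this detour is unnecessary, because the paper's proof and all its applications take $H_{2}'$ normal in $\Pi_{X_{2}^{\bp}}$ (forming $G'=\Pi_{X_{2}^{\bp}}/H_{2}'$), and your equivariance-via-functoriality argument already handles the weaker reading directly.
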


\begin{proof}
By Lemma \ref{lem-4-20}, to verify the lemma, we only need to prove that $X^{\bp}_{H_{1}}$ and $X^{\bp}_{H_{2}}$ satisfy Condition C (ii) and Condition C (iii).

Let $G' \defeq \Pi_{X^{\bp}_{1}}/H_{1}'= \Pi_{X^{\bp}_{2}}/H_{2}'$ and $G''\defeq H_{1}/H_{1}'=H_{2}/H_{2}' \subseteq G'$. By applying Proposition \ref{prop-4-19}, the following commutative diagrams  
\[
\begin{CD}
v(\Gamma_{X^{\bp}_{H'_{1}}}) @>\phi^{\rm sg, ver}_{H'_{1}}>> v(\Gamma_{X^{\bp}_{H'_{2}}}) 
\\
@V\gamma_{H'_{1}}^{\rm ver}VV
@V\gamma_{H'_{2}}^{\rm ver}VV
\\
v(\Gamma_{X^{\bp}_{1}}) @>\phi^{\rm sg,  ver}>> v(\Gamma_{X^{\bp}_{2}}),
\end{CD}
\]
\[
\begin{CD}
e^{\rm op}(\Gamma_{X^{\bp}_{H'_{1}}}) @>\phi^{\rm sg, op}_{H'_{1}}>> e^{\rm op}(\Gamma_{X^{\bp}_{H'_{2}}}) 
\\
@V\gamma_{H'_{1}}^{\rm op}VV
@V\gamma_{H'_{2}}^{\rm op}VV
\\
e^{\rm op}(\Gamma_{X^{\bp}_{1}}) @>\phi^{\rm sg, op}>> e^{\rm op}(\Gamma_{X^{\bp}_{2}}), 
\end{CD}
\]

\[
\begin{CD}
e^{\rm cl}(\Gamma_{X^{\bp}_{H'_{1}}}) @>\phi^{\rm sg, cl}_{H'_{1}}>> e^{\rm cl}(\Gamma_{X^{\bp}_{H'_{2}}}) 
\\
@V\gamma_{H'_{1}}^{\rm cl}VV
@V\gamma_{H'_{2}}^{\rm cl}VV
\\
e^{\rm cl}(\Gamma_{X^{\bp}_{1}}) @>\phi^{\rm sg, cl}>> e^{\rm cl}(\Gamma_{X^{\bp}_{2}})
\end{CD}
\] can be reconstructed group-theoretically from $H'_{i}\migiinje \Pi_{X^{\bp}_{i}}$, $\phi$, and $\phi_{H'_{1}} \defeq \phi|_{H'_{1}}$. Moreover, the commutative diagrams are compatible with the actions of $G''$ and $G'$. Then we obtain that $$\#v(\Gamma_{X^{\bp}_{H_{1}}})=\#(v(\Gamma_{X^{\bp}_{H'_{1}}})/G'')=\#(v(\Gamma_{X^{\bp}_{H'_{2}}})/G'')=\#v(\Gamma_{X^{\bp}_{H_{2}}}),$$
$$\#e^{\rm op}(\Gamma_{X^{\bp}_{H_{1}}})=\#(e^{\rm op}(\Gamma_{X^{\bp}_{H'_{1}}})/G'')=\#(e^{\rm op}(\Gamma_{X^{\bp}_{H'_{2}}})/G'')=\#e^{\rm op}(\Gamma_{X^{\bp}_{H_{2}}}),$$
$$\#e^{\rm cl}(\Gamma_{X^{\bp}_{H_{1}}})=\#(e^{\rm cl}(\Gamma_{X^{\bp}_{H'_{1}}})/G'')=\#(e^{\rm cl}(\Gamma_{X^{\bp}_{H'_{2}}})/G'')=\#e^{\rm cl}(\Gamma_{X^{\bp}_{H_{2}}}).$$ This means that $X^{\bp}_{H_{1}}$ and $X^{\bp}_{H_{2}}$ satisfy Condition C.
\end{proof}

\begin{lemma}\label{cond-1}
We maintain the notation introduced above. Suppose that $(\#G, p)=1$, and that $f_{H_{2}}$ is \'etale. Then $X^{\bp}_{H_{1}}$ and $X^{\bp}_{H_{2}}$ satisfy Condition A, Condition B, and Condition C.
\end{lemma}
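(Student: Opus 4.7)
The plan is induction on $\#G$. For the inductive step, since $G$ is solvable and $(\#G, p) = 1$, choose a normal subgroup $N \triangleleft G$ with $G/N \cong \mbZ/\ell\mbZ$ for some prime $\ell \neq p$, and let $H_{2}' \subseteq \Pi_{X_{2}^{\bp}}$ be the preimage of $N$, $H_{1}' \defeq \phi^{-1}(H_{2}')$. Then $H_{2} \subseteq H_{2}'$, the intermediate cover $f_{H_{2}'}$ is étale as a factor of the étale cover $f_{H_{2}}$, and $X^{\bp}_{H_{2}} \to X^{\bp}_{H_{2}'}$ is an étale Galois cover of group $N$ of order coprime to $p$. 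Once the $\mbZ/\ell\mbZ$ base case is established for $(\phi, H_{2}')$, Conditions A, B, C hold for $X^{\bp}_{H_{1}'}$ and $X^{\bp}_{H_{2}'}$, and applying Lemma \ref{cond-1} by induction on $\#N < \#G$ to the surjection $\phi|_{H_{1}'}: H_{1}' \migisurj H_{2}'$ with open normal subgroup $H_{2} \triangleleft H_{2}'$ yields the Conditions for $X^{\bp}_{H_{1}}$ and $X^{\bp}_{H_{2}}$.

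So the crux is the base case $G \cong \mbZ/\ell\mbZ$ with $\ell \neq p$. The nonzero element $\alpha_{2} \in M^{\et}_{X_{2}^{\bp}}$ corresponding to the étale cover $f_{H_{2}}$ is sent by the isomorphism $\psi_{\ell}^{\et}$ of Proposition \ref{prop-4-10} to $\alpha_{1} \in M^{\et}_{X_{1}^{\bp}}$ with $\ker(\alpha_{1}) = \phi^{-1}(\ker(\alpha_{2})) = H_{1}$, so $f_{H_{1}}$ is étale. Lemma \ref{lem-4-20} then supplies Conditions A, B, and C(i); Condition C(iii) is immediate from the étalité of both $f_{H_{i}}$ and Condition C(iii) for $X_{i}^{\bp}$ via $\#e^{\rm cl}(\Gamma_{X^{\bp}_{H_{i}}}) = \ell \cdot \#e^{\rm cl}(\Gamma_{X^{\bp}_{i}})$.

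Condition C(ii) is the heart of the argument: via $\#v(\Gamma_{X^{\bp}_{H_{i}}}) = (\ell-1)\#v^{\rm sp}_{f_{H_{i}}} + \#v(\Gamma_{X^{\bp}_{i}})$ it reduces to proving $\#v^{\rm sp}_{f_{H_{1}}} = \#v^{\rm sp}_{f_{H_{2}}}$. One inequality follows from Lemma \ref{lem-4-7}: applied to $f_{H_{1}}$ étale with $\#v^{\rm sp}_{f_{H_{1}}} = m$, together with $f_{H_{2}}$ being étale, it yields $\#v^{\rm sp}_{f_{H_{2}}} \leq m$. For the reverse I plan a global counting argument. Proposition \ref{prop-4-10} induces a bijection of nonzero elements $M^{\et}_{X_{1}^{\bp}} \setminus \{0\} \leftrightarrow M^{\et}_{X_{2}^{\bp}} \setminus \{0\}$ sending étale covers to étale covers, and Lemma \ref{lem-4-7} applies to each corresponding pair, so summing yields $S_{1} \geq S_{2}$, where $S_{i} \defeq \sum_{0 \neq \alpha \in M^{\et}_{X_{i}^{\bp}}} \#v^{\rm sp}_{f_{\alpha}}$. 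Exchanging the order of summation rewrites $S_{i} = \sum_{v \in v(\Gamma_{X^{\bp}_{i}})}(\ell^{\dim \ker \mr{res}_{v}} - 1)$, where $\mr{res}_{v}: M^{\et}_{X_{i}^{\bp}} \to H^{1}_{\et}(\widetilde X_{i, v}, \mbF_{\ell})$ is surjective by the normalization exact sequence (valid since each irreducible component of $X_i$ is smooth by Condition A). Hence $\dim \ker \mr{res}_{v} = \dim M^{\et}_{X_{i}^{\bp}} - 2 g_{i, v}$; Proposition \ref{prop-4-10} preserves $\dim M^{\et}$, and Theorem \ref{them-4-13} gives a vertex bijection preserving the individual $g_{i, v}$, so $S_{1} = S_{2}$. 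Combined with the pointwise inequality, equality of totals forces pointwise equality, and in particular $\#v^{\rm sp}_{f_{H_{1}}} = \#v^{\rm sp}_{f_{H_{2}}}$.

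The main obstacle is the intrinsic asymmetry of the surjection $\phi$: Lemma \ref{lem-4-7}-type bounds are inherently one-sided because $\phi$ only goes in one direction. The global counting argument overcomes this by converting the pointwise inequalities into an equality of totals via a dimension count of restriction kernels, and then matches the two totals by appealing to the full strength of Proposition \ref{prop-4-10} (matching $\dim M^{\et}$) together with Theorem \ref{them-4-13} (matching the multiset of vertex genera).
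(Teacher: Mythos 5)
Your proof is correct, but the way you handle Condition C (ii) is genuinely different from the paper. Both arguments share the opening moves: dévissage of the solvable prime-to-$p$ group $G$ to the case $G\cong\mbZ/\ell\mbZ$ (the paper does this implicitly, you spell out the induction), étaleness of $f_{H_1}$ via Proposition \ref{prop-4-10}, and Lemma \ref{lem-4-20} for Conditions A, B, C (i). At that point the paper does not compare $f_{H_1}$ and $f_{H_2}$ directly: it passes to the smaller open normal subgroups $H_i'$ given by the inverse image of $D_\ell(\Pi^{\et}_{X_i^\bp})$, i.e., to the maximal elementary abelian étale mod-$\ell$ covering, verifies Condition C for $X^{\bp}_{H_1'}$ and $X^{\bp}_{H_2'}$ from Theorem \ref{them-4-13} together with the explicit structure of the maximal prime-to-$p$ quotients, and then descends to the pair $(H_1, H_2)$ via Lemma \ref{cond-0}, hence via the $G$-equivariant reconstructed diagrams of Proposition \ref{prop-4-19}. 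You instead stay with the given cyclic covering and prove $\#v^{\rm sp}_{f_{H_1}}=\#v^{\rm sp}_{f_{H_2}}$ by a double count over all nonzero classes of $M^{\et}$: Lemma \ref{lem-4-7} gives the one-sided inequalities along the bijection $\psi_\ell^{\et}$, while exchanging the order of summation, using surjectivity of the restriction maps $M^{\et}_{X_i^\bp}\migi H^1_{\et}(\widetilde X_{i,v},\mbF_\ell)$ (normalization sequence, available since the components are smooth by Condition A) and the genus-matching of Theorem \ref{them-4-13}, shows the two totals coincide, so the termwise inequalities are equalities. Your route avoids Lemma \ref{cond-0} and Proposition \ref{prop-4-19} entirely for this lemma and in fact yields the slightly stronger conclusion that \emph{every} corresponding pair of étale $\mbZ/\ell\mbZ$-coverings has the same number of split vertices; the paper's route is less self-contained here but reuses exactly the descent machinery that is needed anyway for Lemma \ref{lem-4-21} and the rest of Section \ref{sec-4-5}. (One small point you leave implicit, which is fine: in your inductive step the étaleness of $X^{\bp}_{H_2}\migi X^{\bp}_{H_2'}$ follows because all edge inertia subgroups are contained in $H_2\subseteq H_2'$.)
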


\begin{proof}
By Lemma \ref{lem-4-20}, to verify the lemma, we only need to prove that $X^{\bp}_{H_{1}}$ and $X^{\bp}_{H_{2}}$ satisfy Condition C (ii) and Condition C (iii).  Moreover, since $G$ is a finite solvable group, to verify the lemma, it is sufficient to prove the lemme when $G\cong \mbZ/\ell\mbZ$, where $\ell$ is a prime number distinct from $p$. Thus, Proposition \ref{prop-4-10} implies that $f_{H_{1}}$ is also \'etale.

We denote by $H_{2}' \subseteq H_{2}$ the inverse image $D_{\ell}(\Pi_{X_{2}^{\bp}}^{\et})$ of the natural surjection $\Pi_{X_{2}^{\bp}} \migisurj \Pi_{X_{2}^{\bp}}^{\et}$. Then $H_{2}'$  is an open normal subgroup of $\Pi_{X_{2}^{\bp}}$. Let $H_{1}' \defeq \phi^{-1}(H_{2}')  \subseteq H_{1}$. We see that $H_{1}'$ is equal to the inverse image $D_{\ell}(\Pi_{X_{1}^{\bp}}^{\et})$ of the natural surjection $\Pi_{X_{1}^{\bp}} \migisurj \Pi_{X_{1}^{\bp}}^{\et}$. Since $X^{\bp}_{1}$ and $X_{2}^{\bp}$ satisfy Condition C, Theorem \ref{them-4-13} and the structures of the maximal prime-to-$p$ quotients of solvable admissible fundamental groups imply that $X_{H_{1}'}^{\bp}$ and $X_{H_{2}'}^{\bp}$ also satisfy Condition C. Then the lemma follows from Lemma \ref{cond-0}.
\end{proof}

\begin{lemma}\label{lem-4-21}
We maintain the notation introduced above. Suppose that $(\#G, p)=1$. Then $X^{\bp}_{H_{1}}$ and $X^{\bp}_{H_{2}}$ satisfy Condition A, Condition B, and Condition C.
\end{lemma}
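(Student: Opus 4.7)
The plan is to proceed by strong induction on $\#G$. Since $G$ is a quotient of a pro-solvable group (the solvable admissible fundamental group), it is a finite solvable group, and with $(\#G, p)=1$ every composition factor is cyclic of prime order distinct from $p$. When $\#G > 1$, I would select a proper normal subgroup $N \triangleleft G$ with $G/N$ cyclic of prime order $\ell \neq p$, letting $K_2 \subseteq \Pi_{X_2^{\bp}}$ denote its preimage and $K_1 \defeq \phi^{-1}(K_2)$, so that $H_i \subseteq K_i$ and $\Pi_{X_i^{\bp}}/K_i \cong \mbZ/\ell\mbZ$. By the inductive hypothesis applied first to $(\Pi_{X_i^{\bp}}, K_i)$ (base case: cyclic prime order), $X^{\bp}_{K_i}$ satisfies Conditions A, B, C; then the surjection $\phi|_{K_1}: K_1 \migisurj K_2$ together with the subgroups $H_i \subseteq K_i$ of quotient $|N| < \#G$ prime to $p$ falls into the scope of the induction and yields that $X^{\bp}_{H_i}$ satisfies Conditions A, B, C.

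The base case is therefore $\#G = \ell$ prime with $\ell \neq p$. Lemma \ref{lem-4-20} already supplies Conditions A, B, and C (i), so it suffices to establish the two equalities $\#v(\Gamma_{X^{\bp}_{H_1}}) = \#v(\Gamma_{X^{\bp}_{H_2}})$ and $\#e^{\rm cl}(\Gamma_{X^{\bp}_{H_1}}) = \#e^{\rm cl}(\Gamma_{X^{\bp}_{H_2}})$. If the Galois admissible covering $f_{H_2}^{\bp}$ is \'etale, Lemma \ref{cond-1} concludes the argument directly. If $f_{H_2}^{\bp}$ is ramified, I would perform an Abhyankar absorption: set $Q_i \defeq D_{\ell}^{(3)}(\Pi_{X_i^{\bp}})$, observing that $\phi^{-1}(Q_2) = Q_1$ since $\phi$ restricts to an isomorphism on maximal pro-prime-to-$p$ quotients. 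Imitating the ramification estimate of Proposition \ref{prop-1} (Lemmas 3.2 and 3.3 of \cite{Y7}), the ramification index at every closed point of $X^{\bp}_{Q_i}$ lying over a node or marked point of $X^{\bp}_i$ is divisible by $\ell$, so Abhyankar's lemma makes the induced covering $X^{\bp}_{H_i \cap Q_i} \migi X^{\bp}_{Q_i}$ \'etale. An application of Lemma \ref{cond-1} at the level of $Q_i$ would then yield that $X^{\bp}_{H_i \cap Q_i}$ satisfies Conditions A, B, C, and Lemma \ref{cond-0} descends this to $X^{\bp}_{H_i}$.

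The hard part will be verifying that the pair $(X^{\bp}_{Q_1}, X^{\bp}_{Q_2})$ itself satisfies Conditions A, B, and C, which is needed before invoking Lemma \ref{cond-1} in the previous paragraph. Condition A follows from Lemma \ref{rem-them-1-1-3}, Condition B is automatic since every open subgroup of $Q_i$ is an open subgroup of $\Pi_{X_i^{\bp}}$, and equality of types (C (i)) comes from Lemma \ref{lem-4-20}. The delicate point is the equality of the vertex count and closed-edge count of $\Gamma_{X^{\bp}_{Q_i}}$. My plan is to express each such count as a sum over $v(\Gamma_{X^{\bp}_i})$ (resp.\ $e^{\rm cl}(\Gamma_{X^{\bp}_i})$) of the indices of the images of vertex (resp.\ edge) stabilizers in $\Pi_{X_i^{\bp}}/Q_i$, a quotient which factors through the pro-$\ell$ quotient $\Pi_{X_i^{\bp}}^{\ell}$. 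Using the isomorphism $\phi^{\ell}: \Pi_{X_1^{\bp}}^{\ell} \isom \Pi_{X_2^{\bp}}^{\ell}$ and the bijections $\phi^{\rm sg, ver}$ and $\phi^{\rm sg, cl}$ of Theorems \ref{them-4-13} and \ref{them-4-15}, together with the cohomological description of vertices and closed edges in Propositions \ref{pro-2-1} and \ref{pro-2-2} and the preservation of weight-monodromy filtrations in Propositions \ref{prop-4-10} and \ref{prop-4-11}, the images of these stabilizers should match up in $\Pi_{X_i^{\bp}}/Q_i$ under $\phi^{\ell}$, yielding the required equalities and completing the induction.
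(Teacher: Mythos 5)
Your reduction to $G\cong \mbZ/\ell\mbZ$ and your treatment of the case where $f_{H_{2}}^{\bp}$ is \'etale (via Lemma \ref{lem-4-20} and Lemma \ref{cond-1}) agree with the paper. The ramified case, however, has a genuine gap at exactly the point you identify as ``the hard part''. To invoke Lemma \ref{cond-1} with ambient pair $(X^{\bp}_{Q_{1}}, X^{\bp}_{Q_{2}})$, $Q_{i}\defeq D_{\ell}^{(3)}(\Pi_{X_{i}^{\bp}})$, you must first prove Condition C (ii), (iii) for that pair, and your proposed argument --- matching the images of vertex and closed-edge stabilizers in $\Pi_{X_{i}^{\bp}}/Q_{i}$ under $\phi^{\ell}$ --- is not supported by the results available at this stage. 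Theorems \ref{them-4-13} and \ref{them-4-15} only produce bijections of the \emph{sets} $v(\Gamma_{X_{1}^{\bp}})\isom v(\Gamma_{X_{2}^{\bp}})$ and $e^{\rm cl}(\Gamma_{X_{1}^{\bp}})\isom e^{\rm cl}(\Gamma_{X_{2}^{\bp}})$ via cohomology classes; they say nothing about $\phi$ carrying $\Pi_{\widehat v_{1}}$ or $I_{\widehat e_{1}}$ into (conjugates of) the corresponding stabilizers, nor about equality of the orders of their images in the finite quotient $\Pi_{X_{i}^{\bp}}/Q_{i}$, which is what a fiber-counting formula for $\#v(\Gamma_{X_{Q_{i}}^{\bp}})$ and $\#e^{\rm cl}(\Gamma_{X_{Q_{i}}^{\bp}})$ requires. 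That compatibility of stabilizers with $\phi$ is essentially Theorem \ref{comgc}, which is proved \emph{after} and \emph{by means of} this lemma (via Proposition \ref{prop-4-23}); likewise Proposition \ref{prop-4-19}, which would give compatibility of the bijections at the level of a cover, already assumes Condition C for the cover-pair. So as written the plan is circular, or at best requires a substantial new argument. A second, smaller gap: your Abhyankar step needs the ramification index of $X_{Q_{i}}^{\bp}\migi X_{i}^{\bp}$ to be divisible by $\ell$ at every \emph{node} as well as every marked point, but Proposition \ref{prop-1} (and the cited Lemmas 3.2--3.3 of \cite{Y7}) give this only for marked points; a node whose vanishing cycle is separating has trivial image in the first mod-$\ell$ homology, so the divisibility at nodes needs its own proof (that such cycles become non-separating after one characteristic cover), which you do not supply.

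For comparison, the paper avoids large characteristic subgroups altogether. It works over the auxiliary degree-$d$ \'etale cover $Y_{i}^{\bp}$ of the edge-triple and builds explicit $\mbZ/\ell\mbZ$-covers $g_{i,\alpha_{i}}$ (a sum of classes in $E^{\rm cl,\star,0}_{\mfT,e}$) totally ramified over \emph{all} nodes, resp.\ $g_{i,\beta_{i}}$ (classes in $E^{\rm op,\star,0}_{\mfT,e}$) totally ramified over \emph{all} marked points, and then uses Proposition \ref{prop-op-cl} --- which rests on the ${\rm Avr}_{p}$ comparison --- to force the induced cover on the $X_{2}^{\bp}$-side to have exactly the same ramification and splitting behavior. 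This is precisely how Condition C is verified for the auxiliary pair, i.e.\ the step your proposal leaves unproven; Abhyankar's lemma then kills the ramification of $f_{H_{i}}^{\bp}$ over nodes (resp.\ marked points), and Lemmas \ref{cond-1} and \ref{cond-0} conclude, with the general case reduced in two stages (first to $f_{H_{i}}^{\bp}$ \'etale over the marked points, then to the fully \'etale case).
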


\begin{proof}
By Lemma \ref{lem-4-20}, to verify the lemma, we only need to prove that $X^{\bp}_{H_{1}}$ and $X^{\bp}_{H_{2}}$ satisfy Condition C (ii) and Condition C (iii).

Since $G$ is a finite solvable group, to verify the lemma, it is sufficient to prove the lemme when $G\cong \mbZ/\ell\mbZ$, where $\ell$ is a prime number distinct from $p$.

Let $\mfT_{\Pi_{X_{2}^{\bp}}}=(\ell, d, \alpha_{f_{X_{2}}}: \Pi_{X_{2}^{\bp}}^{\et} \migisurj \mbF_{d})$ be an edge-triple associated to $\Pi_{X_{2}^{\bp}}$, $\mfT_{\Pi_{X_{1}^{\bp}}}=(\ell, d, \alpha_{f_{X_{1}}}: \Pi_{X_{1}^{\bp}}^{\et} \migisurj \mbF_{d})$ the edge-triple associated to $\Pi_{X_{1}^{\bp}}$ induced by $\phi$, and $\mfT_{X_{i}^{\bp}}=(\ell, d, f_{X_{i}}^{\bp}: Y_{i}^{\bp} \migi X_{i}^{\bp})$ the edge-triple associated to $X_{i}^{\bp}$ corresponding to $\mfT_{\Pi_{X_{i}^{\bp}}}$. 

First, we suppose that $f_{H_{2}}$ is \'etale over $D_{X_{2}}$. Then Theorem \ref{mainstep-1} implies that $f_{H_{1}}$ is also \'etale over $D_{X_{1}}$. Let $\alpha_{e_{1}} \in E^{\rm cl, \star, 0}_{\mfT_{\Pi_{X^{\bp}_{1}}}, e_{1}}$, $e_{1} \in e^{\rm cl}(\Gamma_{X_{1}^{\bp}})$, $$\alpha_{1} \defeq \sum_{e_{1} \in e^{\rm cl}(\Gamma_{X_{1}^{\bp}})}\alpha_{e_{1}} \in E^{*}_{\mfT_{\Pi_{X^{\bp}_{1}}}},$$ and $g_{1, \alpha_{1}}^{\bp}: Y_{1, \alpha_{1}}^{\bp} \migi Y_{1}^{\bp}$ the Galois admissible covering over $k_{1}$ corresponding to $\alpha_{1}$. Note that we have that $\#e_{g_{1, \alpha_{1}}}^{\rm op, ra}=\#v_{g_{1, \alpha_{1}}}^{\rm sp}=0$. Let $\phi^{\rm sg, cl}: e^{\rm cl}(\Gamma_{X_{1}^{\bp}}) \isom  e^{\rm cl}(\Gamma_{X_{2}^{\bp}})$ be the bijection of the sets of closed edges obtained in Theorem \ref{them-4-15}, $\alpha_{\phi^{\rm sg, cl}(e_{1})} \in E^{\rm cl, \star, 0}_{\mfT_{\Pi_{X^{\bp}_{2}}}, \phi^{\rm sg, cl}(e_{1})}$ the element induced by $\phi$ for every $e_{1} \in e^{\rm cl}(\Gamma_{X_{1}^{\bp}})$, $$\alpha_{2} \defeq \sum_{e_{1} \in e^{\rm cl}(\Gamma_{X_{1}^{\bp}})}\alpha_{\phi^{\rm sg, cl}(e_{1})} \in E^{*}_{\mfT_{\Pi_{X^{\bp}_{2}}}},$$ and $g_{2, \alpha_{2}}^{\bp}: Y_{2, \alpha_{2}}^{\bp} \migi Y_{2}^{\bp}$ the Galois admissible covering over $k_{2}$ corresponding to $\alpha_{2}$.  Then Proposition \ref{prop-op-cl} (a) implies that $$\#e_{g_{2, \alpha_{2}}}^{\rm op, ra}=\#v_{g_{2, \alpha_{2}}}^{\rm sp}=0.$$ We obtain that $g_{i, \alpha_{i}}$ is totally ramified over every node of $Y_{i}$, and that $Y_{1, \alpha_{1}}^{\bp}$ and $Y_{2, \alpha_{2}}^{\bp}$ satisfy Condition A, Condition B, and Condition C. Write $N_{i} \subseteq \Pi_{X_{i}^{\bp}}$ for the open normal subgroup corresponding to $Y^{\bp}_{i, \alpha_{i}}$.

Let $H'_{i} \defeq H_{i}\cap N_{i}$ and $X_{H'_{i}}^{\bp}$ the pointed stable curve over $k_{i}$ corresponding to $H'_{i}$. Note that $X_{H'_{i}}^{\bp}$ is isomorphic to a connected component of $$X_{H_{i}}^{\bp} \times_{X_{i}^{\bp}} Y^{\bp}_{g_{i, \alpha_{i}}}.$$ We denote by $h_{i}^{\bp}: X_{H'_{i}}^{\bp} \migi Y^{\bp}_{g_{i, \alpha_{i}}}$ the Galois admissible covering over $k_{i}$ corresponding to the injection $H'_{i} \migiinje N_{i}$. By applying Abhyankar's lemma, $f_{H_{i}}$ is \'etale over $D_{X_{i}}$ implies that  $h_{i}$ is \'etale. Then the lemma follows from Lemma \ref{cond-0} and Lemma \ref{cond-1}. This completes the proof of the lemme when $f_{H_{2}}$ is \'etale over $D_{X_{2}}$.

Next, let us prove the lemme in the general case. We take $\beta_{e_{1}} \in E^{\rm op, \star, 0}_{\mfT_{\Pi_{X^{\bp}_{1}}}, e_{1}}$ for every $e_{1} \in e^{\rm op}(\Gamma_{X_{1}^{\bp}})$ such that $\#v^{\rm sp}_{g_{1, \beta_{1}}}=0$, where $$\beta_{1} \defeq \sum_{e_{1} \in e^{\rm op}(\Gamma_{X_{1}^{\bp}})}\beta_{e_{1}} \in E^{*}_{\mfT_{\Pi_{X^{\bp}_{1}}}},$$ and $g_{1, \beta_{1}}^{\bp}: Y_{1, \beta_{1}}^{\bp} \migi Y_{1}^{\bp}$ is the Galois admissible covering over $k_{1}$ corresponding to $\beta_{1}$. Note that we have that $\#e_{g_{1, \beta_{1}}}^{\rm cl, ra}=\#v_{g_{1, \beta_{1}}}^{\rm sp}=0$. Let $\phi^{\rm sg, op}: e^{\rm op}(\Gamma_{X_{1}^{\bp}}) \isom  e^{\rm op}(\Gamma_{X_{2}^{\bp}})$ be the bijection of the sets of open edges obtained in Theorem \ref{mainstep-1}, $\beta_{\phi^{\rm sg, op}(e_{1})} \in E^{\rm op, \star, 0}_{\mfT_{\Pi_{X^{\bp}_{2}}}, \phi^{\rm sg, op}(e_{1})}$ the element induced by $\phi$ for every $e_{1} \in e^{\rm op}(\Gamma_{X_{1}^{\bp}})$, $$\beta_{2} \defeq \sum_{e_{1} \in e^{\rm op}(\Gamma_{X_{1}^{\bp}})}\beta_{\phi^{\rm sg, op}(e_{1})} \in E^{*}_{\mfT_{\Pi_{X^{\bp}_{2}}}},$$ and $g_{2, \beta_{2}}^{\bp}: Y_{2, \beta_{2}}^{\bp} \migi Y_{2}^{\bp}$ the Galois admissible covering over $k_{2}$ corresponding to $\beta_{2}$.  Then Proposition \ref{prop-op-cl} (b) implies that $$\#e_{g_{2, \beta_{2}}}^{\rm cl, ra}=\#v_{g_{2, \beta_{2}}}^{\rm sp}=0.$$ We obtain that $g_{i, \beta_{i}}$ is totally ramified over every marked point of $Y_{i}$, and that $Y_{1, \beta_{1}}^{\bp}$ and $Y_{2, \beta_{2}}^{\bp}$ satisfy Condition A, Condition B, and Condition C. Write $Q_{i} \subseteq \Pi_{X_{i}^{\bp}}$ for the open normal subgroup corresponding to $Y^{\bp}_{i, \beta_{i}}$.

Let $H''_{i} \defeq H_{i}\cap Q_{i}$ and $X_{H''_{i}}^{\bp}$ the pointed stable curve over $k_{i}$ corresponding to $H''_{i}$. Note that $X_{H''_{i}}^{\bp}$ is isomorphic to a connected component of $$X_{H_{i}}^{\bp} \times_{X_{i}^{\bp}} Y^{\bp}_{g_{i, \beta_{i}}}.$$ We denote by $h_{i}^{*, \bp}: X_{H''_{i}}^{\bp} \migi Y^{\bp}_{g_{i, \beta_{i}}}$ the Galois admissible covering over $k_{i}$ corresponding to the injection $H''_{i} \migiinje Q_{i}$. By applying Abhyankar's lemma,  $h_{i}^{*}$ is \'etale over $D_{Y_{g_{i, \beta_{i}}}}$. By applying the lemma in the case where $g_{i, \beta_{i}}$ is \'etale over $D_{Y_{i}}$, we obtain that $X_{H''_{1}}^{\bp}$ and $X_{H''_{2}}^{\bp}$ satisfy Condition A, Condition B, and Condition C. Then the lemma follows from Lemma \ref{cond-0}. We complete the proof of the lemma. 
\end{proof}

\begin{lemma}\label{lem-4-22}
We maintain the notation introduced above. Suppose that $G$ is a $p$-group. Then  $X^{\bp}_{H_{1}}$ and $X^{\bp}_{H_{2}}$ satisfy Condition A, Condition B, and Condition C.
\end{lemma}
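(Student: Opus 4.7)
The approach is a two-step reduction: first induct on $|G|$ to reduce to $G\cong\mbZ/p\mbZ$, then establish Condition C by analyzing how the injection $\psi_p: N_{X^\bp_2}\migiinje N_{X^\bp_1}$ (where $N_{X^\bp_i}\defeq \text{Hom}(\Pi_{X^\bp_i},\mbF_p)$) interacts with the vertex decomposition of $N_{X^\bp_i}$ modulo its topological part. For the reduction, since $G$ is a $p$-group and hence nilpotent, there is an open normal subgroup $H_{2,1}\subseteq\Pi_{X^\bp_2}$ with $H_2\subseteq H_{2,1}$ and $\Pi_{X^\bp_2}/H_{2,1}\cong\mbZ/p\mbZ$; setting $H_{1,1}\defeq\phi^{-1}(H_{2,1})$, the $\mbZ/p\mbZ$-case applied to $(\phi,H_{2,1})$ yields Conditions A, B, C for $X^\bp_{H_{i,1}}$, and applying the lemma inductively to $\phi_{H_{1,1}}\defeq\phi|_{H_{1,1}}: H_{1,1}\migisurj H_{2,1}$ together with $H_2\subseteq H_{2,1}$ (whose quotient is a $p$-group of strictly smaller order) completes the reduction.

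Now assume $G\cong\mbZ/p\mbZ$. By Lemma \ref{lem-4-20} and the Riemann-Hurwitz formula applied to the étale covering $f^\bp_{H_i}$, Conditions A, B, and C(i) hold for $X^\bp_{H_i}$. By Definition \ref{def-2}(iv), decomposition groups at nodes in any admissible covering are of order prime to $p$; since $|G|=p$ they are trivial, so every node of $X^\bp_i$ splits completely in $f^\bp_{H_i}$. Consequently $\#e^{\rm cl}(\Gamma_{X^\bp_{H_i}})=p\cdot\#e^{\rm cl}(\Gamma_{X^\bp_i})$, and Condition C(iii) for $X^\bp_{H_i}$ follows from Condition C(iii) for $X^\bp_i$.

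For Condition C(ii), let $\alpha_i\in N_{X^\bp_i}$ correspond to $f^\bp_{H_i}$, so that $\alpha_1=\psi_p(\alpha_2)$. Because the inertia subgroups of marked points and nodes of $\Pi_{X^\bp_i}$ are prime-to-$p$, restriction to vertex subgroups furnishes a canonical short exact sequence
\[
0\migi N^{\rm top}_{X^\bp_i}\migi N_{X^\bp_i}\migi \bigoplus_{v\in v(\Gamma_{X^\bp_i})^{>0,p}} H^1_{\et}(\widetilde X_{i,v},\mbF_p)\migi 0,
\]
in which $v^{\rm ra}_{f_{H_i}}\subseteq v(\Gamma_{X^\bp_i})^{>0,p}$ is exactly the support of the image of $\alpha_i$ in the quotient. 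By Lemma \ref{lem-4-16}, $\psi_p$ restricts to an isomorphism on the topological parts, so it descends to an injection $\bar\psi_p$ of the direct-sum quotients. By Lemma \ref{lem-4-17} combined with the construction of $\psi_p^{\rm sg,ver}$ in Theorem \ref{them-4-18}, $\psi_p$ carries $L^{v_2,p}_{X^\bp_2}$ into $L^{v_1,p}_{X^\bp_1}$ for every $v_2\in v(\Gamma_{X^\bp_2})^{>0,p}$, where $v_1\defeq \psi_p^{\rm sg,ver}(v_2)$. By $\mbF_p$-linearity this forces $\bar\psi_p$ to split as a direct sum of injective maps $H^1_{\et}(\widetilde X_{2,v_2},\mbF_p)\migiinje H^1_{\et}(\widetilde X_{1,\psi_p^{\rm sg,ver}(v_2)},\mbF_p)$ indexed by $v_2\in v(\Gamma_{X^\bp_2})^{>0,p}$. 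Applying $\bar\psi_p$ to $\alpha_2$ yields $v^{\rm ra}_{f_{H_1}}=\psi_p^{\rm sg,ver}(v^{\rm ra}_{f_{H_2}})$, and the injectivity of $\psi_p^{\rm sg,ver}$ gives $\#v^{\rm ra}_{f_{H_1}}=\#v^{\rm ra}_{f_{H_2}}$. Since $\#v(\Gamma_{X^\bp_{H_i}})=p\#v^{\rm sp}_{f_{H_i}}+\#v^{\rm ra}_{f_{H_i}}$ with $\#v^{\rm sp}_{f_{H_i}}+\#v^{\rm ra}_{f_{H_i}}=\#v(\Gamma_{X^\bp_i})$ and $\#v(\Gamma_{X^\bp_1})=\#v(\Gamma_{X^\bp_2})$ by Condition C for $X^\bp_i$, Condition C(ii) for $X^\bp_{H_i}$ follows. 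The main obstacle is justifying the block-diagonal structure of $\bar\psi_p$: the single-vertex compatibility from Lemma \ref{lem-4-17} and Theorem \ref{them-4-18} must be upgraded, via $\mbF_p$-linearity and the canonical vertex decomposition above, to the statement that arbitrary non-split vertex patterns are transported faithfully by $\psi_p^{\rm sg,ver}$.
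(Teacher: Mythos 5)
Your proposal is correct and follows essentially the same route as the paper: Lemma \ref{lem-4-20} plus the reduction to $G\cong\mbZ/p\mbZ$ (the paper's ``without loss of generality''), \'etaleness and trivial decomposition groups at nodes giving Condition C (iii), and the transport of the set of non-split vertices via $\psi_{p}^{\rm sg, ver}$ (Lemmas \ref{lem-4-16}, \ref{lem-4-17}, Theorem \ref{them-4-18}), which is exactly the paper's assertion $V_{1}=V_{1}'$. The ``obstacle'' you flag is not a real one: since the quotient $N_{X_{i}^{\bp}}/N_{X_{i}^{\bp}}^{\rm top}$ is the direct sum of the vertex blocks and $\psi_{p}$ carries each $[L^{v_{2},p}_{X_{2}^{\bp}}]$ into $[L^{\psi_{p}^{\rm sg,ver}(v_{2}),p}_{X_{1}^{\bp}}]$ (as in the proof of Theorem \ref{them-4-18}), linearity together with the injectivity of $\bar\psi_{p}$ and of $\psi_{p}^{\rm sg,ver}$ immediately yields the support statement, which is the detailed form of the paper's one-line appeal to Lemma \ref{lem-4-17}.
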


\begin{proof}
By Lemma \ref{lem-4-20}, to verify the lemma, we only need to prove that $X^{\bp}_{H_{1}}$ and $X^{\bp}_{H_{2}}$ satisfy Condition C (ii) and Condition C (iii).

To verify the lemma, without loss the generality, it is sufficient to treat the case where $G\cong \mbZ/p\mbZ$. Since $f_{H_{i}}^{\bp}$ is \'etale, $X^{\bp}_{H_{1}}$ and $X^{\bp}_{H_{2}}$ satisfy Condition C (iii). 

Let $V_{i} \subseteq v(\Gamma_{X_{i}^{\bp}})^{>0, p}$ be the subset of vertices such that the natural homomorphism $$\Pi_{\widetilde X^{\bp}_{i, v_{i}}} \migiinje \Pi_{X_{i}^{\bp}} \migisurj  G \defeq \Pi_{X_{i}^{\bp}}/H_{i}$$ is non-trivial (since $G \cong \mbZ/p\mbZ$, the homomorphism is a surjection). Then we obtain $\#v(\Gamma_{X_{H_{i}}^{\bp}})=p(\#v(\Gamma_{X_{i}^{\bp}})-\#V_{i})+\#V_{i}$ and  $\#e^{\rm cl}(\Gamma_{X_{H_{i}}^{\bp}})=p\#e^{\rm cl}(\Gamma_{X_{i}^{\bp}})$.

Let $$\psi_{p}^{{\rm sg, ver}}: v(\Gamma_{X^{\bp}_{2}})^{>0, p} \migiinje v(\Gamma_{X^{\bp}_{1}})^{>0, p}$$ be the injection induced by $\phi$, which is obtained in Theorem \ref{them-4-18}. We put $$V'_{1} \defeq \{\psi_{p}^{{\rm sg, ver}}(v_{2})\}_{v_{2} \in V_{2}}\subseteq v(\Gamma_{X_{1}^{\bp}})^{>0, p}.$$ By applying Lemma \ref{lem-4-17}, we see that $$V_{1}=V'_{1}.$$ Thus, we have $\#v(\Gamma_{X_{H_{1}}^{\bp}})=\#v(\Gamma_{X_{H_{2}}^{\bp}})$ and $\#e^{\rm cl}(\Gamma_{X_{H_{1}}^{\bp}})=\#e^{\rm cl}(\Gamma_{X_{H_{2}}^{\bp}})$. This completes the proof of the lemma.
\end{proof}

\begin{proposition}\label{prop-4-23}
We maintain the notation introduced above. Then  $X^{\bp}_{H_{1}}$ and $X^{\bp}_{H_{2}}$ satisfy Condition A, Condition B, and Condition C.
\end{proposition}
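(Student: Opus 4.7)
The plan is to proceed by induction on $\#G$, where $G = \Pi_{X_1^{\bp}}/H_1 = \Pi_{X_2^{\bp}}/H_2$. By Lemma \ref{lem-4-20}, I need only verify the equalities $\#v(\Gamma_{X^{\bp}_{H_1}}) = \#v(\Gamma_{X^{\bp}_{H_2}})$ and $\#e^{\rm cl}(\Gamma_{X^{\bp}_{H_1}}) = \#e^{\rm cl}(\Gamma_{X^{\bp}_{H_2}})$, that is, the content of Condition C (ii) and (iii), since Conditions A, B, and C (i) have already been secured in that lemma.

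The base case $\#G = 1$ is trivial. When $\#G = \ell$ is prime, $G \cong \mbZ/\ell\mbZ$, and one can directly invoke Lemma \ref{lem-4-22} if $\ell = p$ or Lemma \ref{lem-4-21} if $\ell \neq p$ to conclude.

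For the inductive step, I will exploit the fact that $\Pi_{X_i^{\bp}}$ is pro-solvable, so that $G$ is a finite solvable group. Such a $G$ admits a proper normal subgroup $N \triangleleft G$ with $G/N$ cyclic of some prime order $\ell$. Let $K_i \subseteq \Pi_{X_i^{\bp}}$ be the preimage of $N$ under the quotient $\Pi_{X_i^{\bp}} \migisurj G$; then $K_i$ is open and normal in $\Pi_{X_i^{\bp}}$, $H_i \subseteq K_i$, $K_1 = \phi^{-1}(K_2)$, and $\Pi_{X_i^{\bp}}/K_i \cong \mbZ/\ell\mbZ$. The first step is to apply Lemma \ref{lem-4-21} (if $\ell \neq p$) or Lemma \ref{lem-4-22} (if $\ell = p$) to the surjection $\phi$ together with the normal subgroup $K_2$, which yields that $X^{\bp}_{K_1}$ and $X^{\bp}_{K_2}$ satisfy Condition A, Condition B, and Condition C. The second step is to apply the induction hypothesis to the restricted surjection $\phi|_{K_1}: K_1 \migisurj K_2$ together with the open normal subgroup $H_2 \subseteq K_2$: note that $(\phi|_{K_1})^{-1}(H_2) = H_1$, and $K_2/H_2 \cong N$ has order $\#G/\ell < \#G$, so the inductive hypothesis applies and produces the required equalities for $X^{\bp}_{H_1}$ and $X^{\bp}_{H_2}$.

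The argument is a straightforward Jordan-H\"older--style reduction, and I do not anticipate any substantive obstacle: all of the genuine content already resides in the two prime-order cases established in Lemma \ref{lem-4-21} and Lemma \ref{lem-4-22}, and the induction merely propagates them along a chief series of $G$. The only minor point requiring care is that, before invoking the inductive hypothesis on the smaller quotient $K_2/H_2 \cong N$, one must know that the intermediate pair $(X^{\bp}_{K_1}, X^{\bp}_{K_2})$ itself satisfies all three of Conditions A, B, and C, so that the hypothesis of the proposition is actually verified for $\phi|_{K_1}$; this is precisely what the first step, supplied by the prime-order lemmas, furnishes.
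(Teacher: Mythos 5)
Your proposal is correct and is essentially the paper's argument: the paper's proof is the one-line observation that, since $G$ is solvable, the statement follows from Lemma \ref{lem-4-21} and Lemma \ref{lem-4-22}, with the reduction along a normal series left implicit. Your explicit induction on $\#G$ via a prime-order cyclic quotient simply spells out that implicit step, correctly noting that the intermediate pair $(X^{\bp}_{K_{1}}, X^{\bp}_{K_{2}})$ must first be shown to satisfy Conditions A, B, and C before the inductive hypothesis is invoked.
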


\begin{proof}
Since $G$ is a solvable group, the proposition follows from Lemma \ref{lem-4-21} and Lemma \ref{lem-4-22}.
\end{proof}

Next, we prove the main result of the present section which we call combinatorial Grothendieck conjecture for surjections.

\begin{theorem}\label{comgc}
We maintain the notation introduced above. Then the surjective open continuous homomorphism $\phi: \Pi_{X^{\bp}_{1}} \migisurj \Pi_{X_{2}^{\bp}}$ induces  surjective maps $$\phi^{\rm ver}: {\rm Ver}(\Pi_{X_{1}^{\bp}}) \migisurj {\rm Ver}(\Pi_{X_{2}^{\bp}}),$$ $$\phi^{\rm edg, op}: {\rm Edg}^{\rm op}(\Pi_{X_{1}^{\bp}}) \migisurj {\rm Edg}^{\rm op}(\Pi_{X_{2}^{\bp}}),$$ $$\phi^{\rm edg, cl}: {\rm Edg}^{\rm cl}(\Pi_{X_{1}^{\bp}}) \migisurj {\rm Edg}^{\rm cl}(\Pi_{X_{2}^{\bp}})$$ group-theoretically. Moreover, $\phi$ induces an isomorphism $$\phi^{\rm sg}: \Gamma_{X_{1}^{\bp}} \isom \Gamma_{X^{\bp}_{2}}$$ of the dual semi-graphs of $X_{1}^{\bp}$ and $X_{2}^{\bp}$ group-theoretically.
\end{theorem}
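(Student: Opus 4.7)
The plan is to combine the bijections at the base dual-semi-graph level produced by Theorem \ref{mainstep-1}, Theorem \ref{them-4-13}, and Theorem \ref{them-4-15} with the stability assertion of Proposition \ref{prop-4-23}, which ensures that Conditions A, B, C persist after pulling back any open normal subgroup of $\Pi_{X_{2}^{\bp}}$ through $\phi$. A standard inverse-limit argument, with compatibility supplied by Proposition \ref{prop-4-19}, will upgrade these base-level bijections to the universal-cover level, delivering the surjective maps on $\text{Ver}$, $\text{Edg}^{\rm op}$, and $\text{Edg}^{\rm cl}$. The semi-graph isomorphism $\phi^{\rm sg}$ then assembles the three base-level bijections once the edge-vertex incidence is checked.

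More precisely, first I would fix a cofinal system $\{H_{2,\alpha}\}_{\alpha}$ of open normal subgroups of $\Pi_{X_{2}^{\bp}}$ and set $H_{1,\alpha}\defeq \phi^{-1}(H_{2,\alpha})$. By Proposition \ref{prop-4-23}, the pair $X^{\bp}_{H_{1,\alpha}}, X^{\bp}_{H_{2,\alpha}}$ satisfies Conditions A, B, C for every $\alpha$, so Theorem \ref{mainstep-1}, Theorem \ref{them-4-13}, and Theorem \ref{them-4-15}, applied to the surjection $\phi|_{H_{1,\alpha}}\colon H_{1,\alpha}\migisurj H_{2,\alpha}$, yield a bijection $\phi_{H_{1,\alpha}}^{\rm sg, ver}\colon v(\Gamma_{X^{\bp}_{H_{1,\alpha}}}) \isom v(\Gamma_{X^{\bp}_{H_{2,\alpha}}})$ and analogous bijections $\phi_{H_{1,\alpha}}^{\rm sg, op}$ and $\phi_{H_{1,\alpha}}^{\rm sg, cl}$ on the sets of open edges and closed edges. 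Proposition \ref{prop-4-19} then gives the commutativity of these bijections with the specialization maps $\gamma_{H}^{\rm ver}, \gamma_{H}^{\rm op}, \gamma_{H}^{\rm cl}$ attached to refinements $H_{2,\alpha'}\subseteq H_{2,\alpha}$, so the entire family forms a compatible inverse system.

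Passing to the inverse limit in $\alpha$, the right-hand sides recover $v(\Gamma_{\widehat X_{2}^{\bp}})$, $e^{\rm op}(\Gamma_{\widehat X_{2}^{\bp}})$, and $e^{\rm cl}(\Gamma_{\widehat X_{2}^{\bp}})$ because $\{H_{2,\alpha}\}_{\alpha}$ is cofinal in $\Pi_{X_{2}^{\bp}}$, while the left-hand sides are quotients of $v(\Gamma_{\widehat X_{1}^{\bp}})$, $e^{\rm op}(\Gamma_{\widehat X_{1}^{\bp}})$, and $e^{\rm cl}(\Gamma_{\widehat X_{1}^{\bp}})$ by the equivalence relation generated by $\ker\phi$. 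Composing the natural surjections from the universal cover with these inverse-limit bijections, and invoking the canonical identifications $\text{Ver}(\Pi_{X_{i}^{\bp}})\leftrightarrow v(\Gamma_{\widehat X_{i}^{\bp}})$ (and similarly for edges), produces the desired surjections $\phi^{\rm ver}, \phi^{\rm edg, op}, \phi^{\rm edg, cl}$; by Theorem \ref{types} the construction is group-theoretic. A direct computation at each finite quotient identifies $\phi^{\rm ver}(\Pi_{\widehat v_{1}})$ with $\phi(\Pi_{\widehat v_{1}})$: the image of the decomposition group of $\widehat v_{1}$ in $\Pi_{X_{2}^{\bp}}/H_{2,\alpha}$ coincides with the decomposition group of $\phi_{H_{1,\alpha}}^{\rm sg, ver}(v_{1,\alpha})$, and since $\phi(\Pi_{\widehat v_{1}})$ is closed and $\{H_{2,\alpha}\}$ is cofinal, the intersection $\bigcap_{\alpha}\phi(\Pi_{\widehat v_{1}})H_{2,\alpha}$ recovers $\phi(\Pi_{\widehat v_{1}})$ itself. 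The same argument applies to open and closed inertia subgroups.

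Finally, to assemble $\phi^{\rm sg}\colon \Gamma_{X_{1}^{\bp}}\isom \Gamma_{X_{2}^{\bp}}$, I would take $\Pi_{X_{i}^{\bp}}$-orbits of the three bijections and verify compatibility with incidence: $\widehat e$ abuts on $\widehat v$ if and only if $I_{\widehat e}\subseteq \Pi_{\widehat v}$ inside $\Pi_{X_{i}^{\bp}}$, and since the identifications $\phi(I_{\widehat e_{1}})=I_{\phi^{\rm edg, \bullet}(\widehat e_{1})}$ and $\phi(\Pi_{\widehat v_{1}})=\Pi_{\phi^{\rm ver}(\widehat v_{1})}$ are inclusion-preserving, the forward direction of the incidence equivalence is automatic; the reverse direction follows once one observes that Condition C forces the counts of incidence pairs on the two sides to agree. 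The main obstacle I anticipate is exactly the subgroup-level identification $\phi^{\rm ver}(\Pi_{\widehat v_{1}})=\phi(\Pi_{\widehat v_{1}})$ at the universal-cover level (and the analogous identifications on open and closed inertia subgroups): although Proposition \ref{prop-4-19} packages the finite-level compatibilities cleanly, upgrading mere conjugacy-class matches to equalities of actual closed subgroups hinges on the profinite approximation argument above, which in turn requires the cofinality of $\{H_{2,\alpha}\}$ and the closedness of $\phi(\Pi_{\widehat v_{1}})$. Once this identification is in place, the incidence-preservation and the semi-graph isomorphism are immediate.
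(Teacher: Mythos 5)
Your construction of the three surjections is essentially the paper's own argument: pull back a cofinal system of open normal subgroups of $\Pi_{X_2^{\bp}}$, use Proposition \ref{prop-4-23} to keep Conditions A, B, C at every level, apply Theorems \ref{mainstep-1}, \ref{them-4-13}, \ref{them-4-15} to $\phi|_{H_{1,\alpha}}$, glue via the compatibility of Proposition \ref{prop-4-19}, and pass to the limit; your profinite-approximation argument $\bigcap_{\alpha}\phi(\Pi_{\widehat v_1})H_{2,\alpha}=\phi(\Pi_{\widehat v_1})$ for identifying $\phi^{\rm ver}(\Pi_{\widehat v_1})$ with $\phi(\Pi_{\widehat v_1})$ is exactly the point the paper passes over with ``we see immediately,'' so that part is fine (the surjectivity of $v(\Gamma_{\widehat X_1^{\bp}})\migi\varprojlim_{\alpha}v(\Gamma_{X^{\bp}_{H_{1,\alpha}}})$, which you use implicitly, is the same compactness step the paper phrases as ``extending $\mcC'$ to a cofinal system'').

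The gap is in your final assembly of $\phi^{\rm sg}$. You need that an incidence pair $(\widehat e_1,\widehat v_1)$ is sent to an incidence pair, and from $\widehat e_1$ abutting $\widehat v_1$ you only get the subgroup containment $I_{\widehat e_2}=\phi(I_{\widehat e_1})\subseteq\phi(\Pi_{\widehat v_1})=\Pi_{\widehat v_2}$. To conclude that $\widehat e_2$ actually abuts $\widehat v_2$ you must know, inside $\Pi_{X_2^{\bp}}$ alone, that containment of an edge-inertia subgroup in a vertex subgroup forces abutment; this is a genuine combinatorial-anabelian statement about a single fundamental group, and it is precisely where the paper invokes \cite[Lemma 1.5, Lemma 1.7, Lemma 1.9]{HM} (together with the distinctness statements needed to see the resulting correspondence is a graph isomorphism). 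Your proposed substitute — ``Condition C forces the counts of incidence pairs to agree'' — cannot do this work: Condition C only fixes $\#v(\Gamma_{X_i^{\bp}})$ and $\#e^{\rm cl}(\Gamma_{X_i^{\bp}})$, not the number of incidences (e.g.\ loops versus non-loops), and, more fundamentally, a cardinality comparison between the two incidence sets is only usable \emph{after} one already knows that incidence pairs map to incidence pairs, which is the very implication at issue. So the counting argument is circular as stated, and the reverse implication ``$I_{\widehat e}\subseteq\Pi_{\widehat v}\Rightarrow\widehat e$ abuts $\widehat v$'' must be supplied by the HM-type lemmas (or an equivalent argument), as in the paper.
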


\begin{proof}
By applying Theorem \ref{mainstep-1}, the homomorphism $\phi: \Pi_{X^{\bp}_{1}} \migisurj \Pi_{X_{2}^{\bp}}$ induces a surjective map $\phi^{\rm edg, op}: {\rm Edg}^{\rm op}(\Pi_{X_{1}^{\bp}}) \migisurj {\rm Edg}^{\rm op}(\Pi_{X_{2}^{\bp}})$ group-theoretically. We only need to treat the cases of $\phi^{\rm ver}$ and $\phi^{\rm edg, cl}$, respectively.

Let $\mcC_{\Pi_{X^{\bp}_{2}}}$ be a cofinal system of $\Pi_{X^{\bp}_{2}}$ which consists of open normal subgroups of $\Pi_{X_{2}^{\bp}}$. We put $$\mcC_{\Pi_{X_{1}^{\bp}}}\defeq \{H_{1}\defeq \phi^{-1}(H_{2})\ | \ H_{2}\in \mcC_{\Pi_{X_{2}^{\bp}}}\}.$$ Note that $\mcC_{\Pi_{X_{1}^{\bp}}}$ is not a cofinal system of  $\Pi_{X_{1}^{\bp}}$ in general. Moreover, by applying Proposition \ref{prop-4-23}, we have that $X_{H_{1}}^{\bp}$ and $X_{H_{2}}^{\bp}$ satisfy Condition A, Condition B, and Condition C for every $H_{2} \in \mcC_{\Pi_{X_{2}^{\bp}}}$ and $H_{1} \defeq \phi^{-1}(H_{2})\in \mcC_{\Pi_{X^{\bp}_{1}}}$.  

We treat the case of $\phi^{\rm ver}$. Let $\widehat X_{i}^{\bp}$ be the universal solvable admissible covering of $X^{\bp}_{i}$ associated to $\Pi_{X_{i}^{\bp}}$ and $\Gamma_{\widehat X^{\bp}_{i}}$ the dual semi-graph of $\widehat X^{\bp}_{i}$. Let $\widehat w_{1} \in v(\Gamma_{\widehat X^{\bp}_{1}})$ and $\Pi_{\widehat w_{1}}$ the stabilizer subgroup of $\widehat w_{1}$. Write $w_{H_{1}} \in v(\Gamma_{X_{H_{1}}^{\bp}})$, $H_{1} \in \mcC_{\Pi_{X^{\bp}_{1}}}$, for the image of $\widehat w_{1}$. Proposition \ref{prop-4-19} implies that $\phi$ induces a cofinal system of vertices $$\mcC_{\widehat w_{2}}\defeq \{w_{H_{2}} \defeq \phi_{H_{1}}^{\rm ver}(w_{H_{1}})\}_{H_{2} \in \mcC_{\Pi_{X^{\bp}_{2}}}},$$ which admits a natural action of $\Pi_{X_{2}^{\bp}}$.  Then we obtain an element $\widehat w_{2} \in v(\Gamma_{\widehat X^{\bp}_{2}})$. Moreover, the stabilizer of $\mcC_{\widehat w_{2}}$ is $\Pi_{\widehat w_{2}}$. We see immediately that $\phi$ induces a surjective open continuous homomorphism $$\phi|_{\Pi_{\widehat w_{1}}}: \Pi_{\widehat w_{1}} \migisurj \Pi_{\widehat w_{2}}$$ group-theoretically. Then we define $$\phi^{\rm ver}: \text{Ver}(\Pi_{X_{1}^{\bp}}) \migi \text{Ver}(\Pi_{X_{2}^{\bp}}), \ \Pi_{\widehat w_{1}} \mapsto \Pi_{\widehat w_{2}}.$$

Next, we prove that $\phi^{\rm ver}$ is a surjective. Let $\widehat v_{2} \in v(\Gamma_{\widehat X_{2}^{\bp}})$ and $\Pi_{\widehat v_{2}}$ the stabilizer subgroup of $\widehat v_{2}$. Write $v_{H_{2}} \in v(\Gamma_{X_{H_{2}}^{\bp}})$, $H_{2} \in \mcC_{\Pi_{X^{\bp}_{2}}}$, for the image of $\widehat v_{2}$. Then we obtain a cofinal system of vertices $$\mcC_{\widehat v_{2}}\defeq \{v_{H_{2}}\}_{H_{2} \in \mcC_{\Pi_{X^{\bp}_{2}}}}$$ associated to $\widehat v_{2}$. The cofinal system $\mcC_{\widehat v_{2}}$ admits a natural action of $\Pi_{X_{2}^{\bp}}$. We see immediately that the stabilizer of $\mcC_{\widehat v_{2}}$ is equal to $\Pi_{\widehat v_{2}}$. Proposition \ref{prop-4-19} implies that $\phi$ and $\mcC_{\widehat v_{2}}$ induce a set of $$\mcC'\defeq \{v_{H_{1}}\defeq (\phi^{\rm sg, vex})^{-1}(v_{H_{2}})\}_{H_{1} \in \mcC_{\Pi_{X^{\bp}_{2}}}}$$ group-theoretically. By extending $\mcC'$ to a cofinal system of vertices. Then we obtain an element $\widehat v_{1} \in v(\Gamma_{\widehat X_{1}^{\bp}})$ such that the image of $\widehat v_{1}$ in $v(\Gamma_{X_{H_{1}}})$ is $v_{H_{1}}$. Thus, $\phi$ induces a surjective $$\phi|_{\Pi_{\widehat v_{1}}}: \Pi_{\widehat v_{1}} \migisurj \Pi_{\widehat v_{2}}.$$ This means that $\phi^{\rm ver}$ is a surjection.

By applying similar arguments to the arguments given in the proof above, we obtain that $\phi$ induces a surjective map $\phi^{\rm edg, cl}: {\rm Edg}^{\rm cl}(\Pi_{X_{1}^{\bp}}) \migisurj {\rm Edg}^{\rm cl}(\Pi_{X_{2}^{\bp}})$ group-theoretically. This completes the proof of the first part of the theorem.

The surjections $\phi^{\rm ver}$, $\phi^{\rm edg, op}$, and $\phi^{\rm edg, cl}$ imply the following surjections $$\phi^{\rm sg, ver}: v(\Gamma_{X_{1}^{\bp}}) \isom {\rm Ver}(\Pi_{X_{1}^{\bp}})/\Pi_{X_{1}^{\bp}}\migisurj {\rm Ver}(\Pi_{X_{2}^{\bp}})/\Pi_{X_{2}^{\bp}}\isom v(\Gamma_{X_{2}^{\bp}}),$$ $$\phi^{\rm sg, op}: e^{\rm op}(\Gamma_{X_{1}^{\bp}}) \isom {\rm Edg}^{\rm op}(\Pi_{X_{1}^{\bp}})/\Pi_{X_{1}^{\bp}} \migisurj {\rm Edg}^{\rm op}(\Pi_{X_{2}^{\bp}})/\Pi_{X_{2}^{\bp}} \isom e^{\rm op}(\Gamma_{X_{2}^{\bp}}),$$ $$\phi^{\rm sg, cl}: e^{\rm cl}(\Gamma_{X_{1}^{\bp}}) \isom {\rm Edg}^{\rm cl}(\Pi_{X_{1}^{\bp}})/\Pi_{X_{1}^{\bp}}\migisurj {\rm Edg}^{\rm cl}(\Pi_{X_{2}^{\bp}})/\Pi_{X_{2}^{\bp}} \isom e^{\rm cl}(\Gamma_{X_{2}^{\bp}}).$$ Since $X_{1}^{\bp}$ and $X_{2}^{\bp}$ satisfy Condition C, we have that $\phi^{\rm sg, ver}$, $\phi^{\rm sg, op}$, and $\phi^{\rm sg, cl}$ are bijections. Furthermore, by applying \cite[Lemma 1.5, Lemma 1.7, and Lemma 1.9]{HM}, $\phi$ induces an isomorphism of dual semi-graphs $$\phi^{\rm sg}: \Gamma_{X_{1}^{\bp}} \isom \Gamma_{X_{2}^{\bp}}$$ group-theoretically. This completes the proof of the theorem.
\end{proof}

\begin{remarkA}\label{rem-comgc}
We maintain the notation introduced above. We see immediately that Theorem \ref{comgc} does not hold without Condition C (e.g. $X_{1}^{\bp}$ is a generic curve of $\overline \mcM_{g, n}$, and $X_{2}^{\bp}$ is a singular curve).

On the other hand, the author believes that Theorem \ref{comgc} also holds without Condition B (e.g. $n_{X_{i}}=0$). The main difficult is that we do not have a precise formula for limits of $p$-averages of arbitrary pointed stable curves. Moreover, if the question of \cite[Remark 4.10.2]{Y5} is true, without too much difficulty, similar arguments to the arguments given in the proofs of this section imply that Theorem \ref{comgc} holds without Condition B.

Furthermore, the author also believes that Theorem \ref{comgc} holds without Condition A and Condition B. For example, Theorem \ref{mainstep-2} below shows that Theorem \ref{comgc} holds without Condition A and Condition B if $g_{X}=0$. Moreover, Theorem \ref{mainstep-2} will play a key role in the proof of the main theorem of the present paper.
\end{remarkA}

\begin{corollary}\label{coro-4-25}
We maintain the notation introduced above. Let $Q_{2} \subseteq \Pi_{X_{2}^{\bp}}$ be an arbitrary open subgroup and $Q_{1} \defeq \phi^{-1}(Q_{2}) \subseteq \Pi_{X_{1}^{\bp}}$. Then we have $${\rm Avr}_{p}(Q_{1})={\rm Avr}_{p}(Q_{2}).$$
\end{corollary}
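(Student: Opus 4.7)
The plan is to reduce the problem directly to the formula for limits of $p$-averages given by Theorem \ref{max and average}(b), using Condition A to eliminate the boundary correction terms and Condition C to match the remaining combinatorial invariants.

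First, I would apply Proposition \ref{prop-4-23} to the open normal subgroup situation: replacing $Q_2$ by an open normal subgroup contained in it if necessary (and noting that ${\rm Avr}_p$ is defined via a limit that is invariant under passage to subgroups of finite index with compatible combinatorial data, so without loss of generality we may assume $Q_2$ is normal in $\Pi_{X_2^\bp}$), we conclude that $X^\bp_{Q_1}$ and $X^\bp_{Q_2}$ both satisfy Condition A, Condition B, and Condition C. In particular, $\Gamma^{\rm cpt}_{X^\bp_{Q_i}}$ is $2$-connected and $\#v(\Gamma_{X^\bp_{Q_i}})^{b\leq 1}=0$, so Theorem \ref{max and average}(b) collapses to
\[
{\rm Avr}_p(Q_i) \;=\; g_{X_{Q_i}} - r_{X_{Q_i}},
\]
exactly as in Lemma \ref{rem-them-1-1-4}.

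Next, I would read off the equalities $g_{X_{Q_1}}=g_{X_{Q_2}}$ and $r_{X_{Q_1}}=r_{X_{Q_2}}$ from Condition C: the type equality $(g_{X_{Q_1}},n_{X_{Q_1}})=(g_{X_{Q_2}},n_{X_{Q_2}})$ gives the first, while the Betti-number formula
\[
r_{X_{Q_i}} \;=\; \#e^{\rm cl}(\Gamma_{X^\bp_{Q_i}}) - \#v(\Gamma_{X^\bp_{Q_i}}) + 1
\]
(valid since $\Gamma_{X^\bp_{Q_i}}$ is connected) together with the equalities $\#v(\Gamma_{X_{Q_1}^\bp})=\#v(\Gamma_{X_{Q_2}^\bp})$ and $\#e^{\rm cl}(\Gamma_{X_{Q_1}^\bp})=\#e^{\rm cl}(\Gamma_{X_{Q_2}^\bp})$ from Condition C gives the second. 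Combining these yields ${\rm Avr}_p(Q_1)={\rm Avr}_p(Q_2)$.

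The only nontrivial point is the invocation of Proposition \ref{prop-4-23}, which was already established in the preceding subsection and is the substantive input; everything else is bookkeeping. If one is concerned that $Q_2$ need not be normal, one can alternatively observe that Proposition \ref{prop-4-23} applied to any open normal subgroup $N_2 \subseteq Q_2$ of $\Pi_{X^\bp_2}$ and $N_1 \defeq \phi^{-1}(N_2)$ shows $(g_{X_{N_1}},n_{X_{N_1}}) = (g_{X_{N_2}},n_{X_{N_2}})$, so the cardinalities $\#(Q_i^{\rm ab}\otimes \mbZ/(p^t-1)\mbZ)$ coincide for $i=1,2$ (they depend only on $(g_{X_{Q_i}},n_{X_{Q_i}})$, which equals the invariants recovered from the normal core), and the proof of Lemma \ref{lem-0}(b) then upgrades the inequality ${\rm Avr}_p(Q_1)\geq {\rm Avr}_p(Q_2)$ to an equality via the symmetric formula above. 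I do not anticipate any serious obstacle here; the work has been done in establishing Proposition \ref{prop-4-23} and Theorem \ref{max and average}(b).
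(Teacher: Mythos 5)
Your argument is correct for open \emph{normal} subgroups: there Proposition \ref{prop-4-23} gives Conditions A, B, C for $X^{\bp}_{Q_1}$ and $X^{\bp}_{Q_2}$, Theorem \ref{max and average} (b) collapses to ${\rm Avr}_p(Q_i)=g_{X_{Q_i}}-r_{X_{Q_i}}$, and Condition C forces the right-hand sides to agree. This is in substance the same input as the paper, which simply cites Theorem \ref{comgc} (itself built on Proposition \ref{prop-4-23} and Proposition \ref{prop-4-19}).

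The gap is the case of an arbitrary (non-normal) open subgroup $Q_2$, which is what the corollary actually asserts, and neither of your two reductions handles it. First, ``without loss of generality $Q_2$ is normal'' is not justified: ${\rm Avr}_p$ is \emph{not} invariant under passage to finite-index subgroups (already ${\rm Avr}_p(H)\neq{\rm Avr}_p(\Pi_{X^\bp})$ for a proper open $H$, since the genus grows), so replacing $Q_2$ by an open normal subgroup contained in it changes both sides of the desired equality. Second, your fallback only ever yields the inequality ${\rm Avr}_p(Q_1)\geq{\rm Avr}_p(Q_2)$: the surjection $\phi|_{Q_1}\colon Q_1\migisurj Q_2$ together with equal normalizing cardinalities gives one direction exactly as in Lemma \ref{lem-0} (b), but there is no surjection in the other direction, so the ``upgrade to equality via the symmetric formula'' presupposes that ${\rm Avr}_p(Q_i)=g_{X_{Q_i}}-r_{X_{Q_i}}$ with $r_{X_{Q_1}}=r_{X_{Q_2}}$ (equivalently $\#v(\Gamma_{X^\bp_{Q_1}})=\#v(\Gamma_{X^\bp_{Q_2}})$ and $\#e^{\rm cl}(\Gamma_{X^\bp_{Q_1}})=\#e^{\rm cl}(\Gamma_{X^\bp_{Q_2}})$), i.e.\ precisely Condition C for the non-normal pair — which Proposition \ref{prop-4-23} does not provide and which is the substantive content the paper extracts from Theorem \ref{comgc} (the $\phi$-equivariant maps on ${\rm Ver}$ and ${\rm Edg}^{\rm cl}$, resp.\ the $G$-equivariant diagrams of Proposition \ref{prop-4-19}, descend to quotients by $Q_i$ and match the dual semi-graphs of $X^\bp_{Q_1}$ and $X^\bp_{Q_2}$). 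Finally, your claim that the type $(g_{X_{Q_i}},n_{X_{Q_i}})$ is ``recovered from the normal core'' is false as stated (intermediate coverings are not determined by the core); the equality of types for arbitrary open $Q_1=\phi^{-1}(Q_2)$ is true, but it should be quoted from the corollary following Theorem \ref{mainstep-1} rather than derived this way. To repair the proof, either invoke Theorem \ref{comgc} as the paper does, or descend the commutative diagrams of Proposition \ref{prop-4-19} from a normal open subgroup $N_2\subseteq Q_2$ of $\Pi_{X_2^\bp}$ to the quotient graphs by the actions of $Q_i/N_i$ to obtain Condition C for $X^\bp_{Q_1}$, $X^\bp_{Q_2}$, and only then apply your formula.
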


\begin{proof}
The corollary follows immediately from Theorem \ref{comgc}.
\end{proof}


\begin{lemma}\label{lem-4-26}
Let $E^{\bp}$ be a pointed stable curve of type $(0, n)$ over an algebraically closed field $k$ of characteristic $p>0$, $\Pi_{E^{\bp}}$ the solvable admissible fundamental group of $E^{\bp}$, and $\ell$ a prime number such that $\ell \neq p$, and that $\ell >>n$. We put $${\rm Edg}^{\rm op, \ell, ab}(\Pi_{E^{\bp}})\defeq \{ pr^{\ell, \rm ab}(I_{\widehat e}) \ | \ I_{\widehat e} \in {\rm Edg}^{\rm op}(\Pi_{E^{\bp}})\}=\{I_{e}\}_{e \in e^{\rm op}(\Gamma_{E^{\bp}})},$$ where $pr^{\rm \ell, ab}$ denotes the natural surjective homomorphism $\Pi_{E^{\bp}} \migisurj \Pi_{E^{\bp}}^{\rm \ell, ab}$, and $I_{e} \defeq pr^{\ell, \rm ab}(I_{\widehat e})$. Let $a_{e}\in I_{e}$, $e\in e^{\rm op}(\Gamma_{E^{\bp}})$, be a generator of $I_{e}$ such that $$\prod_{e\in e^{\rm op}(\Gamma_{E^{\bp}})}a_{e}=1,$$ and let $\alpha: \Pi_{E^{\bp}}^{\rm \ell, ab} \migisurj \mbF_{\ell}$ be a surjection and $r_{e} \defeq \alpha(a_{e})$. Write $$g^{\bp}: X^{\bp} \migi E^{\bp}$$ for the Galois admissible covering over $k$ with Galois group $\mbZ/\ell\mbZ$ corresponding to $\alpha$. Suppose that $r_{e} \neq 0$ for every $e\in e^{\rm op}(\Gamma_{E^{\bp}})$, and that $$\sum_{e\in e^{\rm op}(\Gamma_{E^{\bp}})}r_{e}=\ell$$ if we identify $\mbF_{\ell}$ to $\{0, 1, \dots, \ell-1\} \subseteq \mbZ$. Then $g^{\bp}$ is totally ramified over every node and every marked point of $E^{\bp}$. In particular, we have that the map of dual semi-graphs $\Gamma_{X^{\bp}} \migi \Gamma_{E^{\bp}}$ of $X^{\bp}$ and $E^{\bp}$ induced by $g^{\bp}$ is a bijection, and that $X^{\bp}$ satisfies Condition A. 
\end{lemma}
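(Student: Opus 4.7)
The plan is to show that $g^{\bp}$ is totally ramified at every open edge and every closed edge; the bijection of dual semi-graphs and Condition A then follow by elementary considerations. Total ramification at each marked point $x_{e}$ is immediate: $\alpha(a_{e})=r_{e}\neq 0$ forces $\alpha|_{I_{e}}: I_{e}\migisurj \mbF_{\ell}$ to be surjective, and since $\ell$ is prime the decomposition group equals the inertia.

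For a closed edge $c\in e^{\rm cl}(\Gamma_{E^{\bp}})$ connecting vertices $v_{1}$ and $v_{2}$, I argue combinatorially. Since $E^{\bp}$ has genus $0$, the semi-graph $\Gamma_{E^{\bp}}$ is a tree, so removing $c$ splits it into two subtrees $T_{1}\ni v_{1}$ and $T_{2}\ni v_{2}$. Write $b_{c,v_{i}}\in I_{c}$ for the generator at the branch on $\widetilde{E}_{v_{i}}$; the admissibility convention at a node yields $b_{c,v_{1}}+b_{c,v_{2}}=0$ in $\Pi_{E^{\bp}}^{\ell,\mathrm{ab}}$, and the product relation at each $\widetilde{E}^{\bp}_{v}\cong \mbP^{1}$ gives $\sum_{e\ \text{open at}\ v}a_{e}+\sum_{c'\ \text{closed at}\ v}b_{c',v}=0$. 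Summing these per-vertex relations over $v\in T_{1}$, every closed edge lying entirely inside $T_{1}$ cancels (two opposite branches) and only $b_{c,v_{1}}$ survives from $c$, giving
\[ b_{c,v_{1}}=-\sum_{e\in E_{1}^{\rm op}}a_{e}\ \text{in}\ \Pi_{E^{\bp}}^{\ell,\mathrm{ab}}, \]
where $E_{1}^{\rm op}$ denotes the set of open edges of $\Gamma_{E^{\bp}}$ lying in $T_{1}$. Applying $\alpha$ gives $\alpha(b_{c,v_{1}})\equiv -\sum_{e\in E_{1}^{\rm op}}r_{e}\pmod{\ell}$.

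To see that this is nonzero, I use stability: summing $\mathrm{val}_{\Gamma_{E^{\bp}}}(v)\geq 3$ over $v\in T_{1}$ yields
\[ 3|T_{1}|\leq \sum_{v\in T_{1}}\mathrm{val}(v)=2(|T_{1}|-1)+|E_{1}^{\rm op}|+1, \]
the ``$+1$'' accounting for the endpoint of $c$ in $T_{1}$. Hence $|E_{1}^{\rm op}|\geq |T_{1}|+1\geq 2$, and symmetrically $|E_{2}^{\rm op}|\geq 2$. Since each $r_{e}$ is a positive integer in $\{1,\ldots,\ell-1\}$ and $\sum_{e\in e^{\rm op}(\Gamma_{E^{\bp}})}r_{e}=\ell$ by hypothesis, we obtain $2\leq \sum_{e\in E_{1}^{\rm op}}r_{e}\leq \ell-2$, which is strictly between $0$ and $\ell$. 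Thus $\alpha(b_{c,v_{1}})\neq 0$, and $g^{\bp}$ is totally ramified over the node $x_{c}$.

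Total ramification at every edge makes the preimage of each vertex $v$ a connected Galois $\mbZ/\ell\mbZ$-cover of $\mbP^{1}$ (ramified at $\mathrm{val}(v)\geq 3$ points, hence irreducible), so the induced map $\Gamma_{X^{\bp}}\migi \Gamma_{E^{\bp}}$ is a bijection on vertices and on open and closed edges. Condition A then follows: (i) Riemann--Hurwitz gives $g_{\widetilde{X}_{v}}=(\ell-1)(\mathrm{val}(v)-2)/2\geq (\ell-1)/2>0$; (ii) each component of $X$ is smooth as admissibility introduces no singularities on components; (iii) $\Gamma_{X^{\bp}}^{\rm cpt}\cong \Gamma_{E^{\bp}}^{\rm cpt}$ is $2$-connected, since the same incidence count applied to $\Gamma_{E^{\bp}}\setminus\{v\}$ shows every connected component contains $\geq 2$ open edges and hence stays connected through $v_{\infty}$; (iv) the stability bound $b(v)\geq 3$ gives $v(\Gamma_{X^{\bp}})^{b\leq 1}=\emptyset$. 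The main obstacle in this plan is establishing the identity $b_{c,v_{1}}=-\sum_{e\in E_{1}^{\rm op}}a_{e}$ in $\Pi_{E^{\bp}}^{\ell,\mathrm{ab}}$, which requires a careful bookkeeping of how inertia at branches of nodes assembles in the $\ell$-abelianization, using the sign convention $b_{c,v_{1}}+b_{c,v_{2}}=0$ and the local product relations at each $\mbP^{1}$-component; once this identity is in hand, the remaining steps reduce to elementary tree combinatorics plus Riemann--Hurwitz.
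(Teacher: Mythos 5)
Your route is genuinely different from the paper's. You fix a closed edge $c$, split the tree $\Gamma_{E^{\bp}}$ into two subtrees, and compute the image of the node inertia directly in $\Pi_{E^{\bp}}^{\ell,\mathrm{ab}}$ as $-\sum_{e\in E_{1}^{\rm op}}a_{e}$; the paper instead argues by induction on $\#v(\Gamma_{E^{\bp}})$, splitting off one component $\widetilde E_{v_{0}}^{\bp}$ and the complementary curve $E_{0}^{\bp}$, using the injectivity of $\Pi_{\widetilde E_{v_{0}}^{\bp}}^{\ell,\mathrm{ab}}\migi\Pi_{E^{\bp}}^{\ell,\mathrm{ab}}$ (via \cite[Corollary 3.5]{Y5} and $2$-connectedness of $\Gamma_{E^{\bp}}^{\rm cpt}$) to compute the residues induced on the two pieces, and checking that the hypotheses of the lemma persist so the induction applies. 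Your numerical core is correct and is the same arithmetic that drives the paper's induction: by the stability count both $\sum_{e\in E_{1}^{\rm op}}r_{e}$ and $\sum_{e\in E_{2}^{\rm op}}r_{e}$ lie in $\{2,\dots,\ell-2\}$, hence are nonzero mod $\ell$. Your treatment of the marked points, of the bijection of dual semi-graphs, and of Condition A (Riemann--Hurwitz, $2$-connectedness as in Lemma \ref{lem-4-27}, $b(v)\geq 3$) is fine, and is in fact more explicit than the paper's.

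The step you yourself flag as ``the main obstacle'' is, as written, a genuine gap. The per-vertex relations $\sum_{e\ \text{open at}\ v}a_{e}+\sum_{c'\ \text{closed at}\ v}b_{c',v}=0$ in $\Pi_{E^{\bp}}^{\ell,\mathrm{ab}}$ are not automatic for the generators $a_{e}$ given in the statement: they hold for \emph{some} compatible system of inertia generators $\{a_{e}^{0}, b^{0}_{c,v}\}$ with the sign convention $b^{0}_{c,v_{1}}+b^{0}_{c,v_{2}}=0$ (this is the graph-of-groups structure of $\Pi_{E^{\bp}}^{p'}$, the same structural input the paper invokes when it writes that ``the structures of maximal pro-prime-to-$p$ quotients \dots imply''), but a priori your $a_{e}$ differ from such a system by units $u_{e}\in\mbZ_{\ell}^{\times}$, and with unequal units the summation over $T_{1}$ does not produce a generator of the node inertia. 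Two additions close it: (i) cite or prove the existence of a compatible system, e.g.\ via the structure of $\Pi_{E^{\bp}}^{p'}$ or via the injectivity statement \cite[Corollary 3.5]{Y5} used in the paper's proof; and (ii) note that the normalization $\prod_{e}a_{e}=1$ forces all the units to coincide: since $\{a_{e}^{0}\}_{e\neq e_{0}}$ is a $\mbZ_{\ell}$-basis of $\Pi_{E^{\bp}}^{\ell,\mathrm{ab}}$ and the only relation is $\sum_{e}a_{e}^{0}=0$, the equality $\sum_{e}u_{e}a_{e}^{0}=0$ gives $u_{e}=u$ for all $e$, so the per-vertex relations hold verbatim with your $a_{e}$ and $b_{c,v}\defeq u\,b^{0}_{c,v}$. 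With (i) and (ii) in place, your subtree summation yields the identity $b_{c,v_{1}}=-\sum_{e\in E_{1}^{\rm op}}a_{e}$ and the rest of your argument goes through.
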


\begin{proof}
We prove the lemma by induction on $\#v(\Gamma_{E^{\bp}})$. Suppose that $\#v(\Gamma_{E^{\bp}})=1$. Then the lemma is trivial. 

Suppose that $\#v(\Gamma_{E^{\bp}})\geq 2$. Let $v_{0} \in v(\Gamma_{E^{\bp}})$ and $\widetilde E_{v_{0}}^{\bp}$ the smooth pointed stable curve associated to $v_{0}$. Note that the underlying curve $\widetilde E_{v_{0}}$ coincides with the irreducible component of $E$ corresponding to $v_{0}$.  On the other hand, we define a pointed stable curve over $k$ to be $$E_{0}^{\bp} =(E_{0}\defeq \overline {E\setminus \widetilde E_{v_{0}}}, D_{E_{0}} \defeq (D_{E}\cap E_{0}) \cup (E_{0} \cap \widetilde E_{v_{0}})),$$ where $\overline {E\setminus \widetilde E_{v_{0}}}$ denotes the topological closure of $E\setminus \widetilde E_{v_{0}}$ in $E$. Then $g^{\bp}$ induces Galois admissible coverings $$g^{\bp}_{v_{0}}: \widetilde X_{v_{0}}^{\bp} \migi \widetilde E_{v_{0}}^{\bp},$$  $$g_{0}^{\bp}: X^{\bp}_{0} \migi E_{0}^{\bp}$$ over $k$ with Galois group $\mbZ/\ell\mbZ$. To verify the lemma, we only need to prove that $g_{v_{0}}^{\bp}$ and $g_{0}^{\bp}$ are totally ramified over every node and every marked point of $\widetilde E_{v_{0}}^{\bp}$ and $E_{0}^{\bp}$, respectively.

Let $\Pi_{\widetilde E_{v_{0}}^{\bp}}$ and $\Pi_{E_{0}^{\bp}}$ be the solvable admissible fundamental groups of $\widetilde E_{v_{0}}^{\bp}$ and $E^{\bp}_{0}$, respectively. Since $\Gamma_{E^{\bp}}^{\rm cpt}$ is $2$-connected, \cite[Corollary 3.5]{Y5} implies that the natural homomorphism $$\theta_{v_{0}}: \Pi_{\widetilde E_{v_{0}}^{\bp}}^{\rm \ell, ab} \migi \Pi^{\rm \ell ,ab}_{E^{\bp}}$$ is an injection. Let $$\theta_{0}: \Pi_{E_{0}^{\bp}}^{\rm \ell, ab} \migi \Pi^{\rm \ell ,ab}_{E^{\bp}}$$ be the homomorphism induced by the natural (outer) injective homomorphism $\Pi_{E_{0}^{\bp}} \migiinje \Pi_{E^{\bp}}$ (in fact, $\theta_{0}$ is also an injection).

Let $\{x\} = E_{0} \cap \widetilde E_{v_{0}}$, $e_{v_{0}} \in e^{\rm op}(\Gamma_{\widetilde E_{v_{0}}^{\bp}})$ the open edge corresponding to $x$, $e_{0} \in e^{\rm op}(\Gamma_{E_{0}^{\bp}})$ the open edge corresponding to $x$, $\widehat {\widetilde E_{v_{0}}^{\bp}}$ the universal admissible covering of $\widetilde E_{v_{0}}^{\bp}$, $\widehat E_{0}^{\bp}$ the universal admissible covering of $E_{0}^{\bp}$, $\widehat e_{v_0} \in v(\Gamma_{\widehat {\widetilde E_{v_{0}}^{\bp}}})$ an element over $e_{v_0}$, and $\widehat e_{0} \in v(\Gamma_{\widehat E_{0}^{\bp}})$ an element over $e_{0}$. We denote by $I_{e_{v_0}}$ the image of $I_{\widehat e_{v_0}}$ of $\Pi_{\widetilde E_{v_{0}}^{\bp}}\migisurj \Pi_{\widetilde E_{v_{0}}^{\bp}}^{\rm \ell, ab}$, and by $I_{e_{0}}$ the image of $I_{\widehat e_{0}}$ of $\Pi_{E_{0}^{\bp}}\migisurj \Pi_{E_{0}^{\bp}}^{\rm \ell, ab}$. We put $$a_{e_{v_0}}=(\prod_{e \in e^{\rm op}(\Gamma_{\widetilde E_{v_{0}}^{\bp}}) \setminus \{e_{v_0}\}}a_{e})^{-1},$$ $$a_{e_{0}}=(\prod_{e \in e^{\rm op}(\Gamma_{ E_{0}^{\bp}}) \setminus \{e_{0}\}}a_{e})^{-1}.$$ Then $a_{e_{v_0}}$ and $a_{e_{0}}$ are generators of $I_{e_{v_0}}$ and $I_{e_{0}}$, respectively. Moreover, we put $$\widetilde \alpha_{v_{0}}: \Pi^{\rm \ell, ab}_{\widetilde E_{v_{0}}^{\bp}}\overset{\theta_{v_{0}}}\migi \Pi_{E^{\bp}}^{\rm \ell, ab} \overset{\alpha} \migisurj \mbF_{\ell}.$$ and $$\alpha_{0}: \Pi^{\rm \ell, ab}_{ E_{0}^{\bp}}\overset{\theta_{0}}\migi \Pi_{E^{\bp}}^{\rm \ell, ab} \overset{\alpha} \migisurj \mbF_{\ell}.$$ Then the structures of maximal pro-prime-to-$p$ quotients of solvable admissible fundamental groups implies that $$\widetilde \alpha_{v_{0}}(a_{e_{v_0}})=\ell-\sum_{e \in e^{\rm op}(\Gamma_{\widetilde E_{v_{0}}^{\bp}}) \setminus \{e_{v_0}\}}r_{e}=\sum_{e \in e^{\rm op}(\Gamma_{ E_{0}^{\bp}}) \setminus \{e_{0}\}}r_{e}$$ and $$\alpha_{0}(a_{e_{0}})=\sum_{e \in e^{\rm op}(\Gamma_{\widetilde E_{v_{0}}^{\bp}}) \setminus \{e_{v_0}\}}r_{e}.$$ Thus, by induction, we have that $g^{\bp}_{v_{0}}$ and $g^{\bp}_{0}$ are totally ramified over every node and every marked point of $\widetilde E_{v_{0}}^{\bp}$ and $E_{0}^{\bp}$, respectively. We complete the proof of the lemma.
\end{proof}

\begin{lemma}\label{lem-4-27}
Let $E^{\bp}$ be a pointed stable curve of type $(0, n)$ over an algebraically closed field $k$ of characteristic $p>0$. Then $E^{\bp}$ satisfies Condition B.
\end{lemma}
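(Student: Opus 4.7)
The plan is to verify Condition B directly by fixing an arbitrary open subgroup $H \subseteq \Pi_{E^\bp}$ and checking, for each vertex $v_0$ of $\Gamma_{E^\bp_H}^{\rm cpt}$, that $\Gamma_{E^\bp_H}^{\rm cpt} \setminus \{v_0\}$ is connected (or empty). Removing $v_\infty$ yields the dual graph $\Gamma_{E^\bp_H}$ stripped of open edges, which is connected since $E^\bp_H$ is connected; so the real work is to handle $v_0 \in v(\Gamma_{E^\bp_H})$, for which it will suffice to show that every connected component $K$ of $\Gamma_{E^\bp_H} \setminus \{v_0\}$ contains a vertex incident to at least one open edge (thereby being joined to $v_\infty$ through an edge that survives the removal of $v_0$). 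Throughout, I will use that $E^\bp$ of type $(0, n)$ forces $n \geq 3$, that $\Gamma_{E^\bp}$ is a tree (since $g_E = 0 = \sum_v g_v + r_E$ gives $r_E = 0$), and that every irreducible component of $E$ is $\mathbb{P}^1$.

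Assume for contradiction that such a $K$ exists with every $w \in K$ corresponding to a component $\widetilde E_{H, w}$ of $E_H$ carrying no marked point of $E^\bp_H$; set $v_w \defeq f^{\rm sg}(w)$ and $v \defeq f^{\rm sg}(v_0)$, where $f^{\rm sg}$ is the map of dual semi-graphs induced by the admissible covering $f^\bp: E^\bp_H \migi E^\bp$. The first step will exploit that $\widetilde E_{H, w} \migi \widetilde E_{v_w}$ is a finite surjective morphism of smooth complete curves (any irreducible component of $E_H$ maps dominantly under the finite morphism $f^\bp$), so every marked point of $\widetilde E^\bp_{v_w}$ has at least one preimage on $\widetilde E_{H, w}$; consequently $n_{v_w} = 0$ for every $w \in K$. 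The second and central step will show that $f^{\rm sg}(K) \cup \{v\} = v(\Gamma_{E^\bp})$: if some neighbor $v''$ of a vertex $v' \in f^{\rm sg}(K)$ satisfied $v'' \notin f^{\rm sg}(K) \cup \{v\}$, I would pick any $w \in K$ over $v'$ and a preimage on $\widetilde E_{H, w}$ of the node of $E^\bp$ corresponding to the edge between $v'$ and $v''$, producing an edge of $\Gamma_{E^\bp_H}$ from $w$ to some vertex $w''$ over $v''$, with $w'' \neq v_0$ (because $v'' \neq v$) and $w'' \notin K$ (because $v'' \notin f^{\rm sg}(K)$); this edge would lie in $\Gamma_{E^\bp_H} \setminus \{v_0\}$ and join $K$ to a different connected component, which contradicts the very definition of $K$. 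Connectedness of the tree $\Gamma_{E^\bp}$ would then force the claimed equality.

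To finish, I will combine these conclusions to see that every vertex of $v(\Gamma_{E^\bp}) \setminus \{v\}$ has $n = 0$, and then invoke the stability of $E^\bp$. Since $\Gamma_{E^\bp}$ is a tree carrying $n \geq 3$ marked points in total, if $|v(\Gamma_{E^\bp})| \geq 2$ the tree has at least two leaves, and hence a leaf $\ell \neq v$; stability applied to the $\mathbb{P}^1$-component of $\Gamma_{E^\bp}$ at $\ell$ (which has degree one in $\Gamma_{E^\bp}$) forces $n_\ell \geq 2$, contradicting $n_\ell = 0$. If $|v(\Gamma_{E^\bp})| = 1$ then either $K$ is empty (and there is nothing to check) or $v \in f^{\rm sg}(K)$, in which case the same argument forces the impossible $n_v = 0$ while $n_v = n \geq 3$. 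The main obstacle is the closure step in the middle paragraph: it is the only place where the tree structure of $\Gamma_{E^\bp}$ and the admissibility of the covering (through the surjectivity of each $\widetilde E_{H, w} \migi \widetilde E_{v_w}$) are simultaneously engaged, and ensuring that the assumption ``$K$ is a connected component of $\Gamma_{E^\bp_H} \setminus \{v_0\}$'' genuinely rules out every potential external edge requires careful bookkeeping of which endpoints can and cannot coincide with $v_0$.
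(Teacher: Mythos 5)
Your overall route is the same as the paper's: reduce $2$-connectedness of $\Gamma_{E^{\bp}_{H}}^{\rm cpt}$ to showing that every connected component of $\Gamma_{E^{\bp}_{H}}\setminus\{v_{0}\}$ meets an open edge, and get this by pushing the component down to $\Gamma_{E^{\bp}}\setminus\{v\}$, where the tree structure and stability of a type-$(0,n)$ curve force marked points, which then pull back along the (componentwise surjective) admissible covering. However, there is one flawed step: your ``central'' claim $f^{\rm sg}(K)\cup\{v\}=v(\Gamma_{E^{\bp}})$ does not follow from what you prove. Your neighbor argument only shows that every neighbor of a vertex of $f^{\rm sg}(K)$ lies in $f^{\rm sg}(K)\cup\{v\}$; it says nothing about the neighbors of $v$ itself. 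If $v\notin f^{\rm sg}(K)$ and $\Gamma_{E^{\bp}}\setminus\{v\}$ has more than one connected component, the equality is simply false (already for the trivial covering $E^{\bp}_{H}=E^{\bp}$ with $v_{0}=v$ an interior vertex of the tree, $K$ is one branch and $f^{\rm sg}(K)\cup\{v\}$ misses the other branches). Since your final paragraph picks an arbitrary leaf $\ell\neq v$ of the whole tree and asserts $n_{\ell}=0$ via that equality, the concluding contradiction does not go through as written.

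The repair is immediate from what you actually establish, so the gap is local rather than structural. Your neighbor-closure argument shows that $f^{\rm sg}(K)$ is closed under passing to neighbors other than $v$; being nonempty and connected, it is therefore either all of $v(\Gamma_{E^{\bp}})$ (when $v\in f^{\rm sg}(K)$) or exactly one full connected component $C$ of $\Gamma_{E^{\bp}}\setminus\{v\}$. In the latter case $C$ is a subtree attached to the rest of the tree $\Gamma_{E^{\bp}}$ by a single closed edge, so $C$ contains a vertex of degree one in $\Gamma_{E^{\bp}}$ (its unique vertex if $\#C=1$, or a leaf of $C$ not adjacent to $v$ otherwise); stability forces that vertex to carry at least two marked points of $E^{\bp}$, contradicting your Step 1 conclusion that every vertex of $f^{\rm sg}(K)$ has $n_{v'}=0$ (and in the former case the same leaf argument, or $n_{v}=n\geq 3$ when the tree has one vertex, gives the contradiction). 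In other words, choose the leaf inside $f^{\rm sg}(K)$ rather than anywhere in the tree. With that one-line correction your proof is complete, and it is essentially the paper's argument run contrapositively: the paper notes directly that the image of a component $C_{w}$ of $\Gamma_{E^{\bp}_{H}}\setminus\{w\}$ lands in a component $C_{v}$ of $\Gamma_{E^{\bp}}\setminus\{v\}$, that $C_{v}\cap e^{\rm op}(\Gamma_{E^{\bp}})\neq\emptyset$, and that these open edges lift into $C_{w}$.
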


\begin{proof}
Let $f^{\bp}: W^{\bp} \migi E^{\bp}$ be an arbitrary admissible covering over $k$, $\Gamma_{W^{\bp}}$ the dual semi-graph of $W^{\bp}$, and $f^{\rm sg}: \Gamma_{W^{\bp}} \migi \Gamma_{E^{\bp}}$ the map of dual semi-graphs of $W^{\bp}$ and $X^{\bp}$ induced by $f^{\bp}$. To verify the lemma, we only need to prove that $\Gamma_{W^{\bp}}^{\rm cpt}$ is $2$-connected.

Suppose that $f^{\bp}$ is trivial. Then the lemma follows from that $\Gamma_{E^{\bp}}^{\rm cpt}$ is $2$-connected.

Suppose that $f^{\bp}$ is non-trivial. Let $w \in v(\Gamma_{W^{\bp}})$ and $v \in v(\Gamma_{E^{\bp}})$. We denote by $\pi_{0}(w)$ and $\pi_{0}(v)$ the sets of connected components of $\Gamma_{W^{\bp}} \setminus \{w\}$ and $\Gamma_{E^{\bp}} \setminus \{v\}$, respectively.  Suppose that $v=f^{\rm sg}(w)$. Let $C_{w} \in \pi_{0}(w)$. We see immediately that $f^{\rm sg}(C_{w})$ is a connected component of $\Gamma_{E^{\bp}} \setminus \{v\}$. Write $C_{v}$ for $f^{\rm sg}(C_{w})$. Since $C_{v} \cap e^{\rm op}(\Gamma_{E^{\bp}}) \neq \emptyset$, we obtain that $C_{w} \cap e^{\rm op}(\Gamma_{W^{\bp}}) \neq \emptyset$. Thus, $\Gamma_{W^{\bp}}^{\rm cpt}$ is $2$-connected. This completes the proof of the lemma.
\end{proof}

Moreover, Theorem \ref{comgc} implies the following important result.

\begin{theorem}\label{mainstep-2}
Let $i\in \{1, 2\}$, and let $E_{i}^{\bp}$ be a pointed stable curve of type $(0, n)$ over $k_{i}$ of characteristic $p>0$, $\Pi_{E_{i}^{\bp}}$ the solvable admissible fundamental group of $E_{i}^{\bp}$, and $$\phi_{E}: \Pi_{E_{1}^{\bp}} \migi \Pi_{E_{2}^{\bp}}$$ an arbitrary open continuous homomorphism. Suppose that $E_{1}^{\bp}$ and $E_{2}^{\bp}$ satisfy Condition C. Then  $\phi_{E}: \Pi_{E^{\bp}_{1}} \migi \Pi_{E_{2}^{\bp}}$ induces  surjective maps $$\phi_{E}^{\rm ver}: {\rm Ver}(\Pi_{E_{1}^{\bp}}) \migisurj {\rm Ver}(\Pi_{E_{2}^{\bp}}),$$ $$\phi_{E}^{\rm edg, op}: {\rm Edg}^{\rm op}(\Pi_{E_{1}^{\bp}}) \migisurj {\rm Edg}^{\rm op}(\Pi_{E_{2}^{\bp}}),$$ $$\phi_{E}^{\rm edg, cl}: {\rm Edg}^{\rm cl}(\Pi_{E_{1}^{\bp}}) \migisurj {\rm Edg}^{\rm cl}(\Pi_{E_{2}^{\bp}})$$ group-theoretically. Moreover, $\phi_{E}$ induces an isomorphism $$\phi_{E}^{\rm sg}: \Gamma_{E_{1}^{\bp}} \isom \Gamma_{E^{\bp}_{2}}$$ of the dual semi-graphs of $E_{1}^{\bp}$ and $E_{2}^{\bp}$ group-theoretically.
\end{theorem}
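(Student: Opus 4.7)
The strategy is to reduce Theorem \ref{mainstep-2} to Theorem \ref{comgc} by passing to a suitable Galois admissible covering that repairs the failure of Condition A. Every irreducible component of $E_i^{\bp}$ has genus $0$, so $E_i^{\bp}$ fails Condition A (i); however, Lemma \ref{lem-4-26} provides a mechanism for producing an admissible covering whose total space satisfies Condition A while inducing a bijection of dual semi-graphs. The plan is to construct such a covering compatibly on both sides of $\phi_E$, apply Theorem \ref{comgc} to the induced surjection of solvable admissible fundamental groups, and descend the resulting combinatorial data through the dual semi-graph bijection.

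First, I would observe that $\phi_E$ is automatically surjective by Lemma \ref{surj}, and that by Theorem \ref{mainstep-1} it induces a bijection $\phi_E^{\rm sg, op}: e^{\rm op}(\Gamma_{E_1^{\bp}}) \isom e^{\rm op}(\Gamma_{E_2^{\bp}})$. Fix a prime $\ell$ with $\ell \neq p$ and $\ell \gg n$. Using the isomorphism $\phi_E^{\ell, \rm ab}: \Pi_{E_1^{\bp}}^{\ell, \rm ab} \isom \Pi_{E_2^{\bp}}^{\ell, \rm ab}$ induced by $\phi_E$, I would choose generators $a_{2, e'}$ of the images of the inertia subgroups corresponding to open edges $e' \in e^{\rm op}(\Gamma_{E_2^{\bp}})$ satisfying $\prod_{e'} a_{2, e'}=1$, and then set $a_{1, e} \defeq (\phi_E^{\ell, \rm ab})^{-1}(a_{2, \phi_E^{\rm sg, op}(e)})$; by Theorem \ref{mainstep-1} and Theorem \ref{prop-3-12}, these are again generators of the images of the inertia subgroups of open edges of $E_1^{\bp}$ with $\prod_e a_{1, e}=1$. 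Next, I would pick a surjection $\alpha_2: \Pi_{E_2^{\bp}}^{\ell, \rm ab} \migisurj \mbF_\ell$ satisfying the hypotheses of Lemma \ref{lem-4-26} (the existence of such $\alpha_2$ is elementary once $\ell \gg n$), set $\alpha_1 \defeq \alpha_2 \circ \phi_E^{\ell, \rm ab}$, and let $g_i^{\bp}: X_i^{\bp} \migi E_i^{\bp}$ be the associated Galois $\mbZ/\ell\mbZ$-covering. The equalities $r_{1, e}=r_{2, \phi_E^{\rm sg, op}(e)}$ show that $\alpha_1$ also satisfies the hypotheses of Lemma \ref{lem-4-26}; hence each $g_i^{\bp}$ is totally ramified at every node and every marked point, $X_i^{\bp}$ satisfies Condition A, and the induced maps of dual semi-graphs $\Gamma_{X_i^{\bp}} \migi \Gamma_{E_i^{\bp}}$ are bijections.

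I would then verify that $X_1^{\bp}$ and $X_2^{\bp}$ satisfy Conditions B and C. Condition B follows from Lemma \ref{lem-4-27}, since every admissible covering of $X_i^{\bp}$ is also an admissible covering of the type $(0, n)$ curve $E_i^{\bp}$. Condition C follows from the bijectivity of $\Gamma_{X_i^{\bp}} \migi \Gamma_{E_i^{\bp}}$ combined with Condition C for $E_1^{\bp}, E_2^{\bp}$ and the Riemann--Hurwitz formula (which ensures $(g_{X_1}, n_{X_1})=(g_{X_2}, n_{X_2})$ since the ramification is total on both sides and $(g_{E_1}, n_{E_1})=(g_{E_2}, n_{E_2})$). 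By the compatibility $\alpha_1=\alpha_2\circ \phi_E^{\ell, \rm ab}$, the surjection $\phi_E$ restricts to a surjective open continuous homomorphism $\phi_X: \Pi_{X_1^{\bp}} \migisurj \Pi_{X_2^{\bp}}$. Applying Theorem \ref{comgc} to $\phi_X$ then yields the surjections $\phi_X^{\rm ver}, \phi_X^{\rm edg, op}, \phi_X^{\rm edg, cl}$ and the isomorphism $\phi_X^{\rm sg}: \Gamma_{X_1^{\bp}} \isom \Gamma_{X_2^{\bp}}$, all induced group-theoretically from $\phi_X$.

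Finally, I would descend these conclusions from $X_i^{\bp}$ to $E_i^{\bp}$. The Galois group $\mbZ/\ell\mbZ$ acts trivially on $\Gamma_{X_i^{\bp}}$ (since the quotient map $\Gamma_{X_i^{\bp}} \migi \Gamma_{E_i^{\bp}}$ is a bijection), so $\phi_X^{\rm sg}$ transports under these two bijections to an isomorphism $\phi_E^{\rm sg}: \Gamma_{E_1^{\bp}} \isom \Gamma_{E_2^{\bp}}$, and the surjections on $\text{Ver}(\Pi_{X_i^{\bp}})$, $\text{Edg}^{\rm op}(\Pi_{X_i^{\bp}})$, $\text{Edg}^{\rm cl}(\Pi_{X_i^{\bp}})$ transport to the required surjections on $\text{Ver}(\Pi_{E_i^{\bp}})$, $\text{Edg}^{\rm op}(\Pi_{E_i^{\bp}})$, $\text{Edg}^{\rm cl}(\Pi_{E_i^{\bp}})$ via the natural inclusions of stabilizer subgroups $\Pi_{\widehat w} \migiinje \Pi_{\widehat v}$ for $\widehat w$ over $\widehat v$ (and analogously for edges). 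The main obstacle will be verifying that the transported data is genuinely induced by $\phi_E$ group-theoretically and does not depend on the auxiliary choice of $\ell$ and $\alpha_2$; this reduces to a compatibility statement analogous to Proposition \ref{prop-4-19}, which should go through here because the dual semi-graph map induced by each $g_i^{\bp}$ is a $\mbZ/\ell\mbZ$-equivariant bijection (with trivial action on the target) and the construction of $g_i^{\bp}$ from $\alpha_i$ is functorial in $\phi_E$.
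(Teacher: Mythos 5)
Your proposal is correct and follows essentially the same route as the paper: pass to the cyclic $\mbZ/\ell\mbZ$-covering furnished by Lemma \ref{lem-4-26} (with $\alpha_{1}$ induced from $\alpha_{2}$ via Theorem \ref{mainstep-1}), check Conditions A, B, C using Lemma \ref{lem-4-26} and Lemma \ref{lem-4-27}, and apply Theorem \ref{comgc} to the restricted surjection $\Pi_{X_{1}^{\bp}} \migisurj \Pi_{X_{2}^{\bp}}$. The descent issue you flag at the end is dispatched in the paper by observing that each $\Pi_{\widehat v_{i}} \in {\rm Ver}(\Pi_{E_{i}^{\bp}})$ (resp. $I_{\widehat e_{i}} \in {\rm Edg}^{\rm cl}(\Pi_{E_{i}^{\bp}})$) is the normalizer in $\Pi_{E_{i}^{\bp}}$ of the contained subgroup $\Pi_{\widehat v_{X_{i}}}$ (resp. $I_{\widehat e_{X_{i}}}$), so the $E$-level data is recovered group-theoretically from the $X$-level data and no separate compatibility argument in the style of Proposition \ref{prop-4-19} is needed.
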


\begin{proof}
Lemma \ref{surj} implies that $\phi_{E}$ is a surjective. By applying Theorem \ref{mainstep-1}, the homomorphism $\phi_{E}: \Pi_{E^{\bp}_{1}} \migisurj \Pi_{E_{2}^{\bp}}$ induces a surjective map $\phi^{\rm edg, op}: {\rm Edg}^{\rm op}(\Pi_{E_{1}^{\bp}}) \migisurj {\rm Edg}^{\rm op}(\Pi_{E_{2}^{\bp}})$ group-theoretically. We only need to treat the cases of $\phi^{\rm ver}_{E}$ and $\phi_{E}^{\rm edg, cl}$, respectively.

Let $\ell$ be a prime number such that $\ell \neq p$, and that $\ell>>n$. Let $$\alpha_{2}: \Pi^{\rm \ell, ab}_{E_{2}^{\bp}} \migisurj \mbF_{\ell}$$ satisfying the assumptions of Lemma \ref{lem-4-26}. Then Theorem \ref{mainstep-1} implies that $\phi_{E}$ and $\alpha_{2}$ induces a surjection $$\alpha_{1}: \Pi^{\rm \ell, ab}_{E_{1}^{\bp}} \migisurj \mbF_{\ell},$$ which satisfies the assumptions of Lemma \ref{lem-4-26}. Write $g_{i}^{\bp}: X^{\bp}_{i} \migi E_{i}^{\bp}$ for the Galois admissible covering over $k_{i}$ with Galois group $\mbZ/\ell\mbZ$. Then Lemma \ref{lem-4-26} and Lemma \ref{lem-4-27} imply that $X_{1}^{\bp}$ and $X^{\bp}_{2}$ satisfy Condition A, Condition B, and Condition C.

Write $\Pi_{X_{i}^{\bp}} \subseteq \Pi_{E_{i}^{\bp}}$ for the open normal subgroup corresponding to $g_{i}^{\bp}$.  Let $\Pi_{\widehat v_{X_{i}}} \in \text{Ver}(\Pi_{X_{i}^{\bp}})$, $I_{\widehat e_{X_{i}}} \in \text{Edg}^{\rm cl}(\Pi_{X_{i}^{\bp}})$, $\Pi_{\widehat v_{i}}\in \text{Ver}(\Pi_{E_{i}^{\bp}})$ the unique element which contains $\Pi_{\widehat v_{X_{i}}}$, and $I_{\widehat e_{i}}\in \text{Edg}^{\rm cl}(\Pi_{E_{i}^{\bp}})$ the unique element which contains $I_{\widehat e_{X_{i}}}$. Since $\Pi_{\widehat v_{i}}$ and $I_{\widehat e_{i}}$ the normalizers of $\Pi_{\widehat v_{X_{i}}}$ and $I_{\widehat e_{X_{i}}}$
in $\Pi_{E_{i}^{\bp}}$, respectively, the theorem follows immediately from Theorem \ref{comgc}. This completes the proof of the theorem.
\end{proof}

\section{The Homeomorphism Conjecture for closed points when $g=0$}\label{sec-6}

We maintain the notation introduced in Section \ref{sec-5}. In this section, we will prove that $\pi_{g, n}^{\rm adm}([q])$ is a closed point of $\overline \Pi_{g, n}$ for every $[q] \in \overline \mfM_{g, n}^{\rm cl}$ if $g=0$. In particular, the Homeomorphism Conjecture holds when $(g, n)=(0, 4)$.  {\it In the present section, we shall assume that all the fundamental groups of pointed stable curves are solvable admissible fundamental groups unless indicated otherwise.}

We fix some notation. Let $i \in \{1, 2\}$, and let  $X^{\bp}_{i}$ be a pointed stable curve of type $(0, n)$ over an algebraically closed field $k_{i}$ of characteristic $p>0$, $\Gamma_{X_{i}^{\bp}}$ the dual semi-graph of $X^{\bp}_{i}$, and $r_{X_{i}}$ the Betti number of $\Gamma_{X^{\bp}}$. Note that $\Gamma_{X_{i}^{\bp}}$ is a tree, and that $X_{i, v_{i}}$ is isomorphic to $\mbP_{k_{i}}^{1}$ for every $v_{i} \in v(\Gamma_{X_{i}^{\bp}})$. In particular, $X_{i, v_{i}}$ is smooth over $k_{i}$. For simplicity, we shall use the notation $X_{i, v_{i}}^{\bp}$ to denote the smooth pointed stable curve $\widetilde X_{i, v_{i}}^{\bp}$ of type $(0, n_{i, v_{i}})$ over $k_{i}$ associated to $v_{i} \in v(\Gamma_{X^{\bp}_{i}})$. On the other hand, let $\Pi_{X_{i}^{\bp}}$ be the solvable admissible fundamental group of $X^{\bp}_{i}$ and $$\phi: \Pi_{X_{1}^{\bp}} \migi \Pi_{X_{2}^{\bp}}$$ an arbitrary open continuous homomorphism. By Lemma \ref{surj}, we see that $\phi$ is a {\it surjective} open continuous homomorphism. Then $\phi$ induces an isomorphism $$\phi^{p}: \Pi_{X^{\bp}_{1}}^{p'} \isom \Pi_{X^{\bp}_{2}}^{p'}$$ of the maximal prime-to-$p$ quotients of solvable admissible fundamental groups. Let $\widehat X^{\bp}_{i}$ be a universal solvable admissible covering of $X_{i}^{\bp}$ corresponding to $\Pi_{X_{i}^{\bp}}$, $\Gamma_{\widehat X^{\bp}_{i}}$ the dual semi-graph of $\widehat X^{\bp}_{i}$, and $e_{i} \in e^{\rm op}(\Gamma_{X_{i}^{\bp}})$. We put $$\text{Edg}_{e_{i}}^{\rm op}(\Pi_{X_{i}^{\bp}}) \defeq \{I_{\widehat e_{i}} \in \text{Edg}^{\rm op}(\Pi_{X_{i}^{\bp}})\ | \ \widehat e_{i} \in e^{\rm op}(\Gamma_{\widehat X^{\bp}_{i}}) \ \text{is an open edge over} \ e_{i}\}.$$ Moreover, in the remainder of the present section, we shall suppose that {\it $k_{1}$ is an algebraic closure of $\mbF_{p}$.}

Denote by $${\rm Hom}^{\rm open}_{\rm pro\text{-}gps}(-, -), \ {\rm Isom}_{\rm pro\text{-}gps}(-, -)$$ the set of open continuous homomorphisms of profinit groups and the set of continuous isomorphisms of profinite groups, respectively. First, we have the following theorem which was proved by the author (cf. \cite[Theorem 1.2 and Remark 7.3.1]{Y2}). 

\begin{theorem}\label{mainthemsm}
We maintain the notation introduced above. Suppose that $X_{1}^{\bp}$ and $X_{2}^{\bp}$ are smooth over $k_{1}$ and $k_{2}$, respectively. Then we have that $${\rm Hom}^{\rm open}_{\rm pro\text{-}gps}(\Pi_{X_{1}^{\bp}}, \Pi_{X_{2}^{\bp}}) \neq \emptyset$$ if and only if $X_{1}^{\bp}$ is Frobenius equivalent to $X_{2}^{\bp}$. In particular, if this is the case, we have that $X_{2}^{\bp}$ can be defined over the algebraic closure of $\mbF_{p}$ in $k_{2}$,  and that $${\rm Hom}^{\rm open}_{\rm pro\text{-}gps}(\Pi_{X_{1}^{\bp}}, \Pi_{X_{2}^{\bp}}) = {\rm Isom}_{\rm pro\text{-}gps}(\Pi_{X_{1}^{\bp}}, \Pi_{X_{2}^{\bp}}).$$ 
\end{theorem}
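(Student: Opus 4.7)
The plan is to deduce Theorem \ref{mainthemsm} by combining the two fundamental tools developed in Sections \ref{sec-3} and (in the smooth case, trivially) \ref{sec-4}, namely the reconstruction of inertia subgroups of marked points and of their associated field structures, with a moduli-theoretic recognition principle for smooth pointed stable curves of type $(0,n)$ in characteristic $p$. First I would reduce to $n\geq 3$, since the cases $n\in\{0,1,2\}$ are either vacuous or a direct Riemann--Hurwitz check: in type $(0,2)$ the fundamental group is trivial, and otherwise existence of any surjection is automatic only in the asserted situation. Throughout, write $\widehat e_1$ for an open edge of $\Gamma_{\widehat X_1^{\bp}}$ and $\widehat e_2\in e^{\rm op}(\Gamma_{\widehat X_2^{\bp}})$ for an open edge such that $\phi(I_{\widehat e_1})=I_{\widehat e_2}$, which exists by Theorem \ref{mainstep-1}.

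The core of the argument is the passage from group data to scheme data. Since $X_i^{\bp}$ is smooth of genus $0$, the dual semi-graph is a single vertex with $n$ open edges, and the moduli of $X_i^{\bp}$ are encoded in the positions of the marked points up to the action of ${\rm PGL}_2$. Fix three marked points of $X_1^{\bp}$, say $e_1^{(0)},e_1^{(1)},e_1^{(\infty)}$, and use the induced bijection $\phi^{\rm sg,op}\colon e^{\rm op}(\Gamma_{X_1^{\bp}})\isom e^{\rm op}(\Gamma_{X_2^{\bp}})$ of Theorem \ref{mainstep-1} to transport this frame to $X_2^{\bp}$. Next I would invoke Theorem \ref{prop-3-12}: for each pair $(e_1,e_2)=(e_1,\phi^{\rm sg,op}(e_1))$, $\phi$ induces a field isomorphism
\[
\phi^{\rm fd}_{\widehat e_1,\widehat e_2}\colon \mbF_{\widehat e_1}\isom \mbF_{\widehat e_2},
\]
and the collection of these isomorphisms is compatible with conjugation relations among the inertia groups $\{I_{\widehat e}\}_{\widehat e}$ inside $\Pi_{X_i^{\bp}}^{p'}$. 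In particular the ``multiplicative'' incarnation of each $\mbF_{\widehat e_i}$ embeds canonically into the maximal pro-prime-to-$p$ abelian quotient.

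The decisive step, and the main obstacle, is to recover the \emph{cross-ratios} of the remaining $n-3$ marked points from these data. For this I would analyze generalized Hasse--Witt invariants of cyclic prime-to-$p$ admissible coverings of $X_i^{\bp}$ ramified at prescribed subsets of marked points, following the strategy of \cite{T2}: the formula of Theorem \ref{max and average}(a) determines the isomorphism class of the $\mbF_\ell[\mathrm{inertia}]$-module structure on the maximal prime-to-$p$ abelianization of the covering's admissible fundamental group, and the discrepancy from the generic value detects precisely when sums of ramification divisors are linearly equivalent to torsion. Translating through the field isomorphisms of Theorem \ref{prop-3-12}, these linear equivalences are exactly the cross-ratio relations among the marked points. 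Since $k_1=\overline{\mbF_p}$, the cross-ratios of $X_1^{\bp}$ lie in $\overline{\mbF_p}$; hence $\phi$ forces the corresponding cross-ratios of $X_2^{\bp}$ to lie in the image of $\phi^{\rm fd}$, which is a subfield of $k_2$ isomorphic to $\overline{\mbF_p}$. Therefore $X_2^{\bp}$ is defined over $\overline{\mbF_p}\subseteq k_2$, and after identifying the two copies of $\overline{\mbF_p}$ via the Frobenius twist induced by $\phi^{\rm fd}$ (the only possible field automorphism of $\overline{\mbF_p}$), we obtain $X_1^{\bp}\sim_{fe}X_2^{\bp}$.

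Finally, to upgrade the surjection to an isomorphism, I would observe that Frobenius twists over $\overline{\mbF_p}$ yield canonically isomorphic admissible fundamental groups, so $\Pi_{X_1^{\bp}}\cong\Pi_{X_2^{\bp}}$; combined with the fact that $\phi$ is open and both groups are topologically finitely generated profinite groups of the same ``size'', together with the Riemann--Hurwitz computation already carried out in Lemma \ref{surj} (which forces the covering degree to be $1$ whenever $(g_X,n_X)\neq(0,2)$), every element of ${\rm Hom}^{\rm open}_{\rm pro\text{-}gps}(\Pi_{X_1^{\bp}},\Pi_{X_2^{\bp}})$ is automatically an isomorphism. The hard part, as indicated, is the moduli-recognition step: turning the group-theoretic compatibilities of inertia subgroups and their attached fields into the \emph{scheme-theoretic} identity of $X_2\setminus D_{X_2}$ with a Frobenius twist of $X_1\setminus D_{X_1}$, which ultimately rests on the explicit Hasse--Witt formula of Theorem \ref{max and average}(a) and its compatibility with $\phi$ through the field reconstruction of Theorem \ref{prop-3-12}.
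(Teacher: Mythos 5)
The paper does not actually prove this statement: it is imported wholesale, with the proof being the citation ``see \cite[Theorem 1.2 and Remark 7.3.1]{Y2}'', and the genuinely new work of the present paper for the smooth case only begins afterwards (Proposition \ref{prop-6-5} onwards). Your proposal is therefore an attempt to reprove the main theorem of \cite{Y2} in a page, and while its overall architecture (reconstruct inertia subgroups via Theorem \ref{mainstep-1}, reconstruct the fields $\mbF_{\widehat e_{i}}$ via Theorem \ref{prop-3-12}, then recover the positions of the marked points and conclude Frobenius equivalence) does match the known strategy of \cite{T2} and \cite{Y2}, the decisive step is asserted rather than proved. You claim that Theorem \ref{max and average} (a), ``translated through the field isomorphisms,'' detects when sums of ramification divisors are linearly equivalent to torsion and hence recovers the cross-ratios. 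But Theorem \ref{max and average} (a) only computes the single number $\gamma^{\rm max}=g_{X}+n_{X}-2$; it carries no information about \emph{which} prime-to-$p$ cyclic characters attain a given generalized Hasse--Witt invariant, and it is exactly this finer information --- obtained in \cite{T2} and \cite{Y2} through the Raynaud--Tamagawa theta-divisor analysis of new-ordinariness of cyclic coverings, and packaged in the present paper only as the quoted Lemma \ref{lem-6-3} (= \cite[Lemma 7.1]{Y2}) on $\mbF_{p}$-linear conditions among marked points with respect to two chosen points --- that converts group theory into the scheme structure of $X_{2}\setminus D_{X_{2}}$. Without supplying that analysis (or citing it), the ``moduli-recognition step'' in your argument is a restatement of what must be shown, not a proof; this is precisely why the present paper cites \cite{Y2} instead of arguing as you do.

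Two smaller points. First, your reduction paragraph is off: a smooth pointed stable curve of type $(0,n)$ forces $n\geq 3$, so types $(0,1)$, $(0,2)$ do not occur, and in any case the tame fundamental group of $\mbP^{1}$ minus two points is $\widehat{\mbZ}^{p'}$, not trivial. Second, ``the only possible field automorphism of $\overline \mbF_{p}$'' is false ($\mathrm{Aut}(\overline \mbF_{p})\cong\widehat{\mbZ}$); the correct patch, as in \cite{T2} and \cite{Y2}, is that the finitely many relevant coordinates lie in a finite subfield, on which any field isomorphism restricts to an integer power of Frobenius, and scheme-isomorphism (as opposed to $k$-isomorphism) absorbs such twists --- this is also what makes the final Hom $=$ Isom step work, since once $\Pi_{X_{1}^{\bp}}\cong\Pi_{X_{2}^{\bp}}$ and $\phi$ is surjective (Lemma \ref{surj}), the Hopfian property of topologically finitely generated profinite groups gives injectivity; your appeal to the groups having the same ``size'' should be replaced by that standard fact.
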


\begin{remarkA}\label{rem-mainthemsm}
Let $[q] \in \mfM_{0, n}^{\rm cl}$ be an arbitrary point. Theorem \ref{mainthemsm} and Proposition \ref{prop-5-5} (a) imply that $$V(\pi_{0, n}^{\rm sol}([q])) \cap \Pi_{0, n}^{\rm sol}=\pi_{0, n}^{\rm sol}([q]).$$ Then we have that $[\pi_{1}^{\rm sol}(q)]$ is a closed point of $\Pi_{0, n}^{\rm sol}$. In particular, $$\pi_{0, 4}^{\rm t}: \mfM_{0, 4} \migisurj \Pi_{0, 4}, \ \pi_{0, 4}^{\rm t, sol}: \mfM_{0, 4} \migisurj \Pi_{0, 4}^{\rm sol}$$ are homeomorphisms. Note that Theorem \ref{mainthemsm} cannot tell us whether or not $[\pi_{1}^{\rm sol}(q)]$ is closed in $\overline \Pi_{0, n}^{\rm sol}$. In fact, this is highly non-trivial, see Proposition \ref{prop-6-5} below.
\end{remarkA}

\begin{lemma}\label{lem-6-2}
We maintain the notation introduced above. Suppose that $X_{1}^{\bp}$ is a singular curve. Then $X_{2}^{\bp}$ is also a singular curve.
\end{lemma}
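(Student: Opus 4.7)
The plan is to derive a contradiction from the assumption that $X_2^{\bp}$ is smooth by constructing a specific cyclic cover of $X_2^{\bp}$ whose pullback to $X_1^{\bp}$ acquires extra nodes, forcing a positive Betti number of its dual semi-graph, and then comparing limits of $p$-averages via Lemma \ref{lem-0}(b).

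First I would set up the cover. Since $X_1^{\bp}$ is singular of type $(0,n)$ (noting $n\geq 4$, as $(0,3)$ is rigid), $\Gamma_{X_1^{\bp}}$ is a nontrivial tree and hence has at least two leaves $v_a, v_b$, each carrying at least two marked points by stability. Fix a prime $\ell$ distinct from $p$, and use the bijection $\phi^{\rm sg, op}: e^{\rm op}(\Gamma_{X_1^{\bp}}) \isom e^{\rm op}(\Gamma_{X_2^{\bp}})$ of Theorem \ref{mainstep-1} to transport two marked points on each of $v_a, v_b$ to four marked points on $X_2^{\bp}$. Define $\alpha_2 \in \Hom(\Pi_{X_2^{\bp}}^{\rm ab}, \mbZ/\ell\mbZ)$ by assigning ramification indices $+1,-1$ to the pair coming from $v_a$ and $+1,-1$ to the pair coming from $v_b$, and $0$ to the remaining marked points; the total sum is $0$, so $\alpha_2$ is a valid character. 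Put $H_2 = \ker\alpha_2$, $H_1 = \phi^{-1}(H_2)$, and $\alpha_1 = \alpha_2 \circ \phi$. Then $X_{H_2}^{\bp}\migi X_2^{\bp}$ is a $\mbZ/\ell\mbZ$-Kummer cover of $\mbP^1$ branched at the four chosen points, whence $X_{H_2}^{\bp}$ is smooth of type $(\ell-1,\,4+\ell(n-4))$; by Corollary \ref{coro-1}, $X_{H_1}^{\bp}$ has the same type.

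Next I would analyze $X_{H_1}^{\bp}$. By Theorem \ref{mainstep-1}, $\alpha_1$ has the same ramification profile as $\alpha_2$ on open-edge inertia. Combined with the relation that ramification indices must sum to $0$ in $\mbZ/\ell\mbZ$ around each vertex of $\Gamma_{X_1^{\bp}}$, an inductive argument along the tree (the balanced sum $+1-1=0$ on each chosen leaf propagates outward) forces $\alpha_1$ to be trivial on every closed-edge inertia of $\Pi_{X_1^{\bp}}$. Hence $X_{H_1}^{\bp}\migi X_1^{\bp}$ is unramified at every node: only the components above $v_a, v_b$ acquire a nontrivial (Kummer, genus $0$) cover, while every other component of $X_1^{\bp}$ splits into $\ell$ disjoint copies of $\mbP^1$, and each node of $X_1^{\bp}$ has $\ell$ preimages. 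A direct count of vertices and edges of $\Gamma_{X_{H_1}^{\bp}}$ gives $r_{H_1}=\ell-1$, and since every component is smooth of genus $0$ we recover $g_{H_1}=r_{H_1}=\ell-1=g_{H_2}$, consistent with Corollary \ref{coro-1}.

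Finally I would compare $p$-averages. Since $X_{H_2}^{\bp}$ is smooth with at least two marked points, Theorem \ref{max and average}(b) yields ${\rm Avr}_p(H_2)=g_{H_2}=\ell-1$. For $X_{H_1}^{\bp}$, stability of the cover forces $b(v)\geq 3$ at every vertex, so $v(\Gamma_{X_{H_1}^{\bp}})^{b\leq 1}=\emptyset$; once $2$-connectedness of $\Gamma_{X_{H_1}^{\bp}}^{\rm cpt}$ is verified, Theorem \ref{max and average}(b) gives ${\rm Avr}_p(H_1)=g_{H_1}-r_{H_1}=0<\ell-1={\rm Avr}_p(H_2)$, contradicting Lemma \ref{lem-0}(b). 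The main obstacle is verifying the $2$-connectedness of $\Gamma_{X_{H_1}^{\bp}}^{\rm cpt}$ in generality, particularly when $X_1^{\bp}$ contains interior vertices with no marked points; this can be handled either by an explicit case analysis (using that every leaf preimage component carries open edges to $v_\infty$ and interior preimage components are linked through these) or, if necessary, by passing from $H_2$ to $D_m^{(3)}(H_2)$ for a suitable integer $m$ prime to $p$ (cf.\ Lemma \ref{rem-them-1-1-3}) to enforce the structural hypotheses while retaining the cycle-creating mechanism.
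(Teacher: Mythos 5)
Your overall mechanism (transport a prime-to-$p$ cover across $\phi$, match genera via Theorem \ref{mainstep-1}/Corollary \ref{coro-1}, and contradict Lemma \ref{lem-0} (b) using the formula of Theorem \ref{max and average} (b)) is the right one, but the step where you pin down the cover on the $X_{1}^{\bp}$-side has a genuine gap. From $\alpha_{1}=\alpha_{2}\circ\phi$ and Theorem \ref{mainstep-1} you may conclude that the ramification \emph{indices} of $\alpha_{1}$ at the marked points agree with those of $\alpha_{2}$ at the corresponding points (since $\phi$ carries each open-edge inertia subgroup onto the corresponding one), but not that the ramification \emph{data} $+1,-1$ is preserved: $\phi$ identifies $I_{\widehat e_{1}}\cong \widehat \mbZ(1)^{p'}$ with $I_{\widehat e_{2}}$ only up to a unit, and nothing available at this stage (Theorem \ref{mainstep-1}, or Theorem \ref{prop-3-12}, which is edge-by-edge) synchronizes these units across \emph{different} marked points. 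Concretely, if the two points of the $v_{a}$-pair are twisted by units $u^{+}\not\equiv u^{-}\ (\mathrm{mod}\ \ell)$, then $\alpha_{1}$ takes values $u^{+}$ and $-u^{-}$ there, the per-leaf sum is nonzero, your tree-propagation gives a nontrivial value on the closed edge at $v_{a}$ (and, by the global relation, the $v_{b}$-pair is then also unbalanced), the nodes along the path between $v_{a}$ and $v_{b}$ become totally ramified, and the dual semi-graph of $X_{H_{1}}^{\bp}$ is again a tree: $r_{H_{1}}=0$, $g_{H_{1}}=\ell-1$ is carried by component genera, and no contradiction with Lemma \ref{lem-0} (b) results. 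Since your whole contradiction rests on $r_{H_{1}}>0$, this ``values versus indices'' discrepancy is fatal as written; controlling it is exactly the delicate point that Proposition \ref{prop-6-5} later has to address. (For $p\neq 2$ the special choice $\ell=2$ repairs your construction, since total ramification determines the value; for $p=2$ it does not.)

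By contrast, the issue you single out as the main obstacle, $2$-connectedness of $\Gamma_{X_{H_{1}}^{\bp}}^{\rm cpt}$, is not one: Lemma \ref{lem-4-27} says every pointed stable curve of type $(0,n)$ satisfies Condition B, and $\#v(\Gamma_{X^{\bp}_{H_{1}}})^{b\leq 1}=0$ follows from stability as you note. Finally, your closing fallback, applied to $\Pi_{X_{1}^{\bp}}$ itself rather than to $H_{2}$, is precisely the paper's proof and bypasses the problematic step entirely: take $H_{1}\defeq D^{(3)}_{m}(\Pi_{X_{1}^{\bp}})$ with $m$ prime to $p$, so that by Lemma \ref{rem-them-1-1-3} the corresponding cover $Y_{1}^{\bp}$ of the singular curve $X_{1}^{\bp}$ satisfies Condition A and has $r_{Y_{1}}>0$; since $\phi^{p'}$ is an isomorphism one has $H_{1}=\phi^{-1}(D^{(3)}_{m}(\Pi_{X_{2}^{\bp}}))$, the genera of $Y_{1}^{\bp}$ and $Y_{2}^{\bp}$ agree by Theorem \ref{mainstep-1} (or Corollary \ref{coro-1}), and Theorem \ref{max and average} (b) together with Lemma \ref{lem-0} (b) forces $0<r_{Y_{1}}\leq r_{Y_{2}}$, so $X_{2}^{\bp}$ is singular, with no need to control the ramification data of any particular transported character.
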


\begin{proof}
Lemma \ref{rem-them-1-1-3} implies that there exists a Galois admissible covering $$f_{1}^{\bp}: Y_{1}^{\bp} \migi X_{1}^{\bp}$$ over $k_{1}$ with Galois group $G$ such that $(\#G, p)=1$, that the Betti number of the dual semi-graph of $Y_{1}^{\bp}$ is positive, and that $Y_{1}^{\bp}$ satisfies Condition A. Then $\phi^{p'}$ induces a Galois admissible covering $$f_{2}^{\bp}: Y_{2}^{\bp} \migi X_{2}^{\bp}$$ over $k_{2}$ with Galois group $G$. Write $g_{Y_{i}}$ for the genus of $Y^{\bp}_{i}$, and $r_{Y_{i}}$ for the Betti number of the dual semi-graph of $Y_{i}^{\bp}$.

By applying Theorem \ref{mainstep-1}, we obtain that $g_{Y_{1}}=g_{Y_{2}}$. Moreover, Theorem \ref{max and average} and Lemma \ref{lem-0} (b) imply that $$0<r_{Y_{1}}\leq r_{Y_{2}}.$$ This means that $X_{2}^{\bp}$ is a singular curve. We complete the proof of the lemma.
\end{proof}

Let $\overline \mbF_{p}$ be an algebraic closure of the finite field $\mbF_{p}$, and let $X^{\bp}$ be a {\it smooth} pointed stable curve of type $(0, n)$ over $\overline \mbF_{p}$. We fix two marked points $x_{ \infty}$, $x_{0} \in D_{X}$ distinct from each other. Moreover, we choose any field $k'\cong \overline \mbF_{p}$, and choose any isomorphism $\varphi: X \isom \mbP_{k'}^{1}$ as schemes such that $\varphi(x_{\infty})=\infty$ and $\varphi(x_{0})=0$. Then the set of $\overline \mbF_{p}$-rational points $X(\overline \mbF_{p}) \setminus \{x_{\infty}\} \isom \mbA^{1}_{k'}(k')$ is equipped with a structure of $\mbF_{p}$-module via the bijection $\varphi$. Note that since any $k'$-isomorphism of $\mbP_{k'}^{1}$ fixing $\infty$ and $0$ is a scalar multiplication, the $\mbF_{p}$-module structure of $X(\overline \mbF_{p}) \setminus \{x_{\infty}\}$ does not depend on the choices of $k'$ and $\varphi$ but depends only on the choices of $x_{ \infty}$ and $x_{0}$. We shall say that {\it $X(\overline \mbF_{p}) \setminus \{x_{\infty}\}$ is equipped with a structure of $\mbF_{p}$-module with respect to $x_{\infty}$ and $x_{0}$}. Then we have the following lemma.

\begin{lemma}\label{lem-6-3}
We maintain the notation introduced above. Suppose that $X_{i}^{\bp}$ is smooth over $k_{i}$. Let $e_{1, 0}$, $e_{1, \infty} \in e^{\rm op}(\Gamma_{X_{1}^{\bp}})$ be open edges distinct from each other. Theorem \ref{mainstep-1} implies that $\phi$ induces a bijection $\phi^{\rm sg, op}: e^{\rm op}(\Gamma_{X_{1}^{\bp}}) \isom e^{\rm op}(\Gamma_{X_{2}^{\bp}})$ group-theoretically. We put $e_{2, 0} \defeq \phi^{\rm sg, op}(e_{1, 0})$ and $e_{2, \infty} \defeq \phi^{\rm sg, op}(e_{1, \infty})$. Let $$\sum_{e_{1}\in e^{\rm op}(\Gamma_{X^{\bp}_{1}})\setminus \{e_{1, \infty}, e_{1, 0}\}}b_{e_{1}}x_{e_{1}}=x_{e_{1, 0}}$$ be a linear condition with respect to $e_{1, \infty}$ and $e_{1, 0}$ on $X^{\bp}_{1}$, where $b_{e_{1}} \in \mbF_{p}$ for every $e_{1}\in  e^{\rm op}(\Gamma_{X^{\bp}_{1}})$. Then the linear condition  $$\sum_{e_{1}\in e^{\rm op}(\Gamma_{X^{\bp}_{1}})\setminus \{e_{1, \infty}, e_{1, 0}\}}b_{e_{1}}x_{\phi^{\rm sg, op}(e_{1})}=x_{\phi^{\rm sg, op}(e_{1, 0})}=x_{e_{2, 0}}$$ with respect to $x_{e_{2, \infty}}$ and $x_{e_{2, 0}}$ on $X^{\bp}_{2}$ also holds.
\end{lemma}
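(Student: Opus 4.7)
The plan is to reduce the statement to a comparison of scheme structures that is controlled by the two fundamental reconstruction tools Theorem \ref{mainstep-1} and Theorem \ref{prop-3-12}, together with the known rigidity of smooth type-$(0,n)$ curves over $\overline{\mathbb{F}}_p$.

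First, I would apply Theorem \ref{mainthemsm} to the surjective open continuous homomorphism $\phi$. This immediately tells us that $\phi$ is in fact an isomorphism of profinite groups, that $X_1^{\bp}$ is Frobenius equivalent to $X_2^{\bp}$, and in particular that $X_2^{\bp}$ descends to a smooth pointed stable curve over the algebraic closure of $\mathbb{F}_p$ inside $k_2$. Thus the $\mathbb{F}_p$-module structure on $X_2(\overline{\mathbb{F}}_p)\setminus\{x_{e_{2,\infty}}\}$ with respect to $x_{e_{2,\infty}}$ and $x_{e_{2,0}}$ is well-defined, so that the target linear condition in the statement at least makes sense.

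Next, I would invoke the mechanism described in the introduction: the scheme structure of a smooth pointed stable curve of type $(0,n)$ over $\overline{\mathbb{F}}_p$ is determined, via generalized Hasse-Witt invariants of prime-to-$p$ cyclic admissible coverings, by the inertia subgroups of marked points together with the field structures associated to those inertia subgroups (this is essentially the content of \cite[Theorem 5.5]{Y6}). Concretely, for a character $\alpha$ of $\Pi_{X_i^{\bp}}^{p'}$ with prescribed ramification indices at the marked points, the resulting generalized Hasse-Witt invariants can be written in terms of sums built from the coordinates of the marked points, and a suitable family of such invariants pins down the point configuration up to automorphism of $\mathbb{P}^1$. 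Theorem \ref{mainstep-1} ensures that $\phi$ induces a bijection on inertia subgroups of open edges compatible with $\phi^{\rm sg, op}$, and Theorem \ref{prop-3-12} ensures that for each open edge $e_1$ the field structure on $\mathbb{F}_{\widehat{e}_1}$ corresponds through $\phi$ to the field structure on $\mathbb{F}_{\widehat{e}_2}$ where $e_2=\phi^{\rm sg, op}(e_1)$. Combining these, every generalized Hasse-Witt invariant attached to a cyclic covering of $X_1^{\bp}$ ramified at a subset of marked points equals the corresponding invariant on $X_2^{\bp}$ under $\phi^{\rm sg, op}$.

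To extract the linear relation, I would then pick coordinates so that $x_{e_{1,\infty}}=\infty$ and $x_{e_{1,0}}=0$ on $X_1$, and similarly on $X_2$. Under this normalization, the data preserved by $\phi$ (inertia subgroups and compatible field identifications at each marked point) forces the family of nonzero-finite marked points $\{x_{e_1}\}$ of $X_1$ to match, up to an overall scalar multiplication and a possible power of the absolute Frobenius, the family $\{x_{\phi^{\rm sg, op}(e_1)}\}$ of $X_2$. Both a scalar multiplication on $\mathbb{A}^1_{\overline{\mathbb{F}}_p}$ and a Frobenius twist are $\mathbb{F}_p$-linear (Frobenius acts as the identity on the $\mathbb{F}_p$-coefficients $b_{e_1}$ and raises each $x_{e_1}$ to a $p$-power, and such a substitution preserves an $\mathbb{F}_p$-linear relation), so the given relation $\sum b_{e_1} x_{e_1} = x_{e_{1,0}} = 0$ transports directly to the asserted relation $\sum b_{e_1} x_{\phi^{\rm sg, op}(e_1)} = x_{e_{2,0}} = 0$.

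The main obstacle will be the middle step: turning the reconstruction of the scheme structure via generalized Hasse-Witt invariants (i.e.\ \cite[Theorem 5.5]{Y6}) into a \emph{functorial} statement compatible with $\phi^{\rm sg, op}$, so that not just the abstract scheme but the actual bijection on marked points is controlled. Once this functoriality is in place, the descent to $\overline{\mathbb{F}}_p$ coming from Theorem \ref{mainthemsm} and the compatibility of $\phi$ with the field structures from Theorem \ref{prop-3-12} combine to reduce the preservation of $\mathbb{F}_p$-linear relations to the elementary observation that Möbius transformations fixing $0$ and $\infty$ and Frobenius automorphisms of $\overline{\mathbb{F}}_p$ both preserve $\mathbb{F}_p$-linear relations among elements of $\overline{\mathbb{F}}_p^{\times}$.
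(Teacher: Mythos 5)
There is a genuine gap, and it sits exactly where you flagged it. The paper itself does not reprove this lemma; it simply cites \cite[Lemma 7.1]{Y2}, where the statement is established \emph{directly}: the validity of a linear condition $\sum_{e_{1}}b_{e_{1}}x_{e_{1}}=x_{e_{1,0}}$ is translated into a group-theoretic condition on $\Pi_{X_{1}^{\bp}}$ — phrased in terms of the inertia subgroups of marked points, the field structures $\mathbb{F}_{\widehat e}$ of Theorem \ref{prop-3-12}, and generalized Hasse--Witt invariants of explicit cyclic admissible coverings with prescribed ramification at the marked points — and this condition is visibly transported by $\phi$ compatibly with $\phi^{\rm sg, op}$. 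Your proposal instead tries to deduce the lemma from Theorem \ref{mainthemsm}, and this cannot work as written: Theorem \ref{mainthemsm} is a \emph{weak} Isom-type statement. It asserts that $X_{1}\setminus D_{X_{1}}$ and $X_{2}\setminus D_{X_{2}}$ are abstractly isomorphic as schemes and that ${\rm Hom}^{\rm open}={\rm Isom}$, but it says nothing about which bijection of marked points such a scheme isomorphism induces, and in particular nothing relating it to the specific bijection $\phi^{\rm sg, op}$ determined by $\phi$ via Theorem \ref{mainstep-1}. Since distinct bijections of the marked points do not preserve the same $\mathbb{F}_p$-linear relations (once $x_{e_{\infty}}$ and $x_{e_{0}}$ are normalized to $\infty$ and $0$), your key claim — that the finite nonzero marked points of $X_{1}$ match those of $X_{2}$ \emph{under $\phi^{\rm sg,op}$} up to a scalar and a power of Frobenius — is not a consequence of the quoted theorems; it is at least as strong as the lemma itself. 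The ``functoriality'' you defer to the middle step is therefore not a technical wrinkle to be smoothed over but the entire mathematical content, and asserting that the generalized Hasse--Witt invariants ``pin down the point configuration'' compatibly with $\phi^{\rm sg,op}$ without exhibiting which invariants detect which relation leaves the proof unestablished.

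There is also a structural problem with the direction of your reduction. In \cite{Y2}, the linear-condition lemma is part of the machinery (together with \cite[Proposition 6.1]{Y2}, i.e.\ the field-structure reconstruction) that feeds into the Hom-type rigidity results quoted here as Theorem \ref{mainthemsm}; deducing the lemma from Theorem \ref{mainthemsm} inverts that logical order and, once the citations are unwound, risks circularity. A correct self-contained argument would have to follow the route the paper's tools are designed for: fix the two distinguished edges, use Theorem \ref{mainstep-1} to match inertia subgroups edge-by-edge and Theorem \ref{prop-3-12} to identify the associated fields, then exhibit, for the given coefficients $b_{e_{1}}\in\mathbb{F}_p$, a concrete family of cyclic admissible coverings (equivalently, characters of $\Pi_{X_{i}^{\bp}}$ with prescribed restrictions to the matched inertia subgroups) whose generalized Hasse--Witt invariants take a distinguished value if and only if the linear relation holds, and then invoke the preservation of these invariants under $\phi$. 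That computation — which is where Raynaud--Tamagawa theta-divisor techniques enter — is absent from your proposal.
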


\begin{proof}
See \cite[Lemma 7.1]{Y2}.
\end{proof}

\begin{lemma}\label{lem-6-4}
Let $X^{\bp}$ be a pointed stable curve of type $(0,n)$ over an algebraically closed field $k$ of characteristic $p>0$ and $\ell \geq 3$ a prime number distinct from $p$. Then there exists a Galois admissible covering $f^{\bp}: Y^{\bp} \migi X^{\bp}$ over $k$ with Galois group $\mbZ/\ell\mbZ$ such that the genus of $Y^{\bp}$ is $0$, and that there exists an irreducible component $Y_{v}$ of $Y$ satisfying $\#(Y_{v} \cap D_{Y})\geq 3$.
\end{lemma}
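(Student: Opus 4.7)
The plan is to construct $Y^{\bp}$ as the connected Galois admissible $\mbZ/\ell\mbZ$-covering associated to a character $\alpha: \pi_{1}^{\rm adm}(X^{\bp})^{p'} \migisurj \mbZ/\ell\mbZ$ that is totally ramified at exactly two marked points of $X^{\bp}$. Since $X^{\bp}$ has type $(0,n)$, the maximal prime-to-$p$ quotient $\pi_{1}^{\rm adm}(X^{\bp})^{p'}$ is the pro-prime-to-$p$ completion of $\langle c_{1}, \dots, c_{n} \mid \prod_{i} c_{i} = 1 \rangle$, where $c_{i}$ is the inertia at $x_{i} \in D_{X}$; thus any tuple $(a_{1}, \dots, a_{n}) \in \mbF_{\ell}^{n}$ with $\sum a_{i} = 0$ defines such a character by $\alpha(c_{i}) = a_{i}$, and the associated covering is connected as soon as $\alpha$ is surjective. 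I would set $\alpha(c_{i_{1}}) = 1$, $\alpha(c_{i_{2}}) = -1$, and $\alpha(c_{j}) = 0$ for $j \notin \{i_{1}, i_{2}\}$, for a carefully chosen pair $x_{i_{1}}, x_{i_{2}}$.

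For the choice: if $\#v(\Gamma_{X^{\bp}}) = 1$ (so $X = X_{v_{0}} \cong \mbP^{1}_{k}$ is smooth), pick any two of the $n \geq 3$ marked points; otherwise $\Gamma_{X^{\bp}}$ is a tree with at least two leaves, and every leaf $v$ satisfies $\#(X_{v} \cap D_{X}) \geq 2$ by stability (since $X_{v} \cong \mbP^{1}$ is incident to exactly one closed edge), so pick two distinct leaves $v_{1}, v_{2}$ and take $x_{i_{1}} \in X_{v_{1}} \cap D_{X}$, $x_{i_{2}} \in X_{v_{2}} \cap D_{X}$. Using the tree structure and the relation $\sum c_{i} = 0$, the inertia at a node $e$ (viewed from a vertex $v$ incident to $e$) is identified in the abelianization with $\sum_{x_{j} \in T_{e}} c_{j}$, where $T_{e}$ is the connected component of $\Gamma_{X^{\bp}} \setminus \{e\}$ not containing $v$. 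Consequently, $f^{\bp}$ is ramified at precisely those nodes lying on the unique path $P$ in $\Gamma_{X^{\bp}}$ from the vertex of $x_{i_{1}}$ to that of $x_{i_{2}}$ (trivially none when $X$ is smooth). For $v \notin P$ the restriction of $\alpha$ to $\pi_{1}^{\rm adm}(\widetilde X_{v}^{\bp})$ vanishes, so $f^{-1}(X_{v})$ is $\ell$ disjoint copies of $X_{v} \cong \mbP^{1}$; for $v \in P$ the restricted character is ramified at exactly two points of $X_{v} \cong \mbP^{1}$, so by Riemann--Hurwitz the corresponding component of $Y$ is again $\mbP^{1}$.

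To conclude, writing $|V(P)|, |E(P)|$ for the vertex and edge counts of $P$ (so $|V(P)| - |E(P)| = 1$), a direct count yields
\[
\#v(\Gamma_{Y^{\bp}}) - \#e^{\rm cl}(\Gamma_{Y^{\bp}}) = \ell\bigl(\#v(\Gamma_{X^{\bp}}) - \#e^{\rm cl}(\Gamma_{X^{\bp}})\bigr) - (\ell - 1)\bigl(|V(P)| - |E(P)|\bigr) = \ell - (\ell - 1) = 1,
\]
so $\Gamma_{Y^{\bp}}$ is a tree; combined with every irreducible component being a $\mbP^{1}$ this gives $g_{Y^{\bp}} = 0$. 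Finally, the component $Y_{v_{1}}$ of $Y$ mapping onto $X_{v_{1}}$ has exactly one preimage over the ramified marked point $x_{i_{1}}$ and $\ell$ preimages over each of the remaining $k_{1} - 1 \geq 1$ unramified marked points of $X_{v_{1}}$, whence $\#(Y_{v_{1}} \cap D_{Y}) \geq 1 + \ell \geq 4$; the smooth case is similar with $\#(Y_{v_{0}} \cap D_{Y}) = 2 + (n-2)\ell \geq 5$. The only real subtlety is the bookkeeping for node-inertia images, which relies on the sign convention forced by the relation $\sum c_{i} = 0$; once this is settled, the conclusion is essentially a vertex-by-vertex application of Riemann--Hurwitz.
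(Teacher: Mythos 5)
Your proposal is correct and follows essentially the same route as the paper: choose two distinct components each meeting $D_{X}$ in at least two points (guaranteed by the tree/leaf structure in the singular case), take the $\mbZ/\ell\mbZ$-admissible covering totally ramified over one marked point on each and \'etale over the remaining marked points, and conclude genus $0$ plus the desired component via Riemann--Hurwitz. The paper simply asserts the existence of such a covering and invokes Riemann--Hurwitz directly, whereas you additionally spell out the character, the node-inertia bookkeeping along the path, and the dual-graph count; these details are consistent with the paper's argument.
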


\begin{proof}
Suppose that $X^{\bp}$ is smooth over $k$. Then the lemma is trivial. We may suppose that $X^{\bp}$ is singular. Since the type of $X^{\bp}$ is $(0, n)$, there exists irreducible components $X_{v_{1}}$, $X_{v_{2}}$ of $X$ distinct from each other such that $\#(X_{v_{1}} \cap D_{X})\geq 2$ and  $\#(X_{v_{2}} \cap D_{X})\geq 2$. 

Let $x_{1} \in  X_{v_{1}} \cap D_{X}$,  $x_{2} \in  X_{v_{2}} \cap D_{X}$, and $$f^{\bp}: Y^{\bp} \migi X^{\bp}$$ a Galois admissible covering over $k$ with Galois group $\mbZ/\ell\mbZ$ such that $f$ is totally ramified over $x_{1}$ and $x_{2}$, and that $f$ is \'etale over $D_{X} \setminus \{x_{1}, x_{2}\}$. We see immediately that the irreducible components $Y_{v_{1}}\defeq f^{-1}(X_{v_{1}})$ and $Y_{v_{2}} \defeq f^{-1}(X_{v_{2}})$ of $Y$ satisfy the conditions $\#(Y_{v_{1}} \cap D_{Y})\geq 3$ and $\#(Y_{v_{2}} \cap D_{Y})\geq 3$, respectively. Moreover, the Riemann-Hurwitz formula implies that the genus of $Y^{\bp}$ is $0$. This completes the proof of the lemma.
\end{proof}

Next, we generalize Theorem \ref{mainthemsm} to the case where we only assume that $X_{1}^{\bp}$ is smooth over $k_{1}$. 

\begin{proposition}\label{prop-6-5}
We maintain the notation introduced above. Suppose that $X^{\bp}_{1}$ is smooth over $k_{1}$. Then $X_{1}^{\bp}$ is Frobenius equivalent to $X^{\bp}_{2}$. In particular, we have that $X_{2}^{\bp}$ is smooth over $k_{2}$, and that $X_{2}^{\bp}$ can be defined over the algebraic closure of $\mbF_{p}$ in $k_{2}$.
\end{proposition}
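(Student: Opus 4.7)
The overall strategy is to reduce to the case where $X_2^{\bp}$ is also smooth over $k_2$, after which Theorem~\ref{mainthemsm} applies directly and yields both the Frobenius equivalence of $X_1^{\bp}$ and $X_2^{\bp}$ and the descent of $X_2^{\bp}$ to the algebraic closure of $\mbF_p$ in $k_2$. So the entire content of the proposition, beyond invoking Theorem~\ref{mainthemsm}, lies in detecting smoothness of $X_2^{\bp}$ from the purely group-theoretic datum of $\phi$.

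First I would observe that, by Lemma~\ref{surj}, the open continuous homomorphism $\phi$ is automatically surjective. Theorem~\ref{mainstep-1} then produces a bijection $\phi^{\mr{sg, op}}: e^{\mr{op}}(\Gamma_{X_1^{\bp}}) \isom e^{\mr{op}}(\Gamma_{X_2^{\bp}})$ of the sets of marked points (so both curves are of type $(0,n)$ with the same $n$), and Theorem~\ref{prop-3-12} supplies compatible field isomorphisms $\mbF_{\widehat e_1} \isom \mbF_{\widehat e_2}$ on the fields associated to the inertia subgroups of corresponding marked points. Since $k_1 = \OL \mbF_p$, each field $\mbF_{\widehat e_1}$ is canonically $\OL \mbF_p$, so the $\mbF_{\widehat e_2}$ are $\OL \mbF_p$ as well, and the induced identifications will eventually descend to a uniform identification of $\OL \mbF_p$ that is compatible with the $n$ marked points.

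The main step is the claim that $X_2^{\bp}$ is smooth over $k_2$. The plan is to argue by contradiction: assume $X_2^{\bp}$ is singular, so that $\#v(\Gamma_{X_2^{\bp}}) \geq 2$ or $\#e^{\mr{cl}}(\Gamma_{X_2^{\bp}}) \geq 1$. Pass to suitable open normal subgroups $H_2 \subseteq \Pi_{X_2^{\bp}}$ and $H_1 \defeq \phi^{-1}(H_2) \subseteq \Pi_{X_1^{\bp}}$, using the $D_m^{(3)}$-construction of Lemma~\ref{rem-them-1-1-3} with $m$ a large integer prime to $p$, so that $X_{i, H_i}^{\bp}$ satisfies Condition~A (and Condition~B, which is automatic for type $(0, n)$ by Lemma~\ref{lem-4-27} applied to the image of the covering data). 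Crucially, since $X_1^{\bp}$ is smooth, each $X_{1, H_1}^{\bp}$ remains a smooth pointed stable curve, hence has a dual semi-graph with exactly one vertex. If I could establish Condition~C at this level, Theorem~\ref{comgc} would give an isomorphism $\Gamma_{X_{1, H_1}^{\bp}} \isom \Gamma_{X_{2, H_2}^{\bp}}$ of dual semi-graphs, forcing $X_{2, H_2}^{\bp}$ and hence (by descending through the Galois quotient $\Pi_{X_2^{\bp}}/H_2$) $X_2^{\bp}$ itself to be smooth, contradicting our assumption. The main obstacle is precisely establishing Condition~C: its first clause on types follows from Theorem~\ref{mainstep-1}, but matching vertex counts and closed-edge counts is almost the assertion to be proved. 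I would attempt to circumvent this circularity by extracting $\#v$ and $\#e^{\mr{cl}}$ (up to the relevant corrections) from the limit of $p$-averages via Theorem~\ref{max and average}(b), comparing $\mr{Avr}_p(H_1)$ and $\mr{Avr}_p(H_2)$ through Lemma~\ref{lem-0}(b), and tuning the tower of covers so that the extra vertices forced by a hypothetical singularity of $X_2^{\bp}$ leave a detectable trace in the Hasse-Witt data, in the spirit of the reconstruction of Theorem~\ref{them-4-13}.

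Once $X_2^{\bp}$ is known to be smooth, the remainder of the proposition is immediate from Theorem~\ref{mainthemsm}: it yields both the Frobenius equivalence $X_1^{\bp} \sim_{fe} X_2^{\bp}$ and the descent of $X_2^{\bp}$ to the algebraic closure of $\mbF_p$ in $k_2$, and moreover upgrades $\phi$ to an isomorphism. The hardest point of the whole argument is therefore the detection of smoothness of $X_2^{\bp}$ in the absence of an a priori matching of combinatorial invariants, which is what forces the detour through $p$-averages and the tower of $D_m^{(3)}$-covers.
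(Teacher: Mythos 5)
There is a genuine gap at exactly the point you flag as the ``main obstacle'': the detection of smoothness of $X_{2}^{\bp}$ is never actually carried out, and the mechanism you propose for it cannot work. Your plan is to force Condition~C via a comparison of $p$-averages and then invoke Theorem~\ref{comgc} (or Theorem~\ref{them-4-13}), but all of those results \emph{presuppose} Condition~C (equal numbers of vertices and closed edges), which is precisely what fails when $X_{1}^{\bp}$ is smooth and $X_{2}^{\bp}$ is singular. Moreover, the inequality available from Lemma~\ref{lem-0}~(b) goes the wrong way: after matching types via Theorem~\ref{mainstep-1} and arranging Condition~A on covers, Theorem~\ref{max and average}~(b) turns ${\rm Avr}_{p}(H_{1})\geq {\rm Avr}_{p}(H_{2})$ into $r_{X_{H_{1}}}\leq r_{X_{H_{2}}}$. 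With $X_{1}^{\bp}$ smooth this reads $0\leq r_{X_{H_{2}}}$, which is vacuous; the Hasse--Witt/$p$-average data are perfectly consistent with a surjection from the fundamental group of a smooth curve onto that of a singular one (this is the specialization direction, cf.\ the remark after Theorem~\ref{them-0-3}). This asymmetry is exactly why Lemma~\ref{lem-6-2} proves the opposite implication ($X_{1}^{\bp}$ singular $\Rightarrow$ $X_{2}^{\bp}$ singular) but no counting argument of this kind can prove the implication you need.

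The paper's actual proof of this step is of a different nature. Assuming $X_{2}^{\bp}$ singular, it first uses Lemma~\ref{lem-6-4} to pass to $\mbZ/\ell\mbZ$-covers (totally ramified over two marked points, so the genus stays $0$ and $X_{H_{1}}^{\bp}$ stays smooth) to arrange a vertex $w_{2}$ of $\Gamma_{X_{2}^{\bp}}$ carrying at least three marked points. It then chooses two marked points lying in a connected component $C_{2}$ of $\Gamma_{X_{2}^{\bp}}\setminus\{w_{2}\}$, and forms quotient curves of types $(0,5)$ and $(0,4)$ by killing the inertia of the remaining marked points (Theorem~\ref{mainstep-1} guarantees $\phi$ respects these subgroups). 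On the $X_{2}$-side the two chosen marked points collapse onto the \emph{same} point (the node $x_{e_{2,2}}$) of the type-$(0,4)$ quotient $Y_{2}^{\bp}$, while on the smooth $X_{1}$-side they remain two distinct points $x_{e_{1,2}}\neq x_{e_{1,3}}$ of $\mbP^{1}$. This collision is then shown to be impossible using the field structures on inertia groups (Theorem~\ref{prop-3-12}), a Kummer cover of degree $m$ chosen so that $\mbF_{p}(\zeta_{1,m})$ contains $m$-th roots of $x_{e_{1,2}},x_{e_{1,3}}$, and the transfer of $\mbF_{p}$-linear conditions among marked points (Lemma~\ref{lem-6-3}), which yields $\sum_{t}b_{1,2,t}\zeta_{2,m}^{t}=\sum_{t}b_{1,3,t}\zeta_{2,m}^{t}$, contradicting $x_{e_{1,2}}\neq x_{e_{1,3}}$. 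Your reduction to Theorem~\ref{mainthemsm} once smoothness is known is correct, but the heart of the proposition requires this inertia/field-structure argument rather than the combinatorial machinery of Section~\ref{sec-4}.
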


\begin{proof}
If $X_{2}^{\bp}$ is smooth over $k_{2}$, the proposition follows immediately from Theorem \ref{mainthemsm}. Then we may assume that $X_{2}^{\bp}$ is singular (i.e., $\#v(\Gamma_{X_{2}^{\bp}})\geq 2$).

Let $\ell\geq 3$ be a prime number distinct from $p$. Lemma \ref{lem-6-4} implies that there exists an open normal subgroup $H_{2} \subseteq \Pi_{X_{2}^{\bp}}$ such that $\Pi_{X_{2}^{\bp}}/H_{2} \cong \mbZ/\ell\mbZ$, that the Galois admissible covering $f_{H_2}^{\bp}: X^{\bp}_{H_2} \migi X_{2}^{\bp}$ corresponding to $H_{2}$ is totally ramified over two marked points of $X_{2}^{\bp}$, and that there exists $w_{H_2} \in v(\Gamma_{X_{H_{2}}^{\bp}})$ such that $\#(X_{H_{2}, w_{H_2}} \cap D_{X_{H_{2}}})\geq 3$. Write $H_{1} \defeq \phi^{-1}(H_{2}) \subseteq \Pi_{X_{1}^{\bp}}$ for the open subgroup and $f_{H_{1}}^{\bp}: X_{H_{1}}^{\bp} \migi X_{1}^{\bp}$ for the Galois admissible covering over $k_{1}$ corresponding to $H_{1}$. Theorem \ref{mainstep-1} implies that $f_{H_{1}}^{\bp}$ is totally ramified over two marked points of $X_{1}^{\bp}$, and that $n_{X_{H_{1}}}=n_{X_{H_{2}}}$. Since $f_{H_{i}}^{\bp}$ is totally ramified over two marked points, we have that $$g_{X_{H_{1}}}=g_{X_{H_{2}}}=0.$$

If we can prove the proposition holds for $X_{H_{1}}^{\bp}$, $X_{H_{2}}^{\bp}$, and $\phi|_{H_{1}}: H_{1} \migisurj H_{2}$, then we obtain that $X_{2}^{\bp}$ is also smooth over $k_{2}$. Then the proposition follows immediately from Theorem \ref{mainthemsm}. Thus, by replacing $X_{1}^{\bp}$, $X_{2}^{\bp}$, and $\phi$ by $X_{H_{1}}^{\bp}$, $X_{H_{2}}^{\bp}$, and $\phi|_{H_{1}}$, respectively, we may assume that there exists $w_{2}\in v(\Gamma_{X_{2}^{\bp}})$ such that $\#(X_{2, w_{2}} \cap D_{X_{2}})\geq 3$.

Let $e_{2, \muge}$, $e_{2, 0}$, $e_{2, 1} \in e^{\rm op}(\Gamma_{X_{2}^{\bp}}) \cap e^{\rm op}(\Gamma_{X_{2, w_{2}}^{\bp}})$ distinct from each other. Theorem \ref{mainstep-1} implies that $\phi$ induces a bijection $$\phi^{\rm sg, op}: e^{\rm op}(\Gamma_{X_{1}^{\bp}}) \isom e^{\rm op}(\Gamma_{X_{2}^{\bp}})$$ group-theoretically. We put $$e_{1,\muge}\defeq (\phi^{\rm sg, op})^{-1}(e_{2, \muge}), \ e_{1,0}\defeq (\phi^{\rm sg, op})^{-1}(e_{2, 0}), \ e_{1,1}\defeq (\phi^{\rm sg, op})^{-1}(e_{2, 1}).$$ Without loss of generality, we may assume that $$\ x_{e_{i, \muge}} \defeq \infty,\ x_{e_{i, 0}}\defeq 0, \ x_{e_{i, 1}}\defeq 1,$$ and that $$X_{1}=\mbP^{1}_{k_{1}}, \ X_{w_{2}}=\mbP^{1}_{k_{2}}.$$ 

Let $\pi_{0}(\Gamma_{X_{2}^{\bp}}\setminus \{w_{2}\})$ denote the set of connected components of $\Gamma_{X_{2}^{\bp}}\setminus \{w_{2}\}$ in $\Gamma_{X_{2}^{\bp}}$. Let $C_{2} \in \pi_{0}(\Gamma_{X_{2}^{\bp}}\setminus \{w_{2}\})$. Since $X_{2}^{\bp}$ is a pointed stable curve of type $(0, n)$ over $k_{2}$, we have that $\#(C_{2} \cap e^{\rm op}(\Gamma_{X_{2}^{\bp}}))\geq 2$. Let $e_{2, C_{2}, 1}$, $e_{2, C_{2}, 2} \in C_{2} \cap e^{\rm op}(\Gamma_{X_{2}^{\bp}})$ be open edges distinct from each other. We put $$e_{1, 2}\defeq (\phi^{\rm sg, op})^{-1}(e_{2, C_{2}, 1}) \in e^{\rm op}(\Gamma_{X_{1}^{\bp}}),$$ $$e_{1, 3}\defeq (\phi^{\rm sg, op})^{-1}(e_{2, C_{2}, 2}) \in e^{\rm op}(\Gamma_{X_{1}^{\bp}}).$$ We denote by $X_{2, C_{2}}$ the semi-stable subcurve of $X_{2}$ whose irreducible components are the irreducible components corresponding to the vertices of $\Gamma_{X_{2}^{\bp}}$ contained in $C_{2}$. Moreover, we write $e_{2, 2}$ for the unique closed edge of $\Gamma_{X_{2}^{\bp}}$ connecting $w_{2}$ and $C_{2}$. Then the node $x_{e_{2, 2}}$ corresponding to $e_{2, 2}$ is the unique closed point of $X_{2}$ contained in $X_{w_{2}} \cap X_{2, C_{2}}$. 

We put $$Z_{1}^{\bp}=(Z_{1}\defeq X_{1}, D_{Z_{1}}\defeq \{x_{e_{1, \muge}}, x_{e_{1, 0}}, x_{e_{1, 1}}, x_{e_{1, 2}}, x_{e_{1, 3}}\}),$$
$$Y_{1, 1}^{\bp}=(Y_{1,1}\defeq X_{1}, D_{Y_{1,1}}\defeq \{x_{e_{1, \muge}}, x_{e_{1, 0}}, x_{e_{1, 1}}, x_{e_{1, 2}}\}),$$ $$Y_{1,2}^{\bp}=(Y_{1,2}\defeq X_{1}, D_{Y_{1}}\defeq \{x_{e_{1, \muge}}, x_{e_{1, 0}}, x_{e_{1, 1}}, x_{e_{1, 3}}\}),$$ $$Y_{2}^{\bp}=(Y_{2}\defeq X_{w_{2}}, D_{Y_{2}} \defeq \{x_{e_{2, \muge}}, x_{e_{2, 0}}, x_{e_{2, 1}}, x_{e_{2, 2}}\}).$$ 
Moreover, we denote by $Z_{2}^{\bp}$ the pointed stable curve of type $(0, 5)$ over $k_{2}$ associated to the pointed semi-stable curve $$(X_{2}, \{x_{e_{2, \muge}}, x_{e_{2, 0}}, x_{e_{2, 1}}, x_{e_{2, C_{2}, 1}}, x_{e_{2, C_{2}, 2}}\})$$ over $k_{2}$ (i.e., the pointed stable curve obtained by contracting the $(-1)$-curves and the $(-2)$-curves of $(X_{2}, \{x_{e_{2, \muge}}, x_{e_{2, 0}}, x_{e_{2, 1}}, x_{e_{2, C_{2}, 1}}, x_{e_{2, C_{2}, 2}}\})$. We see  that $Z_{2}$ has two irreducible components $Z_{w_{2}}$ and $Z_{ C_{2}}$ such that $Z_{w_{2}}$ is equal to $X_{w_{2}}$, that $x_{e_{2, 2}}=Z_{w_{2}}\cap Z_{C_{2}}$, that $\{x_{e_{2, \muge}}, x_{e_{2, 0}}, x_{e_{2, 1}}\} \subseteq Z_{w_{2}}$, and that $\{x_{e_{2, C_{2}, 1}}, x_{e_{2, C_{2}, 2}}\} \subseteq Z_{C_{2}}$.

Next, we will see that the solvable admissible fundamental groups and the natural homomorphisms of the the solvable admissible fundamental groups of pointed stable curves constructing above can be reconstructed group-theoretically from $\phi$.
Let $$I_{1} \subseteq \Pi_{X_{1}^{\bp}}, I_{2} \subseteq \Pi_{X_{2}^{\bp}}$$ be the closed subgroups generated by the inertia subgroups of $$\bigcup_{e_{1} \in e^{\rm op}(\Gamma_{X_{1}^{\bp}})\setminus \{e_{1, \muge}, e_{1, 0},e_{1, 1},e_{1, 2}, e_{1, 3}\}}\text{Edg}_{e_{1}}^{\rm op}(\Pi_{X_{1}^{\bp}}),$$ $$\bigcup_{e_{2} \in e^{\rm op}(\Gamma_{X_{2}^{\bp}})\setminus \{e_{2, \muge}, e_{2, 0},e_{2, 1},e_{2,C_{2},1}, e_{2, C_{2}, 2}\}}\text{Edg}_{e_{2}}^{\rm op}(\Pi_{X_{2}^{\bp}}),$$ respectively, $$I_{1, 1} \subseteq \Pi_{X_{1}^{\bp}}, I_{1, 2} \subseteq \Pi_{X_{1}^{\bp}}$$ the closed subgroups generated by the inertia subgroups of $$\bigcup_{e_{1} \in e^{\rm op}(\Gamma_{X_{1}^{\bp}})\setminus \{e_{1, \muge}, e_{1, 0},e_{1, 1},e_{1, 2}\}}\text{Edg}_{e_{1}}^{\rm op}(\Pi_{X_{1}^{\bp}}),$$ $$\bigcup_{e_{1} \in e^{\rm op}(\Gamma_{X_{1}^{\bp}})\setminus \{e_{1, \muge}, e_{1, 0},e_{1, 1},e_{1, 3}\}}\text{Edg}_{e_{1}}^{\rm op}(\Pi_{X_{1}^{\bp}}),$$ respectively, and $$I_{2, 1} \subseteq \Pi_{X_{2}^{\bp}}, \ I_{2, 2} \subseteq \Pi_{X_{2}^{\bp}}$$ the closed subgroups generated by the inertia subgroups of $$\bigcup_{e_{2} \in e^{\rm op}(\Gamma_{X_{2}^{\bp}})\setminus \{e_{2, \muge}, e_{2, 0},e_{2, 1},e_{2, C_{2}, 1}\}}\text{Edg}_{e_{2}}^{\rm op}(\Pi_{X_{2}^{\bp}}),$$ $$\bigcup_{e_{2} \in e^{\rm op}(\Gamma_{X_{2}^{\bp}})\setminus \{e_{2, \muge}, e_{2, 0},e_{2, 1},e_{2, C_{2}, 2}\}}\text{Edg}_{e_{2}}^{\rm op}(\Pi_{X_{2}^{\bp}}),$$ respectively.

 Then Theorem \ref{mainstep-1} implies that $\phi(I_{1})=I_{2}$, $\phi(I_{1, 1})=I_{2, 1}$, and $\phi(I_{1, 2})=I_{2, 2}$. Moreover, we see that $\Pi_{X_{1}^{\bp}}/I_{1}$ and $\Pi_{X_{2}^{\bp}}/I_{2}$ are (outer) isomorphic to the solvable admissible fundamental groups of $Z^{\bp}_{1}$ and $Z^{\bp}_{2}$, respectively, that $\Pi_{X_{1}^{\bp}}/I_{1,1}$ and $\Pi_{X_{1}^{\bp}}/I_{1,2}$ are (outer) isomorphic to the solvable admissible fundamental groups of $Y^{\bp}_{1, 1}$ and $Y^{\bp}_{1, 2}$, respectively, and that $\Pi_{X_{2}^{\bp}}/I_{2,1}$ and $\Pi_{X_{2}^{\bp}}/I_{2,2}$ are (outer) isomorphic to the solvable admissible fundamental group of $Y_{2}^{\bp}$. Note that $I_{1, 1} \supseteq I_{1} \subseteq I_{1, 2}$ and $I_{2, 1} \supseteq I_{2} \subseteq I_{2, 2}$.

On the other hand, $\phi$ induces the following surjective open continuous homomorphisms 
$$\overline\phi: \Pi_{Z_{1}^{\bp}}\defeq \Pi_{X_{1}^{\bp}}/I_{1} \migisurj \Pi_{Z_{2}^{\bp}}\defeq \Pi_{X_{2}^{\bp}}/I_{2},$$
$$\overline\phi_{1, 1}: \Pi_{Y_{1, 1}^{\bp}}\defeq \Pi_{X_{1}^{\bp}}/I_{1, 1} \migisurj \Pi_{Y_{2}^{\bp}}\defeq \Pi_{X_{2}^{\bp}}/I_{2,1},$$$$ \overline\phi_{1, 2}: \Pi_{Y_{1, 2}^{\bp}}\defeq \Pi_{X_{1}^{\bp}}/I_{1, 2} \migisurj \Pi_{Y_{2}^{\bp}}\defeq \Pi_{X_{2}^{\bp}}/I_{2,2}$$ which fit into the following commutative diagram:
\[
\begin{CD}
\Pi_{Y_{1, 1}^{\bp}}@>\overline \phi_{1, 1}>>\Pi_{Y_{2}^{\bp}}
\\
@A\psi_{1,1}AA@A\psi_{2,1}AA
\\
\Pi_{Z_{1}^{\bp}}@>\overline \phi>>\Pi_{Z_{2}^{\bp}}
\\
@V\psi_{1,2}VV@V\psi_{2,2}VV
\\
\Pi_{Y_{1, 2}^{\bp}}@>\overline \phi_{1, 2}>>\Pi_{Y_{2}^{\bp}},
\end{CD}
\]
where $\psi_{1, 1}$, $\psi_{1,2}$, $\psi_{2,1}$, and $\psi_{2,2}$ denote the natural quotient homomorphisms.

Note that 
$\psi_{2, 1}\circ\overline \phi\neq \psi_{2, 2}\circ\overline \phi$, and that the homomorphisms of maximal prime-to-$p$ quotients of solvable admissible fundamental groups $\overline \phi^{p'}_{1,1}$, $\overline \phi^{p'}$, and $\overline \phi_{1,2}^{p'}$ induced by $\overline \phi_{1,1}$, $\overline \phi$, and $\overline \phi_{1,2}$, respectively, are isomorphisms. Moreover, we see that $\psi_{2,1}(I_{\widehat e_{2, C_{2}, 1}}) \in \text{Edg}^{\rm op}_{e_{2, 2}}(\Pi_{Y_{2}^{\bp}})$ and $\psi_{2,2}(I_{\widehat e_{2, C_{2}, 2}}) \in \text{Edg}^{\rm op}_{e_{2, 2}}(\Pi_{Y_{2}^{\bp}})$ for every $I_{\widehat e_{2, C_{2}, 1}} \in \text{Edg}^{\rm op}_{e_{2, C_{2}, 1}}(\Pi_{Z_{2}^{\bp}})$ and every $I_{\widehat e_{2, C_{2}, 2}} \in \text{Edg}^{\rm op}_{e_{2, C_{2}, 2}}(\Pi_{Z_{2}^{\bp}})$. 

Let $\widehat e_{i, 0} \in e^{\rm op}(\Gamma_{\widehat X_{i}^{\bp}})$ be an open edge over $e_{i, 0}$. By applying Theorem \ref{prop-3-12}, $$\mbF_{\widehat e_{i, 0}}\defeq (I_{\widehat e_{i, 0}}\otimes_{\mbZ} (\mbQ/\mbZ)_{i}^{p'}) \sqcup \{*_{\widehat e_{i, 0}}\}$$ admits a structure of field which can be reconstructed group-theoretically from $\Pi_{X_{i}^{\bp}}$. Since we assume that $k_{1}$ is an algebraic closure of $\mbF_{p}$, we may suppose that $k_{1}=\mbF_{\widehat e_{1, 0}}$. Moreover, we have that $\phi$ induces a field isomorphism $$\phi^{\rm fd}_{\widehat e_{1, 0}, \widehat e_{2, 0}}: \mbF_{\widehat e_{1, 0}} \isom \mbF_{\widehat e_{2, 0}}.$$ group-theoretically. By [T2, Lemma 3.4], there exists a natural number $m$ prime to $p$ such that $\mbF_{p}(\zeta_{1, m})$ contains $m$th roots of $x_{e_{1, 2}}$, $x_{e_{1, 3}}$, where $\zeta_{1, m}$ denotes a fixed primitive $m$th root of unity in $\mbF_{\widehat e_{1, 0}}$. Let $s\defeq [\mbF_{p}(\zeta_{1, m}): \mbF_{p}]$. For each $e_{1, u} \in \{e_{1, 2},  e_{1, 3}\}$, we fix an $m$th root $x_{e_{1, u}}^{\frac{1}{m}}$ in $\mbF_{\widehat e_{1, 0}}$. Then we have $$x_{e_{1, u}}^{\frac{1}{m}}=\sum_{t=0}^{s-1}b_{1, u, t}\zeta_{1, m}^{t}, \ u \in \{2, 3\},$$ where $b_{1, u, t} \in \mbF_{p}$ for each $u\in \{2, 3\}$ and each $t\in \{0, \dots, s-1\}$. Note that since $x_{e_{1, 2}}\neq x_{e_{1, 3}}$, there exists $t' \in \{0, \dots, s-1\}$ such that $b_{1, 2, t'}\neq b_{1, 3, t'}$.

Let $Z_{1} \setminus \{x_{e_{1, \muge}}\} =\spec \mbF_{\widehat e_{1, 0}}[x_{1}]$, $$f^{\bp}_{Q_{1}}: Z^{\bp}_{Q_{1}} \migi Z^{\bp}_{1}$$ the Galois admissible covering over $\mbF_{\widehat e_{1, 0}}$ with Galois group $\mbZ/m\mbZ$ determined by the equation $y_{1}^{m}=x_{1}$, and $Q_{1} \subseteq \Pi_{Z_{1}^{\bp}}$ the open normal subgroup induced by $f^{\bp}_{Q_{1}}$. Then $f_{Q_{1}}$ is totally ramified over $\{x_{e_{1, 0}}=0, x_{e_{1, \muge}}=\infty\}$ and is \'etale over $D_{Z_{1}} \setminus \{x_{e_{1, 0}}, x_{e_{1, \muge}}\}$. Note that $Z_{Q_{1}}= \mbP_{\mbF_{\widehat e_{1, 0}}}^{1}$, and that the marked points of $D_{Z_{Q_{1}}}$ over $\{x_{e_{1, 0}}, x_{e_{1, \muge}}\}$ are $\{x_{e_{Q_{1}, 0}}\defeq 0, x_{e_{Q_{1}, \muge}}\defeq \infty\}$. We put $$x_{e_{Q_{1}, u}}\defeq x_{e_{1, u}}^{\frac{1}{m}} \in D_{Z_{Q_{1}}}, \ u \in \{2, 3\},$$ and $$x_{e^{t}_{Q_{1}, 1}}\defeq \zeta^{t}_{1, m} \in D_{Z_{Q_{1}}}, \ t \in \{0, \dots, s-1\}.$$ Thus, we obtain a linear condition
$$x_{e_{Q_{1}, u}}=\sum_{t=0}^{s-1}b_{1, u, t}x_{e^{t}_{Q_{1}, 1}}, \ u \in \{2, 3\}$$ with respect to $x_{e_{Q_{1}, 0}}$ and $x_{e_{Q_{1}, \muge}}$ on $Z^{\bp}_{Q_{1}}$. 

Since $(m, p)=1$, there exists a unique open normal subgroup $Q_{2} \subseteq \Pi_{Z_{2}^{\bp}}$ such that $\overline \phi^{-1}(Q_{2})=Q_{1}$. On the other hand, we put $$Q_{1, 1} \defeq \psi_{1,1}(Q_{1}) \subseteq \Pi_{Y_{1,1}^{\bp}},$$ $$Q_{1, 2} \defeq \psi_{1,2}(Q_{1}) \subseteq \Pi_{Y_{1,2}^{\bp}},$$ $$Q_{2, 1} \defeq \psi_{2,1}(Q_{2}) \subseteq \Pi_{Y_{2}^{\bp}},$$ $$Q_{2, 2} \defeq \psi_{2,2}(Q_{2}) \subseteq \Pi_{Y_{2}^{\bp}}.$$ Note that the constructions of $Q_{1}$ and $Q_{2}$ imply that $P_{2}\defeq Q_{2,1}=Q_{2,2}$. The commutative diagram of profinite groups above induces the following commutative diagram of profinite groups:
\[
\begin{CD}
Q_{1,1}@>\overline \phi_{Q_{1, 1}}>>P_{2}
\\
@AAA@AAA
\\
Q_{1}@>\overline \phi_{Q_{1}}>>Q_{2}
\\
@VVV@VVV
\\
Q_{1, 2}@>\overline \phi_{Q_{1, 2}}>>P_{2}.
\end{CD}
\]

Let $j \in \{1, 2\}$. Write $Y^{\bp}_{Q_{1, j}}$ for the pointed stable curve over $k_{1}$ corresponding to $Q_{1, j}$. Then we see that  $e^{\rm op}(\Gamma_{Y_{Q_{1, j}}^{\bp}})$ can be regarded as a subset of $e^{\rm op}(\Gamma_{Z_{Q_{1}}^{\bp}})$. By applying Theorem \ref{mainstep-1} for $\overline \phi_{Q_{1, 1}}$ and $\overline \phi_{Q_{1, 2}}$, respectively, the commutative diagram of profinite groups above implies that we may put $$e_{P_{2}, \muge} \defeq\overline\phi^{\rm sg, op}_{Q_{1, 1}}(e_{Q_{1}, \muge})=\overline\phi^{\rm sg, op}_{Q_{1, 2}}(e_{Q_{1}, \muge}), \ e_{P_{2},0} \defeq\overline\phi^{\rm sg, op}_{Q_{1, 1}}(e_{Q_{1}, 0})=\overline\phi^{\rm sg, op}_{Q_{1, 2}}(e_{Q_{1}, 0}),$$ $$ e_{P_{2}, 1}^{t} \defeq \overline\phi^{\rm sg, op}_{Q_{1, 1}}(e_{Q_{1}, 1}^{t})=\overline\phi^{\rm sg, op}_{Q_{1, 2}}(e_{Q_{1}, 1}^{t}), \ t\in \{0, \dots, s-1\},$$ $$e_{P_{2}, 2} \defeq \overline \phi_{Q_{1, 1}}^{\rm sg, op}(e_{Q_{1}, 2})= \overline \phi_{Q_{1, 2}}^{\rm sg, op}(e_{Q_{1}, 3}).$$

We denote by $\zeta_{2, m}\defeq \phi^{\rm fd}_{\widehat e_{1, 0}, \widehat e_{2, 0}}(\zeta_{1, m})$. Then we have  $$x_{e_{Q_{2}, 1}^{t}}=\zeta_{2, m}^{t}, \ t\in \{0, \dots, s-1\}.$$ Let $Y_{P_{2}}^{\bp}$ be the pointed stable curve over $k_{2}$ corresponding to $P_{2} \subseteq \Pi_{Y_{2}^{\bp}}$. Moreover, by applying Lemma \ref{lem-6-3} for $\overline \phi_{Q_{1, 1}}$, we obtain that $$x_{e_{P_{2}, 2}}=\sum_{t=0}^{s-1}b_{1, 2, t}x_{e^{t}_{Q_{2}, 1}}$$ with respect to $x_{e_{P_{2}, 0}}$ and $x_{e_{P_{2}, \muge}}$ on $Y_{P_{2}}^{\bp}$. On the other hand,  by applying Lemma \ref{lem-6-3} for $\overline \phi_{Q_{1, 2}}$, we obtain that $$x_{e_{P_{2}, 2}}=\sum_{t=0}^{s-1}b_{1, 3, t}x_{e^{t}_{Q_{2}, 1}}$$ with respect to $x_{e_{P_{2}, 0}}$ and $x_{e_{P_{2}, \muge}}$ on $Y_{P_{2}}^{\bp}$. This means that $$\sum_{t=0}^{s-1}b_{1, 2, t}\zeta_{2, m}^{t}=\sum_{t=0}^{s-1}b_{1, 3, t}\zeta_{2, m}^{t},$$ which is impossible as $b_{1, 2, t'}\neq b_{1, 3, t'}$ for some $t' \in \{0, \dots, s-1\}$. Then we obtain that $X_{2}^{\bp}$ is smooth over $k_{2}$. Thus, the proposition follows immediately from Theorem \ref{mainthemsm}. This completes the proof of the proposition.
\end{proof}

Now, we can prove the first form of our main theorem of the present paper. 

\begin{theorem}\label{mainthem-form-1}
Let $X_{i}^{\bp}$, $i\in \{1, 2\}$, be an arbitrary pointed stable curve of type $(0, n)$ over an algebraically closed field $k_{i}$ of characteristic $p>0$ and $\Pi_{X_{i}^{\bp}}$ either the admissible fundamental group of $X_{i}^{\bp}$ or the solvable admissible fundamental group of $X_{i}^{\bp}$. Suppose that $k_{1}$ is an algebraic closure of $\mbF_{p}$. Then we have that $${\rm Hom}^{\rm open}_{\rm pro\text{-}gps}(\Pi_{X_{1}^{\bp}}, \Pi_{X_{2}^{\bp}}) \neq \emptyset$$ if and only if $X_{1}^{\bp}$ is Frobenius equivalent to $X_{2}^{\bp}$. In particular, if this is the case, we have that $X_{2}^{\bp}$ can be defined over the algebraic closure of $\mbF_{p}$ in $k_{2}$,  and that $${\rm Hom}^{\rm open}_{\rm pro\text{-}gps}(\Pi_{X_{1}^{\bp}}, \Pi_{X_{2}^{\bp}}) = {\rm Isom}_{\rm pro\text{-}gps}(\Pi_{X_{1}^{\bp}}, \Pi_{X_{2}^{\bp}}).$$ 
\end{theorem}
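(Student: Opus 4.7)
The ``if'' direction is immediate from Proposition \ref{prop-5-2}, which shows that Frobenius equivalence implies isomorphism of the (solvable) admissible fundamental groups, so the Hom set is non-empty. Thus the plan is to prove the ``only if'' direction. Given any $\phi \in {\rm Hom}^{\rm open}_{\rm pro\text{-}gps}(\Pi_{X_1^\bp}, \Pi_{X_2^\bp})$, Lemma \ref{surj} implies that $\phi$ is automatically surjective (types agree, and $n \geq 3$ since $(g,n) = (0,n)$ with $X_i^\bp$ stable). So throughout the argument I will treat $\phi$ as a surjective open continuous homomorphism.

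The first step is a combinatorial reduction. By Theorem \ref{mainstep-1}, the surjection $\phi$ induces a bijection $\phi^{\rm sg, op}: e^{\rm op}(\Gamma_{X_1^\bp}) \isom e^{\rm op}(\Gamma_{X_2^\bp})$ and carries inertia subgroups of marked points to inertia subgroups of marked points. Using this, I can translate the geometric operation of ``remove a subset $S$ of marked points and contract the resulting unstable components'' into the group-theoretic operation of quotienting by the closed normal subgroup generated by the inertia subgroups associated to $S$. Since this process is compatible under $\phi$ (the generating inertia subgroups correspond via $\phi^{\rm sg, op}$), $\phi$ descends to a surjective open continuous homomorphism between the solvable admissible fundamental groups of the resulting pointed stable curves. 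By choosing $S$ appropriately, I reduce to the case in which $\#v(\Gamma_{X_1^\bp}) = \#v(\Gamma_{X_2^\bp})$ and $\#e^{\rm cl}(\Gamma_{X_1^\bp}) = \#e^{\rm cl}(\Gamma_{X_2^\bp})$ (i.e., Condition C holds for $X_1^\bp$ and $X_2^\bp$). It suffices to prove Frobenius equivalence after this reduction, then lift back using the bijection $\phi^{\rm sg, op}$ to conclude Frobenius equivalence of the original pair.

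Once Condition C holds, Theorem \ref{mainstep-2} (combinatorial Grothendieck conjecture for surjections in the genus zero case) provides an isomorphism of dual semi-graphs $\phi^{\rm sg}: \Gamma_{X_1^\bp} \isom \Gamma_{X_2^\bp}$, together with surjective maps $\phi^{\rm ver}: {\rm Ver}(\Pi_{X_1^\bp}) \migisurj {\rm Ver}(\Pi_{X_2^\bp})$ and $\phi^{\rm edg, cl}: {\rm Edg}^{\rm cl}(\Pi_{X_1^\bp}) \migisurj {\rm Edg}^{\rm cl}(\Pi_{X_2^\bp})$. In particular, for each vertex $v_1 \in v(\Gamma_{X_1^\bp})$ with corresponding vertex $v_2 := \phi^{\rm sg}(v_1)$, the restriction of $\phi$ to a vertex stabilizer gives a surjective open continuous homomorphism $\Pi_{\widetilde X_{1,v_1}^\bp} \migisurj \Pi_{\widetilde X_{2,v_2}^\bp}$ between the (solvable) admissible fundamental groups of the associated smooth pointed stable curves of type $(0, n_{v_i})$. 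Since $k_1$ is an algebraic closure of $\mathbb{F}_p$, each $\widetilde X_{1, v_1}^\bp$ is a smooth curve defined over $\overline{\mathbb{F}_p}$, and Proposition \ref{prop-6-5} applies: each $\widetilde X_{2, v_2}^\bp$ is Frobenius equivalent to $\widetilde X_{1, v_1}^\bp$, can be defined over the algebraic closure of $\mathbb{F}_p$ in $k_2$, and the restriction of $\phi$ to the vertex stabilizer is in fact an isomorphism.

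To conclude, I verify Definition \ref{fe}: condition (i) is $\phi^{\rm sg}$, conditions (ii) and (iii) follow vertex-by-vertex from Proposition \ref{prop-6-5} together with the compatibility of $\phi^{\rm sg}$ with the bijections $\phi^{\rm sg, op}$ and $\phi^{\rm sg, cl}$ on the combinatorial data of marked points and nodes attached to each vertex; the latter compatibility is precisely what the proof of Theorem \ref{mainstep-2} supplies. This yields $X_1^\bp \sim_{fe} X_2^\bp$. The ``in particular'' statement that $X_2^\bp$ is defined over $\overline{\mathbb{F}_p}$ follows because each irreducible component is defined over $\overline{\mathbb{F}_p}$ and the dual semi-graph gluing data is combinatorial. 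Finally, ${\rm Hom} = {\rm Isom}$ holds because each vertex restriction of $\phi$ is an isomorphism by Proposition \ref{prop-6-5} and the inertia/edge bijections are isomorphisms by Theorem \ref{mainstep-1} and Theorem \ref{mainstep-2}, so $\phi$ itself is an isomorphism. The main obstacle in this plan is the bookkeeping in the first reduction step, namely ensuring that the geometric operation of forgetting marked points and contracting $(-1)$-/$(-2)$-curves is faithfully mirrored by the group-theoretic quotient and that all relevant structure (inertia, vertex, and edge subgroups) is transported compatibly under $\phi$; once Condition C is achieved, the combinatorial Grothendieck conjecture does the heavy lifting and the smooth case of Proposition \ref{prop-6-5} completes the argument at each vertex.
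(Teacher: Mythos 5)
Your ``if'' direction, the use of Lemma \ref{surj}, and the endgame (Condition C $\Rightarrow$ Theorem \ref{mainstep-2} $\Rightarrow$ apply the smooth result of Proposition \ref{prop-6-5}/Theorem \ref{mainthemsm} at each vertex stabilizer) all match the paper. The genuine gap is in your first step, the ``combinatorial reduction.'' You assert that by removing a suitable subset $S$ of marked points (and quotienting by the corresponding inertia subgroups, which Theorem \ref{mainstep-1} does let you transport along $\phi$) you can ``reduce to the case where Condition C holds,'' prove Frobenius equivalence for the reduced curves, and then ``lift back.'' Neither half of this works as stated. First, you cannot choose $S$ so as to force $\#v(\Gamma_{X_1^\bp})=\#v(\Gamma_{X_2^\bp})$ without already knowing how the two dual semi-graphs compare -- which is essentially what is being proved; a priori, forgetting the marked points in $S_1$ and in $S_2=\phi^{\rm sg,op}(S_1)$ may contract components on one side and not on the other, and ruling out exactly this asymmetry is the mathematical core of the theorem, not bookkeeping. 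Second, Frobenius equivalence of the curves obtained by forgetting marked points does not imply Frobenius equivalence of the originals: the positions of the removed marked points and the contracted components are lost, so there is no ``lift back.''

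The paper's actual proof is structured to avoid both problems: it inducts on $n$, removing one marked point $x_{e_{1,n}}$ (and its image $x_{e_{2,n}}$) at a time, and uses the inductive Frobenius equivalence of the reduced curves $Z_1^\bp\sim_{fe}Z_2^\bp$ \emph{only} to compare vertex counts of the original curves, splitting into four cases according to whether the contraction drops a vertex on each side. Cases (i) and (ii) give Condition C for the \emph{original} pair, after which Theorem \ref{mainstep-2} and Theorem \ref{mainthemsm} are applied to the original $\phi$ (so no lifting is needed). The asymmetric cases (iii) and (iv) are excluded by a delicate construction: one selects four marked points so that the associated type $(0,4)$ pointed stable curves $Y_1^\bp$, $Y_2^\bp$ are smooth on one side and singular on the other, transports the configuration along $\phi$ via Theorem \ref{mainstep-1}, and derives a contradiction from Proposition \ref{prop-6-5} (smooth $\Rightarrow$ smooth) resp.\ Lemma \ref{lem-6-2} (singular $\Rightarrow$ singular). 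Your proposal contains no substitute for this exclusion argument, so as written it does not close the proof; supplying it would essentially force you into the paper's induction.
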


\begin{proof}
To verify the theorem, it is sufficient to prove the theorem when $\Pi_{X_{i}^{\bp}}$ is the solvable admissible fundamental group of $X_{i}^{\bp}$. The ``if" part of the theorem follows from \cite[Proposition 3.7]{Y7}. Let us prove the ``only if" part of the theorem. Suppose that ${\rm Hom}^{\rm open}_{\rm pro\text{-}gps}(\Pi_{X_{1}^{\bp}}, \Pi_{X_{2}^{\bp}}) \neq \emptyset$, and let $\phi \in {\rm Hom}^{\rm open}_{\rm pro\text{-}gps}(\Pi_{X_{1}^{\bp}}, \Pi_{X_{2}^{\bp}})$ be an arbitrary element. Then Lemma \ref{surj} implies that $\phi$ is a surjection.

Suppose that $X_{1}^{\bp}$ is smooth over $k_{1}$. Then the theorem follows from Proposition \ref{prop-6-5}. Thus, we may assume that $X_{1}^{\bp}$ is a singular pointed stable curve.

Note that since  $X_{1}^{\bp}$ is singular, we have $n=\#e^{\rm op}(\Gamma_{X_{1}^{\bp}})\geq 4$. We prove the theorem by induction on $\#e^{\rm op}(\Gamma_{X_{1}^{\bp}})$. Suppose that $\#e^{\rm op}(\Gamma_{X_{1}^{\bp}})=4$. Since $X_{1}^{\bp}$ is a singular pointed stable curve of type $(0, 4)$, we obtain that $\#v(\Gamma_{X_{1}^{\bp}})=2$ and $\#e^{\rm cl}(\Gamma_{X_{1}^{\bp}})=1$. On the other hand, by applying Lemma \ref{lem-6-2}, we obtain that $X_{2}^{\bp}$ is also a singular pointed stable curve of type $(0, 4)$. Thus, we have that $\#e^{\rm op}(\Gamma_{X_{2}^{\bp}})=4$, $\#v(\Gamma_{X_{2}^{\bp}})=2$, and $\#e^{\rm cl}(\Gamma_{X_{2}^{\bp}})=1$. Then $X_{1}^{\bp}$ and $X_{2}^{\bp}$ satisfy Condition C defined in Section \ref{sec-4}. Thus, by Theorem \ref{mainstep-2} and Theorem \ref{mainthemsm}, we obtain that $X_{1}^{\bp}$ is Frobenius equivalent to  $X_{2}^{\bp}$. 

Suppose that $\#e^{\rm op}(\Gamma_{X_{1}^{\bp}}) \geq 5$.  Theorem \ref{mainstep-1} implies that $\phi$ induces a bijection $$\phi^{\rm sg, op}: e^{\rm op}(\Gamma_{X_{1}^{\bp}}) \isom e^{\rm op}(\Gamma_{X_{2}^{\bp}})$$ group-theoretically. Let $e_{1, n} \in e^{\rm op}(\Gamma_{X_{1}^{\bp}})$ and $e_{2, n} \defeq \phi^{\rm sg, op}(e_{1, n})$. We denote by $$Z_{i}^{\bp}$$ the pointed stable curve of type $(0, n-1)$ over $k_{i}$ associated to the pointed semi-stable curve $(X_{i}, D_{X_{i}} \setminus \{x_{e_{i, n}}\})$ whose underlying curve $Z_{i}$ can be regarded naturally as a subcurve of $X_{i}$. Write $I_{i, n} \subseteq \Pi_{X_{i}^{\bp}}$ for the closed subgroup generated by the subgroups $\text{Edg}_{e_{i, n}}^{\rm op}(\Pi_{X_{i}^{\bp}})$. Then we see that $$\Pi_{Z_{i}^{\bp}} \defeq \Pi_{X_{i}^{\bp}}/I_{i, n}$$ is (outer) isomorphic to the solvable admissible fundamental group of $Z_{i}^{\bp}$. Moreover, Theorem \ref{mainstep-1} implies that $\phi(I_{1, n})=I_{2, n}$. Then $\phi$ induces a surjective open continuous homomorphism $$\overline \phi: \Pi_{Z_{1}^{\bp}}\migisurj \Pi_{Z_{2}^{\bp}}.$$ By induction, we obtain that $Z_{1}^{\bp}$ is Frobenius equivalent to $Z_{2}^{\bp}$. Then $\phi$ induces a bijection of dual semi-graphs $$\overline \phi^{\rm sg}: \Gamma_{Z_{1}^{\bp}} \isom \Gamma_{Z_{2}^{\bp}}.$$ In particular, we put $$\overline \phi^{\rm sg, ver}=\overline \phi^{\rm sg}|_{v(\Gamma_{Z_{1}^{\bp}})}: v(\Gamma_{Z_{1}^{\bp}}) \isom v(\Gamma_{Z_{2}^{\bp}}),$$$$\overline \phi^{\rm sg, op}=\overline \phi^{\rm sg}|_{e^{\rm op}(\Gamma_{Z_{1}^{\bp}})}: e^{\rm op}(\Gamma_{Z_{1}^{\bp}}) \isom e^{\rm op}(\Gamma_{Z_{2}^{\bp}}).$$ Note that $\Gamma_{Z_{i}^{\bp}}$
can be regarded naturally as a sub-semi-graph of $\Gamma_{X_{i}^{\bp}}$. Moreover, one of the following cases  may occur: (i) $\#v(\Gamma_{X^{\bp}_{1}})=\#v(\Gamma_{Z^{\bp}_{1}})=\#v(\Gamma_{X^{\bp}_{2}})=\#v(\Gamma_{Z^{\bp}_{2}})$; (ii) $\#v(\Gamma_{X^{\bp}_{1}})-1=\#v(\Gamma_{Z^{\bp}_{1}})=\#v(\Gamma_{X^{\bp}_{2}})-1=\#v(\Gamma_{Z^{\bp}_{2}})$;  (iii) $\#v(\Gamma_{X^{\bp}_{1}})=\#v(\Gamma_{Z^{\bp}_{1}})=\#v(\Gamma_{X^{\bp}_{2}})-1=\#v(\Gamma_{Z^{\bp}_{2}})$; (iv) $\#v(\Gamma_{X^{\bp}_{1}})-1=\#v(\Gamma_{Z^{\bp}_{1}})=\#v(\Gamma_{X^{\bp}_{2}})=\#v(\Gamma_{Z^{\bp}_{2}})$.

Suppose that either (i) or (ii) holds. Then $X_{1}^{\bp}$ and $X_{2}^{\bp}$ satisfy Condition C defined in Section \ref{sec-4}. Thus, by Theorem \ref{mainstep-2} and Theorem \ref{mainthemsm}, we obtain that $X_{1}^{\bp}$ is Frobenius equivalent to  $X_{2}^{\bp}$. 

Suppose that (iii) holds. Let $v_{2} \in v(\Gamma_{X^{\bp}_{2}})$ such that $x_{e_{2, n}} \in X_{v_{2}}$. Since $\#v(\Gamma_{X^{\bp}_{2}})=\#v(\Gamma_{Z^{\bp}_{2}})+1$, we have that $\#(X_{v_{2}} \cap D_{X_{2}})=2$. Note that $\{v_{2}\}=v(\Gamma_{X_{2}^{\bp}}) \setminus v(\Gamma_{Z_{1}^{\bp}})$.

Let $x_{e_{2, n-1}} \in X_{v_{2}} \cap D_{X_{2}}$ be the marked point distinct from $x_{e_{2, n}}$ and $e_{2, n-1}\in e^{\rm op}(\Gamma_{X_{2}^{\bp}})$ the open edge corresponding to the marked point $x_{e_{2, n-1}}$. On the other hand, let $w_{1} \in v(\Gamma_{X^{\bp}_{1}})$ such that $x_{e_{1, n}} \in X_{w_{1}}$. We put $$w_{2}\defeq \overline \phi^{\rm sg, ver}(w_{1}) \in v(\Gamma_{Z_{2}^{\bp}}) \subseteq v(\Gamma_{X_{2}^{\bp}}),$$ $$e_{1, n-1}\defeq (\phi^{\rm sg, op})^{-1}(e_{2, n-1})  \in e^{\rm op}(\Gamma_{Z_{1}^{\bp}}) \subseteq e^{\rm op}(\Gamma_{X_{1}^{\bp}}).$$
Since $Z_{1}^{\bp}$ is a pointed stable curve of type $(0, n-1)$, we have that $$\#(X_{w_{1}} \cap D_{Z_{1}})+\#(X_{w_{1}} \cap Z_{1}^{\rm sing})\geq 3.$$ Then we see that there exist marked points $x_{e_{1, n-2}}$, $x_{e_{1, n-3}} \in D_{Z_{1}} \setminus \{x_{e_{1, n-1}}\}$ distinct from each other such that the following conditions are satisfied: 

(1) if $\#(X_{w_{1}} \cap D_{Z_{1}})\geq 3$, then $x_{e_{1, n-2}}$, $x_{e_{1, n-3}} \in X_{w_{1}}$; 

(2) if $\#(X_{w_{1}} \cap D_{Z_{1}})=2$ and $x_{e_{1, n-1}} \not\in X_{w_{1}}$, then $x_{e_{1, n-2}}$, $x_{e_{1, n-3}} \in X_{w_{1}}$;

(3) if $\#(X_{w_{1}} \cap D_{Z_{1}})=1$ and $x_{e_{1, n-1}} \not\in X_{w_{1}}$, then we have that $x_{e_{1, n-3}} \in X_{w_{1}}$, and that the connected components of $Z_{1} \setminus X_{w_{1}}$ (note that since $\#(X_{w_{1}} \cap D_{Z_{1}})=1$, the cardinality of the set of connected components of $Z_{1} \setminus X_{w_{1}}$ is $\geq 2$) containing $x_{e_{2, n-1}}$ and $x_{e_{n-2}}$ respectively are distinct from each other; 

(4) if $\#(X_{w_{1}} \cap D_{Z_{1}})=2$ and $x_{e_{1, n-1}} \in X_{w_{1}}$, then we have that $x_{e_{1, n-3}} \in X_{w_{1}}$, and that $x_{e_{1, n-2}}$ is contained in a connected component of $Z_{1} \setminus X_{w_{1}}$; 

(5) if $\#(X_{w_{1}} \cap D_{Z_{1}})=1$ and $x_{e_{1, n-1}} \in X_{w_{1}}$, then we have that the connected components of $Z_{1} \setminus X_{w_{1}}$ (note that since $\#(X_{w_{1}} \cap D_{Z_{1}})=1$, the cardinality of the set of connected components of $Z_{1} \setminus X_{w_{1}}$ is $\geq 2$) containing $x_{e_{1, n-2}}$ and $x_{e_{1, n-3}}$ respectively are distinct from each other; 

(6) if $\#(X_{w_{1}} \cap D_{Z_{1}})=0$, then we have that the connected components of $Z_{1} \setminus X_{w_{1}}$ (note that since $\#(X_{w_{1}} \cap D_{Z_{1}})=0$, the cardinality of the set of connected components of $Z_{1} \setminus X_{w_{1}}$ is $\geq 3$) containing $x_{e_{1, n-1}}$, $x_{e_{1, n-2}}$, and $x_{e_{1, n-3}}$ respectively are distinct from each other. 

Write $e_{1, n-2}$ and $e_{1, n-3} \in e^{\rm op}(\Gamma_{Z_{1}^{\bp}})$ for the open edges corresponding to the marked points $x_{e_{1, n-2}}$ and $x_{e_{1, n-3}}$, respectively. We put $$e_{2, n-2} \defeq  \overline \phi^{\rm sg, op}(e_{1, n-2}), \ e_{2, n-3} \defeq  \overline \phi^{\rm sg, op}(e_{1, n-3}).$$ Let $Y_{i}^{\bp}$ be the pointed stable curve of type $(0, 4)$ over $k_{i}$ associated to the pointed semi-stable curve $$(X_{i}, \{x_{e_{i, n}}, x_{e_{i, n-1}}, x_{e_{i, n-2}}, x_{e_{i, n-3}}\}).$$ By the construction of the set of marked points $\{x_{e_{i, n}}, x_{e_{i, n-1}}, x_{e_{i, n-2}}, x_{e_{i, n-3}}\}$, we see that $Y_{1}^{\bp}$ is smooth over $k_{1}$ whose underlying curve is $X_{w_{1}}$, and that $Y_{2}^{\bp}$ is singular whose irreducible components are $X_{w_{2}}$ and $X_{v_{2}}$. 

Next, we will see that the solvable admissible fundamental groups and the natural homomorphisms of the the solvable admissible fundamental groups of pointed stable curves constructing above can be reconstructed group-theoretically from $\phi$. Let $I_{i}\subseteq \Pi_{X^{\bp}_{i}}$ be the closed subgroups generated by the subgroups $$\bigcup_{e_{i} \in e^{\rm op}(\Gamma_{X_{i}^{\bp}}) \setminus \{e_{i, n}, e_{i, n-1}, e_{i, n-2}, e_{i, n-3}\}}\text{Edg}^{\rm op}_{e_{i}}(\Pi_{X_{i}^{\bp}}).$$ We see that $$\Pi_{Y_{i}^{\bp}} \defeq \Pi_{X^{\bp}_{i}}/I_{i}$$ is (outer) isomorphic to the solvable admissible fundamental group of $Y_{i}^{\bp}$. Moreover, Theorem \ref{mainstep-1} implies that $\phi(I_{1})=I_{2}$. Then we obtain a surjective open continuous homomorphism $$\overline {\overline \phi}: \Pi_{Y_{1}^{\bp}} \migisurj \Pi_{Y_{2}^{\bp}}.$$ This contradicts Proposition \ref{prop-6-5}, since Proposition \ref{prop-6-5} implies that $Y_{2}^{\bp}$ is smooth over $k_{2}$. Then (iii) does not occur.

Suppose that (iv) holds. Similar arguments to the arguments given in the proof of (iii) imply that (iv) does not occur. More precisely, we have the following.

Let $v_{1} \in v(\Gamma_{X^{\bp}_{1}})$ such that $x_{e_{1, n}} \in X_{v_{1}}$. Since $\#v(\Gamma_{X^{\bp}_{1}})=\#v(\Gamma_{Z^{\bp}_{1}})+1$, we have that $\#(X_{v_{1}} \cap D_{X_{1}})=2$. Note that $\{v_{1}\}=v(\Gamma_{X_{1}^{\bp}}) \setminus v(\Gamma_{Z_{1}^{\bp}})$.

Let $x_{e_{1, n-1}} \in X_{v_{1}} \cap D_{X_{1}}$ be the marked point distinct from $x_{e_{1, n}}$ and $e_{1, n-1}\in e^{\rm op}(\Gamma_{X_{1}^{\bp}})$ the open edge corresponding to the marked point $x_{e_{1, n-1}}$. On the other hand, let $w_{2} \in v(\Gamma_{X^{\bp}_{2}})$ such that $x_{e_{2, n}} \in X_{w_{2}}$. We put $$w_{1}\defeq (\overline \phi^{\rm sg, ver})^{-1}(w_{2}) \in v(\Gamma_{Z_{1}^{\bp}}) \subseteq v(\Gamma_{X_{1}^{\bp}}),$$ $$e_{2, n-1}\defeq \phi^{\rm sg, op}(e_{1, n-1}) \in e^{\rm op}(\Gamma_{Z_{2}^{\bp}})\subseteq e^{\rm op}(\Gamma_{X_{2}^{\bp}}).$$
Since $Z_{2}^{\bp}$ is a pointed stable curve of type $(0, n-1)$, we have that $$\#(X_{w_{2}} \cap D_{Z_{2}})+\#(X_{w_{2}}\cap Z_{2}^{\rm sing})\geq 3.$$ Then we see that there exist marked points $x_{e_{2, n-2}}$, $x_{e_{2, n-3}} \in D_{Z_{2}} \setminus \{x_{e_{2, n-1}}\}$ distinct from each other such that the following conditions are satisfied: 

(1) if $\#(X_{w_{2}} \cap D_{Z_{2}})\geq 3$, then $x_{e_{2, n-2}}$, $x_{e_{2,n-3}} \in X_{w_{2}}$; 

(2) if $\#(X_{w_{2}} \cap D_{Z_{2}})=2$ and $x_{e_{2, n-1}} \not\in X_{w_{2}}$, then $x_{e_{2, n-2}}$, $x_{e_{2,n-3}} \in X_{w_{2}}$; 

(3) if $\#(X_{w_{2}} \cap D_{Z_{2}})=1$ and $x_{e_{2, n-1}} \not\in X_{w_{2}}$, then we have that $x_{e_{2, n-3}} \in X_{w_{2}}$, and that the connected components of $Z_{2} \setminus X_{w_{2}}$ (note that since $\#(X_{w_{2}} \cap D_{Z_{2}})=1$, the cardinality of the set of connected components of $Z_{2} \setminus X_{w_{2}}$ is $\geq 2$) containing $x_{e_{2, n-1}}$ and $x_{e_{n-2}}$ respectively are distinct from each other; 

(4) if $\#(X_{w_{2}} \cap D_{Z_{2}})=2$ and $x_{e_{2, n-1}} \in X_{w_{2}}$, then we have that $x_{e_{2, n-3}} \in X_{w_{2}}$, and that $x_{e_{2, n-2}}$ is contained in a connected component of $Z_{2} \setminus X_{w_{2}}$; 

(5) if $\#(X_{w_{2}} \cap D_{Z_{2}})=1$ and $x_{e_{2, n-1}} \in X_{w_{2}}$, then we have that the connected components of $Z_{2} \setminus X_{w_{2}}$ (note that  since $\#(X_{w_{2}} \cap D_{Z_{2}})=1$, the cardinality of the set of connected components of $Z_{2} \setminus X_{w_{2}}$ is $\geq 2$) containing $x_{e_{2, n-2}}$ and $x_{e_{n-3}}$ respectively are distinct from each other; 

(6) if $\#(X_{w_{2}} \cap D_{Z_{2}})=0$, then we have that the connected components of $Z_{2} \setminus X_{w_{2}}$ (note that since $\#(X_{w_{2}} \cap D_{Z_{2}})=0$, the cardinality of the set of connected components of $Z_{2} \setminus X_{w_{2}}$ is $\geq 3$) containing $x_{e_{2, n-1}}$, $x_{e_{n-2}}$, and $x_{e_{2, n-3}}$ respectively are distinct from each other. 

Write $e_{2, n-2}$ and $e_{2, n-3} \in e^{\rm op}(\Gamma_{Z_{2}^{\bp}})$ for the open edges corresponding to the marked points $x_{e_{2, n-2}}$ and $x_{e_{2, n-3}}$, respectively. We put $$e_{1, n-2} \defeq  (\overline \phi^{\rm sg, op})^{-1}(e_{2, n-2}), \ e_{1, n-3} \defeq  (\overline \phi^{\rm sg, op})^{-1}(e_{2, n-3}).$$ Let $Y_{i}^{\bp}$ be the pointed stable curve of type $(0, 4)$ over $k_{i}$ associated to the pointed semi-stable curve $$(X_{i}, \{x_{e_{i, n}}, x_{e_{i, n-1}}, x_{e_{i, n-2}}, x_{e_{i, n-3}}\}).$$ By the construction of the set of marked points $\{x_{e_{i, n}}, x_{e_{i, n-1}}, x_{e_{i, n-2}}, x_{e_{i, n-3}}\}$, we see that $Y_{1}^{\bp}$ is singular whose irreducible component are $X_{w_{1}}$ and $X_{v_{1}}$, and that $Y_{2}^{\bp}$ is smooth over $k_{2}$ whose underlying curve is $X_{w_{2}}$. 

Let $I_{i}\subseteq \Pi_{X^{\bp}_{i}}$ be the closed subgroups generated by the subgroups $$\bigcup_{e_{i} \in e^{\rm op}(\Gamma_{X_{i}^{\bp}}) \setminus \{e_{i, n}, e_{i, n-1}, e_{i, n-2}, e_{i, n-3}\}}\text{Edg}^{\rm op}_{e_{i}}(\Pi_{X_{i}^{\bp}}).$$ We see that $$\Pi_{Y_{i}^{\bp}} \defeq \Pi_{X^{\bp}_{i}}/I_{i}$$ is (outer) isomorphic to the solvable admissible fundamental group of $Y_{i}^{\bp}$. Moreover, Theorem \ref{mainstep-1} implies that $\phi(I_{1})=I_{2}$. Then we obtain a surjective open continuous homomorphism $$\overline {\overline \phi}: \Pi_{Y_{1}^{\bp}} \migisurj \Pi_{Y_{2}^{\bp}}.$$ This contradicts Lemma \ref{lem-6-2}, since Lemma \ref{lem-6-2} implies that $Y_{2}^{\bp}$ is singular. Then (iv) does not occur. This completes the proof of the theorem.
\end{proof}

Theorem \ref{mainthem-form-1} implies the following result concerning the Homeomorphism Conjecture which is the main theorem of the present paper.

\begin{theorem}\label{mainthem-form-2}
We maintain the notation introduced in Section \ref{sec-5}. Let $[q] \in \overline \mfM^{\rm cl}_{0, n}$ be an arbitrary closed point. Then $\pi_{0, n}^{\rm adm}([q])$ and $\pi_{0, n}^{\rm sol}([q])$ are closed points of $\overline \Pi_{0, n}$ and $\overline \Pi_{0, n}^{\rm sol}$, respectively. In particular, the Homeomorphism Conjecture and the Solvable Homeomorphism Conjecture hold when $(g, n)=(0, 4)$.  
\end{theorem}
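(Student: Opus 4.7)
The plan is to deduce Theorem \ref{mainthem-form-2} from Theorem \ref{mainthem-form-1} (which is already on the table), using the topological description of closures supplied by Proposition \ref{prop-5-5}(a) and the continuity of $\pi_{0,n}^{\rm adm}$ and $\pi_{0,n}^{\rm sol}$ supplied by Theorem \ref{continuous}.

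First I would treat the closed-point statement. Let $[q] \in \overline{\mfM}^{\rm cl}_{0,n}$. Since $q$ is a closed point of $\overline{M}_{0,n}$ and the latter is of finite type over $\overline{\mbF}_p$, the residue field $k(q)$ is algebraic over $\mbF_p$, so $k_q = \overline{k(q)}$ is an algebraic closure of $\mbF_p$, i.e.\ $X^{\bp}_q$ is a pointed stable curve of type $(0,n)$ over $\overline{\mbF}_p$. By Proposition \ref{prop-5-5}(a), a point $[\pi_1^{\rm adm}(q')] \in \overline{\Pi}_{0,n}$ lies in the closure $V(\pi_{0,n}^{\rm adm}([q]))$ if and only if ${\rm Hom}_{\rm surj}(\pi_1^{\rm adm}(q), \pi_1^{\rm adm}(q')) \neq \emptyset$, and hence in particular ${\rm Hom}^{\rm open}_{\rm pro\text{-}gps}(\pi_1^{\rm adm}(q), \pi_1^{\rm adm}(q')) \neq \emptyset$. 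Since $X^{\bp}_q$ satisfies the hypothesis ``$k_1$ is an algebraic closure of $\mbF_p$'' of Theorem \ref{mainthem-form-1}, that theorem then yields $q \sim_{fe} q'$, which gives $[\pi_1^{\rm adm}(q)] = [\pi_1^{\rm adm}(q')]$ by Proposition \ref{prop-5-2}. Thus $V(\pi_{0,n}^{\rm adm}([q])) = \{\pi_{0,n}^{\rm adm}([q])\}$ and $\pi_{0,n}^{\rm adm}([q])$ is a closed point of $\overline{\Pi}_{0,n}$. The same argument, verbatim, works for $\pi_{0,n}^{\rm sol}([q])$, using the solvable half of Theorem \ref{mainthem-form-1}.

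Next I would deduce the Homeomorphism Conjecture for $(g,n) = (0,4)$. We already know that $\pi_{0,4}^{\rm adm}$ is a bijection of sets, by Theorem \ref{them-0-1} (equivalently Theorem \ref{them-5-4}(b)), and that it is continuous, by Theorem \ref{continuous}. It remains to check that it is a closed map (equivalently that its inverse is continuous). Since $\overline{M}_{0,4}$ is a one-dimensional irreducible variety, the topological space $\overline{\mfM}_{0,4}$ has as its non-empty closed subsets exactly $\overline{\mfM}_{0,4}$ itself and the finite unions of elements of $\overline{\mfM}^{\rm cl}_{0,4}$. The image of the whole space is the whole space, and by the closed-point statement just proved, any finite union of closed points of $\overline{\mfM}^{\rm cl}_{0,4}$ maps under the bijection $\pi_{0,4}^{\rm adm}$ to a finite union of closed points of $\overline{\Pi}_{0,4}$, which is closed. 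Hence $\pi_{0,4}^{\rm adm}$ is a closed continuous bijection, i.e.\ a homeomorphism. The identical reasoning applies to $\pi_{0,4}^{\rm sol}$, giving also the Solvable Homeomorphism Conjecture in this case.

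The main obstacle is entirely concentrated in the first step, specifically in invoking Theorem \ref{mainthem-form-1}: that theorem is precisely the hard content (combining the reconstruction of inertia subgroups and field structures of Theorem \ref{them-0-4}, the combinatorial Grothendieck conjecture for surjections of Theorem \ref{them-0-5}, and Tamagawa's/the author's Isom-type result \cite[Theorem 1.2]{Y2} for the smooth case). Once it is at hand, the deduction of Theorem \ref{mainthem-form-2} is essentially a formal exercise in point-set topology, and no further delicate anabelian input is needed; in particular the one-dimensionality of $\overline{M}_{0,4}$ is what makes the passage from ``closed points go to closed points'' to ``homeomorphism'' automatic, and this is the reason the conjecture for higher-dimensional $(0,n)$ does not follow from the present argument.
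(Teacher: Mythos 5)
Your proposal is correct and follows essentially the same route as the paper: Proposition \ref{prop-5-5} (a) together with Theorem \ref{mainthem-form-1} shows that the closure of $\pi_{0,n}^{\rm adm}([q])$ (resp. $\pi_{0,n}^{\rm sol}([q])$) is a single point, and then bijectivity (Theorem \ref{them-5-4} (b)), continuity (Theorem \ref{continuous}), and the one-dimensionality of $\overline M_{0,4}$ give the $(0,4)$ case. The only cosmetic differences are that the paper treats just the solvable case (the admissible case following since a surjection of admissible fundamental groups induces one of their maximal pro-solvable quotients) and leaves the point-set-topology verification for $(0,4)$ implicit, which you spell out.
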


\begin{proof}
To verify the theorem, we only need to treat the case of solvable admissible fundamental groups.

Let $V(\pi_{0, n}^{\rm sol}([q]))$ be the topological closure of $\pi_{0, n}^{\rm sol}([q])$ in $\overline \Pi_{0, n}^{\rm sol}$ and $[\pi_{1}^{\rm sol}(q')] \in V(\pi_{0, n}^{\rm sol}([q]))$ an arbitrary point. Then by Proposition \ref{prop-5-5} (a), we obtain that there exists a surjective open continuous homomorphism $$\phi: \pi_{1}^{\rm sol}(q) \migisurj \pi_{1}^{\rm sol}(q').$$ Theorem \ref{mainthem-form-1} implies that $q \sim_{fe} q'$. Thus, we obtain that $[\pi_{1}^{\rm sol}(q)]=[\pi_{1}^{\rm sol}(q')]$. This means that $V(\pi_{0, n}^{\rm sol}([q]))=[\pi_{1}^{\rm sol}(q)]$ is a closed point of $\overline \Pi_{0, n}^{\rm sol}$. Moreover, the ``in particular" part of the theorem follows  from Theorem \ref{them-5-4} (b). This completes the proof of the theorem.
\end{proof}

\begin{remarkA}
In \cite{Y8}, the author proved a similar result of Theorem \ref{mainthem-form-2} when $(g, n)=(1,1)$ and $p>2$.
\end{remarkA}

\section{Continuity of $\pi_{g, n}^{\rm adm}$}\label{sec-7}

In this section, we prove Theorem \ref{continuous}.

\subsection{Moduli spaces of curves with level structures}\label{sec-7-1}

We fix some notation. Let $p$ be a prime number, $k$ an algebraic closure of the finite field $\mbF_{p}$, and $B$, $B'$ schemes over $k$. Let $X_{B}^{\bp} = (X_{B}, D_{X_{B}})$ be a pointed stable curve of type $(g, n)$ over $B$ and $B' \migi B$ a $k$-morphism. We shall write $X_{B'}^{\bp}$ for $X^{\bp}_{B} \times _{B} B'$ the pointed stable curve over $B'$. Let $f_{B}^{\bp}: Y^{\bp}_{B} \migi X_{B}^{\bp}$ be a finite flat morphism of pointed stable curves over $B$, $b \in B$, and $\overline b \migi B$ a geometric point over $b$. We shall say $f_{B}^{\bp}$ an admissible covering (resp. a Galois admissible covering) over $B$ if $f_{\overline b}^{\bp}\defeq f^{\bp}_{B}\times_{B} \overline b: Y_{\overline b}^{\bp} \migi X_{\overline b}^{\bp}$ is an admissible covering (resp. a Galois admissible covering) over $\overline b$ for all $b \in B$.

Let $\overline \mcM_{g, n, \mbZ}$ be the moduli stack over $\spec \mbZ$  parameterizing pointed stable curves of type $(g,n)$ and $\mcM_{g, n, \mbZ}$ the open substack of $\overline \mcM_{g, n, \mbZ}$ parameterizing smooth pointed stable curves. Let $\overline \mcM_{g, n}\defeq\overline \mcM_{g, n, \mbZ}\times_{\mbZ} k$, $\mcM_{g, n}\defeq\mcM_{g, n, \mbZ}\times_{\mbZ} k$, $\overline M_{g, n}$ the coarse moduli space of $\overline \mcM_{g, n}$, and $M_{g, n}$ the coarse moduli space of $\mcM_{g, n}$. We denote by $\overline \omega_{g, n}: \overline \mcM_{g,n} \migi \overline M_{g, n}$ and $\omega_{g, n}: \mcM_{g, n} \migi M_{g, n}$ the natural morphisms, respectively. 

When $g=0$, then $\mcM_{0, n}$ is a scheme over $k$. Thus, we have $\mcM_{0, n}=M_{0, n}$. Note that $M_{0, n}$ is a quasi-projective variety over $k$. In general, the coarse moduli space $M_{g, n}$ is not a fine moduli space. In order to build a family of curves over schemes, we use the level structures. Let $m \geq 3$ be an integer number prime to $p$. 

Suppose that $g=1$. We denote by $M^{(m)}_{1, 1}$ the moduli stack over $k$ classifying smooth pointed stable curves of type $(1, 1)$ with level $m$-structure (i.e., the moduli stack of elliptic curves in characteristic $p$ with level $m$-structure). Then we have a natural morphism $\omega_{1,1}^{(m)}: M_{1, 1}^{(m)} \migi \mcM_{1, 1} \migi M_{1,1}.$ Moreover, we put $$M_{1, n}^{(m)}\defeq M_{1, 1}^{(m)}\times_{M_{1,1}}M_{1, n},$$ where $M_{1, n} \migi M_{1, 1}$ is the natural morphism induced by the forgetting morphism $\mcM_{1, n} \migi \mcM_{1,1}$ determined by forgetting the last $n-1$ marked points. Then we obtain a morphism $$\omega_{1, n}^{(m)}: M_{1, n}^{(m)} \migi M_{1, n}$$ determined by the second projection. Note that $M^{(m)}_{1, n}$ is a quasi-projective variety over $k$, and that $M_{1, n}^{(m)}(B)$ is the set of $B$-isomorphism classes of smooth pointed stable curves of type $(1, n)$ over $B$ such that, by forgetting the last $n-1$ marked points, the smooth pointed stable curves of type $(1, 1)$ are elliptic curves over $B$ with level $m$-structure.

Suppose that $g \geq 2$. Let $M^{(m)}_{g, 0}$ be the moduli stack over $k$ classifying smooth pointed stable curves of type $(g, 0)$ with level $m$-structure. Then we obtain a natural morphism $\omega^{(m)}_{g, 0}: M_{g, 0}^{(m)} \migi \mcM_{g, 0} \migi M_{g, 0}.$ Moreover, we put $$M_{g, n}^{(m)}\defeq M_{g, 0}^{(m)}\times_{M_{g, 0}} M_{g, n},$$ where $M_{g, n} \migi M_{g, 0}$ is the natural morphism induced by the forgetting morphism $\mcM_{g, n} \migi \mcM_{g, 0}$ determined by forgetting marked points. Then we obtain a  morphism $$\omega_{g, n}^{(m)}: M_{g, n}^{(m)} \migi M_{g, n}$$ determined by the second projection.  Note that $M^{(m)}_{g, n}$ is a quasi-projective variety over $k$, and that $M_{g, n}^{(m)}(B)$ is the set of $B$-isomorphism classes of smooth pointed stable curves of type $(g, n)$ over $B$ whose underlying curves are smooth projective curves of genus $g$ over $B$ with level $m$-structure.

For simplicity, we shall write $H_{g, n}$ for $M_{g, n}^{(m)}$ when $g \geq 1$, $H_{0, n}$ for $M_{0, n}$ when $g=0$, $\omega_{g, n}^{(m)}$ for the natural morphism $H_{g, n}\defeq M_{g, n}^{(m)} \migi M_{g, n}$ (note that $\omega_{0, n}^{(m)}=\text{id}_{M_{0, n}}: M_{0, n} \migi M_{0,n}$ when $g=0$). Moreover, we denote by  $$X^{\bp}_{H_{g, n}}=(X_{H_{g, n}}, D_{X_{H_{g, n}}})$$ the universal smooth pointed stable curve of type $(g, n)$ over $H_{g, n}$ with a level $m$-structure $\tau_{H_{g, n}}\defeq\tau_{H_{g, 0}}\times_{H_{g, 0}} H_{g, n}$ induced by the universal level $m$-structure $$\tau_{H_{g, 0}}: \text{Pic}_{X_{H_{g, 0}}/H_{g, 0}}^{0}[m] \isom (\mbZ/m\mbZ)^{2g}_{H_{g, 0}}$$ when $g\geq 2$, by the universal level $m$-structure $$\tau_{H_{1, 1}}: \text{Pic}_{X_{H_{1, 1}}/H_{1, 1}}^{0}[m] \isom (\mbZ/m\mbZ)^{2}_{H_{1, 1}}$$ when $g=1$, and by the trivial level $m$-structure when $g=0$.

\subsection{The sets of finite quotients of admissible fundamental groups}\label{sec-7-2}

We maintain the notation introduced in Section \ref{sec-5} and Section \ref{sec-7-1}. Let $q \in \overline M_{g, n}$ be an arbitrary point, $\pi^{\rm adm}_{1}(q)$ the admissible fundamental group of the pointed stable curve $X_{q}^{\bp}$ over an algebraic closure of the residue field $k(q)$ of $q$, $\Gamma_{q}$ the dual semi-graph of $X_{q}^{\bp}$, and $\pi_{A}^{\rm adm}(q)$ the set of finite quotients of $\pi_{1}^{\rm adm}(q)$. Since $\pi_{1}^{\rm adm}(q)$ is topologically finitely generated, the isomorphism class of $\pi_{1}^{\rm adm}(q)$ is determined completely by $\pi_{A}^{\rm adm}(q)$ (cf. \cite[Proposition 16.10.6]{FJ}). First, we have the following lemmas.

\begin{lemma}\label{lem-7-1}
Let $q_{1}, q_{2} \in \overline M_{g, n}$ be arbitrary points such that $q_{2} \in \overline {\{q_{1}\}}$, where $\overline {\{(-)\}}$ denotes the topological closure of $(-)$ in $\overline M_{g, n}$. Then we have that $$\pi_{A}^{\rm adm}(q_{2}) \subseteq \pi_{A}^{\rm adm}(q_{1}).$$
\end{lemma}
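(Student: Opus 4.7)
The plan is to realize the specialization $q_{2} \in \overline{\{q_{1}\}}$ as a specialization inside a moduli space with level structure, and then to apply the specialization theorem for log \'etale fundamental groups to produce a surjection $\pi_{1}^{\rm adm}(q_{1}) \migisurj \pi_{1}^{\rm adm}(q_{2})$. Once such a surjection is established, the inclusion $\pi_{A}^{\rm adm}(q_{2}) \subseteq \pi_{A}^{\rm adm}(q_{1})$ is immediate: every finite quotient of $\pi_{1}^{\rm adm}(q_{2})$ lifts through the surjection to a finite quotient of $\pi_{1}^{\rm adm}(q_{1})$.

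First I would introduce a compactification $\overline{H}_{g,n}$ of the level-$m$ moduli scheme $H_{g,n}$ of Section \ref{sec-7-1} (for some integer $m\geq 3$ prime to $p$), carrying the universal pointed stable curve $X_{\overline{H}_{g,n}}^{\bp}$. Choose points $\widetilde{q}_{1},\widetilde{q}_{2} \in \overline{H}_{g,n}$ over $q_{1},q_{2}$ respectively such that $\widetilde{q}_{2}$ lies in the topological closure of $\widetilde{q}_{1}$; this is possible because $\omega_{g,n}^{(m)}$ is proper and surjective, hence sends closed subsets onto closed subsets. Next I would produce a strictly henselian discrete valuation ring $R$ over $k$ together with a morphism $\spec R \migi \overline{H}_{g,n}$ whose generic point maps to a point lying over $\widetilde{q}_{1}$ and whose closed point maps to a point lying over $\widetilde{q}_{2}$ (standard: choose a curve in $\overline{H}_{g,n}$ connecting $\widetilde{q}_{1}$ to $\widetilde{q}_{2}$, take its normalization, and localize; the admissible fundamental group depends only on the geometric fiber, so passing to algebraic closures of the residue fields of $R$ is harmless).

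Pulling back the universal curve yields a pointed stable curve $X_{R}^{\bp} \migi \spec R$ whose geometric generic and geometric special fibers are $X_{q_{1}}^{\bp}$ and $X_{q_{2}}^{\bp}$, respectively. Equip $\spec R$ with the log structure associated to the closed point, and $X_{R}$ with the log structure pulled back from $\overline{\mcM}_{g_{X},n_{X},\mbZ}^{\log}$; this produces a stable log curve $X_{R}^{\log} \migi s_{R}^{\log}$ in the sense recalled after Definition \ref{def-2}. As explained there, the admissible fundamental group $\pi_{1}^{\rm adm}(q_{i})$ is naturally isomorphic to the geometric log \'etale fundamental group of the corresponding fiber $X_{q_{i}}^{\log}$. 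I would then invoke the specialization theorem for log \'etale fundamental groups of proper log smooth morphisms (due to Mochizuki, cf.\ \cite[\S 3.11 Proposition]{M-1} and its consequences, or Vidal's specialization theorem for tame fundamental groups) to obtain a continuous surjective specialization homomorphism
\[
\text{sp}: \pi_{1}^{\rm adm}(q_{1}) \migisurj \pi_{1}^{\rm adm}(q_{2}).
\]

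The main obstacle is verifying that the specialization homomorphism for log \'etale fundamental groups in a family of stable log curves really is surjective in the generality needed here (arbitrary singular fibers, arbitrary characteristic $p$). For the smooth case this is the classical Grothendieck specialization, and for log smooth families it is due to Mochizuki; one must check that the coverings of $X_{R}^{\log}$ that restrict to log admissible coverings on both fibers exhaust all log \'etale coverings of the special fiber, after possibly a Kummer log base change on $\spec R$. Once this surjection is in hand, the conclusion $\pi_{A}^{\rm adm}(q_{2}) \subseteq \pi_{A}^{\rm adm}(q_{1})$ follows from the tautology that a surjective continuous homomorphism induces an inclusion of sets of finite quotients.
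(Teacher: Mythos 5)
Your proposal is correct and takes essentially the same route as the paper: the paper's proof is a one-line appeal to the specialization theorem for admissible fundamental groups of pointed stable curves (Vidal, Th\'eor\`eme 2.2 of \cite{V}), which provides exactly the surjection $\pi_{1}^{\rm adm}(q_{1}) \migisurj \pi_{1}^{\rm adm}(q_{2})$ that you construct by connecting $q_{1}$ and $q_{2}$ through a discrete valuation ring and passing to log \'etale fundamental groups. The ``main obstacle'' you flag, namely the surjectivity of the specialization homomorphism for stable log curves in positive characteristic, is precisely the content of that cited theorem, so no further verification is needed.
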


\begin{proof}
The lemma follows immediately from the specialization theorem of admissible fundamental groups of pointed stable curves (e.g. \cite[Th\'eor\`eme 2.2]{V}).
\end{proof}

\begin{lemma}\label{lem-7-2}
Let $S$ be a smooth variety over $k$, $\eta_{S}$ the generic point of $S$, and $X_{S}^{\bp}$ a smooth pointed stable curve over $S$. Let $Y^{\bp}_{\eta_{S}}$ be a smooth pointed stable curve over $\eta_{S}$ and $$f^{\bp}_{\eta_{S}}: Y_{\eta_{S}}^{\bp}\migi X_{\eta_{S}}^{\bp}$$ a Galois admissible covering over $\eta_{S}$. Then there exist an open subset $U \subseteq S$ and a Galois admissible covering $$f_{U}^{\bp}: Y^{\bp}_{U} \migi X^{\bp}_{U}$$ of smooth pointed stable curves over $U$ such that $f^{\bp}_{U}\times_{U}\eta_{S}=f^{\bp}_{\eta_{S}}$.
\end{lemma}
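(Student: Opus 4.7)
The plan is to use standard spreading-out arguments in the style of EGA IV, \S 8 to descend the generic admissible covering to an open neighborhood of $\eta_S$. First I would use the fact that $\mcO_{\eta_S} = \colim_{U} \mcO_S(U)$, where $U$ ranges over nonempty open subsets of $S$, together with Grothendieck's limit theorems, to descend the finite morphism $f_{\eta_S}: Y_{\eta_S} \migi X_{\eta_S}$ to a finite morphism $f_{U_1}: Y_{U_1} \migi X_{U_1}$ over some nonempty open $U_1 \subseteq S$. Similarly, since the $k$-scheme $Y_{\eta_S}$ is obtained as a limit of the family $\{Y_U\}_U$, after further shrinking I may arrange that $Y_{U_2}$ is smooth and geometrically connected over $U_2$ (these are constructible conditions that hold at $\eta_S$, hence on an open subset, by EGA IV, \S 9).

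Next I would extend the Galois $G$-action, where $G \defeq \mathrm{Aut}(f_{\eta_S}^{\bp})$. Each $\sigma \in G$ is an $X_{\eta_S}$-automorphism of $Y_{\eta_S}$, hence spreads to an $X_{U_3}$-automorphism of $Y_{U_3}$ for some open $U_3 \subseteq U_2$. Since $G$ is finite, after intersecting the (finitely many) opens required, $G$ acts on $Y_{U_3}$ over $X_{U_3}$. Since $Y_{\eta_S}/G = X_{\eta_S}$ and taking GIT quotients commutes with flat base change, after further shrinking to an open $U_4$ we have $Y_{U_4}/G = X_{U_4}$.

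The main technical step is verifying admissibility on every geometric fiber over $U$. Let $R \subseteq Y_{U_4}$ denote the ramification locus of $f_{U_4}$, which is closed. By the admissibility of $f_{\eta_S}^{\bp}$, we have $R \cap Y_{\eta_S} \subseteq f_{\eta_S}^{-1}(D_{X_{\eta_S}})$. Since $Y_{U_4}$ is irreducible with generic point in $Y_{\eta_S}$, and $f_{U_4}^{-1}(D_{X_{U_4}})$ is closed, the complement $R \setminus f_{U_4}^{-1}(D_{X_{U_4}})$ is a constructible set with empty fiber over $\eta_S$; after shrinking to an open $U_5$, this set is empty, i.e., $f_{U_5}$ is étale outside $f_{U_5}^{-1}(D_{X_{U_5}})$. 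Equipping $Y_{U_5}$ with the marked divisor $D_{Y_{U_5}} \defeq f_{U_5}^{-1}(D_{X_{U_5}})_{\mathrm{red}}$, we obtain a smooth pointed stable curve $Y_{U_5}^{\bp}$ (after possibly shrinking once more so that the marked divisor is étale over $U_5$). Finally, the ramification indices along the marked sections are locally constant over $U_5$ by flatness, so tameness at marked points on $\eta_S$ (part of the admissibility of $f_{\eta_S}^{\bp}$) propagates: the condition $(e, p)=1$ holds on every geometric fiber. Taking $U \defeq U_5$ yields the desired Galois admissible covering $f_U^{\bp}: Y_U^{\bp} \migi X_U^{\bp}$.

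The main obstacle, although essentially standard, is the uniform verification of the admissibility conditions (ii) and (v) of Definition \ref{def-2} along all geometric fibers simultaneously; this is handled by the combination of the openness of the étale locus, the irreducibility of $Y_{U_4}$, and the flatness of the marked sections.
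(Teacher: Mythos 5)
Your proposal is essentially correct, but it follows a genuinely different route from the paper, and one step needs a real argument rather than the phrase ``by flatness''. The paper does not spread out $f^{\bp}_{\eta_S}$ by limit arguments: it takes $Y_{S}$ to be the normalization of $X_{S}$ in the function field of $Y_{\eta_S}$, shrinks $S$ using Harbater's result (\cite[Proposition 5]{Har}) so that all fibers of $Y_{S}$ are geometrically irreducible, shrinks again so that $Y_{S}\setminus D_{Y_{S}}\migi X_{S}\setminus D_{X_{S}}$ is \'etale, and then applies log purity to the log regular scheme $X_{S}^{\log}$: since the cover is tamely ramified along the generic points of the marked divisor, it extends uniquely to a Kummer log \'etale cover $Y_{S}^{\log}\migi X_{S}^{\log}$, and log \'etaleness yields admissibility on every fiber at once. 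In your approach this fiberwise admissibility, in particular tameness at the marked points of the special fibers, must be verified by hand, and that is exactly where your argument is thin: in characteristic $p$ the a priori danger is that ramification indices jump or become wild upon specialization, and flatness by itself does not exclude this.

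The claim is nonetheless true after the reductions you have already made, and can be closed as follows. Since $Y_{U_5}$ is regular and (after your last shrinking) $E\defeq f_{U_5}^{-1}(D_{X_{U_5}})_{\mathrm{red}}$ is finite \'etale over $U_5$, the irreducible components $Z$ of $E$ are pairwise disjoint and meet every geometric fiber transversally in reduced points. For a marked section $D_{x}$ of $X_{U_5}$ write $f_{U_5}^{*}D_{x}=\sum_{Z}e_{Z}Z$ as Cartier divisors; restricting this identity to a geometric fiber $Y_{\overline{u}}$ shows that the ramification index of $f_{\overline{u}}$ at any point lying on $Z$ equals $e_{Z}$, i.e., it equals the ramification index at the corresponding point of the generic fiber, hence is prime to $p$. (Alternatively: the genus of the fibers of $Y_{U_5}\migi U_5$ is constant, so by Riemann--Hurwitz the degree of the different of $f_{\overline{u}}$ is constant, while the tame contribution $\sum_{x}(\#G-\#f_{\overline{u}}^{-1}(x))$ is constant because $E$ is \'etale over $U_5$; hence all wild conductor terms vanish.) One more small point: commutation of quotients with flat base change gives $Y_{U}/G=X_{U}$ over the base, but the inclusion of a non-generic point of $U$ is not flat, so you should add a word on why each geometric fiber map is the quotient by $G$; this is routine, since $Y_{\overline{u}}$ is irreducible and $f_{\overline{u}}$ is \'etale of degree $\#G$ outside the marked points, where a nontrivial deck transformation of a connected cover has no fixed point, so $Y_{\overline{u}}/G\migi X_{\overline{u}}$ is finite and birational onto a smooth curve, hence an isomorphism. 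With these points supplied your spreading-out proof is complete; it is more elementary than the paper's, at the price of the fiberwise verification that the log purity argument provides for free.
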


\begin{proof}
Write $Y_{S}$ for the normalization of $X_{S}$ in the function field of $Y_{\eta_{S}}$ and $D_{Y_{S}}$ for the set of the topological closures of points of $D_{Y_{\eta_{S}}}$ in $Y_{S}$. \cite[Proposition 5]{Har} implies that, by replacing $S$ by an open subset of $S$, we may assume that the fiber $Y_{s}\defeq Y_{S} \times_{S}s$ is geometrically irreducible over every {\it closed} point $s \in S$. 

The normalization $f_{S}: Y_{S} \migi X_{S}$ induces a morphism $$g_{S}\defeq f_{S}|_{Y_{S} \setminus D_{Y_{S}}}: Y_{S} \setminus D_{Y_{S}} \migi X_{S} \setminus D_{X_{S}}$$ over $S$. Since the restriction of $g_{S}$ on the generic fiber $\eta_{S}$ is a Galois \'etale covering over $\eta_{S}$, there exists an open subset $U \subseteq S$ such that $g_{U}\defeq g_{S}|_{U}$ is a Galois \'etale covering over $U$. Thus, by replacing $S$ by  $U$, we may assume that $g_{S}$ is a Galois \'etale covering. Since the fiber $Y_{s}\defeq Y_{S} \times_{S}s$ is generically smooth over each $s \in S$,  $Y_{s}$ is geometrically irreducible over each point $s \in S$.

The normalization $f_{S}: Y_{S} \migi X_{S}$ induces a morphism $$g_{S}\defeq f_{S}|_{Y_{S} \setminus D_{Y_{S}}}: Y_{S} \setminus D_{Y_{S}} \migi X_{S} \setminus D_{X_{S}}$$ over $S$. Since the restriction of $g_{S}$ on the generic fiber $\eta_{S}$ is \'etale, there exists an open subset $U \subseteq S$ such that $$g_{u}: Y_{S} \setminus D_{Y_{S}} \times_{S} u \migi X_{S} \setminus D_{X_{S}} \times_{S} u$$ is \'etale for each $u \in U$. Thus, by replacing $S$ by the open subset $U$, we may assume that $g_{S}$ is \'etale. Since the fiber $Y_{s}\defeq Y_{S} \times_{S}s$ is generically smooth over each point $s \in S$, $Y_{s}$ is geometrically irreducible over each point $s \in S$.

Let $X^{\log}_{S}$ be the log scheme over $S$ whose underlying scheme is $X_{S}$, and whose log structure is determined by the marked points $D_{X_{S}}$. Since $S$ is smooth over $k$, we see that $X^{\log}_{S}$ is log regular. Note that $f_{S}$ is tamely ramified over the generic point of every marked point of $D_{X_{S}}$. Then the log purity (cf. \cite[Theorem B]{M-1}) implies that $g_{S}$ extends uniquely to a Galois log \'etale morphism $f_{S}^{\log}: Y^{\log}_{S} \migi X^{\log}_{S}$ over $S$. Let $Y^{\bp}_{S}\defeq(Y_{S}, D_{S})$. Then $Y^{\bp}_{S}$ is a smooth pointed stable curve over $S$. Thus, $f^{\log}_{S}$ induces a morphism $f^{\bp}_{S}: Y^{\bp}_{S} \migi X^{\bp}_{S}$ such that the restriction of $f^{\bp}_{S}$ on $\eta_{S}$ is equal to $f^{\bp}_{\eta_{S}}$, and that $f_{s}: Y^{\bp}_{s} \migi X^{\bp}_{s}$ induced by $f^{\bp}_{S}$ is a Galois admissible covering over every point $s\in S$.
\end{proof}

\begin{lemma}\label{lem-7-3}
Let $q \in M_{g, n}$ be an arbitrary point, $V_{q}^{\rm sm}$ the topological closure of $q$ in $M_{g, n}$, and $C \subseteq V_{q}^{\rm sm, cl}$ a subset of closed points of $V_{q}^{\rm sm}$. Suppose that $C$ is dense in $V_{q}^{\rm sm}$. Then we have that $$\pi_{A}^{\rm adm}(q)=\bigcup_{c\in C}\pi_{A}^{\rm adm}(c).$$
\end{lemma}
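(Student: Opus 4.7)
The inclusion $\bigcup_{c\in C}\pi_A^{\rm adm}(c)\subseteq\pi_A^{\rm adm}(q)$ is the easy half and follows immediately from Lemma \ref{lem-7-1}: every $c\in C$ lies in $V_q^{\rm sm}=\overline{\{q\}}$, so $\pi_A^{\rm adm}(c)\subseteq\pi_A^{\rm adm}(q)$. For the reverse inclusion, the plan is to spread an arbitrary Galois admissible covering of $X_q^{\bp}$ to a family of Galois admissible coverings over an open subset of a smooth variety that dominates $V_q^{\rm sm}$, and then to use density of $C$ to extract a closed fiber realizing the same finite Galois group.

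First I would pass to the level-$m$ moduli space of Section \ref{sec-7-1}. Fix an integer $m\geq 3$ prime to $p$ and consider $\omega_{g,n}^{(m)}\colon H_{g,n}\migi M_{g,n}$, which is finite surjective onto its image, and recall that $H_{g,n}$ is a smooth quasi-projective variety over $k$ carrying the universal smooth pointed stable curve $X_{H_{g,n}}^{\bp}$. Choose an irreducible component $\tilde V_q$ of $(\omega_{g,n}^{(m)})^{-1}(V_q^{\rm sm})$ that dominates $V_q^{\rm sm}$, and let $S\subseteq\tilde V_q$ be a nonempty open subset contained in the smooth locus of $\tilde V_q$; write $\tilde q\in S$ for its generic point, so that $\omega_{g,n}^{(m)}(\tilde q)=q$ and the base change $X_S^{\bp}$ of $X_{H_{g,n}}^{\bp}$ to $S$ has geometric fiber at $\tilde q$ isomorphic to $X_q^{\bp}$. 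In particular $\pi_1^{\rm adm}(\tilde q)\cong\pi_1^{\rm adm}(q)$, since the admissible fundamental group of a smooth pointed stable curve is invariant under extensions of algebraically closed base fields.

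Given $G\in\pi_A^{\rm adm}(q)=\pi_A^{\rm adm}(\tilde q)$, pick a Galois admissible covering $f_{\tilde q}^{\bp}\colon Y_{\tilde q}^{\bp}\migi X_{\tilde q}^{\bp}$ with Galois group $G$. By Lemma \ref{lem-7-2} applied to the smooth variety $S$ and the family $X_S^{\bp}$, there exist a nonempty open subset $U\subseteq S$ and a Galois admissible covering $f_U^{\bp}\colon Y_U^{\bp}\migi X_U^{\bp}$ of smooth pointed stable curves over $U$ with $f_U^{\bp}\times_U\tilde q=f_{\tilde q}^{\bp}$. In particular the Galois group of the fiber $f_s^{\bp}$ at every point $s\in U$ is $G$, so $G\in\pi_A^{\rm adm}(\omega_{g,n}^{(m)}(s))$ for every such $s$.

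To conclude, I would use Chevalley's theorem: the constructible set $\omega_{g,n}^{(m)}(U)\subseteq V_q^{\rm sm}$ contains $q$, hence contains a nonempty open subset $W$ of $V_q^{\rm sm}$. By density of $C$, we may pick $c\in W\cap C$, and because $\omega_{g,n}^{(m)}|_{\tilde V_q}\migi V_q^{\rm sm}$ is finite on closed points we can lift $c$ to a closed point $\tilde c\in U$. Then $f_{\tilde c}^{\bp}\colon Y_{\tilde c}^{\bp}\migi X_{\tilde c}^{\bp}$ realizes $G$ as a Galois group of an admissible covering of $X_c^{\bp}$, so $G\in\pi_A^{\rm adm}(c)$, completing the proof. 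The main obstacle is the careful moduli-theoretic bookkeeping in the first step, namely ensuring that a smooth component $\tilde V_q\subseteq(\omega_{g,n}^{(m)})^{-1}(V_q^{\rm sm})$ genuinely dominates $V_q^{\rm sm}$ and carries the universal family in a way compatible with identifying $\pi_1^{\rm adm}(\tilde q)$ with $\pi_1^{\rm adm}(q)$; once this is set up, the remainder is a standard constructibility-plus-density argument.
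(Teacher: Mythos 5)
Your overall strategy is the same as the paper's: pass to the level-$m$ space $H_{g,n}$ to get a universal family over a smooth variety dominating $V_q^{\rm sm}$, spread the covering out over an open subset via Lemma \ref{lem-7-2}, and finish with Chevalley's theorem plus density of $C$; the easy inclusion via Lemma \ref{lem-7-1} is also handled identically. However, as written there is one genuine gap at the point where you invoke Lemma \ref{lem-7-2}. The group $G\in\pi_A^{\rm adm}(q)=\pi_A^{\rm adm}(\tilde q)$ only furnishes a Galois admissible covering $f^{\bp}_{\tilde q}$ of the \emph{geometric} fiber, i.e.\ a covering defined over an algebraic closure $\overline{k(\tilde q)}$ of the residue field of $\tilde q$, whereas Lemma \ref{lem-7-2} requires a Galois admissible covering of $X^{\bp}_{\eta_S}$ defined over the generic point $\eta_S$ itself (residue field $k(\tilde q)$). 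So the equation $f^{\bp}_U\times_U\tilde q=f^{\bp}_{\tilde q}$ does not typecheck and the lemma cannot be applied directly to $S$ and $f^{\bp}_{\tilde q}$.

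The fix is exactly the extra step the paper carries out: since the covering is of finite presentation, it descends to a finite extension $l$ of $k(\tilde q)$; one then replaces $S$ by (a smooth open subset of) the normalization of $S$ in $l$, pulls back the family, and only then applies Lemma \ref{lem-7-2}. This finite cover still dominates $V_q^{\rm sm}$, so its image is a dense constructible subset and your Chevalley-plus-density conclusion goes through unchanged (note also that to produce the closed point $\tilde c$ you do not need any finiteness of $\omega^{(m)}_{g,n}$ — the fiber over $c$ is a nonempty finite-type $k$-scheme, hence has a closed point, which is closed in $U$). With that descent-and-normalization step inserted, your argument coincides with the paper's proof.
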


\begin{proof}
If $q$ is a closed point, then the lemma is trivial. We may assume that $q$ is not a closed point. Lemma \ref{lem-7-1} implies that, to verify the lemma, it is sufficient to prove that, for any $G \in \pi_{A}^{\rm adm}(q)$, there exists a closed point $c \in C$ such that $G \in \pi_{A}^{\rm adm}(c)$.

Let $q^{(m)} \in (\omega^{(m)}_{g, n})^{-1}(q)$ be a point of $H_{g, n}$, $V_{q^{(m)}}$ the topological closure of $q^{(m)}$ in $H_{g, n}$, and $k(q^{(m)})$ the residue field of $q^{(m)}$ which is the function field of $V_{q^{(m)}}$. Write $M'$ for the normalization of $V_{q^{(m)}}$ in $k(q^{(m)})$. Then there exists an open subset of $M \subseteq M'$ such that $M$ is smooth over $k$. Moreover, the composition of the natural morphisms $M \migiinje M' \migi V_{q^{(m)}} \migiinje H_{g, n}$ determines a smooth pointed stable curve $X^{\bp}_{M}\defeq X^{\bp}_{H_{g, n}}\times_{H_{g, n}} M$ over $M$.

Let $k_{q}$ be an algebraic closure of $k(q^{(m)})$. By the construction, $k_{q}$ is also an algebraic closure of the residue field $k(q)$ of $q$. Let $Y^{\bp}_{k_{q}} \migi X^{\bp}_{k_{q}}$ be a Galois admissible covering over $k_{q}$ with Galois group $G$. By replacing $k(q^{(m)})$ by a finite extension $l$ of $k(q^{(m)})$, the Galois admissible covering can be descended to a Galois admissible covering $Y^{\bp}_{l} \migi X^{\bp}_{l}$ over $l$ with Galois group $G$. Write $N$ for the normalization of $M$ in $l$, $X_{N}^{\bp}$ for $X^{\bp}_{M}\times_{M} N$, and $Y^{\bp}_{N}$ for the normalization of $X^{\bp}_{N}$ in the function field of $Y^{\bp}_{l}$. Then we obtain a covering $$Y^{\bp}_{N} \migi X^{\bp}_{N}$$ over $N$ such that the base change via the natural morphism $\spec l \migi N$ is the Galois admissible covering $Y^{\bp}_{l} \migi X^{\bp}_{l}$ over $l$ with Galois group $G$. Since $N$ is generically smooth over $k$, by replacing $N$ by an open subset of $N$, we may assume that $N$ is smooth over $k$. Thus, Lemma \ref{lem-7-2} implies that there exists an open subset $U \subseteq N$ such that the morphism $$Y^{\bp}_{U} \defeq Y_{N}^{\bp} \times_{N} U \migi X_{U}^{\bp} \defeq X_{N}^{\bp} \times_{N} U$$ is a Galois admissible covering over $U$ with Galois group $G$.

We denote by $$W \subseteq V^{\rm sm}_{q}$$ the image of $U$ of the composition of the natural morphisms $U \migiinje N  \migi M \migiinje M' \migi V_{q^{(m)}} \migiinje H_{g, n} \migi M_{g, n}$, which is a dense constructible subset of $V^{\rm sm}_{q}$. Then $W$ contains an open subset $W'$ of $V_{q}^{\rm sm}$. Since $C$ is dense in $V_{q}^{\rm sm}$, we obtain that $W \cap C \neq \emptyset$. This means that, there exists a closed point $c \in C$ such that $G \in \pi_{A}^{\rm adm}(c)$. We complete the proof of the lemma.
\end{proof}

We maintain the notation introduced in the proof of Lemma \ref{lem-7-3}. We shall denote by $$U_{q} \subseteq U$$ the inverse image of $W'$ of the composition of the natural morphisms $U \migiinje N \migi M \migiinje M' \migi V_{q^{(m)}} \migiinje H_{g, n} \migi M_{g, n}$, which is an open subset of $U$. Then the proof of Lemma \ref{lem-7-3} implies the following corollary.

\begin{corollary}\label{coro-7-4}
We maintain the notation introduced in the proof of Lemma \ref{lem-7-3}. Let $q \in M_{g, n}$ be an arbitrary point, $k_{q}$ an algebraic closure of the residue field $k(q)$, $V_{q}^{\rm sm}$ the topological closure of $q$ in $M_{g, n}$, and $f^{\bp}_{k_{q}}: Y^{\bp}_{k_{q}} \migi X^{\bp}_{k_{q}}$ a Galois admissible covering over $k_{q}$ with Galois group $G$. Then there exist a smooth $k$-variety $U_{q}$ and a quasi-finite morphism $U_{q} \migi H_{g, n}$ (not necessary a surjection) such that the following conditions are satisfied:

(i) the image of $U_{q}$ of the composition of the natural  morphisms $U_{q} \migi H_{g, n}\overset{\omega_{g, n}^{(m)}}{\migi} M_{g, n}$ is an open subset of $V_{q}^{\rm sm}$;

(ii) the morphism $U_{q} \migi H_{g, n}$ induces a smooth pointed stable curve $$X^{\bp}_{U_{q}}\defeq X^{\bp}_{H_{g, n}} \times_{H_{g, n}} U_{q}$$ over $U_{q}$ with a level $m$-structure $\tau_{U_{q}}\defeq\tau_{H_{g, n}} \times_{H_{g, n}} U_{q}$;

(iii) there exists a Galois admissible covering $f^{\bp}_{U_{q}}: Y^{\bp}_{U_{q}} \migi X^{\bp}_{U_{q}}$ of smooth pointed stable curves over $U_{q}$ with Galois group $G$ such that $f_{U_{q}}^{\bp}\times _{U_{q}}\spec k_{q}=f^{\bp}_{k_{q}}$.
\end{corollary}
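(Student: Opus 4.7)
The plan is to observe that Corollary \ref{coro-7-4} is essentially a bookkeeping extraction from the construction already carried out in the proof of Lemma \ref{lem-7-3}, and to verify that the variety $U_{q}$ defined there (as the preimage in $U$ of the open subset $W' \subseteq V^{\rm sm}_{q}$) genuinely satisfies the three listed conditions. I would first recall the full chain of morphisms $U_{q} \migiinje U \migiinje N \migi M \migiinje M' \migi V_{q^{(m)}} \migiinje H_{g, n}$ and check quasi-finiteness step by step: the morphism $N \migi M$ is finite because $N$ is the normalization of $M$ in the finite extension $l/k(q^{(m)})$; the morphism $M' \migi V_{q^{(m)}}$ is finite because both are integral varieties with the same function field and $M'$ is the normalization; the inclusions $U_{q} \migiinje U$, $U \migiinje N$, and $M \migiinje M'$ are open immersions; the inclusion $V_{q^{(m)}} \migiinje H_{g, n}$ is a closed immersion. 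Composing, we obtain that $U_{q} \migi H_{g, n}$ is quasi-finite, and smoothness of $U_{q}$ follows because $U$ was already chosen to be smooth over $k$ and $U_{q}$ is open in $U$.

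Next I would verify condition (i). By the construction in Lemma \ref{lem-7-3}, the image of $U$ in $M_{g, n}$ is the constructible dense subset $W \subseteq V^{\rm sm}_{q}$, which contains the open subset $W' \subseteq V^{\rm sm}_{q}$. Since $U_{q}$ is defined as the preimage of $W'$ under the composition $U \migi M_{g, n}$, its image is exactly $W'$, which is an open subset of $V^{\rm sm}_{q}$ as required.

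For conditions (ii) and (iii), both are obtained by base change. The pullback of the universal smooth pointed stable curve $X^{\bp}_{H_{g, n}}$ along $U_{q} \migi H_{g, n}$ gives $X^{\bp}_{U_{q}}$ equipped with the induced level $m$-structure $\tau_{U_{q}} \defeq \tau_{H_{g, n}} \times_{H_{g, n}} U_{q}$, which settles (ii). For (iii), the Galois admissible covering $f^{\bp}_{U} : Y^{\bp}_{U} \migi X^{\bp}_{U}$ with Galois group $G$ constructed via Lemma \ref{lem-7-2} in the proof of Lemma \ref{lem-7-3} restricts to $U_{q} \subseteq U$, yielding a Galois admissible covering $f^{\bp}_{U_{q}} \defeq f^{\bp}_{U} \times_{U} U_{q} : Y^{\bp}_{U_{q}} \migi X^{\bp}_{U_{q}}$ with Galois group $G$. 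To check that $f^{\bp}_{U_{q}} \times_{U_{q}} \spec k_{q} = f^{\bp}_{k_{q}}$, I would trace the construction: $f^{\bp}_{U}$ was obtained by extending the finite covering $Y^{\bp}_{N} \migi X^{\bp}_{N}$, which was defined as the normalization of $X^{\bp}_{N}$ in the function field of $Y^{\bp}_{l}$, and $Y^{\bp}_{l} \migi X^{\bp}_{l}$ was chosen precisely as a model of $f^{\bp}_{k_{q}}$ over the finite extension $l/k(q^{(m)}) \subseteq k_{q}$; pulling back successively along $\spec l \migi N$, $U_{q} \migiinje U \migiinje N$, and $\spec k_{q} \migi \spec l$ recovers $f^{\bp}_{k_{q}}$.

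There is no substantial obstacle here; the corollary is essentially a repackaging of intermediate data produced during the proof of Lemma \ref{lem-7-3}. The only point requiring slight care is the compatibility claim in (iii), which needs a clean identification of the base-change diagrams relating $U_{q}$, $N$, $\spec l$, and $\spec k_{q}$, but this follows directly from the universal property of normalization and the flat base change behaviour of admissible coverings (preserved by Lemma \ref{lem-7-2}).
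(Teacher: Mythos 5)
Your proposal is correct and takes essentially the same route as the paper: the paper defines $U_{q}$ as the preimage of $W'$ and then simply asserts that the proof of Lemma \ref{lem-7-3} yields the corollary, so your verification of quasi-finiteness, of (i), and of (ii)--(iii) by restricting $f_{U}^{\bp}$ is exactly the bookkeeping the paper leaves implicit. The only detail worth making explicit is that the morphism $\spec k_{q} \migi U_{q}$ implicit in (iii) exists because the generic point of $N$ (i.e., $\spec l$) maps to $q \in M_{g, n}$, and $q$ lies in $W'$ since $q$ is the generic point of $V_{q}^{\rm sm}$.
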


In the remainder of this subsection, we will generalize Lemma \ref{lem-7-3} to the case where $q \in \overline M_{g, n}$.

\begin{lemma}\label{lem-7-5}
Let $S$ be a $k$-variety and $s_{1}, s_{2} \in S$ two points of $S$ such that $s_{1} \neq s_{2}$ and $s_{2} \in \overline {\{s_{1}\}}$. Then there exist a complete discrete valuation ring $R$ and a morphism $\spec R \migi S$ such that the image of the morphism is $\{s_{1}, s_{2}\}$.
\end{lemma}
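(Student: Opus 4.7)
The plan is to reduce the statement to the classical existence result for discrete valuation rings dominating a Noetherian local domain. First I would equip the topological closure $Z\defeq \overline{\{s_{1}\}}$ with its reduced closed subscheme structure; then $Z$ is an integral $k$-variety whose generic point is $s_{1}$, and $s_{2}$ is a point of $Z$ distinct from $s_{1}$. Setting $A\defeq \mcO_{Z,s_{2}}$, we obtain a Noetherian local domain with fraction field $k(s_{1})$ and residue field $k(s_{2})$, and the condition $s_{1}\neq s_{2}$ together with $s_{2}\in \overline{\{s_{1}\}}$ forces $\dim A\geq 1$.

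Next I would invoke the classical fact that every Noetherian local domain of positive dimension is dominated by a discrete valuation ring having the same fraction field (one standard argument: blow up along the maximal ideal, pick a point of the exceptional divisor, localize, normalize, and iterate; an equivalent route is to cut $\spec A$ by hyperplane sections through the closed point until one lands in dimension one and then normalize). Applied to $A$, this produces a DVR $R_{0}$ with
$$A\subseteq R_{0}\subseteq k(s_{1}), \quad \mathrm{Frac}(R_{0})=k(s_{1}), \quad \mfm_{A}\subseteq \mfm_{R_{0}}.$$

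Finally, passing to the $\mfm_{R_{0}}$-adic completion $R\defeq \widehat{R_{0}}$ yields a complete discrete valuation ring, and the composite
$$\spec R \migi \spec R_{0} \migi \spec A \migi Z \migiinje S$$
sends the generic point of $\spec R$ to $s_{1}$ (via the induced field inclusion $k(s_{1})=\mathrm{Frac}(R_{0})\migiinje \mathrm{Frac}(R)$, which factors the generic point through the generic point of $Z$) and sends the closed point of $\spec R$ to $s_{2}$ (since $A\migi R$ is a local homomorphism, so $\mfm_{A}$ is the contraction of $\mfm_{R}$). Since $\spec R$ has exactly two points, the image is precisely $\{s_{1},s_{2}\}$, as required.

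The only substantive ingredient is the existence of such a DVR $R_{0}$ dominating $A$; this is classical and the main (though mild) technical point, but no geometric subtlety is involved beyond it, so I anticipate no serious obstacle.
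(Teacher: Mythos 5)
Your argument is correct. The paper in fact gives no proof of this lemma at all (its "proof" is the single sentence that the statement follows from elementary algebraic geometry), so there is no argument to compare yours against; the route you take — equip $Z\defeq\overline{\{s_{1}\}}$ with its reduced structure, localize at $s_{2}$ to get a Noetherian local domain $A=\mcO_{Z,s_{2}}$ of positive dimension with fraction field $k(s_{1})$, dominate $A$ by a discrete valuation ring $R_{0}$ with the same fraction field (Krull/Chevalley; e.g.\ EGA II 7.1.7 or the blow-up-and-normalize argument you sketch), and pass to the completion $R=\widehat{R_{0}}$ — is precisely the standard way to fill in this gap, and your bookkeeping of the two points (injectivity of $A\subseteq R_{0}\subseteq R$ sends the generic point of $\spec R$ to $s_{1}$; locality of $A\migi R_{0}\migi R$ sends the closed point to $s_{2}$; $\spec R$ has only these two points) is exactly what is needed to get the image equal to $\{s_{1},s_{2}\}$. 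One remark, not affecting the correctness of your proof of the stated lemma: when the paper later invokes this lemma (in Proposition \ref{pro-7-7} and Lemma \ref{lem-7-12}) it asks in addition that $R$ have algebraically closed residue field, which your $R$ need not have; this is obtained by a further routine step (dominate $R$ by the completion of an unramified extension with residue field $\overline{k(s_{2})}$, which changes neither the two-point source nor its image in $S$), but it is worth being aware that the statement as written does not literally deliver what those later proofs quote.
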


\begin{proof}
The lemma follows immediately from elementary algebraic geometry.
\end{proof}




\begin{lemma}\label{lem-7-6}
Let $R$ be a complete discrete valuation ring, $K_{R}$ the quotient field of $R$, and $k_{R}$ the residue field of $R$ such that $k_{R}$ is an algebraically closed field containing $k$. Let $$f^{\bp}_{K_R}: Y^{\bp}_{K_R} \migi X^{\bp}_{K_R}$$ be a Galois admissible covering over $K_R$ with Galois group $G$. Write $\Gamma_{X_{K_{R}}^{\bp}}$ for the dual semi-graph of $X^{\bp}_{K_R}$ and $\Gamma_{Y^{\bp}_{K_{R}}}$ for the dual semi-graph of $Y^{\bp}_{K_{R}}$. Suppose that $\widetilde X^{\bp}_{K_{R}, v_{X}}$, $v_{X} \in v(\Gamma_{X_{K_{R}}^{\bp}})$, and $\widetilde Y^{\bp}_{K_{R}, v_{Y}}$, $v_{Y} \in v(\Gamma_{Y_{K_{R}}^{\bp}})$, have good reduction over $R$, where $\widetilde X^{\bp}_{K_{R}, v_{X}}$ and $\widetilde Y^{\bp}_{K_{R}, v_{Y}}$ denote the smooth pointed stable curves of types $(g_{v_{X}}, n_{v_{X}})$ and $(g_{v_{Y}}, n_{v_{Y}})$ associated to $v_{X}$ and $v_{Y}$, respectively. Then there exists a Galois admissible covering $$f^{\bp}_{R}: Y^{\bp}_{R} \migi X^{\bp}_{R}$$ over $R$ with Galois group $G$ such that $f_{K_{R}}^{\bp}= f_{R}^{\bp}\times_{R} K_{R}$.
\end{lemma}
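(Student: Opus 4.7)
The plan is to construct $X^{\bp}_R$ and $Y^{\bp}_R$ as stable models of their respective generic fibers, extend the $G$-action via uniqueness of the stable model, obtain $f^{\bp}_R$ as the resulting quotient morphism, and verify admissibility by log-geometric methods together with the explicit good-reduction description at each component.

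First, I would construct the stable model $X^{\bp}_R$ over $R$. Writing $\Gamma_{X^{\bp}_{K_R}}$ for the dual semi-graph, the good reduction hypothesis supplies, for each $v_X \in v(\Gamma_{X^{\bp}_{K_R}})$, a smooth pointed stable curve $\widetilde{\mcX}_{v_X}^{\bp}$ over $R$ with generic fibre $\widetilde{X}^{\bp}_{K_R, v_X}$. Each closed edge $e \in e^{\rm cl}(\Gamma_{X^{\bp}_{K_R}})$ meeting vertices $v_1, v_2$ singles out $R$-rational sections (which are among the marked points of the $\widetilde{\mcX}_{v_i}^{\bp}$) that can be glued transversely; the outcome is a pointed stable curve $X^{\bp}_R$ over $R$ whose generic fibre is $X^{\bp}_{K_R}$. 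The same construction applied to the $\widetilde{Y}^{\bp}_{K_R, v_Y}$ yields $Y^{\bp}_R$, and by uniqueness of the stable model the $G$-action on $Y^{\bp}_{K_R}$ extends canonically to $Y^{\bp}_R$ over $R$.

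Next, to obtain $f^{\bp}_R$, I would form the quotient $Y^{\bp}_R / G$. This is a pointed stable curve over $R$ whose generic fibre is canonically $Y^{\bp}_{K_R}/G = X^{\bp}_{K_R}$, so by uniqueness of the stable model it is identified with $X^{\bp}_R$, giving a morphism $f^{\bp}_R: Y^{\bp}_R \migi X^{\bp}_R$ extending $f^{\bp}_{K_R}$. Finally, I would verify the conditions of Definition \ref{def-2}. \'Etaleness away from nodes and marked points, and tameness at marked points, propagate from $K_R$ to $R$ by flatness and the smoothness/tameness of the integral models just built. For the delicate local description at nodes, I plan to equip $X^{\bp}_R$ and $Y^{\bp}_R$ with their natural log structures (determined by marked points together with the special fibre), producing log smooth log curves $X^{\log}_R, Y^{\log}_R$ over a suitable log base; the admissibility of $f^{\bp}_{K_R}$ becomes a Kummer log \'etale covering over $K_R$, and log purity on the log regular scheme $X^{\log}_R$ extends it uniquely to a Kummer log \'etale covering over $R$, which then yields the prescribed local form at every node with ramification index equal to the generic value (hence prime to $p$).

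The main obstacle will be reconciling the two independent constructions of $Y^{\bp}_R$ — the component-wise glueing obtained from the good reduction of each $\widetilde{Y}^{\bp}_{K_R, v_Y}$, and the one produced by Kummer log \'etale extension from $X^{\log}_R$ — and showing they agree as $R$-schemes with $G$-action. Concretely, I must verify that the integral glueing parameter at a node of $Y^{\bp}_R$ over $R$ is compatible with the generic ramification index $\#D_y$: the good reduction of both source and target components should force the completed local ring at such a node into the form $R[[s,t]]/(st - \pi^{\#D_y})$ for a uniformizer $\pi$ of $R$, mapping to $R[[u,v]]/(uv - \pi^{\#D_y})$ at the corresponding node of $X^{\bp}_R$ via $u \mapsto s^{\#D_y}$, $v \mapsto t^{\#D_y}$. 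Once this local coordinate computation is in place, the remaining verification that $D_y \subseteq G$ acts via $s \mapsto \zeta_{\#D_y} s$, $t \mapsto \zeta^{-1}_{\#D_y} t$ reduces to the corresponding statement on the generic fibre, which holds by hypothesis.
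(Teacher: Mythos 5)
Your construction of $X^{\bp}_{R}$ and $Y^{\bp}_{R}$ by gluing the good-reduction models of the components along $R$-sections, and the extension of the $G$-action via uniqueness of the stable model, agree with the paper's proof. The first genuine gap comes immediately after: you assert that $Y^{\bp}_{R}/G$ is a pointed stable curve over $R$ and then invoke uniqueness of the stable model to identify it with $X^{\bp}_{R}$. Neither stability nor even semi-stability of the quotient is automatic, and when $p$ divides $\#G$ the formation of the quotient need not commute with reduction, so the special fibre of $Y^{\bp}_{R}/G$ is not a priori $Y^{\bp}_{k_{R}}/G$, let alone the curve obtained by gluing the special fibres of the smooth models on the $X$-side. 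The paper fills exactly this hole in two steps: it cites \cite[Appendice, Corollaire]{R0} to see that $Z^{\bp}_{R}\defeq Y^{\bp}_{R}/G$ is a pointed \emph{semi-stable} curve over $R$, and then --- this is precisely where the good-reduction hypothesis on the $\widetilde X^{\bp}_{K_{R}, v_{X}}$ is used --- it extends each component covering to a Galois admissible covering $Y^{\bp}_{R, v_{Y}} \migi X^{\bp}_{R, v_{X}}$ of the smooth models over $R$, so that $Y^{\bp}_{R, v_{Y}}$ modulo its decomposition group is $X^{\bp}_{R, v_{X}}$; this identifies the special fibre of $Z^{\bp}_{R}$ with the pointed stable curve glued from the $X^{\bp}_{k_{R}, v_{X}}$, whence $Z^{\bp}_{R}\cong X^{\bp}_{R}$. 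In your proposal the good reduction of the $X$-components is used only to build $X^{\bp}_{R}$, and the actual content of the identification is assumed.

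The second problem is that your admissibility check rests on a wrong local picture. Because both models are obtained by gluing smooth models along sections over $R$, the dual graphs of the generic and special fibres coincide, and the completed local ring at a node of $Y^{\bp}_{R}$ (resp.\ $X^{\bp}_{R}$) is $R[[s,t]]/(st)$ (resp.\ $R[[u,v]]/(uv)$), not $R[[s,t]]/(st-\pi^{\#D_{y}})$; the latter describes a node that is smoothed in the generic fibre, which is not the situation of this lemma. (Even in that situation your formulas are inconsistent: $u\mapsto s^{\#D_{y}}$, $v\mapsto t^{\#D_{y}}$ forces the thickness of the target node to be $\#D_{y}$ times that of the source, so both cannot equal $\pi^{\#D_{y}}$.) For the same reason $X_{R}$ is not normal along such nodes, hence not log regular, so the log purity statement you invoke does not apply to $X^{\log}_{R}$ in the form stated. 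In fact, in this constant-graph situation no such argument is needed: a node of the special fibre is the reduction of a gluing section, its decomposition group is that of the corresponding node of the generic fibre, and its action on the two branches is through mutually inverse characters because it is so generically; combined with the component-wise admissible coverings over $R$ mentioned above, the conditions of Definition \ref{def-2} on the special fibre follow directly. So the ``main obstacle'' you single out is not where the difficulty lies; the difficulty is the semi-stability and identification of the quotient, which your proposal skips.
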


\begin{proof}
The smooth pointed stable curve $\widetilde Y^{\bp}_{K_{R}, v_{Y}}$ over $K_{R}$ determines a morphism $$c_{v_{Y}}: \spec K_{R} \migi \mcM_{g_{v_{Y}}, n_{v_{Y}}}.$$ Write $c_{Y_{K_{R}}}: \spec K_{R} \migi \overline \mcM_{g_{Y}, n_{Y}}$ for the morphism determined by $Y^{\bp}_{K_{R}}$ over $K_{R}$, where $(g_{Y}, n_{Y})$ denotes the type of $Y^{\bp}_{K_{R}}$. Then the pointed stable curve $Y^{\bp}_{K_{R}}$ determines a clutching morphism $$\kappa_{Y_{K_{R}}}: \bigtimes_{v_{Y} \in v(\Gamma_{Y^{\bp}_{K_{R}}})} \mcM_{g_{v_{Y}}, n_{v_{Y}}} \migi \overline \mcM_{g_{Y}, n_{Y}}$$ such that  $$\kappa_{Y_{K_{R}}}\circ  (\bigtimes_{v_{Y} \in v(\Gamma_{Y^{\bp}_{K_{R}}})}c_{v_{Y}})=c_{Y_{K_{R}}}.$$ We denote by $Y_{R, v_{Y}}^{\bp}$ the pointed stable model of $Y_{K_{R}, v_{Y}}^{\bp}$ over $R$ which is a smooth pointed stable curve of type $(g_{v_{Y}}, n_{v_{Y}})$ over $R$. By applying the clutching morphism $\kappa_{Y_{K_{R}}}$, we may glue the pointed stable curves $\{Y_{R, v_{Y}}^{\bp}\}_{v_{Y} \in v(\Gamma_{Y^{\bp}_{K_{R}}})}$ in a way that is compatible with the
gluing of $\{Y_{K_{R}, v_{Y}}^{\bp}\}_{v_{Y} \in v(\Gamma_{Y^{\bp}_{K_{R}}})}$ that gives rise to $Y^{\bp}_{K_{R}}$. Then we obtain a pointed stable curve $Y^{\bp}_{R}$ over $R$.

Since $Y_{K_{R}}^{\bp}$ admits an action of $G$, we obtain an action of $G$ on the pointed stable model $Y^{\bp}_{R}$. Let $Z^{\bp}_{R}\defeq Y^{\bp}_{R}/G$, $f^{\bp}_{R}: Y^{\bp}_{R} \migi Z^{\bp}_{R}$ the quotient morphism, $Z^{\bp}_{K_{R}}$ the generic fiber over $K_{R}$, and $Z^{\bp}_{k_{R}}$ the special fiber over $k_{R}$. \cite[Appendice, Corollaire]{R0} (or \cite[Proposition 10.3.48]{L}) implies that $Z^{\bp}_{R}$ is a pointed semi-stable curve over $R$. Moreover, since $f_{K_{R}}^{\bp}$ is a Galois admissible covering over $K_{R}$ with Galois group $G$, $Z^{\bp}_{K_{R}}$ is isomorphic to $X^{\bp}_{K_{R}}$ over $K_{R}$. 

On the other hand, write $f^{\rm sg}_{K_{R}}: \Gamma_{Y^{\bp}_{K_{R}}} \migi \Gamma_{X^{\bp}_{K_{R}}}$ for the map of dual semi-graphs induced by $f^{\bp}_{K_{R}}$. Note that, for every $v_{X} \in v(\Gamma_{X^{\bp}_{K_{R}}})$ and every $v_{Y} \in (f^{\rm sg}_{K_{R}})^{-1}(v_{X})$, $f_{K_{R}}^{\bp}$ can be extended to a Galois admissible covering $f^{\bp}_{R, v_{Y}, v_{X}}: Y_{R,v_{Y}}^{\bp} \migi X^{\bp}_{R, v_{X}}$ of smooth pointed stable curves over $R$ with Galois group $G$. Then we obtain $$Y^{\bp}_{R, v_{Y}}/G \cong X^{\bp}_{R, v_{X}}$$ over $R$. This implies that $Z^{\bp}_{k_{R}}$ is a pointed stable curve over $k_{R}$. Then we have $X^{\bp}_{R} \cong Z^{\bp}_{R}$ over $R$. We complete the proof of the lemma.
\end{proof}



\begin{proposition}\label{pro-7-7}
Let $q \in \overline M_{g, n}$ be an arbitrary point, $V_{q}$ the topological closure of $q$ in $\overline M_{g, n}$, and $G \in \pi_{A}^{\rm adm}(q)$ a finite group. Then there exists a closed point $c \in V_{q}^{\rm cl}$ such that $\Gamma_{q}$ is isomorphic to $\Gamma_{c}$ as dual semi-graphs, and that $G \in \pi_{A}^{\rm adm}(c)$.
\end{proposition}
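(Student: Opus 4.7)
The plan is to decompose $X_q^\bp$ along its dual semi-graph $\Gamma_q$, deform each smooth component to a closed point individually using Lemma \ref{lem-7-3}, and then reassemble a closed point $c$ together with a Galois admissible covering via a specialization argument over a discrete valuation ring. Fix a Galois admissible covering $f^\bp: Y^\bp \to X_q^\bp$ with Galois group $G$ realizing the quotient $G \in \pi_A^{\rm adm}(q)$. For each $v \in v(\Gamma_q)$, the associated smooth pointed stable curve $\widetilde X^\bp_{q,v}$ of type $(g_v, n_v)$ determines a point $q_v \in M_{g_v, n_v}$; choosing a vertex $w$ of $\Gamma_{Y^\bp}$ over $v$ with decomposition group $D_w \subseteq G$, the restriction of $f^\bp$ yields a Galois admissible covering $\widetilde Y^\bp_{q,w} \to \widetilde X^\bp_{q,v}$ with Galois group $D_w$; in particular $D_w \in \pi_A^{\rm adm}(q_v)$.

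Corollary \ref{coro-7-4} applied to each $q_v$ produces an open subset of $V_{q_v}^{\rm sm}$ over which the corresponding Galois admissible covering with Galois group $D_w$ extends to a smooth family of Galois admissible coverings. The stratum $\mcM(\Gamma_q)^\circ \subseteq \overline M_{g,n}$ of pointed stable curves whose dual semi-graph is exactly $\Gamma_q$ is locally closed and, via the clutching morphism, is identified (up to the finite automorphism group of $\Gamma_q$) with the image of $\prod_v M_{g_v, n_v}$, so $V_q \cap \mcM(\Gamma_q)^\circ$ is open and dense in $V_q$. Selecting a closed point $(c_v)_v$ in the closure of $(q_v)_v$ inside $\prod_v V_{q_v}^{\rm sm}$ that additionally lies in the product of the open subsets supplied by Corollary \ref{coro-7-4} then produces, via clutching, a closed point $c \in V_q^{\rm cl}$ with $\Gamma_c$ canonically isomorphic to $\Gamma_q$.

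It remains to exhibit $G \in \pi_A^{\rm adm}(c)$. By Lemma \ref{lem-7-5} there exist a complete discrete valuation ring $R$ with algebraically closed residue field $k_R \supseteq k$ together with a morphism $\mathrm{Spec}\,R \to \overline M_{g,n}$ whose generic point maps to $q$ and whose closed point maps to $c$. After enlarging $R$ to a suitable finite extension, a standard argument using the étale-local structure of $\overline{\mcM}_{g,n}$ as a Deligne--Mumford stack lifts this morphism to an actual pointed stable curve $X^\bp_R$ over $R$ with generic fiber $X^\bp_{K_R} \cong X_q^\bp$ (over a finite extension of $k(q)$) and special fiber $X^\bp_{k_R} \cong X_c^\bp$. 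Lemma \ref{lem-7-6} then extends the generic-fiber Galois admissible covering $f^\bp_{K_R}: Y^\bp_{K_R} \to X^\bp_{K_R}$ to a Galois admissible covering $Y^\bp_R \to X^\bp_R$ over $R$ with Galois group $G$, whose special fiber displays $G \in \pi_A^{\rm adm}(c)$.

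The main obstacle is verifying the good-reduction hypothesis of Lemma \ref{lem-7-6}: each component $\widetilde X^\bp_{K_R, v_X}$ of $X^\bp_{K_R}$ and each component $\widetilde Y^\bp_{K_R, v_Y}$ of $Y^\bp_{K_R}$ must extend to a smooth family over $R$. Good reduction on the base side follows from the fact that $c$ was chosen in $\mcM(\Gamma_q)^\circ$, so that the dual semi-graphs of the generic and special fibers of $X^\bp_R$ coincide and no additional nodes appear along the deformation; good reduction on the cover side is precisely the content of our careful selection of each $c_v$ through Corollary \ref{coro-7-4}, which supplies a smooth family of admissible coverings on every vertex component across the degeneration and hence a smooth model over $R$ for each $\widetilde Y^\bp_{K_R, v_Y}$.
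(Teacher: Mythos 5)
Your overall skeleton is the same as the paper's (vertex-wise application of Corollary \ref{coro-7-4}, clutching to manufacture a closed point $c$ with $\Gamma_{c}\cong \Gamma_{q}$, Lemma \ref{lem-7-5} to produce the discrete valuation ring $R$, and Lemma \ref{lem-7-6} to descend the covering to the special fibre), but the last step — the verification of the good-reduction hypothesis of Lemma \ref{lem-7-6} on the cover side — is where your argument has a genuine gap. The fact that each $c_{v}$ lies in the image of $U_{q_{v}}$ only tells you that the special component $\widetilde X^{\bp}_{c,v}$ admits \emph{some} Galois admissible covering with group $G_{v_{Y}}$; it does not give a smooth model over $R$ of the \emph{particular} cover $\widetilde Y^{\bp}_{K_{R}, v_{Y}}$ occurring in $f^{\bp}_{K_{R}}$. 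Since $\#G$ may be divisible by $p$, a Galois admissible covering of the generic fibre of a smooth family can have bad reduction even though the special fibre admits a covering with the same group, so "$c_{v}$ lies in the open set supplied by Corollary \ref{coro-7-4}" does not by itself "supply a smooth family across the degeneration". What is needed, and what your write-up never constructs, is a morphism from $\spec R$ (after a finite extension, or an auxiliary discrete valuation ring finite over it) to $U_{q_{v}}$ that is compatible with the classifying morphism of the smooth model $X^{\bp}_{R, v_{X}}$ and whose generic point lands on the point of $U_{q_{v}}$ over $q_{v}$ at which Corollary \ref{coro-7-4} (iii) identifies the family with the original covering; only after pulling the family back along such an arc does $\widetilde Y^{\bp}_{K_{R}, v_{Y}}$ become the generic fibre of a smooth family over $R$. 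This is exactly the technical heart of the paper's proof: one must choose the level-structure lift $\spec \overline K_{R} \migi H_{g_{v_{X}}, n_{v_{X}}}$ \emph{inside} the image of $U_{q_{v_{X}}}$, descend it to $K_{R}$, take the (smooth) stable model, and then argue that the resulting $\spec R \migi H_{g_{v_{X}}, n_{v_{X}}}$ factors through the merely quasi-finite morphism $U_{q_{v_{X}}} \migi H_{g_{v_{X}}, n_{v_{X}}}$ after a finite extension — a factorization which is not automatic (compare the normalization argument producing the auxiliary ring $A_{2, v_{X}}$ in the proof of Lemma \ref{lem-7-12}).

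Two further remarks. First, the paper in fact avoids proving good reduction of the \emph{original} cover components: it pulls back the families $f^{\bp}_{U_{q_{v_{X}}}, v_{Y}, v_{X}}$ along $\spec R \migi U_{q_{v_{X}}}$, glues the resulting smooth models $Y^{\bp}_{R, v_{Y}}$ via the clutching morphism $\kappa_{Y_{k_{q}}}$ into a pointed stable curve $Y^{\bp}_{R}$, and applies Lemma \ref{lem-7-6} to the covering of $X^{\bp}_{K_{R}}$ so obtained, which agrees with $f^{\bp}_{k_{q}}$ only after base change to $\overline K_{R}$; here the good-reduction hypotheses hold by construction. Your route (extend the original $f^{\bp}_{K_{R}}$ directly) can in principle be completed, but only after supplying the missing arc in $U_{q_{v}}$ described above. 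Second, your base-side claim that equality of the dual semi-graphs of the generic and special fibres forces good reduction of each $\widetilde X^{\bp}_{K_{R}, v_{X}}$ is correct but deserves a line of justification (equal numbers of closed edges force the absence of new nodes, so the closure in $X_{R}$ of each generic component is smooth over $R$); as stated it is an assertion, not an argument.
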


\begin{proof}
If $q$ is a closed point, then the proposition is trivial. To verify the proposition, we may assume that $q$ is not a closed point. If $q \in M_{g, n}$, then the proposition follows from Lemma \ref{lem-7-3}. Then we may assume that $q \in \overline M_{g, n} \setminus M_{g, n}$.

Let $k_{q}$ be an algebraic closure of the residue field $k(q)$ of $q$. The natural morphism $\spec k_q \migi \spec k(q) \migi \overline M_{g, n}$ determines a pointed stable curve $X^{\bp}_{k_q}$ over $k_q$. Let $\widetilde X^{\bp}_{k_{q}, v_{X}}$, $v_{X} \in v(\Gamma_{q})$, be the smooth pointed stable curve of type $(g_{v_{X}}, n_{v_{X}})$ associated to $v_{X}$.

Let $Y^{\bp}_{k_{q}}$ be a pointed stable curve of type $(g_{Y}, n_{Y})$ over $k_q$, $f_{k_{q}}^{\bp}: Y^{\bp}_{k_{q}} \migi X^{\bp}_{k_{q}}$ a Galois admissible covering over $k_{q}$ with Galois group $G$, $\Gamma_{Y^{\bp}_{k_q}}$ the dual semi-graph of $Y^{\bp}_{k_{q}}$, and $f^{\rm sg}_{k_{q}}: \Gamma_{Y^{\bp}_{k_q}} \migi \Gamma_{q}$ the map of dual semi-graphs induced by $f_{k_q}^{\bp}$. For each $v_{X} \in v(\Gamma_{q})$, write $\widetilde Y^{\bp}_{k_q, v_{Y}}$, $v_{Y} \in (f^{\rm sg}_{k_{q}})^{-1}(v_{X})$, for the smooth pointed stable curve of type $(g_{v_{Y}}, n_{v_{Y}})$ associated to $v_{Y}$. Then $f_{k_{q}}^{\bp}$ induces a Galois multi-admissible covering $$f^{\bp}_{k_{q}, v_{X}}: \bigsqcup_{v_{Y} \in (f^{\rm sg}_{k_{q}})^{-1}(v_{X})} \widetilde Y_{k_{q}, v_{Y}}^{\bp} \migi \widetilde X^{\bp}_{k_{q}, v_{X}}$$ over $k_{q}$ with Galois group $G$. Note that $\bigsqcup_{v_{Y} \in (f^{\rm sg}_{k_{q}})^{-1}(v_{X})}\widetilde Y_{k_{q}, v_{Y}}^{\bp}$ admits an action of $G$ induced by the action of $G$ on $Y^{\bp}_{k_{q}}$. This action induces an action of $G$ on the set $(f^{\rm sg}_{k_{q}})^{-1}(v_{X})$. For each $v_{Y} \in (f^{\rm sg}_{k_{q}})^{-1}(v_{X})$, write $G_{v_{Y}}$ for the inertia subgroup of $v_{Y}$. Then we obtain a Galois admissible covering $$f_{k_{q}, v_{Y}, v_{X}}^{\bp}: \widetilde Y^{\bp}_{k_{q}, v_{Y}} \migi \widetilde X^{\bp}_{k_{q}, v_{X}}, \ v_{Y} \in (f^{\rm sg}_{k_{q}})^{-1}(v_{X}),$$ over $k_{q}$ with Galois group $G_{v_{Y}}$. 

The pointed stable curves $X^{\bp}_{k_q}$, $\{\widetilde X^{\bp}_{k_{q}, v_{X}}\}_{v_{X} \in v(\Gamma_{q})}$, $Y^{\bp}_{k_{q}}$, and $\{\widetilde Y^{\bp}_{k_{q}, v_{Y}}\}_{v_{Y} \in v(\Gamma_{Y^{\bp}_{k_q}})}$ over $k_q$ determine morphisms $c_{X_{k_q}}: \spec k_q \migi \overline \mcM_{g, n}$, $\{c_{v_{X}}: \spec k_{q} \migi \mcM_{g_{v_{X}}, n_{v_{X}}}\}_{v_{X}\in v(\Gamma_{q})}$, $c_{Y_{k_q}}: \spec k_q \migi \overline \mcM_{g_{Y}, n_{Y}}$, and $\{c_{v_{Y}}: \spec k_q \migi \mcM_{g_{v_{Y}}, n_{v_{Y}}}\}_{v_{Y} \in v(\Gamma_{Y^{\bp}_{k_q}})}$, respectively. Then the pointed stable curves $X^{\bp}_{k_{q}}$ and $Y^{\bp}_{k_{q}}$ over $k_{q}$ induce clutching morphisms
$$\kappa_{X_{k_q}}: \bigtimes_{v_{X}\in v(\Gamma_{q})}\mcM_{g_{v_{X}}, n_{v_{X}}} \migi \overline \mcM_{g, n},$$
$$\kappa_{Y_{k_q}}: \bigtimes_{v_{Y}\in v(\Gamma_{Y^{\bp}_{k_q}})}\mcM_{g_{v_{Y}}, n_{v_{Y}}} \migi \overline \mcM_{g_{Y}, n_{Y}},$$ respectively, such that $$\kappa_{X_{k_q}}\circ (\bigtimes_{v_{X} \in v(\Gamma_{q})} c_{v_{X}})=c_{X_{k_q}},$$ $$\kappa_{Y_{k_q}}\circ (\bigtimes_{v_{Y} \in v(\Gamma_{Y^{\bp}_{k_q}})} c_{v_{Y}})=c_{Y_{k_q}}.$$

On the other hand, the smooth pointed stable curve $\widetilde X_{k_{q}, v_{X}}^{\bp}$, $v_{X} \in v(\Gamma_{q})$, over $k_{q}$ determines a morphism $\spec k_{q} \migi M_{g_{v_{X}}, n_{v_{X}}}$, and we denote by $q_{v_{X}} \in M_{g_{v_{X}}, n_{v_{X}}}$ the image of the morphism. Write $V_{q_{v_{X}}}^{\rm sm}$ for the topological closure of $q_{v_{X}}$ in $M_{g_{v_{X}}, n_{v_{X}}}$. Let $k_{q_{v_{X}}}$ be an algebraic closure of the residue field $k(q_{v_{X}})$ of $q_{v_{X}}$. Since the admissible coverings over algebraically closed fields do not depend on the choices of base fields, for each $v_{Y} \in (f^{\rm sg}_{k_{q}})^{-1}(v_{X})$, $f^{\bp}_{k_{q}, v_{Y}, v_{X}}$ induces a $G_{v_{Y}}$-Galois admissible covering $$f^{\bp}_{k_{q_{v_{X}}}, v_{Y}, v_{X}}: \widetilde Y^{\bp}_{k_{q_{v_{X}}}, v_{Y}} \migi \widetilde X^{\bp}_{k_{q_{v_{X}}}, v_{X}}$$ over $k_{q_{v_{X}}}$. Then Corollary \ref{coro-7-4} implies that there exist a smooth $k$-variety $U_{q_{v_{X}}}$ and a quasi-finite morphism $U_{q_{v_{X}}} \migi H_{g_{v_{X}}, n_{v_{X}}}$ (not necessary a surjection) such that the following conditions are satisfied:

(i) the image of $U_{q_{v_{X}}}$ of the composition of the morphisms $U_{q_{v_{X}}} \migi H_{g_{v_{X}}, n_{v_{X}}}\overset{\omega_{g_{v_{X}}, n_{v_{X}}}^{(m)}}{\migi} M_{g_{v_{X}}, n_{v_{X}}}$ is an open subset of $V_{q_{v_{X}}}^{\rm sm}$;

(ii) the morphism $U_{q_{v_{X}}} \migi H_{g_{v_{X}}, n_{v_{X}}}$ induces a smooth pointed stable curve $$X^{\bp}_{U_{q_{v_{X}}}, v_{X}}\defeq X^{\bp}_{H_{g_{v_{X}}, n_{v_{X}}}} \times_{H_{g_{v_{X}}, n_{v_{X}}}} U_{q_{v_{X}}}$$ over $U_{q_{v_{X}}}$ with a level $m$-structure $\tau_{U_{q_{v_{X}}}}\defeq\tau_{H_{g_{v_{X}}, n_{v_{X}}}} \times_{H_{g_{v_{X}}, n_{v_{X}}}} U_{q_{v_{X}}}$;

(iii) for each $v_{Y} \in (f^{\rm sg}_{k_{q}})^{-1}(v_{X})$, there exists a Galois admissible covering $$f^{\bp}_{U_{q_{v_{X}}}, v_{Y}, v_{X}}: Y^{\bp}_{U_{q_{v_{X}}}, v_{Y}} \migi X^{\bp}_{U_{q_{v_{X}}}, v_{X}}$$ of smooth pointed stable curves over $U_{q_{v_{X}}}$ with Galois group $G_{v_{Y}}$ such that $f_{U_{q_{v_{X}}}, v_{Y}, v_{X}}^{\bp}\times_{U_{q_{v_{X}}}}\spec k_{q_{v_{X}}}$ is equal to $f^{\bp}_{k_{q_{v_{X}}}, v_{X}, v_{Y}}$.

Then the clutching morphism induces a morphism $$\kappa: \bigtimes_{v_{X} \in v(\Gamma_{q})}U_{q_{v_{X}}} \migi \bigtimes_{v_{X} \in v(\Gamma_{q})} H_{g_{v_{X}}, n_{v_{X}}} \migi \bigtimes_{v_{X}\in v(\Gamma_{q})}\mcM_{g_{v_{X}}, n_{v_{X}}} \overset{\kappa_{X_{k_q}}}{\migi} \overline \mcM_{g, n} \overset{\overline \omega_{g, n}}{\migi} \overline M_{g, n}$$ over $k$. Since the image of $\kappa$ is a dense constructible subset of $V_{q}$, the image of $\kappa$ contains an open subset $$W \subseteq V_{q}.$$

Let $c$ be a closed point of $W$. Note that $\Gamma_{c}$ is isomorphic to $\Gamma_{q}$ as dual semi-graphs. Then Lemma \ref{lem-7-5} implies that there exist a complete discrete valuation ring $R$ with algebraically closed residue field and a morphism $\spec R \migi W$ such that the image of the morphism is $\{q, c\}$. By replacing $R$ by a finite extension of $R$, there is a pointed stable curve $X^{\bp}_{R}$ over $R$. Write $K_{R}$ for the quotient field of $R$, $\overline K_{R}$ for an algebraic closure of $K_{R}$, and $k_{R}$ for the residue field of $R$. Moreover, we may assume that $\overline K_{R}$ contains $k_{q}$. For each $v_{X} \in v(\Gamma_{q})$, the smooth pointed stable curve $$\widetilde X^{\bp}_{\overline K_{R}, v_{X}}\defeq \widetilde X^{\bp}_{k_{q}, v_{X}} \times_{k_{q}} \overline K_{R}$$ of type $(g_{v_{X}}, n_{v_{X}})$ over $\overline K_{R}$ determines a morphism $$\spec \overline K_{R} \migi \mcM_{g_{v_{X}}, n_{v_{X}}} \migi M_{g_{v_{X}}, n_{v_{X}}}.$$

 
Let $\spec \overline K_{R} \migi H_{g_{v_{X}}, n_{v_{X}}}$ be a morphism obtained by restricting the natural morphism $$\bigsqcup \spec \overline K_{R}=\spec \overline K_{R} \times_{M_{g_{v_{X}}, n_{v_{X}}}} H_{g_{v_{X}}, n_{v_{X}}} \migi H_{g_{v_{X}}, n_{v_{X}}}$$ on a connected component of $\spec \overline K_{R} \times_{M_{g_{v_{X}}, n_{v_{X}}}} H_{g_{v_{X}}, n_{v_{X}}}$ such that the image of $\spec \overline K_{R} \migi H_{g_{v_{X}}, n_{v_{X}}}$ is contained in the image of $U_{q_{v_{X}}}  \migi H_{g_{v_{X}}, n_{v_{X}}}$. The morphism $\spec \overline K_{R} \migi H_{g_{v_{X}}, n_{v_{X}}}$ above induces a level $m$-structure $\tau_{\overline K_{R}}\defeq\tau_{H_{g_{v_{X}}, n_{v_{X}}}}\times_{H_{g_{v_{X}}, n_{v_{X}}}} \spec \overline K_{R}.$ 

By replacing $R$ by a finite extension of $R$, $\widetilde X^{\bp}_{\overline K_{R}, v_{X}}$ descends to a smooth pointed stable curve $$\widetilde X^{\bp}_{K_{R}, v_{X}}$$ over $K_{R}$, and the level $m$-structure $\tau_{\overline K_{R}}$ descends to a level $m$-structure $\tau_{K_{R}}$ on the smooth pointed stable curve $\widetilde X_{K_{R}, v_{X}}^{\bp}$ over $K_{R}$. Let $$X^{\bp}_{R, v_{X}}$$ be the pointed stable model of $\widetilde X^{\bp}_{K_{R}, v_{X}}$ over $R$. Note that, by the construction, $X^{\bp}_{R, v_{X}}$ is smooth over $R$. Then the level $m$-structure $\tau_{K_{R}}$ extends to a level $m$-structure $\tau_{R}$. Thus, for each $v_{X} \in v(\Gamma_{q})$, the smooth pointed stable curve $X^{\bp}_{R, v_{X}}$ over $R$ with the level $m$-structure $\tau_{R}$ determines a morphism $\spec R \migi H_{g_{v_{X}}, n_{v_{X}}}$ such that the image of the composition of the morphisms $$\spec R \migi \bigtimes_{v_{X} \in v(\Gamma_{q})} H_{g_{v_{X}}, n_{v_{X}}} \migi \bigtimes_{v_{X} \in v(\Gamma_{q})} \mcM_{g_{v_{X}}, n_{v_{X}}} \overset{\kappa_{X_{k_q}}}{\migi} \overline \mcM_{g, n} \overset{\overline \omega_{g, n}}\migi \overline M_{g, n}$$ is $\{q, c\}$. Moreover, by replacing $R$ by a finite extension of $R$, we may assume that the morphism $\spec R \migi H_{g_{v_{X}}, n_{v_{X}}}$ obtained above factors through the morphism $U_{q_{v_{X}}} \migi H_{g_{v_{X}}, n_{v_{X}}}.$ Thus, for each $v_{X} \in v(\Gamma_{q})$ and each $v_{Y} \in (f^{\rm sg}_{k_{q}})^{-1}(v_{X})$, we obtain a Galois covering $$f^{\bp}_{R, v_{Y}, v_{X}}\defeq f^{\bp}_{U_{q_{v_{X}}}, v_{Y}, v_{X}} \times_{U_{q_{v_{X}}}} \spec R: Y^{\bp}_{R, v_{Y}}\defeq Y^{\bp}_{U_{q_{v_{X}}}, v_{Y}} \times_{U_{q_{v_{X}}}} \spec R$$ $$\migi X^{\bp}_{R, v_{X}}\defeq X^{\bp}_{U_{q_{v_{X}}}, v_{X}} \times_{U_{q_{v_{X}}}} \spec R$$ of smooth pointed stable curves over $R$ with Galois group $G_{v_{Y}}$. Moreover, the clutching morphism $\kappa_{Y_{k_q}}$ implies that we may glue $\{Y_{R, v_{Y}}^{\bp}\}_{v_{Y} \in v(\Gamma_{Y_{k_{q}}^{\bp}})}$ in a way that is compatible with the
gluing of $\{Y_{k_{q}, v_{Y}}^{\bp}\}_{v_{Y} \in v(\Gamma_{Y^{\bp}_{k_{q}}})}$ that gives rise to $Y^{\bp}_{k_{q}}$. Then we obtain a pointed stable curve $Y^{\bp}_{R}$ over $R$ such that the following conditions are satisfied: (i) $Y^{\bp}_{R} \times_{K_{R}} \overline K_{R} \cong Y^{\bp}_{k_q}\times_{k_{q}} \overline K_{R}$ over $\overline K_{R}$; (ii) there exists a Galois admissible covering $f_{K_R}^{\bp}:Y^{\bp}_{K_{R}} \migi X^{\bp}_{K_{R}}$ over $K_{R}$ with Galois group $G$ such that $f_{K_{R}}^{\bp}\times_{K_{R}} \overline K_{R}=f_{k_{q}}^{\bp}\times_{k_{q}} \overline K_{R}$.

Then by applying Lemma \ref{lem-7-6},  there exists a Galois admissible covering $f^{\bp}_{R}: Y_{R}^{\bp} \migi X^{\bp}_{R}$ over $R$ with Galois group $G$ such that the restriction of $f^{\bp}_{R}$ on the special fibers is a connected Galois admissible covering over $k_{R}$ with Galois group $G$. This means that $G \in \pi_{A}^{\rm adm}(c)$. We  complete the proof of the proposition.
\end{proof}

\subsection{Continuity of $\pi_{g, n}^{\rm adm}$}\label{sec-7-2}

In this subsection, we prove the continuity of $\pi_{g, n}^{\rm adm}$.

\begin{lemma}\label{lem-7-8}
Let $v$ be a closed point of $H_{g, n}$, $\widehat \mcO_{H_{g, n},v}$ the completion of the local ring $\mcO_{H_{g, n},v}$, $\widehat V\defeq \spec \widehat \mcO_{H_{g, n},v}$ with the natural morphism $\widehat V \migi H_{g, n}$, and $X_{\widehat V}^{\bp} \defeq X^{\bp}_{H_{g, n}} \times_{H_{g, n}}\widehat V$ the smooth pointed stable curve over $\widehat V$ with a level $m$-structure $\tau_{\widehat V}\defeq\tau_{H_{g, n}}\times_{H_{g, n}} \widehat V$. Let $Y^{\bp}_{\widehat V}$ be a smooth pointed stable curve over $\widehat V$ and $$f_{\widehat V}^{\bp}: Y^{\bp}_{\widehat V} \migi X_{\widehat V}^{\bp}$$ a Galois admissible covering over $\widehat V$ with Galois group $G$. Then there exist a subring $A \subseteq \widehat \mcO_{H_{g, n}, v}$, a morphism $\alpha_{E}: E\defeq \spec A \migi H_{g, n}$, and a morphism $f^{\bp}_{E}: Y^{\bp}_{E} \migi X^{\bp}_{E}\defeq X^{\bp}_{H_{g, n}}\times_{H_{g, n}} E$ such that the following conditions are satisfied:

(i) $X_{E}^{\bp} \times_{E} \widehat V$ is isomorphic to $X^{\bp}_{\widehat V}$ over $\widehat V$, and the pulling-back of $f_{E}^{\bp}\times_{E} \widehat V$ via the natural morphism $\widehat V \migi E$ is isomorphic to $f^{\bp}_{\widehat V}$ over $\widehat V$;

(ii) $f_{E}^{\bp}$ is a Galois admissible covering with Galois group $G$ over $E$.
\end{lemma}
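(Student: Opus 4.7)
The plan is to apply a standard spreading-out argument: write $\widehat \mcO_{H_{g,n},v}$ as a filtered colimit of its finitely generated $\mcO_{H_{g,n},v}$-subalgebras, and descend the finite morphism $f^\bp_{\widehat V}$ to one of them via the classical limit theorems for schemes of finite presentation.

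First, write $\widehat \mcO_{H_{g,n},v} = \varinjlim_i A_i$, where $\{A_i\}$ runs over the finitely generated $\mcO_{H_{g,n},v}$-subalgebras of $\widehat \mcO_{H_{g,n},v}$, and set $E_i \defeq \spec A_i$. Then $\widehat V = \varprojlim_i E_i$, and each $E_i$ comes with a natural morphism to $H_{g,n}$ factoring $\widehat V \migi H_{g,n}$. Since $\widehat V$ is noetherian, the finite morphism $f^\bp_{\widehat V}$ is finitely presented over $X^\bp_{\widehat V}$. By the classical limit theorems (in the style of EGA IV, \S 8), for $i$ sufficiently large, $f^\bp_{\widehat V}$ descends to a finite morphism $f^\bp_i : Y^\bp_i \migi X^\bp_{E_i}$ with $f^\bp_i \times_{E_i} \widehat V \cong f^\bp_{\widehat V}$. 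Enlarging $i$ further if necessary, the $G$-action on $Y^\bp_{\widehat V}$ and the identification $X^\bp_{\widehat V} \cong Y^\bp_{\widehat V}/G$ likewise descend to $Y^\bp_i$ and $X^\bp_{E_i}$, respectively.

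Next, I would upgrade $f^\bp_i$ to a Galois admissible covering. Because $X^\bp_{E_i}$ is a smooth pointed stable curve over $E_i$ (with no nodes in any geometric fiber), admissibility of $f^\bp_i$ amounts to: (a) étaleness of $f_i$ over $X^\bp_{E_i} \setminus D_{X_{E_i}}$; (b) the standard tamely ramified local model $a \mapsto b^m$ with $(m, p) = 1$ at each marked point. Both conditions are open on $E_i$: the étale locus is open by the standard infinitesimal criterion, and the tame-ramification condition is open because ramification indices along components of $D_{X_{E_i}}$ are locally constant on the base. Let $U \subseteq E_i$ denote the corresponding open admissibility locus. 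Since $f^\bp_{\widehat V}$ is admissible, $U$ contains the image of $\widehat V \migi E_i$; in particular $U$ contains the prime $\mfq \defeq \widehat \mfm_v \cap A_i$, where $\widehat \mfm_v$ is the maximal ideal of $\widehat \mcO_{H_{g,n},v}$. Choose $h \in A_i$ with $\mfq \in D(h) \subseteq U$ (possible since $U$ is open and $\mfq \in U$); then $h \notin \widehat \mfm_v$, so $h$ is a unit in $\widehat \mcO_{H_{g,n},v}$, and hence $A \defeq A_i[1/h]$ remains a subring of $\widehat \mcO_{H_{g,n},v}$. Setting $E \defeq \spec A$ and $f^\bp_E \defeq f^\bp_i \times_{E_i} E$ yields the desired data.

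The main obstacle is verifying openness of the admissibility locus, particularly the tame-ramification condition along the marked-points divisor. The cleanest route is through the logarithmic formalism: admissible coverings of a smooth stable log curve are precisely the Kummer log étale covers of the associated log scheme (cf.\ \cite[\S 3]{M-1}), so openness reduces to the standard fact that the Kummer log étale locus of a finite log morphism is open. Granted this, the descent argument proceeds without further obstruction.
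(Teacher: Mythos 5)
Your overall strategy --- spread out by writing $\widehat \mcO_{H_{g,n},v}$ as a filtered colimit of small subalgebras and then cut down to the locus where the descended morphism is admissible --- is a legitimate alternative to the paper's route, which instead invokes \cite[Proposition 4.3 (1)]{V0} to descend the covering to a finite type $k$-algebra $A'$ with \emph{fiberwise multi-admissibility} built in, and then \cite[Proposition 5]{Har}. But there is a genuine gap in your version: you never address connectedness of the fibers. In this paper a ``Galois admissible covering over $E$'' means that the restriction over every point of $E$ is a Galois admissible covering, and by Definition \ref{def-2} this forces $Y^{\bp}_{e}$ to be a \emph{connected} pointed semi-stable curve; a disconnected fiber only yields a multi-admissible covering, for which $G$ need not be a quotient of the admissible fundamental group of that fiber --- and producing $G$ as such a quotient at the points of $E$ away from the image of $\widehat V$ is precisely what the lemma is for (cf.\ the proof of Proposition \ref{prop-7-10}). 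Your conditions (a) and (b) are fiber-local (\'etaleness off $D_{X_{E_i}}$, tameness along it) and say nothing about connectedness, which can genuinely degenerate at points of $E$ outside the image of $\widehat V$, notably when $p$ divides $\#G$. This is exactly what the second half of the paper's proof handles: Harbater's result is used to shrink $E'$ so that all fibers of $Y_{E} \migi E$ are geometrically irreducible. The repair is not hard --- add geometric connectedness of the fibers of the proper flat family $Y_{E}\migi E$ to your list of open conditions (open here because the fibers are smooth and proper, e.g.\ by semicontinuity of $\dim H^{0}(\mcO)$, or quote Harbater as the paper does), noting that it holds at the image of the closed point of $\widehat V$ since $Y^{\bp}_{\widehat V}$ is a smooth pointed stable curve over $\widehat V$ --- but as written conclusion (ii) does not follow.

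Two smaller points. First, taking finitely generated $\mcO_{H_{g,n},v}$-subalgebras only makes $E$ essentially of finite type over $k$; the paper's construction produces $A'$ of finite type over $k$, which is what is actually used downstream (Chevalley's theorem applied to the image of $E$ in $\overline M_{g,n}$ in the proof of Proposition \ref{prop-7-10}). This is fixed by taking finitely generated $k$-subalgebras large enough that $\widehat V \migi H_{g,n}$ factors through them. Second, your openness claim for the admissibility locus really does need the logarithmic formulation you sketch at the end: the naive ``\'etale off the divisor'' condition defines a priori only a constructible subset of $E_i$ (the bad locus is the image of a locally closed subset under a proper map), whereas with the log structures attached to the relative divisors the log \'etale locus is open in $Y_{E_i}$ and properness of $Y_{E_i}\migi E_i$ then makes the corresponding locus in $E_i$ open; this is in effect what the paper's appeal to Vidal packages in one step.
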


\begin{proof}
Similar arguments to the arguments given in the proof of \cite[Lemma 4.1]{Ste} imply the lemma holds. For readers convenience, we attach the proof.

By applying  \cite[Proposition 4.3 (1)]{V0}, there exists a subring $A' \subseteq \widehat \mcO_{H_{g, n}, v}$ which is of finite type over $k$ such that $Y_{E'}^{\bp}$ is smooth over $E'\defeq \spec A'$, that the Galois admissible covering $f^{\bp}_{\widehat V}$ can be descended to a finite morphism (a Galois covering) $f_{E'}^{\bp}: Y_{E'}^{\bp} \migi X^{\bp}_{E'}$ over $E'\defeq\spec A'$ with a level $m$-structure $\tau_{E'}$ on $X^{\bp}_{E'}$, and that $f_{E'}^{\bp}|_{e'}$ is  a Galois multi-admissible covering with Galois group $G$ over every $e' \in E'$. Moreover, by the construction, the pulling-back $f_{E'}^{\bp} \times_{E'} \widehat V$ via $\widehat V \migi E'$ is isomorphic to $f^{\bp}_{\widehat V}$ over $\widehat V$. Moreover, the smooth pointed stable curve $X^{\bp}_{E'}$ over $E'$ with the level $m$-structure $\tau_{E'}$ determines a morphism $\alpha_{E'}: E' \migi H_{g, n}$.

We denote by $v_{E'} \in E'$ the image of $v \in \widehat V$ via the natural morphism $\widehat V \migi E'$ which is a closed point of $E'$. \cite[Proposition 5]{Har} implies that, there exists an affine open subset $v_{E'} \in E \subseteq E'$ such that the fiber $Y^{\bp}_{e}\defeq Y^{\bp}_{E} \times_{E} e$ is geometrically irreducible over each {\it closed} point $e \in E$. We shall put $ A \defeq \mcO_{E}(E) \subseteq \widehat \mcO_{H_{g, n}, v}$. Moreover, since the underlying curve of $Y^{\bp}_{e}\defeq Y^{\bp}_{E} \times_{E} e$ is smooth over each $e$, we have that $Y^{\bp}_{e}$ is geometrically irreducible over each point $e \in E$. Thus, for each point $e \in E$, the restriction of $f_{E}^{\bp}\defeq f^{\bp}_{E'}\times_{E'}E$ on $e$ is a Galois admissible covering over $e$ with Galois group $G$. We put $$\alpha_{E}\defeq\alpha_{E'}|_{E}: E \migi H_{g, n}.$$ Then we obtain the desired curve. This completes the proof of the lemma.
\end{proof}

\begin{definition}
Let $q \in \overline M_{g, n}$ be an arbitrary point. For each $G \in \pi_{A}^{\rm adm}(q)$, we define $$U_{G}\defeq\{q' \in \overline M_{g, n} \ | \ G \in \pi_{A}^{\rm adm}(q') \}.$$ Moreover, we put $U^{\rm sm}_{G}\defeq U_{G} \cap M_{g, n}.$ 
\end{definition}

First, let us prove that $U_{G}^{\rm sm}$ is an open subset of $M_{g, n}$.

\begin{proposition}\label{prop-7-10}
Let $q$ be an arbitrary point of $M_{g, n}$ and $G \in \pi_{A}^{\rm adm}(q)$. Then $U_{G}^{\rm sm}$ is an open subset of $M_{g, n}$. 
\end{proposition}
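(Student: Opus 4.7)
The plan is to establish openness of $U_G^{\rm sm}$ via the standard Noetherian criterion: a subset of a Noetherian scheme is open if and only if it is constructible and closed under generization. Closure under generization is immediate from Lemma \ref{lem-7-1}: if $q' \in U_G^{\rm sm}$ and $q'' \in M_{g,n}$ is a generization of $q'$, meaning $q' \in \overline{\{q''\}}$, then $\pi_A^{\rm adm}(q') \subseteq \pi_A^{\rm adm}(q'')$ by Lemma \ref{lem-7-1} (applied with $q_{1} = q''$, $q_{2} = q'$), so $G \in \pi_A^{\rm adm}(q'')$ and $q'' \in U_G^{\rm sm}$.

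For constructibility I would invoke the criterion that a subset $S$ of a Noetherian scheme is constructible if and only if, for every irreducible closed subset $Y$ such that $S \cap Y$ is dense in $Y$, the intersection $S \cap Y$ contains a nonempty Zariski-open subset of $Y$. Fix such a $Y \subseteq M_{g,n}$ and let $\eta_{Y}$ be its generic point. The closure-under-generization property just proved forces $\eta_{Y} \in U_G^{\rm sm}$, so there exists a Galois admissible covering $Y^{\bp}_{k_{\eta_{Y}}} \migi X^{\bp}_{k_{\eta_{Y}}}$ with Galois group $G$. Applying Corollary \ref{coro-7-4} to $q = \eta_{Y}$ produces a smooth $k$-variety $U_{\eta_{Y}}$, a quasi-finite morphism $\alpha: U_{\eta_{Y}} \migi H_{g,n}$ whose composition with $\omega^{(m)}_{g,n}$ has image a nonempty Zariski-open subset $W$ of $V^{\rm sm}_{\eta_{Y}} = Y$, and a Galois admissible covering of smooth pointed stable curves over $U_{\eta_{Y}}$ with Galois group $G$ extending the given covering at $\eta_{Y}$. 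For each $w \in W$, pick $u \in U_{\eta_{Y}}$ above $w$; restricting the family to $u$ and base-changing to an algebraic closure of $k(u)$ yields a Galois admissible covering (with Galois group $G$) of a curve isomorphic to $X^{\bp}_{k_{w}}$, so $G \in \pi_A^{\rm adm}(w)$ and $W \subseteq U_G^{\rm sm}$.

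Combining the two facts shows $U_G^{\rm sm}$ is constructible and closed under generization in the Noetherian scheme $M_{g,n}$, hence open. The main point requiring care is that Corollary \ref{coro-7-4} has to be applied at the possibly non-closed point $\eta_{Y}$ rather than at a closed point; this is not a serious obstacle, since the corollary is already stated for arbitrary $q \in M_{g,n}$, but it does mean the argument rests essentially on the spreading principle of Lemma \ref{lem-7-2} applied after descent of the covering to a finite extension of $k(\eta_{Y})$ and normalization inside $H_{g,n}$. Once that spreading is granted, the proposition is a direct Noetherian-topology deduction from Lemma \ref{lem-7-1} and Corollary \ref{coro-7-4}.
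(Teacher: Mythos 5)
Your proof is correct, but it takes a genuinely different route from the paper's. The paper first reduces, via Lemma \ref{lem-7-3}, to producing an open neighborhood of each \emph{closed} point $c\in U_{G}^{\rm sm}$ (the reduction being essentially the same specialization trick you use for generization-stability), and then handles the closed point with a separate spreading-out statement, Lemma \ref{lem-7-8}: the covering is lifted over $\widehat V=\spec \widehat \mcO_{H_{g,n},v}$ at a level-structure point $v$ over $c$, descended to a finite-type affine scheme $E\migi H_{g,n}$, and the image of $E$ in $M_{g,n}$ (a dense constructible set containing $c$) is decomposed into locally closed pieces to extract an open neighborhood of $c$. You never invoke Lemma \ref{lem-7-8}: you package openness as constructibility plus stability under generization, get the latter from Lemma \ref{lem-7-1}, and get the former by applying Corollary \ref{coro-7-4} at the generic point $\eta_{Y}$ of any irreducible closed $Y\subseteq M_{g,n}$ in which $U_{G}^{\rm sm}$ is dense, so the only spreading-out input you need is the one already established for Lemma \ref{lem-7-3} (i.e., Lemma \ref{lem-7-2}); the minor point that Corollary \ref{coro-7-4} literally asserts the image is ``an open subset'' of $V_{\eta_Y}^{\rm sm}$, with nonemptiness read off from its construction, is harmless. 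What each buys: your argument is the more economical proof of this particular proposition and avoids the somewhat delicate final step of the paper's proof (where the locally closed piece $W_{1}$ through $c$ is asserted to contain the image of $\widehat V$ and hence to be open); the paper's route, by contrast, is not wasted effort, since the construction from Lemma \ref{lem-7-8} and the last paragraph of its proof of Proposition \ref{prop-7-10} is reused verbatim in the proof of Lemma \ref{lem-7-12} for points on the boundary, so the paper's organization front-loads machinery needed later in Section \ref{sec-7}.
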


\begin{proof}
To verify the proposition, Lemma \ref{lem-7-3} (or Proposition \ref{pro-7-7}) implies that it is sufficient to prove that, for every {\it closed point} $c \in U_{G}^{\rm sm}$, there exists an open subset $c\in U_{c} \subseteq M_{g, n}$ which is contained in $U_{G}^{\rm sm}$.  

Let $v \in H_{g, n}$ be a closed point such that $\omega_{g, n}^{(m)}(v)=c$. We maintain the notation introduced in Lemma \ref{lem-7-8}. Then we obtain an affine $k$-variety $E$ and a morphism $\alpha_{E}: E \migi H_{g, n}$ over $k$ such that $(\omega^{(m)}_{g, n}\circ\alpha_{E})(v_{E'})=c$. Moreover, since the image $\widehat V$ of the composition of the morphisms $\widehat V \migi E \overset{\alpha_{E}}{\migi} H_{g, n} \overset{\omega_{g, n}^{(m)}}{\migi} M_{g, n}$ is dense in $M_{g, n}$, the image of the composition of the morphisms $E \overset{\alpha_{E}}{\migi} H_{g, n} \overset{\omega_{g, n}^{(m)}}{\migi} M_{g, n}$ is also a dense constructible subset of $M_{g, n}$.

Write $W$ for the image of $E$ in $M_{g, n}$. Since $W$ is a constructible subset, we have that $$W=\bigcup_{i=1}^{r} W_{i}$$ is a finite disjoint union of local closed subsets $\{W_{i}\}_{i=1, \dots, r}$, of $M_{g, n}$. Without loss of generality, we may assume that $c \in W_{1}$. Since $W_{1}$ contains the image of $\widehat V$, we obtain that $W_{1}$ is an open subset of $M_{g, n}$. This completes the proof of the proposition.
\end{proof}

\begin{remarkA}
In \cite[Section 4]{Ste}, Stevenson proved that $U_{G}^{\rm sm}$ contains an open subset of $M_{g, n}$ when $n=0$.
\end{remarkA}

In the remainder of this subsection, we generalize Proposition \ref{prop-7-10} to the case of  arbitrary points of $\overline M_{g, n}$.

\begin{lemma}\label{lem-7-11}
Let $R$ be a complete discrete valuation ring, $K_{R}$ the quotient field of $R$, and $k_{R}$ the residue field of $R$ such that $k_{R}$ is an algebraically closed field containing $k$. Let $X^{\bp}_{R}$ be a pointed stable curve of type $(g, n)$ over $R$ and $$f_{k_{R}}^{\bp}: Y^{\bp}_{k_{R}} \migi X^{\bp}_{k_{R}}$$ a Galois admissible covering over $k_{R}$ with Galois group $G$. Then, by replacing $R$ by a finite extension of $R$, there exist a pointed stable curve $Y^{\bp}_{R}$ over $R$ and a Galois admissible covering $$f^{\bp}_{R}: Y^{\bp}_{R} \migi X^{\bp}_{R}$$ over $R$ with Galois group such that $f^{\bp}_{R}\times_{R}k_{R}=f^{\bp}_{k_{R}}$. 
\end{lemma}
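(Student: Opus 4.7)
My plan for Lemma \ref{lem-7-11} is to use the identification of Galois admissible coverings with Kummer log \'etale Galois covers (via \cite[\S 3.11 Proposition]{M-1}, as recalled in Section \ref{sec-1}) together with a specialization theorem for geometric log \'etale fundamental groups of proper stable log curves. The strategy can also be viewed as a ``family version'' of Vidal's specialization theorem \cite[Th\'eor\`eme 2.2]{V} for admissible fundamental groups.

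First, I would equip $X^{\bp}_R$ with its canonical stable log curve structure $X^{\log}_R$ over the log trait $s^{\log}_R$, where $\spec R$ carries the log structure associated to its closed point, and similarly equip the special fiber to obtain $X^{\log}_{k_R}/s^{\log}_{k_R}$. By the Mochizuki correspondence, the Galois admissible covering $f^{\bp}_{k_R}$ is equivalent, after a suitable Kummer log \'etale base change $t^{\log}_{k_R} \migi s^{\log}_{k_R}$, to a Galois Kummer log \'etale cover $Y^{\log}_{k_R} \migi X^{\log}_{k_R} \times_{s^{\log}_{k_R}} t^{\log}_{k_R}$ with Galois group $G$. Applying the specialization theorem for log \'etale fundamental groups to the proper stable log curve $X^{\log}_R \migi s^{\log}_R$ (the log analogue of the classical specialization theorem, and a direct family-theoretic incarnation of \cite[Th\'eor\`eme 2.2]{V}), this Kummer log \'etale cover extends to a Galois Kummer log \'etale cover of $X^{\log}_R \times_{s^{\log}_R} t^{\log}_R$ for a Kummer log \'etale extension $t^{\log}_R \migi s^{\log}_R$. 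The extension $t^{\log}_R$ corresponds to a finite tame extension $R \subseteq R'$, and we replace $R$ by $R'$. Translating back via \cite[\S 3.11 Proposition]{M-1} produces a Galois admissible covering $f^{\bp}_R: Y^{\bp}_R \migi X^{\bp}_R$ over $R'$ with Galois group $G$ whose restriction to the special fiber recovers $f^{\bp}_{k_R}$.

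The main technical obstacle will be to identify precisely the required Kummer log \'etale base change of $s^{\log}_R$ with a genuine finite extension of $R$. Since only finitely many ramification indices appear in $f^{\bp}_{k_R}$ and all of them are prime to $p$ at the nodes (by the very definition of admissible covering) and tame at the marked points, any sufficiently divisible finite tame extension $R \subseteq R'$ suffices to absorb them; verifying this carefully is the heart of the argument. As a backup, one could instead follow the vertex-by-vertex strategy of the proof of Proposition \ref{pro-7-7}: lift each Galois admissible covering $\widetilde Y^{\bp}_{k_R, v_Y} \migi \widetilde X^{\bp}_{k_R, v_X}$ of smooth pointed stable curves by applying a deformation-theoretic argument on a Hurwitz-type moduli space (which is \'etale over $H_{g_{v_X}, n_{v_X}}$ at the relevant points thanks to the tameness assumption on the decomposition groups, compare Lemma \ref{lem-7-8}), and then glue the lifts along the $G$-action at nodes via the clutching morphism exactly as in Lemma \ref{lem-7-6}. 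The compatibility of the clutching with the $G$-action on the set of preimages of each vertex is straightforward, while the finite extension of $R$ is needed at the stage of lifting each vertex cover individually.
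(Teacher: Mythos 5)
Your primary strategy rests on the same underlying mechanism as the paper's proof: the dictionary between admissible coverings and geometric Kummer log \'etale coverings from \cite[\S 3.11 Proposition]{M-1}, plus the fact that Galois Kummer log \'etale coverings of the special log fiber extend across a complete base after a finite Kummer base change, which on the base becomes a finite (tame) extension of $R$. The difference is in the routing: you want to apply a log specialization/extension theorem directly to $X^{\log}_{R} \migi s^{\log}_{R}$, whereas the paper transfers the problem to the versal deformation space $\mcM \defeq \spec k_{R}[[t_{1}, \dots, t_{3g-3+n}]]$ equipped with the log structure pulled back from $\overline \mcM_{g, n}^{\log}$ (a log regular scheme), applies \cite[Corollary 1]{Hos} there to extend the Galois log \'etale covering over a finite log \'etale covering $\mcN^{\log}$ of $\mcM^{\log}$, and only then pulls back along $\spec R \migi \mcN$ after replacing $R$ by a finite extension. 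Your direct route is viable, but you must pin down a precise reference for the extension statement over the complete log trait (Hoshi's exactness result or Vidal's work supplies it); since Lemma \ref{lem-7-11} is itself essentially an admissible-covering reformulation of that specialization statement, citing ``the specialization theorem'' loosely risks circularity. Two further verifications from the paper's proof should be added: that (after a further base change) the underlying morphism of the extended log \'etale covering is a morphism of pointed stable curves, and that the generic fiber of $Y^{\bp}_{R}$ is geometrically connected (connectedness of the special fiber plus Zariski's main theorem), so that the generic fiber really is a connected Galois admissible covering with group $G$. Also note that the log structure on $\spec R$ is the pullback from $\overline \mcM_{g, n}^{\log}$, which is the divisorial log structure of the closed point only when the generic fiber is smooth.

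Your backup strategy, however, does not prove the lemma in the stated generality. Lifting each vertex covering $\widetilde Y^{\bp}_{k_{R}, v_{Y}} \migi \widetilde X^{\bp}_{k_{R}, v_{X}}$ and regluing via the clutching morphism, as in Lemma \ref{lem-7-6} or Proposition \ref{pro-7-7}, produces a covering of a pointed stable curve over $R$ whose dual graph is constant: every node of the special fiber persists in the generic fiber. In Lemma \ref{lem-7-11} the family $X^{\bp}_{R}$ is arbitrary, and its generic fiber may smooth some or all nodes of $X^{\bp}_{k_{R}}$; in that case the glued family is not a covering of $X^{\bp}_{R}$ at all, and no choice of finite extension of $R$ repairs this. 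This is exactly why the paper proves Lemma \ref{lem-7-6} (gluing, constant dual graph, all components with good reduction) and Lemma \ref{lem-7-11} (deformation of the covering along a smoothing of nodes) by different arguments; the log-geometric input is unavoidable in the second, so the backup cannot replace your primary argument.
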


\begin{proof}
Let $X_{\mcM'}^{\bp}$ be the versal formal deformation of the special fiber $X^{\bp}_{k_{R}}$ of $X^{\bp}_{R}$ over $\mcM'\defeq \spec \mcO_{k_{R}}[[t_{1}, \dots, t_{3g-3+n}]],$ where $\mcO_{k_{R}}$ is a regular local ring with maximal ideal $p\mcO_{k_{R}}$ and residue field $k_{R}$ (cf. \cite[p79]{DM}). The pointed stable curve $X^{\bp}_{R}$ over $R$ determines a morphism $\spec R \migi \mcM'$ such that $X^{\bp}_{\mcM'}\times_{\mcM'}\spec R$ is isomorphic to $X^{\bp}_{R}$ over $R$. Moreover, since $R \cong k_{R}[[t]]$, the morphism $\spec R \migi \mcM'$ factors through a morphism $$\spec R \migi \mcM\defeq \spec k_{R}[[t_{1}, \dots, t_{3g-3+n}]].$$ The natural morphism $\mcM \migi \mcM'$ induces a pointed stable curve $X^{\bp}_{\mcM} \defeq X_{\mcM'}^{\bp} \times_{\mcM'} \mcM$ over $\mcM$. 

Let $\overline \mcM_{g, n}^{\log}$ be the log stack obtained by equipping $\overline \mcM_{g, n}$ with the natural log structure associated to the divisor with normal crossings $\overline \mcM_{g, n} \setminus \mcM_{g, n}$. Then we obtain a log scheme $\mcM^{\log}$ whose underlying scheme is $\mcM$, and whose log structure is the pulling-back log structure induced by the natural morphism $\mcM \migi \mcM' \migi \overline \mcM_{g, n}.$ Moreover, we obtain a stable log curve $$X^{\log}_{\mcM}\defeq\overline \mcM^{\log}_{g, n+1}\times_{\overline \mcM_{g, n}^{\log}} \mcM^{\log}$$ over $\mcM^{\log}$ whose underlying curve is $X_{\mcM}$. Note that $X^{\log}_{\mcM}$ is log regular.

By replacing $\mcM^{\log}$ by a finite log \'etale covering $\mcN^{\log}$, and replacing $R$ by a finite extension of $R$, we obtain a morphism $\spec R \migi \mcN$ induced by the morphism $\spec R \migi \mcM$. We obtain a log scheme $s_{k_{R}}^{\log}$ whose underlying scheme is $s_{k_{R}}\defeq \spec k_{R}$, and whose log structure is the pulling-back log structure induced by the composition of the morphisms $s_{k_{R}} \migi \spec R \migi \mcN$. Moreover, the Galois admissible covering $f^{\bp}_{k_{R}}$ determines a log \'etale covering $f^{\log}_{k_{R}}: Y^{\log}_{k_{R}} \migi X^{\log}_{k_{R}}$ over $s_{k_{R}}^{\log}$ such that the underlying morphism of $f^{\log}_{k_{R}}$ is $f_{k_{R}}$.  By applying \cite[Corollary 1]{Hos}, there exists a Galois log \'etale covering $$f^{\log}_{\mcN}: Y^{\log}_{\mcN} \migi X^{\log}_{\mcN}\defeq X^{\log}_{\mcM} \times_{\mcM^{\log}} \mcN^{\log}$$ with Galois group $G$ over $\mcN^{\log}$ such that $$f^{\log}_{\mcN} \times_{\mcN^{\log}}s_{k_{R}}^{\log}: Y^{\log}_{\mcN} \times_{\mcN^{\log}} s_{k_{R}}^{\log} \migi X^{\log}_{\mcN} \times_{\mcN^{\log}} s_{k_{R}}^{\log}$$ is isomorphic to $f^{\log}_{k_{R}}$ over $s_{k_{R}}^{\log}$. Furthermore, by replacing $\mcN^{\log}$ by a finite log \'etale covering of $\mcN^{\log}$, we may assume that the underlying morphism of $f^{\log}_{\mcN}$ is a morphism of pointed stable curves over $\mcN$. 

Let $s^{\log}_{R}$ be the log scheme whose underlying scheme is $\spec R$, and whose log structure is the pulling-back log structure induced by the morphism $\spec R \migi \mcN$. Then we obtain a log \'etale covering $$f^{\log}_{\mcN} \times_{\mcN^{\log}}s_{R}^{\log}: Y^{\log}_{\mcN} \times_{\mcN^{\log}} s_{R}^{\log} \migi X^{\log}_{\mcN} \times_{\mcN^{\log}} s_{R}^{\log}$$ over $s_{R}^{\log}$. We denote by $$f^{\bp}_{R}: Y^{\bp}_{R} \migi X^{\bp}_{R}$$ the morphism induced by the underlying morphism of $f^{\log}_{\mcN^{\log}} \times_{\mcN^{\log}}s_{R}^{\log}$ over $R$. Since the special fiber $Y^{\bp}_{R}$ is geometrically connected, the Zariski main theorem implies that $Y^{\bp}_{R}\times_{R} R'$ is connected for every finite extension $R'$ of $R$. Thus, the generic fiber of $Y^{\bp}_{R}$ is geometrically connected. 


Let us prove that $f^{\bp}_{R}$ is a Galois admissible covering over $R$ with Galois group $G$. Note that we have a log scheme $s_{K_{R}}^{\log}$ whose underlying scheme is $s_{K_{R}}\defeq\spec K_{R}$, and whose log structure is the pulling-back log structure induced by the composition of the natural morphisms $s_{K_{R}}\migi \spec R \migi \mcN$. Then we see that $$f^{\log}_{\mcN} \times_{\mcN^{\log}}s_{K_{R}}^{\log}: Y^{\log}_{\mcN} \times_{\mcN^{\log}} s_{K_{R}}^{\log} \migi X^{\log}_{\mcN} \times_{\mcN^{\log}} s_{K_{R}}^{\log}$$ is geometrically connected Galois log \'etale covering over $s_{K_{R}}^{\log}$. This means that the underlying morphism of $f^{\log}_{\mcN} \times_{\mcN^{\log}}s_{K_{R}}^{\log}$ induces a Galois admissible covering over $K_{R}$ with Galois group $G$. This completes the proof of the lemma.
\end{proof}

Let $c \in \overline M_{g, n}$ be a {\it closed point} and $k_{c}= k$ the residue field of $c$. Then the closed point $c$ determines a pointed stable curve $$X_{k_c}^{\bp}=(X_{k_{c}}, D_{X_{k_{c}}})$$ over $k$. For each $v_{X} \in v(\Gamma_{c})$, let $\widetilde X^{\bp}_{k_{c}, v_{X}}$ be the smooth pointed stable curve of type $(g_{v_X}, n_{v_X})$ over $k_{c}$ associated to $v_{X}$. Then we obtain a morphism  $$c_{v_{X}}: \spec k_{c} \migi \mcM_{g_{v_{X}}, n_{v_{X}}}, \ v_{X} \in v(\Gamma_{c}),$$ determined by $\widetilde X^{\bp}_{k_{c}, v_{X}}$ over $k_{c}$. Write $c_{X_{k_{c}}}: \spec k_{c} \migi \overline \mcM_{g, n}$ for the morphism induced by $X^{\bp}_{k_{c}}$ over $k_{c}$. Moreover, $X^{\bp}_{k_c}$ over $k_{c}$ determines a clutching morphism $$\kappa_{X_{k_c}}: \bigtimes_{v_{X}\in v(\Gamma_{c})} \mcM_{g_{v_{X}}, n_{v_{X}}} \migi \overline \mcM_{g, n}$$ satisfying $\kappa_{X_{k_{c}}} \circ (\bigtimes_{v_{X} \in v(\Gamma_{c})} c_{v_{X}})=c_{X_{k_{c}}}$. We denote by $$M_{c} \defeq \text{Im}(\bigtimes_{v\in v(\Gamma_{c})} \mcM_{g_{v}, n_{v}} \overset{\kappa_{X_{k_c}}} \migi \overline \mcM_{g, n} \overset{\overline \omega_{g, n}}\migi \overline M_{g, n})$$ the image of the composition of the natural morphisms. Note that $\Gamma_{q}\cong \Gamma_{c}$ for each $q \in M_{c}$.

\begin{lemma}\label{lem-7-12}
We maintain the notation introduced above. Let $G \in \pi_{A}^{\rm adm}(c)$ be a finite group. Then $$U_{G} \cap M_{c}$$ contains an open subset of $M_{c}$ in which $c$ is contained.
\end{lemma}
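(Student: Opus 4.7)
The strategy is to deform the Galois admissible covering witnessing $G \in \pi_{A}^{\rm adm}(c)$ component-by-component using Lemma \ref{lem-7-8}, glue the resulting vertex-wise families via the two clutching morphisms $\kappa_{X_{k_c}}$ and $\kappa_{Y_{k_c}}$, and then deduce openness of the image inside $M_c$ by the same constructibility argument used in the proof of Proposition \ref{prop-7-10}.

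First I would fix a Galois admissible covering $f^{\bp}_{k_c}: Y^{\bp}_{k_c} \migi X^{\bp}_{k_c}$ over $k_c$ with Galois group $G$, and decompose it into its component coverings $f^{\bp}_{k_c, v_Y, v_X}: \widetilde Y^{\bp}_{k_c, v_Y} \migi \widetilde X^{\bp}_{k_c, v_X}$ with Galois group $G_{v_Y}$ (the decomposition group of $v_Y$ in $G$), for every $v_X \in v(\Gamma_c)$ and every $v_Y \in (f^{\rm sg}_{k_c})^{-1}(v_X)$. For each $v_X$, I would pick a closed lift $\tilde c_{v_X} \in H_{g_{v_X}, n_{v_X}}$ of $c_{v_X}$; since $H_{g_{v_X}, n_{v_X}}$ is smooth over $k$ and the divisor of marked points is normal crossings, the log purity argument already used in the proof of Lemma \ref{lem-7-2} extends each $f^{\bp}_{k_c, v_Y, v_X}$ canonically over the formal neighborhood $\widehat V_{v_X} = \spec \widehat \mcO_{H_{g_{v_X}, n_{v_X}}, \tilde c_{v_X}}$. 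Lemma \ref{lem-7-8} then descends these formal extensions to Galois admissible coverings over a smooth affine $k$-variety $E_{v_X}$ with a quasi-finite morphism $E_{v_X} \migi H_{g_{v_X}, n_{v_X}}$ factoring $\widehat V_{v_X} \migi H_{g_{v_X}, n_{v_X}}$, chosen so that all $f^{\bp}_{k_c, v_Y, v_X}$ for $v_Y$ over $v_X$ extend simultaneously and assemble into a $G$-equivariant Galois multi-admissible covering over $E_{v_X}$.

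Next I would glue these vertex-wise families over $E := \bigtimes_{v_X \in v(\Gamma_c)} E_{v_X}$ using the two clutching morphisms: $\kappa_{X_{k_c}}$ produces a pointed stable curve $X^{\bp}_E$ over $E$ with dual semi-graph isomorphic to $\Gamma_c$, while $\kappa_{Y_{k_c}}$, combined with the $G$-actions on each factor and with the matching of decomposition subgroups at paired nodes (which is forced by the local description in Definition \ref{def-2}(iv) and is preserved under deformation because both the ramification indices and the characters by which the local groups $D_y$ act on the nodal coordinates $(s, t)$ are locally constant in families), produces a pointed stable curve $Y^{\bp}_E$ over $E$ and a Galois admissible covering $f^{\bp}_E: Y^{\bp}_E \migi X^{\bp}_E$ over $E$ with Galois group $G$ whose specialization at $\tilde c$ recovers $f^{\bp}_{k_c}$. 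This is the pointed stable, relative version of the gluing performed in the proof of Lemma \ref{lem-7-6}, and it is the main technical step of the argument.

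Finally, the image $W$ of the composition $E \migi \bigtimes_{v_X} H_{g_{v_X}, n_{v_X}} \migi \overline M_{g, n}$ is a constructible subset of $M_c$ containing $c$; writing $W = \bigsqcup_{i=1}^{r} W_i$ as a finite disjoint union of locally closed subsets of $M_c$ with $c \in W_1$, the fact that $W_1$ contains the image of each $\widehat V_{v_X}$ forces $W_1$ to meet the generic point of $M_c$, so $W_1$ is dense and locally closed in $M_c$, hence open in $M_c$. For every $q \in W_1$, pulling back $f^{\bp}_E$ along any preimage of $q$ in $E$ (and base-changing to an algebraic closure of the resulting residue field) yields a Galois admissible covering of the pointed stable curve associated to $q$ with Galois group $G$, so $G \in \pi_{A}^{\rm adm}(q)$ and $W_1 \subseteq U_G \cap M_c$. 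The hard part will be the gluing described in the third paragraph: one must verify that the vertex-wise deformations assemble consistently with the global $G$-action and with the nodal identifications into a single $G$-Galois admissible covering of the clutched family, rather than merely producing Galois admissible coverings with the smaller groups $G_{v_Y}$ on each component separately.
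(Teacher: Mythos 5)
Your reduction to vertex-wise deformations and the final constructibility/openness argument match the paper, but the central step of your third paragraph --- gluing the independently deformed families over $E = \bigtimes_{v_X} E_{v_X}$ into a single $G$-Galois admissible covering $f^{\bp}_{E}: Y^{\bp}_{E} \migi X^{\bp}_{E}$ --- is exactly the point you flag as ``the hard part,'' and it is a genuine gap rather than a routine verification. To clutch the families $Y^{\bp}_{E_{v_X}, v_Y}$ you must choose, for each node of $X_{k_c}$, a $G$-equivariant isomorphism between the two finite \'etale covers of $E$ obtained by restricting the deformed coverings to the marked-point sections corresponding to the two branches, compatible with the inverse-character condition of Definition \ref{def-2} (iv) and extending the identification given over the central fiber. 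Fiberwise such identifications exist, but over the (non-local, non-simply-connected) base $E$ they form torsors under nontrivial finite groups (monodromy of the divisors above the branch sections acts through $N_{G_{v_Y}}(I)/I$), so a global choice need not exist without passing to a further finite or \'etale cover of the base; your appeal to ``local constancy of ramification indices and characters'' and to Lemma \ref{lem-7-6} does not address this, since in Lemma \ref{lem-7-6} the base is a complete discrete valuation ring and the gluing data is \emph{already supplied} by the given covering on the generic fiber, whereas here no covering over the generic point of $E$ exists in advance. (A smaller point: the extension of each $f^{\bp}_{k_c, v_Y, v_X}$ over the formal neighborhood $\widehat V_{v_X}$ is a lifting of a tame cover from the special fiber, not an instance of the log purity used in Lemma \ref{lem-7-2}, though the fact itself is fine.)

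The paper's proof avoids this family-gluing problem entirely: it only uses the product of the opens $U_{q_{v_X}}$ (produced as in the last part of the proof of Proposition \ref{prop-7-10}, essentially your Lemma \ref{lem-7-8} input) to cut out an open subset $W_c \subseteq M_c$ containing $c$, and then proves $G \in \pi_{A}^{\rm adm}(c')$ separately for each closed point $c' \in W_c$ by a two-step argument over complete discrete valuation rings along a curve in $W_c$ joining $c$ to $c'$: first lift $f^{\bp}_{k_c}$ from the special fiber to a Galois admissible covering over $R_1$ via Lemma \ref{lem-7-11} (log-\'etale lifting), transport the generic-fiber covering to $K_{R_2}$ where the special fiber of $X^{\bp}_{R_2}$ is $X^{\bp}_{k_{c'}}$, check that each component curve $Y^{\bp}_{K_{R_2}, v_Y}$ has good reduction using the vertex-wise families over $E_{q_{v_X}}$, glue the smooth stable models by the clutching morphism $\kappa_{Y_{k_c}}$ compatibly with the gluing on the generic fiber (which now does supply the identifications), and conclude by Lemma \ref{lem-7-6}. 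If you want to salvage your approach you would need to either trivialize the gluing torsors after replacing $E$ by a suitable finite cover (and check this does not destroy the density of the image in $M_c$), or retreat, as the paper does, to a pointwise DVR argument in which the generic fiber dictates the gluing.
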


\begin{proof}

Let $Y^{\bp}_{k_{c}}=(Y_{k_{c}}, D_{Y_{k_{c}}})$ be a pointed stable curve of type $(g_{Y}, n_{Y})$ over $k_{c}$ and $f^{\bp}_{k_c}: Y^{\bp}_{k_{c}} \migi X^{\bp}_{k_{c}}$ a Galois admissible covering over $k_{c}$ with Galois group $G$. Write $\Gamma_{Y^{\bp}_{k_c}}$ for the dual semi-graph of $Y^{\bp}_{k_{c}}$, and $f^{\rm sg}_{k_{c}}: \Gamma_{Y^{\bp}_{k_c}} \migi \Gamma_{c}$ for the map of dual semi-graphs induced by $f_{k_c}^{\bp}$. For every $v_{X} \in v(\Gamma_{c})$ and every $v_{Y} \in (f^{\rm sg}_{k_{c}})^{-1}(v_{X})$, let $\widetilde Y^{\bp}_{k_c, v_{Y}}$ be the smooth pointed stable curve of type $(g_{v_{Y}}, n_{v_{Y}})$ over $k_{c}$ associated to $v_{Y}$. Then $f_{k_{c}}^{\bp}$ induces a Galois multi-admissible covering $$f^{\bp}_{k_{c}, v_{X}}: \bigsqcup_{v_{Y} \in (f^{\rm sg}_{k_{c}})^{-1}(v_{X})} \widetilde Y_{k_{c}, v_{Y}}^{\bp} \migi \widetilde X^{\bp}_{k_{c}, v_{X}}$$ over $k_{c}$. Note that $\bigsqcup_{v_{Y} \in (f^{\rm sg}_{k_{c}})^{-1}(v_{X})} \widetilde Y_{k_{c}, v_{Y}}^{\bp} $ admits an action of $G$ induced by the action of $G$ on $Y^{\bp}_{k_{c}}$. This action induces an action of $G$ on the set $(f^{\rm sg}_{k_{c}})^{-1}(v_{X})$. For each $v_{Y} \in  (f^{\rm sg}_{k_{c}})^{-1}(v_{X})$, write $G_{v_{Y}} \subseteq G$ for the inertia subgroup of $v_{Y}$. Then we obtain a Galois admissible covering $$f_{k_{c}, v_{Y}, v_{X}}^{\bp}: \widetilde Y^{\bp}_{k_{c}, v_{Y}} \migi \widetilde X^{\bp}_{k_{c}, v_{X}}$$ over $k$ with Galois group $G_{v_{Y}}$. Write $c_{Y_{k_{c}}}: \spec k_{c} \migi \overline \mcM_{g_{Y}, n_{Y}}$ for the morphism determined by $Y^{\bp}_{k_{c}}$ over $k_{c}$, and $c_{v_{Y}}: \spec k_{c} \migi \overline \mcM_{g_{v_{Y}}, n_{v_{Y}}}$, $v_{Y} \in v(\Gamma_{Y_{k_{c}}^{\bp}})$, for the morphism determined by $\widetilde Y^{\bp}_{k_{c}, v_{Y}}$ over $k_{c}$. Then the pointed stable curve $Y^{\bp}_{k_{c}}$ over $k_{c}$ determines a clutching morphism 
$$\kappa_{Y_{k_c}}: \bigtimes_{v_{Y}\in v(\Gamma_{Y^{\bp}_{k_c}})}\mcM_{g_{v_{Y}}, n_{v_{Y}}} \migi \overline \mcM_{g_{Y}, n_{Y}}$$ satisfying $\kappa_{Y_{k_{c}}}\circ  (\bigtimes_{v_{Y} \in v(\Gamma_{Y^{\bp}_{k_{c}}})}c_{v_{Y}})=c_{Y_{k_{c}}}$. 

On the other hand, for each $v_{X} \in v(\Gamma_{c})$, we denote by $$q_{v_{X}} \in M_{g_{v_{X}}, n_{v_{X}}}$$ the image of $\omega_{g_{v_{X}}, n_{v_{X}}}\circ c_{v_{X}}$. Then the last paragraph of the proof of Proposition \ref{prop-7-10} implies that, for each $v_{X} \in v(\Gamma_{c})$, there exist an affine $k$-variety $E_{q_{v_{X}}}$ and a morphism $\alpha_{E_{q_{v_{X}}}}: E_{q_{v_{X}}} \migi H_{g_{v_{X}}, n_{v_{X}}}$ satisfying the following conditions:

(i) the image of $\alpha_{E_{q_{v_{X}}}}$ contains an open subset $U_{q_{v_{X}}}$ of $H_{g_{v_{X}}, n_{v_{X}}}$ such that the image $\omega_{g_{v_{X}}, n_{v_{X}}}^{(m)}(U_{q_{v_{X}}}) \subseteq M_{g_{v_{X}}, n_{v_{X}}}$ contains $q_{v_{X}}$;

(ii) we have a smooth pointed stable curve $X^{\bp}_{E_{q_{v_{X}}}} \defeq X^{\bp}_{H_{g_{v_{X}}, n_{v_{X}}}}\times_{H_{g_{v_{X}}, n_{v_{X}}}}E_{q_{v_{X}}}$ over $E_{q_{v_{X}}}$ with a level $m$-structure $\tau_{E_{q_{v_{X}}}}\defeq\tau_{H_{g_v, n_{v}}} \times_{H_{g_{v_{X}}, n_{v_{X}}}} E_{q_{v_{X}}}$;

(iii) for each $v_{Y}\in (f^{\rm sg}_{k_{c}})^{-1}(v_{X})$, there exists a Galois admissible covering $$f^{\bp}_{E_{q_{v_{X}}}, v_{Y}, v_{X}}: Y^{\bp}_{E_{q_{v_{X}}}, v_{Y}} \migi X^{\bp}_{E_{q_{v_{X}}}, v_{X}}$$ over $E_{q_{v_{X}}}$ with Galois group $G_{v_{Y}}$ such that the pulling-back of $f^{\bp}_{E_{q_{v_{X}}}, v_{Y}, v_{X}}$ to every point of $(\omega_{g_{v_{X}}, n_{v_{X}}}^{(m)}\circ \alpha_{E_{q_{v_{X}}}})^{-1}(q_{v_{X}})$ is isomorphic to the Galois admissible covering $f_{k_{c}, v_{Y}, v_{X}}^{\bp}$ over $k_{c}$ with Galois group $G_{v_{X}}$.

Then the image of the composition of the natural morphisms $$\bigtimes_{v_{X} \in v(\Gamma_{c})} U_{q_{v_{X}}} \migiinje  \bigtimes_{v_{X} \in v(\Gamma_{c})} H_{g_{v_{X}}, n_{v_{X}}} \migi \bigtimes_{v_{X} \in v(\Gamma_{c})} \mcM_{g_{v_{X}}, n_{v_{X}}} \overset{\kappa_{X_{k_c}}}{\migi} \overline \mcM_{g, n} \overset{\overline \omega_{g, n}}\migi \overline M_{g, n}$$ contains an open subset $W_{c}\subseteq M_{c}$ of $M_{c}$ in which $c$ is contained. To verify the lemma, it is sufficient to prove that $G\in \pi_{A}^{\rm adm}(c')$ for each {\it closed point} $c' \in W_{c}$.

Since $W_{c}$ is a $k$-variety, there exists a $k$-curve  $C' \subseteq W_{c}$ (not necessary irreducible) which contains $c$ and $c'$. Suppose that $C'$ is irreducible. Write $C$ for the normalization of $C'$, $c_{1}$ for a closed point of $C$ over $c$, and $c_{2}$ for a closed point of $C$ over $c'$. Let $R_{i}$, $i\in \{1, 2\}$, be a finite extension of $\widehat \mcO_{C, c_{i}}$ (then $R_{i}$ is a complete discrete valuation ring),  $K_{R_{i}}$ the quotient field of $R_{i}$, $\overline K_{R_{i}}$ an algebraic closure of $K_{R_{i}}$, and $k_{R_{i}}=k$ the residue field of $R_{i}$. 

By replacing $R_{1}$ by a finite extension of $R_{1}$, there is a smooth pointed stable curve $X^{\bp}_{R_{1}}$ over $R_{1}$ whose special fiber $X^{\bp}_{k_{R_{1}}}$ over the residue field $k_{R_{1}}=k$ of $R_{1}$ is isomorphic to $X_{k_c}^{\bp}$ over $k$. Lemma \ref{lem-7-11} implies that the Galois admissible covering $f^{\bp}_{k_c}$ over $k_{c}=k$ with Galois group $G$ can be lifted to a Galois admissible covering $$f^{\bp}_{R_{1}}: Y_{R_{1}}^{\bp} \migi X^{\bp}_{R_{1}}$$ over $R_{1}$ with Galois group $G$.

Let $\overline K$ be an algebraically closed field which contains $\overline K_{R_{1}}$ and $\overline K_{R_{2}}$. Since the admissible fundamental groups do not depend on the choices of base fields, the Galois admissible covering $f^{\bp}_{R_{1}}\times_{R_{1}}\overline K_{R_{1}}$ over $\overline K_{R_{1}}$ with Galois group $G$ induces a Galois admissible covering $f^{\bp}_{\overline K_{R_{2}}}: Y^{\bp}_{\overline K_{R_{2}}} \migi X^{\bp}_{\overline K_{R_{2}}}$ over $\overline K_{R_{2}}$ with Galois group $G$ such that $f^{\bp}_{\overline K_{R_{1}}} \times_{\overline K_{R_{1}}} \overline K$ is isomorphic to $f^{\bp}_{\overline K_{R_{2}}} \times_{\overline K_{R_{2}}} \overline K$ over $\overline K$. By replacing $R_{2}$ by a finite extension of $R_{2}$, there is a pointed stable curve $X^{\bp}_{R_{2}}$ over $R_{2}$ whose special fiber $X^{\bp}_{k_{R_{2}}}$ over the residue field $k_{R_{2}}=k$ of $R_{2}$ is isomorphic to $X_{k_{c'}}^{\bp}$ over $k$, and $f_{\overline K_{R_{2}}}^{\bp}$ can be descended to a Galois admissible covering $$f^{\bp}_{K_{R_{2}}}: Y^{\bp}_{K_{R_{2}}} \migi X^{\bp}_{K_{R_{2}}}$$ over $K_{R_{2}}$ with Galois group $G$. Moreover, for each $v_{X} \in v(\Gamma_{c})$ and each $v_{Y} \in (f_{k_{c}}^{\rm sg})^{-1}(v_{X})$, $f^{\bp}_{K_{R_{2}}}$ induces a Galois admissible covering $f^{\bp}_{K_{R_{2}}, v_{Y}, v_{X}}: Y^{\bp}_{K_{R_{2}}, v_{Y}} \migi X^{\bp}_{K_{R_{2}}, v_{X}}$ over $K_{R_{2}}$ with Galois group $G_{v_{Y}}$. By choosing a suitable level $m$-structure $\tau_{R_{2}, v_{X}}$ on $X^{\bp}_{R_{2}, v_{X}}$, we obtain a morphism $$l_{R_{2}, v_{X}}: \spec R_{2} \migi H_{g_{v_{X}}, n_{v_{X}}}$$ determined by $X^{\bp}_{R_{2}, v_{X}}$ over $R_{2}$ and $\tau_{R_{2}, v_{X}}$ such that image of $l_{R_{2}, v_{X}}$ is contained in $U_{q_{v_{X}}}$.

By replacing $R_{2}$ by a finite extension of $R_{2}$, we may assume that $Y^{\bp}_{K_{R_{2}, v_{Y}}}$, $v_{Y} \in v(\Gamma_{Y^{\bp}_{k_{c}}})$, has pointed stable reduction over $R_{2}$. Next, let us prove that $Y^{\bp}_{K_{R_{2}, v_{Y}}}$ has good reduction over $R_{2}$. 

If the image of $l_{R_{2}, v_{X}}$ is a constant morphism, then $Y^{\bp}_{K_{R_{2}, v_{Y}}}$ has good reduction over $R_{2}$. We may assume that $l_{R_{2}, v_{X}}$ is not a constant morphism. Let $\eta'_{v_{X}}$ be a generic point of $E_{q_{v_{X}}} \times_{H_{g_{v_{X}}, n_{v_{X}}}} \spec K_{R_{2}}$. Since $c' \in W_{c}$, there exists a closed point $s_{2, v_{X}} \in E_{q_{v_{X}}} \times_{H_{g_{v_{X}}, n_{v_{X}}}} k_{R_{2}} \migiinje E_{q_{v_{X}}}$ contained in $V_{\eta'_{v_{X}}}\defeq\overline {\{\eta'_{v_{X}}\}}$ such that $\alpha_{E_{q_{v_{X}}}}(s_{2, v_{X}})$ is equal to the image of $$\spec k_{R_{2}} \migiinje \spec R_{2} \overset{l_{R_{2}, v_{X}}}\migi H_{g_{v_{X}}, n_{v_{X}}},$$ where $\overline {\{\eta'_{v_{X}}\}}$ denotes the topological closure of $\eta_{v'_{X}}$ in $E_{q_{v_{X}}}\times_{H_{g_{v_{X}}, n_{v_{X}}}} R_{2}$. Since $R_{2} \cong k[[t]]$, the scheme-theoretic image of $l_{R_{2}, v_{X}}$ is a local ring of dimension one, $V_{\eta_{v}'}$ is an one dimension $k$-scheme. Write $A_{2, v_{X}}$ for the normalization of $\widehat \mcO_{V_{\eta'_{v_{X}}}, s_{2, v_{X}}}$. Note that $A_{2, v_{X}}$ is a complete discrete valuation ring, and the natural morphism $\spec A_{2, v_{X}} \migi \spec R_{2}$ is finite. We may assume that $A_{2, v_{X}} \subseteq \overline K_{R_{2}}$. Thus, we obtain a Galois admissible covering $$f^{\bp}_{A_{2, v_{X}}, v_{Y}, v_{X}}\defeq f^{\bp}_{E_{q_{v_{X}}}, v_{Y}}\times_{E_{q_{v_{X}}}} \spec A_{2, v_{X}}: Y^{\bp}_{A_{2, v_{X}}, v_{Y}} \migi X^{\bp}_{A_{2, v_{X}}, v_{X}}$$ of smooth pointed stable curves over $A_{2, v_{X}}$ with Galois group $G_{v_{Y}}$ such that $f^{\bp}_{A_{2, v_{X}}, v_{Y}, v_{X}}\times_{A_{2, v_{X}}} \overline K_{R_{2}}$ is isomorphic to $f^{\bp}_{\overline K_{R_{2}}, v_{Y}, v_{X}} $ over $\overline K_{R_{2}}$. This implies that $Y^{\bp}_{K_{R_{2}, v_{Y}}}$ has good reduction.

The clutching morphism $\kappa_{Y_{k_c}^{\bp}}$ implies that we may glue $\{Y_{R, v_{Y}}^{\bp}\}_{v_{Y} \in v(\Gamma_{Y_{k_c}^{\bp}})}$  in a way that is compatible with the
gluing of $\{Y_{K_{R_{2}}, v_{Y}}^{\bp}\}_{v_{Y} \in v(\Gamma_{Y^{\bp}_{K_{R_{2}}}})}$ that gives rise to $Y^{\bp}_{K_{R_{2}}}$.  Then we obtain a pointed stable curve $Y^{\bp}_{R_{2}}$ over $R_{2}$ such that

(i) $Y^{\bp}_{R_{2}} \times_{R_{2}}  K_{R_{2}} \cong Y^{\bp}_{K_{R_{2}}}$ over $K_{R_{2}}$;

(ii) there exists a Galois admissible covering $f_{K_{R_{2}}}^{\bp}:Y^{\bp}_{K_{R_{2}}} \migi X^{\bp}_{K_{R_{2}}}$ of pointed stable curves over $K_{R_{2}}$ which is a Galois admissible covering over $K_{R_{2}}$ with Galois group $G$ such that $f_{K_{R_{2}}}^{\bp}\times_{K_{R_{2}}} \overline K$ is isomorphic to $f^{\bp}_{K_{R_{1}}}\times_{K_{R_{1}}}\overline K$.

Then Lemma \ref{lem-7-6} implies that there exists a Galois admissible covering $f^{\bp}_{R_{2}}: Y_{R_{2}}^{\bp} \migi X^{\bp}_{R_{2}}$ over $R_{2}$ with Galois group $G$ such that the restriction of $f^{\bp}_{R_{2}}$ on the special fibers is a Galois admissible covering over $k_{R_{2}}$ with Galois group $G$. Since $X_{k_{R_{2}}}^{\bp}$ is isomorphic to $X_{k_{c'}}^{\bp}$ over $k=k_{c'}$, we have that $G \in \pi_{A}^{\rm adm}(c')$. This completes the proof of the lemma if $C' \subseteq W_{c}$ is irreducible. 

Suppose that $C'$ is not irreducible. Then we see that there is a set of closed points $\{c=c_{1}, c_{2}, \dots, c_{r}=c'\}$ of $C'$ such that $c_{i}$, $c_{i+1}$, $i\in \{1, \dots, r-1\}$, are contained in an irreducible component of $C'$. Then the lemma follows from the lemma when $C'$ is irreducible. This completes the proof of the lemma.
\end{proof}

\begin{corollary}\label{coro-7-13}
We maintain the notation introduced in Lemma \ref{lem-7-12}. Let $G \in \pi_{A}^{\rm adm}(c)$ be a finite group. Then $$U_{G} \cap M_{c}$$ is an open subset of $M_{c}$.
\end{corollary}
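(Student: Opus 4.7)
The plan is to bootstrap Corollary \ref{coro-7-13} out of Lemma \ref{lem-7-12} using Proposition \ref{pro-7-7} together with the topological fact that open subsets of a Noetherian scheme (and of any subspace) are closed under generization. I first fix an arbitrary point $q \in U_G \cap M_c$; it suffices to exhibit an open neighborhood of $q$ in $M_c$ contained in $U_G \cap M_c$.

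Since $G \in \pi^{\rm adm}_A(q)$, Proposition \ref{pro-7-7} produces a closed point $c' \in V_q=\overline{\{q\}}$ (closure in $\overline{M}_{g,n}$) with $\Gamma_{c'}\cong \Gamma_q$ and $G \in \pi^{\rm adm}_A(c')$. Because $q\in M_c$ we have $\Gamma_q \cong \Gamma_c$, hence $\Gamma_{c'}\cong \Gamma_c$; since the stratum $M_c$ depends only on the isomorphism class of the dual semi-graph, this yields $M_{c'}=M_c$ and in particular $c'\in U_G\cap M_c$.

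Apply Lemma \ref{lem-7-12} to the closed point $c'$ and the group $G\in \pi^{\rm adm}_A(c')$: we obtain an open subset $W_{c'}\subseteq M_{c'}=M_c$ with $c'\in W_{c'}\subseteq U_G\cap M_c$. Now $q$ is a generization of $c'$ (because $c'\in \overline{\{q\}}$), and $W_{c'}$ is the intersection of $M_c$ with an open subset of $\overline{M}_{g,n}$. Any open subset of a Noetherian scheme is closed under generization, hence so is its pullback to the subspace $M_c$; therefore $q\in W_{c'}$. Thus $W_{c'}$ is an open neighborhood of $q$ in $M_c$ contained in $U_G\cap M_c$, which (as $q$ was arbitrary) proves that $U_G\cap M_c$ is open in $M_c$.

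There is no real obstacle here: all the heavy lifting was done in Lemma \ref{lem-7-12} (openness at closed points, via formal deformation plus Lemma \ref{lem-7-11}) and in Proposition \ref{pro-7-7} (specialization from a general point to a closed point within the same stratum). The only minor point to verify is that $M_c$ depends only on the isomorphism class of $\Gamma_c$, so that Lemma \ref{lem-7-12} applied to $c'$ lands in the same stratum; this is immediate from the description of $M_c$ as the image of the clutching morphism along $\Gamma_c$.
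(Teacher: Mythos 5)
Your argument is exactly the one the paper intends: its own proof of Corollary \ref{coro-7-13} is just ``follows from Proposition \ref{pro-7-7} and Lemma \ref{lem-7-12}'', and your writeup is the natural unwinding of that (reduce an arbitrary $q \in U_{G}\cap M_{c}$ to a closed point $c'\in \overline{\{q\}}$ in the same stratum via Proposition \ref{pro-7-7}, apply Lemma \ref{lem-7-12} at $c'$, and conclude by stability of open sets under generization). One caveat: your justification of $M_{c'}=M_{c}$, namely that ``$M_{c}$ depends only on the isomorphism class of the dual semi-graph'', is not literally correct in general, since the semi-graph carries no genus labels on its vertices (and no labelling of the marked points), so two closed points with isomorphic dual semi-graphs may lie in different strata $M_{c}$. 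The fact you need is nevertheless true in your situation and easy to supply: since $q\in M_{c}$, the closure $\overline{\{q\}}$ lies in the closure of $M_{c}$, which is the image of the clutching morphism on $\bigtimes_{v}\overline{\mcM}_{g_{v},n_{v}}$, and any point of that image whose dual semi-graph has the same numbers of vertices and closed edges as $\Gamma_{c}$ must come from the open product $\bigtimes_{v}\mcM_{g_{v},n_{v}}$, i.e.\ lies in $M_{c}$; hence $c'\in M_{c}$ and $M_{c'}=M_{c}$. (Alternatively, the proof of Proposition \ref{pro-7-7} actually constructs the closed point inside the image of the clutching morphism attached to $X^{\bp}_{q}$, hence directly inside $M_{q}=M_{c}$, even though its statement only records the semi-graph isomorphism.) With that repair your proof is complete and coincides with the paper's.
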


\begin{proof}
The corollary follows immediately from Proposition \ref{pro-7-7} and Lemma \ref{lem-7-12}.
\end{proof}

Next, we prove the main result of the present section. 

\begin{theorem}\label{continuousmap}
Let $q$ be an arbitrary point of $\overline M_{g, n}$ and $G \in \pi_{A}^{\rm adm}(q)$. Then $U_{G}$ is an open subset of $\overline M_{g, n}$. In particular, the maps $$\pi_{g, n}^{\rm adm}: \overline \mfM_{g, n} \migi \overline \Pi_{g, n},$$ $$\pi_{g, n}^{\rm sol}: \overline \mfM_{g, n} \migi \overline \Pi^{\rm sol}_{g, n}$$ defined in Section \ref{sec-5-2} are continuous.
\end{theorem}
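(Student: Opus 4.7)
The plan is to reduce the continuity of $\pi_{g,n}^{\rm adm}$ and $\pi_{g,n}^{\rm sol}$ to the single topological statement that $U_G \subseteq \overline M_{g,n}$ is open for every finite group $G$. Since $\overline \mfM_{g,n}$ carries the quotient topology induced from $\overline M_{g,n}$, and since the preimage of the subbasic open set $U_{\overline \Pi_{g,n}, G}$ under $\pi_{g,n}^{\rm adm}$ pulls back along the quotient map $|\overline M_{g,n}| \migisurj \overline \mfM_{g,n}$ to exactly $U_G$ (unwinding the definition: $G$ is a quotient of $\pi_{1}^{\rm adm}(q)$ iff $q\in U_G$), openness of $U_G$ in $\overline M_{g,n}$ immediately yields continuity. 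The identical argument, applied to the maximal pro-solvable quotient, handles $\pi_{g,n}^{\rm sol}$.

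To prove $U_G$ is open in the noetherian scheme $\overline M_{g,n}$, I will invoke the classical criterion: a constructible subset of a noetherian topological space that is stable under generization is open. Stability under generization is essentially free from Lemma \ref{lem-7-1}: if $q$ is a generization of $q' \in U_G$, then $\pi_A^{\rm adm}(q') \subseteq \pi_A^{\rm adm}(q)$, so $G \in \pi_A^{\rm adm}(q)$. For constructibility, I will exploit the finite stratification of $\overline M_{g,n}$ by topological type of the dual semi-graph: writing $M_\Gamma \subseteq \overline M_{g,n}$ for the locally closed stratum parametrizing pointed stable curves with dual semi-graph isomorphic to $\Gamma$, one has $\overline M_{g,n} = \bigsqcup_{\Gamma} M_\Gamma$ as a finite disjoint union of locally closed subsets, and for any closed point $c$ lying in this stratum one has $M_\Gamma = M_c$ in the notation preceding Lemma \ref{lem-7-12}. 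It therefore suffices to show $U_G \cap M_\Gamma$ is open in $M_\Gamma$ for each $\Gamma$, since a finite union of sets, each open in its locally closed stratum, is constructible.

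The openness of $U_G \cap M_\Gamma$ inside $M_\Gamma$ is handled as follows. If this intersection is empty there is nothing to check, and otherwise, pick any $q \in U_G \cap M_\Gamma$ and apply Proposition \ref{pro-7-7} to obtain a closed point $c$ in the topological closure of $q$ satisfying $\Gamma_c \cong \Gamma_q \cong \Gamma$ (so that $M_c = M_\Gamma$) together with $G \in \pi_A^{\rm adm}(c)$; Corollary \ref{coro-7-13}, applied to this $c$ and $G$, then directly yields that $U_G \cap M_c = U_G \cap M_\Gamma$ is open in $M_\Gamma$. Combining the constructibility so obtained with stability under generization completes the proof. The bulk of the technical work — the log-smoothness deformation used in Lemma \ref{lem-7-11} to lift Galois admissible coverings over complete discrete valuation rings, the descent from $\widehat{\mcO}_{H_{g,n},v}$ to a $k$-algebra of finite type in Lemma \ref{lem-7-8}, and the clutching argument of Lemma \ref{lem-7-12} together with Corollary \ref{coro-7-13} — is already in place, so the final step is a clean topological assembly; the principal conceptual obstacle was already negotiated in Corollary \ref{coro-7-13}, whose proof hinges on the interplay between clutching morphisms, versal formal deformations, and the log-étale lifting of node-ramified admissible coverings.
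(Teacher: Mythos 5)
Your proposal is correct and follows essentially the same route as the paper: reduce continuity to openness of $U_{G}$ via the quotient topology, get stability under generization from Lemma \ref{lem-7-1}, get constructibility by cutting $\overline M_{g, n}$ into finitely many locally closed graph-type strata and applying Proposition \ref{pro-7-7} together with Corollary \ref{coro-7-13} on each stratum (the paper's decomposition by the number of nodes and the generic points of each $M_{j}$ produces exactly these strata), and conclude with the criterion that a constructible subset of a noetherian sober space stable under generization is open. The only caveat is your identification $M_{\Gamma}=M_{c}$: since the dual semi-graph does not record the genera of the components, $M_{\Gamma}$ may decompose into finitely many sets $M_{c}$ (one for each genus labelling), but each such $M_{c}$ is clopen in $M_{\Gamma}$ and contains the chosen point $q$, so your argument goes through verbatim after stratifying by labelled (stable-graph) type instead.
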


\begin{proof}
For each $j \in \mbZ_{\geq 0}$, we put $M_{j}\defeq\{q' \in \overline M_{g, n} \ | \ \#e^{\rm cl}(\Gamma_{q'})=j\} \subseteq \overline M_{g, n},$ and denote by $\text{Gen}(M_{j})$ the set of generic points of $M_{j}$. Note that $M_{0}=M_{g, n}$, and that $M_{j} = \emptyset$ if $j >>0$. Since $M_{j'} \cap M_{j''} =\emptyset$ if $j' \neq j''$, we have that $$\overline M_{g, n}=\bigsqcup_{j \in \mbZ_{\geq 0}}M_{j}.$$ Moreover, for each $\eta_{j} \in \text{Gen}(M_{j})$, we put $M_{\eta_{j}}\defeq V_{\eta_{j}}\cap M_{j},$ where $V_{\eta_{j}}$ denotes the topological closure of $\eta_{j}$ in $\overline M_{g, n}$. Since $M_{\eta'_{j}} \cap M_{\eta''_{j}}=\emptyset$ if $\eta'_{j}\neq \eta''_{j}$ for each $j \in \mbZ_{\geq 0}$, we obtain a disjoint union $$M_{j}= \bigsqcup_{\eta_{j} \in \text{Gen}(M_{j})} M_{\eta_{j}}.$$ Then we obtain $$\overline M_{g, n}=\bigsqcup_{j \in \mbZ_{\geq 0}}\bigsqcup_{\eta_{j} \in \text{Gen}(M_{j})} M_{\eta_{j}}.$$ Thus, we have  $$U_{G}=\bigsqcup_{j \in \mbZ_{\geq 0}}\bigsqcup_{\eta_{j} \in \text{Gen}(M_{j})} M_{\eta_{j}} \cap U_{G}.$$ Corollary \ref{coro-7-13} implies that $M_{\eta_{j}} \cap U_{G}$ is an open subset of $M_{\eta_{j}}$. This means that $M_{\eta_{j}} \cap U_{G}$ is a constructible subset of $M_{\eta_{j}}$, and $M_{\eta_{j}} \cap U_{G}$ is stable under generization in $M_{\eta_{j}} \cap U_{G}$. Since $M_{\eta_{j}}$ is a constructible subset of $\overline M_{g, n}$, we obtain that $U_{G}$ is a constructible subset of $\overline M_{g, n}$.

Let $j' \geq j''$. If $M_{\eta_{j'}}$ is contained in the topological closure of $M_{\eta_{j''}}$ in $\overline M_{g, n}$ and $M_{\eta_{j'}} \cap U_{G} \neq \emptyset$, then Lemma \ref{lem-7-1} implies that $M_{\eta_{j''}} \cap U_{G} \neq \emptyset$. Since $M_{\eta_{j}} \cap U_{G}$ is stable under generization in $M_{\eta_{j}} \cap U_{G}$, $j\in \mbZ_{\geq 0}$, we obtain that $U_{G}$ is stable under generization in $\overline M_{g, n}$. Thus, $U_{G}$ is an open subset of $\overline M_{g, n}$. This completes the proof of the theorem.
\end{proof}

\newpage
\markright{ }

\bigskip

\bigskip

{\it Yu Yang}

\bigskip

{\it Address: Research Institute for Mathematical Sciences, Kyoto University, Kyoto 606-8502, Japan}

\bigskip

{\it E-mail: yuyang@kurims.kyoto-u.ac.jp}

\end{document}